\DeclareSymbolFont{bbold}{U}{bbold}{m}{n}
\DeclareSymbolFontAlphabet{\mathbbold}{bbold}
\newcommand{\Z}{\ensuremath{\mathbb{Z}}}
\newcommand{\Q}{\ensuremath{\mathbb{Q}}}
\newcommand{\R}{\ensuremath{\mathbb{R}}}
\newcommand{\CC}{\ensuremath{\mathbb{C}}}
\newcommand{\A}{\ensuremath{\mathbb{A}}}
\newcommand{\Weil}{\ensuremath{\mathcal{W}}}	
\newcommand{\Gal}[1]{\ensuremath{\mathrm{Gal}(#1)}}	
\newcommand{\Aut}{\operatorname{Aut}}
\newcommand{\Tr}{\operatorname{tr}}
\newcommand{\topwedge}{\ensuremath{\bigwedge^{\mathrm{max}}}}
\newcommand{\cInd}{\operatorname{c-Ind}}
\newcommand{\dd}{\mathop{}\!\mathrm{d}}
\newcommand{\lrangle}[1]{\ensuremath{\langle #1 \rangle}}
\newcommand{\sgn}{\ensuremath{\mathrm{sgn}}}
\newcommand{\Stab}{\ensuremath{\mathrm{Stab}}}
\renewcommand{\emptyset}{\ensuremath{\varnothing}}
\newcommand{\twomatrix}[4]{ \ensuremath{\bigl(\begin{smallmatrix} #1 & #2 \\ #3 & #4 \end{smallmatrix}\bigr)} }
\newcommand{\twobigmatrix}[4]{ \ensuremath{\begin{pmatrix} #1 & #2 \\ #3 & #4 \end{pmatrix}} }
\newcommand{\cate}[1]{\ensuremath{\mathsf{#1}}}
\newcommand{\identity}{\ensuremath{\mathrm{id}}}
\newcommand{\Hom}{\operatorname{Hom}}
\newcommand{\End}{\operatorname{End}}
\newcommand{\rightiso}{\ensuremath{\stackrel{\sim}{\rightarrow}}}
\newcommand{\longrightiso}{\ensuremath{\stackrel{\sim}{\longrightarrow}}}
\newcommand{\leftiso}{\ensuremath{\stackrel{\sim}{\leftarrow}}}
\newcommand{\dtimes}[1]{\ensuremath{\underset{#1}{\times}}}
\newcommand{\quoted}[1]{\ensuremath{\text{\textquotedblleft} #1 \text{\textquotedblright}}}	
\newcommand{\Ker}{\operatorname{ker}}
\newcommand{\Coker}{\operatorname{coker}}
\newcommand{\Image}{\operatorname{im}}
\newcommand{\dotimes}[1]{\ensuremath{\underset{#1}{\otimes}}}
\newcommand{\Ad}{\operatorname{Ad}}
\newcommand{\ad}{\operatorname{ad}}
\newcommand{\Spec}{\operatorname{Spec}}
\newcommand{\Gm}{\ensuremath{\mathbb{G}_\mathrm{m}}}
\newcommand{\Ga}{\ensuremath{\mathbb{G}_\mathrm{a}}}
\newcommand{\Res}{\operatorname{Res}}
\newcommand{\utimes}[1]{\ensuremath{\overset{#1}{\times}}}
\newcommand{\Frob}{\operatorname{Fr}}
\newcommand{\Gmm}[1]{\ensuremath{\mathbb{G}_{\mathrm{m}, #1}}}	
\newcommand{\shK}{\ensuremath{\mathbf{K}}}	
\newcommand{\GL}{\operatorname{GL}}
\newcommand{\SO}{\operatorname{SO}}
\newcommand{\gl}{\ensuremath{\mathfrak{gl}}}
\newcommand{\SL}{\operatorname{SL}}
\newcommand{\Sp}{\operatorname{Sp}}
\newcommand{\Lgrp}[1]{\ensuremath{{}^{\mathrm{L}} #1}}	
\newcommand{\Or}{\ensuremath{\mathrm{O}}}
\newcommand{\SU}{\ensuremath{\mathrm{SU}}}
\newcommand{\PGL}{\ensuremath{\mathrm{PGL}}}
\newcommand{\so}{\ensuremath{\mathfrak{so}}}
\newcommand{\syp}{\ensuremath{\mathfrak{sp}}}
\newcommand{\GSp}{\ensuremath{\mathrm{GSp}}}
\theoremstyle{plain}
\newtheorem{proposition}{Proposition}[subsection]
\newtheorem{lemma}[proposition]{Lemma}
\newtheorem{theorem}[proposition]{Theorem}
\newtheorem{corollary}[proposition]{Corollary}
\theoremstyle{definition}
\newtheorem{definition}[proposition]{Definition}
\newtheorem{definition-theorem}[proposition]{Definition--Theorem}
\newtheorem{definition-proposition}[proposition]{Definition--Proposition}
\newtheorem{hypothesis}[proposition]{Hypothesis}
\newtheorem{example}[proposition]{Example}
\theoremstyle{remark}
\newtheorem{remark}[proposition]{Remark}
\newtheorem{notation}[proposition]{Notation}
\numberwithin{equation}{section}
\newcommand{\bmu}{\ensuremath{\bm{\mu}}}
\newcommand{\noyau}{\ensuremath{\bm{\varepsilon}}} 
\newcommand{\Cali}{\ensuremath{\mathbf{C}}}	
\newcommand{\CaliAd}{\ensuremath{\mathbf{C}\mathrm{Ad}}}	
\title{Stable conjugacy and epipelagic $L$-packets for Brylinski--Deligne covers of $\Sp(2n)$}
\author{Wen-Wei Li}
\date{}
\begin{document}

\maketitle

\begin{abstract}
	Let $F$ be a local field of characteristic not $2$. We propose a definition of stable conjugacy for all the covering groups of $\mathrm{Sp}(2n,F)$ constructed by Brylinski and Deligne, whose degree we denote by $m$. To support this notion, we follow Kaletha's approach to construct genuine epipelagic $L$-packets for such covers in the non-archimedean case with $p \nmid 2m$, or some weaker variant when $4 \mid m$; we also prove the stability of packets when $F \supset \mathbb{Q}_p$ with $p$ large. When $m=2$, the stable conjugacy reduces to that defined by J.\ Adams, and the epipelagic $L$-packets coincide with those obtained by $\Theta$-correspondence. This fits within Weissman's formalism of $L$-groups. For $n=1$ and $m$ even, it is also compatible with the transfer factors proposed by K.\ Hiraga and T.\ Ikeda.
\end{abstract}

{\small
\begin{tabular}{ll}
	\textbf{MSC (2010)} & Primary 22E50; Secondary 11F27 \\
	\textbf{Keywords} & stable conjugacy, epipelagic supercuspidal representations, covering groups
\end{tabular}}

\vspace{1em}
\textdbend\;\textbf{\textsc{Note}} (added in December 2020): Please have a look of the Errata in \S\ref{sec:Errata}.

\tableofcontents

\section{Introduction}
\subsection*{Background}
Let $G$ be a reductive group over a local field $F$. The covering groups in question are topological central extensions $1 \to \bmu_m \to \tilde{G} \xrightarrow{\bm{p}} G(F) \to 1$ where $m \in \Z_{\geq 1}$ and $\bmu_m := \{z \in \CC^\times: z^m=1 \}$. Harmonic analysis on $\tilde{G}$ aims at studying its genuine representations, that is, the representations $(V_\pi, \pi)$ of $\tilde{G}$ satisfying $\pi(\noyau) = \noyau \cdot \identity_{V_\pi}$ for all $\noyau \in \bmu_m$. In contrast with the case of reductive groups ($m=1$), an important issue here is to single out a wide class of coverings admitting extra algebraic structures, and then try to formulate the local Langlands correspondence, etc.

Denote by $\mu(F)$ (resp. $\mu_m(F)$) the group of roots of unity (resp. $m$-th roots of unity) in $F$, and set $N_F := |\mu(F)|$ when $F \neq \CC$, otherwise $N_F := 1$. Brylinski and Deligne built a functorial framework in \cite{BD01} that produces central extensions of $G(F)$ by $K_2(F)$, or more precisely central extensions of $G$ by Quillen's $\shK_2$ as Zariski sheaves. This includes the $K_2(F)$-extensions constructed by Matsumoto \cite{Mat69} for simply connected split groups. To obtain a cover, take $m \mid N_F$ and write $\mu_m = \mu_m(F)$. Pushing-out first by the Hilbert symbol $(\cdot, \cdot)_{F,m}: K_2(F) \to \mu_m$ and then by a chosen $\epsilon: \mu_m \rightiso \bmu_m \subset \CC^\times$, we obtain a topological central extension $\bmu_m \hookrightarrow \tilde{G} \twoheadrightarrow G(F)$ as above. Covers arising in this way are called \emph{BD-covers} in this article, and one has the notion of $\epsilon$-genuine representations. This theory has a global counterpart and works over more general bases than $\Spec(F)$

Variants of Langlands' program in this perspective have been proposed and studied in \cite{GG,Weis18}, among others. In particular, Weissman \cite{Weis18} proposed a definition of $L$-group $\Lgrp{\tilde{G}}$ for BD-covers as an extension $\tilde{G}^\vee \hookrightarrow \Lgrp{\tilde{G}} \twoheadrightarrow \Weil_F$ of locally compact groups. One of his insights is that $\Lgrp{\tilde{G}}$ is not always splittable into $\tilde{G}^\vee \rtimes \Weil_F$; even when splittings exist, they often depend on auxiliary data.

One expects that the genuine irreducible admissible representations are organized into $L$-packets. Denote by $G_\text{reg}$ the open dense subset of strongly regular semisimple elements in $G$. It seems that the internal structure of these conjectural packets is related to the following issues.
\begin{enumerate}[\bfseries (A)]
	\item Of particular importance in harmonic analysis on $\tilde{G}$ are the \emph{good elements}, i.e. those $\tilde{\delta} \in \tilde{G}$ with image $\delta$ satisfying $Z_{\tilde{G}}(\tilde{\delta}) = \bm{p}^{-1}\left( Z_{G(F)}(\delta) \right)$; this property depends only on $\delta$. For BD-covers of a torus $T$, there is a canonical isogeny $\iota_{Q,m}: T_{Q,m} \to T$; it is known that all elements of $\Image(\iota_{Q,m})$ are good, and the converse holds when $T$ is split; this leads to a description of good elements in $T_\text{reg} := T \cap G_\text{reg}$ when $T \subset G$ is a split maximal torus. This is no longer true for non-split tori.
	\item For reductive groups, the internal structure of $L$-packets is elucidated by \emph{endoscopy}, which originates from the difference between ordinary and \emph{stable conjugacy}, i.e. conjugacy over the separable closure $\bar{F}$, for elements of $G_\text{reg}(F)$ at least. It is unclear if this can be lifted to BD-covers. It is more reasonable to do this on pull-backs $\tilde{T}_{Q,m}$ of the BD-cover via $\iota_{Q,m}: T_{Q,m} \to T$, for various maximal tori $T$, or some translates thereof in order to include all good elements.
	\item One should be able to form the \emph{stable character} attached to an $L$-packet: it is a sum of characters therein with certain multiplicities (often one). Desideratum: the stable character should be a genuine \emph{stable distribution}, in an appropriate sense for the BD-cover in question.
\end{enumerate}

Cf. \cite[\S\S 14---15]{GG} for further discussions. To the author's knowledge, only two non-trivial cases have been discovered.
\begin{itemize}
	\item Let $G = \Sp(W)$ where $(W, \lrangle{\cdot|\cdot})$ is a symplectic $F$-vector space of dimension $2n$, $n \in \Z_{\geq 1}$. Fix an additive character $\psi$, then we have Weil's metaplectic covering $\overline{G}_\psi^{(2)} \twoheadrightarrow G(F)$ with $m=2$. It is a distinguished instance of BD-covers, namely
	\begin{inparaenum}[(a)]
		\item it carries the \emph{Weil representation} $\omega_\psi = \omega_\psi^+ \oplus \omega_\psi^-$, and
		\item all elements are good; in fact $\iota_{Q,2} = \identity_T$ in this case, for all $T \subset G$.
	\end{inparaenum}
	Thus the issue \textbf{(A)} disappears, and the structure of packets should be explicated solely in terms of stable conjugacy.

	J.\ Adams defined stable conjugacy on $\overline{G}_\psi^{(2)}$ in terms of the characters of $\omega_\psi^\pm$. This is then developed by D.\ Renard and the author into a fully-fledged theory of endoscopy, as summarized in \cite{Li11}. Due to the usage of Weil representations, it cannot be ported to other BD-covers.
	
	The endoscopic character relations for this BD-cover are the topic of ongoing works of Caihua Luo,
	\item In an unpublished note by K.\ Hiraga and T.\ Ikeda \cite{HI}, they defined the transfer factors for BD-covers of $\SL(2)$ with $m \in 2\Z$ and established the transfer of orbital integrals; one still has to choose $\psi$ when $m \equiv 2 \pmod 4$. They also classified the good elements in $\SL_\text{reg}(2, F)$ and stabilized the regular elliptic part of the trace formula. This is ultimately based on Flicker's theory \cite{Fl80} for $\widetilde{\GL}(2,F)$ and makes use of Kubota's cocycles; both are unavailable in higher ranks.
	
	This offers a testing ground for notions of stable conjugacy, since the transfer factors should transform under stable conjugacy by some explicit character, as in the case of reductive groups \cite{LS1}.  
\end{itemize}
The aim of this article is to explore these issues for BD-covers of $G = \Sp(W)$, for general $n$ and $m \mid N_F$. Note that the representation theory for these covers has also been considered in the recent work \cite{Le19}.

\subsection*{Main results}
Henceforth we let $G := \Sp(W)$ for a symplectic $F$-vector space $(W, \lrangle{\cdot|\cdot})$ of dimension $2n$.
\paragraph*{Stable conjugacy}
Assume $\text{char}(F) \neq 2$ and let $G = \Sp(W)$ as above. The first goal of this article is to address \textbf{(A)} and \textbf{(B)} for all BD-covers of $G$. This is inspired, and in turn generalizes some aspects of the formalism of Adams and Hiraga--Ikeda. By \cite{BD01}, the $\shK_2$-extensions of $G$ are classified by the Weyl-invariant quadratic forms $Q: Y \to \Z$ where $Y = X_*(T_0)$ for some maximal torus $T_0 \subset G$; here we take the split one. There is a generator $Q$ that corresponds to Matsumoto's central extension $K_2(F) \hookrightarrow E_G(F) \twoheadrightarrow G(F)$, and we shall stick to the BD-covers of degree $m \mid N_F$ arising from $E_G(F)$. For harmonic analysis, this is not a real restriction as explained in Remark \ref{rem:rescaling-Q}: let $k \in \Z \smallsetminus \{0\}$, rescaling $Q$ to $kQ$ amounts to reducing $m$ to $m'$ and replacing $\epsilon$ by $\epsilon' := \epsilon^{k'}$, where $m' = m/\text{gcd}(k,m)$ and $k' = k/\text{gcd}(k,m)$. The same reduction applies to Weissman's $L$-groups, as explained in Theorem \ref{prop:rescale-L}.

The definition of stable conjugacy is sketched as follows. Let $\delta \in G_\text{reg}(F)$ so that $T := Z_G(\delta)$ is a maximal torus. When $n=1$ and $G \simeq \SL(2)$, stable conjugacy can always be realized by $G_\text{ad}(F)$-actions. This is no longer true when $n>1$. Nonetheless, we may construct an intermediate group $T \subset G^T \subset G$ such that
\[ \prod_{i \in I} R_{K_i^\sharp/F}\left( K_i^1 \right) \simeq T \hookrightarrow G^T \simeq \prod_{i \in I} R_{K_i^\sharp/F}\left( \SL(2)\right) \]
where
\begin{compactitem}
	\item $K_i^\sharp$ are finite separable extensions of $F$,
	\item $K_i$ is a quadratic étale $K_i^\sharp$-algebra, whose norm-one torus is denoted by $K_i^1$, and
	\item $R_{K_i^\sharp/F}$ stands for the Weil restriction functor.
\end{compactitem}
Consequently, all stable conjugates of $\delta$ can be obtained by $G^T_\text{ad}(F)$-action plus an ordinary conjugation by $G(F)$. These data are canonically defined in terms of the absolute root system of $T \subset G$.

Given a stable conjugacy $\Ad(g)(\delta) = \eta$ in $G(F)$, we may thus decompose $g = g'' g'$ with $g' \in G^T_\text{ad}(F)$ and $g'' \in G(F)$. Call $(g',g'')$ a \emph{factorization pair} for $\Ad(g)$. As an easy byproduct of \cite{BD01}, the adjoint action of $g'$ lifts uniquely to the $\shK_2$-extension pulled back to $G^T$, thus to the associated BD-cover. By inspecting the compatibility between Brylinski--Deligne classification and Weil restriction, the pull-back of $\tilde{G}$ to $G^T(F)$ turns out to be isomorphic the contracted product of BD-covers of $\SL(2, K_i^\sharp)$, all arising from Matsumoto's central extensions with the same $m$ (Theorem \ref{prop:G-T-reduction}). The determination of this pull-back is similar to the scenario in \cite[3.9]{D96}, but somewhat simpler. The upshot is a systematic reduction to the $\SL(2)$-case, modulo Weil restrictions.

We therefore obtain an embryonic stable conjugacy on $\tilde{G}_\text{reg} := \bm{p}^{-1}(G_\text{reg}(F))$ lifting $\Ad(g)$. However, it deviates from what is known for $m=2$ and is not well-defined: in fact, $(T/Z_G)(F)$ acts trivially on $T(F)$ but not so on its cover. As a calibration, we introduce a sign $\Cali_m(\nu(g'), \delta_0) \in \mu_{\text{gcd}(2,m)}$ into stable conjugacy, and the resulting operation is denoted by $\CaliAd(g)$. The price is that we need to prescribe $\delta_0 \in \iota_{Q,m}^{-1}(\delta)$; that is, we have to work in $\tilde{T}_{Q,m} \ni (\tilde{\delta}, \delta_0)$. Here $\delta = \bm{p}(\tilde{\delta})$ as usual.

More generally, for $\sigma \in \{\pm 1\}^I \hookrightarrow T(F)$, one forms $\tilde{T}^\sigma_{Q,m} := \left\{ (\tilde{\delta}, \delta_0): \iota_{Q,m}(\delta_0) = \sigma \cdot \delta \right\}$, and define $\CaliAd^\sigma(g): \tilde{T}^\sigma_{Q,m} \to \tilde{S}^\sigma_{Q,m}$ where $S := gTg^{-1}$ with $\sigma$ transported to $S$ via $\Ad(g)$. Here we assume $\sigma = (+, \ldots, +)$ unless $4 \mid m$. This is Definition \ref{def:st-conj}, which is independent of the choice of factorization pairs and satisfies the natural properties below (Proposition \ref{prop:CAd-prop}).
\begin{compactenum}[\bfseries{AD}.1.\;]
	\item $\CaliAd^\sigma(g)(\noyau\tilde{t}) = \noyau \CaliAd^\sigma(g)(\tilde{t})$ for all $\tilde{t} \in \tilde{T}^\sigma_{Q,m}$ and $\noyau \in \mu_m$.
	\item $\CaliAd(g): \tilde{T}_{Q,m} \rightiso \tilde{S}_{Q,m}$ is an isomorphism of topological groups. In general, $\CaliAd^\sigma(g)$ is a $\CaliAd(g)$-equivariant map between torsors.
	\item If $g \in G(F)$ then $\CaliAd^\sigma(g) = \Ad(g)$; if $g \in T(\bar{F})$, it equals $\identity$.
	\item Transitivity under composition of stable conjugations of maximal $F$-tori.
\end{compactenum}

We have arrived at a possible answer to \textbf{(B)}. As for \textbf{(A)}, Proposition \ref{prop:good-T} characterizes the good elements in $G_\text{reg}(F)$ completely: $\delta \in T_\text{reg}(F)$ is good if and only if $\delta = \sigma \cdot \iota_{Q,m}(\delta_0)$ for some $\delta_0 \in T_{Q,m}(F)$, with $\sigma$ needed only when $4 \mid m$. This boils down to the case of $\SL(2)$ by a similar reduction, which has been solved by Hiraga--Ikeda. It also gives some evidence for our formalism of stable conjugacy.

For potential applications to the trace formula, we show in Theorem \ref{prop:adelic-conj} that when $F$ is a global field, the local $\CaliAd(g_v)$ patch into an adélic one, and coincides with the usual $\Ad(g)$ on the image of $G(F)$ in the adélic BD-cover. A similar characterization of adélically good elements is obtained in Proposition \ref{prop:good-T-global}.

Theorem \ref{prop:st-conj-twofold} asserts that when $m=2$, the notion of stable conjugacy coincides with that of Adams via Weil representations. When $F=\R$, the only nontrivial case is $m=2$; Adams gave in \cite[Definition 3.4]{Ad98} a Lie-theoretic definition of stable conjugacy for real metaplectic groups, namely by lifting the action of stable Weyl groups. His recipe can still be reduced to $\SL(2, \R)$, and is seen to coincide with our recipe by Proposition \ref{prop:CaliAd-simple} since the calibration factor $\Cali_2(\cdots)$ disappears.

We also show in Theorem \ref{prop:HI-cocycle} that when $n=1$ and $m \in 2\Z$, the Hiraga--Ikeda transfer factors satisfy a transformation property with respect to $\CaliAd^\sigma(g)$, which justifies our formalism in that case.

\paragraph*{\texorpdfstring{$L$}{L}-groups}
Fix $\epsilon: \mu_m \rightiso \bmu_m$. It will be explained in \S\ref{sec:Weissman} that $\tilde{G}^\vee \simeq \Sp(2n,\CC)$ (resp. $\SO(2n+1, \CC)$) when $m \in 2\Z$ (resp. $m \notin 2\Z$), with trivial Galois action; furthermore, there is an $L$-isomorphism $\Lgrp{\tilde{G}} \rightiso \tilde{G}^\vee \times \Weil_F$. Such a splitting of $L$-group is canonical when $m \notin 2\Z$, otherwise it depends on
\begin{compactitem}
	\item an additive character $\psi$ when $m \equiv 2 \pmod 4$;
	\item a $G(F)$-conjugacy class of $F$-pinnings of $G$, or more concretely the symplectic form $\lrangle{\cdot|\cdot}$ on $W$ as explained below.
\end{compactitem}
Given $\lrangle{\cdot|\cdot}$, one chooses a symplectic basis for $W$ to produce a standard $F$-pinning, which is well-defined up to $G(F)$-conjugacy. The $F$-pinnings form a torsor under $G_\text{ad}(F)$, and the homomorphism $G_1 := \GSp(W) \twoheadrightarrow G_\text{ad}$ induces
\[ G_1(F)/( Z_{G_1}(F) G(F)) \rightiso G_\text{ad}(F) \big/ \Image\left[ G(F) \to G_\text{ad}(F) \right]. \]
Note that $Z_{G_1}(F) G_1(F)$ is the stabilizer of $\lrangle{\cdot|\cdot} \bmod F^{\times 2}$ under $G_1(F)$. Summing up, one may identify the $G(F)$-conjugacy classes of $F$-pinnings with the dilated symplectic forms $c\lrangle{\cdot|\cdot}$ where $c$ is taken up to $F^{\times 2}$. In this article, symplectic forms will usually be preferred over $F$-pinnings.

When $m \in 2\Z$, the $L$-isomorphism $\Lgrp{\tilde{G}} \simeq \tilde{G}^\vee \times \Weil_F$ undergoes an explicit quadratic twist $\chi_c: \Gamma_F \to \bmu_2$ when $\psi$ or $\lrangle{\cdot|\cdot}$ is rescaled by $c \in F^\times$; see \eqref{eqn:metaGalois-section-twist}.

\paragraph*{Epipelagic \texorpdfstring{$L$}{L}-packets}
In order to justify the formalism of stable conjugacy, we construct \emph{epipelagic supercuspidal $L$-packets} for $\tilde{G}$ when $F$ is non-archimedean with residual characteristic $p \nmid 2m$. This is a tractable class of supercuspidal representations, yet sufficiently rich to probe the genuine spectrum of $\tilde{G}$. The original definition is due to Reeder--Yu \cite{RY14}, but the approach here follows Kaletha \cite{Kal15,Kal19}. We shall take Yu's construction \cite{Yu01} for BD-covers with $p \nmid m$ as granted, in the epipelagic case at least.

On the dual side, there is a well-defined notion of epipelagic supercuspidal $L$-parameters $\phi$. Given $\phi$, Kaletha's recipe produces a stable class of embeddings $j: S \hookrightarrow G$ of certain elliptic maximal tori, which we call of type (ER). For reductive groups we also obtain a character $\theta$ of $S(F)$. For BD-covers, there is an isogeny $\iota_{Q,m}: S_{Q,m} \to S$ and what can be expected is just a character $\theta^\flat$ of $S_{Q,m}(F)$. The first task is to lift it to genuine character(s) $\theta_j$ of $\widetilde{jS} := \bm{p}^{-1}\left( jS(F) \right)$ for various $j$. Since the genuine representation theory of BD-covers of anisotropic tori is not yet fully developed, we do this in an \textit{ad hoc} way, namely
\begin{compactitem}
	\item we give explicit splittings of $\bm{p}$ over $jS(F)$ upon enlarging the cover (see \S\ref{eg:section-S}), thereby showing its commutativity;
	\item when $4 \mid m$, we take all $\theta_j$ with pro-$p$ component determined by $\theta^\flat$;
	\item when $4 \nmid m$, the desiderata on $\theta_j$ are encapsulated into the notion of \emph{stable system} (Definition \ref{def:stable-system}). Again, the pro-$p$ component of $\theta_j$ is pinned down by $\theta^\flat$.
\end{compactitem}
By varying $j$, we construct the packet $\Pi_\phi := \Pi(S, \theta^\flat)$ by taking compact-induction of $\theta_j$ from $\widetilde{jS} \cdot G(F)_{x, 1/e}$ where $x \in \mathcal{B}(G,F)$ is determined by $jS$ and $\frac{1}{e}$ stands for the depth of $\theta^\flat$. Specifically:
\begin{enumerate}[(i)]
	\item Kaletha's construction involves certain quadratic characters $\epsilon_{jS}$ of $jS(F)$ needed for stability (see \cite[\S 4.6]{Kal15}), and here we multiply $\theta_j$ by the same character before inducing.
	\item When $4 \mid m$, some packets might be empty and $\Pi_\phi$ depends on the splitting of $\Lgrp{\tilde{G}}$. As a workaround, we consider all $\chi_c$-twists of $\phi$ at once, for various $c \in F^\times$, to obtain the \emph{pre-$L$-packet} $\Pi_{[\phi]}$. By Remark \ref{rem:pre-packets}, there is at most one datum in the orbit $[\phi]$ under twists that yields a nonempty $\Pi(S, \theta^\flat)$.
	\item When $m \notin 2\Z$, Proposition \ref{prop:std-stable-system} furnishes a standard stable system, whilst for $m \equiv 2 \pmod 4$, the stability and the canonicity (i.e. independence of the splitting of $\Lgrp{\tilde{G}}$) of $\Pi_\phi$ impose stringent constraints on the stable system. In Theorem \ref{prop:dagger-SS}, a stable system for $m \equiv 2 \pmod 4$ will be constructed by reverse-engineering and extrapolation from the case $m=2$, with the help from the description of $\Theta$-lifting by Loke--Ma--Savin \cite{LMS16,LM18}. The main features of this stable system include
	\begin{compactitem}
		\item the use of \emph{moment maps} that relate $G$ and the pure inner forms $\SO(V,q)$ of the split $\SO(2n+1)$, which already appeared in \cite{Ad98};
		\item a sign character measuring the ratio between Kaletha's quadratic characters for $G$ and $\SO(V,q)$;
		\item it turns out that the ratio, as interpreted by Theorem \ref{prop:interplay}, depends not only on the embedded tori, but also on the pro-$p$ part of inducing data, which is related to the unrefined minimal $\mathsf{K}$-type \cite{MP94,MP96}.
	\end{compactitem}
\end{enumerate}
Theorem \ref{prop:central-character} shows that the elements in a packet need not share the same central character. Theorem \ref{prop:L-packet-prop} lists a few properties of $\Pi_\phi$ and $\Pi_{[\phi]}$, including cardinality and disjointness. We do not pursue the further issues such as basepoints (genericity) or formal degrees in this article.

By a straightforward comparison with \cite{LMS16}, we show in Theorem \ref{prop:Theta-compatibility} that when $\text{char}(F)=0$ and $m=2$, the $L$-packets $\Pi_\phi$ are obtained via $\Theta$-lifting from the epipelagic Vogan $L$-packets of $\SO(2n+1)$ constructed by Kaletha \cite{Kal15}. Our formalism is thus compatible with the local Langlands correspondence for $p$-adic metaplectic groups set forth by Gan--Savin \cite{GS1}.

The construction hinges on certain properties of maximal tori of type (ER), eg. Theorem \ref{prop:S-p'}. It would be beneficial to develop a theory for toral supercuspidals, or more generally for the regular supercuspidal representations considered in \cite{Kal19}.

\paragraph*{Stability of packets}
Let $\Xi$ be an invariant genuine distribution on $\tilde{G}$ represented by a locally $L^1$ function that is smooth over $\tilde{G}_{\mathrm{reg}}$. In Definition \ref{def:stability}, we say that $\Xi$ is \emph{stable} if for any maximal torus $T$, good $\tilde{\delta} \in \tilde{T}_{\mathrm{reg}}$ and $\sigma \in \{\pm 1\}^I$ (trivial when $4 \nmid m$), there exists $\delta_0 \in T_{Q,m}(F)$ such that $(\tilde{\delta}, \delta_0) \in \tilde{T}^\sigma_{Q,m}$ and
\[ \CaliAd^\sigma(g)(\tilde{\delta}, \delta_0) = \left( \tilde{\eta}, \Ad(g)(\delta_0)\right) \implies \Xi(\tilde{\delta}) = \Xi(\tilde{\eta}) \]
where $\Ad(g): \delta \mapsto \eta$ is a stable conjugacy. If $4 \nmid m$, the choice of $\delta_0$ is immaterial. If $4 \mid m$, the dependence of this condition on $\delta_0$ is quantified by Proposition \ref{prop:CAd-minus-1}.

Now assume $\text{char}(F)=0$ with large residual characteristic $p$ as in Hypothesis \ref{hyp:p-large}. Let $\phi$ be an epipelagic supercuspidal $L$-parameter for $\tilde{G}$. We show in Theorems \ref{prop:stability-1}, \ref{prop:stability-2} that the character-sum $S\Theta_\phi$ (resp. $S\Theta_{[\phi]}$) of $\Pi_\phi$ when $4 \nmid m$ (resp. $\Pi_{[\phi]}$ when $4 \mid m$) is a stable distribution in the sense above. As in \cite{Kal15}, the basic ingredients are
\begin{inparaenum}[(a)]
	\item the Adler--Spice character formula \cite{AS09} for epipelagic supercuspidal representations of $\tilde{G}$, whose validity is taken for granted, and
	\item Waldspurger's results \cite{Wa97} relating transfer and Fourier transform on Lie algebras, now completed by B.\ C.\ Ngô's proof of the Fundamental Lemma.
\end{inparaenum}

We remark that when $4 \nmid m$, the \textbf{SS.2} in Definition \ref{def:stable-system} is the key to stability. It stipulates that for stably conjugate embeddings $j: S \hookrightarrow G$ and $j' = \Ad(g)j$ in the given stable class, the stable system must satisfy
\[ \theta_{j'}\left( \CaliAd(g)(\tilde{\gamma}) \right) = \theta_j(\tilde{\gamma}), \quad \tilde{\gamma} \in \widetilde{jS}; \]
the reference to $\widetilde{jS}_{Q,m}$ being dropped according to Corollary \ref{prop:st-conj-elements-canonical}. This gives a preliminary answer to the issue \textbf{(C)} alluded to above.

\subsection*{Organization}
The assumptions on $F$ will be summarized in the beginning of each section and subsection. We caution the reader that they are not necessarily optimal.

In \S\ref{sec:BD-theory}, we review the basic formalism of Brylinski--Deligne theory and collect several results concerning Weil restrictions, which are not entirely trivial and seem to be missing in the literature.

In \S\ref{sec:Sp-gen}, we recall some structural results on symplectic groups, including the parameterization of regular semisimple classes and the formation of the intermediate group $T \subset G^T \subset G$. The parameterization of classes originates from \cite{SS70} and has been used in \cite{Wa01, Li11}; here we allow $F$ of any characteristic $\neq 2$.

In \S\ref{sec:BD-covers-Sp} we introduce Matsumoto's central extensions for $G = \Sp(W)$, and define stable conjugacy for the corresponding BD-covers. We also collect some results from Hiraga--Ikeda in the case $\dim_F W = 2$, with complete proofs. The properties studied in \S\ref{sec:further-properties} will play a crucial role in the subsequent sections.

Weissman's theory of $L$-groups is summarized in \S\ref{sec:Weissman}. We provide explicit splittings for $\Lgrp{\tilde{G}}$, cf. \cite{GG}, and clarify their dependence on auxiliary data. In \S\ref{sec:rescaling} we study the $L$-groups attached to Baer multiples of Matsumoto's central extension. The conclusion is that on the dual side, it is legitimate to confine ourselves to Matsumoto's central extension; this is unsurprising, but seem to be undocumented hitherto.

We fix the notation for Moy--Prasad filtrations, etc.\ in \S\ref{sec:construction-rep}. The construction of genuine epipelagic supercuspidal representations is also reviewed there. The only new ingredient is a description of maximal tori of type (ER) in $G$ and their preimages in $\tilde{G}$.

In \S\ref{sec:packets} we present the notions of inducing data and stable systems, then construct the $L$-packets and pre-$L$-packets and establish their basic properties (Theorems \ref{prop:L-packet-prop}, \ref{prop:central-character}). These packets are shown to be independent of the splitting of $\Lgrp{\tilde{G}}$ in \S\ref{sec:independence}. Stability is proven in \S\ref{sec:stability}.

In \S\ref{sec:stable-system} we construct a stable system for $m \equiv 2 \pmod 4$ using moment map correspondences. Auxiliary results in \S\ref{sec:MM} and \S\ref{sec:interplay} will be used in the later comparison with $\Theta$-lifting.

The compatibility of the foregoing constructions with existing theories (Weil's metaplectic groups, Hiraga--Ikeda theory) is resolved in \S\ref{sec:compatibilities}. The necessary backgrounds are also reviewed there.

\subsection*{Acknowledgments}
The author is grateful to Wee Teck Gan, Hung Yean Loke, Jiajun Ma and Gopal Prasad for helpful answers. He would also like to express his appreciation to the anonymous referee(s) for meticulous reading and suggestions. This work is supported by NSFC-11922101.

\subsection*{Conventions}
\paragraph{Fields}
For any field $F$, we denote by $\bar{F}$ a chosen separable closure and denote $\Gamma_F := \Gal{\bar{F}/F}$. Let $\mu_m$ be the group functor \index{mu_m@$\mu_m, \bmu_m$} $R \mapsto \mu_m(R) := \{r \in R^\times: r^m=1 \}$ for commutative rings $R$, and put $\bmu_m := \mu_m(\CC)$. The norm and trace in a finite extension $K/F$ is denoted by $N_{K/F}$ and $\Tr_{K/F}$, respectively.

Suppose $F$ is a non-archimedean local field. The normalized valuation on $F$ is denoted by $v_F$. For a finite extension $L/F$ of non-archimedean local fields, the ramification degree is denoted by $e(L/F)$. The residue field of $F$ is written as $\kappa_F = \mathfrak{o}_F/\mathfrak{p}_F = \mathfrak{o}_F/(\varpi_F)$. Let $\{ \Gamma_F^r \}_{r \geq -1}$ be the upper ramification filtration, and set $\Gamma_F^{r+} := \bigcap_{s > r} \Gamma_F^s$. In particular $\Gamma_F^{-1} = \Gamma_F$, $\Gamma_F^0 = I_F$ (inertia) and $\Gamma_F^{0+} = P_F$ (wild inertia). Denote by $F^\text{nr}$ the maximal unramified extension of $F$ inside $\bar{F}$.

The Weil group of a local or global field $F$ is written as $\Weil_F$.\index{W_F@$\Weil_F$} By an additive character over a local or finite field $F$ (resp. the adèle ring $\A_F$ of a global field), we mean a non-trivial continuous unitary homomorphism $\psi: F \to \CC^\times$ (resp. $\psi: \A_F/F \to \CC^\times$). For $c \in F^\times$ we write $\psi_c: x \mapsto \psi(cx)$. \index{$\psi_c$}

Denote by $\mu(F)$ the group of all roots of unity in $F^\times$, for any field $F$. When $F$ is a local or global field, we denote \index{N_F@$N_F$}
\[ N_F := \begin{cases} |\mu(F)|, & F \neq \CC \\ 1, & F = \CC. \end{cases} \]
If $F \neq \CC$ then $\mu(F) = \mu_{N_F}(F)$, and $N_F$ is always invertible in $F$. For $m \mid N_F$, we write $\mu_m = \mu_m(F)$ and denote by $(\cdot, \cdot)_{F,m}: F^\times \times F^\times \to \mu_m$ the Hilbert symbol of degree $m$.

\paragraph{Cohomology}
For a topological group $\Gamma$, we denote by $H^\bullet(\Gamma, \cdot)$ the continuous cohomology groups. The Galois cohomology over a field $F$ is written as $H^\bullet(F, \cdot)$. If $E/F$ is a separable quadratic extension of local fields, we denote by $\sgn_{E/F}: F^\times \to \mu_2 \simeq \bmu_2$ the corresponding quadratic character; $\sgn_{E/F}(x)=1$ if and only if $x \in N_{E/F}(E^\times)$.

\paragraph{Quadratic spaces}
Let $F$ be a field of characteristic $\neq 2$. By a quadratic $F$-vector space we mean a pair $(V,q)$ where $V$ is a finite-dimensional $F$-vector space and $q: V \times V \to F$ is a non-degenerate symmetric form; $q$ is determined by the function $q(v) := q(v|v)$. We write $\lrangle{a}$ for the quadratic $F$-vector space on $V=F$ with $q: x \mapsto ax^2$, and put $\lrangle{d_1, \ldots, d_n}$ for the orthogonal direct sum of the $\lrangle{d_i}$.

The (signed) \emph{discriminant} of $(V,q)$ is defined as $d^\pm(V,q) := (-1)^{n(n-1)/2} d_1 \cdots d_n \in F^\times/F^{\times 2}$ whenever $(V,q) \simeq \lrangle{d_1, \ldots, d_n}$. We have
\[ d^\pm((V,q) \oplus (V', q')) = d^\pm(V,q) d^\pm(V', q'). \] \index{dpm@$d^\pm(V,q)$}
The meaning of $\Or(V,q)$ and $\SO(V,q)$ is then clear. By $\SO(2n+1)$ we always mean the split $\SO(V,q)$ with $\dim_F V = 2n+1$.

Let $F$ be local with an additive character $\psi$. \emph{Weil's constant} associated to the character of second degree $\psi \circ \frac{q}{2}: V \to \CC^\times$ is denoted by $\gamma_\psi(q) \in \bmu_8$, see \cite[\S 24]{Weil64} or \cite[\S 1.3]{Per81}; this extends to a homomorphism into $\bmu_8$ of the Witt group of quadratic $F$-vector spaces. For $a \in F^\times$, we write $\gamma_\psi(a) := \gamma_\psi(\lrangle{a})$. The \emph{Hasse invariant} of $(V,q)$ is denoted by $\epsilon(V,q) \in \mu_2$. \index{epsilon(V,q)@$\epsilon(V,q)$}

In a similar vein, a symplectic $F$-vector space means a pair $(W, \lrangle{\cdot|\cdot})$, where $\dim_F W = 2n$ is finite and $\lrangle{\cdot|\cdot}$ is non-degenerate anti-symmetric. Dropping $\lrangle{\cdot|\cdot}$ from the notation, we have the groups $\Sp(W) \subset \GSp(W)$; we will occasionally write $\Sp(2n)$. For both quadratic and symplectic vector spaces, $\oplus$ stands for the orthogonal direct sum.

More generally, for a commutative ring $R$, a quadratic form or space, possibly singular, is the datum $(M,Q)$ where
\begin{inparaenum}[(a)]
	\item $M$ is an $R$-module,
	\item the map $Q: M \to R$ satisfies $Q(tx) = t^2 Q(x)$, for all $t \in R$ and $x \in M$, and
	\item $B_Q(x,y) := Q(x+y) - Q(x) - Q(y)$ is a symmetric $R$-bilinear form on $M$.
\end{inparaenum}
This general notion will mainly be applied for $R=\Z$.

\paragraph{Group schemes}
For any $S$-scheme $X$ and a morphism $U \to S$, we write $X_U := X \dtimes{S} U$ for its base-change to $T$. When $S = \Spec(F)$ and $U = \Spec(A)$, we write $X_A = X_{\Spec(A)}$ and $X(B)$ denotes $X(\Spec(B))$ if $B$ is a commutative ring. Denote by $S_\text{Zar}$ the \emph{big Zariski-site} and by $S_\text{ét}$ the \emph{small étale site} over $S$. Then $\mu_m$ is a sheaf for both sites.

The group $S$-schemes will be designated by letters $G$, $H$, etc.; for Lie algebras we use $\mathfrak{gothic}$ letters. The dual of $\mathfrak{g}$ is denoted as $\mathfrak{g}^*$. The identity connected component of a smooth $S$-group scheme $G$ (see \cite[Exposé $\mathrm{VI}_{\mathrm{B}}$ Théorème 3.10]{SGA3-1}) is denoted by $G^\circ$. We use $N_G(\cdot)$, $Z_G(\cdot)$ to indicate the normalizers and centralizers in $G$, respectively. The center of $G$ is written as $Z_G$. The same notation also pertains to abstract and topological groups.

When $F$ is a local field, $G(F)$ is endowed with the topology from $F$, and all the representations of $G(F)$ are over $\CC$.

\paragraph{Reductive groups}
The reductive groups $G$ over a field $F$ are assumed to be connected. The simply connected covering of the derived subgroup is $G_\text{sc} \to G_\text{der} \subset G$. The adjoint group (resp. derived subgroup) of $G$ is $G_\text{ad} := G/Z_G$ (resp. $G_\text{der}$). The adjoint action of $G$ or $G_\text{ad}$ on $G$ is written as $\Ad(g)(\delta) = g\delta g^{-1}$; on Lie algebras we have $\ad(X)(Y) = [X,Y]$. The coroot corresponding to root $\alpha$ is denoted by $\check{\alpha}$ or $\alpha^\vee$.

We say a regular semisimple element $\delta \in G$ is \emph{strongly regular} if $Z_G(\delta) = Z_G(\delta)^\circ$. Denote by $G_\text{reg} \subset G$ the strongly regular locus, which is Zariski-open. For any maximal torus $T \subset G$, we write $T_\text{reg} := T \cap G_\text{reg}$. The same notation pertains to Lie algebras: we have $\mathfrak{g}_\text{reg}$, etc.

For any $\gamma \in G(F)$, denote $G_\gamma := Z_G(\gamma)^\circ$; the same notation pertains to $X \in \mathfrak{g}(F)$. When $\gamma \in G(F)$ or $X \in \mathfrak{g}(F)$ is semi-simple, define the Weyl discriminants
\begin{align*}
	D^G(\gamma) & := \det\left( 1 - \Ad(\gamma) \big| \mathfrak{g}/\mathfrak{g}_\gamma \right), \\
	D^G(X) & := \det\left( \ad(X) \big| \mathfrak{g}/\mathfrak{g}_X \right).
\end{align*}

The Weyl group of a maximal $F$-torus $T \subset G$ is $\Omega(G,T) := N_G(T)/T$\index{$\Omega(G,T)$}. This is to be regarded as a sheaf over $\Spec(F)_\text{ét}$; thus $\Omega(G,T)(F) \supsetneq N_G(T)(F)/T(F)$ in general. For a diagonalizable $F$-group $D$, we write $X^*(D) = \Hom(D, \Gm)$ and $X_*(D) = \Hom(\Gm, D)$, again considered as sheaves over $\Spec(F)_\text{ét}$.

We say $G$ is pinned when it is endowed with an $F$-pinning. We denote the Langlands dual group of $G$ by $\check{G}$ or $G^\vee$ as a pinned $\CC$-group with $\Gamma_F$-action. The $L$-group is denoted by $\Lgrp{G}$: it usually means the Weil form unless otherwise specified. The relevant notation for coverings will be introduced in \S\ref{sec:L-group}.

\section{Theory of Brylinksi--Deligne}\label{sec:BD-theory}
\subsection{Multiplicative \texorpdfstring{$\shK_2$-torsors}{K2-torsors}}\label{sec:torsors-generalities}
The main reference is \cite{BD01}. Fix a base scheme $S$. Let $G, A$ be sheaves of groups over $S_\text{Zar}$ with $(A,+)$ commutative. Consider the \emph{central extensions} of $G$ by $A$
\[ 0 \to A \to E \stackrel{p}{\to} G \to 1. \]
This means that $p$ is an epimorphism between sheaves and $A \simeq \Ker(p)$. It is known \cite[Exp VII, 1.1.2]{SGA7-1} that $E$ is an $A$-torsor over $G$ in this context; in particular, $E \to G$ is Zariski-locally trivial. We will make frequent use of the shorthand $A \hookrightarrow E \twoheadrightarrow G$. 

As in the set-theoretical case, the adjoint action of $G$ on itself lifts to $E$, which we still denote as $\tilde{x} \mapsto g\tilde{x}g^{-1}$ where $g \in G$ and $\tilde{x} \in E$: it is the conjugation by any preimage of $g$ in $E$. To any central extension we may associate the \emph{commutator pairing} $G \times G \to A$. Set-theoretically, it is simply
\begin{equation}\label{eqn:commutator}
	[x,y] := \tilde{x}\tilde{y}\tilde{x}^{-1}\tilde{y}^{-1}
\end{equation} \index{[x,y]@$[x,y]$}
where $\tilde{x}, \tilde{y}$ are preimages of $x,y$ in $E$.

We have the following basic operations on $E$:
\begin{itemize}
	\item the pull-back $f^* E$ by a homomorphism $f: G_1 \to G$;
	\item the push-out $h_* E$ by a homomorphism $h: A \to A_1$, as torsors this amounts to $A_1 \overset{A,h}{\wedge} E$.
\end{itemize}
Up to a canonical isomorphism, the order of pull-back and push-out can be changed.
\begin{itemize}
	\item Given two central extensions $E_1 \to G_1$ and $E_2 \to G_2$ by the same sheaf $A$, we have the \emph{contracted product}\index{contracted product} $A \hookrightarrow E_1 \utimes{A} E_2 \twoheadrightarrow G_1 \times G_2$: it is the push-out of central extension $E_1 \times E_2 \to G_1 \times G_2$ (by $A \times A$) by $+: A \times A \to A$;
	\item when $G_1 = G_2$, the \emph{Baer sum} $E_1 + E_2$ of $E_1, E_2$ can be realized by pulling $E_1 \utimes{A} E_2$ back via the diagonal $G \hookrightarrow G \times G$.
\end{itemize}
 
Given $f: G_1 \to G$, a homomorphism $\varphi: E_1 \to E$ covering (or lifting) $f$ is a commutative diagram of sheaves of groups
\[\begin{tikzcd}
	0 \arrow{r} & A \arrow{r} \arrow{d}[right]{\identity} & E_1 \arrow{r} \arrow{d}[right]{\varphi} & G_1 \arrow{d}[right]{f} \arrow{r} & 1 \\
	0 \arrow{r} & A \arrow{r} & E \arrow{r} & G \arrow{r} & 1
\end{tikzcd}\]
Denote by $\cate{CExt}(G,A)$ \index{CExt@$\cate{CExt}(G,A)$}the category of central extensions of $G$ by $A$, with morphisms being the homomorphisms $E \to E_1$ covering $\identity_G$. This makes $\cate{CExt}(G, A)$ a groupoid equipped with the ``addition'' given by Baer sum, subject to the usual functorial constraints; such a structure is called a \emph{Picard groupoid}. In general, giving $\varphi: E_1 \to E$ covering $f: G_1 \to G$ is the same as giving a morphism $E_1 \to f^* E$ in $\cate{CExt}(G_1, A)$.

In \cite{BD01}, the framework of central extensions is reformulated in terms of \emph{multiplicative $A$-torsors} as in \cite[Exp VII]{SGA7-1}. These are $A$-torsors $p: E \to G$ equipped with a ``multiplication'' $m: \text{pr}_1^* E + \text{pr}^*_2 E \rightiso \mu^* E$, where
\begin{compactitem}
	\item $\mu: G \times G \to G$ is the multiplication,
	\item $G \xleftarrow{\text{pr}_1} G \times G \xrightarrow{\text{pr}_2} G$ are the projections,
	\item the $+$ signifies the Baer sum of $A$-torsors over $G \times G$.
\end{compactitem}
Furthermore, $m$ is required to render the following diagram of $A$-torsors over $G \times G \times G$ commutative
\[\begin{tikzcd}
	& \text{pr}_1^* E + \text{pr}_2^* E + \text{pr}_3^* E \arrow{rd}{m \times \identity} \arrow{ld}[swap]{\identity \times m} & \\
	\text{pr}_1^* E + \text{pr}_{23}^* \mu^* E \arrow{d}[swap]{\simeq} & & \text{pr}_{12}^* \mu^* E + \text{pr}_3^* E \arrow{d}{\simeq} \\
	\text{pr}_1^* E + \mu_{23}^* E \arrow{d}[swap]{(\identity \times \mu)^*(m)} & & \mu_{12}^* E + \text{pr}_3^* E \arrow{d}{(\mu \times \identity)^*(m)} \\
	(\identity \times \mu)^* \mu^* E \arrow{r}[above]{\sim} & \mu_{123}^* E & (\mu \times \identity)^* \mu^* E \arrow{l}[above]{\sim}
\end{tikzcd}\]
where $\mu_{123} = \mu \circ \mu_{ij}: G \times G \times G \to G$ are the morphisms that multiply the slots in the subscript. In forming the diagram we used the compatibility between pull-back and Baer sum. The resulting groupoid of multiplicative torsors is denoted by $\cate{MultTors}(G,A)$.

Given a central extension $A \hookrightarrow E \twoheadrightarrow G$, taking $m$ to be the group law of $E$ gives rise to a multiplicative $A$-torsor $E$ over $G$. By \cite[Exp VII, 1.6.6]{SGA7-1}, this establishes an equivalence $\cate{CExt}(G,A) \rightiso \cate{MultTors}(G,A)$.

Let $\shK_n$ be the Zariski sheaves associated to Quillen's $K$-groups $K_n$, for $n \in \Z_{\geq 0}$; note that $\shK_0 \simeq \Z$ and $\shK_1 \simeq \Gm$ canonically. Several observations are in order.
\begin{enumerate}
	\item By \cite[IV.6.4]{Wei13}, $K_n$ transforms products of rings to products of abelian groups. In parallel, $\shK_n(U_1 \sqcup U_2) = \shK_n(U_1) \times \shK_n(U_2)$ for disjoint union of schemes: in fact, the latter holds for any sheaf.
	\item The stalk of $\shK_n$ at any point $s$ of $S$ equals $K_n(\mathcal{O}_{S,s})$, where $\mathcal{O}_{S,s}$ stands for the local ring. Indeed, this is readily reduced to the case of affine $S$. Then one can use the fact that $K_n$ commutes with filtered $\varinjlim$, see \cite[IV. 6.4]{Wei13}.
	\item Consider a multiplicative $\shK_2$-torsor $E \to G$. When $S = \Spec(R)$ where $R$ is a field or a discrete valuation ring, there is a central extension
		\[ 1 \to K_2(S) \to E(S) \to G(S) \to 1 \]
		of groups. Indeed, $\shK_2(S) = K_2(S)$ by the previous observation; when $R$ is a field we have $H^1(S, \shK_2)=0$ for dimension reasons (true if $\shK_2$ is replaced by any $A$), and the same holds if $R$ is a discrete valuation ring by \cite[Corollary 3.5]{Weis16} (only for $\shK_2$).
\end{enumerate}

\subsection{Classification of Brylinski--Deligne}\label{sec:BD-classification}
In this subsection, $S$ is assumed to be regular of finite type over a field. Fix a reductive group $G$ over $S$ and let
\[ 0 \to \shK_2 \to E \to G \to 1 \]
be an object of $\cate{CExt}(G,\shK_2)$. We set out to review the classification in \cite{BD01} of such objects in terms of triplets $(Q,\mathcal{D},\varphi)$ with étale descent data, formulated as an equivalence of categories. Note that the classification below has been extended to some other rings in \cite{Weis16}, such as the spectrum of discrete valuation rings with finite residue fields.

\begin{asparaenum}[\bfseries (A)]
	\item Let $G=T$ be a split torus. Set $Y := X_*(T)$, a constant sheaf over $S_\text{Zar}$. One constructs in \cite[\S 3]{BD01} a central extension
		\begin{equation}\label{eqn:D-extension}
			1 \to \Gm \to \mathcal{D} \to Y \to 0
		\end{equation} \index{D@$\mathcal{D}$}
		as follows (in \textit{loc. cit.} one writes $\mathcal{E}$ instead of $\mathcal{D}$). First, we introduce a variable $\mathbf{t}$ and consider the base-change of $E$ to $\Gmm{S} = S[\mathbf{t}, \mathbf{t}^{-1}]$. It can be regarded as a central extension over $S$ by Sherman's theorem \cite[(3.1.2)]{BD01} that asserts for all $n$,
			\[ R^i(\Gmm{S} \to S)_* \shK_n = \begin{cases}
					\shK_n \oplus \shK_{n-1}, & i=0 \\
					0, & i > 0.
			\end{cases}	\]
			Pull the resulting extension back via $Y \to T[\mathbf{t}, \mathbf{t}^{-1}]$ that maps $\chi \in Y$ to $\chi(\mathbf{t}) \in T[\mathbf{t}, \mathbf{t}^{-1}]$, and push it out by $\shK_2(\cdots [\mathbf{t}, \mathbf{t}^{-1}]) \to \shK_1 = \Gm$ using Sherman's theorem, we obtain the required $\Gm \hookrightarrow \mathcal{D} \twoheadrightarrow Y$. Alternatively, we can also base-change to $S(\!(\mathbf{t})\!)$, and push-out via the tame symbol \cite[III. Lemma 6.3]{Wei13} $\shK_2(\cdots (\!(\mathbf{t})\!)) \to \shK_1 = \Gm$ instead.
			
			Moreover, one obtains a quadratic form $Q: Y \to \Z$ so that
			\[ B_Q(y_1, y_2) := Q(y_1 + y_2) - Q(y_1) - Q(y_2) \]
			satisfies $[y_1, y_2] = (-1)^{B_Q(y_1, y_2)}$ for all $y_1, y_2 \in Y$, where $[\cdot, \cdot]$ is the commutator pairing of $\Gm \hookrightarrow \mathcal{D} \twoheadrightarrow Y$. The classification in this case \cite[Proposition 3.11]{BD01} says that $\cate{CExt}(T,\shK_2)$ is equivalent to the groupoid of pairs $(Q, \mathcal{D})$ satisfying $[y_1, y_2] = (-1)^{B_Q(y_1, y_2)}$; by stipulation, $\Aut(\mathcal{D}, Q) = \Hom(Y, \Gm)$.
			
			It is important to note that for central extensions of $Y$ by $\Gm$, working over $S_\text{Zar}$ is the same as over $S_\text{ét}$.
	\item Let $G$ be simply connected and split. Fix any split maximal torus $T \subset G$ and put $Y = X_*(T)$. In \cite[\S 4]{BD01} one obtains a Weyl-invariant quadratic form
		\[Q: Y \to \Z \]
		from $E$. Furthermore, there are no non-trivial automorphisms of $E$. The classification in this case \cite[Theorem 4.7]{BD01} says that $\cate{CExt}(G, \shK_2)$ is equivalent to the groupoid of Weyl-invariant quadratic forms $Q: Y \to \Z$; by stipulation, $\Aut(Q) = \{\identity\}$.
		
		We may also pull-back $E$ by $T \hookrightarrow G$, obtaining a central extension of $T$ by $\shK_2$. The resulting quadratic form $Y \to \Z$ furnished by the case of tori is the same as the $Q$ above: this fact has been established in \cite[4.9]{BD01}.
		
	\item The case of a split reductive group $G$ is obtained by patching the cases above. Fix a split maximal torus $T$ with $Y = X_*(T)$. Then $\cate{CExt}(G, \shK_2)$ is equivalent to the groupoid of triplets $(Q, \mathcal{D}, \varphi)$ such that
		\begin{compactitem}
			\item $Q: Y \to \Z$ is a Weyl-invariant quadratic form, whose restriction to $Y_\text{sc} = X_*(T_\text{sc}) \hookrightarrow Y$ defines a central extension of $G_\text{sc}$, hence a central extension
				\[ 1 \to \Gm \to \mathcal{D}_\text{sc} \to Y_\text{sc} \to 0 \]
				by the theory over $T_\text{sc}$;
			\item $\mathcal{D}$ is a central extension of $Y$ by $\Gm$ satisfying $[y_1, y_2] = (-1)^{B_Q(y_1, y_2)}$;
			\item $\varphi: \mathcal{D}_\text{sc} \to \mathcal{D}$ covers $Y_\text{sc} \to Y$, i.e. there is a commutative diagram with exact rows
			\begin{equation}\label{eqn:varphi-diagram}\begin{tikzcd}
				1 \arrow{r} & \Gm \arrow{r} \arrow{d}[left]{\identity} & \mathcal{D}_\text{sc} \arrow{r} \arrow{d}[right]{\varphi} & Y_\text{sc} \arrow{d} \arrow{r} & 1 \\
				1 \arrow{r} & \Gm \arrow{r} & \mathcal{D} \arrow{r} & Y \arrow{r} & 1.
			\end{tikzcd}\end{equation}
			the morphisms are of the form $f: (Q, \mathcal{D}' ,\varphi') \rightiso (Q, \mathcal{D}, \varphi'')$, where $f: \mathcal{D}' \rightiso \mathcal{D}''$ is an isomorphism in $\cate{CExt}(Y, \Gm)$ and renders
			\[\begin{tikzcd}[row sep=small, column sep=small]
				& \mathcal{D}_\text{sc} \arrow{ld}[swap]{\varphi'} \arrow{rd}{\varphi''} & \\
				\mathcal{D}' \arrow{rr}[swap]{f} & & \mathcal{D}''
			\end{tikzcd}\]
			commutative.
		\end{compactitem}
	\item The case of general reductive groups $G$ is obtained in \cite[Theorem 7.2]{BD01} by rephrasing the description above étale-locally, by interpreting $\mathcal{D}, Y$, etc. as sheaves over $S_\text{ét}$. This is legitimate since split maximal tori exist locally over the finite étale site of $S$, by \cite[Exp XXII. Proposition 2.2]{SGA3-3}. For example, when $S$ is the spectrum of a field $F$, we may fix a maximal $F$-torus $T$ and $\cate{CExt}(G, \shK_2)$ is equivalent to the groupoid of triples $(Q, \mathcal{D}, \varphi)$ such that $Q: Y \to \Z$ is Weyl- and Galois-invariant, and so forth. Here $Y$ is viewed as a free $\Z$-module endowed with Galois action.
	\item The Baer sum $E_1 + E_2$ gives rise to $(Q_1 + Q_2, \mathcal{D}_1 + \mathcal{D}_2, \varphi_1 + \varphi_2)$ with obvious notations. This is implicit in \cite{BD01} and has been explicitly stated in \cite[Theorem 2.2]{Weis16}.
\end{asparaenum}

The classification immediately leads to the following statement. The assignment $T \mapsto \cate{CExt}(G_T, \shK_2)$, for $T \to S$ finite étale, becomes naturally a Picard stack over $S_{\text{ét}}$.

\begin{remark}\label{rem:Matsumoto}
	For a split simple and simply connected group $G$ over a field $F$, the space of Weyl-invariant quadratic forms $Y \otimes \R \to \R$ is one-dimensional. By \cite[Proposition 4.15]{BD01}, \emph{Matsumoto's central extension}\index{Matsumoto's central extension} \cite{Mat69} of $G$ by $\shK_2$ corresponds to the unique Weyl-invariant $Y \to \Z$ taking value $1$ one short coroots. The multiplicative $\shK_2$-torsors over $G$ are therefore classified by integers.
\end{remark}

\begin{remark}\label{rem:shared-torus}
	Let $H$ be a subgroup of $G$. Assume that $G,H$ are both simply connected and share a maximal torus $T$. For any object $E$ of $\cate{CExt}(G, \shK_2)$, its pull-back to $H$ and $E$ itself are classified by the same datum $Q: Y \to \Z$. Indeed, both pull back to the same object of $\cate{CExt}(T, \shK_2)$, classified by some $(Q, \mathcal{D})$.
\end{remark}

We record a result of Brylinski and Deligne for later use.
\begin{proposition}\label{prop:BD-adjoint-action}
	Suppose that $G = G_\mathrm{sc}$, and let $E$ be an object of $\cate{CExt}(G, \shK_2)$. Then the adjoint action of $Z_G$ on $E$ is trivial, and the action of $G_\mathrm{ad}$ on $G$ lifts uniquely to an action on $E$.
\end{proposition}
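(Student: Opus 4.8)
The plan is to derive everything from two facts already available: the rigidity of $\cate{CExt}(G,\shK_2)$ when $G=G_{\mathrm{sc}}$ (case (B) of the Brylinski--Deligne classification, where $\Aut(E)=\{\identity\}$), and the elementary observation that $G_{\mathrm{sc}}$ equals its own derived subgroup as an fppf sheaf, so that $\Hom(G,A)=\{1\}$ for every abelian sheaf $A$. Recall first that any central extension $\shK_2\hookrightarrow E\twoheadrightarrow G$ carries a canonical action of $G$ by conjugation, $g\cdot\tilde x:=\tilde g\tilde x\tilde g^{-1}$, which is well defined because $\shK_2$ is central; each $g$ acts by a group automorphism of $E$ that covers $\Ad(g)\colon G\to G$ and fixes $\shK_2$ pointwise.

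First I would show this action is trivial on $Z_G$. Given an $S$-scheme $S'$ and $z\in Z_G(S')$, the automorphism $c_z$ of $E_{S'}$ covers $\Ad(z)=\identity$ and fixes $\shK_2$, hence has the form $\tilde x\mapsto\phi(\tilde x)\tilde x$ with $\phi(\tilde x)\in\shK_2$ uniquely determined; using that $c_z$ fixes $\shK_2$ and is multiplicative, one checks $\phi$ descends to a homomorphism $G_{S'}\to\shK_2$, which vanishes since $G_{\mathrm{sc}}=[G_{\mathrm{sc}},G_{\mathrm{sc}}]$. (Alternatively one can reduce to $G$ split, since the whole statement is étale-local on $S$, and invoke $\Aut(E)=\{\identity\}$ directly.) Thus $c_z=\identity$ and $Z_G$ acts trivially on $E$.

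Next, since $\pi\colon G\to G_{\mathrm{ad}}$ is a $Z_G$-torsor, hence an fppf epimorphism exhibiting $G_{\mathrm{ad}}$ as the sheaf quotient $G/Z_G$, an action of $G$ on $E$ that is trivial on $Z_G$ factors uniquely through $G_{\mathrm{ad}}$: the two maps $G\times_{G_{\mathrm{ad}}}G\times E\rightrightarrows G\times E$ agree after composition with the action $a\colon G\times E\to E$, because $a(gz,\tilde x)=a(g,a(z,\tilde x))=a(g,\tilde x)$ for $z\in Z_G$, so fppf descent yields the desired $\bar a\colon G_{\mathrm{ad}}\times E\to E$; that $\bar a$ is an action covering the adjoint action on $G$ and fixing $\shK_2$ is inherited from $a$ by descent. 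Finally, for uniqueness, any other lift $\bar a'$ pulls back along $\pi$ to a lift $a'$ of the conjugation action of $G$ on $E$; for each point $g$, the maps $a'_g$ and $c_g$ both cover $\Ad(g)$ and fix $\shK_2$, so $c_g^{-1}\circ a'_g$ is an automorphism of $E$ over $\identity_G$, hence the identity by the rigidity used above, giving $a'=a$ and therefore $\bar a'=\bar a$ since $\pi$ is an epimorphism.

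The only genuinely delicate point is passing from the pointwise triviality of the $Z_G$-action to the factorization of the action morphism through $G_{\mathrm{ad}}$ at the level of sheaves rather than on points; this is exactly the fppf descent just described, which is routine once one records $G\times_{G_{\mathrm{ad}}}G\cong G\times Z_G$. Everything else is formal manipulation with central extensions, so no serious obstacle is expected.
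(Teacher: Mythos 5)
Your argument for the triviality of the $Z_G$-action is essentially the paper's: an automorphism of $E$ covering $\identity_G$ is given by a homomorphism $G\to\shK_2$, which is trivial since $G=G_{\mathrm{sc}}$ (equivalently, $\Aut(E)=\{\identity\}$ by the Brylinski--Deligne classification for simply connected $G$). Your uniqueness argument, via pullback along $\pi$ and rigidity, is likewise sound.

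The gap lies in the existence step, where you invoke fppf descent along $\pi\colon G\to G_{\mathrm{ad}}$ to produce $\bar a\colon G_{\mathrm{ad}}\times E\to E$ from the conjugation action $a$. Recall that $E$ is a $\shK_2$-torsor, hence a sheaf on the (big) Zariski site; Quillen's $\shK_2$ does not satisfy fppf descent, so $E$ is not an fppf sheaf in general. The morphism $\pi\times\identity_E\colon G\times E\to G_{\mathrm{ad}}\times E$ is a $Z_G$-torsor and thus an fppf cover, but it is not a Zariski (nor, unless $Z_G$ is \'etale, an \'etale) epimorphism. Descent of morphisms $G_{\mathrm{ad}}\times E\to E$ along this cover therefore requires $E$ to be an fppf sheaf, which is not part of the setup; without that, the equalizer condition you verify does not actually produce $\bar a$ defined on all of $G_{\mathrm{ad}}\times E$, only on the image of $G\times E$. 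In concrete terms, a $T$-point of $G_{\mathrm{ad}}$ need not lift to $G$ Zariski- or even \'etale-locally on $T$, so your construction does not specify $\bar a$ at such points.

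The paper avoids this by citing the explicit construction in \cite[4.10]{BD01} (and \cite[Proposition 5.13]{Mat69}) for split simply connected $G$, where the $G_{\mathrm{ad}}$-action on $E$ is built directly from a pinning and the canonical splittings over unipotent radicals, without any descent along $G\to G_{\mathrm{ad}}$; the general case is then deduced by \'etale descent \emph{over the base $S$} (split $G$ over an \'etale cover of $S$, apply the split case, glue using uniqueness), which is licit because $\cate{CExt}(G,\shK_2)$ is insensitive to passing between the Zariski and \'etale topologies. Your proposal would be rescued if one either assumed $Z_G$ \'etale (making $\pi$ an \'etale cover, and using the Zariski--\'etale comparison for $\cate{CExt}$) or supplied a separate argument that $E$ satisfies fppf descent, but as written the factorization through $G_{\mathrm{ad}}$ is not justified.
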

\begin{proof}
	When $G$ is split, this is stated in \cite[4.10]{BD01}; see also \cite[Proposition 5.13]{Mat69}. The general case follows by étale descent in view of the uniqueness of the lift. Note that the triviality of $Z_G$-action on $E$ is straightforward: it covers $\identity_G$, hence trivial since $G = G_\text{sc}$.
\end{proof}

\subsection{Weil restriction}\label{sec:Weil-restriction}
Given a morphism $f: T \to S$, we have the functor $f^*: \cate{Shv}(S_\mathrm{Zar}) \to \cate{Shv}(T_\mathrm{Zar})$ given by $f^* \mathcal{F}(X \xrightarrow{s} T) = \mathcal{F}(X \xrightarrow{fs} S)$. The functor of \emph{Weil restriction} $f_*\mathcal{F}(Y \to S) = \mathcal{F}(Y \times_S T \to T)$ is initially defined for presheaves, but turns out to yield a functor $\cate{Shv}(T_\mathrm{Zar}) \to \cate{Shv}(S_\mathrm{Zar})$ by \cite[p.194, Proposition 3]{BLR90}. Furthermore, $f_*$ is the right adjoint of $f^*$. In particular $f_*$ preserves $\varprojlim$, thus maps group objects to group objects. \index{Weil restriction}

\begin{remark}\label{rem:easy-Weil}
	Here is an easy case of Weil restriction. Suppose that $T = S^{\sqcup I}$ for some set $I$ (disjoint union of $I$ copies). One readily checks that for a family of sheaves $\{\mathcal{F}_i\}_{i \in I}$ on $S$, the sheaf $\mathcal{G} := (\mathcal{F}_i)_{i \in I}$ on $T$ satisfies $f_* (\mathcal{G}) = \prod_{i \in I} \mathcal{F}_i$.
\end{remark}

\begin{notation}\label{nota:Weil-restriction}\index{R_B/A@$R_{B/A}$}
	When $T=\Spec(B)$, $S=\Spec(A)$ and $f: T \to S$ corresponds to a ring homomorphism $A \to B$, it is customary to write $f_* = R_{B/A}$.
\end{notation}

In what follows, the morphism $f: T \to S$ is assumed to be finite and locally free of constant rank $d$. The treatment below is inspired by \cite{113891}.
\begin{definition}\label{def:well-behaved}
	Let $Y$ be a $T$-scheme and $E \to Y$ be an $A$-torsor, where $E,A$ are objects of $\cate{Shv}(T_\mathrm{Zar})$ and $A$ is a sheaf of abelian groups. We say $E \to Y$ is \emph{well-behaved} if there exists an open covering $\mathcal{U} = \{U_i\}_{i \in I}$ of $Y$ such that
	\begin{compactitem}
		\item every $U_i$ is affine,
		\item every $d$-tuple $(x_1, \ldots, x_d)$ of closed points of $Y$ lies in some $U_i$;
		\item there exists a section $s_i: U_i \to E$ of $E \to Y$ for every $i \in I$.
	\end{compactitem}
\end{definition}

For any $T$-group scheme $G$, we denote by $\cate{CExt}(G,A)_0 \subset \cate{CExt}(G,A)$ the full subcategory of $A \hookrightarrow E \twoheadrightarrow G$ such that $E \to G$ is a well-behaved $A$-torsor. When $G$ is affine, it is known that $f_* G$ is represented by an affine group $S$-scheme by \cite[p.194, Theorem 4]{BLR90}.

\begin{proposition}\label{prop:well-behaved}
	Let $G$ be an affine $T$-group scheme and let $A$ be a sheaf of abelian groups over $T_\mathrm{Zar}$. Then $f_*$ induces a functor $\cate{CExt}(G,A)_0 \to \cate{CExt}(f_* G, f_* A)$.
\end{proposition}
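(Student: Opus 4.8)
The plan is to apply the Weil‐restriction functor $f_*$ termwise to a central extension $A \hookrightarrow E \xrightarrow{p} G$ lying in $\cate{CExt}(G,A)_0$ and to check that the resulting
\[ f_* A \hookrightarrow f_* E \xrightarrow{\; f_* p \;} f_* G \]
is again a central extension; functoriality on morphisms will then be automatic from the functoriality of $f_*$. Three of the four needed points are soft. First, $f_*$ is a right adjoint, hence preserves all limits; thus $f_* A$ is a sheaf of abelian groups over $S_{\mathrm{Zar}}$, $f_* E$ and $f_* G$ are sheaves of groups, and $f_* A = \Ker(f_* p)$ since $f_*$ preserves kernels. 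Second, centrality is preserved: saying $A \hookrightarrow E \twoheadrightarrow G$ is central means the conjugation morphism $E \times A \to A$ equals $(e,a) \mapsto a$, and applying $f_*$ (which commutes with products and with both morphisms) gives the same for $f_* E \times f_* A \to f_* A$. What remains — and this is the only substantial point — is that $f_* p$ is an epimorphism of Zariski sheaves over $S$; once that is known, $f_* p$ is automatically a Zariski‐locally trivial $f_* A$-torsor by \cite[Exp VII, 1.1.2]{SGA7-1}, so the triple indeed lands in $\cate{CExt}(f_* G, f_* A)$.

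The heart of the matter is thus local surjectivity of $f_* p$, and I expect this to be the main obstacle; it is exactly where well‐behavedness enters. Unravelling the definition of $f_*$, a section of $f_* G$ over an open $V \subseteq S$ is a $T$-morphism $\sigma \colon f^{-1}(V) \to G$ (writing $f^{-1}(V) := V \times_S T$), and one must lift $\sigma$ through $p$ after passing to an open cover of $V$. If $\sigma$ happens to factor through a single member $U_i$ of a well‐behaved covering of $G$, then $s_i \circ \sigma$ is such a lift; so it suffices to produce, around every closed point $v$ of $V$, an open $V' \ni v$ with $\sigma(f^{-1}(V')) \subseteq U_i$ for some $i$. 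Here is where the hypotheses force everything into place: since $f$ is finite locally free of rank $d$, the fibre $f^{-1}(v)$ is the spectrum of a $\kappa(v)$-algebra of dimension $d$, hence Artinian with at most $d$ points, all closed and with residue field finite over the ground field (recall $S$, hence $G$, is of finite type over a field); consequently $\sigma$ carries them to at most $d$ closed points of $G$, and the clause of Definition \ref{def:well-behaved} on $d$-tuples of closed points — padding with repetitions if there are fewer than $d$ — places them in a common $U_i$. Because $f$, hence $f^{-1}(V) \to V$, is a closed map, $V' := V \smallsetminus f\bigl( f^{-1}(V) \smallsetminus \sigma^{-1}(U_i) \bigr)$ is an open neighbourhood of $v$ with $f^{-1}(V') \subseteq \sigma^{-1}(U_i)$, so $\sigma|_{f^{-1}(V')}$ factors through $U_i$ and $s_i \circ \sigma|_{f^{-1}(V')}$ is the desired section of $f_* E$ over $V'$.

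This exhibits a lift of $\sigma$ on an open subset of $V$ containing every closed point; since $S$, and hence $V$, is Jacobson, that open subset must be all of $V$, which is precisely the local surjectivity of $f_* p$. Finally, $f_*$ carries a morphism of central extensions over $G$ to a morphism of the associated central extensions over $f_* G$, so $f_*$ induces the claimed functor $\cate{CExt}(G,A)_0 \to \cate{CExt}(f_* G, f_* A)$. The one genuinely delicate input, as indicated, is the step confining $\sigma(f^{-1}(v))$ to a single chart over which $E$ trivializes — exactly the reason the "$d$-tuples of closed points" requirement is built into well‐behavedness, and the reason one works with an $f$ that is finite and locally free of a fixed rank.
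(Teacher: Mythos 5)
Your proposal follows the same skeleton as the paper's proof: show $f_*$ preserves the exactness and centrality of the extension, and reduce the substantial point to local triviality of $f_*p$ using the well-behaved covering $\{U_i\}$ together with the pushed-forward sections $f_*(s_i)$. Where the paper simply cites the gluing argument of Bosch--L\"utkebohmert--Raynaud (or the linked MathOverflow answer) to conclude that $\{f_*U_i\}$ is a Zariski cover of $f_*G$, you instead re-derive that fact by hand, localizing a section $\sigma$ of $f_*G$ around each closed point $v$ of the base via the closedness of the finite map $f$ and the $d$-tuple clause of well-behavedness. This makes the argument more self-contained, but note that it silently uses two hypotheses that are not in the statement of the proposition (though they hold in every application in the paper): that $G \to T$ is of finite type over a field, so that $\sigma$ carries the closed points of $f^{-1}(v)$ to \emph{closed} points of $G$ to which the well-behaved clause applies; and that $S$ (hence $V$) is Jacobson, so that producing sections near every closed point suffices for local surjectivity. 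In contrast, the paper's citation of BLR hides this cleanup inside the reference; you could tighten your write-up either by adding these running hypotheses explicitly or by invoking the BLR gluing lemma directly, as the paper does.
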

\begin{proof}
	Let $p: E \to G$ be an object of $\cate{CExt}(G,A)_0$. Since $f_*$ preserves $\varprojlim$,
	\[ 1 \to f_* A \to f_* E \xrightarrow{f_*(p)} f_* G \]
	is exact. Furthermore,
	\begin{align*}
		f_* E \dtimes{f_* G} f_* A & \longrightarrow f_* E \dtimes{f_* G} f_* E \\
		(x,a) & \longmapsto (xa,a)
	\end{align*}
	is an isomorphism. To get a central extension, it suffices to show that $f_*(p): f_* E \to f_* G$ locally admits sections. Take an open covering $\mathcal{U} = \{U_i\}_{i \in I}$ of $G$ as in Definition \ref{def:well-behaved}. Now \cite{113891} or the proof of \cite[p.194, Theorem 4]{BLR90} implies that $\{ f_* U_i\}_{i \in I}$ with the evident Zariski gluing data yields $f_* G$. Furthermore, for each $i$ we deduce a section $f_*(s_i): f_* U_i \to f_* E$. This shows the local triviality of $f_* E \to f_* G$.
\end{proof}

\begin{example}\label{eg:well-behaved-reductive}
	Suppose $T = \Spec(L)$ where $K$ is an infinite field, and $Y = G$ is a reductive group over $L$. It is well-known that $G(L)$ is Zariski-dense in $G$. For every Zariski $A$-torsor $E \to G$, there exists an open affine subscheme $U \subset G$, $U \neq \emptyset$, together with a section $s: U \to E$. For every $g \in G(L)$, since $H^1(\Spec L, A) = 0$ (see the end of \S\ref{sec:torsors-generalities}), $E(L) \twoheadrightarrow G(L)$ and we may choose a preimage $\tilde{g} \in E(L)$, thus obtain a section $s_g: gU \to E$ as the composite $gU \xrightarrow{\lambda_{g^{-1}}} U \xrightarrow{s} E \xrightarrow{\lambda_{\tilde{g}}} E$ where $\lambda_\bullet$ means left translation. We claim that
	\[ \mathcal{U} := \left\{ gU: g \in G(L) \right\} \]
	forms an open covering satisfying the requirements of Definition \ref{def:well-behaved}. Let $g_1, \ldots, g_d$ be closed points of $G$. Observe that $g^{-1}g_k \in U$ is an open condition on $g$ for any $k$, therefore those $g$ satisfying $g_k \in gU$ for $k=1, \ldots, d$ form a nonempty open subset of $G$. Since $G(L)$ is Zariski dense, we can choose $g \in G(L)$. The required section is given by $s_g: gU \to E$. In particular, we see $\cate{CExt}(G,A) = \cate{CExt}(G,A)_0$.
	
	Notice that given central extensions $E_j$ of $G$ by $A_j$ with $j=1,\ldots,n$, one can find an open affine $U$ as above (hence the covering $\mathcal{U}$) that works for all $E_j$. It follows that the functor in Proposition \ref{prop:well-behaved} preserves Baer sums. Indeed, given objects $E_1, E_2$ of $\cate{CExt}(G, A)$, the natural arrow $f_* E_1 \times f_* E_2 \simeq f_*(E_1 \times E_2) \to f_*(E_1 + E_2)$ induces $f_* E_1 + f_* E_2 \to f_*(E_1 + E_2)$. To show the latter is an isomorphism, we recall from the proof of Proposition \ref{prop:well-behaved} that if $E_j$ is glued from $\{ A \times U_i\}_{i \in I}$ with a system of transition functions $a^{(j)}_{i,i'}$ ($j=1,2$), then $f_* (E_1 + E_2)$ is glued from the transition functions $f_*(a^{(1)}_{i,i'} + a^{(2)}_{i,i'}) = f_* a^{(1)}_{i,i'} + f_* a^{(2)}_{i,i'}$. The same gluing datum defines $f_* E_1 + f_* E_2$.
\end{example}

\begin{example}
	Suppose that $T$ is a local scheme and $Y$ is isomorphic to the constant sheaf over $T_\text{Zar}$ associated to some set (such as $\Z^n$). The requirements in Definition \ref{def:well-behaved} are easily verified.
\end{example}

Still assume that $f: T \to S$ is locally free of finite constant rank $d$. This property is stable under base change, therefore the transfer/norm maps in $K$-theory \cite[V.3.3.2]{Wei13} yields an arrow
\[ f_* \shK_n \to \shK_n, \quad n=0,1,2,\ldots \]

\begin{proposition}\label{prop:BD-restriction-Weil}
	Let $L/F$ be a field extension of finite degree and write $f: \Spec(L) \to \Spec(F)$. Let $G$ be a reductive group over $L$. Then $E \mapsto f_* E$ followed by push-out by $f_* \shK_2 \to \shK_2$ gives a functor
	\[ \cate{CExt}(G, \shK_2) \to \cate{CExt}(f_* G, \shK_2) \]
	which is compatible with Baer sums, i.e. it is a monoidal functor.
\end{proposition}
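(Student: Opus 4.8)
The plan is to build the functor by composing two monoidal operations that are already available from \S\ref{sec:Weil-restriction}: Weil restriction of well-behaved torsors (Proposition \ref{prop:well-behaved}), followed by push-out along the $K$-theoretic transfer $f_*\shK_2 \to \shK_2$.

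First I would note that $f\colon \Spec(L) \to \Spec(F)$ is finite and locally free of constant rank $d = [L:F]$, so the setup of \S\ref{sec:Weil-restriction} applies, and that a reductive $G$ over $L$ is in particular an affine $L$-group scheme; by \cite[p.194, Theorem 4]{BLR90}, $f_* G$ is then an affine $F$-group scheme. The hypothesis that must be discharged before invoking Proposition \ref{prop:well-behaved} is the well-behavedness of the torsor $E \to G$. For $G$ reductive over the infinite field $L$ --- which is the only relevant case, since in this paper $F$, hence $L$, is local or global --- Example \ref{eg:well-behaved-reductive} shows precisely that $\cate{CExt}(G,\shK_2) = \cate{CExt}(G,\shK_2)_0$, so Proposition \ref{prop:well-behaved} with $A = \shK_2$ gives a functor
\[ f_*\colon \cate{CExt}(G,\shK_2) \longrightarrow \cate{CExt}(f_* G, f_* \shK_2), \]
and the last paragraph of Example \ref{eg:well-behaved-reductive} shows that it preserves Baer sums.

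Next I would push out along the transfer/norm arrow $f_*\shK_2 \to \shK_2$ of Zariski sheaves of abelian groups over $\Spec(F)$ recalled just above (\cite[V.3.3.2]{Wei13}). Push-out along a homomorphism $h\colon A \to A'$ of abelian sheaves always defines a functor $h_*\colon \cate{CExt}(H,A) \to \cate{CExt}(H,A')$, and it is monoidal: the Baer sum $E_1 + E_2$ is the pull-back along the diagonal of the contracted product $E_1 \utimes{A} E_2$, and both contracted product and pull-back commute with push-out, whence a canonical isomorphism $h_*(E_1+E_2) \simeq h_* E_1 + h_* E_2$. Composing the two functors yields $\cate{CExt}(G,\shK_2) \to \cate{CExt}(f_* G, \shK_2)$, which is monoidal as a composite of monoidal functors, the coherence constraints being inherited from those of the Picard groupoids $\cate{CExt}(-,-)$.

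There is no serious obstacle here: the content sits in the two cited results, and the only points that deserve care are (i) the reduction to $\cate{CExt}(G,\shK_2)_0$, which is why Example \ref{eg:well-behaved-reductive}, and with it the infiniteness of the base field, is used, and (ii) checking that Weil restriction and push-out each respect Baer sums --- which is exactly the assertion of monoidality. Should one want the proposition over a finite base field, Example \ref{eg:well-behaved-reductive} would have to be replaced by an \emph{ad hoc} construction of an affine open cover of $G$ separating $d$-tuples of closed points and admitting local sections of $E$, but this case does not arise in the present article.
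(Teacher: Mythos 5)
Your proposal is correct and follows exactly the paper's route: combine Proposition \ref{prop:well-behaved} with Example \ref{eg:well-behaved-reductive} (which supplies both the well-behavedness hypothesis and the preservation of Baer sums by $f_*$), then push out along the $K$-theoretic transfer $f_*\shK_2 \to \shK_2$, which is monoidal for standard formal reasons. Your remark explicitly flagging the need for $L$ to be infinite in Example \ref{eg:well-behaved-reductive} is a useful clarification of an implicit assumption in the paper's proof.
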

\begin{proof}
	The first part is a combination of Proposition \ref{prop:well-behaved} and Example \ref{eg:well-behaved-reductive}. The preservation of Baer sums follows from the same property for $f_*: \cate{CExt}(G,A) \to \cate{CExt}(f_* G, f_* A)$ as explained in Example \ref{eg:well-behaved-reductive}.
\end{proof}

In addition, if $L/F$ is separable then $f_* G$ is a reductive group over $F$, otherwise it is \emph{pseudo-reductive}; we refer to \cite[A.5]{CGP15} for the relevant generalities.

Before discussing the effect of Weil restriction on the Brylinski--Deligne classification, we have to review the procedure of Galois descent. Let $L/F$ be a finite extension of fields, with maximal separable subextension $L_s/F$. Choose a finite Galois extension $M/F$ and set
\[ \mathcal{I} := \Hom_F(L_s , M), \quad M_\iota := L \dotimes{L_s, \iota} M, \; (\iota \in \mathcal{I}). \]
By taking sufficiently large $M$ to split $L_s/F$, we have an isomorphism
\begin{equation}\label{eqn:tensor-split}\begin{aligned}
	L \dotimes{F} M & \longrightiso L \dotimes{L_s} M^{\oplus \mathcal{I}} = \bigoplus_{\iota \in \mathcal{I}} M_\iota \\
	cx \otimes t & \longmapsto c \otimes (\iota(x)t)_{\iota \in \mathcal{I}}, \quad c \in L, \; x \in L_s, \; t \in M.
\end{aligned}\end{equation}
We deduce from $L_s \subset L$ a morphism $g_\iota: \Spec(M_\iota) \to \Spec(M)$. The diagram
\[\begin{tikzcd}
	\bigsqcup_{\iota \in \mathcal{I}} \Spec(M_\iota) \arrow{r}{\sim} \arrow{d}[swap]{\bigsqcup_\iota g_\iota } & \Spec(L) \dtimes{\Spec(F)} \Spec(M) \arrow{d}{f'} \arrow{r} & \Spec(L) \arrow{d}{f} \\
	\bigsqcup_{\iota \in \mathcal{I}} \Spec(M) \arrow{r}[swap]{\text{natural}} & \Spec(M) \arrow{r} & \Spec(F).
\end{tikzcd} \]
is commutative and the rightmost square is Cartesian. We have a functor $X \mapsto X_\iota := X \dtimes{L} M_\iota$ from $\cate{Shv}(\Spec(L)_\text{Zar})$ to $\cate{Shv}(\Spec(M_\iota)_\text{Zar})$. The canonical isomorphisms \cite[p.192]{BLR90}, \eqref{eqn:tensor-split} give
\begin{align*}
	f_*(X) \dtimes{\Spec(F)} \Spec(M) & \rightiso f'_* \left( X \dtimes{\Spec(F)} \Spec(M) \right) \\
	& \simeq f'_* \left( X \dtimes{\Spec(L)} \left(\Spec(L) \dtimes{\Spec(F)} \Spec(M)\right) \right) \\
	& \simeq f'_*\left( \bigsqcup_{\iota \in \mathcal{I}} X_\iota \right) \simeq  \prod_{\iota \in \mathcal{I}} (g_\iota)_* X_\iota. \qquad (\because\text{Remark \ref{rem:easy-Weil}})
\end{align*}
Every $\sigma \in \Gal{M/F}$ gives rise to isomorphisms $M_\iota \xrightarrow[\sim]{\sigma} M_{\sigma\iota}$, so $X_\iota \xrightarrow[\sim]{\sigma} X_{\sigma\iota}$. The $\Gal{M/F}$-action on $\prod_{i \in \mathcal{I}} (g_\iota)_* X_\iota$ can be determined from \eqref{eqn:tensor-split}, namely
\begin{equation}\label{eqn:MF-Gal-action}
	(x_\iota)_{\iota \in \mathcal{I}} \xmapsto{\sigma \in \Gal{M/F}} \left( \sigma(x_{\sigma^{-1}\iota}) \right)_{\iota \in \mathcal{I}}.
\end{equation}

\begin{remark}
	If the sheaf $X$ in the formalism above is defined over $\Spec(F)$, such as $\Gm$ and $\shK_2$, then $X_\iota = X \dtimes{L} M_\iota \simeq X \dtimes{F} M_\iota$ canonically, thus we can write $X_\iota = X$ unambiguously.
\end{remark}

\begin{notation}
	From now onwards, we assume $L/F$ to be \emph{separable}, so that $g_\iota = \identity$.
\end{notation}

Next, let $G$ be a reductive group over $L$. Upon enlarging $M$, there exists a maximal $L$-torus $T$ of $G$ that splits over $LM$. For each $\iota \in \mathcal{I}$ there is the $\Z$-module $Y_\iota := X_*(T \times_L M_\iota)$, with $\Gal{M/F}$ operating via $\sigma: Y_{\iota} \rightiso Y_{\sigma\iota}$. At the cost of neglecting Galois actions, one may identify $Y_\iota$ and $Y$. By the discussion above, $f_* T \subset f_* G$ is a maximal $F$-torus (see \cite[Proposition A.5.15]{CGP15}) that splits over $M$ and $X_*((f_* T)_M) = \bigoplus_{\iota \in \mathcal{I}} Y_\iota$ with Galois action given by the recipe \eqref{eqn:MF-Gal-action}. On the other hand, the Weyl group action on $\bigoplus_\iota Y_\iota$ is just the direct sum over $\mathcal{I}$ of the individual ones.

\begin{itemize}
	\item Note that $f_*(G_\text{sc})$ can be identified with $(f_* G)_\text{sc}$. In view of the foregoing discussions, it suffices to notice that $(G_1 \times \cdots \times G_n)_\text{sc} = G_{1,\text{sc}} \times \cdots \times G_{n,\text{sc}}$ for any tuple of reductive groups $G_1, \ldots, G_n$.
	\item We also have $f_*(G_\text{ad}) = (f_* G)_\text{ad}$. To see this, note the identification between $f_*(Z_G)$ and $Z_{f_* G}$ by \cite[Proposition A.5.15]{CGP15}; it remains to apply \cite[Corollary A.5.4 (3)]{CGP15} to see $f_* G / f_*(Z_G) \simeq f_*(G/Z_G)$ canonically.
\end{itemize}

\begin{theorem}\label{prop:BD-restriction}
	In the situation above, suppose that an object $E$ of $\cate{CExt}(G, \shK_2)$ corresponds to the triplet $(Q, \mathcal{D}, \varphi)$ with $\Gal{M/L}$-equivariance, as in \S\ref{sec:BD-classification}. Then $f_* T$ is split over $M$, and the image of $E$ in $\cate{CExt}(f_* G, \shK_2)$ by Proposition \ref{prop:BD-restriction-Weil} corresponds to the triplet $\left( f_* Q, f_* \mathcal{D}, f_* \varphi\right)$ where
	\begin{itemize}
		\item $f_* Q : \bigoplus_{\iota \in \mathcal{I}} Y_\iota \to \Z$ is the direct sum of $Q: Y_\iota = Y \to \Z$, which is automatically Weyl- and Galois-invariant;
		\item $f_* \mathcal{D}$ is the contracted product of the central extensions $\Gm \hookrightarrow \mathcal{D}_\iota \hookrightarrow Y_\iota$, which carries the evident Galois action/descent datum with respect to $M/F$;
		\item $f_* \varphi: f_* \mathcal{D} \to (f_* \mathcal{D})_\mathrm{sc} \simeq f_* (\mathcal{D})_\mathrm{sc}$ is obtained by first taking $\varphi_\iota := \varphi \dtimes{L,\iota} M$ to get
			\[\begin{tikzcd}
				1 \arrow{r} & \Gm^\mathcal{I} \arrow{r} \arrow[-, double equal sign distance]{d} & \bigoplus_{\iota \in \mathcal{I}} \mathcal{D}_{\mathrm{sc},\iota} \arrow{r} \arrow{d}{\oplus_\iota \varphi_\iota} & \bigoplus_{\iota \in \mathcal{I}} Y_{\mathrm{sc}, \iota} \arrow{r} \arrow{d} & 1 \\
				1 \arrow{r} & \Gm^\mathcal{I} \arrow{r} & \bigoplus_{\iota \in \mathcal{I}} \mathcal{D}_\iota \arrow{r} & \bigoplus_{\iota \in \mathcal{I}} Y_\iota \arrow{r} & 1
			\end{tikzcd}\]
			then push-out by product $\Gm^\mathcal{I} \to \Gm$; it makes the analogue of \eqref{eqn:varphi-diagram} commutative.
	\end{itemize}
\end{theorem}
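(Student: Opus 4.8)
The plan is to pass to the splitting field $M$, where the Brylinski--Deligne triple can be read off directly from the split theory of \S\ref{sec:BD-classification}, and then to trace through the resulting Galois descent. Since the classification over $F$ (item \textbf{(D)}) is precisely ``split theory over $M$ plus $\Gal{M/F}$-descent'', and since the discussion preceding the statement already identifies $f_* T$ as a maximal $F$-torus of $f_* G$ split over $M$ with $X_*\bigl((f_* T)_M\bigr) = \bigoplus_{\iota \in \mathcal{I}} Y_\iota$, together with the canonical identifications $f_*(G_{\mathrm{sc}}) = (f_* G)_{\mathrm{sc}}$ and $(f_* \mathcal{D})_{\mathrm{sc}} \simeq f_*(\mathcal{D}_{\mathrm{sc}})$, it is enough to: (i) identify $(f_* E)_M$ as a central extension of $(f_* G)_M$ by $\shK_2$ and classify it relative to $(f_* T)_M$; and (ii) check that the natural $\Gal{M/F}$-descent datum on the triple so obtained is the one asserted. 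We may enlarge $M$ so that $L \subseteq M$, so that in particular each $T \dtimes{L,\iota} M$ is split over $M$.

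The first step is the base change of the Weil-restricted torsor. Let $E'$ denote the $f_* \shK_2$-torsor $f_* E \to f_* G$ of Proposition \ref{prop:well-behaved} — which applies since $E \to G$ is well-behaved by Example \ref{eg:well-behaved-reductive} — taken before push-out along the transfer map. The base-change formula for Weil restriction reviewed in \eqref{eqn:tensor-split}--\eqref{eqn:MF-Gal-action}, applied to the sheaves of groups $G$ and $E$ over $\Spec(L)$ and using that $g_\iota = \identity$ because $L/F$ is separable, gives compatible isomorphisms of group $M$-schemes
\[ (f_* G)_M \simeq \prod_{\iota \in \mathcal{I}} G_\iota, \qquad E'_M \simeq \prod_{\iota \in \mathcal{I}} E_\iota, \qquad G_\iota := G \dtimes{L,\iota} M, \quad E_\iota := E \dtimes{L,\iota} M, \]
likewise $(f_* \shK_2)_M \simeq \prod_{\iota} \shK_2$, under which $E'_M \to (f_* G)_M$ is the product over $\iota$ of the central extensions $E_\iota \to G_\iota$ by $\shK_2$. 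By \cite[V.3.3.2]{Wei13} the transfer maps commute with base change, and the transfer for a finite product of rings is the sum of the factorwise transfers while the transfer along an identity is the identity; hence the base change to $M$ of $f_* \shK_2 \to \shK_2$ is the summation map $\prod_\iota \shK_2 \to \shK_2$. Therefore $(f_* E)_M$ is the push-out of $\prod_\iota E_\iota$ along this map, i.e. the iterated contracted product $\underset{\iota \in \mathcal{I}}{\utimes{\shK_2}} E_\iota$, a central extension of $\prod_\iota G_\iota$ by $\shK_2$.

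It remains to compute the Brylinski--Deligne triple of a contracted product. For two factors one has a canonical isomorphism $E_1 \utimes{\shK_2} E_2 \simeq \mathrm{pr}_1^* E_1 + \mathrm{pr}_2^* E_2$ of central extensions of $G_1 \times G_2$ by $\shK_2$ (Baer sum over $G_1 \times G_2$ of the two pull-backs). The classification is functorial under the projections $\mathrm{pr}_i$, which induce on cocharacter lattices the coordinate projections $Y_1 \oplus Y_2 \to Y_i$, and additive under Baer sums (item \textbf{(E)}); hence $E_1 \utimes{\shK_2} E_2$ is classified relative to $T_1 \times T_2$ by the ``direct sum'' triple, with quadratic form $Q_1 \oplus Q_2$ (the cross term vanishes, the two factors commuting in the extension), central extension $\mathcal{D}_1 \utimes{\Gm} \mathcal{D}_2$, and the morphism $(\mathcal{D}_1 \utimes{\Gm} \mathcal{D}_2)_{\mathrm{sc}} \to \mathcal{D}_1 \utimes{\Gm} \mathcal{D}_2$ induced by $\varphi_1, \varphi_2$. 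Iterating over $\iota \in \mathcal{I}$, and noting that each $E_\iota$ is classified by the base change $(Q_\iota, \mathcal{D}_\iota, \varphi_\iota)$ of $(Q, \mathcal{D}, \varphi)$ along $\iota$, we conclude that $(f_* E)_M$ is classified relative to $\prod_\iota (T \dtimes{L,\iota} M) \simeq (f_* T)_M$ by $\bigl(\bigoplus_\iota Q_\iota,\ \underset{\iota}{\utimes{\Gm}} \mathcal{D}_\iota,\ \oplus_\iota \varphi_\iota\bigr)$, the last being the push-out along $\Gm^{\mathcal{I}} \to \Gm$ of the $\mathcal{I}$-fold direct sum of the diagrams \eqref{eqn:varphi-diagram} for the $\varphi_\iota$, exactly as in the statement.

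Finally, the $\Gal{M/F}$-action intertwining the base-change isomorphisms of the second step is, by \eqref{eqn:MF-Gal-action}, $(x_\iota)_\iota \mapsto (\sigma(x_{\sigma^{-1}\iota}))_\iota$: it permutes the summands through the $\Gal{M/F}$-action on $\mathcal{I}$ and, on each summand, applies the descent datum of $E$ relative to $M/L$. Transporting this datum through the identifications above yields precisely the descent data attached to $f_* Q$, $f_* \mathcal{D}$, $f_* \varphi$ in the statement; in particular $f_* Q$ is Weyl- and Galois-invariant because each $Q_\iota$ is Weyl-invariant and the action permutes the summands $\Gal{M/F}$-equivariantly. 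Descending back to $F$ gives the theorem. The step I expect to cause the most trouble is the bookkeeping: one must verify that the base-change isomorphism for Weil restriction, the identification of the base-changed transfer map with the summation map, and the contracted-product formula for the classifying triple are all mutually compatible and respect the multiplicative (not merely torsorial) structure throughout; among the individual ingredients, the behaviour of the $K$-theory transfer map under base change is the least routine.
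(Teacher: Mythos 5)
Your proof is correct and follows essentially the same route as the paper: base-change the Weil restriction to $M$, observe that the transfer map becomes summation so that $(f_* E)_M$ is the contracted product $\utimes{\shK_2}_{\iota} E_\iota$, read off its Brylinski--Deligne triple via the Baer-sum/contracted-product compatibility of the classification, and recover the descent datum from \eqref{eqn:MF-Gal-action}. You spell out more explicitly than the paper does both why the base-changed transfer is the summation map and why the contracted product is classified by the direct-sum triple (via $E_1 \utimes{\shK_2} E_2 \simeq \mathrm{pr}_1^* E_1 + \mathrm{pr}_2^* E_2$ and item \textbf{(E)}), but these are the same ingredients the paper invokes more telegraphically.
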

\begin{proof}
	Since $f_* T \subset f_* G$ is a maximal $F$-torus in a reductive $F$-group. We know that $f_* E$ corresponds to some triplet $\left( f_* Q, f_* \mathcal{D}, f_* \varphi\right)$ attached to $f_* T$, together with descent data with respect to $M/F$. By the foregoing discussions,
	\[ f_* E \dtimes{F} M = \left[ \shK_2^\mathcal{I} \hookrightarrow \bigoplus_{\iota \in \mathcal{I}} E_\iota \twoheadrightarrow \bigoplus_{\iota \in \mathcal{I}} G_\iota \right] + \Gal{M/F}-\text{action}. \]
	The norm map in $K$-theory induced by $M \xrightarrow{\text{diag}} M^\mathcal{I}$ (cf. \eqref{eqn:tensor-split}) is $K_2(\cdots)^\mathcal{I} \xrightarrow{\text{sum}} K_2(\cdots)$; a further push-out yields the corresponding $\shK_2$-extension. The corresponding descriptions of $f_* \mathcal{D}$ and $f_* Q$ are then clear: the principle is the same as that for describing Baer sums under the Brylinkski--Deligne classification; we refer to \cite[\S 3]{BD01} for the precise constructions of $f_* \mathcal{D}$ and $f_* Q$.

	The same construction applies to $G_\text{sc}$, the description of $f_* \varphi$ is thus evident. It is also evident by the constructions above that the descent data come from \eqref{eqn:MF-Gal-action}.
\end{proof}

\subsection{The case over local fields: BD-covers}\label{sec:local-BD}
Matsumoto's theorem \cite[III.6.1]{Wei13} says that $K_2(F)$ is the abelian group generated by symbols $\{x,y\}_F$ with $x,y \in F^\times$, subject to the relations
\begin{gather*}
	\{xx', y \}_F = \{x,y\}_F + \{x',y\}_F , \quad \{x,yy'\}_F = \{x,y\}_F + \{x,y'\}_F , \\
	x \neq 0,1 \implies \{x,1-x\}_F = 1.
\end{gather*} \index{$\{x,y\}_F$}
Therefore $K_2(F)$ is a quotient of $F^\times \otimes_\Z F^\times$. In other words $K_2(F)$ equals the Milnor $K$-group $K_2^M(F)$. As is customary, we call $\{x,y\}_F$ the \emph{Steinberg symbol}; they are actually anti-symmetric and satisfy $\{x,-x\}_F = 1$ or $\{x,x\}_F = \{x, -1\}_F$ \cite[p.246]{Wei13}. Bi-multiplicative maps $F^\times \times F^\times \to A$ factorizing through $F^\times \times F^\times \to K_2(F)$ are called $A$-valued symbols, where $A$ is any abelian group.

Henceforth, we assume that $F$ is a local field.

Following \cite[10.1]{BD01}, we consider the symbols that are locally constant with respect to the topology on $F^\times$. By a result of Moore, there is an initial object $F^\times \times F^\times \to K_2^{\text{cont}}(F)$ in the category of locally constant symbols: in fact
\[ K_2^{\text{cont}}(F) = \begin{cases}
	\mu(F), & \mu \neq \CC \\
	\{1\} & \mu=\CC.
\end{cases} \]
The corresponding homomorphism $K_2(F) \to K_2^\text{cont}(F) = \mu(F)$ is just the Hilbert symbol $(\cdot,\cdot)_{F, N_F}$.

Let $G$ be a reductive group over $F$, so $G(F)$ inherits the topology from $F$. For any object $E$ of $\cate{CExt}(G, \shK_2)$, we first take $F$-points to obtain a central extension $0 \to K_2(F) \to E(F) \to G(F) \to 1$, then push-out by $K_2(F) \to K_2^\text{cont}(F)$ to obtain
\[ 1 \to K_2^\text{cont}(F) \to \tilde{G} \to G(F) \to 1. \]
Hence $\tilde{G} \to G(F)$ inherits local sections and transition maps from those of $E \to G$. It becomes a topological $K_2^\text{cont}(F)$-torsor over $G(F)$ by the following fact \cite[Lemma 10.2]{BD01}: for any scheme $X \to \Spec(F)$ of finite type and $s \in H^0(X, \shK_2)$, evaluation gives a locally constant function $s_1: X(F) \to K_2(F) \to K_2^\text{cont}(F)$. Consequently, $\tilde{G}$ is a topological central extension of locally compact groups, also known as a \emph{covering}; here $K_2^\text{cont}(F)$ carries the discrete topology. This is interesting only when $F \neq \CC$. If $m \in \Z_{\geq 1}$ and $m \mid N_F$, a further push-out by
\begin{equation*}\begin{aligned}
	\mu(F) = \mu_{N_F}(F) & \longrightarrow \mu_m(F) \\
	z & \longmapsto z^{N_F/m}
\end{aligned}\end{equation*}
furnishes a topological central extension of $G(F)$ by $\mu_m(F)$. It is the same as the push-out via $(\cdot, \cdot)_{F,m}$ by the following \eqref{eqn:norm-residue-d}.

\begin{remark}\label{rem:Hilb-symbol-cohomology}
	Suppose $m \mid N_F$. Below is a review of the cohomological interpretation of $(\cdot, \cdot)_{F,m}$. The Kummer map gives $\partial: F^\times \to H^1(F, \mu_m)$. Then $(x,y) \mapsto \partial x \cup \partial y$ factors through $K_2(F) \to H^2(F, \mu_m^{\otimes 2})$; this is called the \emph{norm-residue symbol}, see \cite[III.6.10]{Wei13}. Since the $\Gamma_F$ acts trivially on on $\mu_m = \mu_m(F)$,
	\begin{align*}
		H^2(F, \mu_m^{\otimes 2}) & = H^2(F, \mu_m) \otimes \mu_m(F) = {}_m \text{Br}(F) \otimes \mu_m(F) \\
		& = (\Z/m\Z) \otimes \mu_m(F) = \mu_m(F)
	\end{align*}
	where ${}_m \text{Br}(F)$ stands for the $m$-torsion part of $\text{Br}(F)$. The composite $K_2(F) \to \mu_m(F)$ turns out to be $(\cdot, \cdot)_{F,m}$. Moreover, if $d \mid m$, the interpretation above and the commutative diagram with exact rows
	\[\begin{tikzcd}
		1 \arrow{r} & \mu_m \arrow{d}{d} \arrow{r} & \Gm \arrow{d}{d} \arrow{r}{m} & \Gm \arrow[-, double equal sign distance]{d} \arrow{r} & 1 \\
		1 \arrow{r} & \mu_{m/d} \arrow{r} & \Gm \arrow{r}{m/d} \arrow{r} & \Gm \arrow{r} & 1
	\end{tikzcd}\]
	immediately lead to
	\begin{equation}\label{eqn:norm-residue-d}
		(x, y)_{F, m}^d = (x, y)_{F, m/d}, \quad x,y \in F^\times.
	\end{equation}

	Finally, it is known (Merkurjev–-Suslin) that $K_2(F)/m \rightiso \mu_m(F)$ under $(\cdot, \cdot)_{F,m}$: see \cite[III.6.9.3]{Wei13}.
\end{remark}

\begin{definition}[M. Weissman]
	By a \emph{BD-cover} of degree $m$, we mean a covering $p: \tilde{G} \twoheadrightarrow G(F)$ with $\Ker(p)=\mu_m(F)$ arising from the procedure above. It consists of the data $m \mid N_F$ together with a multiplicative $\shK_2$-torsor $E \to G$ over $F$. By convention, when $F=\CC$ the covering splits.
\end{definition}

The next result concerns Weil restrictions. Let $L/F$ be a finite separable extension of local fields, corresponding to $f: \Spec(L) \to \Spec(F)$. Let $G$ be a reductive group over $L$ and $E$ be an object of $\cate{CExt}(G,\shK_2)$. Denote by $E'$ the image in of $\cate{CExt}(f_* G, \shK_2)$ of $E$ furnished by Proposition \ref{prop:BD-restriction-Weil}. Note that $f_* G(F) = G(L)$. If $m \in \Z_{\geq 1}$ and $m \mid N_F$, then $\mu_m(F) = \mu_m(L)$ and the procedure above will yield
\begin{align*}
	\left[ K_2(L) \hookrightarrow E(L) \twoheadrightarrow G(L) \right] & \xrightarrow[\text{push-out}]{(\cdot, \cdot)_{L,m}} \left[ \mu_m(F) \hookrightarrow \tilde{G} \twoheadrightarrow G(L) \right], \\
	\left[ K_2(F) \hookrightarrow E'(F) \twoheadrightarrow G(L) \right] & \xrightarrow[\text{push-out}]{(\cdot, \cdot)_{F,m}} \left[ \mu_m(F) \hookrightarrow \tilde{G}' \twoheadrightarrow G(L) \right].
\end{align*}
In the degenerate case $L=\CC$, we interpret $(\cdot, \cdot)_{L,m}$ as the trivial homomorphism so that $\tilde{G} = G(L) \times \mu_m$.

\begin{proposition}\label{prop:restriction-commutes}
	Under the assumptions above, there is a canonical isomorphism $\tilde{G} \rightiso \tilde{G}'$ between topological central extensions of $G(L)$ by $\mu_m(F)$.
\end{proposition}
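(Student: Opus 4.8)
The plan is to unwind the definition of $E'$, identify $E'(F)$ with a push-out of $E(L)$ along the $K$-theoretic transfer, and thereby reduce the assertion to a compatibility between the Hilbert symbols over $L$ and over $F$ and that transfer.

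First I would describe $E'(F)$ concretely. Recall that $E' = h_*(f_* E)$, where $h\colon f_*\shK_2 \to \shK_2$ is the transfer map and $f_* E \in \cate{CExt}(f_* G, f_*\shK_2)$ is the central extension of sheaves over $\Spec(F)_{\mathrm{Zar}}$ produced by Proposition \ref{prop:BD-restriction-Weil} (via Proposition \ref{prop:well-behaved} and Example \ref{eg:well-behaved-reductive}, noting $L$ is infinite). Since $\Spec(F)$ is a one-point space, $H^1(\Spec(F), A) = 0$ for every abelian sheaf $A$; hence evaluation at $F$-points is exact on abelian sheaves, kills $H^1$, and therefore commutes with taking quotients by central subsheaves, in particular with the contracted product defining $h_*$. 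Reading off
\[ (f_* E)(F) = E(L), \quad (f_*\shK_2)(F) = K_2(L), \quad (f_* G)(F) = G(L), \]
together with the fact that $h$ evaluates on $F$-points to the $K$-theoretic norm $N_{L/F}\colon K_2(L) \to K_2(F)$ (\cite[V.3.3.2]{Wei13}), one obtains a canonical isomorphism of central extensions of $G(L)$ by $K_2(F)$,
\[ E'(F) \;\simeq\; (N_{L/F})_*\bigl( K_2(L) \hookrightarrow E(L) \twoheadrightarrow G(L) \bigr). \]
By transitivity of push-outs, $\tilde{G}'$ is then canonically the push-out of $E(L)$ along $(\cdot,\cdot)_{F,m} \circ N_{L/F}\colon K_2(L) \to \mu_m(F)$, whereas $\tilde{G}$ is by construction the push-out of $E(L)$ along $(\cdot,\cdot)_{L,m}\colon K_2(L) \to \mu_m(L) = \mu_m(F)$ (using $m \mid N_F$).

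Everything therefore comes down to the identity of homomorphisms
\[ (\cdot,\cdot)_{L,m} \;=\; (\cdot,\cdot)_{F,m} \circ N_{L/F}\colon K_2(L) \longrightarrow \mu_m(F) = \mu_m(L), \]
since once the two coefficient maps agree the two push-outs of $E(L)$ are given by the same construction, hence canonically isomorphic. I would prove this identity from the cohomological description in Remark \ref{rem:Hilb-symbol-cohomology}: both sides factor through the norm-residue (Galois) symbol $K_2(-) \to H^2(-, \mu_m^{\otimes 2})$ followed by the local invariant isomorphism $H^2(-, \mu_m^{\otimes 2}) \rightiso \mu_m$. The norm-residue symbol is compatible with the $K$-theoretic transfer on the source and Galois corestriction on the target, i.e. $h_F \circ N_{L/F} = \mathrm{cor}_{L/F} \circ h_L$; and for local fields $\mathrm{inv}_F \circ \mathrm{cor}_{L/F} = \mathrm{inv}_L$ on Brauer groups, an identity that survives the constant $\mu_m$-twist since $\mu_m \subset F$. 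Chaining these equalities gives the claim. The degenerate case $L = \CC$ is consistent with the conventions: then $N_\CC = 1$ forces $m = 1$ and both coverings split, and in any case $(\cdot,\cdot)_{F,m} \circ N_{\CC/F}$ is trivial because $\mathrm{Br}(\CC) = 0$.

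Finally one must check that this isomorphism of abstract central extensions is an isomorphism of coverings, i.e. a homeomorphism. This is automatic: the structural morphism $f_* E \to E'$ is a morphism of sheaves of groups over $\Spec(F)_{\mathrm{Zar}}$ covering $h$, so after passing to $F$-points and pushing out to $\mu_m(F)$ it matches the locally constant transition data of $\tilde{G}$ and $\tilde{G}'$ furnished by \cite[Lemma 10.2]{BD01}; equivalently, it carries continuous local sections to continuous local sections. The main obstacle is the Hilbert-symbol/transfer compatibility of the preceding paragraph — classical, and alternatively extractable from Moore's theorem or the projection formula for norm-residue symbols — since it is exactly the point at which passing through Weil restriction is vindicated, the remainder being bookkeeping with the Brylinski--Deligne formalism and the triviality of Zariski cohomology on a point.
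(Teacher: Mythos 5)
Your proof is correct and follows essentially the same route as the paper: reduce to the identity $(\cdot,\cdot)_{L,m} = (\cdot,\cdot)_{F,m}\circ N_{L/F}$ on $K_2(L)$, and establish it via the compatibility of the norm-residue map with the $K$-theoretic transfer on one side and Galois corestriction on the other, followed by the invariance of the local invariant map under corestriction (the paper cites Suslin and Deninger for the two commuting squares). Your handling of the topological upgrade via \cite[Lemma 10.2]{BD01} also matches; the only small slip is the aside that $L=\CC$ forces $m=1$ (with $F=\R$ one still allows $m=2$, since $m$ divides $N_F$, not $N_L$), but your fallback argument via $\mathrm{Br}(\CC)=0$ covers it.
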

\begin{proof}
	By construction, the central extension $\tilde{G}'$ is obtained by first pushing-out $E(L)$ via the norm map $K_2(L) \to K_2(F)$ in $K$-theory, followed by $(\cdot, \cdot)_{F,m}: K_2(F) \to \mu_m(F)$. On the other hand, $\tilde{G}$ is obtained by pushing-out $E(L)$ via $(\cdot, \cdot)_{L,m}: K_2(L) \to \mu_m(L) = \mu_m(F)$. In view of the cohomological interpretation of Hilbert symbols (Remark \ref{rem:Hilb-symbol-cohomology}) and the functoriality of push-out, the existence of a canonical $\tilde{G} \rightiso \tilde{G}'$ is reduced to the commutativity of
	\[\begin{tikzcd}
		K_2(L) \arrow{d}[swap]{\text{norm}} \arrow{r} & H^2(L, \mu_m^{\otimes 2}) \arrow{d}{\text{cor}} \arrow{r} & \mu_m(L) \arrow[-, double equal sign distance]{d} \\
		K_2(F) \arrow{r} & H^2(F, \mu_m^{\otimes 2}) \arrow{r} & \mu_m(F).
	\end{tikzcd}\]
	Indeed, the commutativity of the rightmost square is \cite[Proposition 5.5]{D96}. As regards the leftmost square, see \cite[Lemma 18.2]{Sus85}. To show $\tilde{G} \rightiso \tilde{G}'$ is a homeomorphism, we may argue using \cite[Lemma 10.2]{BD01} as before. Note that all these make sense even when $L=\CC$.
\end{proof}
In summary, the Weil restriction of multiplicative $\shK_2$-torsors (Proposition \ref{prop:BD-restriction-Weil}) does not affect harmonic analysis. \index{Weil restriction}

\begin{remark}
	Not all BD-covers of $(f_* G)(F)$ arise from BD-covers of $G(L)$ via Weil restriction. To see a counter-example, assume $[L:F] = 2$, $F \neq \R$ and $m = 2$. The cocycle of any twofold BD-cover of $\Gm(L) = L^\times$ is given by some power of $(\cdot, \cdot)_{L, 2}$, by the proof of \cite[Proposition 3.11]{BD01}, hence these BD-covers are commutative. On the other hand, one can embed $L^\times$ into $\GL(2, F)$ and restrict Kubota's twofold cover of $\GL(2, F)$ to $L^\times$ to obtain non-abelian BD-covers of $(f_* \Gm)(F)$; see Proposition \ref{prop:Flicker-comm}.
\end{remark}

BD-covers intervene in harmonic analysis in the following way: let $\tilde{G} \twoheadrightarrow G(F)$ be a BD-cover of degree $m \mid N_F$. Upon choosing an embedding $\epsilon: \mu_m(F) \rightiso \bmu_m \subset \CC^\times$\index{epsilon@$\epsilon$}, we obtain a topological central extension of locally compact groups
\[ 1 \to \bmu_m \to \tilde{G} \to G(F) \to 1. \]

\begin{remark}\label{rem:rescaling-Q}
	Let $k \in \Z \smallsetminus \{0\}$ and denote by $\tilde{G}[kQ, m, \epsilon]$ the central extension by $\bmu_m$ obtained from the $k$-fold Baer sum of the $E \to G$ in $\cate{CExt}(G, \shK_2)$. The notation stems from the fact that $kE$ corresponds to the datum $(kQ, \ldots)$, as noted in \S\ref{sec:BD-classification}. Put $m=m'd$, $k=k'd$ with $d := \text{gcd}(k,m)$. In view of \eqref{eqn:norm-residue-d}, $\tilde{G}[kQ, m, \epsilon]$ is the push-out of $K_2(F) \hookrightarrow E(F) \twoheadrightarrow G(F)$ in two equivalent ways:
	\[ \left[ K_2(F) \xrightarrow{(\cdot,\cdot)_{F,m}} \mu_m \xrightarrow{k} \mu_m \xrightarrow{\epsilon} \bmu_m \right] = \left[ K_2(F) \xrightarrow{(\cdot, \cdot)_{F,m'}} \mu_{m'} \xrightarrow{\epsilon^{k'}} \bmu_m \right]. \]
	Hence $\tilde{G}[kQ, m, \epsilon]$ admits a canonical reduction to $\tilde{G}[Q, m', \epsilon']$, where $\epsilon' := \epsilon^{k'}: \mu_{m'} \rightiso \bmu_{m'}$.
\end{remark}

We proceed to review a few generalities on topological central extensions. It is sometimes convenient to allow central extensions by $\CC^\times$, and the results below will carry over.
\begin{notation}
	Let $\bmu_m \hookrightarrow \tilde{G} \twoheadrightarrow G(F)$ be any topological central extension of $G(F)$. For a subset $C \subset G(F)$, it is customary to denote by $\tilde{C}$ its preimage in $\tilde{G}$. In this manner we define $\tilde{G}_{\text{reg}}$, etc.
\end{notation}

\begin{definition}\index{representation!genuine}
	For $\tilde{G}$ as above, we say that a smooth representation $\pi$ of $\tilde{G}$ on a $\CC$-vector space $V$ is \emph{genuine} if $\pi(z) = z \cdot \identity_V$ for all $z \in \bmu_m$. In a similar vein, we have the notion of \emph{$\epsilon$-genuine representations} of a BD-cover of degree $m$ for any embedding $\epsilon: \mu_m(F) \hookrightarrow \CC^\times$.
\end{definition}

Likewise, let $C \subset G(F)$ be a subset, we say a function $f: \tilde{C} \to \CC^\times$ is genuine if $f(z\tilde{x}) = z f(\tilde{x})$ for all $\tilde{x} \in \tilde{C}$ and $z \in \bmu_m$.

As in \eqref{eqn:commutator}, we define the commutators $[x, y] = \tilde{x}\tilde{y}\tilde{x}^{-1}\tilde{y}^{-1}$ for topological central extensions.
\begin{definition}\label{def:good-element}\index{good element}
	Call an element $\gamma \in G(F)$ \emph{good} if
	\[ [\gamma, \eta]=1, \quad \eta \in Z_G(\gamma)(F). \]
	This amounts to $\eta\tilde{\gamma}\eta^{-1} = \tilde{\gamma}$ whenever $\eta \in Z_G(\gamma)(F)$ and $\tilde{G} \ni \tilde{\gamma} \mapsto \gamma$. Call an element of $\tilde{G}$ good if its image in $G(F)$ is.
\end{definition}
Every genuine $G(F)$-invariant function on $\tilde{G}_\text{reg}$ vanishes off the good locus. 

\begin{proposition}\label{prop:lifting-uniqueness}
	Suppose that $G(F)$ is perfect, i.e. equals its own commutator subgroup. Let $\sigma$ be an automorphism of the topological group $G(F)$. Then there exists at most one automorphism $\tilde{\sigma}: \tilde{G} \to \tilde{G}$ of coverings lifting $\sigma$.
\end{proposition}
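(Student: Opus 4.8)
The plan is to reduce the uniqueness statement to the triviality of $\Hom(G(F), \bmu_m)$, which holds because $\bmu_m$ is abelian and $G(F)$ is perfect. So suppose $\tilde{\sigma}_1, \tilde{\sigma}_2$ are two automorphisms of the covering $p\colon \tilde{G} \to G(F)$, each lifting $\sigma$. Set $\tau := \tilde{\sigma}_1 \circ \tilde{\sigma}_2^{-1}$. Then $\tau$ is a (continuous) automorphism of $\tilde{G}$ restricting to $\identity$ on $\bmu_m$ and inducing $\identity_{G(F)}$ on the quotient. It suffices to show $\tau = \identity_{\tilde{G}}$, for then $\tilde{\sigma}_1 = \tilde{\sigma}_2$.

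Next I would attach to $\tau$ a character of $G(F)$. Since $\tau$ covers $\identity_{G(F)}$, for every $\tilde{x} \in \tilde{G}$ the elements $\tau(\tilde{x})$ and $\tilde{x}$ have the same image under $p$, so there is a unique $c(\tilde{x}) \in \bmu_m$ with $\tau(\tilde{x}) = c(\tilde{x})\,\tilde{x}$. Because $\bmu_m$ is central in $\tilde{G}$ and $\tau|_{\bmu_m} = \identity$, the scalar $c(\tilde{x})$ is unchanged when $\tilde{x}$ is multiplied by an element of $\bmu_m$; hence $c$ descends to a map $c\colon G(F) \to \bmu_m$. The fact that $\tau$ is a group homomorphism, combined once more with the centrality of $\bmu_m$, gives for all $x,y \in G(F)$ the identity $c(xy)\,\widetilde{xy} = \tau(\tilde{x})\tau(\tilde{y}) = c(x)\,\tilde{x}\,c(y)\,\tilde{y} = c(x)c(y)\,\widetilde{xy}$, so $c(xy) = c(x)c(y)$; thus $c$ is a homomorphism into the abelian group $\bmu_m$.

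Finally, any homomorphism from $G(F)$ to an abelian group factors through $G(F)/[G(F),G(F)]$, which is trivial by the perfectness hypothesis. Hence $c \equiv 1$, i.e. $\tau = \identity_{\tilde{G}}$, as desired. I do not expect any real obstacle here: the only point to check carefully is that $c$ is well defined on $G(F)$ and multiplicative, and both follow formally from $\bmu_m$ being central; continuity of $c$ is automatic and is in fact not needed. (The same argument applies verbatim to topological central extensions by $\CC^\times$.)
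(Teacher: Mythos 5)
Your argument is correct and is essentially the same as the paper's: both reduce to the observation that the deviation of $\tilde\sigma_1 \circ \tilde\sigma_2^{-1}$ from the identity is a homomorphism $G(F)\to\bmu_m$, which must vanish since $G(F)$ is perfect. You merely spell out (correctly) the well-definedness and multiplicativity of this deviation map, which the paper leaves implicit.
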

\begin{proof}
	Let $\tilde{\sigma}_1, \tilde{\sigma}_2$ be two liftings, then $\chi := \tilde{\sigma}_1^{-1} \circ \tilde{\sigma}_2$ is an automorphism of $\tilde{G}$ lifting $\identity_{G(F)}$. Hence $\chi$ is a homomorphism $G(F) \to \bmu_m$, which must be trivial.
\end{proof}


Finally, we record a useful observation on splittings.
\begin{lemma}\label{prop:pro-p-splitting}
	Let $A \subset G(F)$ be a pro-$p$ subgroup, where $p \nmid m$ is a prime number. Then $\tilde{G} \twoheadrightarrow G(F)$ admits a unique splitting over $A$, and all elements in $A$ are good.
\end{lemma}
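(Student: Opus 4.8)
The plan is to produce the splitting by the pro-finite form of the Schur--Zassenhaus theorem, and then to read off goodness from the canonical lift it furnishes.

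First I would set $\tilde A := p^{-1}(A) \subset \tilde G$ and note that it is a profinite group: it is compact, being an extension of the compact group $A$ by the finite group $\bmu_m$, and it is totally disconnected as a subgroup of the covering $\tilde G$. Thus $1 \to \bmu_m \to \tilde A \to A \to 1$ is a central extension of a pro-$p$ group by a finite group whose order $m$ is coprime to $p$, and by local triviality of $\tilde G \twoheadrightarrow G(F)$ (i.e. the existence of continuous local sections) it defines a class in the continuous cohomology group $H^2(A, \bmu_m)$ with trivial action. Writing $A = \varprojlim_U A/U$ over the open normal subgroups $U \trianglelefteq A$, one has $H^2(A, \bmu_m) = \varinjlim_U H^2(A/U, \bmu_m)$, and every term vanishes because $A/U$ is a finite $p$-group while $m$ is coprime to $p$. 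Hence $H^2(A, \bmu_m) = 0$ and there is a continuous homomorphism $s\colon A \to \tilde G$ with $p \circ s = \identity_A$. Uniqueness is immediate: two splittings differ by a continuous homomorphism $A \to \bmu_m$, whose image would be a finite quotient of the pro-$p$ group $A$ sitting inside $\bmu_m$, hence trivial.

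For goodness, the key observation I would isolate is that the lift $s(\gamma)$ of a given $\gamma \in A$ depends only on $\gamma$: the closure $\overline{\lrangle{\gamma}}$ is again a (closed, hence) pro-$p$ subgroup, and $s$ restricts there to \emph{the} splitting of $p$ over $\overline{\lrangle{\gamma}}$, unique by the part of the statement already proved applied to this pro-$p$ subgroup. Granting this, let $\eta \in Z_G(\gamma)(F)$ and pick any lift $\tilde\eta \in \tilde G$. Then $\Ad(\tilde\eta)$ is an automorphism of the topological central extension $\tilde G$ lying over $\Ad(\eta)$; since $\eta$ commutes with $\gamma$ it centralizes $\overline{\lrangle{\gamma}}$ pointwise, so $\Ad(\tilde\eta)$ preserves $p^{-1}(\overline{\lrangle{\gamma}})$ and carries the splitting $s|_{\overline{\lrangle{\gamma}}}$ to another continuous splitting of that same extension, hence to $s|_{\overline{\lrangle{\gamma}}}$ itself by uniqueness. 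Evaluating at $\gamma$ gives $\tilde\eta\, s(\gamma)\, \tilde\eta^{-1} = s(\gamma)$, i.e. $[\gamma,\eta]=1$. As $\eta$ ranges over $Z_G(\gamma)(F)$ this says that $\gamma$ is good, and then so is every element of $\tilde A = \bmu_m \cdot s(A)$.

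The routine inputs — the vanishing of $H^2(A,\bmu_m)$, the non-existence of nontrivial continuous maps $A \to \bmu_m$, and the profiniteness of $\tilde A$ — are essentially mechanical. The one genuinely delicate point, and what I expect to be the main obstacle, is that goodness quantifies over the \emph{full} centralizer $Z_G(\gamma)(F)$, which need not be contained in $A$, so one cannot simply invoke that $s$ is a homomorphism; the remedy is exactly the conjugation-equivariance of the canonical lift $s(\gamma)$, namely that it is the value at $\gamma$ of the unique splitting over the $\Ad(\eta)$-fixed pro-$p$ subgroup $\overline{\lrangle{\gamma}}$.
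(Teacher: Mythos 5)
Your proof of existence and uniqueness of the splitting is essentially the paper's: both pass to continuous cohomology, write $A$ as $\varprojlim A/U$ over open normal subgroups, identify $H^i(A,\bmu_m)$ with $\varinjlim H^i(A/U,\bmu_m)$, and invoke coprimality of $m$ and $p$. For goodness, however, you take a genuinely different route. The paper does not touch the splitting at all: it observes that the commutator pairing is multiplicative in the first variable, so $[a^{p^k},g] = [a,g]^{p^k} \in \bmu_m$; since $a^{p^k} \to 1$ in $A$, the left side is eventually $1$, while the exponents $p^k \bmod m$ cycle and hit $1$ infinitely often, forcing $[a,g]=1$. You instead apply the just-proved uniqueness over the pro-$p$ subgroup $\overline{\lrangle{\gamma}}$ and argue that conjugation by any lift of $\eta \in Z_G(\gamma)(F)$ sends the canonical splitting to another splitting of the same extension, hence fixes it, so $[\gamma,\eta]=1$. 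Both are correct. Your argument is more structural — it isolates the conjugation-equivariance of the canonical lift, a useful principle (the paper effectively reproves it later in the guise of Proposition \ref{prop:lifting-uniqueness}) — and it adapts readily whenever one has a canonical lift over a closed subgroup preserved by the relevant automorphism. The paper's argument is more elementary and self-contained: it needs neither the existence nor the uniqueness of the splitting, only the multiplicativity of the central commutator pairing and the interplay between $p$-adic convergence in $A$ and cyclicity of $p^k \bmod m$. One small caveat worth stating explicitly in your version: $\eta$ commutes with $\gamma$, hence with $\overline{\lrangle{\gamma}}$ (the centralizer of $\eta$ in $G(F)$ is closed), which is what guarantees $\Ad(\tilde\eta)\circ s|_{\overline{\lrangle{\gamma}}}$ really is a section of $\bm{p}$ over $\overline{\lrangle{\gamma}}$ rather than over a conjugate.
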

\begin{proof}
	Write $A \simeq \varprojlim_n A_n$ where $A_n$ is a finite $p$-group for each $n$. The obstructions to the existence and uniqueness of splittings live in $H^2(A, \bmu_m)$ and $H^1(A, \bmu_m)$, respectively. Continuous cohomology satisfies $H^\bullet(A, \bm{\mu_m}) = \varinjlim_n H^\bullet(A_n, \bmu_m)$. Evidently, $H^\bullet(A_n, \bmu_m) = 0$ for all $n$.
	
	Let $a \in A$ and $g \in Z_G(a)(F)$, then $[a^{p^k},g] = [a,g]^{p^k}$ in $\bmu_m$. We have $\lim_{k \to \infty} a^{p^k} = 1$ whereas $\{ p^k \}_{k=1}^\infty$ is periodic mod $m$, hence $[a, g]=1$.
\end{proof}

\subsection{The isogeny \texorpdfstring{$T_{Q,m} \to T$}{TQm to T}}\label{sec:isogeny}
Retain the notation from \S\ref{sec:local-BD}. In particular, we fix a reductive $F$-group $G$ and $m \mid N_F$. Take an object
\[ 1 \to \shK_2 \to E \to G \to 1 \]
of $\cate{CExt}(G, \shK_2)$, and suppose $T \subset G$ is a maximal $F$-torus. Thus $\Gamma_F$ acts on $Y := X_*(T_{\bar{F}})$. The Brylinski--Deligne classification in \S\ref{sec:BD-classification} attaches to $E$ a $\Gamma_F$-invariant quadratic form $Q: Y \to \Z$. Define \index{B_Q@$B_Q$} \index{Y_Qm@$Y_{Q,m}$}\index{X_Qm@$X_{Q,m}$}
\begin{align*}
	B_Q(y_1, y_2) & := Q(y_1 + y_2) - Q(y_1) - Q(y_2), \\
	Y_{Q,m} & := \left\{y \in Y: \forall y' \in Y, \; B_Q(y, y') \in m\Z \right\} \supset mY , \\
	X_{Q,m} & := \left\{x \in X \dotimes{\Z} \Q : \forall y \in Y_{Q,m}, \; \lrangle{x, y} \in \Z \right\} \subset \frac{1}{m} X.
\end{align*}
One should view them as local systems over $\Spec(F)_{\text{ét}}$. From $Y_{Q,m} \hookrightarrow Y$ we deduce an isogeny \index{$\iota_{Q,m}$} \index{T_Qm@$T_{Q,m}$}
\[ \iota_{Q,m}: T_{Q,m} \to T. \]
Note that $\iota_{Q,m}$ is an étale morphism and the scheme-theoretic $\Ker(\iota_{Q,m})$ is a finite étale, since $Y_{Q,m} \supset mY$ and $m$ is invertible in $F$.

The invariance properties allow use to view $Q, B_Q$ as living on $\varprojlim_T X_*(T_{\bar{F}})$, where $T$ ranges over maximal $F$-tori and the arrows are $G(\bar{F})$-conjugation. In particular, we have the commutative diagram below.
\begin{equation}\label{eqn:isogeny-Ad} \begin{tikzcd}[row sep=small]
	T_{Q,m, \bar{F}} \arrow{r}{\Ad(g)} \arrow{d}[swap]{\iota_{Q,m}} & S_{Q,m, \bar{F}} \arrow{d}{\iota_{Q,m}} \\
	T_{\bar{F}} \arrow{r}[swap]{\Ad(g)} & S
\end{tikzcd}\end{equation}
Hence the absolute Weyl group $\Omega(G,T)(\bar{F})$ acts on $T_{Q,m,\bar{F}}$. By taking $\Ad(g)$ defined over $F$, we see that stable conjugacy $\Ad(g): T \to S$ (see \S\ref{sec:Sp-parameters}) induces $T_{Q,m} \to S_{Q,m}$.

Now we take a finite separable extension $L$ of $F$, and write $f: \Spec(L) \to \Spec(F)$. Take $G$ to be a reductive group over $L$ and $T \subset G$ be a maximal $L$-torus. Let $E$ be an object of $\cate{CExt}(G, \shK_2)$ (thus defined over $L$). We turn to the Weil restriction $f_* E$ afforded by Proposition \ref{prop:BD-restriction-Weil}.

\begin{proposition}
	Suppose $m \mid N_F$, so that $m \mid N_E$ and consider the isogeny $\iota_{Q,m}: T_{Q,m} \to T$ between tori over $L$. Then the isogeny attached to $f_* T \subset f_* G$ and $f_* E$ can be identified with
	\[ f_*(\iota_{Q,m}): f_* T_{Q,m} \to f_* T. \]
\end{proposition}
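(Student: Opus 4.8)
The plan is to verify the asserted identification after base change to a sufficiently large finite Galois extension $M/F$, where both isogenies become visibly the direct sum over $\mathcal{I} = \Hom_F(L,M)$ of the single inclusion of cocharacter lattices defining $\iota_{Q,m}$; Galois descent then yields the statement over $F$. So first I would fix $M/F$ as in \S\ref{sec:Weil-restriction}, splitting $L$ and a maximal torus, with $Y_\iota := X_*\!\left(T \times_L M_\iota\right)$ (each a copy of $Y = X_*(T_{\bar{F}})$) and $X_*((f_* T)_M) = \bigoplus_{\iota \in \mathcal{I}} Y_\iota$ carrying the $\Gal{M/F}$-action \eqref{eqn:MF-Gal-action}.

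Next I would invoke Theorem \ref{prop:BD-restriction}: the triplet attached to $f_* E$ has quadratic form $f_* Q = \bigoplus_\iota Q$ on $\bigoplus_\iota Y_\iota$, so the associated bilinear form is block-diagonal, $B_{f_* Q}\!\left((y_\iota),(y'_\iota)\right) = \sum_\iota B_Q(y_\iota, y'_\iota)$. Testing the defining condition of $\left(\bigoplus_\iota Y_\iota\right)_{f_* Q, m}$ against cocharacters supported at a single index shows $(y_\iota)_\iota$ lies in it if and only if each $y_\iota \in Y_{Q,m}$, and the converse is immediate from block-diagonality; hence $\left(\bigoplus_\iota Y_\iota\right)_{f_* Q, m} = \bigoplus_\iota Y_{Q,m}$. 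This equality is $\Gal{M/F}$-equivariant because each $\sigma : Y_\iota \rightiso Y_{\sigma\iota}$ preserves $B_Q$ and therefore carries $Y_{Q,m}\subset Y_\iota$ onto $Y_{Q,m}\subset Y_{\sigma\iota}$, so the induced action on $\bigoplus_\iota Y_{Q,m}$ is again \eqref{eqn:MF-Gal-action}. Thus the isogeny attached to $f_* T \subset f_* G$ and $f_* E$ is, at the level of $M$-cocharacters, the direct sum over $\mathcal{I}$ of the inclusions $Y_{Q,m}\hookrightarrow Y$, with descent datum \eqref{eqn:MF-Gal-action}. To recognize this as $f_*(\iota_{Q,m})$, I would apply the same Weil-restriction cocharacter formula used for $f_* T$ (cf.\ \S\ref{sec:Weil-restriction}, \cite[Proposition A.5.15]{CGP15}) to the $L$-torus $T_{Q,m}$: this gives $X_*((f_* T_{Q,m})_M) = \bigoplus_\iota X_*\!\left(T_{Q,m}\times_L M_\iota\right) = \bigoplus_\iota Y_{Q,m}$ with action \eqref{eqn:MF-Gal-action}, and $f_*(\iota_{Q,m})$ corresponds on $M$-cocharacters to the direct sum of the same inclusions (here $f_*(\iota_{Q,m})$ is again an isogeny of $F$-tori, Weil restriction along a finite separable extension taking tori to tori). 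Since the anti-equivalence between $F$-tori and their Galois cocharacter modules is faithful, the two isogenies over $F$ coincide up to canonical isomorphism.

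The only genuinely delicate point — the ``hard part'', such as it is — is the bookkeeping of the Galois descent data: one must check that the datum produced by Theorem \ref{prop:BD-restriction} on $f_* Q$, hence on $\left(\bigoplus_\iota Y_\iota\right)_{f_* Q, m}$, is literally \eqref{eqn:MF-Gal-action}, and that the generic-fibre computation for $f_* T_{Q,m}$ is equipped with the very same datum; both are instances of the general formalism of \S\ref{sec:Weil-restriction}, so this reduces to applying it consistently rather than to any new input. Everything else — the block-diagonal shape of $B_{f_* Q}$ and the resulting splitting of $(\,\cdot\,)_{Q,m}$ — is a routine lattice computation. I would not track $X_{Q,m}$ separately, since $\iota_{Q,m}$ depends only on the inclusion $Y_{Q,m}\hookrightarrow Y$.
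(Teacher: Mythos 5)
Your proof is correct and follows the same route as the paper, which simply declares the claim ``clear in view of the effect of $f_*$ on the datum $Q$'' (Theorem \ref{prop:BD-restriction}); you have filled in the details — the block-diagonal computation of $B_{f_* Q}$, the resulting identification of the lattice $(\bigoplus_\iota Y_\iota)_{f_*Q,m}$ with $\bigoplus_\iota Y_{Q,m}$, and the check that the Galois descent data on both sides agree — all of which the paper leaves implicit.
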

\begin{proof}
	This is clear in view of the effect of $f_*$ on the datum $Q$, see Theorem \ref{prop:BD-restriction}.
\end{proof}
The discussions above work for both local and global fields. Henceforth we shall assume $F$ local. Consider the BD-cover obtained from $E$, denoted by $\tilde{G} \to G(F)$ as usual.

\begin{proposition}[M.\ Weissman \cite{Wei09}]\label{prop:good-in-tori}\index{good element}
	Suppose $G=T$ is a torus, then all elements in $\Image(\iota_{Q,m})$ are good; the converse is true when $T$ is split.
\end{proposition}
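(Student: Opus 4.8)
The plan is to recast ``good'' as a condition on the radical of a pairing. Since $G=T$ is commutative, $Z_G(\gamma)=T$ for every $\gamma$, so by Definition~\ref{def:good-element} an element $\gamma\in T(F)$ is good exactly when the commutator pairing $[\cdot,\cdot]_{\tilde T}\colon T(F)\times T(F)\to\bmu_m$ of the BD-cover kills $\{\gamma\}\times T(F)$, i.e.\ when $\gamma$ lies in the left kernel of $[\cdot,\cdot]_{\tilde T}$. Thus the proposition becomes: this radical always contains $\Image(\iota_{Q,m})$, and coincides with it when $T$ is split.

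The substantive input — for which I would cite Weissman~\cite{Wei09}, though it also follows from the Brylinski--Deligne formula for the commutator of the $\shK_2$-extension $E$ (bi-multiplicative, and over a splitting field given on $Y\otimes\bar F^\times$ by $(y_1\otimes a,y_2\otimes b)\mapsto B_Q(y_1,y_2)\{a,b\}$) by taking $F$-points, pushing out through the Hilbert symbol, and invoking the cohomological interpretation of Remark~\ref{rem:Hilb-symbol-cohomology} — is the following description of $[\cdot,\cdot]_{\tilde T}$: up to the canonical identification $H^2(F,\bmu_m)\cong\bmu_m$, it factors as
\[ T(F)\times T(F)\;\xrightarrow{\;\partial\times\partial\;}\;H^1(F,\Ker(\iota_{Q,m}))\times H^1(F,\Ker(\iota_{Q,m}))\;\xrightarrow{\;\cup,\;B_Q\;}\;H^2(F,\bmu_m), \]
where $\partial$ is the connecting map for $0\to\Ker(\iota_{Q,m})\to T_{Q,m}\xrightarrow{\iota_{Q,m}}T\to 0$ and the second arrow is cup product followed by the pairing on $\Ker(\iota_{Q,m})$ induced by $B_Q$. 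That $B_Q$ induces such a pairing, and that it is perfect, is elementary: $B_Q\bmod m$ descends, by the very definition of $Y_{Q,m}$, to a non-degenerate symmetric form $Y/Y_{Q,m}\times Y/Y_{Q,m}\to\Z/m\Z$, and since $\mu_m\subset F$ this may be read as a perfect $\Gamma_F$-pairing $\Ker(\iota_{Q,m})\times\Ker(\iota_{Q,m})\to\bmu_m$ exhibiting $\Ker(\iota_{Q,m})$ as self-Cartier-dual.

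Granting this, the first assertion is immediate and uses nothing about $T$: if $\gamma=\iota_{Q,m}(\gamma_0)$ with $\gamma_0\in T_{Q,m}(F)$ then $\partial\gamma=0$, hence $[\gamma,t]_{\tilde T}=1$ for all $t$. (Alternatively, bypassing the cohomological formula: the commutator morphism $c\colon T\times T\to\shK_2$ of $E$, precomposed with $\iota_{Q,m}\times\identity_T$, has all coefficients $B_Q(f,y)$ with $f\in Y_{Q,m}$, $y\in Y$, divisible by $m$; as these integers are $\Gamma_F$-invariant, dividing them by $m$ descends to an $F$-morphism $c''$ with $c\circ(\iota_{Q,m}\times\identity_T)=m\cdot c''$, so $c(\iota_{Q,m}(\gamma_0),t)\in mK_2(F)$, which is the kernel of $K_2(F)\to\bmu_m$ by Merkurjev--Suslin.) For the converse I would specialise to $T$ split: then $T_{Q,m}$ is split too, Hilbert~90 gives $H^1(F,T_{Q,m})=0$, so $\partial$ is surjective with kernel exactly $\iota_{Q,m}(T_{Q,m}(F))$, and local Tate duality for $\Ker(\iota_{Q,m})$ together with its $B_Q$-self-duality makes the pairing on $H^1(F,\Ker(\iota_{Q,m}))$ in the display perfect; hence any good $\gamma$ must satisfy $\partial\gamma=0$. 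Unwound for split $T$ with a cocharacter basis $e_1,\dots,e_n$, this says: $\gamma=\sum_i e_i\otimes\gamma_i$ is good iff $\prod_i\gamma_i^{B_Q(e_i,y)}\in F^{\times m}$ for all $y\in Y$ (by non-degeneracy of the degree-$m$ Hilbert symbol on $F^\times/F^{\times m}$), and a Smith-normal-form reduction of $B_Q$ to the rank-one identity $u^b\in F^{\times m}\iff u\in F^{\times m/\gcd(b,m)}$ — again using $\mu_m\subset F$ — identifies these $\gamma$ with the image of $Y_{Q,m}\otimes F^\times\to Y\otimes F^\times$.

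I expect the main obstacle to be pinning down the commutator-pairing input of the second paragraph with the correct normalisation — i.e.\ checking that the Brylinski--Deligne/Hilbert-symbol recipe yields precisely the $B_Q$-twisted cup product rather than some variant. A pervasive subsidiary point is that the hypothesis $\mu_m\subset F$ (equivalently $m\mid N_F$) enters essentially several times — in the self-duality of $\Ker(\iota_{Q,m})$, in non-degeneracy of the Hilbert symbol, and in the rank-one power identity — and it is precisely the possible failure of $H^1(F,T_{Q,m})=0$ for non-split $T$ (so that $\partial$ need not be onto and the Tate pairing restricted to the proper subgroup $\Image(\partial)$ may be degenerate) that accounts for the ``split'' hypothesis in the converse; I would record that as an explicit remark.
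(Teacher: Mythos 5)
The paper states this proposition as a citation to Weissman \cite{Wei09} and does not reproduce a proof, so there is no in-text argument to compare against. Your proposal is essentially correct and reconstructs what one expects Weissman's route to be. Two remarks on the details. Your ``alternative'' argument for the forward direction --- descending $m^{-1}\cdot\bigl(c\circ(\iota_{Q,m}\times\identity)\bigr)$ to an $F$-morphism $c''$ using $\Gamma_F$-invariance of $B_Q$, then observing $c(\iota_{Q,m}(\gamma_0),t)=m\,c''(\gamma_0,t)\in mK_2(F)$ --- is the cleanest and really the decisive point, since it genuinely works for arbitrary (not just split) $T$; and for this you do not even need Merkurjev--Suslin, only the trivial containment $mK_2(F)\subset\Ker(\cdot,\cdot)_{F,m}$, since the target $\mu_m$ has exponent $m$. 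For the converse your Smith-normal-form unwinding is sound; it is worth stressing that the rank-one identity $u^b\in F^{\times m}\iff u\in F^{\times m/\gcd(b,m)}$ should be proved via a generator of $\mu_m$ as you indicate and not via $\Z/m\Z$-freeness of $F^\times/F^{\times m}$, which can fail when $p\mid m$. The cohomological packaging (connecting map for $\Ker(\iota_{Q,m})\hookrightarrow T_{Q,m}\twoheadrightarrow T$, the $B_Q$-induced self-Cartier-duality of $\Ker(\iota_{Q,m})$, local Tate duality) is the right conceptual frame, and your closing remark --- that the split hypothesis is used precisely to make $\partial$ onto via $H^1(F,T_{Q,m})=0$ so that the restricted Tate pairing cannot degenerate --- correctly diagnoses why the converse can fail for non-split $T$.
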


In general, we consider the pull-back $\tilde{T} \to T(F)$ of $\tilde{G} \to G(F)$. It is also a BD-cover.
\begin{corollary}\label{prop:isogeny-good}
	Let $\gamma \in T_\mathrm{reg}(F)$. If $\gamma$ lies in the image of $\iota_{Q,m}$, then it is good as an element of $G(F)$; the converse is true when $T$ is split.
\end{corollary}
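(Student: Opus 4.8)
The plan is to reduce the assertion to the torus case already settled in Proposition \ref{prop:good-in-tori}. First I would use strong regularity: since $\gamma \in T_{\mathrm{reg}}(F)$, we have $Z_G(\gamma) = T$, hence $Z_G(\gamma)(F) = T(F)$. By Definition \ref{def:good-element}, $\gamma$ is good as an element of $G(F)$ precisely when $[\gamma, \eta] = 1$ for all $\eta \in T(F)$. Now the commutator $[\gamma, \eta]$ of two elements of $T(F)$ is computed from preimages that may be taken inside the pull-back $\tilde{T}$ of $\tilde{G}$ along $T(F) \hookrightarrow G(F)$; therefore this condition depends only on the covering $\tilde{T} \to T(F)$. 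In other words, $\gamma$ is good in $G(F)$ with respect to $\tilde{G}$ if and only if $\gamma$ is good as an element of $T(F)$ with respect to $\tilde{T}$, in the sense of the torus case.

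Second, I would identify $\tilde{T} \to T(F)$ with the BD-cover of the torus $T$ attached to the pulled-back multiplicative $\shK_2$-torsor $E|_T := (T \hookrightarrow G)^* E$ in $\cate{CExt}(T, \shK_2)$. Indeed, taking $F$-points and then pushing out successively by $(\cdot,\cdot)_{F,m}$ and $\epsilon: \mu_m(F) \rightiso \bmu_m$ commutes with pull-back, by the functoriality of push-out recalled in \S\ref{sec:torsors-generalities} and \S\ref{sec:local-BD}. Moreover, by the Brylinski--Deligne classification (\S\ref{sec:BD-classification}, cases (A) and (B)) the quadratic form attached to $E|_T$ is the restriction to $Y = X_*(T_{\bar{F}})$ of the Weyl- and Galois-invariant $Q$ attached to $E$, which is the very $Q$ used to define $Y_{Q,m}$ and the isogeny $\iota_{Q,m}: T_{Q,m} \to T$. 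Consequently, the isogeny produced by the torus $T$ together with its central extension $E|_T$ is literally $\iota_{Q,m}$, the same map appearing in the statement.

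Third, I would invoke Proposition \ref{prop:good-in-tori} for the torus $T$ and the extension $E|_T$: every element of $\Image(\iota_{Q,m})$ is good in $T(F)$, and when $T$ is split the converse holds as well. Combining with the equivalence ``good in $T(F)$ for $\tilde{T}$ $\iff$ good in $G(F)$ for $\tilde{G}$'' established in the first step yields both halves of the corollary.

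I do not anticipate a genuine obstacle. The only point requiring a line of care is the compatibility used in the second step — that pulling the multiplicative $\shK_2$-torsor back to $T$ and then performing the local push-outs reproduces exactly $\tilde{T}$, and that the Brylinski--Deligne invariant $Q$ is preserved under this pull-back — but both are immediate from the functoriality of pull-back and push-out and from the classification in \S\ref{sec:BD-classification}.
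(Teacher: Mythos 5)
Your proof is correct and is essentially the same argument the paper gives: reduce via $Z_G(\gamma)=T$ to the question of goodness inside $\tilde T \to T(F)$, observe that $\tilde T$ is the BD-cover of $T$ whose Brylinski--Deligne datum is the restriction of $Q$, so the isogeny $\iota_{Q,m}$ coincides with the one for the torus, and invoke Proposition~\ref{prop:good-in-tori}. The paper compresses this into one line ("it suffices to notice that the isogeny $\iota_{Q,m}$ is the same for both $\tilde G$ and $\tilde T$"), whereas you have usefully spelled out the pull-back/push-out compatibility and the preservation of $Q$; there is no difference in substance.
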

\begin{proof}
	Since $Z_G(\gamma) = T$, it suffices to notice that the isogeny $\iota_{Q,m}$ is the same for both $\tilde{G}$ and $\tilde{T}$ and apply Proposition \ref{prop:good-in-tori}.
\end{proof}

\section{The symplectic group}\label{sec:Sp-gen}
Unless otherwise specified, $F$ is a local field of characteristic $\neq 2$.

\subsection{Conjugacy classes and maximal tori}\label{sec:Sp}
Let $(W, \lrangle{\cdot|\cdot})$ be a symplectic $F$-vector space with $\dim_F W = 2n > 0$. Every orthogonal decomposition $W = \bigoplus_{i=1}^k W_i$ yields to a natural embedding $\prod_{i=1}^k \Sp(W_i) \hookrightarrow \Sp(W)$.

There always exists a basis $e_1, \ldots, e_n, e_{-n}, \ldots, e_{-1}$ satisfying $\lrangle{e_i, e_j}=0$ and $\lrangle{e_i, e_{-j}} = \delta_{i, j}$ (Kronecker's delta) for all $1 \leq i,j \leq n$, called a \emph{symplectic basis} of $W$. It gives rise to the Borel pair
\begin{align*}
	B & := \Stab_{\Sp(W)}\left( \text{the flag}\; \lrangle{e_1} \subset \lrangle{e_1, e_2} \subset \cdots \subset \lrangle{e_1, \ldots, e_{-1}} \right), \\
	T & := \left\{ \text{diag}(a_1, \ldots, a_n, a_n^{-1}, \ldots, a_1^{-1}) : (a_1, \ldots, a_n) \in \Gm^n \right\} \; \simeq \Gm^n.
\end{align*}
Write
\[ X := X^*(T), \quad Y := X_*(T). \]
Denote by $\epsilon_i \in X$ the character $\text{diag}(a_1, \ldots, a_1^{-1}) \mapsto a_i$. The $B$-positive roots are
\[ \epsilon_i \pm \epsilon_j \; (n \geq i>j \geq 1), \qquad 2\epsilon_i\; (n \geq i \geq 1) \]
and the simple ones are $\{ \epsilon_i - \epsilon_{i+1}: i=1, \ldots, n-1 \} \sqcup \{2\epsilon_i: i=1, \ldots, n\}$. Writing $\check{\epsilon}_1, \ldots, \check{\epsilon}_n$ for the dual basis, the corresponding coroots are
\[ \check{\epsilon}_i \pm \check{\epsilon}_j, \quad \check{\epsilon}_i. \]
The long positive roots are $2\epsilon_i$ whilst the short positive coroots are $\check{\epsilon}_i$. This is a simply connected root datum of type $\mathbf{C}_n$ ($ :=\mathbf{A}_1$ when $n=1$). The Weyl group attached to $T \subset \Sp(W)$ is
\[ \Omega(G,T) = \{\pm 1\}^n \rtimes \mathfrak{S}_n \]
acting on $Y = \bigoplus_i \Z\check{\epsilon}_i$, where the $\{\pm 1\}^n$ acts by component-wise multiplication and $\mathfrak{S}_n$ permutes. The Weyl-invariant quadratic forms $Y \to \Z$ are readily seen to be integer multiples of
\begin{equation}\label{eqn:Y-Sp}
	Q: (y_1, \ldots, y_n) \mapsto y_1^2 + \cdots + y_n^2.
\end{equation}
Also note that $Q$ takes value $1$ on short coroots. In this case we have
\[ Y = \bigoplus_{\substack{\alpha > 0 \\ \text{simple root}}} \Z\check{\alpha} = \bigoplus_{\substack{\alpha > 0\\ \text{long root}}} \Z\check{\alpha}. \]

Next, we recall the parametrization of regular semisimple conjugacy classes in $\Sp(W)$ following \cite[IV.2]{SS70}, \cite{Wa01} or \cite[\S 3]{Li11}. They are described by the data $(K, K^\sharp, x, c)$ in which \index{$(K, K^\sharp, x, c)$}
\begin{itemize}
	\item $K$ is an étale $F$-algebra of dimension $2n$ equipped with an involution $\tau$;
	\item $K^\sharp$ is the subalgebra $\{ t \in K: \tau(t)=t\}$;
	\item $x \in K^\times$ satisfies $\tau(x) = x^{-1}$ and $K=F[\alpha]$;
	\item $c \in K^\times$ satisfies $\tau(c) = -c$.
\end{itemize}
In the general recipe of \cite[IV.1]{SS70}, one sets $K$ to be the subalgebra of $\End_F(W)$ generated by the element $\gamma$ in question; since $\gamma \in \Sp(W)_\text{reg}$, the étale property of $K$ can be seen over the algebraic closure of $F$. The notation is justified since the extension $K \supset K^\sharp$ determines $\tau$. Indeed, $K^\sharp$ is also an étale $F$-algebra, thus decomposes into $\prod_{i \in I} K_i^\sharp$ where each $K_i^\sharp$ is a field. Accordingly, $K = \prod_{i \in I} K_i$ and for each $i \in I$, either
\begin{compactenum}[(a)]
	\item $K_i$ is a quadratic field extension of $K_i^\sharp$ with $\Gal{K_i/K_i^\sharp} = \{\identity, \tau|_{K_i} \}$, or
	\item $K_i \simeq K_i^\sharp \times K_i^\sharp$ as $K_i^\sharp$-algebras and $\tau(u,v)=(v,u)$.
\end{compactenum}
In both cases $K_i/K_i^\sharp$ determines $\tau|_{K_i}$.

\begin{notation}\label{nota:norm}
	Given an étale $F$-algebra $K$ with involution $\tau$ and fixed subalgebra $K^\sharp$, we define the norm map
	\begin{align*}
		N_{K/K^\sharp}: K & \longrightarrow K^\sharp \\
		t & \longmapsto t\tau(t)
	\end{align*}
	and the norm-one $F$-torus
	\[ K^1 := \left\{ y \in K^\times: N_{K/K^\sharp}(y)=1 \right\}. \]
\end{notation}

We say $(K, K^\sharp, x, c)$ and $(L, L^\sharp, y, d)$ are equivalent if there exists an isomorphism $\varphi: K \rightiso L$ of $F$-algebras such that
\begin{compactitem}
	\item $\varphi$ preserves involutions,
	\item $\varphi(x)=y$,
	\item $\varphi(c)d^{-1} \in N_{L/L^\sharp}(K^\times)$.
\end{compactitem}
To each datum $(K, K^\sharp, x, c)$, we deduce a symplectic form on the $F$-vector space $K$:
\begin{equation}\label{eqn:K-symp-form}
	h(u|v) = \Tr_{K/F}\left( \tau(u)vc \right);
\end{equation}
the trace here is non-degenerate since $K$ is étale over $F$. It can also be described by breaking $K$ into $\prod_{i \in I} K_i$. For every $x \in K^1$, the automorphism $m_x: t \mapsto xt$ preserves $h$. Every symplectic $F$-vector space $(W, \lrangle{\cdot|\cdot})$ of dimension $2n$ is isomorphic to $(K, h)$ via some $\iota: W \rightiso K$: therefore $m_x$ transports to $\delta := \iota^{-1} m_x \iota \in \Sp(W)$. The conjugacy class $\mathcal{O}(K/K^\sharp, x, c)$ of $\delta$ in $\Sp(W)$ is regular semisimple and is independent of the choice of $\iota$. The following result is standard.

\begin{proposition}\label{prop:reg-ss-parameters}
	The mapping $\mathcal{O}$ is a bijection between the equivalence classes of data $(K, K^\sharp, x, c)$ and the regular semisimple conjugacy classes in $\Sp(W)$. Furthermore, by choosing $\iota: W \rightiso K$ as above, there is an isomorphism of $F$-tori
	\begin{align*}
		j: K^1 & \longrightiso Z_{\Sp(W)}(\delta) \\
		y & \longmapsto \iota^{-1} m_y \iota.
	\end{align*}
\end{proposition}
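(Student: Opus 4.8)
\emph{Plan.} The whole statement is an instance of the principle that conjugacy in $\Sp(W)$ for regular semisimple elements is governed by the étale $F$-algebra $K=F[\delta]$ together with its symplectic adjoint involution, as in \cite[IV]{SS70}. I would first record well-definedness of $\mathcal O$, then prove surjectivity and injectivity, and finally identify the centralizer. For independence of the choice of $\iota$: two choices $\iota,\iota'$ differ by $\phi:=\iota'\iota^{-1}\in\Sp(K,h)$, so $\iota^{-1}m_x\iota$ and $\iota'^{-1}m_x\iota'$ are conjugate in $\Sp(W)$ by the element of $\Sp(W)$ transported from $\phi$. For descent to equivalence classes: if $\varphi\colon K\rightiso L$ is an isomorphism of algebras-with-involution with $\varphi(x)=y$ and $\varphi(c)=d\cdot N_{L/L^\sharp}(a)$ for some $a\in L^\times$, then $t\mapsto a\varphi(t)$ is an isometry of the associated symplectic spaces carrying $m_x$ to $m_y$; the verification uses only that $K$ is commutative and that $\Tr_{L/F}\circ\,\tau=\Tr_{L/F}$. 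Hence $\mathcal O$ factors through equivalence classes.

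\emph{Surjectivity.} Let $\delta\in\Sp(W)$ be regular semisimple. Then its $2n$ eigenvalues are pairwise distinct and none equals $\pm1$, so $\delta$ is regular in $\GL(W)$, the algebra $K:=F[\delta]\subset\End_F(W)$ is étale of dimension $2n$ over $F$, and $W$ is free of rank one over $K$. The symplectic adjoint $g\mapsto g^{*}$ on $\End_F(W)$ (defined by $\lrangle{gu|v}=\lrangle{u|g^{*}v}$) satisfies $\delta^{*}=\delta^{-1}\in K$, hence stabilizes $K$; put $\tau:=(\cdot)^{*}|_{K}$, $K^\sharp:=K^{\tau}$, $x:=\delta$. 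Because $\tau(\delta)=\delta^{-1}\neq\delta$, the involution $\tau$ is non-trivial on each field factor of $K$, so $K\supset K^\sharp$ is quadratic componentwise and $\tau$ is the involution determined by $K^\sharp$. Choose a $K$-module isomorphism $\iota\colon W\rightiso K$ and transport $\lrangle{\cdot|\cdot}$ to $K$; the identity $\lrangle{ku|v}=\lrangle{u|\tau(k)v}$ for $k\in K$ gives $\lrangle{u|v}=\ell(\tau(u)v)$ with $\ell:=\lrangle{1|\cdot}$, and non-degeneracy of the trace form on the étale algebra $K$ produces a unique $c\in K$ with $\ell(z)=\Tr_{K/F}(zc)$. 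Anti-symmetry (resp.\ non-degeneracy) of $\lrangle{\cdot|\cdot}$ forces $\tau(c)=-c$ (resp.\ $c\in K^{\times}$). Then $\iota$ identifies $(W,\lrangle{\cdot|\cdot})$ with $(K,h)$ and $\delta$ with $m_x$, so $\delta\in\mathcal O(K/K^\sharp,x,c)$.

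\emph{Injectivity.} If $\delta\in\mathcal O(K/K^\sharp,x,c)$, then $\delta=\iota^{-1}m_x\iota$ for some isometry $\iota\colon W\rightiso(K,h)$, and running the construction of the previous paragraph on $\delta$ recovers $F[\delta]\simeq K$ compatibly with $\delta\leftrightarrow m_x$ (hence with $x$), the adjoint involution corresponding to $\tau$ (since the $h$-adjoint of $m_k$ is $m_{\tau(k)}$), and the element $c$ back up to the indeterminacy in $\iota$. That indeterminacy is exactly left multiplication by an element of $N_{K/K^\sharp}(K^{\times})$, which is precisely the freedom built into the equivalence relation on data. Hence the equivalence class of the datum depends only on the conjugacy class of $\delta$, and with surjectivity this gives the asserted bijection. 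The delicate point — and the one I expect to require the most care — is exactly this last bookkeeping: verifying that the choice of $\iota$ (and of the $K$-module identification $W\simeq K$) contributes \emph{no} ambiguity in $x$ but precisely a norm factor in $c$, so that $x$ is rigid while $c$ is only well-defined modulo $N_{K/K^\sharp}(K^{\times})$.

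\emph{The centralizer.} For $y\in K^{1}$ one computes $h(m_yu|m_yv)=\Tr_{K/F}\bigl(N_{K/K^\sharp}(y)\,\tau(u)v\,c\bigr)=h(u|v)$, so $m_y\in\Sp(K,h)$ and $j(y):=\iota^{-1}m_y\iota\in\Sp(W)$; commutativity of $K$ gives $j(y)\delta=\delta j(y)$, so $j\colon K^{1}\to Z_{\Sp(W)}(\delta)$ is a homomorphism of $F$-tori. It extends to the closed immersion $R_{K/F}\Gm=K^{\times}\hookrightarrow\GL(W)$, the unit group of the subalgebra $K\subset\End_F(W)$, hence $j$ is itself a closed immersion. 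Since $K^{1}$ is a connected torus of dimension $n$ and $Z_{\Sp(W)}(\delta)$ is a maximal torus of dimension $n$, the closed immersion $j$ is surjective, therefore an isomorphism. (All the trace-form and norm manipulations above are valid in any characteristic $\neq2$ because $K/F$ is étale throughout.)
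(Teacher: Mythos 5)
The paper does not give its own proof of this proposition — it labels the result "standard" and points to \cite{SS70}, \cite{Wa01}, \cite{Li11} — so I am comparing your argument against the standard route those references encode. Your argument follows that route faithfully: pass to $K=F[\delta]\subset\End_F(W)$ using regularity in $\GL(W)$, identify the symplectic adjoint with an involution $\tau$ on $K$ via $\delta^*=\delta^{-1}$, produce $c$ from the trace form, and track the $N_{K/K^\sharp}(K^\times)$-indeterminacy coming from the choice of $K$-module isomorphism $W\simeq K$. All of that is correct, and the bookkeeping for well-definedness, surjectivity and injectivity is complete.

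The one place where the argument has a genuine gap is the final sentence. You write that $Z_{\Sp(W)}(\delta)$ is \emph{a maximal torus} of dimension $n$, and from this and the closed immersion $j\colon K^1\hookrightarrow Z_{\Sp(W)}(\delta)$ you conclude surjectivity by a dimension count. But for a regular semisimple element, what is automatic is only that $Z_{\Sp(W)}(\delta)^\circ$ is a maximal torus; that $Z_{\Sp(W)}(\delta)$ itself is connected (equivalently, a torus) is precisely the strong regularity/connectedness statement that the paper records immediately after the proposition as a \emph{consequence} of it ("In particular $Z_{\Sp(W)}(\delta)$ is always connected\ldots"). So your dimension count only yields $K^1\simeq Z_{\Sp(W)}(\delta)^\circ$, unless you separately invoke Steinberg's connectedness theorem for the simply connected group $\Sp(W)$ — which you do not cite, and which would make the "in particular" remark circular in spirit. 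The clean fix is already implicit in what you wrote: since $\delta$ is regular in $\GL(W)$ and $K=F[\delta]$, one has $Z_{\GL(W)}(\delta)=K^\times$, hence $Z_{\Sp(W)}(\delta)=K^\times\cap\Sp(W)$; and your computation $h(m_ku|m_kv)=\Tr_{K/F}\bigl(N_{K/K^\sharp}(k)\tau(u)vc\bigr)$ combined with non-degeneracy of the trace form shows that $m_k$ preserves $h$ iff $N_{K/K^\sharp}(k)=1$. This gives $Z_{\Sp(W)}(\delta)=j(K^1)$ on the nose, no connectedness input needed, and connectedness then drops out as the corollary the paper intends.
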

In particular $Z_{\Sp(W)}(\delta)$ is always connected; this is also known from the fact that $\Sp(W)$ is simply connected.

The datum $x$ is not used in constructing $(K,h)$. There is a similar notion of equivalence among data $(K, K^\sharp, c)$. Given such a datum, $j: y \mapsto \iota^{-1} m_y \iota$ embeds $K^1$ as a maximal $F$-torus $j(K^1) \subset \Sp(W)$; its conjugacy class is independent of the choice of $\iota$.

\begin{proposition}\label{prop:parameter-tori}
	The assignment $(K, K^\sharp, c) \mapsto j$ is a bijection between the equivalence classes of data $(K, K^\sharp, c)$ and the conjugacy classes of embeddings of maximal $F$-tori $j: T \hookrightarrow \Sp(W)$.
\end{proposition}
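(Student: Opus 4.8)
The plan is to deduce Proposition \ref{prop:parameter-tori} from Proposition \ref{prop:reg-ss-parameters} by ``forgetting the datum $x$''. The bridge between the two statements is the map $\delta \mapsto Z_{\Sp(W)}(\delta)$ from regular semisimple conjugacy classes in $\Sp(W)$ to $\Sp(W)(F)$-conjugacy classes of maximal $F$-tori, together with the remark that every maximal $F$-torus $T \subset \Sp(W)$ carries a regular semisimple $F$-point: indeed an $F$-torus is a unirational $F$-variety, so (as $F$ is infinite) $T(F)$ is Zariski dense in $T$, and since $T_{\mathrm{reg}}$ is a nonempty open subscheme of $T$ we get $T_{\mathrm{reg}}(F) \neq \emptyset$. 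Throughout I read ``conjugacy class of an embedding $j: T \hookrightarrow \Sp(W)$'' as ``$\Sp(W)(F)$-conjugacy class of the subtorus $j(T)$'', which is harmless since the abstract torus is recovered as $K^1$ on the other side.

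First I would settle that the assignment is well defined on equivalence classes. Given an equivalence $\varphi: K \rightiso L$ of data $(K, K^\sharp, c)$ and $(L, L^\sharp, d)$ (i.e. $\varphi$ preserves involutions and $\varphi(c)d^{-1} \in N_{L/L^\sharp}(L^\times)$), choose $x \in K^1(F)$ generating $K$ over $F$ — possible because $K^1(F)$ is Zariski dense in the $n$-dimensional torus $K^1$ and this is an open condition — and put $y := \varphi(x)$. Then $\varphi$ is also an equivalence of the augmented data $(K, K^\sharp, x, c)$ and $(L, L^\sharp, y, d)$, so by Proposition \ref{prop:reg-ss-parameters} the elements $\iota_K^{-1} m_x \iota_K$ and $\iota_L^{-1} m_y \iota_L$ are $\Sp(W)(F)$-conjugate; taking centralizers shows that the embeddings $j_K, j_L$ attached to $(K,K^\sharp,c)$ and $(L,L^\sharp,d)$ have $\Sp(W)(F)$-conjugate images.

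For surjectivity, given an embedding $j: T \hookrightarrow \Sp(W)$ I pick $\delta \in (jT)_{\mathrm{reg}}(F)$, so $Z_{\Sp(W)}(\delta) = jT$; Proposition \ref{prop:reg-ss-parameters} attaches to the class of $\delta$ a datum $(K, K^\sharp, x, c)$ and an isomorphism $K^1 \rightiso Z_{\Sp(W)}(\delta) = jT$ of the prescribed shape $z \mapsto \iota^{-1} m_z \iota$, which is exactly the embedding attached to $(K,K^\sharp,c)$, with image $jT$. For injectivity, suppose the embeddings attached to $(K,K^\sharp,c)$ and $(L,L^\sharp,d)$ have images conjugated by some $g \in \Sp(W)(F)$; choose a regular $\delta$ in the first image, so that $\eta := g\delta g^{-1}$ is a regular element of the second. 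By the construction the full data of $\delta$ and $\eta$ are $(K, K^\sharp, x, c)$ and $(L, L^\sharp, y, d)$ for suitable $x,y$, and since $\delta, \eta$ are $\Sp(W)(F)$-conjugate, Proposition \ref{prop:reg-ss-parameters} produces $\varphi: K \rightiso L$ preserving involutions with $\varphi(x) = y$ and $\varphi(c)d^{-1} \in N_{L/L^\sharp}(L^\times)$; discarding the condition on $x$ gives $(K,K^\sharp,c) \sim (L,L^\sharp,d)$.

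I expect no substantive obstacle beyond Proposition \ref{prop:reg-ss-parameters} and the density of rational points on $F$-tori; the only points requiring a little care are bookkeeping ones — verifying that the datum Proposition \ref{prop:reg-ss-parameters} attaches to $\iota^{-1} m_x \iota$ is literally $(K,K^\sharp,x,c)$ rather than a Weyl-twisted representative (this is the defining property of the bijection $\mathcal{O}$ there), and keeping the two meanings of ``equivalence of data'' (with and without $x$) straight.
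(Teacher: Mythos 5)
Your proof is correct and follows essentially the same route as the paper's: produce a strongly regular rational point of the torus via Zariski density, translate between maximal tori and regular semisimple conjugacy classes through the centralizer map, and invoke Proposition~\ref{prop:reg-ss-parameters} in both directions. Your reading of ``conjugacy classes of embeddings'' as conjugacy classes of subtori is the one the paper's own proof effectively employs (the surjectivity step passes through an arbitrary torus isomorphism $j_K^{-1}j: T \rightiso K^1$ to ``implement the equivalence''), and it is what makes the two sides match in cardinality, so the interpretational point you flag is harmless; the paper's subsequent remark that the torus classification is strictly coarser sits a little uneasily with this, but that does not affect the argument.
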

\begin{proof}
	To show surjectivity, given $j$ there exists $t \in T(F)$ such that $j(t) \in \Sp(W)_\text{reg}$ by Zariski density. By Proposition \ref{prop:reg-ss-parameters}, the class of $j(t)$ is parameterized by some $(K, K^\sharp, x, c)$ that comes with an embedding $j_K: K^1 \to \Sp(W)$; upon conjugating $j$, we may assume $j_K(x) = j(t)$. Taking centralizers yields $\Image(j) = \Image(j_K)$, hence we deduce $j_K^{-1} j: T \rightiso K^1$ that implements the equivalence between $j$ and $j_K$.

	As for injectivity, let $j, j'$ be embeddings arising from data $(K, K^\sharp, c)$ and $(K', K'^\sharp, c')$. If $j, j'$ are conjugate, there will exist $x \in K^1$ and $x' \in K'^1$ such that $j(x)$ and $j'(x')$ are conjugate in $\Sp(W)_\text{reg}$, which implies the equivalence between $(K, K^\sharp, x, c)$ and $(K', K'^\sharp, x', c')$ by Proposition \ref{prop:reg-ss-parameters}.
\end{proof}
Note that the classification of conjugacy classes of maximal tori $T \subset \Sp(W)$ is coarser than that embeddings.

\begin{remark}\label{rem:parameter-GSp}
	If $\lrangle{\cdot|\cdot}$ becomes $a\lrangle{\cdot|\cdot}$, then for the $\iota: W \rightiso K$ above to be an isometry, the datum $(K, K^\sharp, x, c)$ must be replaced by $(K, K^\sharp, x, ac)$, i.e. replace $h$ by $ah$. In a similar vein, suppose that the $\iota^{-1} m_y \iota \in \Sp(W)$ parameterized by $y \in K^1$ becomes $g_1 \iota^{-1} m_y \iota g_1^{-1}$, with $g_1 \in \GSp(W)$ having similitude factor $a$; to make $\iota g_1^{-1}: W \to K$ an isometry, we must pass to $(K, K^\sharp, x, ac)$.
\end{remark}

Finally, the action of $\Omega(G,T)(F) := (N_G(T)/T)(F)$ on $T(F) \simeq K^1$ has the algebraic description below.
\begin{proposition}\label{prop:big-Weyl-action}
	The action of $\Omega(G,T)(F)$ on $T$ is the same as that of $\Aut(K, \tau)$ on $K^1$, where $(K, \tau)$ is seen as an étale $F$-algebra with involution.
\end{proposition}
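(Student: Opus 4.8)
The plan is to construct a single isomorphism at the level of $\bar{F}$-points, $\Gamma_F$-equivariantly (equivalently, an isomorphism of the associated sheaves over $\Spec(F)_{\text{ét}}$), and then take $\Gamma_F$-invariants; the construction is an instance of Skolem--Noether. Throughout I use an isometry $\iota\colon W \rightiso (K,h)$ with $h$ as in \eqref{eqn:K-symp-form} to identify $T$ with $\{m_y : y \in K^1\}$ and to regard $K$, via $a \mapsto \iota^{-1} m_a \iota$, as an $F$-subalgebra of $\End_F(W)$, which is then defined over $F$. Two structural inputs are needed. First, $Z_{\End_F(W)}(T) = K$: over $\bar F$ the torus $T$ contains a strongly regular semisimple element, whose eigenvalues on $W_{\bar F} := W\otimes_F\bar F$ are pairwise distinct, so its centralizer in $\End_{\bar F}(W_{\bar F}) \cong M_{2n}(\bar F)$ is the corresponding diagonal algebra $\bar K := K\otimes_F\bar F$; in particular $\bar K$ is maximal commutative and $Z_{\GL(W_{\bar F})}(\bar K) = \bar K^\times$. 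Second, $\bar K^\times \cap \Sp(W) = \bar K^1$, which follows from $h(m_k u | m_k v) = \Tr_{K/F}(\tau(k)k\,\tau(u)vc)$ together with non-degeneracy of $\Tr_{K/F}$. I also record, directly from \eqref{eqn:K-symp-form}, that $\tau$ is the adjoint involution of $h$ on $K$, i.e. $h(au|v) = h(u|\tau(a)v)$ for $a\in K$.

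Next, for $g \in N_{\Sp(W)}(T)(\bar F)$ I set $\rho_g := \Ad(g)|_{\bar K}$. Since $g$ normalizes $T$ it preserves $Z_{\End}(T) = \bar K$, so $\rho_g$ is an $\bar F$-algebra automorphism of $\bar K$; combining $g$-invariance of $h$ with the adjoint-involution property shows $\rho_g\tau = \tau\rho_g$, so $\rho_g \in \Aut(\bar K,\tau)$. The assignment $g \mapsto \rho_g$ is a homomorphism $N_{\Sp(W)}(T)(\bar F) \to \Aut(\bar K,\tau)$ with kernel $Z_{\GL(W_{\bar F})}(\bar K)\cap\Sp(W) = \bar K^\times\cap\Sp(W) = \bar K^1 = T(\bar F)$. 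For surjectivity, given $\rho \in \Aut(\bar K,\tau)$, Skolem--Noether yields $g_0 \in \GL(W_{\bar F})$ with $\Ad(g_0)|_{\bar K} = \rho$, unique up to right multiplication by $\bar K^\times$; the pulled-back form $(u,v) \mapsto h(g_0 u | g_0 v)$ is again $\tau$-balanced and antisymmetric over $\bar K$, hence equals the form attached to some $c'$ in the $(-1)$-eigenspace of $\tau$ on $\bar K$, which is a free $\bar K^\sharp$-module of rank one; thus $c' = sc$ with $s\in(\bar K^\sharp)^\times$. Because $N_{\bar K/\bar K^\sharp}\colon \bar K^\times \to (\bar K^\sharp)^\times$ is surjective over $\bar F$, I may pick $k$ with $N_{\bar K/\bar K^\sharp}(k) = s^{-1}$ and replace $g_0$ by $g := g_0 k$; then $\rho_g = \rho$ still holds and $h(gu|gv) = h(u|v)$, so $g\in\Sp(W)$, and $g$ normalizes $T = \bar K^\times\cap\Sp(W)$ since $\Ad(g)$ preserves $\bar K$. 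This produces an isomorphism $\Phi\colon \Omega(G,T)(\bar F) = N_{\Sp(W)}(T)(\bar F)/T(\bar F) \rightiso \Aut(\bar K,\tau)$, and unwinding the definitions, $w = gT$ acts on $T(\bar F) = \bar K^1$ through $\rho_g$; i.e. $\Phi$ intertwines the two actions. (As a check, over $\bar F$ one recovers $\Aut(\bar K,\tau) = \{\pm1\}^n \rtimes \mathfrak{S}_n$, the absolute Weyl group.)

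It remains to descend to $F$. The groups $\Sp(W)\supset T$, the subalgebra $K\subset\End_F(W)$, the involution $\tau$ and the embedding $j$ are all defined over $F$, so $\Phi$ is $\Gamma_F$-equivariant for the natural actions on $\Omega(G,T)(\bar F)$ and on $\Aut(\bar K,\tau) = \Aut_{\bar F}(K\otimes_F\bar F,\,\tau\otimes 1)$. Passing to $\Gamma_F$-invariants (equivalently, global sections of the corresponding étale sheaves over $\Spec(F)$) gives $\Omega(G,T)(F) \rightiso \Aut_{\bar F}(\bar K,\tau)^{\Gamma_F} = \Aut(K,\tau)$, compatibly with the actions on $T(F) = K^1(F)$; here one invokes Galois descent for homomorphisms of finite $F$-algebras and that the $\tau$-equivariance condition descends. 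Alternatively, one runs the previous paragraph over an arbitrary $F$-algebra, obtaining directly an isomorphism of $N_{\Sp(W)}(T)/T$ with the automorphism group scheme of $(K,\tau)$, Skolem--Noether being applied étale-locally.

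I expect the main obstacle to be the surjectivity step, specifically upgrading the Skolem--Noether element from $\GL(W)$ to $\Sp(W)$: one must verify that the residual scaling freedom by $\bar K^\times$ precisely absorbs the similitude discrepancy between $h$ and its pullback, which rests on the surjectivity of $N_{\bar K/\bar K^\sharp}$ over $\bar F$ and on the rank-one freeness of $\{z\in\bar K : \tau z = -z\}$ over $\bar K^\sharp$. A secondary subtlety worth flagging is that $\Omega(G,T)(F)$ is in general strictly larger than $N_{\Sp(W)}(T)(F)/T(F)$, so the identification must genuinely be carried out on $\bar F$-points (or at the level of group schemes) and then descended, rather than over $F$ directly.
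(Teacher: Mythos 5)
Your argument is correct, and it runs along the same lines as the paper's proof (establish the identification on $\bar F$-points, check $\Gamma_F$-equivariance, and conclude by Galois descent), but the paper proceeds by a much shorter route: it simply observes that over $\bar F$ one has $K\otimes_F\bar F\simeq(\bar F\times\bar F)^n$, so that $\Aut((K,\tau)\otimes_F\bar F)\simeq\{\pm1\}^n\rtimes\mathfrak S_n$ visibly agrees with the absolute Weyl group already described in \S\ref{sec:Sp}, and leaves the compatibility with the actions on $T$ as a routine check. You instead construct the isomorphism intrinsically, sending $g\in N_{\Sp(W)}(T)(\bar F)$ to $\Ad(g)|_{\bar K}$, with the kernel computed as $\bar K^\times\cap\Sp(W)=T(\bar F)$ and surjectivity obtained from Skolem--Noether plus the normalization of the similitude factor by an element of $\bar K^\times$ (using surjectivity of $N_{\bar K/\bar K^\sharp}$ over $\bar F$). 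Your approach buys a basepoint-free, coordinate-free identification that visibly intertwines the actions and descends without having to recheck compatibility of two independently obtained descriptions; the paper's buys brevity by leaning on the explicit root-system picture already set up. One small remark: the form of Skolem--Noether you invoke applies to a commutative (non-simple) subalgebra, so it is cleaner to phrase it, as you in fact implicitly do, as the statement that two embeddings $\bar K\rightrightarrows\End_{\bar F}(W_{\bar F})$ making $W_{\bar F}$ a free $\bar K$-module of rank one are conjugate; your observation that the eigenvalue multiplicities are all $1$ is exactly what guarantees the rank-one freeness.
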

\begin{proof}
	We have $K \otimes_F \bar{F} \simeq (\bar{F} \times \bar{F})^n$, so the description of Weyl groups in \S\ref{sec:Sp} ensures that $\Omega(G,T)(\bar{F}) = \Aut((K, \tau) \otimes_F \bar{F})$. This identification respects $\Gamma_F$-actions, so we conclude by Galois descent.
\end{proof}

\subsection{Stable conjugacy: parameters}\label{sec:Sp-parameters}
Let $G$ be a reductive $F$-group. We say $\delta, \eta \in G_\text{reg}(F)$ are \emph{stably conjugate}, written as $\delta \stackrel{\text{st}}{\sim} \eta$, if they are conjugate in $G(\bar{F})$. Define the variety $\mathcal{T}(\delta, \eta) := \{ g \in G: g\delta g^{-1} = \eta\}$, which admits an $F$-structure. It is nonempty (resp. has an $F$-point) if and only if $\delta$ and $\eta$ are stably conjugate (resp. conjugate).\index{stable conjugacy}

\begin{proposition}\label{prop:canonical-isom}
	Let $\delta, \eta \in G_\mathrm{reg}(F)$. Define the $F$-tori of centralizers
	\[ R := G_\delta, \quad S := G_\eta. \]
	If $\delta$ and $\eta$ are stably conjugate, then there is a canonical isomorphism $\Ad(g): R \rightiso S$ between $F$-tori, where $g \in \mathcal{T}(\delta, \eta)(\bar{F})$ is arbitrary.
\end{proposition}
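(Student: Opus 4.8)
The plan is to exploit the fact that $\mathcal{T}(\delta,\eta)$ is a torsor under the abelian group $R$; once this is noticed, the statement is purely formal. First I would fix any $g \in \mathcal{T}(\delta,\eta)(\bar{F})$ and observe that $\Ad(g)$ conjugates $Z_G(\delta)$ onto $Z_G(\eta)$; since $\delta,\eta$ are strongly regular these centralizers are the tori $R$ and $S$, so $\Ad(g)\colon R \rightiso S$ is an isomorphism of $\bar{F}$-tori. The point is to upgrade this to a \emph{canonical} isomorphism defined over $F$.

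Next I would establish independence of the choice of $g$. If $g' \in \mathcal{T}(\delta,\eta)(\bar{F})$ is another element, then $g^{-1}g' \in Z_G(\delta)(\bar{F}) = R(\bar{F})$, so $g' = gt$ with $t \in R(\bar{F})$. For $r \in R(\bar{F})$ we then have $\Ad(g')(r) = g t r t^{-1} g^{-1} = g r g^{-1} = \Ad(g)(r)$, using that $R$ is a torus, hence abelian, so $t$ and $r$ commute. Thus the isomorphism $R \rightiso S$ obtained from $g$ does not depend on $g$; call it simply $\Ad(g)$.

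Finally I would descend to $F$ by Galois equivariance. Since $\delta,\eta \in G(F)$, for each $\sigma \in \Gamma_F$ the conjugate $\sigma(g)$ again lies in $\mathcal{T}(\delta,\eta)(\bar{F})$. Applying $\sigma$ to the morphism $\Ad(g)\colon R \to S$ of $\bar{F}$-schemes — legitimate as $R$ and $S$ are defined over $F$ — yields $\Ad(\sigma(g))$, which by the previous step equals $\Ad(g)$. Hence $\Ad(g)$ is $\Gamma_F$-invariant and therefore comes from an isomorphism of $F$-tori $R \rightiso S$, which is canonical by construction.

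I do not anticipate a genuine obstacle here: the argument is a standard torsor-plus-descent manipulation. The only points worth a word are that the resulting map is automatically an isomorphism of tori (not merely of $F$-group schemes), and that when $g \in G(F)$ this canonical isomorphism is just the ordinary $\Ad(g)$; it is this canonicity that later legitimizes transporting auxiliary structures such as the isogeny $\iota_{Q,m}$ along stable conjugacy, as in \eqref{eqn:isogeny-Ad}.
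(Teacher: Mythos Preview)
Your argument is correct and follows essentially the same route as the paper: both use that $\mathcal{T}(\delta,\eta)$ is a torsor under the commutative group $R$ to deduce that $\Ad(g)$ is independent of $g$, and then descend to $F$. The paper phrases this in bitorsor language and leaves the Galois descent implicit, while you spell out the $\Gamma_F$-equivariance explicitly, but there is no substantive difference.
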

\begin{proof}
	Indeed, $\mathcal{T}(\delta, \eta)$ is an $(S,R)$-bitorsor, meaning that it is simultaneously a left (resp. right) torsor under $S$ (resp. $R$) satisfying $s(gr) = (sg)r$ for all $g$ in $\mathcal{T}(x,y)$. Therefore $g$ defines an isomorphism $\sigma: S \rightiso R$ such that $sg = g\sigma(r)$, and this is seen to be independent of $g$ as $S,R$ are both commutative. In our case $\sigma = \Ad(g): S \rightiso R$ for any $g \in \mathcal{T}(\delta, \eta)(\bar{F})$, and this isomorphism descends to $F$.
\end{proof}

We say two embeddings of maximal $F$-tori $j,j': S \hookrightarrow G$ are \emph{stably conjugate} if there exists $g \in G(\bar{F})$ inducing $j' = \Ad(g) \circ j$. The foregoing Proposition entails that stable conjugacy of strongly regular semisimple elements is realized by stable conjugacy of embeddings of maximal tori, and the converse is evidently true.

\begin{definition}
	For any subgroup $S \subset G$, define the pointed set
	\[ \mathfrak{D}(S, G; F) := \Ker\left[ H^1(F, S) \to H^1(F, G) \right]. \]
\end{definition}

Consider the situation of stable conjugacy $\eta = g\delta g^{-1}$ as before, where $g \in G(\bar{F})$. Put $T := G_\delta$. The element $g$ induces a $T$-valued $1$-cocycle $c_g: \Gamma_F \ni \tau \mapsto g^{-1}\tau(g)$. It is well-known (eg. \cite[\S 27]{ArIntro}) that the recipe induces a bijection
\begin{equation}\label{eqn:inv} \begin{aligned}
	\text{inv}(\delta, \cdot): \left\{ \eta \in G(F): \delta \stackrel{\text{st}}{\sim} \eta \right\} \bigg/ \text{conj} & \longrightiso \mathfrak{D}(T, G; F) \\
	\eta = g\delta g^{-1} & \longmapsto \text{inv}(\delta, \eta) := [c_g].
\end{aligned}\end{equation}
In fact, $[c_g] \in H^1(F, T)$ parameterizes the right $T$-torsor $\mathcal{T}(\delta, \eta)$. By fixing a stable conjugacy class $\mathcal{O} \subset G(F)$, we may work inside $\varprojlim_\delta H^1(F, G_\delta)$ for $\delta$ ranging over $\mathcal{O}$, at least as a device to simplify expressions. In this setting, one verifies readily that for $\delta, \delta', \delta'' \in \mathcal{O}$
\begin{equation}\label{eqn:st-conj-in-stages}\begin{aligned}
	\text{inv}(\delta, \delta'') & = \text{inv}(\delta, \delta') + \text{inv}(\delta', \delta''), \quad \text{and} \\
	\text{inv}(\delta, \delta') & = -\text{inv}(\delta', \delta).
\end{aligned}\end{equation}
Likewise, we can measure the relative position of stably conjugate embeddings $j,j': S \hookrightarrow G$ by $\text{inv}(j, j') \in H^1(F, S)$: it is still given by $[c_g]$ when $j' = \Ad(g) \circ j$. It has the same properties above. \index{inv@$\mathrm{inv}(\cdot, \cdot)$}

Let us turn to a special instance of stable conjugacy. The adjoint group $G_\text{ad}(F)$ acts on $G(F)$ and $g \delta g^{-1}$ is stably conjugate to $\delta$ whenever $g \in G_\text{ad}(F)$. On the other hand, the short exact sequence $1 \to Z_G \to G \xrightarrow{\pi} G_\text{ad} \to 1$ induces $G_\text{ad}(F)/\pi(G(F)) \rightiso \mathfrak{D}(Z_G, G; F)$.
\begin{proposition}\label{prop:adjoint-rel-pos}
	For all $\delta \in G_\mathrm{reg}(F)$ with $T = G_\delta$ and all $g \in G_\mathrm{ad}(F)$, we have
	\[ \mathrm{inv}(\delta, g\delta g^{-1}) = \text{the image of $g$ under}\; G_\mathrm{ad}(F) \to \mathfrak{D}(Z_G, G; F) \to \mathfrak{D}(T, G; F). \]
\end{proposition}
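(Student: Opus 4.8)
The plan is to unwind the definition of $\mathrm{inv}(\delta,\cdot)$ from \eqref{eqn:inv} and compare it with the connecting map of the central isogeny $1 \to Z_G \to G \xrightarrow{\pi} G_\mathrm{ad} \to 1$; the point is that the cocycle computing $\mathrm{inv}(\delta, g\delta g^{-1})$ automatically takes values in $Z_G$ once $g$ comes from $G_\mathrm{ad}(F)$, so the class it defines in $H^1(F,T)$ is visibly the pushforward of the class it already defines in $H^1(F,Z_G)$.

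First I would fix a lift $\bar{g} \in G(\bar{F})$ of $g$ under $\pi$; this exists since $\pi$ is surjective on $\bar{F}$-points ($H^1(\bar{F}, Z_G)=0$). As $Z_G$ is central, $\eta := \Ad(\bar{g})(\delta)$ is independent of the choice of lift, and it is $\Gamma_F$-fixed because $g \in G_\mathrm{ad}(F)$ and $\delta \in G(F)$ are rational; hence $\eta \in G(F)$ and $\eta = g\delta g^{-1}$ in the sense of the $G_\mathrm{ad}(F)$-action. Since $\bar{g}\delta\bar{g}^{-1} = \eta$, we have $\bar{g} \in \mathcal{T}(\delta,\eta)(\bar{F})$, so \eqref{eqn:inv} gives $\mathrm{inv}(\delta,\eta) = [c_{\bar{g}}]$, where $c_{\bar{g}}(\tau) = \bar{g}^{-1}\tau(\bar{g})$ is a $1$-cocycle valued a priori in $T = G_\delta$.

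The key observation is that $\pi\bigl(c_{\bar{g}}(\tau)\bigr) = g^{-1}\tau(g) = g^{-1}g = 1$, so $c_{\bar{g}}$ is in fact valued in $Z_G(\bar{F}) \subset T(\bar{F})$. By definition of the connecting homomorphism $\partial\colon G_\mathrm{ad}(F) \to H^1(F, Z_G)$ attached to $1 \to Z_G \to G \to G_\mathrm{ad} \to 1$, the class $[c_{\bar{g}}] \in H^1(F,Z_G)$ equals $\partial(g)$; since $c_{\bar{g}}$ is a coboundary after pushing into $G$, this class lies in $\mathfrak{D}(Z_G,G;F)$ and is precisely the image of $g$ under $G_\mathrm{ad}(F) \to \mathfrak{D}(Z_G,G;F)$ (equivalently, under the isomorphism $G_\mathrm{ad}(F)/\pi(G(F)) \rightiso \mathfrak{D}(Z_G,G;F)$ recalled before the statement). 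Finally, the map $\mathfrak{D}(Z_G,G;F) \to \mathfrak{D}(T,G;F)$ is the one induced on $H^1$ by the inclusion $Z_G \hookrightarrow T$, and it sends $[c_{\bar{g}}]$ viewed in $H^1(F,Z_G)$ to $[c_{\bar{g}}]$ viewed in $H^1(F,T)$, which is $\mathrm{inv}(\delta,\eta)$ by the previous paragraph. This yields the asserted equality.

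I do not expect a genuine obstacle: the argument is a diagram chase unwinding two definitions. The only points needing a little care are that $T$ is not normal in $G$, so one must genuinely route the invariant through $Z_G$ rather than look for a connecting map valued directly in $H^1(F,T)$; and that the left/right conventions in the definitions of $c_g$ and of $\partial$ be matched, which is automatic because $\bar{g}^{-1}\tau(\bar{g})$ is central (hence equals $\tau(\bar{g})\bar{g}^{-1}$) as soon as $g \in G_\mathrm{ad}(F)$.
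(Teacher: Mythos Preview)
Your proof is correct and is precisely the unraveling the paper intends: the paper's own proof is the single sentence ``Unravel the definition of $G_\text{ad}(F) \to H^1(F, Z_G)$,'' and your argument carries this out in full detail.
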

\begin{proof}
	Unravel the definition of $G_\text{ad}(F) \to H^1(F, Z_G)$.
\end{proof}

Now specialize to $G := \Sp(W)$. The following result is also standard, see for instance \cite[\S 3]{Li11}.
\begin{proposition}\label{prop:D-description}
	In terms of the parameterization of \ref{prop:reg-ss-parameters}, two regular semisimple conjugacy classes $\mathcal{O}(K, K^\sharp, x, c)$ and $\mathcal{O}(L, L^\sharp, y, d)$ in $G = \Sp(W)$ are stably conjugate if and only if there exists an isomorphism of $F$-algebras $K \rightiso L$ that
	\begin{compactitem}
		\item respects the involutions and
		\item maps $x$ to $y$.
	\end{compactitem}
	In other words, passing to stable conjugacy amounts to discarding the datum $c$. Choose $\delta \in \mathcal{O}(K, K^\sharp, x, c)$ and set $T := G_\delta$, decompose $K^\sharp = \prod_{i \in I} K_i^\sharp$ and put $I_0 := \left\{ i \in I: K_i \;\text{is a field} \right\}$. Then there are canonical isomorphisms of groups
	\[ \mathfrak{D}(T, G; F) = H^1(F, T) \rightiso K^{\sharp, \times}/N_{K/K^\sharp}(K^\times) \rightiso \{\pm 1\}^{I_0}. \]
	Finally, forgetting the datum $x$ gives a parameterization of stable conjugacy classes of embeddings of maximal $F$-tori.
\end{proposition}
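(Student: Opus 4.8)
The plan is to reduce all three assertions to the parametrization of Proposition~\ref{prop:reg-ss-parameters}, applied over $F$ and over $\bar{F}$, together with routine Galois cohomology carried out factor by factor along the decomposition $K^\sharp = \prod_{i \in I} K_i^\sharp$. The structural observation that drives everything is this: since $\delta$ is strongly regular, $K = F[\delta]$ is generated by $x$, so from $\dim_F K = 2n$ the minimal polynomial of $x$ over $F$ equals its characteristic polynomial $\chi_\delta$ on $W$ (which is separable, and palindromic: $X^{2n}\chi_\delta(X^{-1}) = \chi_\delta(X)$); hence $K \cong F[X]/(\chi_\delta)$ with $x \leftrightarrow X$. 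Moreover $\tau$ is the unique $F$-algebra homomorphism $K \to K$ sending $x$ to $x^{-1}$ — it is automatically an involution and $K^\sharp$ is its fixed subalgebra — so $\tau$ and $K^\sharp$ are determined by the pair $(K, x)$. Consequently an $F$-algebra isomorphism $K \rightiso L$ with $x \mapsto y$ exists if and only if $\chi_\delta = \chi_\eta$, and any such isomorphism automatically intertwines the involutions and restricts to $K^\sharp \rightiso L^\sharp$.

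For the stable conjugacy criterion: $\delta \stackrel{\text{st}}{\sim} \eta$ means $\delta$ and $\eta$ are conjugate over $\bar{F}$; applying Proposition~\ref{prop:reg-ss-parameters} over $\bar{F}$ to the base-changed elements, this holds precisely when the base-changed data $(K \otimes_F \bar{F}, \dots, x)$ and $(L \otimes_F \bar{F}, \dots, y)$ are equivalent over $\bar{F}$. The point is that the clause on $c$ in the equivalence relation, $\varphi(c)d^{-1} \in N_{L/L^\sharp}((\cdot)^\times)$, becomes vacuous over $\bar{F}$: each $K_i \otimes_F \bar{F}$ and $K_i^\sharp \otimes_F \bar{F}$ is a product of copies of $\bar{F}$ on which the norm map is surjective, so $c/d \in (K^\sharp)^\times$ is automatically a norm from $(K \otimes_F \bar{F})^\times$ there. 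Hence $\delta \stackrel{\text{st}}{\sim} \eta$ iff $(K \otimes \bar{F}, x) \cong (L \otimes \bar{F}, y)$ over $\bar{F}$ iff $\chi_\delta = \chi_\eta$ (an equality of polynomials over $F$, testable over $\bar{F}$) iff $(K, K^\sharp, x) \cong (L, L^\sharp, y)$ over $F$; this is exactly the statement that passing to stable conjugacy discards $c$.

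For the cohomological identifications: $G = \Sp(W)$ is the automorphism group of a symplectic $F$-space, and over any field all symplectic spaces of a given dimension are isometric, so $H^1(F, G) = 1$ and therefore $\mathfrak{D}(T, G; F) = H^1(F, T)$. Using $T \cong K^1 = \prod_{i \in I} K_i^1$ from Proposition~\ref{prop:reg-ss-parameters} I would compute $H^1(F, K_i^1)$ in the two cases of the decomposition. When $i \notin I_0$ (type (b)) one has $K_i^1 \cong R_{K_i^\sharp/F}\Gm$, so $H^1(F, K_i^1) = H^1(K_i^\sharp, \Gm) = 0$ by Shapiro and Hilbert~90, matching $K_i^{\sharp,\times}/N_{K_i/K_i^\sharp}(K_i^\times) = 1$ (the norm is surjective there). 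When $i \in I_0$ (type (a)), $K_i^1 = R_{K_i^\sharp/F}U_i$ with $U_i$ the norm-one torus of the quadratic field extension $K_i/K_i^\sharp$ over $K_i^\sharp$; the exact sequence $1 \to U_i \to R_{K_i/K_i^\sharp}\Gm \xrightarrow{N} \Gm \to 1$ over $K_i^\sharp$ together with Hilbert~90 for $R_{K_i/K_i^\sharp}\Gm$ yields $H^1(K_i^\sharp, U_i) \cong K_i^{\sharp,\times}/N_{K_i/K_i^\sharp}(K_i^\times)$, and Shapiro gives $H^1(F, K_i^1) \cong K_i^{\sharp,\times}/N_{K_i/K_i^\sharp}(K_i^\times)$. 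Taking products over $i$ gives $H^1(F, T) \cong K^{\sharp,\times}/N_{K/K^\sharp}(K^\times)$, and a brief unwinding shows this agrees with the cocycle recipe of \eqref{eqn:inv}. Finally, for $i \in I_0$ local class field theory identifies $K_i^{\sharp,\times}/N_{K_i/K_i^\sharp}(K_i^\times)$ with $\Gal{K_i/K_i^\sharp}$, of order $2$ (for $K_i^\sharp = \R$, $K_i = \CC$ this reads $\R^\times/\R_{>0}$), which delivers the isomorphism with $\{\pm 1\}^{I_0}$.

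For the final assertion I would combine Proposition~\ref{prop:parameter-tori} — which parametrizes conjugacy classes of embeddings of maximal $F$-tori by the data $(K, K^\sharp, c)$ — with the first part just proved: running the same $\bar{F}$-base-change argument at the level of embeddings (equivalently, on a regular point of the source torus, for which stable conjugacy of embeddings is equivalent to stable conjugacy of the associated regular elements) shows that passing from ordinary to stable conjugacy of embeddings discards exactly $c$, leaving the pair $(K, K^\sharp)$; since this is the element-datum $(K, K^\sharp, x, c)$ with both $x$ and $c$ forgotten, the assertion follows. The one point that deserves genuine care is the descent inside the stable conjugacy criterion — checking both that $\bar{F}$-equivalence of the data forces an $F$-isomorphism of $(K, K^\sharp, x)$ (which is why reducing to equality of characteristic polynomials is the right mechanism, since pairs $(K, K^\sharp)$ alone are not rigid under $\bar{F}$-isomorphism) and that the datum $c$ becomes irrelevant precisely over $\bar{F}$ (in type (b) it is already irrelevant over $F$, in type (a) it is not). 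Everything else is bookkeeping along $K = \prod_i K_i$ together with standard local arithmetic.
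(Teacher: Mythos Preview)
Your proposal is correct and takes essentially the same approach as the paper: both argue that $(K,K^\sharp,x)$ is a stable invariant because $K=F[x]$ and $\tau$ is determined by $x\mapsto x^{-1}$, then observe that the norm condition on $c$ evaporates after base change (you go to $\bar{F}$ and invoke Proposition~\ref{prop:reg-ss-parameters} there; the paper passes to a finite extension and builds the isometry $t\mapsto at$ by hand), and both compute $H^1(F,T)$ via Shapiro, Hilbert~90 on the norm sequence, and local class field theory. Your characteristic-polynomial framing is a cosmetic variant of the paper's ``$K=F[x]$ depends only on the $\GL(W)$-class'' line.
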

\begin{proof}
	The algebra $K = F[x]$ depends only on the conjugacy class placed in $\GL(W)$, whilst the involution is determined by $x \mapsto x^{-1}$; the invariance of $(K, K^\sharp, x)$ follows. Conversely, given data $(K, K^\sharp, x, c)$ and $(K, K^\sharp, x, c')$ parameterizing conjugacy classes $\mathcal{O}$ and $\mathcal{O}'$, we define the corresponding symplectic forms $h,h': K \to F$. If there exists $a \in K^\sharp$ such that $c' = cN_{K/K^\sharp}(a)$, then $t \mapsto at$ will define an isomorphism $(K, h) \rightiso (K, h')$ between symplectic $F$-vector spaces. The existence of such an $a$ can be guaranteed upon base-change to a finite extension $E/F$, therefore $\mathcal{O}$, $\mathcal{O}'$ become conjugate in $\Sp(W \otimes_F E)$.
	
	As $H^1(F, G)$ is trivial, $\mathfrak{D}(T, G; F) = H^1(F, T)$ is an abelian group. Define the $K_i^\sharp$-torus $K_i^1$ (Notation \ref{nota:norm}) and use Notation \ref{nota:Weil-restriction} for Weil restriction. Shapiro's lemma \cite[Exp XXIV. Proposition 8.4]{SGA3-3} (which also works for non-separable extensions, see the Remarque 8.5 therein) implies that
	\[ H^1(F,T) \simeq H^1(F, K^1) = \prod_{i \in I} H^1 \left( F, R_{K_i^\sharp/F}(K_i^1) \right) = \prod_{i \in I} H^1(K_i^\sharp, K_i^1). \]
	It remains to show $H^1(F, K^1) = F^\times / N_{K/F}(K^\times)$ when $K$ is a $2$-dimensional étale $F$-algebra. If $K$ is a field, use the short exact sequence
	\[ 1 \to K^1 \to R_{K/F}(\Gmm{K}) \xrightarrow{N_{K/F}} \Gmm{F} \to 1 \]
	of $F$-tori to see $F^\times/N_{K/F}(K^\times) \rightiso H^1(F, K^1)$; the case $K = F \times F$ is trivial. Local class field gives the description of $F^\times/N_{K/F}(K^\times) \simeq \{\pm 1\}$.

	For the final assertion, see the proof of Proposition \ref{prop:parameter-tori}. 
\end{proof}

\begin{definition}\label{def:kappa-minus}\index{$\kappa_-$}
	Let $T \subset G$ be parameterized by $(K, K^\sharp, c)$. Denote by $\kappa_- = \kappa_-^T$ the homomorphism $H^1(F,T) \to \bmu_2$ corresponding to
	\[ \prod_{i \in I_0} K_i^{\sharp, \times} \big/ N_{K_i/K_i^\sharp}(K_i^\times) \ni (t_i)_{i \in I_0} \longmapsto \prod_{i \in I_0} \sgn_{K_i/K_i^\sharp}(t_i). \]
\end{definition}
Pairings of the form $\lrangle{ \kappa_-, \text{inv}(\delta, \delta')}$ will play an important role, where $\delta' \stackrel{\text{st}}{\sim} \delta \in T_\text{reg}(F)$. One can show that $\kappa_-$ is canonically defined, cf. the interpretation via long roots in \S\ref{sec:stable-reduction}, but this will not be needed.

\begin{lemma}\label{prop:parameter-ani}
	Conserve the notation of Proposition \ref{prop:D-description}. Then $T$ is anisotropic if and only if $I_0 = I$.
\end{lemma}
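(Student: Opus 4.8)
The plan is to translate the statement entirely into the cocharacter lattices, using the explicit description
\[ T \simeq K^1 = \prod_{i \in I} R_{K_i^\sharp/F}\left( K_i^1 \right) \]
from Proposition \ref{prop:parameter-tori} and Notation \ref{nota:norm}. Recall that an $F$-torus $S$ is anisotropic if and only if its maximal split subtorus is trivial, i.e. $X_*(S)^{\Gamma_F}$ has rank $0$. Two elementary reductions are available: first, a finite product $\prod_j S_j$ is anisotropic if and only if each $S_j$ is, since $X_*(\prod_j S_j) = \bigoplus_j X_*(S_j)$ as $\Gamma_F$-modules; second, for a finite separable extension $L/F$ and an $L$-torus $S'$, the Weil restriction $R_{L/F}(S')$ is anisotropic over $F$ if and only if $S'$ is anisotropic over $L$, because $X_*(R_{L/F} S') \simeq \mathrm{Ind}_{\Gamma_L}^{\Gamma_F} X_*(S')$ and Frobenius reciprocity gives $\left( \mathrm{Ind}_{\Gamma_L}^{\Gamma_F} X_*(S') \right)^{\Gamma_F} \simeq X_*(S')^{\Gamma_L}$.

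Applying both reductions to the displayed product, $T$ is anisotropic if and only if $K_i^1$ is anisotropic over $K_i^\sharp$ for every $i \in I$. It then remains to examine $K_i^1$ according to the dichotomy (a)/(b) recalled in \S\ref{sec:Sp}. In case (a), $K_i$ is a quadratic field extension of $K_i^\sharp$, and the exact sequence $1 \to K_i^1 \to R_{K_i/K_i^\sharp}(\Gmm{K_i}) \xrightarrow{N_{K_i/K_i^\sharp}} \Gmm{K_i^\sharp} \to 1$ identifies $X_*(K_i^1)$ with the kernel of the augmentation $\Z[\Gal{K_i/K_i^\sharp}] \to \Z$, i.e. the rank-one module on which the nontrivial element of $\Gal{K_i/K_i^\sharp}$ acts by $-1$; hence $X_*(K_i^1)^{\Gamma_{K_i^\sharp}} = 0$ and $K_i^1$ is anisotropic. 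In case (b), $K_i \simeq K_i^\sharp \times K_i^\sharp$ with $\tau$ swapping the two factors, so $K_i^1 = \{(u,v) : uv = 1\} \simeq \Gmm{K_i^\sharp}$ is split, hence not anisotropic.

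Combining the two cases, $T$ is anisotropic if and only if $K_i$ is a field for every $i \in I$, which is precisely the condition $I_0 = I$. I do not anticipate any real obstacle here: the only points needing care are the stability of anisotropy under finite products and Weil restriction, both handled by the cocharacter-lattice computations above, and the observation that a split factor in $T$ arises exactly from those indices $i$ with $K_i \simeq K_i^\sharp \times K_i^\sharp$. I would keep the argument at the level of $\Gamma$-modules rather than spelling out any Galois cohomology.
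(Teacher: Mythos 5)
Your proof is correct and takes essentially the same route as the paper: both reduce to the rank-one case via the product decomposition $T \simeq \prod_i R_{K_i^\sharp/F}(K_i^1)$ and the fact that Weil restriction preserves anisotropy (the paper phrases this via the adjunction $\Hom_F(\Gm, R_{L/F}S) \simeq \Hom_L(\Gm, S)$, you via the induced-module description of $X_*$ and Frobenius reciprocity — the same fact in two guises). You simply spell out the rank-one dichotomy that the paper dismisses as "easy."
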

\begin{proof}
	For any finite extension $L/F$ and $L$-torus $S$, the adjunction
	\[ \Hom_{F-\text{grp}}(\Gmm{F}, R_{L/F}(S)) \simeq \Hom_{L-\text{grp}}(\Gmm{L}, S) \]
	implies that $R_{L/F}(S)$ is anisotropic over $F$ if and only if $S$ is over $L$. Our assertion thus reduces to the case $\dim_F W = 2$, which is easy.
\end{proof}

Turn to the rank-one case $\dim_F W = 2$. By choosing a symplectic basis $\{e_1, e_{-1}\}$, we may identify $G$ (resp. $G_\text{ad}$) with $\SL(2)$ (resp. $\PGL(2)$). The $\PGL(2,F)$-action on $\SL(2,F)$ is realizable in $\GL(2,F)$. From $\mathfrak{D}(\mu_2, \SL(2); F) = H^1(F, \mu_2) \leftiso F^\times/F^{\times 2}$, we obtain a canonical homomorphism
\begin{equation}\label{eqn:nu-arises}
	\nu: G_\text{ad}(F)/\Image\left[ G(F) \to G_\text{ad}(F) \right] \simeq F^{\times 2}/F^{\times 2}.
\end{equation}
Let $x \in \SL(2, F)$ be semisimple regular with parameter $(K, F, \ldots)$ for its conjugacy class, so that $T := Z_{\SL(2)}(x) \simeq K^1$. The following has been recorded in \cite[p.728]{LL79}.

\begin{proposition}\label{prop:st-conj-SL2}
	The stable conjugacy class of $\delta \in \SL(2,F)$ consists of elements of the form
	\[ g\delta g^{-1}, \quad g \in \PGL(2,F). \]
	Choose any $g_1 \in \GL(2,F)$ that maps to $g$, then $\mathrm{inv}(\delta, g\delta g^{-1})$ equals
	\begin{gather*}
		\text{image of $g$ under }\; \PGL(2,F) \to \mathfrak{D}(\mu_2, \SL(2); F) \to \mathfrak{D}(T, \SL(2); F) \\
		= \text{image of $\det(g_1)$ or $\nu(g)$ under }\; F^\times/F^{\times 2} \twoheadrightarrow F^\times/N_{K/F}(K^\times) \simeq \mathfrak{D}(T, \SL(2); F).
	\end{gather*}
	Consequently, stable conjugacy in $\SL(2,F)$ is realizable by $\PGL(2,F)$-action.
\end{proposition}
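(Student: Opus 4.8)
The plan is to deduce everything from the cohomological formalism of \S\ref{sec:Sp-parameters}, specialized to $G=\SL(2)$, $G_\text{ad}=\PGL(2)$, $Z_G=\mu_2$. Two inputs do the work. First, $H^1(F,\SL(2))=1$, so $\mathfrak{D}(T,\SL(2);F)=H^1(F,T)$ and $\mathfrak{D}(\mu_2,\SL(2);F)=H^1(F,\mu_2)=F^\times/F^{\times 2}$, and $\mathrm{inv}(\delta,\cdot)$ of \eqref{eqn:inv} is a bijection from the set of stable conjugates of $\delta$ modulo conjugacy onto $H^1(F,T)$. Second, Proposition \ref{prop:adjoint-rel-pos} says that for $g\in\PGL(2,F)$ the class $\mathrm{inv}(\delta,g\delta g^{-1})$ is the image of $g$ under $\PGL(2,F)\to\mathfrak{D}(\mu_2,\SL(2);F)\to\mathfrak{D}(T,\SL(2);F)$. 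Granting these, the first and last assertions (realizability of stable conjugacy by $\PGL(2,F)$) will follow as soon as the natural map $H^1(F,\mu_2)\to H^1(F,T)$ is surjective: then every class in $\mathfrak{D}(T,\SL(2);F)$ is attained by some $g\delta g^{-1}$ with $g\in\PGL(2,F)$, and ordinary $\SL(2,F)$-conjugacy is a fortiori $\PGL(2,F)$-conjugacy.

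First I would dispose of the split case $K\simeq F\times F$: there $T\simeq\Gm$, $H^1(F,T)=0$, the stable class is a single conjugacy class, and there is nothing to prove. So assume $K/F$ is a quadratic field extension, $T=K^1$, and $H^1(F,T)=F^\times/N_{K/F}(K^\times)$ by Proposition \ref{prop:D-description}. To pin down $H^1(F,\mu_2)\to H^1(F,T)$ I would construct a morphism of short exact sequences from the Kummer sequence $1\to\mu_2\to\Gm\xrightarrow{2}\Gm\to1$ to the norm sequence $1\to K^1\to R_{K/F}\Gm\xrightarrow{N_{K/F}}\Gm\to1$, with left vertical arrow the inclusion $\mu_2\hookrightarrow K^1$ (legitimate since $N_{K/F}(\pm1)=1$), middle arrow $\Gm\hookrightarrow R_{K/F}\Gm$, and right arrow the identity (because $N_{K/F}(a)=a^2$ for $a\in F^\times$). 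Passing to Galois cohomology, the connecting maps $F^\times\to H^1(F,\mu_2)$ and $F^\times\to H^1(F,K^1)$ are reduction modulo squares, resp. modulo norms, so the induced map $F^\times/F^{\times2}\to F^\times/N_{K/F}(K^\times)$ is simply the canonical projection — in particular surjective, since $F^{\times2}\subseteq N_{K/F}(K^\times)$. This gives realizability and simultaneously identifies the composite $\mathfrak{D}(\mu_2,\SL(2);F)\to\mathfrak{D}(T,\SL(2);F)$ in the statement with $F^\times/F^{\times2}\twoheadrightarrow F^\times/N_{K/F}(K^\times)$.

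It remains to identify the image of $g$ in $\mathfrak{D}(\mu_2,\SL(2);F)=F^\times/F^{\times2}$ with $\det(g_1)\bmod F^{\times2}$, equivalently with $\nu(g)$ of \eqref{eqn:nu-arises}. I would unwind the connecting homomorphism of $1\to\mu_2\to\SL(2)\to\PGL(2)\to1$: lift $g$ to $\tilde g\in\SL(2,\bar F)$ and write $\tilde g=\lambda^{-1}g_1$ with $\lambda\in\bar F^\times$; taking determinants gives $\lambda^2=\det(g_1)\in F^\times$, and since $g_1$ is $F$-rational the cocycle $\sigma\mapsto\tilde g^{-1}\sigma(\tilde g)$ collapses to $\sigma\mapsto\lambda/\sigma(\lambda)$, which is exactly the Kummer cocycle of $\det(g_1)$. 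Hence the image of $g$ in $H^1(F,\mu_2)$ is $\det(g_1)\bmod F^{\times2}$, which is $\nu(g)$ by construction. Combining this with the previous paragraph and Proposition \ref{prop:adjoint-rel-pos} yields the displayed formula for $\mathrm{inv}(\delta,g\delta g^{-1})$.

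The only place demanding genuine care — the main obstacle, such as it is — is the bookkeeping in these two identifications: one must check that the morphism of short exact sequences is normalized so that the orientation of the projection $F^\times/F^{\times2}\to F^\times/N_{K/F}(K^\times)$ agrees with the isomorphism $H^1(F,T)\simeq F^\times/N_{K/F}(K^\times)$ used in Proposition \ref{prop:D-description}, and that the two-step map $\PGL(2,F)\to\mathfrak{D}(\mu_2,\SL(2);F)\to\mathfrak{D}(T,\SL(2);F)$ here is literally the one furnished by Proposition \ref{prop:adjoint-rel-pos}. These are routine but must be tracked consistently with the sign conventions fixed in \S\ref{sec:Sp-parameters}.
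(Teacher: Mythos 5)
Your proof is correct and follows essentially the same route as the paper: Proposition \ref{prop:adjoint-rel-pos} for the first identification, the morphism of short exact sequences from the Kummer sequence $1\to\mu_2\to\Gm\xrightarrow{2}\Gm\to1$ to $1\to K^1\to R_{K/F}\Gm\xrightarrow{N_{K/F}}\Gm\to1$ for the map $\mathfrak{D}(\mu_2,\SL(2);F)\to\mathfrak{D}(T,\SL(2);F)$, and the same cocycle computation (lifting $g_1$ to $(\det g_1)^{-1/2}g_1\in\SL(2,\bar F)$) to identify the image of $g$ in $H^1(F,\mu_2)$ with $\det(g_1)\bmod F^{\times 2}$. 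You are a bit more explicit than the paper about the split case and about surjectivity (hence realizability by $\PGL(2,F)$-action), but these are elaborations of the same argument, not a different approach.
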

\begin{proof}
	The first equality for $\text{inv}(\delta, g\delta g^{-1})$ is Proposition \ref{prop:adjoint-rel-pos}. The second one amounts to the commutativity of
	\[\begin{tikzcd}
		\PGL(2,F) \arrow{r} \arrow{rd}[swap]{\nu} & \mathfrak{D}(\mu_2, \SL(2); F) \arrow{r} \arrow{d}{\simeq} & \mathfrak{D}(T, \SL(2); F) \arrow{d}{\simeq} \\
		\GL(2,F) \arrow{u} \arrow{r}[swap]{\det} & F^\times/F^{\times 2} \arrow[twoheadrightarrow]{r} & F^\times/N_{K/F}(K^\times)
	\end{tikzcd}\]
	The right square commutes because so does the following diagram.
	\[\begin{tikzcd}
		\mu_2 \arrow{r} \arrow[hookrightarrow]{d} & \Gm \arrow{r}{2} \arrow[hookrightarrow]{d} & \Gm \arrow[-, double equal sign distance]{d} \\
		K^1 \arrow{r} & R_{K/L} \Gmm{K} \arrow{r}[swap]{N_{L/K}} & \Gm.
	\end{tikzcd}\]
	The triangle \begin{tikzpicture}[scale=0.3] \draw (0,0) -- (1,0) -- (1,-1) --cycle; \end{tikzpicture} commutes by the definition of $\nu$. As for the \begin{tikzpicture}[scale=0.3] \draw (0,0) -- (1,-1) -- (0,-1) --cycle; \end{tikzpicture}, note that given $g_1 \mapsto g$, every $\tau \in \Gamma_F$ multiplies $(\det g_1)^{-1/2} g_1 \in \SL(2, \bar{F})$ by a sign $c(\tau)$, and $\tau \mapsto c(\tau)$ represents the image of $g$ in $H^1(F, \mu_2)$. But the same cocycle represents the image of $\det(g_1)$ in $F^\times/F^{\times 2} \rightiso H^1(F, \mu_2)$.
\end{proof}

\subsection{Stable conjugacy: reduction to \texorpdfstring{$\SL(2)$}{SL2}}\label{sec:stable-reduction}
Fix a maximal $F$-torus $T$ in $G := \Sp(W)$. The lattices $X := X^*\left( T_{\bar{F}} \right)$ and $Y := X_*\left( T_{\bar{F}} \right)$ are endowed with $\Gamma_F$-actions. We choose an isomorphism $T_{\bar{F}} \simeq \Gm^n$ as in \S\ref{sec:Sp}, and enumerate the long roots (resp. short coroots) as $\pm 2\epsilon_1, \ldots, \pm 2\epsilon_n$ (resp. $\pm \check{\epsilon}_1, \ldots, \pm \check{\epsilon}_n$). Then $\Gamma_F$ acts on both sets and commutes with the bijection between roots and coroots. If $\mathcal{O}$ is a $\Gamma_F$-orbit, so is $-\mathcal{O}$. Now recall some definitions from \cite[\S 2]{LS1}.

\begin{definition}\label{def:symmetric-orbits}
	Let $\mathcal{O}$ be a $\Gamma_F$-orbit of roots.
	\begin{compactenum}[(i)]
		\item If $\mathcal{O}=-\mathcal{O}$, we say $\mathcal{O}$ is \emph{symmetric}.\index{symmetric root}
		\item If $\mathcal{O} \cap (-\mathcal{O}) = \emptyset$, we say $\mathcal{O}$ is \emph{asymmetric}.
	\end{compactenum}
	The same terminology pertains to $\Gamma_F$-orbits of coroots.
\end{definition}
For any root $\alpha$, set
\begin{equation}\label{eqn:F_alpha}\begin{gathered}
	\Gamma_\alpha := \Stab_{\Gamma_F}(\alpha) \subset \Stab_{\Gamma_F}(\{\pm\alpha\}) =: \Gamma_{\pm\alpha}, \\
	F_\alpha \supset F_{\pm\alpha} \supset F: \quad \text{their fixed fields in}\; \bar{F}.
\end{gathered}\end{equation}\index{F_\alpha@$F_{\alpha}, F_{\pm\alpha}$}
These extensions are all separable since the splitting field of $T$ is.

\begin{lemma}\label{prop:anisotropic-criterion}
	The torus $T$ is anisotropic if and only if every orbit $\mathcal{O}$ of long roots (resp. short coroots) is symmetric.
\end{lemma}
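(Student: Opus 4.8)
The plan is to reduce anisotropy to a statement about $\Gamma_F$-invariants of the cocharacter lattice $Y := X_*(T_{\bar{F}})$, and then to decompose $Y$ along the $\Gamma_F$-orbits of short coroots.

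First I would reduce to the short coroots. A maximal $F$-torus $T$ is anisotropic precisely when its maximal split subtorus is trivial, i.e.\ when $Y^{\Gamma_F} = 0$. Since the bijection $\alpha \mapsto \check{\alpha}$ is $\Gamma_F$-equivariant and carries long roots to short coroots and negatives to negatives, the $\Gamma_F$-set of long roots is isomorphic to that of short coroots; hence ``every orbit of long roots is symmetric'' is equivalent to ``every orbit of short coroots is symmetric'', and it suffices to prove that $Y^{\Gamma_F} = 0$ if and only if every $\Gamma_F$-orbit on $\Sigma := \{\pm\check{\epsilon}_1, \dots, \pm\check{\epsilon}_n\}$ is symmetric. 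I would first record the elementary observations that $\Sigma$ carries a fixed-point-free involution $\iota \colon v \mapsto -v$ commuting with $\Gamma_F$, and that consequently each $\Gamma_F$-orbit $\mathcal{O} \subseteq \Sigma$ is either symmetric ($\iota\mathcal{O} = \mathcal{O}$) or asymmetric ($\mathcal{O} \cap \iota\mathcal{O} = \emptyset$), with no third possibility.

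The main step is the orbit decomposition. Let $\bar{J}$ range over the $\Gamma_F$-orbits on $\{1, \dots, n\} \cong \Sigma/\iota$ and put $Y_{\bar{J}} := \bigoplus_{j \in \bar{J}} \Z\check{\epsilon}_j$, a $\Gamma_F$-submodule, so that $Y = \bigoplus_{\bar{J}} Y_{\bar{J}}$ and $Y^{\Gamma_F} = \bigoplus_{\bar{J}} Y_{\bar{J}}^{\Gamma_F}$. Fix $j_0 \in \bar{J}$ and set $H := \Stab_{\Gamma_F}(\check{\epsilon}_{j_0})$, a normal subgroup of $H^\pm := \Stab_{\Gamma_F}(\{\pm\check{\epsilon}_{j_0}\})$ of index $1$ or $2$; both are open since $\Gamma_F$ acts on $\Sigma$ through a finite quotient. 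If the index is $1$ --- equivalently $-\check{\epsilon}_{j_0} \notin \Gamma_F\check{\epsilon}_{j_0}$ --- then $\Gamma_F\check{\epsilon}_{j_0}$ maps bijectively onto $\bar{J}$, hence equals $\{\varepsilon_j\check{\epsilon}_j : j \in \bar{J}\}$ for suitable signs $\varepsilon_j$, and is an asymmetric orbit; as a $\Gamma_F$-module $Y_{\bar{J}} \cong \Ind_H^{\Gamma_F}\Z$ with trivial coefficients (the action permutes the vectors $\varepsilon_j\check{\epsilon}_j$ without sign), so $Y_{\bar{J}}^{\Gamma_F} = \Z \cdot \sum_{j \in \bar{J}}\varepsilon_j\check{\epsilon}_j \neq 0$. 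If the index is $2$, then $\Gamma_F\check{\epsilon}_{j_0} = \{\pm\check{\epsilon}_j : j \in \bar{J}\}$ is the unique, symmetric orbit over $\bar{J}$, the line $\Z\check{\epsilon}_{j_0}$ is $H^\pm$-stable with $H^\pm$ acting through the nontrivial character $\chi \colon H^\pm \twoheadrightarrow H^\pm/H \cong \{\pm 1\}$, so $Y_{\bar{J}} \cong \Ind_{H^\pm}^{\Gamma_F}\Z_\chi$ and $Y_{\bar{J}}^{\Gamma_F} \cong \Z_\chi^{H^\pm} = 0$ by Shapiro's lemma.

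Finally I would assemble the pieces: over a $\bar{J}$ of index $1$ the set $\Sigma$ has exactly two ($\iota$-conjugate) asymmetric orbits, while over a $\bar{J}$ of index $2$ it has a single symmetric orbit; hence $Y^{\Gamma_F} = \bigoplus_{\bar{J}} Y_{\bar{J}}^{\Gamma_F}$ vanishes if and only if every $\bar{J}$ has index $2$, if and only if every $\Gamma_F$-orbit on $\Sigma$ is symmetric, which together with the reduction above is exactly the assertion. The step requiring the most care is the identification of each $Y_{\bar{J}}$ with an induced module ($\Ind_H^{\Gamma_F}\Z$ in the asymmetric case, $\Ind_{H^\pm}^{\Gamma_F}\Z_\chi$ in the symmetric one) and the bookkeeping matching $\Gamma_F$-orbits on $\Sigma/\iota$ with those on $\Sigma$; everything else is routine.
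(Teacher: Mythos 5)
Your proof is correct, and it takes a genuinely different route from the paper's. The paper argues by hand on fixed vectors $v = \sum_i a_i \epsilon_i \in X^{\Gamma_F}$: if some orbit $\mathcal{O}$ of long roots is asymmetric, then $\sum_{\alpha \in \mathcal{O}} \alpha$ is a nonzero Galois-invariant element of $X$, so $T$ is isotropic; conversely, if every orbit is symmetric, then for each $i$ some $\sigma \in \Gamma_F$ sends $\epsilon_i$ to $-\epsilon_i$, and comparing $\epsilon_i$-coefficients of $v$ and $\sigma(v)$ forces $a_i = 0$, whence $X^{\Gamma_F} = 0$. You instead decompose $Y$ into $\Gamma_F$-submodules indexed by orbits on $\{1,\dots,n\}$, identify each summand $Y_{\bar{J}}$ with an induced module ($\Ind_H^{\Gamma_F}\Z$ in the asymmetric case, $\Ind_{H^\pm}^{\Gamma_F}\Z_\chi$ with $\chi$ the sign character in the symmetric case), and read off $Y_{\bar{J}}^{\Gamma_F}$ from Frobenius reciprocity. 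Your argument is longer to set up but is more structural and makes transparent exactly what the invariants are in each orbit block; the paper's argument is shorter and avoids any mention of induced modules, at the cost of the slightly fiddly sign-bookkeeping in the converse direction. Both are sound; one small stylistic point is that what you call ``Shapiro's lemma'' here is just the $H^0$ instance, i.e.\ Frobenius reciprocity $(\Ind_{H^\pm}^{\Gamma_F}\Z_\chi)^{\Gamma_F} \cong (\Z_\chi)^{H^\pm}$, which then vanishes since $\chi \neq 1$; that is exactly what you mean, but the label could mislead a reader into expecting higher cohomology.
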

\begin{proof}
	Observe that for any orbit $\mathcal{O}$ of long roots, $\sum_{\alpha \in \mathcal{O}} \alpha \in X^{\Gamma_F}$ equals $0$ if and only if $\mathcal{O}$ is symmetric. Indeed, when $\mathcal{O} = -\mathcal{O}$ we surely have $\sum_{\alpha \in \mathcal{O}} \alpha = 0$. When $\mathcal{O} \cap (-\mathcal{O}) = \emptyset$, we may write $\mathcal{O} = \left\{ (-1)^{s_1} 2\epsilon_{a_1}, \ldots, (-1)^{s_k} 2\epsilon_{a_k} \right\}$ for $k := |\mathcal{O}|$ and uniquely determined $1 \leq a_1 < \ldots < a_k \leq n$, $s_1, \ldots, s_k \in \{0, 1\}$. Then $\mathcal{O}$ is linearly independent in $X \otimes \Q$.

	The existence of asymmetric $\mathcal{O}$ implies $X^{\Gamma_F} \neq 0$, thus $T$ is isotropic. Conversely, assume every $\mathcal{O}$ is symmetric and consider $v = \sum_{i=1}^n a_i\epsilon_i \in X^{\Gamma_F} \smallsetminus \{0\}$. For each $i$, there exists $\sigma \in \Gamma_F$ such that $\sigma(\pm\epsilon_i) = \mp\epsilon_i$ since $2\epsilon_i$ belongs to a symmetric orbit; from $\sigma(v)=v$ we deduce $a_i=0$. This implies $v=0$ so $T$ is anisotropic.
\end{proof}

Hereafter we only consider the orbits of long roots or short coroots.

Select a $\Gamma_F$-orbit $\mathcal{O}$ in $X$ together with a long $\alpha \in \mathcal{O}$. After base-change to $F_{\pm\alpha}$, we see that $\{\alpha,-\alpha\}$ generates a copy of $\SL(2)$ in $G_{F_{\pm\alpha}}$. This $\SL(2)$ contains the subtorus $T_{\pm\alpha}$ of $T_{\bar{F}}$ with $X_*(T_{\pm\alpha, \bar{F}}) = \Z\check{\alpha} \subset Y$; as a shorthand, we say $T_{\pm\alpha}$ is generated by $\alpha$. Hence $T_{\pm\alpha}$, $\SL(2)$ are both defined over $F_{\pm\alpha}$.

\begin{enumerate}
	\item First assume $\mathcal{O}$ is symmetric so that $(\Gamma_{\pm\alpha} : \Gamma_\alpha) = [F_\alpha : F_{\pm\alpha}] = 2$. In this case $T_{\pm\alpha}$ is anisotropic and it splits over $F_\alpha$, and
	\[ T_{\pm\alpha} \simeq F_\alpha^1 := \Ker \left[N_{F_\alpha/F_{\pm\alpha}}: F_\alpha^\times \to F_{\pm\alpha}^\times \right]. \]
	\item Next, assume $\mathcal{O}$ is asymmetric. Then $\Gamma_{\pm\alpha}=\Gamma_\alpha$, $F_\alpha = F_{\pm\alpha}$, and $T_{\pm\alpha}$ is a split.
\end{enumerate}
Identify $\mathcal{O}$ with $\Hom_F(F_\alpha, \bar{F}) = \Gamma_F/\Gamma_{F_\alpha}$. Let $T_{\mathcal{O}}$ be the subtorus of $T_{\bar{F}}$ generated by $\{ T_\alpha : \alpha \in \mathcal{O}\}$, which is now defined over $F$. By the generalities on reductive groups and their Weil restrictions (cf. \eqref{eqn:tensor-split}), we see that there is an embedding (see \S\ref{sec:Sp-parameters} for discussions on $\SL(2)$):
\begin{equation}\label{eqn:O-embedding-1} \begin{tikzcd}[baseline]
	-1 \in \SL(2, F_{\pm\alpha}) & R_{F_{\pm\alpha}/F}(\SL(2)) \arrow[hookrightarrow]{r} & G \\
	& R_{F_{\pm\alpha}/F}(T_{\pm\alpha}) \arrow[hookrightarrow]{u}{\text{max. torus}} & \\
	\quoted{-1} := \prod_{\pm \beta \in \mathcal{O}} \check{\beta}(-1) \arrow[mapsto]{uu} & T_{\mathcal{O}} \arrow[hookrightarrow]{r} \arrow{u}{\simeq} & T \arrow[hookrightarrow]{uu}[swap]{\text{max. torus}} \\
\end{tikzcd}\end{equation}
Denote the image of $R_{F_{\pm\alpha}/F}(\SL(2))$ as $G_{\mathcal{O}}$. It is a canonically defined subgroup of $G$ relative to $\mathcal{O}$. Observe that $(G_\mathcal{O}, T_\mathcal{O}) = (G_{-\mathcal{O}}, T_{-\mathcal{O}})$. Furthermore,
\[ \prod_{\pm\mathcal{O}} T_{\mathcal{O}} \rightiso T. \]

This construction can be described in terms of the parameter $(K, K^\sharp, c)$ of $T \hookrightarrow G$ supplied by Proposition \ref{prop:parameter-tori}. We shall use the familiar decomposition $K = \prod_{i \in I} K_i$.
\begin{itemize}
	\item When $T$ is split, we may assume $I = \{1, \ldots, n\}$, $K_i^\sharp = F$, and $K_i = F \times F$ for all $i$. Identify $T$ with $\left\{  (x_i, y_i)_{i=1}^n \in (F^\times \times F^\times)^n : x_i y_i = 1 \right\}$. Now $X = \bigoplus_{i=1}^n \Z\epsilon_i$ where $\epsilon_i$ (resp. $-\epsilon_i$) corresponds to $(x_i,y_i) \mapsto x_i$ (resp. $(x_i, y_i) \mapsto y_i$). The $\Gamma_F$-orbits of long roots are singletons $\{\pm 2\epsilon_i\}$; they are all asymmetric.
	\item The general case is obtained by a twist as follows. The set $I$ is in bijection with the sets $\{\mathcal{O}, -\mathcal{O}\}$ of $\Gamma_F$-orbits. We have $i \in I_0$ (i.e. $K_i$ is a field) if and only if $\pm\mathcal{O}$ are symmetric. By choosing an homomorphism $K_i \to \bar{F}$ of $F$-algebras, we pick out $\alpha \in \mathcal{O}$ and
	\begin{equation}\label{eqn:K_i-alpha} \begin{gathered}
		K_i^\sharp \simeq F_{\pm\alpha}; \qquad
		K_i \simeq \begin{cases} F_\alpha, & i \in I_0 \\ F_{\pm\alpha} \times F_{\pm\alpha}, & i \notin I_0. \end{cases}
	\end{gathered}\end{equation}
	Decompose $c = (c_i)_{i \in I}$ and define a symplectic form $h^i$ (resp. $h_i$) on the $K_i^\sharp$-vector space (resp. $F$-vector space) $K_i$ à la \eqref{eqn:K-symp-form} as
	\begin{equation}\label{eqn:h-h} \begin{aligned}
		h^i(u|v) & := \Tr_{K_i/K_i^\sharp} \left(\tau(u)v c_i \right), \\
		h_i(u|v) & := \Tr_{K_i/F} \left( \tau(u)v c_i \right) = \Tr_{K_i^\sharp/F} \left( h^i(u|v) \right).
	\end{aligned}\end{equation}
	Under this correspondence, \eqref{eqn:O-embedding-1} coincides with
	\begin{equation}\label{eqn:O-embedding-2} \begin{tikzcd}
		-1 \in \SL(2,K_i^\sharp) & G_{\mathcal{O}} = R_{K_i^\sharp/F}( \overbracket{\Sp(K_i, h^i)}^{= \SL(2)} ) \arrow[hookrightarrow]{r} & \Sp(K_i, h_i) \arrow[hookrightarrow]{r} & \Sp(W) = G \\
		-1 \in K_i^1 \arrow[mapsto]{u} & T_{\mathcal{O}} = K_i^1 \arrow[hookrightarrow]{rr} \arrow[hookrightarrow]{u} & & K^1 = T \arrow[hookrightarrow]{u}
	\end{tikzcd}\end{equation}
	\item The isomorphism $(W, \lrangle{\cdot|\cdot}) \simeq \bigoplus_{i \in I} (K_i, h_i)$ of symplectic $F$-vector spaces gives rise to
	\begin{gather*}
		\prod_{\pm \mathcal{O}} G_{\mathcal{O}} \simeq \prod_{i \in I} R_{K_i^\sharp/F}( \overbracket{\Sp(K_i, h^i)}^{=\SL(2)} ) \subset \prod_{i \in I} \Sp(K_i, h_i) \hookrightarrow \Sp(W) = G, \\
		\prod_{\pm \mathcal{O}} T_{\mathcal{O}} \simeq \prod_{i \in I} R_{K_i^\sharp/F}(K_i^1) = K^1 \simeq T. 
	\end{gather*}
\end{itemize}

\begin{definition}\label{def:G-T}
	Given $T$ as above, we set $G^T := \prod_{\pm\mathcal{O}} G_{\mathcal{O}}$. It is a canonically defined subgroup of $G$ containing $T$, and can be described as $\prod_{i \in I} R_{K_i^\sharp/F}(\SL(2))$ in terms of the parameter $(K, K^\sharp, c)$ for $T \hookrightarrow G$. \index{GT@$G^T$}
\end{definition}

By Shapiro's lemma, $H^1(F, G^T) = \prod_{i \in I} H^1(K_i^\sharp, \SL(2))$ is trivial. By the discussions in \S\ref{sec:Weil-restriction} on Weil restrictions, we see
\begin{compactitem}
	\item $G^T$ is simply connected since $\SL(2)$ is;
	\item $G^T_\text{ad} = \prod_{\pm \mathcal{O}} R_{F_{\pm\alpha}/F}(\PGL(2))$ since Weil restriction commutes with the formation of adjoint groups, as noted in \S\ref{sec:Weil-restriction};
	\item if $\Ad(g): T \rightiso T'$ is a stable conjugation inside $G^T$, then we have $G^T = G^{T'}$. In fact one can take $g \in G^T_\text{ad}(F)$ by Proposition \ref{prop:st-conj-SL2}.
\end{compactitem}

\begin{proposition}\label{prop:stable-reduction-SL2}
	Conserve the notations above and suppose $\delta \in T_\mathrm{reg}(F)$. Then every conjugacy class in the stable conjugacy class of $\delta$ contains an element of the form $g\delta g^{-1}$ where $g \in G^T_{\mathrm{ad}}(F)$, and
	\[ \mathrm{inv}(\delta, g\delta g^{-1}) = \text{image of $g$ under }\; G^T_{\mathrm{ad}}(F) \to \mathfrak{D}(Z_{G^T}, G^T; F) \to \mathfrak{D}(T, G; F). \]
	Furthermore, $\mathfrak{D}(Z_{G^T}, G^T; F) \to \mathfrak{D}(T, G; F)$ is bijective and in terms of parameters, $\mathrm{inv}(x, g\delta g^{-1})$ can be identified with
	\[ \left(\det(g_{i,1}) N_{K_i/K_i^\sharp}(K_i^\sharp) \right)_{i \in I} \; \in K^{\sharp,\times}/N_{K/K^\sharp}(K^\times). \]
	Here $g_{i,1} \in \GL(2, K_i^\sharp)$ is any representative of $g_i \in \PGL(2, K_i^\sharp)$.
\end{proposition}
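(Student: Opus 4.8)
The plan is to reduce every assertion, factor by factor, to Proposition~\ref{prop:st-conj-SL2}, via the canonical identifications $G^T \simeq \prod_{i \in I} R_{K_i^\sharp/F}(\SL(2))$ and $T \simeq \prod_{i \in I} R_{K_i^\sharp/F}(K_i^1)$ supplied by \eqref{eqn:O-embedding-2}, together with Shapiro's lemma. Write $\delta = (\delta_i)_{i \in I}$ and $g = (g_i)_{i \in I}$ under these identifications, so $\delta_i \in \SL(2, K_i^\sharp)$ and $g_i \in \PGL(2, K_i^\sharp)$. Since $\delta \in T_\mathrm{reg}(F)$ we have $Z_{G^T}(\delta) = Z_G(\delta) \cap G^T = T$, so $\delta$ is regular semisimple in $G^T$ with centralizer $T$; hence each $\delta_i$ is regular semisimple in $\SL(2)$ over $K_i^\sharp$ with $Z_{\SL(2)}(\delta_i) \simeq K_i^1$, and the $\SL(2)/K_i^\sharp$-parameter of its conjugacy class has algebra pair exactly $(K_i, K_i^\sharp)$ — this is built into the constructions of \S\ref{sec:stable-reduction}, but deserves an explicit line.

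First I would prove the displayed formula for $\mathrm{inv}(\delta, g\delta g^{-1})$. Applying Proposition~\ref{prop:adjoint-rel-pos} with $G^T$ in place of $G$ (legitimate, by the previous paragraph) identifies $\mathrm{inv}^{G^T}(\delta, g\delta g^{-1})$ with the image of $g$ under $G^T_\mathrm{ad}(F) \to \mathfrak{D}(Z_{G^T}, G^T; F) \to \mathfrak{D}(T, G^T; F)$. The $T$-valued cocycle $\tau \mapsto \tilde g^{-1}\tau(\tilde g)$ realizing this stable conjugacy, for any lift $\tilde g \in G^T(\bar F)$ of $g$, is literally the same whether read in $G^T$ or in $G$; since $\mathfrak{D}(T, G^T; F) = H^1(F, T) = \mathfrak{D}(T, G; F)$, we obtain the same element of $\mathfrak{D}(T, G; F)$. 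Under Shapiro's lemma $H^1(F, T) = \prod_{i} H^1(K_i^\sharp, K_i^1)$, its $i$-th component is $\mathrm{inv}^{\SL(2)}(\delta_i, g_i\delta_i g_i^{-1})$ computed over $K_i^\sharp$, which Proposition~\ref{prop:st-conj-SL2} identifies with the class of $\det(g_{i,1})$ in $K_i^{\sharp\times}/N_{K_i/K_i^\sharp}(K_i^\times)$, for any $g_{i,1} \in \GL(2, K_i^\sharp)$ lifting $g_i$. Assembling over $I$ and invoking the identification $\prod_i K_i^{\sharp\times}/N_{K_i/K_i^\sharp}(K_i^\times) = K^{\sharp\times}/N_{K/K^\sharp}(K^\times)$ of Proposition~\ref{prop:D-description} gives the formula.

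The remaining assertions follow. By \eqref{eqn:inv} the conjugacy classes inside the stable class of $\delta$ correspond bijectively to $\mathfrak{D}(T, G; F) = H^1(F, T)$ via $\eta \mapsto \mathrm{inv}(\delta, \eta)$; thus the claim that every such class is of the form $g\delta g^{-1}$ with $g \in G^T_\mathrm{ad}(F)$ amounts to surjectivity of $g \mapsto \mathrm{inv}(\delta, g\delta g^{-1})$, which is immediate from the formula since $\det \colon \GL(2, K_i^\sharp) \to K_i^{\sharp\times}$ is onto. For the description of $\mathfrak{D}(Z_{G^T}, G^T; F) \to \mathfrak{D}(T, G; F)$: the connecting map $G^T_\mathrm{ad}(F) \to \mathfrak{D}(Z_{G^T}, G^T; F)$ is onto because $H^1(F, G^T) = \prod_i H^1(K_i^\sharp, \SL(2)) = 0$, and under Shapiro the group $\mathfrak{D}(Z_{G^T}, G^T; F) = H^1(F, Z_{G^T})$ becomes $\prod_i K_i^{\sharp\times}/K_i^{\sharp\times 2}$, the map in question being the product over $i \in I$ of the quotient maps $K_i^{\sharp\times}/K_i^{\sharp\times 2} \to K_i^{\sharp\times}/N_{K_i/K_i^\sharp}(K_i^\times)$ of Proposition~\ref{prop:st-conj-SL2}.

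Since the substantive content sits in Propositions~\ref{prop:st-conj-SL2}, \ref{prop:adjoint-rel-pos} and \ref{prop:D-description}, the main obstacle is the bookkeeping: aligning the three avatars of $\mathfrak{D}(T, G; F)$ — Galois cohomology of $T$, the norm-coset group $K^{\sharp\times}/N_{K/K^\sharp}(K^\times)$ of Proposition~\ref{prop:D-description}, and the product over $i$ of the $\SL(2)/K_i^\sharp$-pictures — and checking that Shapiro's isomorphism $H^1(F, R_{K_i^\sharp/F}(K_i^1)) \simeq H^1(K_i^\sharp, K_i^1)$ respects the torsor interpretation of $\mathrm{inv}$, so that the $i$-th component of $\mathrm{inv}^{G^T}$ genuinely is the $\SL(2)$-invariant of Proposition~\ref{prop:st-conj-SL2} and the relevant norm is $N_{K_i/K_i^\sharp}$. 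These compatibilities are implicit in \S\ref{sec:Weil-restriction} and \S\ref{sec:stable-reduction} and merely need to be made explicit.
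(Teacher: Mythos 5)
Your proof is correct and follows essentially the same route as the paper's: apply Proposition~\ref{prop:adjoint-rel-pos} inside $G^T$, identify $\mathfrak{D}(T, G^T; F) = H^1(F,T) = \mathfrak{D}(T, G; F)$, and reduce to the rank-one Proposition~\ref{prop:st-conj-SL2} via Shapiro's lemma; you merely spell out a few compatibility checks that the paper treats implicitly.

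One remark applicable equally to your argument and to the paper's own proof: the assertion that $\mathfrak{D}(Z_{G^T}, G^T; F) \to \mathfrak{D}(T, G; F)$ is \emph{bijective} does not appear to hold, and neither proof establishes it. Your own description makes this visible: under Shapiro the map becomes the product over $i \in I$ of the quotient maps $K_i^{\sharp\times}/K_i^{\sharp\times 2} \to K_i^{\sharp\times}/N_{K_i/K_i^\sharp}(K_i^\times)$, which are surjective but not injective in general (over a non-archimedean $F$ of residue characteristic $\neq 2$, each source has four elements while each target has at most two). Only the surjectivity of the composite $G^T_{\mathrm{ad}}(F) \to \mathfrak{D}(T,G;F)$ is actually used or verified, so ``bijective'' in the statement is presumably a slip for ``surjective''. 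Apart from this shared issue, which is not a gap in your argument relative to the paper's, the proposal is complete.
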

Note that the indexes $i \notin I_0$ have no contribution, and $I=I_0$ when $T$ is anisotropic by Lemma \ref{prop:parameter-ani}.
\begin{proof}
	The first equality for $\text{inv}(\delta, g\delta g^{-1})$ reduces essentially to Proposition \ref{prop:adjoint-rel-pos}. The next step is to describe the image of $g$ in terms of $(g_{i,1})_{i \in I}$. The map $\mathfrak{D}(Z_{G^T}, G^T; F) \to \mathfrak{D}(T, G; F)$ factors into $\mathfrak{D}(Z_{G^T}, G^T; F) \to \mathfrak{D}(T, G^T; F) \to \mathfrak{D}(T, G; F)$, whilst $\mathfrak{D}(T, G^T; F) = H^1(F, T) = \mathfrak{D}(T, G; F)$. Proposition \ref{prop:D-description} asserts that
	\[ H^1(F,T) \simeq \prod_{i \in I} \dfrac{K_i^{\sharp,\times}}{N_{K_i/K_i^\sharp}(K_i^\times)} = \dfrac{K^{\sharp, \times}}{N_{K/K^\sharp}(K^\times)}. \]
	This is compatible with the decomposition $T = \prod_{i \in I} R_{K^\sharp_i/F}(K_i^1)$, so we may work separately for each $i \in I$. Shapiro's lemma affords the commutative diagram
	\[\begin{tikzcd}
		(R_{K_i^\sharp/F} \PGL(2))(F) \arrow[-, double equal sign distance]{d} \arrow{r} & H^1(F, R_{K_i^\sharp/F}(K^1)) \arrow{d}{\simeq} & \\
		\PGL(2, K_i^\sharp) \arrow{r} & H^1(K_i^\sharp, K^1) \arrow{r}{\simeq} & K_i^\sharp/N_{K_i/K_i^\sharp}(K^\times).
	\end{tikzcd}\]
	We are now reduced to the rank-one case of Proposition \ref{prop:st-conj-SL2} and the description of $\text{inv}(\delta, g\delta g^{-1})$ follows. Finally, this description implies the surjectivity of $G^T_\text{ad}(F) \to \mathfrak{D}(T,G;F)$, therefore every $\eta \stackrel{\text{st}}{\sim} \delta$ is conjugate to some $g\delta g^{-1}$ with $g \in G^T_\text{ad}(F)$.
\end{proof}

Summarizing, we obtain all stable conjugates of elements in $T_\text{reg} \subset G$ (up to ordinary conjugacy) by working inside $G^T$. Modulo Weil restrictions, the description of stable conjugacy boils down to the $\SL(2)$ case. There is also an obvious version for $\mathfrak{g}$.

\section{BD-covers of symplectic groups}\label{sec:BD-covers-Sp}
Except in \S\ref{sec:nr-global}, the assumptions in \S\ref{sec:Sp-gen} on the field $F$ remain in force. For the study of harmonic analysis, we can and do confine ourselves to the BD-covers of $\Sp(2n)$ arising from Matsumoto's central extension; see the Remarks \ref{rem:Matsumoto} and \ref{rem:rescaling-Q}.

\subsection{The covers}\label{sec:BD-Sp}
Let $(W, \lrangle{\cdot | \cdot})$ be a symplectic $F$-vector space of dimension $2n$. Fix a maximal $F$-torus $T$ of $G := \Sp(W)$ and set $Y = X_*(T_{\bar{F}})$. Write $G := \Sp(W)$ and denote by $E_G \to G$ the multiplicative $\shK_2$-torsor constructed by Matsumoto (Remark \ref{rem:Matsumoto}). It corresponds to the quadratic form $Q: Y \to \Z$ in \eqref{eqn:Y-Sp}. We will also write $E_{G,F}$ when the base field is to be stressed. When $\dim W = 2$ with chosen symplectic basis, we adopt the shorthand $E_G = E_{\SL(2)}$.


To $Q$ is associated the symmetric bilinear form on $Y$
\[ B_Q(y, y') := Q(y+y')-Q(y)-Q(y') = 2 \sum_{i=1}^n y_i y'_i. \]
Suppose $m \mid N_F$ as in \S\ref{sec:isogeny}. Since $Y_{Q,m} = \dfrac{m}{\text{gcd}(2,m)} Y \subset Y$, the isogeny $\iota_{Q,m}: T_{Q,m} \to T$, together with its compatibility under $G$-conjugation, can be identified with the endomorphism of $T \subset G$
\begin{equation}\label{eqn:isogeny-Sp} \begin{aligned}
	\iota_{Q,m}: T_{Q,m} = T & \longrightarrow T \\
	t & \longmapsto t^{m/\text{gcd}(2,m)}
\end{aligned}\end{equation}
\textbf{Caveat}: one must be careful when identifying $T_{Q,m}$ and $T$, as they will play different roles in our latter applications.

\begin{lemma}\label{prop:restriction-W_i}
	Suppose that $(W, \lrangle{\cdot|\cdot})$ is the orthogonal direct sum $\bigoplus_{i=1}^r W_i$ of symplectic vector subspaces. Write $G_i := \Sp(W_i) \hookrightarrow G$. There is a natural morphism $\iota_i: E_{G_i} \rightiso E_G|_{G_i}$ for all $1 \leq i \leq r$. They realize the pull-back of $E_G \to G$ to $G_1 \times \cdots \times G_r$ as the contracted product of multiplicative $\shK_2$-torsors $E_{G_1} \utimes{\shK_2} \cdots \utimes{\shK_2} E_{G_r}$.
	
	In particular, elements lying over different components $G_i$ commute in $E_G$.
\end{lemma}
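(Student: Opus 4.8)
The plan is to reduce everything to the Brylinski--Deligne classification for split simply connected groups (Case (B) of \S\ref{sec:BD-classification}, cf. Remark \ref{rem:Matsumoto}), using that $G$, each $G_i$ and the product $\prod_{i} G_i$ are all split simply connected over $F$, and that the extensions in play admit no nontrivial automorphisms, so that every isomorphism in sight is automatically canonical.

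First I would choose the tori compatibly. Pick a symplectic basis of each $W_i$ and concatenate them — all the ``positive'' vectors first, then the ``negative'' ones in reverse order — to obtain a symplectic basis of $W$. For these bases, the diagonal maximal torus $T_i \subset G_i$ and the diagonal maximal torus $T \subset G$ satisfy $T = \prod_i T_i$, so that $Y := X_*(T_{\bar F}) = \bigoplus_i Y_i$ with $Y_i := X_*(T_{i,\bar F})$, the coordinate blocks attached to distinct $W_i$ being disjoint. Under this identification the Weyl-invariant form $Q\colon Y \to \Z$ of \eqref{eqn:Y-Sp} decomposes as $Q\bigl((y_i)_i\bigr) = \sum_i Q_i(y_i)$, where $Q_i\colon Y_i \to \Z$ is the form classifying Matsumoto's $E_{G_i}$. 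Since $\prod_i G_i$ and $G$ share the maximal torus $T$, Remark \ref{rem:shared-torus} shows that the pull-back $E_G|_{\prod_i G_i}$ is an object of $\cate{CExt}(\prod_i G_i, \shK_2)$ classified by $Q$ on $Y = \bigoplus_i Y_i$.

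Next I would identify the classifying datum of the contracted product $E := E_{G_1} \utimes{\shK_2} \cdots \utimes{\shK_2} E_{G_r}$. Being a push-out, the contracted product commutes with pull-back along $T = \prod_i T_i \hookrightarrow \prod_i G_i$, so the restriction $E|_T$ is the contracted product of the torus extensions $E_{G_i}|_{T_i}$; its commutator pairing on $Y$ is $\bigl((y_i)_i, (y'_i)_i\bigr) \mapsto (-1)^{\sum_i B_{Q_i}(y_i, y'_i)}$, whence the associated bilinear form is $\sum_i B_{Q_i}$ and, using $2Q_j(y) = B_{Q_j}(y,y)$, the associated quadratic form is $\sum_i Q_i = Q$. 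By the last paragraph of Case (B) this $Q$ is exactly the form classifying the simply connected extension $E$. Since $\Aut(Q) = \{\identity\}$ for simply connected groups, there is a unique isomorphism $E_{G_1} \utimes{\shK_2} \cdots \utimes{\shK_2} E_{G_r} \rightiso E_G|_{\prod_i G_i}$ in $\cate{CExt}(\prod_i G_i, \shK_2)$. Restricting it along the slot-$i$ inclusion $G_i \hookrightarrow \prod_j G_j$, on which every other factor of the contracted product becomes trivial, yields the desired $\iota_i\colon E_{G_i} \rightiso E_G|_{G_i}$; uniqueness makes it canonical.

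Finally, the commutation statement is immediate from the contracted-product description: in $E_{G_1} \times \cdots \times E_{G_r}$ a lift of $g_i$ sitting in slot $i$ and a lift of $g_j$ sitting in slot $j$ with $i \neq j$ commute, since elements lying over the identity of a central extension are central; this persists after the push-out defining $E$, hence in $E_G|_{\prod_i G_i}$ and so in $E_G$. I expect the only genuinely delicate point to be the bookkeeping in the third step — verifying that ``pull back to $T$ and read off the commutator pairing'' really computes the classifying form of the contracted product, and that this matches the restriction of \eqref{eqn:Y-Sp}; everything else is formal manipulation with Remarks \ref{rem:Matsumoto} and \ref{rem:shared-torus} and the torsor operations of \S\ref{sec:torsors-generalities}.
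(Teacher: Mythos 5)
Your proof follows the same strategy as the paper's: choose compatible symplectic bases, identify the split maximal torus $T = \prod_i T_i$ and the form $Q = \bigoplus_i Q_i$ on $Y = \bigoplus_i Y_i$, invoke Remark \ref{rem:shared-torus} to identify the classifying form of $E_G|_{\prod_i G_i}$, match it with the contracted product, and extract $\iota_i$. The paper stops there, simply asserting that ``$\bigoplus_{i=1}^r (Y_i, Q_i)$ corresponds to $E_{G_1} \utimes{\shK_2} \cdots \utimes{\shK_2} E_{G_r}$.'' You try to justify this assertion, which is good discipline, but the justification you offer has a genuine gap.

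The gap is in the step ``its commutator pairing is $(-1)^{\sum_i B_{Q_i}}$, whence the associated bilinear form is $\sum_i B_{Q_i}$.'' For $\shK_2$-extensions of a split torus, the classifying datum is the pair $(Q, \mathcal{D})$ subject only to the constraint $[y,y'] = (-1)^{B_Q(y,y')}$; the commutator pairing on $\mathcal{D}$ determines $B_Q$ \emph{only modulo} $2$, and the integer-valued quadratic form $Q$ is an independent piece of data. (Concretely, for a one-dimensional torus the commutator is always trivial while $Q(1) \in \Z$ is arbitrary.) So computing the commutator of the contracted product restricted to $T$ cannot by itself pin down $Q$, and the inference ``whence $Q = \sum_i Q_i$'' does not follow.

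The cleanest way to close the gap, using only what the paper provides, is via the Baer-sum compatibility of the Brylinski--Deligne classification (Case (E) of \S\ref{sec:BD-classification}). Write the contracted product as the Baer sum $\sum_i p_i^* E_{G_i}$, where $p_i\colon \prod_j G_j \to G_i$ is the projection. To read off the quadratic form of each summand, pull back along a cocharacter $y = (y_j)_j\colon \Gm \to \prod_j T_j$: the pullback of $p_i^* E_{G_i}$ is $y_i^* (E_{G_i}|_{T_i})$, classified by the integer $Q_i(y_i)$. By the additivity of the classification under Baer sums, the quadratic form of $\sum_i p_i^* E_{G_i}$ evaluated at $y$ is $\sum_i Q_i(y_i)$, i.e.\ $(\bigoplus_i Q_i)(y)$. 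This gives $Q$ itself (not just $B_Q \bmod 2$) and completes the argument. With that substitution your remaining steps — uniqueness of the isomorphism because $\Aut(Q) = \{\identity\}$ in the simply connected case, the slot-inclusion to get $\iota_i$, and the commutation of lifts over different factors — are all correct.
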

\begin{proof}
	Choose symplectic bases of $W_1, \ldots, W_r$; their union is a symplectic basis of $W$. The corresponding split maximal $F$-torus of $G$ is $T = T_1 \times \cdots \times T_r$; in parallel $(Y, Q) = (Y_1, Q_1) \oplus \cdots \oplus (Y_r, Q_r)$ where $Q_i$ is the quadratic form associated to Matsumoto's $E_{G_i}$, by \eqref{eqn:Y-Sp}.

	By Remark \ref{rem:shared-torus}, $E_G|_{G_1 \times \cdots \times G_r}$ is also classified by the quadratic form $(Y, Q)$. On the other hand, $\bigoplus_{i=1}^r (Y_i, Q_i)$ corresponds to $E_{G_1} \utimes{\shK_2} \cdots \utimes{\shK_2} E_{G_r}$. This gives the required isomorphism. The required $\iota_i$ comes from composing with $E_{G_i} \hookrightarrow E_{G_1} \utimes{\shK_2} \cdots \utimes{\shK_2} E_{G_r}$.
\end{proof}

Now consider the constructions in \S\ref{sec:stable-reduction}. We have a maximal $F$-torus $T \subset G$. Form the canonical subgroup $T \subset G^T \subset G$ of Definition \ref{def:G-T}. By choosing a parameter $(K, K^\sharp, \ldots)$ for $T \hookrightarrow G$ (Proposition \ref{prop:parameter-tori}), we may identify $G^T$ with $\prod_{i \in I} R_{K^\sharp_i/F} \SL(2)$. To reconcile with the notations in \S\ref{sec:Weil-restriction}, denote by $f_i: \Spec(K_i^\sharp) \to \Spec(F)$ the structure morphisms for each $i \in I$.

\begin{theorem}\label{prop:G-T-reduction-K_2}
	The restriction of $E_G$ to $G^T$ is isomorphic to the contracted product of the multiplicative $\shK_2$-torsors $f_{i,*} \left( E_{\SL(2), K_i^\sharp} \right)$.\index{contracted product}
\end{theorem}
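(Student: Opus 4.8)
The plan is to reduce the statement to an equality of Weyl- and Galois-invariant quadratic forms, exploiting that $G^T$ is simply connected. Both $E_G|_{G^T}$ and the contracted product of the torsors $f_{i,*}(E_{\SL(2),K_i^\sharp})$ are objects of $\cate{CExt}(G^T,\shK_2)$: the latter because each $f_{i,*}(E_{\SL(2),K_i^\sharp})$ lies in $\cate{CExt}(R_{K_i^\sharp/F}\SL(2),\shK_2)$ by Proposition \ref{prop:BD-restriction-Weil} — the well-behavedness hypothesis being automatic over the infinite field $K_i^\sharp$, cf.\ Example \ref{eg:well-behaved-reductive} — and contracted products are formed inside $\cate{CExt}$ of the product group, the commutativity of elements lying over different factors being the last assertion of Lemma \ref{prop:restriction-W_i}. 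Since $G^T$ is simply connected (recorded after Definition \ref{def:G-T}), the Brylinski--Deligne classification of \S\ref{sec:BD-classification} classifies objects of $\cate{CExt}(G^T,\shK_2)$, with no nontrivial automorphisms, by the invariant quadratic form they induce on $Y:=X_*(T_{\bar{F}})$ for the common maximal torus $T$; hence it suffices to check that the two quadratic forms coincide, and the desired isomorphism then exists (and is unique).

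Next I would compute both forms. For $E_G|_{G^T}$: using the isomorphism $(W,\lrangle{\cdot|\cdot})\simeq\bigoplus_{i\in I}(K_i,h_i)$ of symplectic $F$-spaces from \S\ref{sec:stable-reduction}, Lemma \ref{prop:restriction-W_i} identifies the restriction of $E_G$ to $\prod_{i\in I}\Sp(K_i,h_i)$ with the contracted product of Matsumoto's extensions $E_{\Sp(K_i,h_i)}$; since pull-back of a multiplicative $\shK_2$-torsor commutes with the formation of contracted products and $G^T=\prod_{i\in I}R_{K_i^\sharp/F}\SL(2)$ sits in $\prod_{i\in I}\Sp(K_i,h_i)$ via \eqref{eqn:O-embedding-2}, it remains to prove that for each $i$ the restriction of $E_{\Sp(K_i,h_i)}$ to $R_{K_i^\sharp/F}\SL(2)$ is isomorphic to $f_{i,*}(E_{\SL(2),K_i^\sharp})$ in $\cate{CExt}(R_{K_i^\sharp/F}\SL(2),\shK_2)$. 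Here $K_i^1\simeq T_{\mathcal{O}}$ of \eqref{eqn:O-embedding-2} is a maximal torus of $R_{K_i^\sharp/F}\SL(2)$, and as $\dim K_i^1=[K_i^\sharp:F]$ equals the rank of $\Sp(K_i,h_i)$, it is also a maximal torus of $\Sp(K_i,h_i)$; so Remark \ref{rem:shared-torus} shows the left-hand side is classified by the invariant quadratic form of $E_{\Sp(K_i,h_i)}$ read on $Y_i:=X_*((K_i^1)_{\bar{F}})$. Over $\bar{F}$ the $\SL(2)$-factors of $R_{K_i^\sharp/F}\SL(2)$ are the symplectic groups of the two-dimensional summands of $K_i\otimes_F\bar{F}=\bigoplus_{\iota\in\Hom_F(K_i^\sharp,\bar{F})}(K_i\otimes_{K_i^\sharp,\iota}\bar{F})$, so their coroots $\check\alpha_\iota$ are exactly the short coroots of $\Sp(K_i,h_i)_{\bar{F}}$ (the coroots of its long roots $2\epsilon_j$) and they form a $\Z$-basis of $Y_i$ with $B_Q(\check\alpha_\iota,\check\alpha_{\iota'})=0$ for $\iota\neq\iota'$; as Matsumoto's form takes value $1$ on short coroots (Remark \ref{rem:Matsumoto}), it restricts to $(c_\iota)_\iota\mapsto\sum_\iota c_\iota^2$ on $Y_i$. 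On the other side, Theorem \ref{prop:BD-restriction} gives that $f_{i,*}(E_{\SL(2),K_i^\sharp})$ is classified by $f_{i,*}Q_{\SL(2)}$, the direct sum over $\iota$ of Matsumoto's form on $X_*(\Gm)=\Z$ (value $1$ on the coroot), which is again $(c_\iota)_\iota\mapsto\sum_\iota c_\iota^2$ once one notes (\S\ref{sec:isogeny}) that the invariant form does not depend on the maximal torus used to display it. Hence the two forms agree; composing the factorwise isomorphisms with the contracted-product isomorphism of Lemma \ref{prop:restriction-W_i} delivers the theorem.

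The hard part will be the combinatorial bookkeeping in the middle step: one must track, through \eqref{eqn:O-embedding-1}--\eqref{eqn:O-embedding-2}, the identification of the index set $\Hom_F(K_i^\sharp,\bar{F})$ and the coroots $\check\alpha_\iota$ with the short coroots of $\Sp(W)$, and bear in mind that the quadratic forms are being evaluated on the non-split torus $T$ (equivalently the $K_i^1$) rather than on the split torus used to define $Q$ in \S\ref{sec:BD-classification} — so that the conjugation-invariance of $Q$ and the étale-descent form of the classification for the non-split simply connected group $G^T$ have to be invoked. The remaining ingredients are routine and already in place: exactness of Weil restriction and its preservation of simply connectedness (\S\ref{sec:Weil-restriction}), well-behavedness of torsors over infinite fields (Example \ref{eg:well-behaved-reductive}), and the compatibility of pull-back with contracted products.
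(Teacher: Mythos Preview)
Your proposal is correct and follows essentially the same route as the paper: both arguments restrict $E_G$ in stages via Lemma~\ref{prop:restriction-W_i} through $\prod_i \Sp(K_i,h_i)$, reduce to a single factor, invoke Remark~\ref{rem:shared-torus} (using that $R_{K_i^\sharp/F}\SL(2)$ and $\Sp(K_i,h_i)$ are simply connected and share the maximal torus $K_i^1$), and then compare the resulting quadratic form on $Y_i$ with the one furnished by Theorem~\ref{prop:BD-restriction} for $f_{i,*}E_{\SL(2),K_i^\sharp}$, both being the sum-of-squares in the basis of short coroots indexed by $\Hom_F(K_i^\sharp,\bar F)$. The only cosmetic difference is that the paper immediately specializes to $|I|=1$ after invoking Lemma~\ref{prop:restriction-W_i}, whereas you carry the product notation through and invoke compatibility of pull-back with contracted products at the end; the bookkeeping you flag as ``the hard part'' is exactly what the paper records in \eqref{eqn:SL2-Y-restricted}.
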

\begin{proof}
	Using the notation from \S\ref{sec:stable-reduction}, we restrict $E_G$ in stages
	\[ G^T = \prod_{i \in I} R_{K^\sharp_i/F} \Sp(K_i, h^i) \hookrightarrow \prod_{i \in I} \Sp(K_i, h_i) \hookrightarrow \Sp(W) = G. \]
	With $G_i := \Sp(K_i, h_i)$, Lemma \ref{prop:restriction-W_i} reduces the problem to the case $|I|=1$ and $K^\sharp$ is a field. Write $h = h^i$ and $f = f_i$. Since $R_{K^\sharp/F} \Sp(K, h)$ is simply connected and contains $T$, Remark \ref{rem:shared-torus} implies that $E_G|_{G^T}$ is classified by the $Q: Y \to \Z$ in \eqref{eqn:Y-Sp}.

	Now turn to the quadratic form $Q': Y \to \Z$ associated to $f_* E_{\Sp(K, h)}$. Choose a coroot $\check{\alpha}$ of $K^1 \subset \Sp(K,h)$, which is defined over $F_\alpha$ and recall $K^\sharp = F_{\pm\alpha}$ in the notation of \S\ref{sec:stable-reduction}. Then $Y_{\Sp(K,h)} = \Z\check{\alpha}$ and the $Q_{\Sp(K,h)}$ associated to $E_{\Sp(K,h)}$ is simply $y\check{\alpha} \mapsto y^2$ by \eqref{eqn:Y-Sp}. If $K$ is a field then $K=F_\alpha$ and $\Gal{F_\alpha/F_{\pm\alpha}}$ acts on $Y_{\Sp(K,h)}$ by $\check{\alpha} \mapsto \pm\check{\alpha}$, thus stabilizes $Q_{\Sp(K,h)}$. If $K \simeq K^\sharp \times K^\sharp$ then $T$ splits over $F_{\pm\alpha}$. By the discussions in \S\ref{sec:stable-reduction}, the $\{\iota(\check{\alpha}) : \iota \in \Gamma_F \}$ is precisely the set of short coroots in $Y$ respect to $T \subset G$ (over $\bar{F}$), therefore
	\begin{equation}\label{eqn:SL2-Y-restricted}
		Y = \bigoplus_{\iota \in \Gamma_F/\Gamma_{\pm\alpha}} \Z\iota(\check{\alpha}).
	\end{equation}
	This coincides with the description $\bigoplus_{\iota \in \Hom_F(K^\sharp, \bar{F})} Y_\iota$ of $f_* Y_{\Sp(K,h)}$ in \S\ref{sec:Weil-restriction}. Theorem \ref{prop:BD-restriction} gives the $Q'$ associated to $f_* E_{\Sp(K, h)}$: it is orthogonal direct sum of the forms
	\[ \Z\iota(\check{\alpha}) \to \Z, \quad y\iota(\check{\alpha}) \mapsto y^2, \quad (\iota \in \Gamma_F/\Gamma_{\pm\alpha}). \]
	By \eqref{eqn:SL2-Y-restricted} together with \eqref{eqn:Y-Sp}, we see $Q' = Q$. Therefore $f_* E_{\Sp(K,h)} \simeq E_G|_{G^T}$ in $\cate{CExt}(G^T, \shK_2)$.
\end{proof}

Let $m \mid N_F$. By the construction of \S\ref{sec:local-BD}, to $(E_G, m)$ is attached the topological central extension of locally compact groups
\begin{equation}\label{eqn:G-BD-cext}
	1 \to \mu_m \to \tilde{G} \xrightarrow{\bm{p}} G(F) \to 1.
\end{equation}
When $G = \SL(2,F)$, denote by $\widetilde{\SL}(2, F)$ the topological central extension of $\SL(2,F)$ by $\mu_m$ so obtained.

\begin{theorem}\label{prop:G-T-reduction}
	The restriction of $\tilde{G}$ to $G^T(F)$ is isomorphic to the contracted product of the topological central extensions $\widetilde{\SL}(2, K_i^\sharp)$ by $\mu_m$.
\end{theorem}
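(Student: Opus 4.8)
The plan is to obtain this by applying the ``BD-cover'' functor to the isomorphism of $\shK_2$-torsors furnished by Theorem \ref{prop:G-T-reduction-K_2}. Recall from \S\ref{sec:local-BD} that for a reductive group $H$ over a local field $F$ with $m \mid N_F$, every object $E$ of $\cate{CExt}(H, \shK_2)$ gives rise to a topological central extension $\mathcal{F}_F(E)$ of $H(F)$ by $\mu_m$: one first takes $F$-points to get a central extension of $H(F)$ by $K_2(F)$ (using $H^1(F, \shK_2) = 0$), then pushes out by the Hilbert symbol $(\cdot,\cdot)_{F,m}\colon K_2(F) \to \mu_m$. By construction $\tilde{G} = \mathcal{F}_F(E_G)$; and since pulling back a torsor along the closed immersion $G^T \hookrightarrow G$ commutes both with taking $F$-points and with the two push-outs, one has $\tilde{G}|_{G^T(F)} \simeq \mathcal{F}_F(E_G|_{G^T})$, where $G^T(F) = \prod_{i \in I} \SL(2, K_i^\sharp)$ is reductive over $F$.

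The first real point is that $\mathcal{F}_F$ is \emph{monoidal}: it carries the contracted product of $\shK_2$-torsors to the contracted product of the associated $\mu_m$-covers. Indeed, a contracted product $E_1 \utimes{\shK_2} E_2$ is by definition the push-out of the external product $E_1 \times E_2 \in \cate{CExt}(H_1 \times H_2, \shK_2 \times \shK_2)$ along $+\colon \shK_2 \times \shK_2 \to \shK_2$. Taking $F$-points commutes with this: external products of torsors go to products of central extensions, and since the $\shK_2$-torsors in play are Zariski-locally trivial over the reductive base groups while every $\shK_2$-torsor over $\Spec F$ is trivial, base change and the push-out along $+$ may be computed fibrewise over $F$-points; thus $(E_1 \utimes{\shK_2} E_2)(F) = E_1(F) \utimes{K_2(F)} E_2(F)$. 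Finally, pushing out along the Hilbert symbol, which is a homomorphism of abelian groups, turns a contracted product over $K_2(F)$ into one over $\mu_m$. Applying all of this to Theorem \ref{prop:G-T-reduction-K_2}, we conclude that $\tilde{G}|_{G^T(F)}$ is isomorphic, as a topological central extension of $\prod_{i \in I} \SL(2, K_i^\sharp)$ by $\mu_m$, to the contracted product over $i \in I$ of the covers $\mathcal{F}_F\!\left( f_{i,*}(E_{\SL(2), K_i^\sharp}) \right)$.

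It remains to identify each factor. Applying Proposition \ref{prop:restriction-commutes} to the finite separable extension $K_i^\sharp/F$ and the group $\SL(2)$ over $K_i^\sharp$ gives a canonical isomorphism $\mathcal{F}_F\!\left( f_{i,*}(E_{\SL(2), K_i^\sharp}) \right) \simeq \mathcal{F}_{K_i^\sharp}(E_{\SL(2), K_i^\sharp}) = \widetilde{\SL}(2, K_i^\sharp)$ of topological central extensions of $\SL(2, K_i^\sharp)$ by $\mu_m(F) = \mu_m(K_i^\sharp)$ (the inclusion $F \subset K_i^\sharp$ giving $m \mid N_F \mid N_{K_i^\sharp}$, so the construction applies over $K_i^\sharp$ with the same $m$). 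Substituting yields the asserted isomorphism, and the homeomorphism property at every stage is handled exactly as in the proof of Proposition \ref{prop:restriction-commutes}, via \cite[Lemma 10.2]{BD01}. I expect the only genuine friction to be the bookkeeping behind the monoidality of $\mathcal{F}_F$ — in particular verifying that taking $F$-points commutes with the relevant push-outs of Zariski-locally trivial torsors — which is nonetheless purely formal once Theorem \ref{prop:G-T-reduction-K_2} and Proposition \ref{prop:restriction-commutes} are in hand.
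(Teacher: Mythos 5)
Your proof is correct and follows essentially the same route as the paper: apply Theorem~\ref{prop:G-T-reduction-K_2} to decompose $E_G|_{G^T}$ as a contracted product of Weil restrictions, observe that the passage from $\shK_2$-torsors to $\mu_m$-covers (taking $F$-points, pushing out by the Hilbert symbol) commutes with contracted products, and then identify each factor via Proposition~\ref{prop:restriction-commutes}. The only stylistic difference is that you package the argument as the monoidality of a functor $\mathcal{F}_F$ and spell out why $F$-points commute with push-out along $+$, whereas the paper states the commutation more tersely; the mathematical content is identical.
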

When $K_i^\sharp = \CC$, we set $\widetilde{\SL}(2, K_i^\sharp) := \SL(2,\CC) \times \mu_m$, so that the assertion always makes sense.
\begin{proof}
	Consider the push-out to $\mu_m$ of the contracted product of $\left( f_{i,*} E_{\SL(2), K_i^\sharp} \right)(F)$; it is canonically isomorphic to the contracted product of the push-outs of $\left( f_{i,*} E_{\SL(2), K_i^\sharp} \right)(F)$ to $\mu_m$. The latter push-outs are isomorphic to $\widetilde{\SL}(2, K_i^\sharp)$ as topological central extensions of $\SL(2, K_i^\sharp)$ by $\mu_m$, by Proposition \ref{prop:restriction-commutes}. We conclude by applying Theorem \ref{prop:G-T-reduction-K_2}.
\end{proof}

\subsection{Kubota's cover of \texorpdfstring{$\GL(2)$}{GL2}}\label{sec:Kubota}
We review Kubota's description \cite{Ku69} of a covering of $\GL(2,F)$, cf. \cite[0.1]{KP84} and \cite[\S 16.2]{GG}. It is a multiplicative $\shK_2$-torsor $E_\text{Ku} \to \GL(2)$ such that
\begin{compactitem}
	\item $E_\text{Ku}$ restricts to Matsumoto's $E_{\SL(2)} \to \SL(2)$, see \cite[0.1]{KP84};
	\item by \cite[p.41]{KP84}, there is a preferred section $\bm{s}: \GL(2,F) \to E_\text{Ku}(F)$ with $\bm{s}(x)\bm{s}(y) = \bm{c}(x,y) \bm{s}(xy)$ in terms of an explicit $2$-cocycle $\bm{c}$.
\end{compactitem} \index{xbold@$\bm{x}(\cdot), \bm{s}, \bm{c}$}
Using the notations of \S\ref{sec:local-BD}, we describe $\bm{c}$ by
\begin{equation}\label{eqn:Kubota-cocycle}
	\begin{gathered}
		\bm{x}\twobigmatrix{a}{b}{c}{d} := \begin{cases}
		c, & c \neq 0 \\
		d, & c = 0,
	\end{cases} \\
	\bm{c}(g_1, g_2) := - \left\{ \dfrac{ \bm{x}(g_1) }{ \bm{x}(g_1 g_2) }, \; \dfrac{ \det g_1 \cdot \bm{x}(g_2) }{ \bm{x}(g_1 g_2) } \right\}_F \; \in K_2(F).
\end{gathered}\end{equation}

\begin{remark}\label{rem:Kubota-minus}
	We follow \cite[Corollaire 5.12]{Mat69} to take the negative of the usual Kubota's cocycle found in \cite{Ku69, Fl80, KP84}, otherwise $E_{\text{Ku}}|_{\SL(2)}$ would be the negative of Matsumoto's central extension. For the relation between $\bm{s}$ and Steinberg's presentation for $E_{\SL(2)}(F)$, we refer to the discussions preceding \cite[Corollaire 5.12]{Mat69}. After pushing-out to $\mu_2$, the difference disappears.
\end{remark}

\begin{lemma}\label{prop:PGL-action}
	The adjoint action of $\GL(2)$ on $E_{\mathrm{Ku}}$ induces the canonical $\PGL(2)$-action on $E_{\SL(2)}$ given by Proposition \ref{prop:BD-adjoint-action}.
\end{lemma}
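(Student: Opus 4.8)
The plan is to bypass any computation with the explicit Kubota cocycle $\bm{c}$ of \eqref{eqn:Kubota-cocycle} and instead invoke the rigidity of $\cate{CExt}(\SL(2),\shK_2)$ together with the uniqueness clause of Proposition \ref{prop:BD-adjoint-action}. Since $\SL(2)$ is normal in $\GL(2)$, the adjoint action of $\GL(2)$ on $E_{\mathrm{Ku}}$ (conjugation by any preimage, which is well defined as $\shK_2$ is central) stabilises the sub-$\shK_2$-torsor $E_{\mathrm{Ku}}|_{\SL(2)}$; under the identification $E_{\mathrm{Ku}}|_{\SL(2)} \simeq E_{\SL(2)}$ recalled in \S\ref{sec:Kubota} this yields a homomorphism $\kappa$ from $\GL(2)$ to the automorphisms of $E_{\SL(2)}$, with each $\kappa(g)$ covering $\Ad(g) \in \Aut(\SL(2))$ and acting trivially on $\shK_2$. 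I would reduce the lemma to two assertions: that $\kappa$ factors through $\GL(2) \twoheadrightarrow \PGL(2) = \GL(2)/Z_{\GL(2)}$, and that the resulting $\PGL(2)$-action on $E_{\SL(2)}$ is the canonical one $\rho$ of Proposition \ref{prop:BD-adjoint-action}.

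For the first assertion, fix $z \in Z_{\GL(2)}$. Then $\kappa(z)$ covers $\Ad(z) = \identity_{\SL(2)}$, so it is an automorphism of $E_{\SL(2)}$ in $\cate{CExt}(\SL(2),\shK_2)$ covering the identity; as $\SL(2)$ is simply connected, such an automorphism is trivial by the Brylinski--Deligne classification (case (B) of \S\ref{sec:BD-classification}) --- this is exactly the observation used in the proof of Proposition \ref{prop:BD-adjoint-action}. Carrying this out functorially in the test ring and using that $\GL(2) \to \PGL(2)$ is the fppf quotient by $Z_{\GL(2)}$, descent produces a $\PGL(2)$-action $\bar\kappa$ on $E_{\SL(2)}$ lifting the conjugation action of $\PGL(2)$ on $\SL(2)$. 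For the second assertion, $\bar\kappa$ and $\rho$ are then two lifts of one and the same $\PGL(2)$-action on $\SL(2)$, hence coincide by the uniqueness part of Proposition \ref{prop:BD-adjoint-action}.

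The one point requiring care is the descent step: \emph{a priori} the triviality of $\kappa(z)$ is only available over a base on which the Brylinski--Deligne classification applies (e.g. $\Spec R$ with $R$ regular of finite type over $F$), and one must check this suffices to descend the action morphism $\GL(2)\times E_{\SL(2)} \to E_{\SL(2)}$ of fppf sheaves along the $Z_{\GL(2)}$-torsor $\GL(2) \to \PGL(2)$ --- this is where I expect the only mild friction, though it is routine. As a sanity check one could alternatively compute $\bm{s}(g)\bm{s}(x)\bm{s}(g)^{-1}$ for $x \in \SL(2,F)$ directly from \eqref{eqn:Kubota-cocycle}, verifying that it depends only on the class of $g$ modulo scalars and agrees with Matsumoto's formulas; I would retain the uniqueness argument as the main proof and use this only as a cross-check.
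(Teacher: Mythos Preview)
Your proposal is correct and follows essentially the same argument as the paper: both show the center of $\GL(2)$ acts trivially on $E_{\SL(2)}$ because $\SL(2)$ is simply connected (so $\cate{CExt}(\SL(2),\shK_2)$ has no nontrivial automorphisms), then conclude by the uniqueness clause of Proposition~\ref{prop:BD-adjoint-action}. The paper states this in three sentences without dwelling on the descent subtlety you flag; your extra care there is not misplaced but is not needed for the level of detail the paper adopts.
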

\begin{proof}
	The $\GL(2)$-action leaves $\SL(2)$ and $E_{\text{Ku}}|_{\SL(2)} = E_{\SL(2)}$ invariant. The center of $\GL(2)$ acts trivially on $\SL(2)$, thereby giving rise to an automorphism of $E_{\SL(2)} \to \SL(2)$; this action must be trivial as $\SL(2)$ is simply connected. We conclude by the uniqueness part of Proposition \ref{prop:BD-adjoint-action}.
\end{proof}

\begin{lemma}\label{prop:-1-adjoint}
	Let $g \in \PGL(2,F)$ with preimage $g_1 \in \GL(2,F)$. For any preimage $\widetilde{-1} \in E_{\mathrm{Ku}}(F)$ of $-1$, we have $\Ad(g)(\widetilde{-1}) = g_1 (\widetilde{-1}) g_1^{-1} = \xi (\widetilde{-1})$ where
	\begin{align*}
		\xi & = -\left\{ -1, \det g_1 \right\}_F \\
		& = -\left\{ \det g_1, \det g_1 \right\}_F \; \in K_2(F).
	\end{align*}
\end{lemma}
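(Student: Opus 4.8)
The plan is to reduce the statement to a short computation with Kubota's $2$-cocycle $\bm{c}$ from \eqref{eqn:Kubota-cocycle}. First I would note that since $-1$ is central in $\GL(2)$, conjugation of $\widetilde{-1}$ by any lift $\tilde{g}_1 \in E_{\mathrm{Ku}}(F)$ of $g_1$ produces $\xi \cdot \widetilde{-1}$ with $\xi \in K_2(F)$, and that $\xi = \tilde{g}_1 \widetilde{-1} \tilde{g}_1^{-1} \widetilde{-1}^{-1}$ is exactly the value $[g_1, -1]$ of the commutator pairing of the central extension $K_2(F) \hookrightarrow E_{\mathrm{Ku}}(F) \twoheadrightarrow \GL(2,F)$; in particular it does not depend on the chosen lifts. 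Together with Lemma \ref{prop:PGL-action} this legitimizes the notation $\Ad(g)(\widetilde{-1}) = g_1\widetilde{-1}g_1^{-1} = \xi\widetilde{-1}$ for $g \in \PGL(2,F)$, so everything comes down to computing $[g_1, -1]$.

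For this I would use the general identity $[x,y] = \bm{c}(x,y) - \bm{c}(y,x)$ in $K_2(F)$ (additive notation), valid for commuting $x,y$ with the section $\bm{s}$, applied to $x = g_1$ and $y = -1$. Plugging into \eqref{eqn:Kubota-cocycle}: writing $x_1 := \bm{x}(g_1) \in F^\times$ and $\Delta := \det g_1$, one has $\bm{x}(-1) = -1$ and $\bm{x}(-g_1) = -x_1$ — the latter regardless of whether the lower-left entry of $g_1$ vanishes — whence $\bm{c}(g_1, -1) = -\{-1, \Delta/x_1\}_F$ and $\bm{c}(-1, g_1) = -\{1/x_1, -1\}_F$. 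Expanding by the bilinearity and anti-symmetry of the Steinberg symbol then gives $[g_1, -1] = -\{-1, \Delta\}_F + 2\{-1, x_1\}_F$.

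The last point is the only one that uses anything beyond bookkeeping: $2\{-1, x_1\}_F = \{(-1)^2, x_1\}_F = \{1, x_1\}_F = 0$ by bilinearity, so $\xi = [g_1,-1] = -\{-1, \det g_1\}_F$, as asserted. I do not expect a genuine obstacle; the only care required is to track the signs in Kubota's cocycle correctly (keeping in mind the sign convention of Remark \ref{rem:Kubota-minus}, which is immaterial once one pushes out to $\mu_2$). As a consistency check, replacing $g_1$ by $zg_1$ with $z \in F^\times$ multiplies $\det g_1$ by $z^2$ and hence, again because $2\{-1,z\}_F = 0$, leaves $\xi$ unchanged — consistent with $\xi$ depending only on $g \in \PGL(2,F)$.
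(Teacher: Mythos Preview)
Your proof is correct and follows essentially the same approach as the paper: both reduce to the commutator identity $[g_1,-1] = \bm{c}(g_1,-1) - \bm{c}(-1,g_1)$ and compute directly from Kubota's cocycle \eqref{eqn:Kubota-cocycle}. The only cosmetic differences are that the paper splits into the cases $c \neq 0$ and $c = 0$ whereas you handle both at once via $x_1 = \bm{x}(g_1)$, and that the paper cancels the $x_1$-contribution by invoking anti-symmetry of Steinberg symbols (together with their $2$-torsion when one argument is $-1$) while you phrase the same cancellation as $2\{-1,x_1\}_F = \{1,x_1\}_F = 0$.
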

\begin{proof}
	By Lemma \ref{prop:PGL-action} we have $\Ad(g)(\widetilde{-1}) = g_1(\widetilde{-1})g_1^{-1}$. As $-1$ is central in $\GL(2,F)$, we have $\xi = \bm{c}(g_1, -1) - \bm{c}(-1, g_1)$. Write $g_1 = \twomatrix{a}{b}{c}{d}$. First suppose $c \neq 0$, then $\bm{c}(g_1, -1) = -\left\{ -1, (\det g_1) c^{-1} \right\}_F$ whereas $\bm{c}(-1, g_1) = -\left\{ c^{-1}, -1 \right\}_F = -\left\{ -1, c^{-1} \right\}_F$ by the anti-symmetry of Steinberg symbols. Hence $\bm{c}(g_1, -1) - \bm{c}(-1, g_1) = -\left\{ -1, \det g_1 \right\}_F$.
	
	If $c = 0$, replacing $c$ by $d$ in the arguments above gives the same result.
\end{proof}

By fixing $m \mid N_F$ and pushing $E_\text{Ku}(F)$ out via $(\cdot, \cdot)_{F,m}: K_2(F) \to \mu_m$, we obtain a topological central extension
\[ 1 \to \mu_m \to \widetilde{\GL}(2,F) \to \GL(2,F) \to 1. \]
The resulting preferred section and cocycle are still denoted by $\mathbf{s}$ and $\bm{c}$, now with $\{\cdot, \cdot \}_F$ replaced by $(\cdot, \cdot)_{F,m}$. Hence
\begin{compactitem}
	\item $\widetilde{\GL}(2,F)$ restricts to $\widetilde{\SL}(2,F) \to \SL(2,F)$;
	\item by Lemma \ref{prop:PGL-action}, the adjoint action of $\GL(2,F)$ on $\widetilde{\SL}(2,F)$ induces the canonical $\PGL(2,F)$-action on $\widetilde{\SL}(2,F)$ from Proposition \ref{prop:BD-adjoint-action};
	\item the statements in Lemma \ref{prop:-1-adjoint} hold for $\widetilde{\SL}(2,F)$, with $\{\cdot,\cdot\}_F$ replaced by $( \cdot, \cdot)_{F,m}$.
\end{compactitem}

Let $K$ be an étale $F$-algebra of dimension $2$, therefore comes equipped with an involution $\tau \neq \identity$. The $F$-torus $K^\times$ embeds into $\GL(2)$ with $\det|_{K^\times} = N_{K/F}$; it restricts to $K^1 \hookrightarrow \SL(2)$. As the $n=1$ case of Proposition \ref{prop:D-description}, this parameterize stable conjugacy classes of embeddings of maximal tori in $\SL(2)$.

\begin{notation}
	When $K \simeq F \times F$, the elements are expressed as $x = (x_1, x_2)$ and we have $\tau(x_1, x_2) = (x_2, x_1)$. The Hilbert symbols for such $K$ can be conveniently defined as
	\[ (x,y)_{F,m} := (x_1, y_1)_{F,m} (x_2, y_2)_{F,m}. \]
\end{notation}

The result below quantifies the non-commutativity of the preimage of $K^\times$ in $\widetilde{\GL}(2,F)$.
\begin{proposition}[Flicker]\label{prop:Flicker-comm}
	Suppose that $\gamma, g \in \GL(2,F)$ arise from $x, u \in K^\times$. Let $\tilde{\gamma}$ be any preimage of $\gamma$ in $\widetilde{\GL}(2,F)$. The factor $[g,\gamma] \in \mu_m$ in \eqref{eqn:commutator} determined by $g\tilde{\gamma}g^{-1} = [g, \gamma] \tilde{\gamma}$ has the form
	\[ [g,\gamma] = (x, \tau(u))_{K,m}^{-1}. \]
\end{proposition}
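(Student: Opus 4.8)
The plan is to compute the commutator pairing directly from Kubota's explicit cocycle, treating the split and non-split cases separately. First note that since $K$ is commutative, $\gamma$ and $g$ commute in $\GL(2,F)$, so $g\tilde\gamma g^{-1}$ is another preimage of $\gamma$ and the scalar $[g,\gamma]\in\mu_m$ with $g\tilde\gamma g^{-1}=[g,\gamma]\tilde\gamma$ is well defined; it is the value of the commutator pairing \eqref{eqn:commutator} of $\widetilde{\GL}(2,F)$, which on the abelian subgroup $K^\times$ is continuous and bimultiplicative in $(x,u)$. Evaluating on the preferred section $\bm{s}$ and using $\gamma g=g\gamma$, I would obtain the standard identity $[g,\gamma]=\bm{c}(g,\gamma)\,\bm{c}(\gamma,g)^{-1}$ (with the convention $\bm{s}(a)\bm{s}(b)=\bm{c}(a,b)\bm{s}(ab)$ of \eqref{eqn:Kubota-cocycle}), where $\bm{c}$ is Kubota's cocycle pushed out along $(\cdot,\cdot)_{F,m}$. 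Since $(x,u)\mapsto(x,\tau(u))_{K,m}^{-1}$ is likewise continuous and bimultiplicative, it suffices to verify the identity when the relevant values of $\bm{x}(\cdot)$ do not vanish (a dense condition), the remaining scalar cases following by continuity or by a separate direct check.

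In the split case $K\simeq F\times F$ the embedding $K^\times\hookrightarrow\GL(2)$ is conjugate to the diagonal torus, so $\gamma=\mathrm{diag}(x_1,x_2)$ and $g=\mathrm{diag}(u_1,u_2)$ are upper triangular and $\bm{x}(\cdot)$ is simply the lower-right entry. Feeding this into \eqref{eqn:Kubota-cocycle} and simplifying with the bimultiplicativity and antisymmetry of Steinberg symbols, I would compute $\bm{c}(g,\gamma)$ and $\bm{c}(\gamma,g)$ explicitly, read off $[g,\gamma]$ as a product of two $F$-Hilbert symbols, and match it with $(x,\tau(u))_{K,m}^{-1}$ using $\tau(u)=(u_2,u_1)$ and the componentwise convention $(a,b)_{K,m}=(a_1,b_1)_{F,m}(a_2,b_2)_{F,m}$. (When $K=\CC$ the extension splits and both sides are trivial.)

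For the non-split case, $K=F(\sqrt d)$ a field, I would realize $K^\times\hookrightarrow\GL(2,F)$ by the regular representation on the $F$-basis $\{1,\sqrt d\}$, so that $z=P+Q\sqrt d$ maps to $\twomatrix{P}{dQ}{Q}{P}$; then the determinant of this matrix equals $N_{K/F}(z)$ and, away from $F^\times$, $\bm{x}(\cdot)$ is the coordinate $Q=Q(z)$, with $Q(zw)=P(z)Q(w)+Q(z)P(w)$. Substituting into \eqref{eqn:Kubota-cocycle} and expanding by bimultiplicativity gives, after cancellation of the cross-terms, an expression for $[g,\gamma]$ as a product of $F$-Hilbert symbols built from $Q(x),Q(u),Q(xu),N_{K/F}(x),N_{K/F}(u)$. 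To identify this with $(x,\tau(u))_{K,m}^{-1}$ I would write the norms as $N_{K/F}$ of explicit elements of $K$ via $N_{K/F}(z)=P(z)^2-dQ(z)^2$, then apply the projection formula $\{\alpha,N_{K/F}(\beta)\}_F=\mathrm{cor}_{K/F}\{\alpha,\beta\}_K$ and the corestriction-compatibility $(\mathrm{cor}_{K/F}\xi)_{F,m}=(\xi)_{K,m}$ of Hilbert symbols (the same ingredients used for Proposition \ref{prop:restriction-commutes}, via \cite[Prop.~5.5]{D96} and \cite[Lemma 18.2]{Sus85}), together with $\mathrm{cor}_{K/F}\circ\mathrm{res}_{K/F}=[K:F]=2$, to pass from $F$- to $K$-symbols and collapse the expression.

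The hard part will be exactly this last identification in the non-split case: the bilinear bookkeeping of the various $F$-symbols and, above all, the careful tracking of signs --- the minus in \eqref{eqn:Kubota-cocycle}, the antisymmetry $\{a,b\}_F=-\{b,a\}_F$, the factor $2$ from $\mathrm{cor}\circ\mathrm{res}$, and the discrepancy between $(\cdot,\cdot)_{F,m}$ and $(\cdot,\cdot)_{K,m}$ --- so that the cross-terms reassemble into the single $K$-symbol $(x,\tau(u))$. Handling first the sub-cases $x,u\in F^\times$ and $x\in F^\times$, $u\in K^1$ (together with their transposes), and then reducing the general case to these by bimultiplicativity, should keep the computation under control.
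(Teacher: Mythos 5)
Your approach is genuinely different from the paper's: the paper's entire proof is the single sentence citing Flicker's computation on p.128 of \cite{Fl80} (done ``using the cocycle $-\bm{c}$''), whereas you propose a self-contained verification directly from \eqref{eqn:Kubota-cocycle}. That is a worthwhile thing to have, since the statement is then checkable without chasing down Flicker's normalizations. Two concrete issues with your plan, however.

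First, a sign trap that Remark~\ref{rem:Kubota-minus} and the paper's one-line proof are quietly warning about. If you expand the paper's $\bm{c}$ of \eqref{eqn:Kubota-cocycle} (which carries the extra minus sign relative to Kubota/Flicker) in the split case $\gamma=\mathrm{diag}(x_1,x_2)$, $g=\mathrm{diag}(u_1,u_2)$, you find
\[
\bm{c}(g,\gamma)=\{x_2,u_1\}_F,\qquad \bm{c}(\gamma,g)=\{u_2,x_1\}_F,
\]
so that $[g,\gamma]=\bm{c}(g,\gamma)-\bm{c}(\gamma,g)$ pushes out to $(x_2,u_1)_{F,m}(x_1,u_2)_{F,m}=(x,\tau(u))_{K,m}$, \emph{without} the inverse. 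The same happens in the scalar test $x,u\in F^\times$: one gets $(x,u)_{F,m}^{2}=(x,\tau(u))_{K,m}$, not its inverse. The printed formula $(x,\tau(u))_{K,m}^{-1}$ is in Flicker's normalization (the cocycle $-\bm{c}$); the commutator pairing changes sign under $\bm{c}\leadsto-\bm{c}$. You need to decide up front which normalization you are computing in, and state that, or your split-case check will appear to fail. (For $m=2$, where Theorem~\ref{prop:good-SL2} and Lemma~\ref{prop:commutator-GL2-T} actually use this, $(\cdot,\cdot)_{K,2}^{-1}=(\cdot,\cdot)_{K,2}$, so the discrepancy is invisible downstream.)

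Second, in the non-split case your plan to verify the bimultiplicative identity on ``$x,u\in F^\times$ and $x\in F^\times$, $u\in K^1$ (together with their transposes)'' is insufficient: for a quadratic field $K/F$ over a non-archimedean local field, $F^\times\cdot K^1$ has index $2$ in $K^\times$ (its image under $N_{K/F}$ is only $F^{\times 2}\subsetneq N_{K/F}(K^\times)$), so these do not generate $K^\times$. You need to add one more generator (e.g.\ a uniformizer $\varpi_K$ in the ramified case, or a unit of non-square norm in the unramified case) to close the reduction. Alternatively, carry out the honest expansion of
\[
[g,\gamma]=-\bigl\{\tfrac{Q_u}{Q_{xu}},\tfrac{N(u)Q_x}{Q_{xu}}\bigr\}_F+\bigl\{\tfrac{Q_x}{Q_{xu}},\tfrac{N(x)Q_u}{Q_{xu}}\bigr\}_F
\]
on the dense open set $Q_x Q_u Q_{xu}\neq 0$ and match it with $\mathrm{cor}_{K/F}\{x,\tau(u)\}_K$ via the projection formula — but note the terms in $Q_{xu}$ are \emph{not} bilinear in $(x,u)$, so you cannot simply compare cross-terms summand by summand; the cleanest route is still verification on an honest generating set plus bimultiplicativity of both sides.

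With those two corrections the outline is sound, and the approach is more informative than the paper's citation.
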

\begin{proof}
	This is done in the calculations in \cite[p.128]{Fl80} using the cocycle $-\bm{c}$.
\end{proof}

In the situation above, we define
\begin{align*}
	\iota_{Q,m}: K^1 & \longrightarrow K^1 \\
	x_0 & \longmapsto x_0^{m/\mathrm{gcd}(2,m)}
\end{align*}
following \eqref{eqn:isogeny-Sp}. Given $x = \iota_{Q,m}(x_0)$, there exists $\omega \in K^\times$ with $\omega/\tau(\omega) = x_0$ by Hilbert's theorem 90. Then $N_{K/F}(\omega)$ is well-defined modulo $F^{\times 2}$. When $K \simeq F \times F$ and $x_0 = (a, a^{-1})$ with $a \in F^\times$, we may take $\omega = (a,1)$ to see that $N_{K/F}(\omega)$ represents the class of $a$ inside $F^\times/F^{\times 2}$.

We will need the \emph{projection formula} for the next proof
\begin{equation}\label{eqn:Hilbert-projection-formula}
	(a, b)_{K,m} = (a, N_{K/F}(b))_{F,m}, \quad a \in F^\times, \; b \in K^\times;
\end{equation}
it is standard when $K$ is a field, and the case $K \simeq F \times F$ is straightforward.

\begin{lemma}\label{prop:commutator-GL2-T}
	Suppose $x = \iota_{Q,m}(x_0) \in K^1$ and choose $\omega$ be as above. Then in the setting of Proposition \ref{prop:Flicker-comm} we have
	\begin{align*}
		[g,\gamma] & = \left( \omega, N_{K/F}(u) \right)_{K, \mathrm{gcd}(2,m)} = \left( \omega, \det\gamma \right)_{K, \mathrm{gcd}(2,m)} \\
		& = \left( N_{K/F}(\omega), \det\gamma \right)_{F, \mathrm{gcd}(2,m)}.
	\end{align*}
\end{lemma}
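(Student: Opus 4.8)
The plan is to deduce the three identities successively from Flicker's formula (Proposition~\ref{prop:Flicker-comm}), the degree‑lowering relation \eqref{eqn:norm-residue-d}, and the projection formula \eqref{eqn:Hilbert-projection-formula}, using throughout that $d:=\mathrm{gcd}(2,m)\in\{1,2\}$, so that every value of a Hilbert symbol of degree $d$ lies in $\{\pm1\}$ and is its own inverse; in particular the anti‑symmetry of such symbols costs nothing. If $m$ is odd then $d=1$, all degree‑$1$ symbols are trivial, and Flicker's formula already gives $[g,\gamma]=(x_0^{m},\tau(u))_{K,m}^{-1}=\bigl((x_0,\tau(u))_{K,m}^{m}\bigr)^{-1}=(x_0,\tau(u))_{K,1}^{-1}=1$, so the assertion holds vacuously. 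Assume henceforth $d=2$.

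First I would unwind Flicker's formula. Since the element giving $\gamma$ is $x=\iota_{Q,m}(x_0)=x_0^{m/2}$, bimultiplicativity of the symbol together with \eqref{eqn:norm-residue-d}, applied over $K$ with exponent $m/2\mid m$, yield
\[ [g,\gamma] \;=\; (x_0^{m/2},\tau(u))_{K,m}^{-1} \;=\; \bigl((x_0,\tau(u))_{K,m}^{\,m/2}\bigr)^{-1} \;=\; (x_0,\tau(u))_{K,2}^{-1} \;=\; (x_0,\tau(u))_{K,2}. \]
Now substitute $x_0=\omega/\tau(\omega)$. By bimultiplicativity, $(x_0,\tau(u))_{K,2}=(\omega,\tau(u))_{K,2}\,(\tau(\omega),\tau(u))_{K,2}^{-1}$. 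The Hilbert symbol satisfies $(\tau(a),\tau(b))_{K,2}=(a,b)_{K,2}$: for $K$ a field this is naturality of the norm‑residue symbol together with the fact that $\tau$ acts trivially on $\mu_2(K)=\{\pm1\}\subset F$, and for $K\simeq F\times F$ it follows from the componentwise definition, $\tau$ merely interchanging the two factors. Hence $(x_0,\tau(u))_{K,2}=(\omega,\tau(u)u^{-1})_{K,2}$. Finally $\tau(u)u^{-1}=N_{K/F}(u)\,u^{-2}$ and $(\omega,u^{-2})_{K,2}=(\omega,u)_{K,2}^{-2}=1$, so
\[ [g,\gamma] \;=\; (\omega,N_{K/F}(u))_{K,2}, \]
which is the first equality.

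For the remaining two equalities: the element of $\GL(2,F)$ arising from $u$ has determinant $N_{K/F}(u)$, because $\det$ restricted to the embedded torus $K^\times\hookrightarrow\GL(2)$ is $N_{K/F}$ (this is the determinant of the conjugating element; note $N_{K/F}(x)=1$ as $x\in K^1$), which gives the second equality. As $N_{K/F}(u)\in F^\times$, anti‑symmetry and the projection formula \eqref{eqn:Hilbert-projection-formula} give
\[ (\omega,N_{K/F}(u))_{K,2} \;=\; (N_{K/F}(u),\omega)_{K,2}^{-1} \;=\; (N_{K/F}(u),N_{K/F}(\omega))_{F,2}^{-1} \;=\; (N_{K/F}(\omega),N_{K/F}(u))_{F,2}, \]
the third equality. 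I do not anticipate a real obstacle: the only points demanding care are the passage from degree $m$ to degree $d$ via \eqref{eqn:norm-residue-d} (keeping track that inverses and anti‑symmetry signs are harmless because the relevant symbols take values in $\{\pm1\}$), and the $\tau$‑equivariance of the Hilbert symbol, where the split étale case $K\simeq F\times F$ must be handled separately from the field case.
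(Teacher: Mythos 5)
Your argument is correct and follows essentially the same route as the paper: apply Proposition~\ref{prop:Flicker-comm}, drop the degree from $m$ to $\mathrm{gcd}(2,m)$ via \eqref{eqn:norm-residue-d}, expand $x_0=\omega/\tau(\omega)$ and use $\tau$-equivariance of the Hilbert symbol to combine the two factors into $(\omega,N_{K/F}(u))$, then pass to $F$ by the projection formula \eqref{eqn:Hilbert-projection-formula}. You are merely more explicit than the paper about the $\tau$-equivariance step, the anti-symmetry swap needed before invoking the projection formula (whose first slot must lie in $F^\times$), and the vacuous case $m$ odd; the underlying computation is identical.
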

In particular, in this case $[g,\gamma]=1$ whenever $m \notin 2\Z$. It also follows that $\left( N_{K/F}(\omega), \det\gamma \right)_{F, \mathrm{gcd}(2,m)}$ is independent of the choice of $\omega$, although this can also be verified directly.
\begin{proof}
	From \eqref{eqn:norm-residue-d} and the Proposition \ref{prop:Flicker-comm} we infer
	\begin{align*}
		[g, \gamma] & = \left( x_0^{m/\text{gcd}(2,m)}, \tau(u) \right)_{K,m}^{-1} = \left( x_0, \tau(u) \right)_{K, \text{gcd}(2,m)} \\
		& = \left( \omega, \tau(u)\right)_{K, \text{gcd}(2,m)} \left( \tau(\omega)^{-1}, \tau(u) \right)_{K, \text{gcd}(2,m)} \\
		& = \left( \omega, \tau(u)\right)_{K, \text{gcd}(2,m)} \left( \omega, u \right)_{K, \text{gcd}(2,m)} \\
		& = \left( \omega, N_{K/F}(u) \right)_{K, \text{gcd}(2,m)} = \left( \omega, \det\gamma \right)_{K, \text{gcd}(2,m)}.
	\end{align*}
	Now apply \eqref{eqn:Hilbert-projection-formula} to obtain the remaining equality in the assertion.
\end{proof}

We will need further invariance properties for this factor.
\begin{definition-proposition}\label{def:Cali-factor}\index{Cm@$\Cali_m(\nu, \gamma_0)$}
	Take $x_0 = \omega/\tau(\omega) \in K^1$ as before, and let $\nu \in F^\times/F^{\times 2}$. Let $\gamma_0 \in \SL(2,F)$ be associated to $x_0$ via $K^1 \hookrightarrow \SL(2)$ and put
	\[ \Cali_m(\nu, \gamma_0) := \left( N_{K/F}(\omega), \nu \right)_{F, \mathrm{gcd}(2,m)}. \]
	Then $\Cali_m(\nu, \gamma_0)$ depends only on the stable conjugacy class of the element $\gamma_0 \in \SL(2,F)$ associated to $x$. Furthermore, $\Cali_m(\nu, \gamma_0)$ is invariant under any automorphism of the $F$-group $\SL(2)$.
\end{definition-proposition}
\begin{proof}
	Given a stable conjugacy class $\gamma_0$, the datum $(K, x_0)$ is determined up to isomorphisms of étale $F$-algebras, thus $N_{K/F}(\omega)$ is determined up to $F^{\times 2}$. Hence $\Cali_m(\nu, \gamma_0)$ is invariant under $\PGL(2,F)$, and $\PGL(2)$ equals the $F$-scheme of automorphisms of $\SL(2)$ by \cite[Exp XXIV, Théorème 1.3]{SGA3-3}.
\end{proof}

Observe that $\Cali_m(\nu, \gamma_0)$ is bi-multiplicative in $\nu$ and $\gamma_0$. In applications, $\nu$ will arise from the $\nu(g)$ in \eqref{eqn:nu-arises}.

We record the classification by K.\ Hiraga and T.\ Ikeda of good regular semisimple elements in $\widetilde{\SL}(2,F)$. Write $T \subset \SL(2)$ for the maximal $F$-torus parameterized by a two-dimensional étale $F$-algebra $K$.

\begin{theorem}[Hiraga--Ikeda]\label{prop:good-SL2} \index{good element}
	The projection of $Z_{\tilde{T}}$ to $T(F)$ equals $\{\pm 1\} \cdot T(F)^{m/\mathrm{gcd}(2,m)}$. In particular, $\gamma \in T_{\mathrm{reg}}(F)$ is good if and only if $\gamma \in \{\pm 1\} \cdot T(F)^{m/\mathrm{gcd}(2,m)}$.
\end{theorem}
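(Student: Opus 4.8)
The plan is to compute the $\mu_m$-valued commutator pairing on the preimage $\tilde{T}$ of $T(F)$ in $\widetilde{\SL}(2,F)$ via Flicker's formula, and then identify its radical. Recall that $\widetilde{\SL}(2,F)$ is the restriction of Kubota's cover $\widetilde{\GL}(2,F)$, so commutators of elements lying over $T(F) \simeq K^1 \subset K^\times$ are governed by Proposition~\ref{prop:Flicker-comm}. An element $\tilde{\gamma} \in \tilde{T}$ lies in $Z_{\tilde{T}}$ exactly when it commutes with every preimage of $T(F)$, i.e.\ when $[\gamma,\eta]=1$ for all $\eta \in T(F)$; hence the projection of $Z_{\tilde{T}}$ to $T(F)$ is precisely the radical
\[ R := \bigl\{ \gamma \in T(F) : [\gamma,\eta]=1 \text{ for all } \eta \in T(F) \bigr\} \]
of that pairing. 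Granting the first assertion, the ``in particular'' is immediate, since $Z_{\SL(2)}(\gamma)(F) = T(F)$ for $\gamma \in T_{\mathrm{reg}}(F)$. Now, for $\gamma, \eta$ corresponding to $x, u \in K^1$, Proposition~\ref{prop:Flicker-comm} gives $[\gamma,\eta] = (x,\tau(u))_{K,m}^{-1} = (x,u)_{K,m}$, using $\tau(u) = u^{-1}$; so under $T(F) \simeq K^1$ the set $R$ becomes $\{ x \in K^1 : (x,u)_{K,m}=1 \text{ for all } u \in K^1 \}$.

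Next I would evaluate this set. Every $u \in K^1$ has the form $\eta/\tau(\eta)$, $\eta \in K^\times$, by Hilbert's Theorem~90. When $K$ is a field, the $\langle\tau\rangle = \mathrm{Gal}(K/K^\sharp)$-equivariance of the Hilbert symbol, together with the triviality of this action on $\mu_m = \mu_m(F) \subset K^\sharp$ and the identity $\tau(x) = x^{-1}$ for $x \in K^1$, yields $(x,\tau(\eta))_{K,m} = (x,\eta)_{K,m}^{-1}$, hence $(x,u)_{K,m} = (x,\eta)_{K,m}^2 = (x^2,\eta)_{K,m}$. By the nondegeneracy of the pairing $K^\times/K^{\times m} \times K^\times/K^{\times m} \to \mu_m$ (resting on the Merkurjev--Suslin isomorphism of Remark~\ref{rem:Hilb-symbol-cohomology}), this vanishes for all $\eta \in K^\times$ if and only if $x^2 \in K^{\times m}$. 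When $K \simeq F\times F$ the same conclusion follows from the componentwise definition of the symbol after identifying $K^1 \simeq F^\times$. So in all cases $R \simeq \{ x \in K^1 : x^2 \in K^{\times m} \}$.

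It then remains to prove $\{ x \in K^1 : x^2 \in K^{\times m} \} = \{\pm 1\}\cdot (K^1)^{e}$ with $e := m/\gcd(2,m)$. The inclusion $\supseteq$ is immediate. For $\subseteq$, suppose $x^2 = c^m$, $c \in K^\times$, and argue by the parity of $m$. If $m = 2e$, then $(xc^{-e})^2 = 1$, so $x = \pm c^e$ (in the split case one takes $c \in F^\times$ and the claim is immediate); applying $\tau$ gives $N_{K/K^\sharp}(c)^e = 1$, so $N_{K/K^\sharp}(c) \in \mu_e(F) = \mu_m(F)^2$ — here the hypothesis $m \mid N_F$ enters, guaranteeing $|\mu_m(F)| = m$ — whence $c = c'\zeta$ with $c' \in K^1$, $\zeta \in \mu_m(F) \subset K^{\sharp,\times}$, and since $\zeta^e \in \mu_2(F)$ we obtain $x = \pm (c')^e \in \{\pm1\}(K^1)^e$. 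If $m$ is odd, pick $s,t$ with $2s - 1 = mt$; then $x = (x^2)^s x^{-mt} = (c^s x^{-t})^m$ exhibits $x$ as an $m$-th power in $K^\times$, and a norm adjustment by an element of $\mu_m(F)$ (squaring being bijective on $\mu_m(F)$) replaces the root by one in $K^1$, so $x \in (K^1)^m = (K^1)^e$, with $-1 = (-1)^m$ absorbing the factor $\{\pm1\}$. This completes the identification and the theorem.

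The conceptual core — reducing goodness to the radical of the commutator pairing and reading it off from Flicker's formula — is short; the main work, and where care is required, is the final arithmetic step: tracking the sign $\pm 1$, separating $K$ a field from $K \simeq F\times F$, and using $m\mid N_F$ to secure $\mu_m(F)^2 = \mu_{m/\gcd(2,m)}(F)$. The split-algebra case is the easiest and should be dispatched first via $K^1 \simeq F^\times$, where the radical is visibly $\{a \in F^\times : a^2 \in F^{\times m}\} = \{\pm1\}F^{\times m/\gcd(2,m)}$.
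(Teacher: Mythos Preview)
Your proof is correct and follows essentially the same route as the paper: both reduce to Flicker's commutator formula, compute that $x$ lies in the radical iff $x^2 \in K^{\times m}$ (via the identity $(x,\eta/\tau(\eta))_{K,m}=(x^2,\eta)_{K,m}$, which is exactly the paper's manipulation), and then perform the same norm-and-roots-of-unity argument to identify this with $\{\pm1\}\cdot(K^1)^{m/\gcd(2,m)}$. The only organizational difference is that the paper obtains the inclusion $\{\pm1\}\cdot T(F)^{m/\gcd(2,m)}\subset \bm{p}(Z_{\tilde T})$ and the split case by citing the general Proposition~\ref{prop:good-in-tori} and Proposition~\ref{prop:BD-adjoint-action}, whereas you derive everything uniformly from the commutator computation; this makes your version slightly more self-contained but is not a different idea.
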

\begin{proof}
	Their original proof is reproduced below. We begin by showing that the preimage of $\{\pm 1\} \cdot T(F)^{m/\mathrm{gcd}(2,m)}$ is central. In view of Proposition \ref{prop:good-in-tori} and \eqref{eqn:isogeny-Sp}, it suffices to show $[\pm 1, \eta] = 1$ for all $\eta \in \SL(2,F)$: this is already true on the $K_2(F)$-level by Proposition \ref{prop:BD-adjoint-action}.
	
	Now suppose $\gamma \in T(F)$ is projected from $Z_{\tilde{T}}$. Set $m_0 := m/\mathrm{gcd}(2,m)$. When $T$ is split, by Proposition \ref{prop:good-in-tori} we have $\gamma \in T(F)^{m_0}$. Thus we can assume $T$ is associated with a quadratic extension of fields $K$ of $F$, and $\gamma$ corresponds to $x \in K^1$. Proposition \ref{prop:Flicker-comm} implies that for all $v \in K^\times$,
	\begin{align*}
		1 & = \left( x, \tau(v)/v \right)_{K,m} \\
		& = ( x, \tau(v) )_{K,m} ( x, v^{-1})_{K,m} = ( \tau(x), v )_{K,m} ( x, v^{-1} )_{K,m} \\
		& = ( x^{-1}, v )_{K,m} ( x, v^{-1} )_{K,m} = \left( x^2, v^{-1} \right)_{K, m}.
	\end{align*}
	This shows $\pm x \in K^1 \cap K^{\times m_0}$ and it remains to show $K^1 \cap K^{\times m_0} = \pm (K^1)^{m_0}$.
	
	We have $-1 \in K^{\times m_0}$, because $(-1)^{m_0} = -1$ when $4 \nmid m$ whilst $\zeta^{m_0} = -1$ when $4 \mid m$ and $\mu_m = \lrangle{\zeta}$. Therefore $K^1 \cap K^{\times m_0} \supset \pm (K^1)^{m_0}$. Next, suppose $y_0 \in K^\times$ satisfies $N_{K/F}(y_0^{m_0}) = 1$, then there exists $\xi \in \mu_m$, $\xi^{m_0} = \pm 1$ such that $N_{K/F}(y_0) = \xi^2$. This leads to $y_1 := \xi^{-1} y_0 \in K^1$ satisfying $(y_1)^{m_0} = \xi^{-m_0} y_0^{m_0} = \pm y_0^{m_0}$; we conclude that $K^1 \cap K^{\times m_0} \subset \pm (K^1)^{m_0}$.
	
	The last assertion results from Corollary \ref{prop:isogeny-good}.
\end{proof}

Below is a supplement to the classification above.
\begin{proposition}\label{prop:iota-kernel}
	Let $T \simeq K^1$ as above, then
	\begin{align*}
		T(F)^{m/\mathrm{gcd}(2,m)} = (-1) \cdot T(F)^{m/\mathrm{gcd}(2,m)}, & \quad  4 \nmid m \quad \text{or}\quad T: \text{split}, \\
		T(F)^{m/\mathrm{gcd}(2,m)} \cap (-1) \cdot T(F)^{m/\mathrm{gcd}(2,m)} = \emptyset, & \quad 4 \mid m \quad\text{and}\quad T: \text{anisotropic}.
	\end{align*}
	On the other hand,
	\begin{equation}
		\Ker(\iota_{Q,m}) = \begin{cases}
			\mu_{m/\mathrm{gcd}(2,m)}, & T: \text{split} \\
			1, & 4 \nmid m, \quad T: \text{anisotropic} \\
			\{\pm 1 \}, & 4 \mid m, \quad T: \text{anisotropic};
	\end{cases}\end{equation}
	in the split case, $K \simeq F \times F$ and we embed $\mu_m$ into $K^1$ via $z \mapsto (z, z^{-1})$.
\end{proposition}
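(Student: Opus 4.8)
Set $m_0 := m/\mathrm{gcd}(2,m)$. The plan is to transport the whole statement into the two-dimensional étale $F$-algebra $K$ attached to $T$. Under the identification $T \simeq K^1$ of $F$-tori from the preceding discussion, the map $\iota_{Q,m}$ on $K^1$ is the $m_0$-th power map $x \mapsto x^{m_0}$ (see \S\ref{sec:Kubota}, following \eqref{eqn:isogeny-Sp}); hence $T(F)^{m/\mathrm{gcd}(2,m)} = (K^1)^{m_0}$, a subgroup of $K^1 = \{x \in K^\times : x\tau(x) = 1\}$, and $\Ker(\iota_{Q,m}) = \{x \in K^1 : x^{m_0} = 1\}$. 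By Lemma \ref{prop:parameter-ani} (with $\dim_F K = 2$), the torus $T$ is split exactly when $K \simeq F \times F$ and anisotropic exactly when $K$ is a field, and I would argue along this dichotomy. The one ``global'' input used repeatedly is that $m_0 \mid m \mid N_F$, which forces $|\mu_{m_0}(F)| = m_0$ and $|\mu_m(F)| = m$.

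The core of the proof is a single arithmetic fact: \emph{$-1$ lies in $(K^1)^{m_0}$ unless $4 \mid m$ and $K$ is a field.} If $4 \nmid m$, then $m_0$ is odd and $-1 = (-1)^{m_0} \in (K^1)^{m_0}$ since $-1 \in K^1$. If $4 \mid m$ and $K \simeq F \times F$, write $\mu_m = \langle \zeta \rangle$; then $\zeta^{m_0} = -1$ (as recalled in the proof of Theorem \ref{prop:good-SL2}), so the image of $\zeta$ under $\mu_m \hookrightarrow K^1$, $z \mapsto (z, z^{-1})$, is an $m_0$-th root of $-1$ in $K^1$. If $4 \mid m$ and $K$ is a field, suppose $x \in K^1$ satisfies $x^{m_0} = -1$; then $x^m = x^{2m_0} = 1$, so $x \in \mu_m(K)$, and $\mu_m(K) = \mu_m(F)$ because both are cyclic subgroups of $\mu_m(\bar F)$ of order equal to $|\mu_m(F)| = m$; thus $x \in F^\times$, so $\tau(x) = x$ and $x^2 = x\tau(x) = 1$, forcing $x = \pm 1$ and hence $x^{m_0} = (\pm 1)^{m_0} = 1 \neq -1$, a contradiction. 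Given this fact, the two displays about $T(F)^{m_0}$ follow at once: since $T(F)^{m_0}$ is a subgroup, the coset $(-1)\cdot T(F)^{m_0}$ equals $T(F)^{m_0}$ precisely when $-1 \in T(F)^{m_0}$, and is disjoint from it otherwise.

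For the kernel, the split case is immediate: $K^1 \simeq \Gm$, $\iota_{Q,m}$ becomes the $m_0$-th power isogeny of $\Gm$, and $\Ker(\iota_{Q,m}) = \mu_{m_0}$, realized inside $K^1$ via $z \mapsto (z,z^{-1})$ as stated. In the anisotropic case $K$ is a field; if $x \in K^1$ with $x^{m_0} = 1$, then $x \in \mu_{m_0}(K) = \mu_{m_0}(F) \subset F^\times$ by the same argument as above (with $m_0$ in place of $m$), so $x \in K^1 \cap F^\times = \{\pm 1\}$, whereas conversely $-1 \in K^1$ and $(-1)^{m_0} = 1$ iff $m_0$ is even iff $4 \mid m$. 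Hence $\Ker(\iota_{Q,m}) = \{\pm 1\}$ when $4 \mid m$ and $\{1\}$ when $4 \nmid m$, matching the statement. (Equivalently: over $\bar F$ the kernel is $\mu_{m_0}$, on which $\Gamma_F$ acts through $\Gal{K/F}$ by inversion, so its $F$-points are $\mu_{m_0} \cap \{\pm 1\}$.)

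The only delicate ingredient — the crux — is the identity $\mu_m(K) = \mu_m(F)$ (and its $m_0$-analogue): this is exactly where the standing hypothesis $m \mid N_F$ enters, via $|\mu_m(F)| = m$, which caps $|\mu_m(K)| \le m$ from above. Everything else is parity bookkeeping on $m_0 = m/\mathrm{gcd}(2,m)$ together with the elementary observation, taken from the proof of Theorem \ref{prop:good-SL2}, that $-1$ is always an $m_0$-th power in $K^\times$ but need not be one inside the norm-one subgroup $K^1$.
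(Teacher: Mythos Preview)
Your proof is correct and follows essentially the same approach as the paper: both reduce the coset question to whether $-1$ is an $m_0$-th power in $K^1$, handle the three cases ($4 \nmid m$; split with $4 \mid m$; anisotropic with $4 \mid m$) identically, and use the key input $m \mid N_F$ to force $\mu_m(K) = \mu_m(F)$ in the anisotropic case. Your treatment of the kernel is likewise the same, with the parenthetical Galois-action remark being a pleasant alternative viewpoint not in the paper.
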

\begin{proof}
	For the first part, note that $(\pm 1) \cdot T(F)^{m/\text{gcd}(2,m)}$ are either identical or disjoint. They are identical if and only if $-1 = \xi^{m/\text{gcd}(2,m)}$ for some $\xi \in K^1$. When $4 \nmid m$, we take $\xi = -1$. When $T$ splits (so $K \simeq F \times F$) and $4 \mid m$, we take $\xi = (\zeta, \zeta^{-1})$ where $\mu_m = \lrangle{\zeta}$; note that $\zeta^{m/2} = -1$. In both cases, these choices of $\xi$ show $T(F)^{m/\mathrm{gcd}(2,m)} = (-1) \cdot T(F)^{m/\mathrm{gcd}(2,m)}$.

	When $4 \mid m$ and $T$ is anisotropic, we know $K$ is a field; by $\xi^m = 1$ and $m \mid N_F$ we obtain $\xi \in F^\times$. Then $N_{K/F}(\xi)=1$ forces $\xi = \pm 1$, but $(-1)^{m/2} = 1$. This shows $T(F)^{m/\mathrm{gcd}(2,m)} \cap (-1) \cdot T(F)^{m/\mathrm{gcd}(2,m)} = \emptyset$.

	For the second part, the case $K = F \times F$ is straightforward. When $K$ is a field, $\iota_{Q,m}(x_0) = 1 \implies x_0 \in F^\times \cap K^1$ since $m \mid N_F$, hence $x_0 = \pm 1$ as before; it remains to observe that $(-1)^{m/\text{gcd}(2,m)} = 1$ if and only if $4 \mid m$.
\end{proof}

\subsection{Stable conjugacy for BD-covers}\label{sec:st-conj-BD}
Revert to the notation of \S\ref{sec:Sp} and \S\ref{sec:BD-Sp}. Fix a maximal $F$-torus $T$. The subgroup $T \subset G^T \subset G$ (Definition \ref{def:G-T}) together with the decomposition $G^T = \prod_{\mathcal{O}/\pm} G_{\mathcal{O}}$ are canonically defined by $T$. Once a parameter $(K, K^\sharp, \ldots)$ for $T$ is chosen, $G^T$ together with its decomposition can be identified with $\prod_{i \in I} R_{K_i^\sharp/F}(\SL(2))$, and $T$ is identified with $\prod_{i \in I} R_{K_i^\sharp/F}(K_i^1)$.

Given $\delta \in T_\text{reg}(F)$ and $\eta \in G_\text{reg}(F)$, recall that $\mathcal{T}(\delta, \eta) := \{g: g\delta g^{-1} = \eta \}$ is nonempty if and only if $\delta \stackrel{\text{st}}{\sim} \eta$, in which case it is a right $T$-torsor. As a first step, we assume $\eta \in G^T(F)$ so that Proposition \ref{prop:stable-reduction-SL2} entails $\mathcal{T}(\delta, \eta) \subset G^T$. In parallel with the decomposition of $G^T$, we have $\mathcal{T}(\delta, \eta) = \prod_{i \in I} \mathcal{T}_i(\delta_i, \eta_i)$, where each $\mathcal{T}_i(\cdots)$ is defined inside $R_{K_i^\sharp/F} (\SL(2))$.

Now suppose $\eta \in G_\text{reg}(F)$ is stably conjugate to $\delta \in T_\text{reg}(F)$. Denote $S := G_\eta$. By Proposition \ref{prop:stable-reduction-SL2}, the canonical isomorphism of pointed tori $\Ad(g): (T, \delta) \rightiso (S, \eta)$ induced by any $g \in \mathcal{T}(\delta, \eta)(\bar{F})$ can be decomposed as
\[\begin{tikzcd}
	(T, \delta) \arrow{r}{\Ad(g')} \arrow[bend right]{rr}[swap]{\Ad(g)} & (T', \delta') \arrow{r}{\Ad(g'')} & (S, \eta)
\end{tikzcd}\]
for some $\delta' \in G^T_\text{reg}(F)$, $g' \in \mathcal{T}^{G^T}(\delta, \delta')(\bar{F})$ and $g'' \in \mathcal{T}(\delta', \eta)(F)$. In particular $\Ad(g')(\delta') = \eta$ is ordinary conjugacy and
\[ \text{inv}(\delta, \eta) = \text{inv}(\delta, \delta'). \]
This equality also determines $\delta'$ up to $G^T(F)$-conjugacy. The goal of this subsection is to adapt these to good elements in $\tilde{G}_\text{reg}$ for the BD-cover $
\tilde{G} \twoheadrightarrow G(F)$.

Fix $m \mid N_F$. Denote by $\widetilde{G^T}$ the pull-back of $\mu_m \hookrightarrow \tilde{G} \twoheadrightarrow G(F)$ to $G^T(F)$. Theorem \ref{prop:G-T-reduction} says that as topological central extensions $\widetilde{G^T}$ is isomorphic to the contracted product of $\mu_m \hookrightarrow \widetilde{\SL}(2, K_i^\sharp) \twoheadrightarrow \SL(2, K_i^\sharp)$. Here it is convenient to identify $\iota_{Q,m}$ with the endomorphism $t_0 \mapsto t_0^{m/\text{gcd}(2,m)}$ of $T$ by \eqref{eqn:isogeny-Sp}; this decomposes into $\iota_{Q_i, m}: K_i^1 \to K_i^1$ for each $i \in I$.

\begin{proposition}\label{prop:good-T}
	The projection of $Z_{\tilde{T}}$ to $T(F)$ equals
	\[ \prod_{i \in I} \{\pm 1 \} \cdot \Image\left( \iota_{Q_i,m} \right). \]
	In particular, its intersection with $T_\mathrm{reg}(F)$ equals the set of good elements in $T_\mathrm{reg}(F)$.
\end{proposition}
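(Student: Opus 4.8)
The plan is to reduce the computation of $\bm{p}(Z_{\tilde{T}})$ factor by factor, over $i \in I$, to the $\SL(2)$-case of Hiraga--Ikeda (Theorem \ref{prop:good-SL2}), via the contracted-product decomposition of Theorem \ref{prop:G-T-reduction}.

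First I would note that, since $T(F) \subset G^T(F) \subset G(F)$, the cover $\tilde{T} \twoheadrightarrow T(F)$ is the pull-back of $\widetilde{G^T} \twoheadrightarrow G^T(F)$. Under the identification $G^T \simeq \prod_{i \in I} R_{K_i^\sharp/F}(\SL(2))$ one has $T(F) = \prod_{i \in I} K_i^1(K_i^\sharp) = \prod_{i \in I} T_i(K_i^\sharp)$, where $T_i \simeq K_i^1$ is the maximal torus of $\SL(2)$ over $K_i^\sharp$ attached to the quadratic étale algebra $K_i$. Theorem \ref{prop:G-T-reduction} identifies $\widetilde{G^T}$ with the contracted product over $i \in I$ of the $\widetilde{\SL}(2,K_i^\sharp)$; restricting it to $\prod_{i \in I} T_i(K_i^\sharp)$ identifies $\tilde{T}$ with the contracted product over $i \in I$ of the covers $\widetilde{T_i} \twoheadrightarrow T_i(K_i^\sharp)$ obtained by restricting $\widetilde{\SL}(2,K_i^\sharp)$, and by the contracted-product structure (Lemma \ref{prop:restriction-W_i}) elements of $\tilde{T}$ lying over distinct factors $T_i$ commute.

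Next I would show $\bm{p}(Z_{\tilde{T}}) = \prod_{i \in I} \bm{p}(Z_{\widetilde{T_i}})$. Represent an element of $\tilde{T}$ by a tuple $(\tilde{t}_i)_{i \in I}$ with $\tilde{t}_i \in \widetilde{T_i}$ over $t_i \in T_i(K_i^\sharp)$, modulo the anti-diagonal $\mu_m$; since the factors commute and the copies of $\mu_m$ are identified in the contracted product, the commutator of $(\tilde{t}_i)_i$ and $(\tilde{s}_i)_i$ equals $\prod_{i \in I} [t_i, s_i]_{\widetilde{T_i}} \in \mu_m$, where $[t_i, s_i]_{\widetilde{T_i}}$ is the commutator pairing of $\widetilde{T_i}$ and depends only on $t_i, s_i$. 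Specializing $s_j = 1$ for $j \neq i$ shows that this product vanishes for all $(s_i)_i \in T(F)$ if and only if $[t_i, s_i]_{\widetilde{T_i}} = 1$ for all $s_i \in T_i(K_i^\sharp)$ and all $i$; since an element of $\widetilde{T_i}$ is central precisely when its commutator with every element of $T_i(K_i^\sharp)$ is trivial, and $\bm{p}$ is surjective with kernel $\mu_m \subset Z_{\widetilde{T_i}}$, this reads $t_i \in \bm{p}(Z_{\widetilde{T_i}})$ for every $i$. Applying Theorem \ref{prop:good-SL2} over $K_i^\sharp$ (with the quadratic étale algebra $K_i$) gives $\bm{p}(Z_{\widetilde{T_i}}) = \{\pm 1\} \cdot T_i(K_i^\sharp)^{m/\mathrm{gcd}(2,m)} = \{\pm 1\} \cdot \Image(\iota_{Q_i,m})$, the last equality by \eqref{eqn:isogeny-Sp}; this yields the claimed formula for $\bm{p}(Z_{\tilde{T}})$.

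For the last assertion, let $\delta \in T_\mathrm{reg}(F)$, so $Z_G(\delta) = T$. By Definition \ref{def:good-element}, $\delta$ is good if and only if $[\delta, \eta] = 1$ for every $\eta \in T(F)$, the commutator being taken in $\tilde{G}$; as $\tilde{T}$ is the pull-back of $\tilde{G}$, this coincides with the commutator pairing of $\tilde{T}$, so $\delta$ is good if and only if a preimage of $\delta$ lies in $Z_{\tilde{T}}$, i.e. if and only if $\delta \in \bm{p}(Z_{\tilde{T}})$; hence $\bm{p}(Z_{\tilde{T}}) \cap T_\mathrm{reg}(F)$ is exactly the good locus. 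The only step with genuine content is the commutator bookkeeping in the contracted product that gives $\bm{p}(Z_{\tilde{T}}) = \prod_{i \in I} \bm{p}(Z_{\widetilde{T_i}})$; the rest is a direct appeal to Theorems \ref{prop:G-T-reduction} and \ref{prop:good-SL2}, so I do not expect a serious obstacle.
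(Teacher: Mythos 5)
Your proof is correct and follows the same strategy as the paper's: decompose $\widetilde{G^T}$ via Theorem \ref{prop:G-T-reduction} into a contracted product of $\widetilde{\SL}(2,K_i^\sharp)$, reduce the commutator condition factor by factor, and apply Theorem \ref{prop:good-SL2} over each $K_i^\sharp$. The paper states this in one line (``reduce immediately\ldots upon passing to a finite extension of $F$''); you simply make the commutator bookkeeping in the contracted product and the final identification of the good locus explicit.
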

\begin{proof}
	By the decomposition of $\widetilde{G^T}$, we reduce immediately to the case $\tilde{G} = \widetilde{G^T} = \widetilde{\SL}(2, F)$ treated in Theorem \ref{prop:good-SL2}, upon passing to a finite extension of $F$.
\end{proof}

\begin{remark}\label{rem:good-pm1}
	Every element $\eta = (\gamma_i)_{i \in I} \in \prod_{i \in I} \{\pm 1\}$ is good with respect to $\tilde{G} \twoheadrightarrow G(F)$. Indeed, Lemma \ref{prop:restriction-W_i} reduces the problem to the case $\eta \in \{\pm 1\}$, and one concludes by Proposition \ref{prop:BD-adjoint-action}.
\end{remark}

\begin{notation}\index{TQm-tilde@$\tilde{T}_{Q,m}, \tilde{T}_{Q,m}^\sigma$}
	Recall that $I$ can be identified with $\{ \text{long roots} \} \big/ \lrangle{\Gamma_F, \pm}$, thus canonical for the given $T$. The embedding $\{\pm 1\}^I \hookrightarrow T(F)$ is also definable in terms of long roots by \eqref{eqn:O-embedding-1}, hence canonical and can be transported under stable conjugacy. Let $\{\pm 1\}^I$ act on $T$ by coordinate-wise multiplication. Consider $\sigma = (\sigma_i)_{i \in I} \in \{\pm 1\}^I$. Pull-back of $\bm{p}: \tilde{T} \twoheadrightarrow T(F)$ along $\sigma \cdot \iota_{Q,m}$ yields
	\begin{equation}\label{eqn:iota-cover}\begin{aligned}
		\tilde{T}^\sigma_{Q,m} & := \left\{ (\tilde{t}, t_0) \in \tilde{T} \times T_{Q,m}(F) : \bm{p}(\tilde{t}) = \sigma \cdot \iota_{Q,m}(t_0) \right\}, \\
		\tilde{T}_{Q,m} & := \tilde{T}^{(+, \ldots, +)}_{Q,m}.
	\end{aligned}\end{equation}
	There are natural maps $\tilde{T} \xleftarrow{\text{pr}_1} \tilde{T}^\sigma_{Q,m} \xrightarrow{\text{pr}_2} T_{Q,m}(F)$. Proposition \ref{prop:good-T} implies that every good regular element in $\tilde{G}_\text{reg}$ lies in some $\text{pr}_1\left(\tilde{T}^\sigma_{Q,m}\right)$; when $4 \nmid m$, it suffices to use $\sigma = (+, \ldots, +)$.
\end{notation}

Note that $\tilde{T}_{Q,m}$ is a group, whilst $\tilde{T}^\sigma_{Q,m}$ is only a $\tilde{T}_{Q,m}$-torsor for general $\sigma$.

Consider a stable conjugation $\Ad(g): T \rightiso S$ between maximal $F$-tori in $G$, then
\begin{itemize}
	\item as explicated above, $\{\pm 1\}^I \hookrightarrow T(F)$ can be transported to $S$ by $\Ad(g)$, thus $\tilde{S}^\sigma_{Q,m}$ makes sense;
	\item under the identification, $\sigma \cdot \Ad(g)(t) = \Ad(g)(\sigma \cdot t)$ for all $t \in T(F)$ and $\sigma \in \{\pm 1\}^I$; 
	\item if moreover $g \in G(F)$, from \eqref{eqn:isogeny-Ad} we have
	\begin{align*}
		\Ad(g): \tilde{T}^\sigma_{Q,m} & \longrightiso \tilde{S}^\sigma_{Q,m} \\
		(\tilde{t}, t_0) & \longmapsto (\Ad(g)(\tilde{t}), \Ad(g)(t_0))
	\end{align*}
	which is a group isomorphism for $\sigma = (+, \ldots, +)$, and is an equivariant map between torsors for general $\sigma$. Proposition \ref{prop:good-T} implies that $\Ad(g) = \identity$ when $g \in T(F)$.
\end{itemize}

Now comes the stable conjugacy in the $G \simeq \SL(2)$ case; the notation above reduces to $\tilde{T}^\pm_{Q,m}$. We will employ systematically the $G_\text{ad}(F)$-action on $\tilde{G}$ from Proposition \ref{prop:BD-adjoint-action}, again denoted by $\Ad$.

\begin{definition-proposition}\label{def:st-conj-SL2}\index{CAd@$\CaliAd$}
	Suppose that $\dim_F W = \dim_F K = 2$, so that $K^\sharp = F$ and $G^T = G$. Let $\Ad(g): T \rightiso S$ be a stable conjugation of maximal $F$-tori; here we may assume $g \in G_\mathrm{ad}(F)$ by Proposition \ref{prop:st-conj-SL2}. Set $\nu := \nu(g) \in F^\times/F^{\times 2}$ by \eqref{eqn:nu-arises}.
	\begin{enumerate}
		\item Define an isomorphism $\CaliAd(g) = \CaliAd^+(g): \tilde{T}^+_{Q,m} \to \tilde{S}^+_{Q,m}$ that fits into the commutative diagram
			\begin{equation*}\begin{gathered} \begin{tikzcd}[row sep=small]
				T(F) \arrow{r}{\Ad(g)} & S(F) \\
				\tilde{T}^+_{Q,m} \arrow{r}{\CaliAd^+(g)} \arrow{u} \arrow{d} & \tilde{S}^+_{Q,m} \arrow{u} \arrow{d} \\
				T_{Q,m}(F) \arrow{r}[swap]{\Ad(g)} \arrow[bend left=50]{uu}{\iota_{Q,m}} & S_{Q,m}(F) \arrow[bend right=50]{uu}[swap]{\iota_{Q,m}}
			\end{tikzcd} \\
				\CaliAd^+(g)(\tilde{t}, t_0) = \left( \Cali_m(\nu, t_0) \cdot \Ad(g)(\tilde{t}), \quad \Ad(g)(t_0) \right).
			\end{gathered}\end{equation*}
		\item When $4 \mid m$, define $\CaliAd^-(g): \tilde{T}^-_{Q,m} \to \tilde{S}^-_{Q,m}$ that fits into the commutative diagram
			\begin{equation*}\begin{gathered} \begin{tikzcd}[row sep=small]
				T(F) \arrow{r}{\Ad(g)} & S(F) \\
				\tilde{T}^-_{Q,m} \arrow{r}{\CaliAd^-(g)} \arrow{u} \arrow{d} & \tilde{S}_{Q,m} \arrow{u} \arrow{d} \\
				T_{Q,m}(F) \arrow{r}[swap]{\Ad(g)} \arrow[bend left=50]{uu}{(-1)\iota_{Q,m}} & S_{Q,m}(F) \arrow[bend right=50]{uu}[swap]{(-1)\iota_{Q,m}}
			\end{tikzcd} \\
				\CaliAd^-(g)\left( \widetilde{-1} \cdot \tilde{t}', t_0 \right) = \left( \Cali_m(\nu, t_0) \cdot \widetilde{-1} \cdot \Ad(g)(\tilde{t}'), \quad \Ad(g)(t_0) \right)
			\end{gathered}\end{equation*}
			for any $\widetilde{-1} \mapsto -1$ and $\tilde{t}' \mapsto \iota_{Q,m}(t_0)$.
	\end{enumerate}
	These constructions are independent of all choices. In particular it is independent of the identification $G \simeq \SL(2)$.
\end{definition-proposition}
\begin{proof}
	We may choose a preimage $g_1 \in \GL(2,F)$ of $g \in \PGL(2,F)$ by identifying $G$ and $\SL(2)$, and then apply the constructions in \S\ref{sec:Kubota}. By Definition--Proposition \ref{def:Cali-factor}, the factor $\Cali_m(\nu, t_0)$ depends only on $g \bmod \Image[G(F) \to G_\text{ad}(F)]$ and on the stable conjugacy class of $t_0$; it is invariant under any re-parameterization or automorphisms of $\SL(2)$.
\end{proof}

The following properties will also hold for stable conjugacy in general. We begin with the $\SL(2)$ case above.
\begin{proposition}\label{prop:CAd-prop}
	For any sign $\sigma$ that is allowed in our situation, the maps $\CaliAd^\sigma(g)$ satisfy the following properties.
	\begin{enumerate}[\bfseries{AD}.1.\;]
		\item $\CaliAd^\sigma(g)(\noyau\tilde{t}) = \noyau \CaliAd^\sigma(g)(\tilde{t})$ whenever $\noyau \in \mu_m$.
		\item $\CaliAd(g): \tilde{T}_{Q,m} \rightiso \tilde{S}_{Q,m}$ is an isomorphism of topological groups. In general, $\CaliAd^\sigma(g)$ is a $\CaliAd(g)$-equivariant map between torsors with respect to $\tilde{T}_{Q,m} \xrightarrow{\CaliAd(g)} \tilde{S}_{Q,m}$.
		\item If $g \in G(F)$, then $\CaliAd^\sigma(g)$ reduces to ordinary conjugation. If $g \in T(\bar{F})$, it equals $\identity$.
		\item Given stable conjugations of maximal $F$-tori $T \xrightarrow{\Ad(h)} S \xrightarrow{\Ad(g)} R$, we have
		\[ \CaliAd^\sigma(g) \circ \CaliAd^\sigma(h) = \CaliAd^\sigma(gh): \tilde{T}^\sigma_{Q,m} \to \tilde{R}^\sigma_{Q,m}. \]
	\end{enumerate}
\end{proposition}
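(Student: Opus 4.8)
The plan is to verify all four properties directly from the explicit formulas in Definition--Proposition \ref{def:st-conj-SL2}, treating them as formal consequences of a short list of inputs already established: (i) by Proposition \ref{prop:BD-adjoint-action}, the $G_{\mathrm{ad}}(F)$-action $\Ad$ on $\tilde{G}$ is a genuine group action fixing $\mu_m$ pointwise, and $\widetilde{-1}$ is central in $\tilde{G}$; (ii) by Definition--Proposition \ref{def:Cali-factor} and the ensuing remark, $\Cali_m(\nu,\gamma_0)$ lies in $\mu_{\mathrm{gcd}(2,m)}$, is bi-multiplicative in $(\nu,\gamma_0)$, and depends on $\gamma_0$ only through its stable conjugacy class; (iii) the map $\nu$ of \eqref{eqn:nu-arises} is a group isomorphism, and a stable conjugation realized by an element of $G(F)$ has $\nu=1$.

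For \textbf{AD}.1 I would note that multiplying by $\noyau\in\mu_m$ only touches the $\tilde{G}$-coordinate, that $\Ad(g)(\noyau\tilde{t})=\noyau\Ad(g)(\tilde{t})$ by (i), and that $\Cali_m(\nu,t_0)$ is unaffected, so the formulas give the claim at once (for $\sigma=-$, using also the centrality of $\widetilde{-1}$). For \textbf{AD}.2 the point is that the group law on $\tilde{T}_{Q,m}$ is coordinate-wise; since $\Ad(g)$ is a homomorphism, $\Cali_m(\nu,-)$ is multiplicative in $t_0$ (because $N_{K/F}(\omega)$ is) and central-valued, the formula defines a homomorphism, bijective with inverse $\CaliAd^+(g^{-1})$ of the same shape, and continuous because $\Ad(g)$ is a homeomorphism and Hilbert symbols are locally constant — hence a topological isomorphism. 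The torsor statement for general $\sigma$ follows from the same multiplicativity of $\Cali_m$ and the centrality of the $\Cali_m$- and $\widetilde{-1}$-factors, which show that $\CaliAd^\sigma(g)$ intertwines the coordinate-wise $\tilde{T}_{Q,m}$- and $\tilde{S}_{Q,m}$-actions along $\CaliAd(g)$. For \textbf{AD}.3: if the stable conjugation is realized by $g\in G(F)$ then $\nu(g)=1$, so $\Cali_m(1,t_0)=1$, and $\Ad(g)$ on $\tilde{T}$ is conjugation by a lift of $g$ — the two being lifts of the same inner automorphism and hence equal by uniqueness (Proposition \ref{prop:lifting-uniqueness}, $\SL(2,F)$ being perfect); if $g\in T(\bar{F})$ the induced map $T\rightiso S$ is $\identity_T$, realized already by $1\in G(F)$, and one invokes the independence-of-choices clause of Definition--Proposition \ref{def:st-conj-SL2} to get $\CaliAd^\sigma(g)=\CaliAd^\sigma(1)=\identity$.

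For \textbf{AD}.4 I would unwind $\CaliAd^+(g)\circ\CaliAd^+(h)$ on $(\tilde{t},t_0)$: using $\Ad(g)\Ad(h)=\Ad(gh)$, the $\mu_m$-equivariance of $\Ad(g)$, and centrality of the correction factors, the $\tilde{G}$-coordinate comes out as $\Cali_m(\nu(g),\Ad(h)(t_0))\,\Cali_m(\nu(h),t_0)\,\Ad(gh)(\tilde{t})$; since $\Ad(h)(t_0)$ is conjugate to $t_0$ in $\SL(2,F)$, input (ii) replaces $\Cali_m(\nu(g),\Ad(h)(t_0))$ by $\Cali_m(\nu(g),t_0)$, and then bi-multiplicativity of $\Cali_m$ together with $\nu(gh)=\nu(g)\nu(h)$ collapses the product to $\Cali_m(\nu(gh),t_0)$, matching $\CaliAd^+(gh)(\tilde{t},t_0)$. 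The $\sigma=-$ case runs identically once one checks that the auxiliary $\widetilde{-1}$ passes unchanged through both steps. The only real difficulty is bookkeeping in the $\sigma=-$ (torsor) cases of \textbf{AD}.2 and \textbf{AD}.4: one must consistently rewrite each element in the prescribed form $\widetilde{-1}\cdot(\text{lift of }\iota_{Q,m}(\cdot))$ and keep track of where the $\mu_{\mathrm{gcd}(2,m)}$-valued factors sit, appealing repeatedly to the centrality of $\widetilde{-1}$ and of $\Cali_m(\nu,t_0)$, to the fact that multiplying such a lift by a central element of $\mu_m$ is again such a lift, and to the well-definedness of $\CaliAd^-$ in the choice of $\widetilde{-1}$ granted by Definition--Proposition \ref{def:st-conj-SL2}.
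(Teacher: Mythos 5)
Your treatment of \textbf{AD}.1, \textbf{AD}.2, the first clause of \textbf{AD}.3, and \textbf{AD}.4 is correct and runs essentially along the paper's own lines: the paper also derives \textbf{AD}.1 from the $\bmu_m$-equivariance of the $G_{\mathrm{ad}}(F)$-action, gets the bijectivity in \textbf{AD}.2 from \textbf{AD}.4, and reduces \textbf{AD}.4 to the bi-multiplicativity and stable-class-invariance of $\Cali_m$.

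There is, however, a genuine gap in the second clause of \textbf{AD}.3, and it is precisely where the calibration factor earns its keep. You argue that for $g\in T(\bar{F})$ the induced map $T\rightiso S$ is the identity, so it is ``realized already by $1\in G(F)$'', and you then invoke the independence-of-choices clause of Definition--Proposition \ref{def:st-conj-SL2} to conclude $\CaliAd^\sigma(g)=\CaliAd^\sigma(1)=\identity$. But that clause does not say $\CaliAd^\sigma$ depends only on the stable conjugation $\Ad(g)\colon T\rightiso S$; it only says $\Cali_m(\nu,t_0)$ depends on $g$ through $\nu(g)=g\bmod\Image[G(F)\to G_{\mathrm{ad}}(F)]$ and is invariant under re-parameterization. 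The element $g\in G_{\mathrm{ad}}(F)$ realizing the given stable conjugation is determined only up to $(T/Z_G)(F)$, which need not lie in $\Image[G(F)\to G_{\mathrm{ad}}(F)]$, so $\nu(g)$ and hence $\Cali_m(\nu,t_0)$ may well be nontrivial. You are therefore assuming the very statement you need to prove --- that $\CaliAd^\sigma$ depends only on $\Ad(g)$ and not on $g$ --- and that independence is established \emph{from} \textbf{AD}.3 (together with \textbf{AD}.4), not the other way around.

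The missing ingredient is the commutator computation of Lemma \ref{prop:commutator-GL2-T} (built on Proposition \ref{prop:Flicker-comm}): for $g\in(T/Z_G)(F)$ the $G_{\mathrm{ad}}(F)$-action on $\tilde{T}$ is \emph{not} trivial; rather $\Ad(g)(\tilde{t})=\Cali_m(\nu,t_0)^{-1}\tilde{t}$ when $\bm{p}(\tilde{t})=\iota_{Q,m}(t_0)$ (and likewise for $\tilde{t}'$ in the $\sigma=-$ case). Plugging this into the formula defining $\CaliAd^\sigma(g)$, the explicit calibration factor $\Cali_m(\nu,t_0)$ exactly cancels the commutator, and only then does one obtain $\CaliAd^\sigma(g)=\identity$. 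This cancellation is the whole raison d'\^etre of $\Cali_m$, so omitting it leaves the central step of \textbf{AD}.3 (and with it the independence of choices and the inverse needed in \textbf{AD}.2) unproven.
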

\begin{proof}
	The setting under consideration is $G=\SL(2)$, $g,h \in G_\text{ad}(F)$.
	\begin{asparaenum}[\bfseries{AD}.1.\;]
	\item It follows from the analogous property of the $G_\text{ad}(F)$-action on $\tilde{G}$.
 
	\item Since $\Cali_m(\nu, t_0 t'_0) = \Cali_m(\nu, t_0) \Cali_m(\nu, t'_0)$ and $\Cali_m(\nu, \cdot)$ is locally constant, $\CaliAd(g)$ is a continuous homomorphism. It will follow from \textbf{AD.4} that $\CaliAd(g) \CaliAd(g^{-1}) = \CaliAd(1) = \identity$.
	
	For $(\tilde{t}, t_0) \in \tilde{T}^-_{Q,m}$, \textbf{AD.1} implies that $\CaliAd^-(g)(\tilde{t}, t_0)$ is independent of how we decompose $\tilde{t} = \widetilde{-1} \cdot \tilde{t}'$. Given $(\tilde{s}, s_0) \in \tilde{T}_{Q,m}$, since $\widetilde{-1}$ is central by Proposition \ref{prop:BD-adjoint-action}, $(\tilde{s}, s_0)(\tilde{t}, t_0)$ is mapped to
	\begin{multline*}
		\CaliAd^-(g)\left( \tilde{s} (\widetilde{-1}) \tilde{t}', s_0 t_0 \right) = \CaliAd^-(g)\left( (\widetilde{-1}) \tilde{s}\tilde{t}', s_0 t_0  \right) \\
		= \left( \Cali_m(\nu, t_0) \Cali_m(\nu, s_0) \cdot (\widetilde{-1}) \Ad(g)(\tilde{s}) \Ad(g)(\tilde{t}'), \quad \Ad(g)(s_0) \cdot \Ad(g)(t_0) \right) \\
		= \left( \Cali_m(\nu, s_0) \Ad(g)(\tilde{s}), \; \Ad(g)(s_0)\right) \left( \Cali_m(\nu, t_0) (\widetilde{-1}) \Ad(g)(\tilde{t}'), \; \Ad(g)(t_0) \right) \\
		= \CaliAd(g)(\tilde{s}, s_0) \CaliAd^-(g)(\tilde{t}, t_0);
	\end{multline*}
	the equivariance in the case $\sigma = -$ follows.

	\item If $g$ comes from $G(F)$, then $\nu \in F^{\times 2}$ so $\Cali_m(\nu, \cdot) = 1$; also note that $\Ad(g)(\widetilde{-1}) = \widetilde{-1}$ by Proposition \ref{prop:BD-adjoint-action}. This shows the first assertion.
	
	As for the second assertion, the premise implies that every $g \in G(\bar{F})$ realizing the given $T \rightiso S$ belongs to $T(\bar{F})$; in particular, we may assume that $g \in (T/Z_G)(F)$. The case $\sigma=+$ follows from Lemma \ref{prop:commutator-GL2-T} which says that $\Ad(g)(\tilde{t}) = \Cali_m(\nu, t_0)^{-1} \tilde{t}$. For the same reason, in the case $\sigma=-$ we have $\Ad(g)(\tilde{t}') = \Cali_m(\nu, t_0)^{-1} \tilde{t}'$ and the result follows.

	\item This equality follows from the fact that $\Cali_m(\nu, \gamma_0)$ is multiplicative in $\nu$ and depends on $\gamma_0$ only through its stable class.
	\end{asparaenum}
\end{proof}

Proceed to define stable conjugacy in higher rank inside $\widetilde{G^T}$. The allowable signs in the constructions below will be taken from \index{Sgn@$\mathrm{Sgn}_m(T)$}
\begin{equation}\label{eqn:Sgn} \text{Sgn}_m(T) := \begin{cases}
	\{1\}^I, & 4 \nmid m \\
	\{\pm 1\}^I, & 4 \mid m.
\end{cases}\end{equation}
Write $T = \prod_{i \in I} T_i$ with $T_i = R_{K_i^\sharp/F}(K_i^1)$. Multiplication induces the epimorphism
\begin{equation}\label{eqn:T-Q-m-contracted}
	\prod_{i \in I} (\widetilde{T_i})_{Q,m}^{\sigma_i} \longrightarrow \tilde{T}^\sigma_{Q,m}, \quad \text{kernel} = \left\{ (\noyau_i)_i \in \mu_m^I: \prod_i \noyau_i = 1 \right\}.
\end{equation}

\begin{remark}
	By Proposition \ref{prop:BD-adjoint-action}, $G^T_\text{ad}(F)$ acts on $\widetilde{G^T}$; when restricted to the component corresponding to $i \in I$, this is the same as the $\PGL(2, K_i^\sharp)$-action on $\widetilde{\SL}(2, K_i^\sharp)$ (working over $K_i^\sharp$). One way to see this is to invoke Proposition \ref{prop:lifting-uniqueness}.
\end{remark}

\begin{definition-proposition}\label{def:st-conj-G-T}
	Let $S \subset G^T$ be a maximal $F$-torus stably conjugate to $T$ via $\Ad(g): T \rightiso S$, where $g = (g_i)_{i \in I} \in (G^T)_\mathrm{ad}(F)$. For $\sigma \in \mathrm{Sgn}_m(T)$, define
	\begin{equation*}\begin{gathered} \begin{tikzcd}[row sep=small]
		T(F) \arrow{r}{\Ad(g)} & S(F) \\
		\tilde{T}^\sigma_{Q,m} \arrow{r}{\CaliAd^\sigma(g)} \arrow{u} \arrow{d} & \tilde{S}^\sigma_{Q,m} \arrow{u} \arrow{d} \\
		T_{Q,m}(F) \arrow{r}[swap]{\Ad(g)} \arrow[bend left=50]{uu}{\sigma \cdot \iota_{Q,m}} & S_{Q,m}(F) \arrow[bend right=50]{uu}[swap]{\sigma \cdot \iota_{Q,m}}
	\end{tikzcd} \\
		\CaliAd(g)^\sigma(g)\left( \tilde{t}, t_0 \right) = \prod_{i \in I} \CaliAd^{\sigma_i}(g_i)\left( \tilde{t}_i, t_{0,i} \right)
	\end{gathered}\end{equation*}
	for $((\tilde{t}_i)_i, (t_{0,i})_i) \mapsto (\tilde{t}, t_0)$ under \eqref{eqn:T-Q-m-contracted}. We shall abbreviate $\CaliAd(g) := \CaliAd^{ (+, \ldots, +) }(g)$.

	These constructions are independent of all choices and satisfy the properties in Proposition \ref{prop:CAd-prop}, with $\sigma \in \mathrm{Sgn}_m(T)$.
\end{definition-proposition}
\begin{proof}
	This is just a multi-component version of Definition--Proposition \ref{def:st-conj-SL2} modulo Weil restrictions. It does not depend on the identification $G_{\mathcal{O}} \simeq R_{K_i^\sharp/F}(\SL(2))$. Indeed, the field $K_i^\sharp \simeq F_{\pm\alpha}$ is uniquely determined by $(G,T)$, and all $F$-automorphisms of $R_{K_i^\sharp/F}(\SL(2))$ arise from $K_i^\sharp$-automorphisms of $\SL(2)$ by \cite[Proposition A.5.14]{CGP15}; this does not alter $\CaliAd^{\sigma_i}$ by Definition--Proposition \ref{def:st-conj-SL2}.
\end{proof}

We are ready to state the general recipe.
\begin{definition}\label{def:st-conj}\index{stable conjugacy}
	Let $\Ad(g): T \rightiso S$ be a stable conjugacy of maximal $F$-tori in $G$. Decompose $\Ad(g)$ into
	\[\begin{tikzcd}
		T \arrow{r}{\Ad(g')} & T' \arrow{r}{\Ad(g'')} & S
	\end{tikzcd} \quad g' \in G^T_\mathrm{ad}(F), \; g'' \in G(F). \]
	In particular $T' = \Ad(g')T \subset G^T$. Call such a $(g',g'')$ a \emph{factorization pair}\index{factorization pair} for $\Ad(g)$. Given $\sigma \in \mathrm{Sgn}_m(T)$, define the map
	\begin{gather*}
		\CaliAd^\sigma(g) := \Ad(g'') \circ \CaliAd^\sigma(g'): \tilde{T}^\sigma_{Q,m} \longrightarrow \tilde{S}^\sigma_{Q,m}.
	\end{gather*}
\end{definition}
Recall that for all $\delta \in T_\text{reg}(F)$, we have $\text{inv}(\delta, g\delta g^{-1}) = \text{inv}(\delta, g' \delta g'^{-1})$ in $H^1(F,T)$.

\begin{theorem}
	The map $\CaliAd^\sigma(g)$ in Definition \ref{def:st-conj} is independent of all choices and satisfy the properties in Proposition \ref{prop:CAd-prop}, with $\sigma \in \mathrm{Sgn}_m(T)$. In particular, it depends on the $\Ad(g): T \rightiso S$ but not on the choice of $g$.
\end{theorem}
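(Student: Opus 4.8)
The plan is to bootstrap everything from the $G^T$-level statements already available: Definition--Proposition~\ref{def:st-conj-G-T}, which asserts that $\CaliAd^\sigma$ on $G^T_\mathrm{ad}(F)$ satisfies the properties \textbf{AD.1}--\textbf{AD.4} of Proposition~\ref{prop:CAd-prop}; the bijection $\mathfrak{D}(Z_{G^T}, G^T; F) \rightiso \mathfrak{D}(T,G;F)$ and the vanishing $H^1(F,G^T)=0$ from \S\ref{sec:stable-reduction}; and the goodness of $\Image(\iota_{Q,m})$ (Proposition~\ref{prop:good-in-tori}) and of $\{\pm 1\}^I$ (Remark~\ref{rem:good-pm1}). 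Recall that a factorization pair for any $\Ad(g)\colon T\rightiso S$ exists by Proposition~\ref{prop:stable-reduction-SL2}, and that the construction in Definition~\ref{def:st-conj} refers only to the isomorphism $\Ad(g)$, never to $g$; so the final clause ``depends on $\Ad(g)$ but not on $g$'' is automatic once independence of the factorization pair is established. Here $\Ad(g'')$, for $g''\in G(F)$, always denotes conjugation in $\tilde G$ together with \eqref{eqn:isogeny-Ad}, exactly as in Definition~\ref{def:st-conj}.

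The crux is a \emph{consistency lemma}: if $h\in G^T_\mathrm{ad}(F)$ realizes a stable conjugacy $\Ad(h)\colon T_1\rightiso T_2$ between maximal $F$-tori of $G^T$ with $T_1$ stably conjugate to $T$ inside $G^T$, and $\Ad(h)$ is \emph{also} realized by some $\ell\in G(F)$, then $\CaliAd^\sigma(h)=\Ad(\ell)$ on $\tilde T^\sigma_{1,Q,m}$. I would prove this as follows. The hypothesis gives $\mathrm{inv}(\delta_1,\Ad(h)\delta_1)=0$ in $H^1(F,T_1)$ for a regular $\delta_1\in T_{1,\mathrm{reg}}(F)$, and since $H^1(F,G^T)=0$ this triviality also means $\Ad(h)\delta_1$ is $G^T(F)$-conjugate to $\delta_1$; hence $\Ad(h)$ is realized by some $\ell_1\in G^T(F)$ (the Weyl element correcting the two isomorphisms is trivial, fixing the regular $\delta_1$). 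Writing $\ell_{1,\mathrm{ad}}\in G^T_\mathrm{ad}(F)$ for its image, the element $z:=h\,\ell_{1,\mathrm{ad}}^{-1}$ acts trivially on $T_2$, hence lies in $(T_2/Z_{G^T})(F)$; by \textbf{AD.4} in the $G^T$-setting $\CaliAd^\sigma(h)=\CaliAd^\sigma(z)\circ\CaliAd^\sigma(\ell_{1,\mathrm{ad}})$, where $\CaliAd^\sigma(z)=\identity$ and $\CaliAd^\sigma(\ell_{1,\mathrm{ad}})$ is conjugation by $\ell_1$ inside $\widetilde{G^T}$, both by \textbf{AD.3} in the $G^T$-setting. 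Finally $\ell_1=\ell s$ with $s:=\ell^{-1}\ell_1\in Z_G(T_1)(F)=T_1(F)$, so conjugation by $\ell_1$ in $\widetilde{G^T}$ differs from conjugation by $\ell$ in $\tilde G$, on $\widetilde{T_1}$, precisely by the commutator $[s,-]$; and $[s,\sigma\cdot\iota_{Q,m}(t_0)]=[s,\sigma]\,[s,\iota_{Q,m}(t_0)]=1$ because $\sigma\in\{\pm1\}^I$ and $\iota_{Q,m}(t_0)\in\Image(\iota_{Q,m})$ are both good, whence the two maps agree on $\tilde T^\sigma_{1,Q,m}$.

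Well-definedness follows quickly. Given two factorization pairs $(g',g'')$ and $(\tilde g',\tilde g'')$ of $\Ad(g)$, set $T'=\Ad(g')T$, $\tilde T'=\Ad(\tilde g')T$ (both inside $G^T$, so $G^{T'}=G^{\tilde T'}=G^T$) and $k:=\tilde g'(g')^{-1}\in G^T_\mathrm{ad}(F)$. Then $\CaliAd^\sigma(\tilde g')=\CaliAd^\sigma(k)\circ\CaliAd^\sigma(g')$ by \textbf{AD.4} in the $G^T$-setting, while $\Ad(\tilde g'')\circ\Ad(k)=\Ad(g'')$ on $T'$ exhibits $\ell:=(\tilde g'')^{-1}g''\in G(F)$ as a realization of $\Ad(k)$, so the consistency lemma gives $\CaliAd^\sigma(k)=\Ad(\ell)$ on $\widetilde{(T')}^\sigma_{Q,m}$. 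Substituting, the map from the second pair, $\Ad(\tilde g'')\circ\CaliAd^\sigma(\tilde g')=\Ad(\tilde g'')\circ\Ad(\ell)\circ\CaliAd^\sigma(g')$, collapses to $\Ad(g'')\circ\CaliAd^\sigma(g')$, the map from the first pair. Then \textbf{AD.1}--\textbf{AD.2} are immediate, each factor of $\CaliAd^\sigma(g)=\Ad(g'')\circ\CaliAd^\sigma(g')$ being $\mu_m$-equivariant and a topological-group isomorphism (resp.\ a torsor-equivariant map) by the $G^T$-statement resp.\ by conjugation in $\tilde G$; and \textbf{AD.3} uses the factorization pair $(1,g)$ when $g\in G(F)$ and $(1,1)$ when $g\in T(\bar F)$ (so $\Ad(g)=\identity_T$).

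For \textbf{AD.4}, given $T\xrightarrow{\Ad(h)}S\xrightarrow{\Ad(g)}R$ with factorization pairs $(h',h'')$ of $\Ad(h)$ and $(g',g'')$ of $\Ad(g)$ (so $S'=\Ad(h')T\subset G^T$, $G^{S'}=G^T$): since $h''\in G(F)$ induces an $F$-automorphism of $G$ carrying $G^{S'}$ onto $G^S$, the element $g'_0:=\Ad((h'')^{-1})g'\in G^T_\mathrm{ad}(F)$ makes $(g'_0h',\,g''h'')$ a factorization pair for $\Ad(gh)$, whence $\CaliAd^\sigma(gh)=\Ad(g'')\circ\bigl(\Ad(h'')\circ\CaliAd^\sigma(g'_0)\bigr)\circ\CaliAd^\sigma(h')$ after applying \textbf{AD.4} in the $G^T$-setting; the naturality of the $G^T$-level $\CaliAd^\sigma$ under the inner $F$-automorphism $\Ad(h'')$ — a formal consequence of the canonicity statements in Definition--Propositions~\ref{def:Cali-factor}, \ref{def:st-conj-SL2}, \ref{def:st-conj-G-T} together with the $G$-equivariance of $E_G$ — rewrites the bracket as $\CaliAd^\sigma(g')\circ\Ad(h'')$, giving $\CaliAd^\sigma(gh)=\CaliAd^\sigma(g)\circ\CaliAd^\sigma(h)$. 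The main obstacle is the consistency lemma, and within it the bookkeeping of the several tori and of conjugation inside $\widetilde{G^T}$ versus inside $\tilde G$; but once $H^1(F,G^T)=0$ pulls the realizing element into $G^T(F)$ and the goodness of $\Image(\iota_{Q,m})$ and $\{\pm1\}^I$ kills the residual commutator, no genuine computation remains. A secondary point needing care is precisely that naturality of the $G^T$-level construction under inner $F$-automorphisms of $G$.
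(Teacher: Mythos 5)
Your proof is correct and takes essentially the same route as the paper: both prove independence of the factorization pair by showing the ``discrepancy'' $k\in G^T_\mathrm{ad}(F)$ has trivial invariant, pulling a realizing element into $G^T(F)$, applying \textbf{AD.3--4} at the $G^T$-level, and absorbing the residual torus element via the goodness/centrality of $\sigma\cdot\Image(\iota_{Q,m})$; the properties \textbf{AD.1--4} then follow by the same inheritance-and-naturality arguments. Your packaging of the key step into a standalone ``consistency lemma'' (and using $(1,1)$ rather than $(g,1)$ for \textbf{AD.3} when $g\in T(\bar F)$) is a tidy reorganization of the paper's in-line computation, not a different method.
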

\begin{proof}
	First we show that $\CaliAd^\sigma(g)$ is independent of the factorization pair. Choose $\delta \in T_\text{reg}(F)$. Let $\delta' := g'\delta g'^{-1} \in G^T(F)$, $T' := g' T g'^{-1} \subset G^T = G^{T'}$, and let $(h', h'')$ be another factorization pair. Then $\text{inv}(\delta, \delta') = \text{inv}(\delta, h' \delta h'^{-1})$. Setting $k := h' g'^{-1} \in G^T_\text{ad}(F)$, the formalism of \eqref{eqn:st-conj-in-stages} yields
	\begin{align*}
		\text{inv}(\delta', k \delta' k^{-1}) & = \text{inv}(\delta', h'\delta h'^{-1}) = \text{inv}(\delta', \delta) + \text{inv}(\delta, h'\delta h'^{-1}) \\
		& = -\text{inv}(\delta, \delta') + \text{inv}(\delta, h'\delta h'^{-1}) = 0.
	\end{align*}
	Hence $\mathcal{T}^{G^T}(\delta', k\delta' k^{-1})$ has an $F$-point $r \in G^T(F)$. Since $r$ also yields an $F$-point of the quotient $\mathcal{T}^{G^T_\text{ad}}(\delta', k\delta' k^{-1})$ by $Z_{G^T}$ which contains $k$, the torsor structure entails $k \in r \cdot (T'/Z_{G^T})(F)$. Property \textbf{AD.3---4} for $G^T$ entail $\CaliAd^\sigma(k) = \Ad(r)$, and
	\begin{gather*}
		\CaliAd^\sigma(h') = \CaliAd^\sigma(k) \CaliAd^\sigma(g') = \Ad(r) \CaliAd^\sigma(g').
	\end{gather*}
	Also, as isomorphisms $T' \rightiso S$ we have
	\[ \Ad(h'')\Ad(r) = \Ad(h'') \Ad(k) = \Ad\left( h'' h' g'^{-1} \right) = \Ad(g''), \quad h'', r, g'' \in G(F); \]
	that is, $h'' r = s g''$ for some $s \in S(F)$. Since $\tilde{S}^\sigma_{Q,m} \to \tilde{S}$ has central image, on $\tilde{S}^\sigma_{Q,m}$ acts $\Ad(s)$ as $\identity$ hence we arrive at
	\begin{align*}
		\Ad(h'') \CaliAd^\sigma(h') &= \Ad(h'') \Ad(r) \CaliAd^\sigma(g') = \Ad(h'' r) \CaliAd^\sigma(g') \\
		& = \Ad(s) \Ad(g'') \CaliAd^\sigma(g') = \Ad(g'') \CaliAd^\sigma(g').
	\end{align*}
	The independence of $\CaliAd^\sigma(g)$ on parameters, identifications with $\SL(2)$, etc. result immediately.

	The properties \textbf{AD.1---2} for $G$ in Proposition \ref{prop:CAd-prop} are inherited from $G^T$. For \textbf{AD.3}, if $g$ comes from $G(F)$ (resp. from $T(\bar{F})$), the factorization pair for $\Ad(g)$ may be taken as $(1, g)$ (resp. $(g,1)$ by adjusting $g$ as in the proof of Proposition \ref{prop:CAd-prop}); the case of $(1,g)$ is ordinary conjugation, whereas the case of $(g,1)$ is handled by \textbf{AD.3} for $G^T$ (Definition--Proposition \ref{def:st-conj-G-T}).
	
	To verify \textbf{AD.4}, suppose that to $\Ad(g)$ (resp. $\Ad(h)$) is associated a factorization pair $(g', g'')$ (resp. $(h', h'')$), and accordingly
	\[\begin{tikzcd}
		T \arrow{r}{\Ad(g')} \arrow[bend right]{rr}[swap]{\Ad(g)} & T' \arrow{r}{\Ad(g'')} & S \arrow{r}{\Ad(h')} \arrow[bend right]{rr}[swap]{\Ad(h)} & S' \arrow{r}{\Ad(h'')} & R.
	\end{tikzcd}\]
	Observe that $G^T = G^{T'}$, $G^S = G^{S'}$. Set $k' := g''^{-1}h' g''$ and $P := g''^{-1} S' g''$, then transport $\sigma$ from $\text{Sgn}_m(S)$ to $\text{Sgn}_m(P)$ via $\Ad(g'')^{-1}$. Since $h' \in G^S_\text{ad}(F)$ (resp. $S' \subset G^S$), we see $k' \in G^{T'}_\text{ad}(F) = G^T_\text{ad}(F)$ (resp. $P \subset G^{T'} = G^T$). The composite above equals $\Ad(h''g'') \Ad(k' g'): T \rightiso P \rightiso R$, therefore $(k'g', h''g'')$ is a factorization pair for $\Ad(hg)$. Now
	\begin{multline*}
		\Ad(h'') \underbracket{ \CaliAd^\sigma(h') }_{\text{in}\; G^S = G^{S'}} \Ad(g'') \underbracket{ \CaliAd^\sigma(g') }_{\text{in}\; G^T = G^{T'}} \\
		= \Ad(h'') \underbracket{\Ad(g'')}_{ \tilde{S}'^\sigma_{Q,m} \leftarrow \tilde{P}^\sigma_{Q,m} } \underbracket{\left(  \Ad(g'')^{-1} \CaliAd^\sigma(h') \Ad(g'') \right)}_{ \tilde{P}^\sigma_{Q,m} \leftarrow \tilde{T}'^\sigma_{Q,m} } \CaliAd^\sigma(g').
	\end{multline*}
	We contend that
	\begin{equation}\label{eqn:AD4-verification}
		 \Ad(g'')^{-1} \CaliAd^\sigma(h') \Ad(g'') = \CaliAd^\sigma\left( k' \right);
	\end{equation}
	if so, we will obtain $\Ad(h''g'') \CaliAd^\sigma(k' g')$ by \textbf{AD.4} inside $G^T = G^{T'}$, which equals $\Ad(hg)$ via the factorization pair $(k'g', h''g'')$. In view of the invariance of $\CaliAd^\sigma$ afforded by Definition--Proposition \ref{def:st-conj-G-T}, the \eqref{eqn:AD4-verification} is a straightforward transport of structure.
\end{proof}

\begin{definition}\label{def:st-conj-elements}
	Let $\delta, \eta \in G_\mathrm{reg}(F)$ and set $T := G_\delta$, $S := G_\eta$. For $\sigma \in \mathrm{Sgn}_m(T)$, we say $(\tilde{\delta}, \delta_0) \in \tilde{T}^\sigma_{Q,m}$ and $(\tilde{\eta}, \eta_0) \in \tilde{S}^\sigma_{Q,m}$ are \emph{stably conjugate} if
	\begin{itemize}
		\item there exists $g \in G(\bar{F})$ such that $g\delta g^{-1} = \eta$;
		\item $\CaliAd^\sigma(g)(\tilde{\delta}, \delta_0) = (\tilde{\eta}, \eta_0)$.
	\end{itemize}
\end{definition}
The reference to $\delta_0, \eta_0$ can be dropped when $4 \nmid m$: see Corollary \ref{prop:st-conj-elements-canonical}.


\subsection{Further properties and stability}\label{sec:further-properties}
Let $T \subset G$ be any maximal $F$-torus, parameterized by the datum $(K, K^\sharp, \ldots)$ with $K = \prod_{i \in K} K_i$, etc.

\begin{proposition}\label{prop:CaliAd-simple}
	Let $\Ad(g): T \rightiso S$ be a stable conjugacy of maximal $F$-tori with factorization pair $(g', g'')$ (Definition \ref{def:st-conj}). Suppose either
	\begin{inparaenum}[(a)]
		\item $F$ is archimedean, or
		\item $m \notin 2\Z$.
	\end{inparaenum}
	Then $\CaliAd^\sigma(g)(\tilde{\delta}, \delta_0) = (\Ad(g'') \Ad(g')\tilde{\delta}, \; \Ad(g)(\delta_0))$ for all $(\tilde{\delta}, \delta_0) \in \tilde{T}^\sigma_{Q,m}$.
\end{proposition}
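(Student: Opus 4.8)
The plan is to reduce everything to the assertion that, under either hypothesis, all the calibration factors $\Cali_m(\nu(g'_i),\cdot)$ entering Definition--Propositions \ref{def:st-conj-SL2} and \ref{def:st-conj-G-T} are trivial. Granting this, $\CaliAd^\sigma(g')$ is just the canonical lift of $\Ad(g')$ to $\widetilde{G^T}$ (Proposition \ref{prop:BD-adjoint-action}), and since $\CaliAd^\sigma(g)=\Ad(g'')\circ\CaliAd^\sigma(g')$ by Definition \ref{def:st-conj} while $\Ad(g'')$ restricts on $T_{Q,m}(F)$ to $\Ad(g)$, the formula in the statement falls out. First I would note that in both cases $4\nmid m$, so $\mathrm{Sgn}_m(T)=\{(+,\dots,+)\}$ and only $\CaliAd=\CaliAd^{(+,\dots,+)}$ is in play, which removes the sign bookkeeping.

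Case (b) is immediate: $m\notin 2\Z$ gives $\mathrm{gcd}(2,m)=1$, hence the symbols $(\cdot,\cdot)_{\bullet,\mathrm{gcd}(2,m)}$ are identically $1$ and $\Cali_m\equiv 1$ by Definition--Proposition \ref{def:Cali-factor}; no choices are needed. For case (a) the field $F=\CC$ is vacuous ($N_F=1$ forces $m=1$ and $\tilde G$ splits), so I would assume $F=\R$, whence $m\in\{1,2\}$; reducing to $m=2$ leaves $\mathrm{gcd}(2,m)=2$. Fix a factorization pair $(g',g'')$ for $\Ad(g)$ with $g'=(g'_i)_{i\in I}\in G^T_{\mathrm{ad}}(F)=\prod_i\PGL(2,K_i^\sharp)$. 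The components of $\mathrm{inv}(\delta,g\delta g^{-1})$ indexed by $i\notin I_0$ vanish (Propositions \ref{prop:D-description}, \ref{prop:stable-reduction-SL2}), so — invoking the independence of $\CaliAd^\sigma(g)$ on the factorization pair (the theorem following Definition \ref{def:st-conj}) — I may take $g'_i=1$ for every $i\notin I_0$ without disturbing the requirement $\mathrm{inv}(\delta,g'\delta g'^{-1})=\mathrm{inv}(\delta,g\delta g^{-1})$; for those $i$ the factor $\Cali_2(\nu(g'_i),\cdot)=\Cali_2(1,\cdot)$ is trivial (and if $K_i^\sharp=\CC$ the relevant component of $\widetilde{G^T}$ is split regardless). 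For $i\in I_0$, $K_i^\sharp$ is a finite extension of $\R$ carrying a quadratic field extension $K_i$, which forces $K_i^\sharp=\R$ and $K_i=\CC$; then $N_{K_i/K_i^\sharp}=N_{\CC/\R}$ is positive-definite, so $N_{K_i/K_i^\sharp}(\omega)\in\R^{\times 2}$ for all $\omega\in\CC^\times$ and $\Cali_2(\nu(g'_i),\gamma_0)=(N_{K_i/K_i^\sharp}(\omega),\nu(g'_i))_{\R,2}=1$ regardless of $\nu(g'_i)$. This exhausts the factors.

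The hard part is precisely the archimedean $m=2$ case: for a generic factorization pair the calibration does not vanish term by term, since a split component $i\notin I_0$ can pair a nontrivial $\nu(g'_i)$ against a negative norm. The two inputs that save the day are (i) the freedom to normalize $g'_i=1$ on the split components — legitimate because those components contribute nothing to $\mathrm{inv}$ — and (ii) the structural fact that over $\R$ an anisotropic $K_i^1$ necessarily comes from $K_i=\CC$, whose norm form only takes square values; this last point is exactly what fails $p$-adically and is why the archimedean hypothesis is essential. Beyond carefully citing the independence of $\CaliAd^\sigma(g)$ on the factorization pair to license (i), I do not anticipate further difficulties.
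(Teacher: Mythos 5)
Your proof is correct and takes essentially the same route as the paper: reduce to $\SL(2)$ components, then note that for $m\notin 2\Z$ the symbol $(\cdot,\cdot)_{\bullet,\gcd(2,m)}$ is trivial, while for $F=\R$ one uses $H^1(\R,T)=0$ to normalize $g'=1$ on the split components and $N_{\CC/\R}(\CC^\times)=\R_{>0}$ on the anisotropic ones. The extra care you take in spelling out why the component-wise normalization is licensed (via the independence of $\CaliAd^\sigma(g)$ on the factorization pair) is the same observation the paper uses implicitly in writing "we may take $g'=1$".
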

\begin{proof}
	It suffices to treat the $\SL(2)$ case. This boils down to show that the $\Cali_m(\nu, \delta_0)$ from Definition--Proposition \ref{def:Cali-factor} is trivial. When $m \notin 2\Z$ this is evident. When $F=\R$ and $T$ splits, we may take $g' = 1$ in factorization pair since $H^1(F,T)=0$, accordingly $\nu = 1$. When $F=\R$ and $T$ is anisotropic, this follows from $N_{\CC/\R}(\CC^\times) = \R_{>0}$.
\end{proof}

Below are some useful results for the basic building block: the $\SL(2)$ case. 

\begin{proposition}\label{prop:CAd-minus-1}
	Assume $G = \SL(2)$. Choose any preimage $\widetilde{-1} \in \bm{p}^{-1}(-1)$. Let $g \in G_\mathrm{ad}(F)$ with $\nu = \nu(g) \in F^\times/F^{\times 2}$ via \eqref{eqn:nu-arises}. Suppose $(\tilde{\delta}, \delta_0) \in \tilde{T}_{Q,m}$.
	\begin{enumerate}
		\item When $m \notin 2\Z$, we have $\CaliAd(g)\left( \widetilde{-1} \cdot \tilde{\delta}, -\delta_0 \right) = \left( \widetilde{-1}, -1\right) \cdot \CaliAd(g)\left(\tilde{\delta}, \delta_0 \right)$.
		\item When $m \equiv 2 \pmod 4$,
			\[ \CaliAd(g)\left( \widetilde{-1} \cdot \tilde{\delta}, -\delta_0 \right) = \sgn_{K/F}(\nu) \cdot \left( \widetilde{-1}, -1 \right) \cdot \CaliAd(g)(\tilde{\delta}, \delta_0) \]
			with $\sgn_{K/F}(\nu) \in \mu_2 \subset \mu_m = \Ker(\bm{p})$. Note that $\sgn_{K/F}(\nu) = \lrangle{\kappa_-, \mathrm{inv}(\delta, \Ad(g)\delta)}$ (Definition \ref{def:kappa-minus}).
	\end{enumerate}
\end{proposition}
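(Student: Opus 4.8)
The plan is to reduce both identities to the single computation of $\CaliAd(g)(\widetilde{-1},-1)$, using the group homomorphism property \textbf{AD.2} of $\CaliAd(g): \tilde T_{Q,m} \rightiso \tilde S_{Q,m}$ from Proposition~\ref{prop:CAd-prop}. Since $m/\gcd(2,m)$ is odd in both cases (it equals $m$ when $m \notin 2\Z$ and $m/2$ when $m \equiv 2 \pmod 4$), we have $(-1)^{m/\gcd(2,m)} = -1 = \bm{p}(\widetilde{-1})$, so $(\widetilde{-1}, -1)$ lies in $\tilde T_{Q,m}$ and in $\tilde S_{Q,m}$, and $(\widetilde{-1}\cdot\tilde\delta, -\delta_0) = (\widetilde{-1}, -1)\cdot(\tilde\delta, \delta_0)$ inside the group $\tilde T_{Q,m}$. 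As $\widetilde{-1}$ is central in $\tilde G$ (Proposition~\ref{prop:BD-adjoint-action}), \textbf{AD.2} gives
\[ \CaliAd(g)(\widetilde{-1}\cdot\tilde\delta, -\delta_0) = \CaliAd(g)(\widetilde{-1}, -1)\cdot\CaliAd(g)(\tilde\delta, \delta_0), \]
so it suffices to show $\CaliAd(g)(\widetilde{-1},-1) = (\widetilde{-1},-1)$ when $m \notin 2\Z$, and $\CaliAd(g)(\widetilde{-1},-1) = \sgn_{K/F}(\nu)\cdot(\widetilde{-1},-1)$ when $m \equiv 2\pmod 4$.

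To this end I would unwind Definition--Proposition~\ref{def:st-conj-SL2}: $\CaliAd(g)(\widetilde{-1},-1) = \big(\Cali_m(\nu, -1)\cdot\Ad(g)(\widetilde{-1}), -1\big)$, where the $-1$ inside $\Cali_m$ denotes the element of $K^1$ corresponding to $-\mathrm{id}\in\SL(2)$. Two inputs are needed. First, by Lemma~\ref{prop:-1-adjoint} in its $\widetilde{\SL}(2,F)$-form (with $\{\cdot,\cdot\}_F$ replaced by $(\cdot,\cdot)_{F,m}$), $\Ad(g)(\widetilde{-1}) = (-1,\det g_1)_{F,m}^{-1}\cdot\widetilde{-1}$ for any lift $g_1 \in \GL(2,F)$ of $g$, the class of $\det g_1$ in $F^\times/F^{\times 2}$ being $\nu$. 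This is the one slightly subtle point: the adjoint action of $g$ does \emph{not} fix $\widetilde{-1}$ once $\nu \neq 1$. Second, choosing $\omega \in K^\times$ with $\tau(\omega) = -\omega$ (so $\omega/\tau(\omega) = -1$), one has $\omega^2 \in F^\times$, $N_{K/F}(\omega) = -\omega^2$ and $K = F(\omega)$; hence $\omega^2$ represents the discriminant $d_K$ of $K/F$ in $F^\times/F^{\times 2}$ (trivial if $K \simeq F\times F$), giving $\Cali_m(\nu, -1) = (N_{K/F}(\omega),\nu)_{F,\gcd(2,m)} = (-d_K, \nu)_{F,\gcd(2,m)}$, while $\sgn_{K/F}(a) = (a, d_K)_{F,2}$.

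It remains to combine the Hilbert symbols. When $m \notin 2\Z$, $\gcd(2,m)=1$ kills $\Cali_m(\nu,-1)$, and $-1 = (-1)^m \in F^{\times m}$ kills $(-1,\det g_1)_{F,m}$, so $\CaliAd(g)(\widetilde{-1},-1) = (\widetilde{-1},-1)$. When $m \equiv 2\pmod 4$, write $m = 2m'$ with $m'$ odd; from \eqref{eqn:norm-residue-d} and $-1 = (-1)^{m'} \in F^{\times m'}$ one sees $(-1,\det g_1)_{F,m}^2 = (-1,\det g_1)_{F,m'} = 1$ and hence $(-1,\det g_1)_{F,m} = (-1,\det g_1)_{F,2} = (-1,\nu)_{F,2}$, so the factor multiplying $\widetilde{-1}$ is $(-d_K,\nu)_{F,2}(-1,\nu)_{F,2} = (d_K,\nu)_{F,2} = \sgn_{K/F}(\nu)$ (everything here lies in $\mu_2$, so the inverse from Lemma~\ref{prop:-1-adjoint} is immaterial). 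Finally, $\sgn_{K/F}(\nu) = \langle\kappa_-, \mathrm{inv}(\delta, \Ad(g)\delta)\rangle$ follows from Proposition~\ref{prop:st-conj-SL2}, which identifies $\mathrm{inv}(\delta, \Ad(g)\delta) \in H^1(F,T) = F^\times/N_{K/F}(K^\times)$ with the class of $\det g_1$, i.e. of $\nu$, together with Definition~\ref{def:kappa-minus}. The only obstacle is the Hilbert-symbol bookkeeping (degree $m$ versus degree $2$) plus the observation about $\Ad(g)(\widetilde{-1})$; once \textbf{AD.2} is invoked there is no conceptual difficulty.
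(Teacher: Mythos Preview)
Your proof is correct and follows essentially the same route as the paper, with a small organizational improvement: you invoke \textbf{AD.2} to reduce at once to computing $\CaliAd(g)(\widetilde{-1},-1)$, whereas the paper keeps the general $(\tilde\delta,\delta_0)$ throughout and unwinds Definition--Proposition~\ref{def:st-conj-SL2} directly with $-\delta_0$ parametrized by $c\omega/\tau(c\omega)$ (your $\omega$ is the paper's $c$). The Hilbert-symbol manipulation---combining $\Cali_m(\nu,-1)=(-d_K,\nu)_{F,\gcd(2,m)}$ with $\Ad(g)(\widetilde{-1})=(-1,\nu)_{F,m}^{-1}\widetilde{-1}$ and using \eqref{eqn:norm-residue-d}---is identical in substance. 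One expository nit: the step ``hence $(-1,\det g_1)_{F,m}=(-1,\det g_1)_{F,2}$'' needs the observation that once the symbol is in $\mu_2$, applying \eqref{eqn:norm-residue-d} with $d=m/2$ odd gives $(-1,\det g_1)_{F,m}^{m/2}=(-1,\det g_1)_{F,2}$ and the left side equals $(-1,\det g_1)_{F,m}$; you have all the pieces but the link is implicit.
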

\begin{proof}	
	In the case $m \notin 2\Z$, the factor $\Cali_m(\cdots) = 1$ by Proposition \ref{prop:CaliAd-simple}. It remains to show that $\widetilde{-1}$ is central in $\widetilde{\GL}(2,F)$. By Lemma \ref{prop:-1-adjoint}, this amounts to $(-1, x)_{F,m} = 1$ for all $x \in F^\times$. Indeed, $(-1)^2 = 1$ implies that $(-1, x)_{F,m} \in \mu_2 \cap \mu_m$, hence trivial.
	
	In the case $m \equiv 2 \pmod 4$, suppose $\delta_0$ is parameterized by $x_0 = \omega/\tau(\omega) \in K^1$ for some $\omega \in K^\times$. Then $-x = (-x_0)^{m/2}$ and
	\[ -x_0 = \dfrac{c\omega}{\tau(c\omega)}, \quad c :=
	\begin{cases}
		\sqrt{D}, & K = F(\sqrt{D}): \;\text{field} \\
		(1,-1), & K = F \times F.
	\end{cases}\]
	By Lemma \ref{prop:-1-adjoint},
	\begin{equation}\label{eqn:CAd-flip-derivation} \begin{aligned}
		\CaliAd(g)\left( \widetilde{-1} \cdot \tilde{\delta}, -\delta_0 \right) & = (N_{K/F}(c\omega), \nu)_{F,2} \cdot \left( \Ad(g)\left( \widetilde{-1} \cdot \tilde{\delta} \right), \; -\Ad(g)(\delta_0) \right) \\
		& = (N_{K/F}(c), \nu)_{F,2} (N_{K/F}(\omega), \nu)_{F,2} (-1, \nu)_{F,m} \cdot \left( \widetilde{-1}, -1\right) \cdot \\
		& \quad \cdot \left( \Ad(g)(\tilde{\delta}),\; \Ad(g)(\delta_0) \right) \\
		& = (N_{K/F}(c), \nu)_{F,2} (-1, \nu)_{F,m} \cdot \left( \widetilde{-1}, -1 \right) \cdot \CaliAd(g)(\tilde{\delta}, \delta_0).
	\end{aligned}\end{equation}
	Notice that $(-1, \nu)_{F,m} = ((-1)^{m/2}, \nu)_{F,m} = (-1, \nu)_{F,2}$ by \eqref{eqn:norm-residue-d}. Suppose $K = F(\sqrt{D})$, then
	\[ (N_{K/F}(c), \nu)_{F,2} = (-D, \nu)_{F,2}, \]
	hence \eqref{eqn:CAd-flip-derivation} is $(\widetilde{-1}, -1) \cdot \CaliAd(g)(\tilde{\delta}, \delta_0)$ times $(D, \nu)_{F,2} = \sgn_{K/F}(\nu)$. Next, suppose $K = F \times F$ so that $\sgn_{K/F}(\cdot)=1$. Then $(N_{K/F}(c), \nu)_{F,2} = (-1, \nu)_{F,2}$ and \eqref{eqn:CAd-flip-derivation} reduces to $(\widetilde{-1}, -1) \cdot \CaliAd(g)(\tilde{\delta}, \delta_0)$.
\end{proof}

\begin{proposition}\label{prop:dependence-on-delta_0}
	Assume $G = \SL(2)$ and $\eta_0 \in \Ker(\iota_{Q,m})$ corresponds to $y_0 \in K^1$. Let $g \in G_\mathrm{ad}(F)$ with $\nu = \nu(g) \in F^\times/F^{\times 2}$ via \eqref{eqn:nu-arises}. For all $(\tilde{\delta}, \delta_0) \in \tilde{T}_{Q,m}$ and $\sigma \in \mathrm{Sgn}_m(T)$,
	\begin{enumerate}
		\item when $T$ splits, we have $y_0 \in \mu_{m/\mathrm{gcd}(2,m)}$ and the diagram
			\[\begin{tikzcd}
				\tilde{T}^\sigma_{Q,m} \arrow{d}[swap]{\CaliAd^\sigma(g)} \arrow{rr}{\cdot (1, \eta_0)} & & \tilde{T}^\sigma_{Q,m} \arrow{d}{\CaliAd^\sigma(g)} \\
				\tilde{T}^\sigma_{Q,m} \arrow{rr}[swap]{{\cdot \left(1, \Ad(g)\eta_0 \right)}} & & \tilde{T}^\sigma_{Q,m}
			\end{tikzcd}\]
			commutes: both composites send $(\tilde{\delta}, \delta_0)$ to $\left( \Ad(g)(\tilde{\delta}), \Ad(g)(\eta_0 \delta_0)\right)$;
		\item when $T$ is anisotropic, only for $4 \mid m$ can $\eta_0$ be nontrivial, in which case $\eta_0 = -1$ and
			\[\begin{tikzcd}
				\tilde{T}^\sigma_{Q,m} \arrow{d}[swap]{\CaliAd^\sigma(g)} \arrow{rr}{\cdot (1, \eta_0)} & & \tilde{T}^\sigma_{Q,m} \arrow{d}{\CaliAd^\sigma(g)} \\
				\tilde{T}^\sigma_{Q,m} \arrow{r}[swap, inner sep=1em]{{\cdot \left(1, \eta_0 \right)}} & \tilde{T}^\sigma_{Q,m} \arrow{r}[swap, inner sep=1em]{\cdot \sgn_{K/F}(\nu) } & \tilde{T}^\sigma_{Q,m}
			\end{tikzcd}\]
			commutes, where $\sgn_{K/F}(\nu)$ is viewed as an element of $\mu_2 \subset \mu_m$.
	\end{enumerate}
\end{proposition}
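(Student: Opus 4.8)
The plan is to reduce the commutativity of each square to one scalar identity — the value of the calibration factor $\Cali_m(\nu,\eta_0)$ — and then to evaluate that factor directly from the recipe of Definition--Proposition \ref{def:Cali-factor}. First I would unwind the definitions. Since $\eta_0\in\Ker(\iota_{Q,m})$, multiplying $(\tilde\delta,\delta_0)\in\tilde T^\sigma_{Q,m}$ by $(1,\eta_0)$ leaves the first coordinate untouched and replaces $\delta_0$ by $\eta_0\delta_0$, while $\iota_{Q,m}(\eta_0\delta_0)=\iota_{Q,m}(\delta_0)$; in particular the decomposition $\tilde\delta=\widetilde{-1}\cdot\tilde\delta'$ with $\tilde\delta'\mapsto\iota_{Q,m}(\delta_0)$ used in the $\sigma=-$ formula of Definition--Proposition \ref{def:st-conj-SL2} is unaffected. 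Plugging $\eta_0\delta_0$ into the formula for $\CaliAd^\sigma(g)$ and using that $\Cali_m(\nu,\cdot)$ is multiplicative and that $\widetilde{-1}$ is central with $\Ad(g)(\widetilde{-1})=\widetilde{-1}$ (Proposition \ref{prop:BD-adjoint-action}), one finds that the top composite sends $(\tilde\delta,\delta_0)$ to $\CaliAd^\sigma(g)(\tilde\delta,\eta_0\delta_0)$, which equals the value of the bottom composite multiplied on the first coordinate by the central scalar $\Cali_m(\nu,\eta_0)\in\mu_m$. Hence case (1) reduces to the claim $\Cali_m(\nu,\eta_0)=1$, and case (2) to the claim $\Cali_m(\nu,-1)=\sgn_{K/F}(\nu)$ (this is exactly where the extra factor $\sgn_{K/F}(\nu)$ in the bottom row of the case (2) diagram originates); the $\sigma=-$ variants then follow verbatim because $\widetilde{-1}$ is central.

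For case (1), $T$ split means $K\simeq F\times F$ and, by Proposition \ref{prop:iota-kernel}, $\Ker(\iota_{Q,m})=\mu_{m/\gcd(2,m)}$ embedded via $z\mapsto(z,z^{-1})$; this is the assertion that $y_0\in\mu_{m/\gcd(2,m)}$, and here $\Ad(g)(\eta_0)=\eta_0$ since $\eta_0$ is central. Writing $\eta_0=\omega/\tau(\omega)$ with $\omega=(z,1)$ gives $N_{K/F}(\omega)=z$, hence $\Cali_m(\nu,\eta_0)=(z,\nu)_{F,\gcd(2,m)}$. If $m$ is odd this is trivial; if $m$ is even, then, because $m\mid N_F$, the group $\mu_m(F)$ is cyclic of order $m$ and $\mu_{m/2}(F)$ is generated by the square of a generator, so $z\in F^{\times 2}$ and $(z,\nu)_{F,2}=1$. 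This closes case (1).

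For case (2), $K$ is a field, and Proposition \ref{prop:iota-kernel} shows $\Ker(\iota_{Q,m})$ is trivial unless $4\mid m$, in which case it is $\{\pm1\}$; a nontrivial $\eta_0$ is therefore $-1$ with $4\mid m$ (and again $\Ad(g)(-1)=-1$). Writing $K=F(\sqrt D)$ and taking $\omega=\sqrt D$ one has $\omega/\tau(\omega)=-1$ and $N_{K/F}(\omega)=-D$, so $\Cali_m(\nu,-1)=(-D,\nu)_{F,2}=(-1,\nu)_{F,2}\,(D,\nu)_{F,2}$. Since $4\mid m\mid N_F$, the field $F$ contains a primitive fourth root of unity, so $-1\in F^{\times 2}$, whence $(-1,\nu)_{F,2}=1$ and $\Cali_m(\nu,-1)=(D,\nu)_{F,2}=\sgn_{K/F}(\nu)$, as required. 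The only substantive point in the whole argument is this observation that the relevant roots of unity are automatically squares in $F^\times$ under the standing hypotheses relating $m$ to $N_F$; the remaining obstacle is purely clerical — keeping track of the two coordinates of $\tilde T^\sigma_{Q,m}$ and of the $\sigma=\pm$ cases — and is handled exactly as in the proof of Proposition \ref{prop:CAd-prop}.
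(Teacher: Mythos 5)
Your argument correctly reduces the commutativity of the square (in both cases) to evaluating the calibration factor $\Cali_m(\nu,\eta_0)$, using the multiplicativity of $\Cali_m(\nu,\cdot)$ and the observation that passing from $\delta_0$ to $\eta_0\delta_0$ affects the first coordinate of $\CaliAd^\sigma(g)(\tilde\delta,\cdot)$ precisely by $\Cali_m(\nu,\eta_0)$. For case (2) your computation coincides with the paper's: take $\omega=\sqrt D$, so $N_{K/F}(\omega)=-D$, and then $(-D,\nu)_{F,2}=(D,\nu)_{F,2}=\sgn_{K/F}(\nu)$ because $4\mid m\mid N_F$ forces $-1\in F^{\times 2}$ — a reason the paper leaves implicit.

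For case (1) your route is genuinely different. The paper invokes $H^1(F,T)=0$ (split torus) to realize $\Ad(g)\colon T\to S$ by conjugation by some $h\in G(F)$, then appeals to property \textbf{AD.3} of Proposition \ref{prop:CAd-prop} to conclude $\CaliAd^\sigma(g)=\Ad(h)$, which yields both the commutativity and the stated explicit output formula in one stroke. You instead compute directly that $\Cali_m(\nu,\eta_0)=(z,\nu)_{F,\gcd(2,m)}=1$ because $z\in\mu_{m/\gcd(2,m)}(F)\subset F^{\times 2}$. This is clean and hands-on, and does suffice for commutativity of the square; but it does not by itself justify the extra clause of the statement that both composites land on $\bigl(\Ad(g)(\tilde\delta),\Ad(g)(\eta_0\delta_0)\bigr)$, since that requires knowing $\CaliAd^\sigma(g)$ collapses to ordinary conjugation, i.e.\ that $\Cali_m(\nu,\delta_0)=1$ for all $\delta_0$, which one only gets after switching to the factorization pair $(1,h)$ as in the paper. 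One more slip: in case (1) your side remark that $\Ad(g)(\eta_0)=\eta_0$ \emph{because $\eta_0$ is central} is wrong — $\eta_0$ corresponds to $\mathrm{diag}(z,z^{-1})$, which is not central in $\SL(2)$ unless $z=\pm1$; the bottom arrow of the diagram already accounts for this by writing $\Ad(g)\eta_0$, so nothing is needed here, but the false justification should be dropped.
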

\begin{proof}
	The relevant descriptions of $\Ker(\iota_{Q,m})$ are already in Proposition \ref{prop:iota-kernel}. When $T$ splits, $H^1(F,T)=0$ thus $\Ad(g): T \rightiso gTg^{-1}$ can be realized by ordinary conjugacy. The equalities follow from \textbf{AD.3} of Proposition \ref{prop:CAd-prop}.

	When $T$ is anisotropic, $4 \mid m$ and $\eta_0 = -1 \in Z_G(F)$, one may write $K = F(\sqrt{D})$ and take $\omega := \sqrt{D}$. From the definition of $\CaliAd^\sigma(g)$ we see
	\begin{align*}
		\CaliAd^\sigma(g)(\tilde{\delta}, -\delta_0) & = (N_{K/F}(\omega), \nu)_{F,2} \cdot (1, -1) \cdot \CaliAd^\sigma(\tilde{\delta}, \delta_0) \\
		& = (-D, \nu)_{F,2} \cdot (1, -1) \cdot \CaliAd^\sigma(\tilde{\delta}, \delta_0).
	\end{align*}
	It follows from $4 \mid m$ that $(-D, \nu)_{F,2} = (D, \nu)_{F,2} = \sgn_{K/F}(\nu)$.
\end{proof}

Now we switch back to $G$ of general rank.
\begin{corollary}\label{prop:st-conj-elements-canonical}
	When $4 \nmid m$, the Definition \ref{def:st-conj-elements} works on the level of $\tilde{G}_\mathrm{reg}$: one can define two good elements $\tilde{\delta}, \tilde{\eta} \in \tilde{G}_\mathrm{reg}$ to be stably conjugate if
	\begin{compactitem}
		\item there exists $g \in G(\bar{F})$ such that $g \delta g^{-1} = \eta$;
		\item $\CaliAd(g)(\tilde{\delta}, \delta_0) = (\tilde{\eta}, \Ad(g)(\delta_0))$, where $\delta_0 \in \iota_{Q,m}^{-1}(\delta)$.
	\end{compactitem}
	This notion is independent of the choice of $\delta_0$.
\end{corollary}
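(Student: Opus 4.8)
The plan is to show that the relation introduced in Definition \ref{def:st-conj-elements} does not depend on the auxiliary preimage $\delta_0$, so that for $4 \nmid m$ — where $\mathrm{Sgn}_m(T) = \{1\}^I$, hence the only admissible sign is $\sigma = (+,\dots,+)$ and $\tilde T^\sigma_{Q,m} = \tilde T_{Q,m}$ — that definition reads verbatim as the statement of the Corollary. Fix good regular $\tilde\delta, \tilde\eta \in \tilde G_{\mathrm{reg}}$ with images $\delta, \eta$, put $T := G_\delta$, $S := G_\eta$, and note that $\iota_{Q,m}^{-1}(\delta) \neq \emptyset$ by Proposition \ref{prop:good-T} since $\delta$ is good and $4 \nmid m$. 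Let $\delta_0, \delta_0' \in \iota_{Q,m}^{-1}(\delta)$; then $\delta_0' = \eta_0 \delta_0$ for a unique $\eta_0 \in \Ker(\iota_{Q,m})$. Fix any $g \in G(\bar F)$ with $g\delta g^{-1} = \eta$. It suffices to prove that $\CaliAd(g)$ intertwines the translations $\cdot\,(1,\eta_0)$ on $\tilde T_{Q,m}$ and $\cdot\,(1,\Ad(g)(\eta_0))$ on $\tilde S_{Q,m}$, i.e. $\CaliAd(g)(\tilde\delta, \eta_0\delta_0) = (1,\Ad(g)(\eta_0)) \cdot \CaliAd(g)(\tilde\delta, \delta_0)$: granting this, the equation $\CaliAd(g)(\tilde\delta,\delta_0) = (\tilde\eta, \Ad(g)(\delta_0))$ holds if and only if $\CaliAd(g)(\tilde\delta,\delta_0') = (\tilde\eta, \Ad(g)(\delta_0'))$, and since this equivalence is valid for every such $g$, the two relations defined via $\delta_0$ and via $\delta_0'$ coincide.

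For the intertwining statement I would use the reduction already present in Definition \ref{def:st-conj}. Choose a factorization pair $(g',g'')$ for $\Ad(g)$, so $\CaliAd(g) = \Ad(g'') \circ \CaliAd(g')$ with $g' \in G^T_{\mathrm{ad}}(F)$, $g'' \in G(F)$ and $T' := \Ad(g')T \subset G^T = G^{T'}$. The outer map $\Ad(g'')$ is literal conjugation by $g'' \in G(F)$ on $\tilde S_{Q,m}$ (the formula in \S\ref{sec:st-conj-BD}), hence carries $\cdot\,(1,x)$ to $\cdot\,(1,\Ad(g'')(x))$ for any $x$; thus it remains to treat $\CaliAd(g')$ on $\widetilde{G^T}$, where the relevant kernel element is $\eta_0' := \Ad(g')(\eta_0) \in \Ker(\iota_{Q,m}|_{T'})$ by the commutativity of \eqref{eqn:isogeny-Ad}. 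Decomposing through the contracted product \eqref{eqn:T-Q-m-contracted} into the factors $\widetilde{\SL}(2, K_i^\sharp)$ and writing $\eta_0' = (\eta_{0,i}')_{i \in I}$, the assertion reduces, component by component, to Proposition \ref{prop:dependence-on-delta_0}: for an index $i$ with $K_i$ split, part (1) of that Proposition is exactly the desired commuting square; for $i$ with $K_i$ a field, Proposition \ref{prop:iota-kernel} gives $\Ker(\iota_{Q_i,m}) = 1$ — here is where $4 \nmid m$ enters — so $\eta_{0,i}' = 1$ and there is nothing to check.

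I do not expect a genuine obstacle; the proof is a bookkeeping assembly of Proposition \ref{prop:dependence-on-delta_0}, Proposition \ref{prop:iota-kernel} and the $\SL(2)$-reduction built into Definition--Proposition \ref{def:st-conj-G-T} and Definition \ref{def:st-conj}. The only points needing care are (i) that $\Ad(g')(\eta_0)$ lands in $\Ker(\iota_{Q,m})$ over $T'$ and decomposes compatibly with \eqref{eqn:T-Q-m-contracted}, which is immediate from \eqref{eqn:isogeny-Ad}, and (ii) that $\Ad(g'')$ acts on the $T_{Q,m}$-slot of $\tilde S_{Q,m}$ by ordinary conjugation. It is worth flagging in the write-up why $4 \nmid m$ is essential: when $4 \mid m$, $\Ker(\iota_{Q,m})$ contains $-1$ even for anisotropic tori, and Proposition \ref{prop:dependence-on-delta_0}(2) shows that translating $\delta_0$ by $-1$ alters $\CaliAd(g')$ by the sign $\sgn_{K/F}(\nu(g'))$, which is generally nontrivial — precisely the reason one must carry $\delta_0$ (and allow the signs $\sigma \in \mathrm{Sgn}_m(T)$) in that case.
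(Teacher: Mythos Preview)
Your proof is correct and follows essentially the same route as the paper's: reduce to the $\SL(2)$ case via the factorization pair and the contracted-product decomposition, then invoke Proposition \ref{prop:dependence-on-delta_0}(1) for the split factors and Proposition \ref{prop:iota-kernel} (trivial kernel when $4 \nmid m$) for the anisotropic ones. The paper compresses the reduction into a single sentence and appeals only to Proposition \ref{prop:dependence-on-delta_0} (whose part (2) already encodes the anisotropic triviality), but your more explicit bookkeeping is the same argument unpacked.
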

\begin{proof}
	By construction this reduces to the $\SL(2)$ case. Using Proposition \ref{prop:dependence-on-delta_0}, we may assume $T$ split and infer that
	\[ \CaliAd(g)(\tilde{\delta}, \eta_0 \delta_0) = \left( \tilde{\eta}, \Ad(g)(\eta_0 \delta_0) \right) \]
	for all $\eta_0 \in \Ker(\iota_{Q,m})$, as required.
\end{proof}

\begin{definition}[Stable distributions]\label{def:stability} \index{stable distribution}
	Let $\Xi$ be a distribution on $\tilde{G}$ represented by a genuine, $G(F)$-invariant locally integrable function that is smooth over $\tilde{G}_{\mathrm{reg}}$. We say that $\Xi$ is \emph{stable} if the following requirements are met.
	\begin{itemize}
		\item When $4 \nmid m$, we require that $\Xi(\tilde{\delta}) = \Xi(\tilde{\eta})$ for any two stably conjugate good elements $\tilde{\delta}, \tilde{\eta} \in \tilde{G}_\mathrm{reg}$.
		\item When $4 \mid m$, consider any maximal torus $T \subset G$ and $\sigma \in \mathrm{Sgn}_m(T)$ (see \eqref{eqn:Sgn}). For every good $\tilde{\delta} \in \tilde{T}_{\mathrm{reg}}$, we require the existence of a $\delta_0 \in T_{Q,m}(F)$ such that $(\tilde{\delta}, \delta_0) \in \tilde{T}^\sigma_{Q,m}$ and
		\[ \CaliAd^\sigma(g)(\tilde{\delta}, \delta_0) = \left( \tilde{\eta}, \Ad(g)(\delta_0)\right) \implies \Xi(\tilde{\delta}) = \Xi(\tilde{\eta}) \]
		where $\Ad(g): \delta \mapsto \eta$ is a stable conjugacy in $G$.
	\end{itemize}
	Note that the $G(F)$-invariance forces $\Xi(\tilde{\delta}) = 0$ if $\tilde{\delta} \in \tilde{G}_{\mathrm{reg}}$ is not good.
\end{definition}
The definition for $4 \mid m$ might seem unnatural. Nonetheless, Proposition \ref{prop:dependence-on-delta_0} shows that even when $n=1$, the $\tilde{\eta}$ depends on how we choose $\delta_0$ to define $\CaliAd^\sigma(g)$ when $T$ is anisotropic.

\subsection{The unramified and global settings}\label{sec:nr-global}
First, we consider the unramified case by supposing that
\begin{compactitem}
	\item $F$ is a non-archimedean local field with residual characteristic $p \neq 2$;
	\item $G = \Sp(W)$ is endowed with a structure of smooth connected $\mathfrak{o}_F$-group, in particular $G(\mathfrak{o}_F)$ is a hyperspecial subgroup of $G(F)$;
	\item by the theory in \S\ref{sec:BD-classification}, $E_G \to G$ is also defined over $\mathfrak{o}_F$;
	\item $m \mid N_F$ is coprime to $p$.
\end{compactitem}

By \cite[10.7]{BD01} and the discussions in \S\ref{sec:torsors-generalities}, with $S := \Spec(\mathfrak{o}_F)$, the pull-back of $E_G(F)$ to $G(\mathfrak{o}_F)$ factors through the central extension $K_2(\mathfrak{o}_F) \hookrightarrow E_G(\mathfrak{o}_F) \twoheadrightarrow G(\mathfrak{o}_v)$. Standard results on tame symbols show that the composite $K_2(\mathfrak{o}_F) \to K_2(F) \to \mu_m$ is trivial. This trivializes the restriction of the BD-cover \eqref{eqn:G-BD-cext} to $G(\mathfrak{o}_F)$. We will view $K$ as a subgroup of $\tilde{G}$ in what follows.

Now put $K := G(\mathfrak{o}_F)$. Let $\delta \in K \cap G_\text{reg}(F)$ and $T := G_\delta$. We recall a result of Kottwitz.
\begin{theorem}[{\cite[Proposition 7.1]{Ko86}}]\label{prop:Kottwitz}
	Suppose that for every root $\alpha$ of $T \dtimes{F} \bar{F}$ in $G \dtimes{F} \bar{F}$, either $\alpha(\delta) = 1$ or $v(\alpha(\delta)-1)=0$. If $\eta \in G(\mathfrak{o}_F)$ is stably conjugate to $\delta$, then $\mathcal{T}(\delta, \eta)$ contains a point in $G(\mathfrak{o}_F)$; in particular $\eta$ is conjugate to $\delta$.
\end{theorem}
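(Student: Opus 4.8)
We recover the argument of Kottwitz. Write $\mathcal{G}$ for the given smooth connected $\mathfrak{o}_F$-model of $\Sp(W)$ (so that $\mathcal{G}(\mathfrak{o}_F) = G(\mathfrak{o}_F)$ is hyperspecial) and $\kappa_F$ for its residue field. The plan is to realize the stable conjugacy $\mathcal{T}(\delta,\eta)$, which is an $F$-torsor under $T := Z_G(\delta)$, as the generic fibre of a genuine torsor over $\mathfrak{o}_F$ under a maximal torus of $\mathcal{G}$, and then to kill that torsor by combining Lang's theorem over the finite field $\kappa_F$ with Hensel's lemma.

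The first step is to check that the reductions $\bar\delta, \bar\eta \in \mathcal{G}(\kappa_F)$ are again regular semisimple. Passing to a finite extension $F'/F$ splitting $T$, the eigenvalues $\lambda_1^{\pm 1}, \dots, \lambda_n^{\pm 1}$ of $\delta$ in the standard $2n$-dimensional representation lie in $\mathfrak{o}_{F'}^\times$, and the hypothesis $v(\alpha(\delta)-1)=0$ for every root $\alpha$ of $T$ says precisely that these $2n$ reduced eigenvalues are pairwise distinct; hence $\bar\delta$ is regular semisimple, and so is $\bar\eta$, which has the same characteristic polynomial since $\eta$ is a geometric conjugate of $\delta$. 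Consequently the scheme-theoretic centralizers $\mathcal{C} := Z_{\mathcal{G}}(\delta)$ and $\mathcal{C}' := Z_{\mathcal{G}}(\eta)$ are centralizers of fibrewise regular semisimple sections of the reductive $\mathfrak{o}_F$-group $\mathcal{G}$, hence are maximal $\mathfrak{o}_F$-tori of $\mathcal{G}$; their generic fibres are $T$ and $S := Z_G(\eta)$, and their special fibres are $Z_{\mathcal{G}_{\kappa_F}}(\bar\delta)$ and $Z_{\mathcal{G}_{\kappa_F}}(\bar\eta)$. For the smoothness of such centralizers one invokes SGA~3, or Conrad's notes on reductive group schemes.

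Next I would form the transporter $\mathcal{I} := \mathrm{Transp}_{\mathcal{G}}(\delta,\eta) = \{\, g \in \mathcal{G} : g\delta = \eta g \,\}$, a closed $\mathfrak{o}_F$-subscheme of $\mathcal{G}$ carrying commuting free actions of $\mathcal{C}$ (on the right) and $\mathcal{C}'$ (on the left), transitive whenever non-empty. Its generic fibre is the non-empty $T$-torsor $\mathcal{T}(\delta,\eta)$ (non-empty as a scheme because $\delta \stackrel{\text{st}}{\sim} \eta$), and its special fibre is non-empty over $\bar\kappa_F$ since $\bar\delta$ and $\bar\eta$ are regular semisimple with equal characteristic polynomial, hence conjugate over $\bar\kappa_F$. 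One then promotes $\mathcal{I}$ to an honest $\mathcal{C}$-torsor over $\mathfrak{o}_F$: the orbit map $\mathcal{G} \to \mathcal{G}$, $g \mapsto g\delta g^{-1}$, factors through a smooth $\mathcal{C}$-torsor $\mathcal{G} \to \mathcal{O}_\delta$ onto the conjugacy orbit $\mathcal{O}_\delta \cong \mathcal{G}/\mathcal{C}$, which a fibrewise argument shows to be a locally closed $\mathfrak{o}_F$-subscheme of $\mathcal{G}$; the section $\eta\colon \Spec \mathfrak{o}_F \to \mathcal{G}$ lands in $\mathcal{O}_\delta$ because it does so on both fibres; and $\mathcal{I}$ is the pullback of $\mathcal{G} \to \mathcal{O}_\delta$ along $\eta$. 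In particular $\mathcal{I} \to \Spec \mathfrak{o}_F$ is smooth.

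Finally, the special fibre $\mathcal{I}_{\kappa_F}$ is a torsor under the torus $\mathcal{C}_{\kappa_F}$ over the finite field $\kappa_F$; by Lang's theorem $H^1(\kappa_F, \mathcal{C}_{\kappa_F}) = 1$, so $\mathcal{I}(\kappa_F) \neq \emptyset$. Since $\mathcal{I} \to \Spec \mathfrak{o}_F$ is smooth and $\mathfrak{o}_F$ is henselian, the reduction map $\mathcal{I}(\mathfrak{o}_F) \to \mathcal{I}(\kappa_F)$ is surjective, whence $\mathcal{I}(\mathfrak{o}_F) \neq \emptyset$: there is $g \in \mathcal{G}(\mathfrak{o}_F) = G(\mathfrak{o}_F)$ with $g\delta g^{-1} = \eta$, which is the desired $\mathfrak{o}_F$-point of $\mathcal{T}(\delta,\eta)$; in particular $\eta$ is conjugate to $\delta$. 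The step deserving the most care is the promotion of $\mathcal{I}$ to a torsor over $\mathfrak{o}_F$ — equivalently, the flatness of $\mathcal{I}$ over $\mathfrak{o}_F$: the orbit-map description above is the clean route, but it rests on knowing that the orbit $\mathcal{O}_\delta$ is a smooth subscheme through which the section $\eta$ factors integrally, and this is exactly where the regular semisimplicity of both reductions $\bar\delta$ and $\bar\eta$ is indispensable.
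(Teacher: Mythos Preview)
The paper does not prove this statement at all: it is quoted as \cite[Proposition 7.1]{Ko86} and used as a black box, with no argument supplied. Your write-up is a reasonable reconstruction of Kottwitz's original proof (regular semisimplicity of the reductions, integral transporter as a torsor under an $\mathfrak{o}_F$-maximal torus, Lang's theorem on the special fibre, then Hensel), so there is nothing in the paper to compare it with.

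One small point worth tightening: the hypothesis as stated allows $\alpha(\delta)=1$ for some roots, and in that generality $\bar\delta$ need not be regular semisimple; your eigenvalue computation silently assumes $\alpha(\delta)\neq 1$ for all $\alpha$. In the paper's context this is harmless since $\delta \in G_{\mathrm{reg}}(F)$ has been fixed just above the statement, but if you want the argument to match the theorem as written you should either note this or work with the (possibly non-toral) smooth centralizer scheme of $\delta$ instead.
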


\begin{proposition}\label{prop:CAd-unramified}
	In the circumstance above, write $\Ad(g): T \rightiso S := G_\eta$ for the corresponding stable conjugacy between maximal tori, and suppose that
	\[ (\delta, \delta_0) \in \tilde{T}^\sigma_{Q,m}, \quad \sigma \in \mathrm{Sgn}_m(T). \]
	Then we can take $g \in G(\mathfrak{o}_F) \cap \mathcal{T}(\delta, \eta)(F)$ and $\CaliAd^\sigma(g)(\delta, \delta_0) = (\Ad(g) \delta, \Ad(g)\delta_0)$, i.e. the usual conjugacy.
\end{proposition}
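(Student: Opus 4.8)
The proof will combine Kottwitz's integrality theorem with property \textbf{AD.3}. First I would record that we are in the scope of Theorem~\ref{prop:Kottwitz}: by hypothesis $\delta \in K = G(\mathfrak{o}_F)$ is regular semisimple, $\eta$ is a stable conjugate lying in $G(\mathfrak{o}_F)$, and the root-integrality condition on $\delta$ is part of the standing circumstance. Theorem~\ref{prop:Kottwitz} then furnishes a point $g \in \mathcal{T}(\delta, \eta)(F) \cap G(\mathfrak{o}_F)$; in particular $\eta$ and $\delta$ are already conjugate under $G(\mathfrak{o}_F) \subset G(F)$. This settles the first assertion of the Proposition (existence of an integral $g$).

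For the second assertion, I would invoke two facts from \S\ref{sec:st-conj-BD}. On the one hand, by the theorem following Definition~\ref{def:st-conj}, the map $\CaliAd^\sigma(g)$ depends only on the induced isomorphism of tori $\Ad(g) : T \rightiso S$, not on the particular representative $g \in \mathcal{T}(\delta, \eta)(\bar F)$ realizing it; so we are free to use the integral $g$ produced above. On the other hand, since this $g$ lies in $G(F)$, property \textbf{AD.3} of Proposition~\ref{prop:CAd-prop} (carried over to $G$ of arbitrary rank) identifies $\CaliAd^\sigma(g)$ with the ordinary adjoint action $\Ad(g)$ of $g$ on the cover, namely conjugation by any preimage $\tilde g \in \tilde G$: one has $\CaliAd^\sigma(g)(\tilde t, t_0) = \left(\Ad(g)(\tilde t),\, \Ad(g)(t_0)\right)$ for $(\tilde t, t_0) \in \tilde{T}^\sigma_{Q,m}$, the action on the $T_{Q,m}$-slot being legitimate by the equivariance \eqref{eqn:isogeny-Ad}.

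It then remains to compute this adjoint action on $(\delta, \delta_0)$, where $\delta$ is regarded as an element of $\tilde G$ via the canonical splitting $K \hookrightarrow \tilde G$. Since $g \in G(\mathfrak{o}_F)$, I would take the preimage $\tilde g$ of $g$ to be its canonical lift in $K$; then $\Ad(\tilde g)(\delta) = \tilde g \delta \tilde g^{-1}$ again lies in the subgroup $K$ and projects to $g\delta g^{-1} = \eta$, hence equals the canonical lift of $\eta$, i.e. $\Ad(g)(\delta) = \Ad(g)\delta$. Likewise $\Ad(g)(\delta_0) = g\delta_0 g^{-1}$ in $S_{Q,m}(F)$. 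Putting this together gives $\CaliAd^\sigma(g)(\delta, \delta_0) = (\Ad(g)\delta,\, \Ad(g)\delta_0)$, i.e. the usual conjugacy, as claimed. The only points requiring (routine) care are verifying that the standing unramified hypotheses really place us inside the scope of Theorem~\ref{prop:Kottwitz} and that conjugation in $\tilde G$ by $\tilde g \in K$ stabilizes $K$; the latter is immediate as $K$ is a subgroup containing $\tilde g$, so I do not expect any substantial obstacle beyond the appeal to Kottwitz.
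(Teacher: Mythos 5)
Your proof is correct and follows the same two-step route as the paper's: invoke Theorem~\ref{prop:Kottwitz} to produce an integral representative $g \in G(\mathfrak{o}_F) \cap \mathcal{T}(\delta,\eta)(F)$, then apply \textbf{AD.3} of Proposition~\ref{prop:CAd-prop} to conclude that $\CaliAd^\sigma(g)$ is ordinary conjugation. The extra details you supply (independence of the representative $g$, compatibility of conjugation by $\tilde g \in K$ with the canonical splitting $K \hookrightarrow \tilde G$, and the $T_{Q,m}$-slot via \eqref{eqn:isogeny-Ad}) are correct and simply unpack what the paper leaves implicit.
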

\begin{proof}
	One may take $g \in G(\mathfrak{o}_F) \cap \mathcal{T}(\delta, \eta)(F)$ by Theorem \ref{prop:Kottwitz}. It remains to apply \textbf{AD.3} in Proposition \ref{prop:CAd-prop}.
\end{proof}

Next, let $F$ be a global field with characteristic $\neq 2$ and consider Matsumoto's $E_G \twoheadrightarrow G$ over $F$. Fix $m \mid N_F$ and take a large finite subset $S$ of places of $F$, verifying
\begin{itemize}
	\item $S \supset \{v: v \mid \infty \} \sqcup \{v: v \nmid \infty \; \wedge \; \text{res.char}(F_v) \nmid m \}$;
	\item $G$ is defined over the ring $\mathfrak{o}_S$ of $S$-integers as a connected smooth group scheme, and $E_G \to G$ is defined over $\mathfrak{o}_S$ as well;
	\item the earlier conditions in the unramified case hold at every $v \notin S$.
\end{itemize}

At each place $v$, we construct the BD-cover $\mu_m \hookrightarrow \tilde{G}_v \twoheadrightarrow G(F_v)$, except that for complex places we set $\tilde{G}_v = G(F_v)$. Following \cite[10.4]{BD01}, the adélic BD-cover $\tilde{G}$ is the $\varinjlim_V$ of
\begin{equation}\label{eqn:adelic-BD-cover}\begin{aligned}
	\tilde{G}_V  & := \prod_{v \in V} \tilde{G}_v \big/ \mathbf{N}_V \xrightarrow{\bm{p}_V} \prod_{v \in V} G(F_v), \\
	\mathbf{N}_V & := \left\{ (\noyau_v)_v \in \prod_{\substack{v \in V \\ \text{non-complex}}} \mu_m : \prod_v \noyau_v = 1 \right\}.
\end{aligned}\end{equation}
over finite subsets of places $V \supsetneq S$, i.e. the $\varinjlim_V$ of the contracted product $\bm{p}_V$ of the local BD-covers $\tilde{G}_v \xrightarrow{\bm{p}_v} G(F_v)$. Using the aforementioned section $G(\mathfrak{o}_v) \hookrightarrow \tilde{G}_v$, the transition map for $V' \supset V$ is
\[ \tilde{G}_V \hookrightarrow \tilde{G}_V \times \prod_{v \in V' \smallsetminus V} G(\mathfrak{o}_v) \subset \tilde{G}_{V'}. \]

By \cite[10.4.3]{BD01}, we obtain a central extension of locally compact groups equipped with a section $s$ over $G(F)$
\[\begin{tikzcd}
	1 \arrow{r} & \mu_m \arrow{r} & \tilde{G} \arrow{r}{\bm{p}} & G(\mathbb{A}_F) \arrow{r} & 1 \\
	& & & G(F) \arrow[hookrightarrow]{u} \arrow[hookrightarrow]{lu}{\exists s} &
\end{tikzcd}\]

\begin{remark}\label{rem:adelic-Weil-restriction}
	The same construction applies to all multiplicative $\shK_2$-torsors over a reductive $F$-group, but the $s$ here is unique since $G(F)$ equals its own commutator subgroup.
	
	Furthermore, the formation of adélic BD-covers is compatible with Weil restriction by Proposition \ref{prop:restriction-commutes}: if $E_G \to G$ is over a separable extension $L/F$, then the adélic BD-cover obtained from to $R_{L/K}(E_G) \to R_{L/K}(G)$ is the same as the one from $E_G \to G$.
\end{remark}

\begin{definition}\index{good element}
	Call an element $(\delta_v)_v$ of $G(\mathbb{A}_F)$ \emph{good} in $\tilde{G}$ if $\delta_v$ is good in $\tilde{G}_v$ for all $v$. Call $\delta \in G(F)$ \emph{good} in $\tilde{G}$ if $s(\delta) \in \tilde{G}$ is.
\end{definition}

We consider only the case of $\delta \in G_\text{reg}(F)$. The local classification of good elements in Proposition \ref{prop:good-T} can be adapted to the present setting. Notice that the regular semisimple classes and maximal tori in $G$ can still be parameterized by étale $F$-algebras with involution, together with other data. The construction $T \leadsto G^T \subset G$ (Definition \ref{def:G-T}) also works here.

For any closed subvariety $H \subset G$, denote $\tilde{H} := \bm{p}^{-1}(H(\A_F)) \subset \tilde{G}$.

\begin{proposition}\label{prop:good-T-global}
	Let $T \subset G$ be a maximal $F$-torus parameterized by $(K, K^\sharp, \ldots)$, $K = \prod_{i \in I} K_i$ as usual. An element $\delta \in T(F)$ has central preimages in $\tilde{T}$ if and only if
	\[ \delta \in \prod_{i \in I} \{\pm 1 \} \cdot \Image\left[ \iota_{Q_i,m}: K_i^1 \to K_i^1 \right]. \]
	In particular, $\delta \in T_{\mathrm{reg}}(F)$ is good if and only if the property above holds.
\end{proposition}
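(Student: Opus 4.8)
The plan is to reduce the question to local goodness at every place of $F$---which is answered by Proposition \ref{prop:good-T}---and then to recombine the family of local conditions into the single global assertion by a Hasse principle, for which the hypothesis $m \mid N_F$ is exactly what is needed.

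First I would localize. For $\delta \in T(F)$ all preimages in $\tilde{T} = \bm{p}^{-1}(T(\A_F))$ differ by the central $\mu_m$, so they are central in $\tilde{T}$ iff the section value $s(\delta)$ commutes with $\bm{p}^{-1}(\eta)$ for every $\eta \in T(\A_F)$; since $\delta$ and $\eta$ already commute in the torus $T$, this is the vanishing of the commutator pairing $[\delta,\eta]$ of the adélic cover. By the construction \eqref{eqn:adelic-BD-cover} of $\tilde{G}$ as a filtered union of contracted products of the $\tilde{G}_v$, together with the splitting of $\tilde{G}_v$ over $G(\mathfrak{o}_v)$ for $v \notin S$, this pairing is the finite product $\prod_v [\cdot,\cdot]_v$ of the local commutator pairings $T(F_v)\times T(F_v) \to \mu_m$. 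Pairing $\delta$ against adèles trivial away from one place, one concludes that the preimages of $\delta$ in $\tilde{T}$ are central exactly when, for every $v$, $\delta_v$ lies in the projection of $Z_{\tilde{T}_v}$ to $T(F_v)$ (complex places imposing nothing). Proposition \ref{prop:good-T}, applied to $T_v \subset \Sp(W)_{F_v}$ with the base-changed parameters $(K,K^\sharp,\ldots)$, identifies that projection with $\prod_{i\in I}\{\pm 1\}\cdot\Image[\iota_{Q_i,m}\colon K_i^1\to K_i^1]$ over $F_v$.

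It then remains to show that $\delta\in T(F)$ lies in $\prod_{i\in I}\{\pm 1\}\cdot\Image(\iota_{Q_i,m})$ iff it does so in every completion. Here I would use the surjective homomorphism of $F$-group schemes $j\colon \{\pm 1\}^I\times T \to T$, $(\sigma,t)\mapsto \sigma\cdot\iota_{Q,m}(t)$, recalling from \eqref{eqn:isogeny-Sp} that $\iota_{Q,m}$ is the $m_0$-power map with $m_0 := m/\gcd(2,m)$: its image on $F$-points is precisely the subgroup in question (a subgroup, cf.\ Proposition \ref{prop:iota-kernel}), and the connecting map sends $\delta$ to a class $\partial(\delta)\in H^1(F,\Ker j)$ vanishing exactly when $\delta$ lies in it---and likewise over every $F_v$. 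So the equivalence to be proved is the vanishing of the Tate--Shafarevich group $\mathrm{Sha}^1(F,\Ker j)$. Via $T = \prod_i R_{K_i^\sharp/F}(K_i^1)$ and Shapiro's lemma, this factors as $\prod_i \mathrm{Sha}^1(K_i^\sharp,\Ker j_i)$ for the corresponding finite group schemes over the global fields $K_i^\sharp$.

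Finally I would verify that each $\mathrm{Sha}^1(K_i^\sharp,\Ker j_i)$ is trivial. According to the cases of Proposition \ref{prop:iota-kernel} the group schemes $\Ker j_i$ are built out of $\mu_{m_0}$, its norm-one twist $R^1_{K_i/K_i^\sharp}(\mu_{m_0})$, and $\{\pm 1\}$, all killed by $2m_0 \mid 2m$; and since $m\mid N_F$ forces $\mu_m \subseteq K_i^\sharp \subseteq K_i$, the needed roots of unity are at hand. Over a function field $\mathrm{Sha}^1$ of such groups is automatically trivial; over a number field $K_i^\sharp$, the only danger is the special case of Grunwald--Wang, which requires $8$ to divide the relevant exponent---but that forces $8\mid m$, hence $\zeta_8\in K_i^\sharp$, which excludes it. Thus $\mathrm{Sha}^1(K_i^\sharp,\mu_{m_0}) = \mathrm{Sha}^1(K_i,\mu_{m_0}) = \mathrm{Sha}^1(K_i^\sharp,\{\pm 1\}) = 0$ ($K_i$ being a field in the anisotropic case), and the norm-one twist is dispatched through the exact sequence $1\to R^1_{K_i/K_i^\sharp}(\mu_{m_0})\to R_{K_i/K_i^\sharp}(\mu_{m_0})\xrightarrow{N}\mu_{m_0}\to 1$ together with the existence of a place of $K_i^\sharp$ inert in $K_i$. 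I expect this last step to be the main obstacle: pinning down the precise $K_i^\sharp$-group-scheme structure of $\Ker j_i$ in the anisotropic case with $4\mid m$, where the $\{\pm 1\}$-factor makes it a non-constant extension, and confirming that $m\mid N_F$ really keeps us clear of the Grunwald--Wang special case. Granting the vanishing, the equivalence follows, and the final clause is immediate: for $\delta\in T_{\mathrm{reg}}(F)$ one has $Z_G(\delta) = T$, so $\delta$ is good precisely when its preimages in $\tilde{T}$ are central.
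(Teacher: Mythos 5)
Your overall strategy matches the paper's: localize the centrality condition place by place via the contracted-product description of the adélic cover, then resolve the resulting local--global question for tori after reducing to the rank-one case over the fields $K_i^\sharp$ (you via Shapiro's lemma on the Shafarevich group, the paper via Theorem~\ref{prop:G-T-reduction} together with Remark~\ref{rem:adelic-Weil-restriction}). The localization and Shapiro steps are sound. Where the proposal diverges, and where your self-flagged obstacle lives, is in the shape of the kernel you feed into the Hasse principle. You build it around $j\colon\{\pm 1\}^I\times T\to T$, $(\sigma,t)\mapsto\sigma\cdot\iota_{Q,m}(t)$, whose scheme-theoretic kernel in rank one is the twisted $2m_0$-torsion $K^1[2m_0]$. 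Be careful here: Proposition~\ref{prop:iota-kernel} records only $F$-rational points of $\Ker(\iota_{Q,m})$ (hence $\{\pm 1\}$ or trivial in the anisotropic case), not the finite étale group scheme, which is a non-constant twist of $\mu_{m_0}$ even when its rational points vanish; so reading off the ``building blocks'' of $\Ker j_i$ from that proposition is not quite right.

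The paper's key move, which closes the gap you name, is an elementary reformulation that makes the $\{\pm 1\}$-factor disappear: since $\{\pm 1\}$ is exactly the $2$-torsion of $K^1$, the condition $x\in\{\pm 1\}\cdot(K^1)^{m_0}$ is equivalent to $x^2\in(K^1)^m$ when $m$ is even, and to $x\in(K^1)^m$ when $m$ is odd (where $-1=(-1)^m$ is already an $m$-th power). This recasts both Theorem~\ref{prop:good-SL2} locally and the global assertion in terms of the bare $m$-power map, whose kernel is $Z:=\Ker[K^1\xrightarrow{m}K^1]$, and the required vanishing of $\mathrm{Sha}^1(F,Z)$ is then quoted: Grunwald--Wang in the split case (using $m\mid N_F$), and \cite[Proposition 2.1]{Mi87T} in the field case, a ready-made statement that also holds over function fields and dispenses with any ad hoc $8$-divisibility gymnastics. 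Note that for $m$ even your $\Ker j_i$ coincides with this $Z$ (because $2m_0=m$), so the cited result would finish your argument there; for $m$ odd you carry a redundant $\mu_2$-factor which the reformulation absorbs. So the approach is the same, but the paper's reformulation plus the reference to \cite{Mi87T} are exactly the two ingredients you were missing.
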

\begin{proof}
	By Theorem \ref{prop:G-T-reduction} applied to each place $v$, together with Remark \ref{rem:adelic-Weil-restriction}, this reduces immediately to the case $\dim_F K = 2$ and $K^\sharp = F$. Let $x \in K^1$ be an element corresponding to the class of $\delta$, and choose any $\tilde{\delta} \in \bm{p}^{-1}(\delta)$.
	\begin{itemize}
		\item If $m \notin 2\Z$, then $-1$ clearly belongs to $\Image(\iota_{Q,m})$. The goal is to show $\tilde{\delta} \in Z_{\tilde{T}} \iff x \in (K^1)^m$.
		\item If $m \in 2\Z$, the goal is to show $\tilde{\delta} \in Z_{\tilde{T}} \iff x^2 \in (K^1)^m$, as the $2$-torsion subgroup of $K^1$ is clearly $\{\pm 1\}$.
	\end{itemize}
	The same recipe applies to each non-complex place $v$. Set $K_v := K \otimes_F F_v$. By Theorem \ref{prop:good-SL2}, we see
	\[ \tilde{\delta} \in Z_{\tilde{T}} \iff \begin{cases}
		\forall v, \; x_v \in (K^1_v)^m, & m \notin 2\Z \\
		\forall v, \; (x^2)_v \in (K^1_v)^m, &  m \in 2\Z
	\end{cases} \]
	where $v$ ranges over the non-complex places; this is immaterial since the conditions are clearly satisfied when $F_v = \CC$. Hence we are reduced to show that $y \in K^1$ is an $m$-th power in $K^1$ if and only if it is so locally everywhere.
	
	Write $Z := \Ker[K^1 \xrightarrow{m} K^1]$ and set $\Ker^1(F, Z) := \Ker\left[ H^1(F, Z) \to \prod_v H^1(F_v, Z)\right]$. The required local-global principle amounts to $\Ker^1(F,Z) = 0$. When $K \simeq F \times F$, we have $K^1 \simeq F^\times$, $Z \simeq \mu_m$ and this is covered by the Grunwald--Wang theorem since $m \mid N_F$. When $K$ is a field, the vanishing of $\Ker^1(F,Z)$ has been shown in \cite[Proposition 2.1]{Mi87T} (taking $S=\emptyset$, $2^s \| m$), which also works over function fields.
\end{proof}

Next, recall that $\tilde{T}$ denotes the preimage of $T(\mathbb{A}_F)$ in $\tilde{G}$. Given $\sigma \in \text{Sgn}_m(T)$, define
\begin{align*}
	T^\sigma_{Q,m} & := \left\{ (t, t_0) \in T \times T_{Q,m}: t = \sigma \cdot \iota_{Q,m}(t_0) \right\} \quad \text{(fibered product of varieties)}, \\
	\tilde{T}^\sigma_{Q,m} & := \left\{ (\tilde{t}, t_0) \in \tilde{T} \times T_{Q,m}(\mathbb{A}_F): \bm{p}(\tilde{t}) = \sigma \cdot \iota_{Q,m}(t_0) \right\}.
\end{align*}
Here the action of $\sigma$, etc. are defined in the same manner as in \S\ref{sec:st-conj-BD}. As in \eqref{eqn:adelic-BD-cover}, observe that
\[ \tilde{T}^\sigma_{Q,m} = \varinjlim_V \left( \prod_{v \in V} \tilde{T}^\sigma_{Q,m,v} \big/ \mathbf{N}_V \right), \]
the transition maps are again defined using integral models of BD-covers off $V$, for $V$ sufficiently large. Using the section $s$, one embeds $T^\sigma_{Q,m}(F)$ into $\tilde{T}^\sigma_{Q,m}$.

Given Proposition \ref{prop:good-T-global}, it is natural to study the effect of adélic stable conjugacy on $\tilde{T}^\sigma_{Q,m}$ and $T^\sigma_{Q,m}(F)$. This is based on the local avatars $\CaliAd^\sigma(g)_v$ at each place $v$; it reduces to the usual one when $F_v = \CC$.

\begin{theorem}\label{prop:adelic-conj}
	Let $\Ad(g): T \rightiso S$ be stable conjugacy between maximal $F$-tori of $G$. Then $\CaliAd^\sigma(g) := \prod_v \CaliAd^\sigma(g)_v$ defines a map $\tilde{T}^\sigma_{Q,m} \to \tilde{S}^\sigma_{Q,m}$. It satisfies the properties enunciated in Proposition \ref{prop:CAd-prop}, and restricts to a map $T^\sigma_{Q,m}(F) \to S^\sigma_{Q,m}(F)$; when $g \in G^T_\mathrm{ad}(F)$, this restriction comes from the usual $\Ad(g)$.
\end{theorem}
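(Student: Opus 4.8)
The plan is to reduce the adélic statement to the local constructions by choosing a factorization pair for $\Ad(g)$ that is already defined over $F$, then to control the local maps at almost all places using Kottwitz's theorem together with the tameness of the calibration symbols, and finally to get the coincidence with $\Ad(g)$ on $F$-points from Hilbert reciprocity. First I would produce a factorization pair $(g',g'')$ for $\Ad(g)$ with $g'\in G^T_\mathrm{ad}(F)$ and $g''\in G(F)$: pick $\delta\in T_\mathrm{reg}(F)$, put $\eta:=\Ad(g)(\delta)$, and use Proposition~\ref{prop:stable-reduction-SL2} to find $g'\in G^T_\mathrm{ad}(F)$ with $\mathrm{inv}(\delta,g'\delta g'^{-1})=\mathrm{inv}(\delta,\eta)$; then $g'\delta g'^{-1}$ and $\eta$ are $G(F)$-conjugate, say $\eta=g''(g'\delta g'^{-1})g''^{-1}$ with $g''\in G(F)$. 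At every place $v$ this remains a factorization pair in the sense of Definition~\ref{def:st-conj}, so $\CaliAd^\sigma(g)_v=\Ad(g'')_v\circ\CaliAd^\sigma(g')_v$. Since $G(F)$ is perfect, $s:G(F)\hookrightarrow\tilde{G}$ is the unique splitting; conjugation by $s(g'')$ lies in some $\tilde{G}_{V'}$ and fixes the integral parts $\prod_{v\notin V'}G(\mathfrak{o}_v)$ entering the transition maps, so it descends to a well-defined map $\tilde{T}'^\sigma_{Q,m}\to\tilde{S}^\sigma_{Q,m}$ of adélic objects. Thus the only delicate point is the well-definedness of $\CaliAd^\sigma(g')=\prod_v\CaliAd^\sigma(g')_v$.

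For that I would show $\CaliAd^\sigma(g')_v$ preserves the integral part of $\tilde{T}^\sigma_{Q,m,v}$ for all $v$ outside a finite set $V$, enlarging $V$ so that $v\notin V$ falls under the unramified setting of \S\ref{sec:nr-global}, the tori $T,S$ and the algebras $K_i^\sharp$ are unramified at $v$, $\nu(g')$ is a unit at $v$, and $\mathrm{inv}(\delta,\eta)$ is locally trivial at $v$ with the Kottwitz condition of Theorem~\ref{prop:Kottwitz} holding for $\delta$. For such $v$, Kottwitz's theorem applied inside $G^T$ gives $h_v\in G^T(\mathfrak{o}_v)$ with $\Ad(h_v)=\Ad(g')_v$ on $T_v$, and by Proposition~\ref{prop:lifting-uniqueness} the lift of this action to $\widetilde{G^T}$ furnished by Proposition~\ref{prop:BD-adjoint-action} coincides with conjugation by the integral section $s_v(h_v)$. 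On the integral part one may take the auxiliary element $\omega$ of Definition--Proposition~\ref{def:Cali-factor} to be a unit (units are cohomologically trivial for the unramified $K_i/K_i^\sharp$), so the calibration factor is a Hilbert symbol of units of degree $\mathrm{gcd}(2,m)$ prime to the residue characteristic, hence trivial; therefore $\CaliAd^\sigma(g')_v$ restricted to the integral part is $s_v\circ\Ad(h_v)$ and lands in the integral part of $\tilde{S}^\sigma_{Q,m,v}$. Hence $\prod_v\CaliAd^\sigma(g)_v$ is compatible with the transition maps and defines a map $\tilde{T}^\sigma_{Q,m}\to\tilde{S}^\sigma_{Q,m}$. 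Properties \textbf{AD.1---4} are then inherited place by place from Proposition~\ref{prop:CAd-prop}: \textbf{AD.1}, \textbf{AD.2}, \textbf{AD.4} are formal consequences of the local statements and of the description of the adélic objects as inductive limits, while for \textbf{AD.3} one uses the factorization $(1,g)$ when $g\in G(F)$ and the local \textbf{AD.3} at each place when $g\in T(\bar{F})$.

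For the restriction to $F$-points, $\Ad(s(g''))$ sends $s(T'^\sigma_{Q,m}(F))$ into $s(S^\sigma_{Q,m}(F))$ because $s$ is a group homomorphism on $G(F)$. For $\CaliAd^\sigma(g')$ acting on $s(T^\sigma_{Q,m}(F))$, the adjoint part preserves the rational splitting by uniqueness of the splitting of $\widetilde{G^T}$ over the perfect group $G^T(F)$, and for a rational $t_0\in T_{Q,m}(F)$, with components $t_{0,i}=\omega_i/\tau(\omega_i)$, the full product over all places of $F$ of the calibration factors equals $\prod_i\prod_{w}(N_{K_i/K_i^\sharp}(\omega_i),\nu(g'_i))_{(K_i^\sharp)_w,\,\mathrm{gcd}(2,m)}=\prod_i 1=1$ by the Hilbert reciprocity formula over each global field $K_i^\sharp\supseteq\mu_{\mathrm{gcd}(2,m)}$. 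Thus $\CaliAd^\sigma(g')(s(t),t_0)=(s(\Ad(g')(t)),\Ad(g')(t_0))$, so $\CaliAd^\sigma(g)$ maps $T^\sigma_{Q,m}(F)$ to $S^\sigma_{Q,m}(F)$; in particular, when $g''=1$, i.e. $g\in G^T_\mathrm{ad}(F)$, this restriction is exactly the usual $\Ad(g)$. I expect the main obstacle to be the second step — verifying compatibility of the local maps with the ind-limit structure at almost all places — since it requires handling simultaneously the unramifiedness of the tori, the Kottwitz condition for the chosen $\delta$, and the vanishing of the calibration symbol on the integral part.
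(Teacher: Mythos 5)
Your proof is correct and uses essentially the same ingredients as the paper's — Kottwitz's theorem for almost all places, a factorization pair $(g',g'')$ over $F$, Hilbert reciprocity for the calibration factors, and uniqueness of the canonical section over the perfect group of rational points. The one organizational difference is that the paper establishes well-definedness of $\prod_v\CaliAd^\sigma(g)_v$ directly from Proposition~\ref{prop:CAd-unramified} (Kottwitz plus \textbf{AD.3}) without first factorizing, which lets it avoid the separate analysis of calibration symbols on the integral part; your version reaches the same conclusion by factorizing first and then arguing inside $G^T$ at unramified places. Both are valid; the paper's route is slightly leaner for the well-definedness step, while yours makes the unramified triviality of $\Cali_m(\cdot,\cdot)$ more visible.

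One small point to tighten: when you say "the lift of this action to $\widetilde{G^T}$ furnished by Proposition~\ref{prop:BD-adjoint-action} coincides with conjugation by the integral section $s_v(h_v)$," note that $\Ad(h_v)$ and $\Ad(g')_v$ need only agree as maps $T_v\to S_v$, not as automorphisms of $G^T_v$, so Proposition~\ref{prop:lifting-uniqueness} is not directly what you want; what you really use is that $\CaliAd^\sigma$ depends only on the isomorphism $T\to S$ (not on the realizing element) together with \textbf{AD.3}, exactly as in Proposition~\ref{prop:CAd-unramified}.
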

\begin{proof}
	Pick $\delta \in T_\text{reg}(F)$ and $\eta = \Ad(g)\delta \in S_\text{reg}(F)$. There is a large finite set $S$ of places such that for all $v \notin S$, the Theorem \ref{prop:Kottwitz} applies to $\Ad(g): \delta_v \mapsto \eta_v$. At such places, Proposition \ref{prop:CAd-unramified} implies that $\CaliAd^\sigma(g)$ reduces to ordinary conjugacy by $G(\mathfrak{o}_v)$. Together with \textbf{AD.1} of Proposition \ref{prop:CAd-prop}, this implies $\prod_v \CaliAd^\sigma(g)_v$ is well-defined. The properties in Proposition \ref{prop:CAd-prop} are inherited from $\CaliAd^\sigma(g)_v$, for all non-complex place $v$.

	Now move to the restriction of $\CaliAd^\sigma(g)$ to $T^\sigma_{Q,m}(F)$. As in the local setting, we may choose a factorization pair $(g', g'')$ for $\Ad(g)$ over $F$, with $g' \in G^T_\text{ad}(F)$ and $g'' \in G(F)$. Hence it suffices to consider the case $G \simeq \SL(2)$ and $g \in G_\text{ad}(F)$ (see Remark \ref{rem:adelic-Weil-restriction}). The relevant factors $\Cali_m(\cdots)$ in Definition--Proposition \ref{def:Cali-factor} are trivial for almost all $v$, and cancel out by the product formula for Hilbert symbols.	It remains to show $\widetilde{\Ad}(g) := \prod_v \Ad(g_v): \tilde{G} \to \tilde{G}$ leaves $s(G(F))$ invariant, noting that $\Ad(g_v)$ is realized by $G(\mathfrak{o}_v)$-conjugacy for almost all $v$, by Theorem \ref{prop:Kottwitz}. We contend that
	\[\begin{tikzcd}
		\tilde{G} \arrow{r}{\widetilde{\Ad}(g)} & \tilde{G} \\
		G(F) \arrow[hookrightarrow]{u}{s} \arrow{r}[swap]{\Ad(g)} & G(F) \arrow[hookrightarrow]{u}[swap]{s}
	\end{tikzcd}\]
	commutes. Indeed, $\widetilde{\Ad}(g) s  \Ad(g)^{-1} = s$ by the uniqueness of the section.
\end{proof}

\section{\texorpdfstring{$L$}{L}-groups after Weissman}\label{sec:Weissman}
Except in \S\ref{sec:rescaling}, we consider $G = \Sp(W)$ over a local field $F$ with $\text{char}(F) \neq 2$, $m \mid N_F$, and $\tilde{G} \twoheadrightarrow G(F)$ as in \S\ref{sec:BD-Sp}; also fix $\epsilon: \mu_m \hookrightarrow \CC^\times$. Note that the constructions in \cite{Weis18} can also be adapted to global or the unramified integral cases.

\subsection{Definitions}\label{sec:L-group}
For later use, the data in \cite[\S 2.2]{Weis18} derived from the Brylinski--Deligne data $(Q,\mathcal{D},\varphi)$ are tabulated below in the notation of \S\ref{sec:Sp}. 
\begin{gather*}
	\beta_Q = \frac{1}{m} B_Q = \frac{2}{m}\sum_{i=1}^n \epsilon_i \otimes \epsilon_i: Y \otimes Y \to \frac{1}{m}\Z, \\
	Y_{Q,m} = \frac{m}{\text{gcd}(2,m)} Y, \quad X_{Q,m} = \frac{\text{gcd}(2,m)}{m}  X, \\
	\quad n_\alpha := \frac{m}{\text{gcd}(m, Q(\check{\alpha}))} = \begin{cases}
		m, & \alpha: \text{long root} \\
		\frac{m}{\text{gcd}(2,m)}, & \alpha: \text{short root}
	\end{cases}, \quad \tilde{\alpha} := n_\alpha^{-1} \alpha, \quad \tilde{\alpha}^\vee := n_\alpha \alpha^\vee, \\
	Y_{Q,m}^\text{sc} = \sum_\alpha \Z\tilde{\alpha}^\vee = \begin{cases}
	\frac{m}{2} Y_0, \quad Y_0 := \left\{ \sum a_i \check{\epsilon}_i \in Y: \sum_i a_i \in 2\Z \right\}, & m \in 2\Z \\
	Y_{Q,m}, & m \notin 2\Z.
	\end{cases}
\end{gather*} \index{Gvee@$\tilde{G}^\vee$}\index{YQmsc@$Y_{Q,m}^{\mathrm{sc}}$}\index{Y0@$Y_0$}
We call $\tilde{\alpha}$ and $\tilde{\alpha}^\vee$ the \emph{modified roots and coroots}. Let $\tilde{\Delta} \subset \tilde{\Phi} \subset X_{Q,m}$ be the sets of modified roots and that of the simple ones; similarly $\tilde{\Delta}^\vee \subset \tilde{\Phi}^\vee \subset Y_{Q,m}$. In \textit{loc. cit.}, the dual group $\tilde{G}^\vee$ is defined as the pinned $\CC$-group with based root datum $(Y_{Q,m}, \tilde{\Delta}^\vee, X_{Q,m}, \tilde{\Delta})$, here with trivial $\Gamma_F$-action. Thus $\tilde{G}^\vee$ comes with a maximal torus $\tilde{T}^\vee$ with
\[ X^*(\tilde{T}^\vee) = Y_{Q,m}, \quad Z_{\tilde{G}^\vee} = \Hom(Y_{Q,m}/Y_{Q,m}^\text{sc}, \CC^\times) \subset \tilde{T}^\vee. \]
There is a homomorphism $\tau_{Q,m}: \mu_2 \to Z_{\tilde{G}^\vee}$ that is Cartier-dual to
\[ Y_{Q,m}/Y_{Q,m}^\text{sc} \twoheadrightarrow Y_{Q,m}\big/(Y_{Q,m}^\text{sc} + mY_{Q,m}) \xrightarrow{y \mapsto m^{-1}Q(y)} \frac{1}{2}\Z \big/ \Z. \]
As the roots/coroots are modified by rescaling, $\tilde{G}^\vee$ and $G$ share the same Weyl group. In fact
\[ \tilde{G}^\vee = \begin{cases}
	\SO(2n+1, \CC), & m \notin 2\Z \\
	\Sp(2n, \CC), & m \in 2\Z.
\end{cases}\]
Quoting \cite[\S 2.7.4]{Weis18}, we have $\tau_{Q,m}(-1) \neq 1$ if and only if $m \equiv 2 \pmod 4$. \index{tauQm@$\tau_{Q,m}$}

To define the Galois form of the $L$-group of $\tilde{G}$, consider the following two extensions of groups.
\begin{enumerate}
	\item In \cite[\S 4.1]{Weis18} the \emph{metaGalois group} is defined as the central extension
	\begin{equation}\label{eqn:metaGalois}
		1 \to \mu_2 \to \widetilde{\Gamma_F} \to \Gamma_F \to 1.
	\end{equation}
	It is just $\Gamma_F \times \mu_2$ with multiplication given by the cocycle $(\tau_1, \tau_2) \mapsto (\text{rec}_F(\tau_1), \text{rec}_F(\tau_2))_{F,2}$, where the reciprocity homomorphism $\text{rec}_F: \Gamma_F \to F^\times$ is normalized to send a geometric Frobenius to a uniformizer in $\mathfrak{p}_F$. We obtain the push-out $Z_{\tilde{G}^\vee} \hookrightarrow \tau_{Q,m,*}(\widetilde{\Gamma_F}) \twoheadrightarrow \Gamma_F$.
	
	\item The constructions in \cite[\S 3.2]{Weis18} yield a gerbe $\mathsf{E}_\epsilon(\tilde{G})$ over $\Spec(F)_{\text{ét}}$ banded by $Z_{\tilde{G}^\vee}$. Its fundamental group sits in an extension
	\begin{equation}\label{eqn:gerbe-pi1}
		1 \to Z_{\tilde{G}^\vee} \to \pi_1^{\text{ét}}(\mathsf{E}_\epsilon(\tilde{G})) \to \Gamma_F \to 1.
	\end{equation}
	The relevant definitions will be recalled in \S\S\ref{sec:second-twist}---\ref{sec:rescaling} whenever needed.
\end{enumerate}
Their Baer sum is an extension $Z_{\tilde{G}^\vee} \hookrightarrow \Lgrp{Z} \twoheadrightarrow \Gamma_F$; a further $\Gamma_F$-equivariant push-out via $Z_{\tilde{G}^\vee} \hookrightarrow \tilde{G}^\vee$ yields $\Lgrp{\tilde{G}}$. To obtain a continuous section $\Gamma_F \to \Lgrp{\tilde{G}}$, we inspect the metaGalois group first.\index{GL@$\Lgrp{G}$}

When $m \not\equiv 2 \pmod 4$, the central extension $\tau_{Q,m,*}(\widetilde{\Gamma_F})$ is already trivial. Now suppose $m \equiv 2 \pmod 4$. Fix an additive character $\psi$. According to \cite[Proposition 4.5]{Weis18}, upon enlarging $\widetilde{\Gamma_F}$ to $\widetilde{\Gamma_F}^{(4)}$ by $\mu_2 \hookrightarrow \bmu_4$, there is a splitting $s(\psi): \Gamma_F \to \widetilde{\Gamma_F}^{(4)}$ given by
\begin{gather*}
	s(\psi)(\tau) := \left( \tau, \; \dfrac{ \gamma_\psi(\text{rec}_F(\tau)) }{\gamma_\psi(1)} \right) \; \in \Gamma_F \times \mu_4.
\end{gather*}
This is based on the standard identity \cite[Proposition 1.3.3]{Per81} for Weil's constants
\begin{equation}\label{eqn:Weil-Hilbert}
	(a,b)_{F,2} = \dfrac{\gamma_\psi(ab) \gamma_\psi(1)}{\gamma_\psi(a) \gamma_\psi(b)}.
\end{equation}
Notice that the enlargement to $\bmu_4$ is realizable inside $\tilde{G}^\vee = \Sp(2n, \CC)$: consider the subgroup
\begin{equation}\label{eqn:C}
	C := \left\{ \text{diag}(\zeta, \ldots, \zeta, \zeta^{-1}, \ldots, \zeta^{-1}) \in \Sp(2n, \CC): \zeta \in \bmu_4. \right\}
\end{equation}
We have $\tau_{Q,m}(-1) = -1 \in C$, and such a subgroup $C \simeq \bmu_4$ is unique up to conjugacy.

\begin{lemma}[{\cite[Proposition 4.5]{Weis18}}]\label{prop:variance-metaGalois}
	If $\psi$ is replaced by $x \mapsto \psi(cx)$ where $c \in F^\times$, then $s(\psi)$ will be twisted by the character
	\begin{equation}\label{eqn:metaGalois-section-twist}
		\chi_c: \Gamma_F \to \mu_2, \quad \chi_c(\tau) = (\mathrm{rec}_F(\tau), c)_{F,2}.
	\end{equation}
\end{lemma}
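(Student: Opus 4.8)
The plan is to compute directly how the section $s(\psi)$ changes when $\psi$ is replaced by $\psi_c: x \mapsto \psi(cx)$, using the behaviour of Weil's constant $\gamma_\psi$ under the twist $\psi \rightsquigarrow \psi_c$. Recall $s(\psi)(\tau) = \bigl(\tau,\ \gamma_\psi(\mathrm{rec}_F(\tau))/\gamma_\psi(1)\bigr) \in \Gamma_F \times \mu_4$, where for $a \in F^\times$ one writes $\gamma_\psi(a) = \gamma_\psi(\lrangle{a})$ for Weil's constant attached to the quadratic form $x \mapsto ax^2$. The key input is the standard identity (see \cite[\S 1.3]{Per81} or \cite[\S 14]{Weil64})
\[ \gamma_{\psi_c}(a) = \gamma_\psi(ca) / \gamma_\psi(c), \]
valid for all $a, c \in F^\times$; equivalently this is a rephrasing of \eqref{eqn:Weil-Hilbert} together with the scaling behaviour $\gamma_{\psi_c}(q) = \gamma_\psi(cq)$ of Weil's constant for a quadratic form $q$.

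First I would substitute into the formula for $s(\psi_c)(\tau)$. Write $a := \mathrm{rec}_F(\tau) \in F^\times$. Then, using the identity above twice (once for $a$ and once for $1$),
\[ \frac{\gamma_{\psi_c}(a)}{\gamma_{\psi_c}(1)} = \frac{\gamma_\psi(ca)/\gamma_\psi(c)}{\gamma_\psi(c)/\gamma_\psi(c)} = \frac{\gamma_\psi(ca)}{\gamma_\psi(c)}. \]
Now I would bring in \eqref{eqn:Weil-Hilbert} in the form $(c,a)_{F,2} = \gamma_\psi(ca)\gamma_\psi(1)/(\gamma_\psi(c)\gamma_\psi(a))$, which rearranges to
\[ \frac{\gamma_\psi(ca)}{\gamma_\psi(c)} = (c,a)_{F,2} \cdot \frac{\gamma_\psi(a)}{\gamma_\psi(1)}. \]
Combining the two displays gives $\dfrac{\gamma_{\psi_c}(\mathrm{rec}_F(\tau))}{\gamma_{\psi_c}(1)} = (c, \mathrm{rec}_F(\tau))_{F,2} \cdot \dfrac{\gamma_\psi(\mathrm{rec}_F(\tau))}{\gamma_\psi(1)}$, i.e. $s(\psi_c)(\tau) = \bigl(\tau, \chi_c(\tau)\bigr) \cdot s(\psi)(\tau)$ in $\Gamma_F \times \mu_4$ with $\chi_c(\tau) = (\mathrm{rec}_F(\tau), c)_{F,2}$ — note $(c,a)_{F,2} = (a,c)_{F,2}$ by symmetry of the Hilbert symbol, matching \eqref{eqn:metaGalois-section-twist}. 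Since both $s(\psi)$ and $s(\psi_c)$ are group homomorphisms $\Gamma_F \to \widetilde{\Gamma_F}^{(4)}$ lifting $\identity_{\Gamma_F}$, their ratio is automatically a homomorphism $\Gamma_F \to \mu_2$, which confirms a posteriori that $\chi_c$ as written is a genuine character (this also follows from bimultiplicativity of the Hilbert symbol and $\mathrm{rec}_F$ being a homomorphism); it factors through $\Gamma_F^{\mathrm{ab}}$ and is valued in $\mu_2$ because $(\cdot,\cdot)_{F,2}$ is.

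The computation is essentially mechanical once the scaling identity for $\gamma_\psi$ and \eqref{eqn:Weil-Hilbert} are in hand, so there is no serious obstacle; the only point requiring a little care is bookkeeping the enlargement $\mu_2 \hookrightarrow \bmu_4$ and checking that all the $\gamma$-ratios indeed lie in $\mu_4$ (so that the statement makes sense inside $\widetilde{\Gamma_F}^{(4)}$ and hence, via $C \simeq \bmu_4 \subset \Sp(2n,\CC)$ of \eqref{eqn:C}, inside $\tilde{G}^\vee$), and that the twisting cocycle for $\widetilde{\Gamma_F}$, namely $(\tau_1,\tau_2) \mapsto (\mathrm{rec}_F(\tau_1), \mathrm{rec}_F(\tau_2))_{F,2}$, is the same for $\psi$ and $\psi_c$ so that comparing the two splittings is legitimate. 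Since this is exactly \cite[Proposition 4.5]{Weis17}, I would present the argument concisely and cite that reference for any remaining verification of well-definedness.
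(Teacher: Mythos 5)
Your proof is correct, and it supplies the computation that the paper elides by citing \cite[Proposition 4.5]{Weis17} directly: the mechanism (scaling of $\gamma_\psi$ under $\psi \mapsto \psi_c$ plus \eqref{eqn:Weil-Hilbert}) is exactly the one underlying Weissman's argument. One small point of care: the scaling identity you invoke, $\gamma_{\psi_c}(a)=\gamma_\psi(ca)/\gamma_\psi(c)$, is the \emph{normalized} convention (where $\gamma_\psi(1)=1$); with this paper's convention $\gamma_\psi(a):=\gamma_\psi(\lrangle{a})$, the correct scaling is $\gamma_{\psi_c}(a)=\gamma_\psi(ca)$, so $\gamma_{\psi_c}(1)=\gamma_\psi(c)\neq 1$ in general. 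This discrepancy is harmless here because only the ratio $\gamma_{\psi_c}(a)/\gamma_{\psi_c}(1)$ enters, which equals $\gamma_\psi(ca)/\gamma_\psi(c)$ under either convention, and from there your application of \eqref{eqn:Weil-Hilbert} and the symmetry $(c,a)_{F,2}=(a,c)_{F,2}$ yields the stated twist. It would be cleaner to state and use the unnormalized scaling $\gamma_{\psi_c}(a)=\gamma_\psi(ca)$, which also makes the second use of the identity (for $a=1$) trivial rather than requiring a cancelling $\gamma_\psi(c)/\gamma_\psi(c)$.
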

Clearly, $\chi_{c_1 c_2} = \chi_{c_1} \chi_{c_2}$ for all $c_1, c_2 \in F^\times$.

Later on, by using the form $\lrangle{\cdot|\cdot}$ or the associated $F$-pinning \cite[Exp XXIII]{SGA3-3} of $G$,
\begin{compactitem}
	\item it will be shown in Lemma \ref{prop:gerbe-splitting} that \eqref{eqn:gerbe-pi1} also splits after a push-out via $Z_{\tilde{G}^\vee} \hookrightarrow C$;
	\item in Lemma \ref{prop:variance-gerbe}, it will be shown that changing $\lrangle{\cdot|\cdot}$ or the pinning will twist that splitting by the $\chi_c$ in \eqref{eqn:metaGalois-section-twist}.
\end{compactitem}

With these auxiliary data, we can switch to the Weil form of $\Lgrp{G}$ and identify
\[\begin{tikzcd}[row sep=tiny, column sep=small]
	\Lgrp{\tilde{G}} \arrow{rr}{\sim} \arrow{rd} & & \tilde{G}^\vee \times \Weil_F \arrow{ld} \\
	& \Weil_F &
\end{tikzcd}\]
\begin{definition}\index{L-parameter}
	An $L$-parameter for $\tilde{G}$ is a continuous homomorphism $\phi: \Weil_F \times \SU(2) \to \Lgrp{\tilde{G}}$ commuting with the projections to $\Weil_F$, such that the $\tilde{G}^\vee$-component of $\phi(w)$ is semi-simple for all $w \in \Weil_F$. Equivalence between $L$-parameters is given by $\tilde{G}^\vee$-conjugacy.
\end{definition}

By the foregoing discussions, the choice of splitting $\Lgrp{G} \simeq \tilde{G}^\vee \times \Weil_F$ does not affect the definitions above. The $L$-parameters that we will encounter are all trivial on $\SU(2)$.

\subsection{On the second twist}\label{sec:second-twist}
For the next result, we recall from \cite[Remark 19.8]{Weis18} that $\pi_1^{\text{ét}}(\mathsf{E}_\epsilon(\tilde{G}))$ is realized as $\varinjlim_{\bar{z}} \pi_1^{\text{ét}}(\mathsf{E}_\epsilon(\tilde{G}), \bar{z})$ with respect to unique transition isomorphisms, where the ``geometric basepoint'' $\bar{z}$ ranges over objects of $\mathsf{E}_\epsilon(\tilde{G})(\bar{F})$. We will employ the concrete description of \eqref{eqn:gerbe-pi1} in \cite{Weis18b} or \cite[\S 5.2]{GG} for pinned split groups, called the \emph{second twist} in \cite{GG}.

Take a symplectic basis, which gives rise to a standard $F$-pinning for $G$; note that the symplectic bases form a single $G(F)$-orbit. Let $T \subset G$ be the corresponding split maximal torus adapted to the chosen symplectic basis and consider the $\mathcal{D}$ from \eqref{eqn:D-extension}. Taking pull-backs, we obtain the objects $\mathcal{D}_{Q,m}$, $\mathcal{D}^\text{sc}_{Q,m}$ in $\cate{CExt}(Y_{Q,m}, \Gm)$, $\cate{CExt}(Y_{Q,m}^\text{sc}, \Gm)$ respectively. Using these data, \cite[\S 11.7]{BD01} gives the map for each simple coroot $\alpha^\vee$: \index{DQm@$\mathcal{D}_{Q,m}, \mathcal{D}_{Q,m}^{\mathrm{sc}}$}
\begin{gather*}
	k\alpha^\vee \longmapsto s(k\alpha^\vee) = \Res\left( s[n](k\alpha^\vee(\mathbf{t})) \right) \in \mathcal{D}, \quad k \in \Z.
\end{gather*}
Here we work in the $K_2(F(\!(\mathbf{t})\!))$-torsor over $T(F(\!(\mathbf{t})\!))$, and $K_2(F(\!(\mathbf{t})\!)) \xrightarrow{\Res} F$ is the tame symbol \cite[(11.1.8)]{BD01}; $s[n]$ is defined before \cite[(11.1.3)]{BD01} by working in the $\SL(2)$ generated by $\alpha$ and using the pinning. We have $s(k\alpha^\vee) \mapsto k\alpha^\vee$. From this we see $\mathcal{D}_{Q,m}(F) \to Y_{Q,m}$ (taking $F$-points yields a central extension by $F^\times$ by Hilbert's Satz 90) admits a splitting $s_0$ over $Y_{Q,m}^\text{sc}$, by sending each $\tilde{\alpha}^\vee \in \tilde{\Delta}^\vee$ to $s(n_\alpha \alpha^\vee)$.

Now assume $m \in 2\Z$ so that \cite[Assumption 3.1]{Weis18} is in force. Choose a \emph{convenient basepoint} $\bar{z}_0$ as in \cite[\S 1.1]{Weis18b} for the pinning, which arises from a splitting $\hat{s}_0: Y_{Q,m} \to \mathcal{D}_{Q,m}(\bar{F})$ over $\bar{F}$ extending $s_0$. Define the group
\[ E_0 := \left\{ \chi \in \Hom(\mathcal{D}_{Q,m}(F), \CC^\times): \chi|_{F^{\times m}} = \chi|_{\Image(s_0)} = 1 \right\}. \]
Then $E_0 \twoheadrightarrow \Hom(F^\times/F^{\times m}, \bmu_m)$ with kernel $Z_{\tilde{G}^\vee}$. Consider the surjective homomorphism
\begin{align*}
	q_m: \Gamma_F & \longrightarrow \Hom(F^\times/F^{\times m}, \bmu_m) \\
	\gamma & \longmapsto \left[ u \mapsto \epsilon\left(\frac{\gamma^{-1}u^{1/m}}{u^{1/m}}\right)\right].
\end{align*}
By \cite[\S\S 2.1--2.3]{Weis18b}, one can describe $\pi_1^{\text{ét}}(\mathsf{E}_\epsilon(\tilde{G}), \bar{z}_0)$ via the comparison isomorphism of central extensions in \textit{loc. cit.}
\[ C_0: \pi_1^{\text{ét}}(\mathsf{E}_\epsilon(\tilde{G}), \bar{z}_0) \rightiso (q_m)^* E_0 \quad (\text{pull-back by}\; q_m). \]
Such isomorphisms are shown to be compatible with the transition isomorphisms when $\bar{z}_0$ varies.

\begin{lemma}\label{prop:gerbe-splitting}
	When $m \notin 2\Z$, the central extension \eqref{eqn:gerbe-pi1} is trivial. When $4 \mid m$, \eqref{eqn:gerbe-pi1} splits; when $m \equiv 2 \pmod 4$, it splits after a push-out $Z_{\tilde{G}^\vee} \hookrightarrow C$, see \eqref{eqn:C}. These splittings for $m \in 2\Z$ are canonical for the chosen $(W, \lrangle{\cdot|\cdot})$.
\end{lemma}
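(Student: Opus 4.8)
The plan is to reduce the statement, via the ``second twist'' description of $\pi_1^{\text{ét}}(\mathsf E_\epsilon(\tilde G))$ recalled above, to a single computation in the rank-one subgroup attached to a long root. For $m \notin 2\Z$ there is nothing to split: with the modified coroots one has $n_\alpha = m$ for \emph{every} root $\alpha$, long or short, since $\gcd(2,m)=1$; hence $Y_{Q,m}^{\mathrm{sc}} = m\sum_\alpha \Z\alpha^\vee = mY = Y_{Q,m}$ because $G$ is simply connected, so $Z_{\tilde G^\vee} = \Hom(Y_{Q,m}/Y_{Q,m}^{\mathrm{sc}},\CC^\times) = 1$ and \eqref{eqn:gerbe-pi1} is the trivial extension $\Gamma_F = \Gamma_F$.

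Now suppose $m \in 2\Z$, so $Y_{Q,m} = \tfrac m2 Y$, $Y_{Q,m}^{\mathrm{sc}} = \tfrac m2 Y_0$, $Y_{Q,m}/Y_{Q,m}^{\mathrm{sc}} \simeq Y/Y_0 \simeq \Z/2\Z$ with generator $y_* := \tfrac m2\check\epsilon_n$ (the coroot $\check\epsilon_n = (2\epsilon_n)^\vee$ of the simple long root $2\epsilon_n$ lies in $Y\smallsetminus Y_0$), and $Z_{\tilde G^\vee} \simeq \mu_2$. Via the convenient basepoint $\bar z_0$ attached to the chosen symplectic basis and the comparison isomorphism $C_0\colon \pi_1^{\text{ét}}(\mathsf E_\epsilon(\tilde G),\bar z_0) \rightiso (q_m)^* E_0$, it suffices to split, respectively split after push-out, the abelian extension $1 \to \mu_2 \to E_0 \to \Hom(F^\times/F^{\times m},\bmu_m) \to 1$. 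Since $B_Q$ is $m$-divisible, hence even-valued, on $Y_{Q,m}$, the group $\mathcal D_{Q,m}(F)$ is commutative; it is generated by $F^\times$, $\Image(s_0)$ and a lift $\tilde y_*$ of $y_*$, with $s_0$ a homomorphism on $Y_{Q,m}^{\mathrm{sc}}$, with $2y_* = m\check\epsilon_n = n_{2\epsilon_n}(2\epsilon_n)^\vee \in Y_{Q,m}^{\mathrm{sc}}$ and $s_0(2y_*) = s(m\check\epsilon_n)$, and with a single further relation $\tilde y_*^2 = a\cdot s_0(2y_*)$ for some $a \in F^\times$. Taking $\tilde y_* := s\bigl(\tfrac m2\check\epsilon_n\bigr)$ as the preferred lift, I would compute $a$ from the formulas of \cite[\S 11]{BD01} (cf.\ \cite[\S 2.1]{Weis17b}): the quadratic relation $s[n]\bigl(\alpha^\vee(\mathbf t)\bigr)^2 = \{\mathbf t,-1\}_F^{\,Q(\check\alpha)}\, s[n]\bigl(\alpha^\vee(\mathbf t^2)\bigr)$ in the $\SL(2)$ generated by $\alpha := 2\epsilon_n$ (with $Q(\check\epsilon_n)=1$), applied with $\mathbf t$ replaced by $\mathbf t^{m/2}$, together with the tame-symbol value $\Res\{\mathbf t,-1\}_F = -1$, gives $a = (-1)^{m/2}$ \emph{on the nose}. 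Hence every $\chi\in E_0$ satisfies $\chi(\tilde y_*)^2 = \chi\bigl((-1)^{m/2}\bigr)$; that is, $E_0$ is the pull-back $\xi^*\bigl(\mu_4 \xrightarrow{x\mapsto x^2}\mu_2\bigr)$ along the homomorphism $\xi\colon \Hom(F^\times/F^{\times m},\bmu_m) \to \mu_2$, $\chi \mapsto \chi\bigl((-1)^{m/2}\bigr)$.

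The two cases then drop out. If $4 \mid m$ then $(-1)^{m/2}=1$, so $\xi$ is trivial, $\chi(\tilde y_*)\in\mu_2$ for all $\chi$, and requiring $\chi(\tilde y_*)=1$ defines a canonical splitting of $E_0$; transporting it through $q_m$, $C_0$ and the unique transition isomorphisms (compatible with change of convenient basepoint by \cite[\S 2.3]{Weis17b}) gives the asserted splitting of \eqref{eqn:gerbe-pi1}, which depends only on the symplectic basis, hence only on $(W,\lrangle{\cdot|\cdot})$ since all symplectic bases are $G(F)$-conjugate. If $m \equiv 2 \pmod 4$ then $\xi(\chi)=\chi(-1)$ and $E_0$ need not split (e.g.\ when $-1\notin F^{\times 2}$), but $[E_0] = \xi^*[\mu_4 \xrightarrow{x\mapsto x^2}\mu_2]$, and the push-out of the nontrivial class $[\mu_4 \xrightarrow{x\mapsto x^2}\mu_2]\in\Ext^1_{\Z}(\mu_2,\mu_2)$ along $\mu_2\hookrightarrow\bmu_4$ vanishes (the elementary fact that $0 \to \mu_2 \to \mu_4 \xrightarrow{x\mapsto x^2}\mu_2 \to 0$ becomes split after push-out by $\mu_2\hookrightarrow\bmu_4$); by naturality of $\xi^*$ the push-out $C \wedge_{Z_{\tilde G^\vee}} E_0$ splits, where $Z_{\tilde G^\vee}\simeq\mu_2\hookrightarrow C$ is \eqref{eqn:C}. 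Pulling back by $q_m$ yields the splitting of the push-out of \eqref{eqn:gerbe-pi1}, again pinned down by $(W,\lrangle{\cdot|\cdot})$ once the identification $C\simeq\bmu_4$ of \eqref{eqn:C} is fixed.

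The only nonformal point, and the main obstacle, is the identification $a = (-1)^{m/2}$: one must match Weissman's section $s_0$ — built from $s[n]$ and the $F$-pinning — with the tame-symbol presentation of $\mathcal D_{Q,m}(F)$, carry along the exponent $Q(\check\alpha)$ and the sign conventions of \cite[\S 11]{BD01} and Remark \ref{rem:Kubota-minus}, and in particular verify that the outcome is the \emph{exact} element $(-1)^{m/2}$ of $F^\times$ rather than merely its class modulo $F^{\times 2}$ — this precision being exactly what makes the splitting for $4\mid m$ canonical and not just canonical up to a sign character. Everything else is formal bookkeeping with the comparison isomorphisms $C_0$.
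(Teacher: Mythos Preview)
Your approach is essentially the same as the paper's: both reduce, via the second-twist description $\pi_1^{\text{\'et}}(\mathsf{E}_\epsilon(\tilde{G}), \bar{z}_0) \simeq (q_m)^* E_0$, to the single computation
\[
s\bigl(\tfrac{m}{2}\check{\epsilon}_i\bigr)^{2} = (-1)^{m/2} \cdot s(m\check{\epsilon}_i) \quad \text{in } \mathcal{D}_{Q,m}(F),
\]
extracted from \cite[(11.1.4)--(11.1.5)]{BD01} together with $\Res\{\mathbf{t},\mathbf{t}\} = -1$; the paper uses $\check{\epsilon}_1$ where you use $\check{\epsilon}_n$, which is immaterial. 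The case $4 \mid m$ is then handled identically.

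For $m \equiv 2 \pmod 4$ there is a small gap in your canonicity claim. Showing that the push-out class vanishes gives only \emph{existence}; the splittings of the pushed-out universal extension $\bmu_4 \wedge^{\mu_2} \bmu_4 \twoheadrightarrow \mu_2$ still form a torsor under $\Hom(\mu_2,\bmu_4) \simeq \mu_2$, and you do not single one out --- saying it is ``pinned down once $C \simeq \bmu_4$ is fixed'' does not suffice, since both candidate splittings are compatible with that identification. The paper resolves this by taking the explicit section $\zeta^{-1} \wedge \chi_\zeta$ (where $\chi_\zeta(s(\tfrac{m}{2}\check{\epsilon}_1)) = \zeta$, $\zeta^2 = -1$) and then verifying it is independent of the choice of $\zeta$. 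In your framework this is the observation that $w \mapsto [\zeta^{-1}, \zeta]$ (any $\zeta$ with $\zeta^2 = w$) is a well-defined section of $\bmu_4 \wedge^{\mu_2} \bmu_4 \to \mu_2$, since $[(-\zeta)^{-1}, -\zeta] = [\zeta^{-1}, \zeta]$ under the relation $[c,z] \sim [-c,-z]$. Adding this one line would complete your argument.
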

\begin{proof}
	When $m \notin 2\Z$ we have $Z_{\tilde{G}^\vee} = \{1\}$, so assume $m \in 2\Z$ in what follows.
	
	Let us take the quotient of $\mathcal{D}_{Q,m}(F)$ by $s_0(Y_{Q,m}^\text{sc})$, which gives an object $\mathcal{D}_1$ of $\cate{CExt}(Y_{Q,m}/Y_{Q,m}^\text{sc}, F^\times)$. A further push-out yields an object $\mathcal{D}_2$ of $\cate{CExt}(Y_{Q,m}/Y_{Q,m}^\text{sc}, F^\times/F^{\times m})$. The discussions above imply that \eqref{eqn:gerbe-pi1} is obtained from $q_m^* \Hom(\mathcal{D}_2, \CC^\times)$, the multiplication in $\Hom(\mathcal{D}_2, \CC^\times)$ being pointwise.
	
	To find splittings, we first reduce $\mathcal{D}_1$ to a $\mu_2 \hookrightarrow \mathcal{D}_1^\diamond \twoheadrightarrow Y_{Q,m}/Y_{Q,m}^\text{sc}$. As $\frac{m}{2}\check{\epsilon}_1$ generates $Y_{Q,m}/Y_{Q,m}^\text{sc}$, using the pinning, the coset of $s(m\check{\epsilon}_1/2)$ affords a canonical preimage in $\mathcal{D}_1$. By \cite[(11.1.4)---(11.1.5)]{BD01} and $\Res\{\mathbf{t}, \mathbf{t}\} = -1$, we have
	\begin{align*}
		s\left( \frac{m\check{\epsilon}_1}{2} \right)^2 & = \Res\{ \mathbf{t}^{m/2}, \mathbf{t}^{m/2} \}^{Q(\check{\epsilon}_1)} \cdot \overbracket{ s(m\check{\epsilon}_1) }^{=1 \in \mathcal{D}_1} \\
		& = (-1)^{m/2} \quad \in \mu_2.
	\end{align*}
	This furnishes the $\mathcal{D}_1^\diamond$. Accordingly $\Hom(\mathcal{D}_2, \CC^\times)$ is pulled back from $\Hom(\mathcal{D}_1^\diamond, \CC^\times)$ by $\Hom(\mu_2, \CC^\times) \xrightarrow{\text{res}} \Hom(F^\times/F^{\times m}, \CC^\times)$, and both splits canonically when $4 \mid m$.
	
	Hereafter suppose $m \equiv 2 \pmod 4$. Take $\zeta \in \CC$ with $\zeta^2 = -1$. Prescribe $\chi_\zeta \in \Hom(\mathcal{D}_1^\diamond, \CC^\times)$ lying over $\epsilon|_{\mu_2} \in \Hom(\mu_2, \CC^\times)$ by
	\[ \chi_\zeta|_{\mu_2} = \epsilon|_{\mu_2}, \quad \chi_\zeta(s(m\check{\epsilon}_1/2)) = \zeta, \]
	which is legitimate by the foregoing computation. Then $\chi_\zeta^2 \in \Hom(Y_{Q,m}/Y_{Q,m}^\text{sc}, \CC^\times) = Z_{\tilde{G}^\vee}$ is nontrivial, so it is not a section for $\Hom(\mathcal{D}_1^\diamond, \CC^\times) \twoheadrightarrow \Hom(\mu_2, \CC^\times)$. However, upon pushing out by $Z_{\tilde{G}^\vee} \hookrightarrow C \simeq \bmu_4$, we have $(\zeta^{-1} \wedge \chi_\zeta)^2 = 1$ and this does afford a splitting.
	
	Finally, $\chi_{\zeta^{-1}}/\chi_\zeta$ is trivial on $\mu_2 \subset \mathcal{D}_1^\diamond$ and maps $s(m\check{\epsilon}_1/2)$ to $-1$; this cancels with the ratio $\zeta/\zeta^{-1} = -1$ in the push-out to $C$. Hence the splitting does not depend on the choice of $\zeta$.
\end{proof}

The upshot is that both \eqref{eqn:metaGalois} and \eqref{eqn:gerbe-pi1} split after pushed out to $C$, therefore the Galois form of $\Lgrp{\tilde{G}}$ splits as well. These splittings depend on $\psi$ and $(W, \lrangle{\cdot|\cdot})$ (or the $F$-pinning). Our ultimate goal is an ambiguity-free formalism of $L$-packets. The dependence on $\psi$ is quantified by Lemma \ref{prop:variance-metaGalois} so we shall concentrate on $\lrangle{\cdot|\cdot}$. Set $G_1 := \GSp(W)$ and note that
\[\begin{tikzcd}[column sep=small]
	\dfrac{\left\{ F-\text{pinnings}\right\}}{G(F)-\text{conj}} & \dfrac{ G_\text{ad}(F) }{\Image[G(F) \to G_\text{ad}(F)]} \arrow{d}[swap]{\simeq} \arrow[bend right=40]{l}[midway, swap]{\text{torsor}} & \dfrac{G_1(F)}{Z_{G_1}(F) G(F)} \arrow{ld}{\text{similitude}}[swap]{\simeq} \arrow{l}[swap]{\sim} \\
	& F^{\times}/F^{\times 2} &
\end{tikzcd} \qquad \text{commutes.} \]
Hence $G_\text{ad}(F)$ has three effects:
\begin{inparaenum}[(a)]
	\item dilation of $\lrangle{\cdot|\cdot}$ up to $F^{\times 2}$,
	\item change of $F$-pinnings,
	\item action on $E_G$ via Proposition \ref{prop:BD-adjoint-action}.
\end{inparaenum}

Define the groups $\hat{T} := \Hom(Y_{Q,m}, \bar{F}^\times)$, $\hat{T}^\text{sc} := \Hom(Y_{Q,m}^\text{sc}, \bar{F}^\times)$ and $\hat{Z} := \Hom(Y_{Q,m}/Y_{Q,m}^\text{sc}, \bar{F}^\times)$.

\begin{lemma}\label{prop:variance-gerbe}
	Suppose that $m \in 2\Z$ and let $g_1 \in G_1(F)$ be of similitude factor $c \in F^\times$. Then changing pinnings by $\Ad(g_1)$ induces a twist by the $\chi_c$ of \eqref{eqn:metaGalois-section-twist} on the splitting constructed in Lemma \ref{prop:gerbe-splitting}.
\end{lemma}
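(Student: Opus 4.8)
The plan is to reduce to one explicit $g_1$, determine how the recipe of Lemma~\ref{prop:gerbe-splitting} transforms under the induced change of $F$-pinning, and read off the resulting twist via a tame-symbol computation. For the reduction, note that by the commutative diagram preceding the statement the splitting of Lemma~\ref{prop:gerbe-splitting} depends on the $F$-pinning only through its $G(F)$-conjugacy class (equivalently, on $(W,\lrangle{\cdot|\cdot})$ only through the choice of symplectic basis, which forms a single $G(F)$-orbit), and $Z_{G_1}(F)$ acts trivially; thus $\Ad(g_1)$ affects the splitting only through the class of $g_1$ in $G_1(F)/(Z_{G_1}(F)G(F)) \simeq F^\times/F^{\times 2}$, i.e.\ through its similitude factor $c$ modulo squares. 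So we may take $g_1 = \mathrm{diag}(1,\dots,1;c,\dots,c)$ in the diagonal torus of $G_1 = \GSp(W)$ adapted to the chosen symplectic basis. A direct check shows $\Ad(g_1)$ fixes the Borel pair $(B,T)$ and acts on the standard épinglage by fixing every short simple root vector and scaling the unique long simple root vector $X_{2\epsilon_n}$ by $c^{-1}$; since $2\epsilon_1$ has coefficient $1$ on the long simple root $2\epsilon_n$ in its expansion into the simple roots of type $\mathbf{C}_n$, the Chevalley vector $X_{2\epsilon_1}$ is likewise scaled by $c^{-1}$, all short root vectors being unchanged.

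Next I would track the building blocks entering Lemma~\ref{prop:gerbe-splitting}. The central extension $\mathcal{D}_{Q,m}(F) \twoheadrightarrow Y_{Q,m}$, the sublattice $Y_{Q,m}^\mathrm{sc}$, and the surjection $q_m$ are intrinsic and untouched; only the Brylinski--Deligne section $s\colon n_\alpha\alpha^\vee \mapsto \Res\bigl(s[n](n_\alpha\alpha^\vee(\mathbf{t}))\bigr)$ is affected, since $s[n]$ is assembled from the épinglage inside the $\SL(2)$ generated by $\alpha$. Rescaling the relevant root vector by $c^{-1}$ changes $s[n](\beta^\vee(\mathbf{t}^k))$ by a Steinberg symbol $\{c^{-1},\mathbf{t}\}_F^{\mp k}$ (via Matsumoto's relations \cite[\S 11.1]{BD01}, the relevant value being $Q(\check\epsilon_i)=1$), and since $\Res\{c^{-1},\mathbf{t}\}_F = c^{-1}$ the image $\Res\bigl(s[n](\beta^\vee(\mathbf{t}^k))\bigr)$ is multiplied by $c^{\pm k}$. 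Taking $\beta = 2\epsilon_n$, $k = n_{2\epsilon_n} = m$ shows $s_0$ is altered only by elements of $F^{\times m}$, so $\mathcal{D}_2$ --- hence the extension \eqref{eqn:gerbe-pi1} itself --- is unchanged; taking $\beta = 2\epsilon_1$, $k = m/2$ shows that the canonical preimage $s(m\check\epsilon_1/2)$ of the generator of $Y_{Q,m}/Y_{Q,m}^\mathrm{sc}$ inside $\mathcal{D}_1^\diamond$ gets multiplied by the class of $c^{\pm m/2}$ in $F^\times/F^{\times m}$ (a $2$-torsion class, as it must be).

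Finally I would identify the twist. The two splittings --- for $\lrangle{\cdot|\cdot}$ and for its $\Ad(g_1)$-translate --- split the same extension \eqref{eqn:gerbe-pi1} (resp.\ its push-out along $Z_{\tilde{G}^\vee}\hookrightarrow C$ when $m\equiv 2\pmod 4$), so their quotient is a continuous character $\Gamma_F \to Z_{\tilde{G}^\vee}$ (resp.\ $\to C$). Carrying the previous step through the comparison isomorphism $C_0$ and the formula for $q_m$, this character is
\[ \tau \longmapsto q_m(\tau)\bigl([c^{m/2}]\bigr) = \epsilon\!\left( \frac{\tau^{-1}\sqrt{c}}{\sqrt{c}} \right), \qquad \sqrt{c}\in\bar{F}\ \text{a fixed square root}, \]
which is exactly the Kummer cocycle of $c$ valued in $\mu_2$; with $\mathrm{rec}_F$ normalised as in \S\ref{sec:L-group}, local class field theory (cf.\ Remark~\ref{rem:Hilb-symbol-cohomology}) identifies it with $(\mathrm{rec}_F(\tau),c)_{F,2} = \chi_c(\tau)$. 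The sign and the interchange $c\leftrightarrow c^{-1}$ left open above are immaterial since the values lie in $\mu_2$ and $c\equiv c^{-1}\pmod{F^{\times 2}}$, and the answer is visibly trivial when $c\in F^{\times 2}$, as demanded by the first reduction.

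The hard part will be the Steinberg-symbol/tame-symbol bookkeeping of the second step: extracting the relevant behaviour precisely from \cite[\S 11.1]{BD01} with the right exponents and sign conventions, and confirming that a change of épinglage supported on the long simple root scales the Chevalley vector $X_{2\epsilon_1}$ by exactly $c^{-1}$. One should also check that replacing the $\bar{F}$-level convenient basepoint $\bar{z}_0$ by its $\Ad(g_1)$-translate introduces no further factor --- which is automatic from the uniqueness of the transition isomorphisms between the $\pi_1^{\text{ét}}(\mathsf{E}_\epsilon(\tilde{G}),\bar{z})$ recalled in \cite[Remark 19.8]{Weis17}.
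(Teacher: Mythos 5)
Your approach is essentially the one the paper takes: reduce to an explicit torus element $t\in T_1(F)$, track the variance of the Brylinski--Deligne section $s$ (and its extension $\hat{s}$) under $\Ad(t)$, and read off the twist on the generator of $Y_{Q,m}/Y_{Q,m}^{\mathrm{sc}}$ via the $q_m$-pull-back. The paper sidesteps the Steinberg/tame-symbol bookkeeping you flag as the ``hard part'' by reducing via Lemma~\ref{prop:restriction-W_i} to copies of $E_{\SL(2)}$, where Kubota's explicit cocycle \eqref{eqn:Kubota-cocycle} yields $\Ad\twomatrix{c}{}{}{1}(\tilde\delta) = -\{\mathbf{t}, c\}_F\cdot\tilde\delta$ for $\tilde\delta \mapsto \check{\epsilon}_i(\mathbf{t})$ directly; this is the precise form of the $\{c^{\mp1},\mathbf{t}\}_F^{\pm k}$ claim in your second step and is cleaner than re-extracting it from \cite[\S 11.1]{BD01}.

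The one place to push back on is your closing assertion that replacing $\bar{z}_0$ by its $\Ad(g_1)$-translate ``introduces no further factor --- which is automatic from the uniqueness of the transition isomorphisms.'' Uniqueness of $\iota\colon \pi_1^{\mathrm{\acute et}}(\mathsf{E}_\epsilon(\tilde{G}),\bar{z}_0) \to \pi_1^{\mathrm{\acute et}}(\mathsf{E}_\epsilon(\tilde{G}),\bar{z}_1)$ does not make it the identity: in the paper's proof the $\chi_c$-twist is \emph{exactly} the failure of the square \eqref{eqn:comparison-twist} to commute, and the explicit formula $\iota(\tau,\zeta) = (\tau\cdot b/\gamma^{-1}b,\,\zeta)$ for $b^m = \hat{s}_1/\hat{s}_0$ from \cite{Weis17b} is where the content lies. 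Your final formula $q_m(\tau)\bigl([c^{m/2}]\bigr) = \epsilon(\tau^{-1}\sqrt{c}/\sqrt{c})$ happens to package the factor $b(y)/\gamma^{-1}b(y)$ for $y=\tfrac{m}{2}\check{\epsilon}_1$, so you land at the right answer, but the logical route has to go through this nontrivial $\iota$ rather than dismissing it; also note the paper's caveat that one cannot assume $s_0=s_1$ as in \cite[\S 2.3]{Weis17b}, so that $b\notin\hat{Z}$ --- an extra point you would need to track if you wanted to make your ``$E_0=E_1$, hence no further factor'' shortcut rigorous.
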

\begin{proof}
	Let $T_1 := Z_{G_1}(T)$. Upon a translation by $G(F)$, we may assume $g_1 \in T_1(F)$. In fact, in the symplectic basis we take $g_1 = t := \text{diag}(c, \ldots, c, 1, \ldots, 1)$.
	
	For any two choices of $s_0, \hat{s}_0$ and $s_1, \hat{s}_1$ as before, which give rise convenient base points $\bar{z}_0, \bar{z}_1$, we have comparison isomorphisms
	\[ C_i: \pi_1^{\text{ét}}(\mathsf{E}_\epsilon(\tilde{G}), \bar{z}_i) \rightiso (q_m)^* E_i, \quad i = 0, 1. \]
	The prior constructions are based on the pinning and canonical splittings of multiplicative $K_2$-torsors over unipotent groups; therefore they are transported by the $\Ad(t): E_G \to E_G$ of Proposition \ref{prop:BD-adjoint-action}. Now regard $s_0, \hat{s}_0$ as given and set
	\[ s_1 := \Ad(t) \circ s_0, \quad \hat{s}_1 := \Ad(t) \circ \hat{s}_0. \]
	Define $\varphi_t: E_0 \rightiso E_1$ by $\chi \mapsto \chi \circ \Ad(t)^{-1}$ and observe that $\varphi_t$ transports the splittings (after a push-out) afforded by Lemma \ref{prop:gerbe-splitting} for similar reasons. It suffices to show that the commutativity up to $\chi_c$-twist of
	\begin{equation}\label{eqn:comparison-twist} \begin{tikzcd}
		\pi_1^{\text{ét}}(\mathsf{E}_\epsilon(\tilde{G}), \bar{z}_0) \arrow{r}{\sim}[swap]{\iota} \arrow{d}[swap]{C_0} & \pi_1^{\text{ét}}(\mathsf{E}_\epsilon(\tilde{G}), \bar{z}_1) \arrow{d}{C_1} \\
		(q_m)^* E_0 \arrow{r}{\sim}[swap]{\varphi_t} & (q_m)^* E_1
	\end{tikzcd} \quad \iota: \text{transition isomorphism}. \end{equation}
	We shall make free use of the computations in \cite{Weis18b} below. Let $b \in \hat{T}$ with $b^m = \hat{s}_1/\hat{s}_0$. Represent the elements of $\pi_1^{\text{ét}}(\mathsf{E}_\epsilon(\tilde{G}), \bar{z}_0)$ as $(\tau, \zeta) \in \hat{T} \times Z_{\tilde{G}^\vee}$. To simplify matters, we fix $\gamma \in \Gamma_F$ and look only at the fibers in $E_0, E_1$ over $q_m(\gamma)$; it is shown in \cite[\S\S 2.2--2.3]{Weis18b} that
	\begin{gather*}
		\iota(\tau, \zeta) = \left( \tau \cdot \frac{b}{\gamma^{-1} b}, \zeta\right), \quad C_i(\tau, \zeta) = \left[ \hat{s}_i(y)u \mapsto \epsilon\left( \frac{\gamma^{-1} u^{1/m}}{u^{1/m}} \cdot \tau(y) \right) \zeta(y) \right] \quad (i=0,1)
	\end{gather*}
	where $(y, u) \in Y_{Q,m} \times \bar{F}^\times$ satisfies $\hat{s}_i(y)u \in \mathcal{D}_{Q,m}(\bar{F})^{\Gamma_F} = \mathcal{D}_{Q,m}(F)$. Note that in \S 2.3 of \textit{loc. cit.} one has $s_0 = s_1$ in describing $\iota$; we cannot assume this, so $b \notin \hat{Z}$ in our case.
	
	Suppose $\zeta=1$ for simplicity. Then $\varphi_t \circ C_0(\tau, 1)$ is the map
	\[ \hat{s}_1(y)u \xmapsto{\Ad(t)^{-1}} \hat{s}_0(y) u \mapsto \epsilon\left( \frac{\gamma^{-1} u^{1/m}}{u^{1/m}} \cdot \tau(y) \right). \]
	On the other hand, $C_1  \circ\iota(\tau, 1)$ maps $\hat{s}_1(y) u \in \mathcal{D}_{Q,m}(F)$ to
	\[ \epsilon\left( \frac{\gamma^{-1} u^{1/m}}{u^{1/m}} \cdot \frac{ (\tau b) (y)}{\gamma^{-1} b(y)} \right) = \epsilon\left( \frac{\gamma^{-1} u^{1/m}}{u^{1/m}} \cdot \tau(y) \right) \epsilon\left( \frac{b(y)}{\gamma^{-1} b(y)} \right). \]
	So the twist to make \eqref{eqn:comparison-twist} commutative is $y \mapsto \epsilon(b(y)/\gamma^{-1}b(y))$.

	Note that $\Ad(t)$ induces an automorphism of the central extension $\mathcal{D}_{Q,m}$, i.e. an element of $\Hom(Y_{Q,m}, \Gm)$. By Lemma \ref{prop:restriction-W_i} and our choice of $T, t$, it suffices to study $\Ad(t)$ inside copies of $E_{\SL(2)} \to \SL(2)$, one for each $\check{\epsilon}_i$. For any preimage $\tilde{\delta} \in E_{\SL(2)}(F(\!(\mathbf{t})\!))$ of $\check{\epsilon}_1(\mathbf{t})$, Kubota's cocycle \eqref{eqn:Kubota-cocycle} yields $\Ad\twomatrix{c}{}{}{1}(\tilde{\delta}) = (-\{\mathbf{t}, c\}_F) \cdot \tilde{\delta}$. From $\Res\{\mathbf{t}, c\}_F = c$ we deduce
	\begin{align*}
		b^m = \frac{\Ad(t) \circ \hat{s}_0}{\hat{s}_0}: Y_{Q,m} & \longrightarrow F^\times \\
		y = \sum_{i=1}^n a_i \check{\epsilon}_i & \longmapsto c^{-(a_1 + \cdots a_n)}.
	\end{align*}
	Recall $Y_{Q,m} = \frac{m}{2}Y$; thus the definition of Hilbert symbols and \eqref{eqn:norm-residue-d} imply
	\[ \epsilon\left( \frac{b(y)}{\gamma^{-1}b(y)}\right) = \epsilon \left( \text{rec}_F(\gamma), c^{a_1 + \cdots + a_n} \right)_{F, m}^{-1} = \epsilon\left( \text{rec}_F(\gamma), c^{2(a_1 + \cdots + a_n)/m} \right)_{F,2}^{-1}. \]
	As $Y_{Q,m}^\text{sc} = \frac{m}{2} Y_0$, it yields a character of $Y_{Q,m}/Y_{Q,m}^\text{sc}$ mapping any generator to $\epsilon(\text{rec}_F(\gamma), c)_{F,2} = \chi_c(\gamma)$.
\end{proof}

See also \cite[\S 7]{GG} for a different construction of splittings.

\subsection{Rescaling}\label{sec:rescaling}
In this subsection we work with a pinned quasisplit $F$-group $G$ and $E \twoheadrightarrow G$ in $\cate{CExt}(G, \shK_2)$, classified by some $(Q, \mathcal{D}, \varphi)$. We only retain the datum $Q$ in the notation; for instance $kQ$ really means the $k$-fold Baer sum of $E$ or the corresponding triple. Consider
\[ m \in \Z_{\geq 1}, \quad k \in \Z \smallsetminus \{0\}, \quad d := \text{gcd}(k,m), \quad m = dm', \; k = dk'. \]
Fix $\epsilon: \mu_m \hookrightarrow \CC^\times$ and set $\epsilon' := \epsilon^{k'}: \mu_{m'} \hookrightarrow \CC^\times$. Denote by $\tilde{G}[kQ, m]$, $\tilde{G}[Q, m']$ the BD-covers attached to these data, and affix $\epsilon, \epsilon'$ to denote their push-outs; the same formalism also applies to other constructions. Our goal is to identify the $L$-groups $\Lgrp{\tilde{G}}_\epsilon[kQ,m]$ and $\Lgrp{\tilde{G}}_{\epsilon'}[Q, m']$, which should reflect Remark \ref{rem:rescaling-Q}. Unexplained notations can be found in \cite[\S 3]{Weis18}.

Note that $B_{kQ} = kB_Q$. We claim that
\begin{align*}
	Y_{kQ,m} & := \left\{ y \in Y: kB_Q(y,Y) \subset m\Z \right\} \\
	& = \left\{ y \in Y: k' B_Q(y,Y) \subset m'\Z \right\} = Y_{Q,m'}.
\end{align*}
Only the last equality is nontrivial, for which $\supset$ is clear. As for $\subset$, take $a,b \in \Z$ such that $k' a + m' b = 1$, then $y \in Y_{kQ,m}$ implies that $B_Q(y,Y) = (k'a + m'b) B_Q(y,Y) \subset m'\Z$, thus $y \in Y_{Q,m'}$.

Accordingly, $X_{kQ, m} := \left\{x \in X \otimes \Q: \lrangle{x, Y_{kQ,m}} \subset \Z \right\}$ equals $X_{Q,m'}$. Next, for every root $\phi$ we have
\[ n_\phi[kQ, m] := \dfrac{m}{\text{gcd}(m, kQ(\check{\phi}))} = \dfrac{m'}{\text{gcd}(m', k'Q(\check{\phi}))} = n_\phi[Q, m']. \]
Hence the modified roots/coroots for $kQ, m$ are the same as those for $Q, m'$. It also follows that $Y_{kQ, m}^\text{sc} = Y_{Q, m'}^\text{sc}$. Write $\tilde{Z}^\vee := \Hom(Y_{Q,m'}/Y^\text{sc}_{Q,m'}, \CC^\times)$.

\begin{lemma}\label{prop:rescale-dual}
	The dual groups $\tilde{G}^\vee[kQ, m]$ and $\tilde{G}^\vee[Q, m']$ are defined by the same based root datum with $\Gamma_F$-action; the corresponding $\tau_{kQ, m}, \tau_{Q, m'}: \bmu_2 \to \tilde{Z}^\vee$ are also equal.
\end{lemma}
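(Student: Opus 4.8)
The plan is to deduce both assertions directly from the lattice identities just established. For the first assertion, recall that in \cite[\S 3]{Weis17} the dual group $\tilde{G}^\vee[\,\cdot\,]$ is defined as the pinned $\CC$-group attached to the based root datum $(Y_{\cdot}, \tilde{\Delta}^\vee, X_{\cdot}, \tilde{\Delta})$, carrying the $\Gamma_F$-action inherited from the one on $Y$ and $X$. We have already verified $Y_{kQ,m} = Y_{Q,m'}$ and $X_{kQ,m} = X_{Q,m'}$ as sublattices of $Y$ and of $X \otimes \Q$, that $n_\phi[kQ,m] = n_\phi[Q,m']$ for every root $\phi$ — so the modified roots and coroots, hence $\tilde{\Delta}$ and $\tilde{\Delta}^\vee$, coincide — and that $Y_{kQ,m}^{\mathrm{sc}} = Y_{Q,m'}^{\mathrm{sc}}$. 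All of these are subobjects of $(X,Y)$ equipped with the restriction of a single $\Gamma_F$-action, so the two based root data together with their $\Gamma_F$-actions are literally the same; this gives the canonical identification $\tilde{G}^\vee[kQ,m] \rightiso \tilde{G}^\vee[Q,m']$ of pinned $\CC$-groups with $\Gamma_F$-action. In particular the two centres, each equal to $\tilde{Z}^\vee = \Hom(Y_{Q,m'}/Y_{Q,m'}^{\mathrm{sc}}, \CC^\times)$, agree, which makes the last clause of the statement meaningful.

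For the equality $\tau_{kQ,m} = \tau_{Q,m'}$ I would pass to Cartier duals. A homomorphism $\bmu_2 \to \tilde{Z}^\vee$ is the same as a homomorphism $Y_{Q,m'}/Y_{Q,m'}^{\mathrm{sc}} \to \Hom(\bmu_2,\CC^\times) = \tfrac{1}{2}\Z/\Z$, and under this correspondence $\tau_{kQ,m}$ is dual to the map induced by $y \mapsto m^{-1}(kQ)(y) = k'\cdot\tfrac{1}{m'}Q(y)$ while $\tau_{Q,m'}$ is dual to the one induced by $y \mapsto \tfrac{1}{m'}Q(y)$. Both maps already make sense on $Y_{Q,m'}$: if $y \in Y_{Q,m'}$ then $2Q(y) = B_Q(y,y) \in m'\Z$, so $\tfrac{1}{m'}Q(y) \in \tfrac{1}{2}\Z$ and hence also $k'\cdot\tfrac{1}{m'}Q(y) \in \tfrac{1}{2}\Z$; moreover both are known, by the definition recalled in \S\ref{sec:L-group}, to factor through $Y_{Q,m'}/Y_{Q,m'}^{\mathrm{sc}}$. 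So it is enough to show that these two maps $Y_{Q,m'} \to \tfrac{1}{2}\Z/\Z$ coincide. Their difference sends $y$ to $(k'-1)\,\tfrac{1}{m'}Q(y) \bmod \Z$. If $k'$ is odd, then $\tfrac{k'-1}{2} \in \Z$ and $(k'-1)\,\tfrac{1}{m'}Q(y) = \tfrac{k'-1}{2}\cdot\tfrac{2Q(y)}{m'} \in \Z$, so the difference vanishes. If $k'$ is even, then $\gcd(k',m')=1$ forces $m'$ to be odd; combined with $m' \mid 2Q(y)$ this yields $m' \mid Q(y)$, so both maps are identically zero. Either way they agree, whence $\tau_{kQ,m} = \tau_{Q,m'}$.

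There is no real obstacle beyond careful bookkeeping; the one point that must not be overlooked is the parity split in the last step, where one uses that $k'$ even automatically forces $m'$ odd — so that the potentially discrepant class $\tfrac{1}{2} \in \tfrac{1}{2}\Z/\Z$ never actually arises, and in that case both $\tau$'s are in fact trivial, consistent with the criterion that $\tau_{Q,m''}(-1) \neq 1$ only when $m'' \equiv 2 \pmod 4$.
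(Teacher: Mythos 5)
Your proof is correct and takes essentially the same route as the paper: where the paper encodes the identity $m^{-1}(kQ) = k'\cdot m'^{-1}Q$ in a commutative diagram and observes that $k'$-multiplication on $\tfrac{1}{2}\Z/\Z$ is the identity or zero according to the parity of $k'$, you subtract the two maps directly and run the same parity split, using in the even case that $\gcd(k',m')=1$ forces $m'$ odd so both maps vanish. Nothing to fix.
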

\begin{proof}
	We only need to show $\tau_{kQ,m} = \tau_{Q,m'}$. By construction, they are dual to the rows of the commutative diagram below (see \cite[\S 2.2]{Weis18}).
	\[\begin{tikzcd}[row sep=small]
		Y_{kQ,m}/Y_{kQ,m}^\text{sc} \arrow[twoheadrightarrow]{r} \arrow[-, double equal sign distance]{d} & \left( Y_{kQ,m}/Y_{kQ,m}^\text{sc}\right)\big/m(\cdots) \arrow{r}{m^{-1}kQ} & \frac{1}{2}\Z \big/ \Z \\
		Y_{Q,m'}/Y_{Q,m'}^\text{sc} \arrow[twoheadrightarrow]{r} & \left( Y_{Q,m'}/Y_{Q,m'}^\text{sc}\right)\big/m'(\cdots) \arrow[twoheadrightarrow]{u} \arrow{r}[swap]{m'^{-1}Q} & \frac{1}{2}\Z \big/ \Z \arrow{u}[swap]{k'}
	\end{tikzcd}\] \index{tauQm@$\tau_{Q,m}$}
	We contend that the two arrows $Y_{\cdots}/Y_{\cdots}^\text{sc} \to \frac{1}{2}\Z / \Z$ are equal. We may assume $k'$ even; $Y_{kQ,m}/Y_{kQ,m}^\text{sc} \to \frac{1}{2}\Z / \Z$ is then trivial. In this case $m'$ must be odd, so $Y_{Q,m'}/Y_{Q,m'}^\text{sc} \to \frac{1}{2}\Z / \Z$ is trivial as well since it factorizes through an $m'$-torsion group.
\end{proof}
As a consequence, the extensions $\tau_{kQ, m,*}(\widetilde{\Gamma_F})$ and $\tau_{Q, m', *}(\widetilde{\Gamma_F})$ of $\Gamma_F$ by $\tilde{Z}^\vee$ (see \eqref{eqn:metaGalois}) are also equal. The next step is to compare the gerbes $\mathsf{E}_\epsilon(\tilde{G}[kQ, m])$ and $\mathsf{E}_{\epsilon'}(\tilde{G}[Q, m'])$. To $E$ and $kE$ are associated $\mathcal{D}[Q]$, $\mathcal{D}[kQ]$ from \eqref{eqn:D-extension}, respectively; their pull-backs to $Y_{Q,m'} = Y_{kQ, m}$ yield $\mathcal{D}_{Q, m'}$ and $\mathcal{D}_{kQ, m}$. The following is a quick replacement of \cite[Assumption 3.1]{Weis18}. It enables us to use the constructions in \cite[\S\S 3.1---3.3]{Weis18}.

\begin{hypothesis}\label{hyp:D-comm}
	The group $\mathcal{D}_{Q,m'}$ is commutative.
\end{hypothesis}
By construction, $\mathcal{D}[kQ]$ (resp. $\mathcal{D}_{kQ,m}$) is the $k$-fold Baer sum of $\mathcal{D}[Q]$ (resp. of $\mathcal{D}_{Q,m'}$), thus is commutative as well.

\begin{example}
	When $G = \Sp(W)$, we know $B_Q$ is even-valued on $Y$: it suffices to consider the $Q$ in \eqref{eqn:Y-Sp}. Hence the commutator formula of \cite[Proposition 3.11]{BD01} implies the commutativity of $\mathcal{D}[Q]$. 
\end{example}

We tabulate some functorial operations from \cite[\S 19.3]{Weis18}. In what follows, $A$, $B$, etc. stand for sheaves of commutative groups over $S_{\text{ét}}$ where $S$ is some scheme; more generally one can work in a $1$-topos. For all $m \in \Z \smallsetminus \{0\}$ in view, we assume $A \xrightarrow{m} A$ is an epimorphism and set $A_{[m]} := \Ker(m)$. Torsors will be manipulated ``set-theoretically'' below, as justified by the usual formal apparatus, eg. \cite[Exp VII, I.1.2.1]{SGA7-1}. Likewise, we shall forget topology when talking about gerbes (generally in a $(2,1)$-topos) and treat them as groupoids.

\begin{itemize}
	\item Let $\varphi: A \to B$ be a homomorphism of sheaves of groups. The push-out of an $A$-torsor $\mathcal{P}$ under $\varphi$ will be written as $\varphi_* \mathcal{P} = B \wedge^{A, \varphi} \mathcal{P}$, whose elements are expressed as $b \wedge p$, with $b \wedge ap = b \varphi(a) \wedge p$ as usual. Let $\mathcal{P}$ be a $A$-torsor and $\mathcal{Q}$ be a $B$-torsor; an equivariant morphism $\mathcal{P} \xrightarrow{f} \mathcal{Q}$ relative to $\varphi$ is the same as a morphism $B \wedge^{A,\varphi} \mathcal{P} \to \mathcal{Q}$ of $B$-torsors (covering $\identity_B$), which sends $b \wedge p \mapsto b f(p)$.
	\item Let $m \in \Z_{\geq 1}$. The gerbe $\sqrt[m]{\mathcal{P}}$ banded by $A_{[m]}$ is defined as follows. Its objects are morphisms $\mathcal{H} \xrightarrow{f} \mathcal{P}$ of torsors, equivariant relative to $m: A \to A$. Its morphisms are commutative diagrams
	$\begin{tikzcd}[column sep=small]
		\mathcal{H} \arrow{r}{g} \arrow[bend right]{rr}[swap]{f} & \mathcal{H}' \arrow{r}{f'} & \mathcal{P}
	\end{tikzcd}$ where $g$ is a morphism of $A$-torsors, in particular $\Aut(\mathcal{H}, f) = A_{[m]}$.

	\item Consider a commutative central extension $C \hookrightarrow \mathcal{E} \twoheadrightarrow Z$ over $S_{\text{ét}}$. Take $A := \Hom(Z, C)$ (internal Hom). The splittings of $\mathcal{E} \to Z$ form an $A$-torsor, denoted by $\text{Spl}(\mathcal{E})$. Let $k \in \Z$. Every splitting $s$ for $\mathcal{E}$ gives rise to a splitting $1 \wedge s$ for $k_* \mathcal{E}$. Thus we obtain an isomorphism of $A$-torsors
\[ k_* \text{Spl}(\mathcal{E}) \longrightarrow \text{Spl}(k_* \mathcal{E}), \quad s \mapsto 1 \wedge s. \]
The $k_*$ on the left-hand side is required to obtain a morphism covering $\identity_A$.

	\item Suppose $m = dm'$. There is a functor of gerbes $\sqrt[m']{\mathcal{P}} \to \sqrt[m]{d_* \mathcal{P}}$ lying over $A_{[m']} \hookrightarrow A_{[m]}$, which maps an object $\mathcal{H} \xrightarrow{f} \mathcal{P}$ to $\mathcal{H} \xrightarrow{1 \wedge f} d_* \mathcal{P}$. Here $1 \wedge f: h \mapsto 1 \wedge f(h)$ is $m$-equivariant since $1 \wedge f(th) = 1 \wedge (t^{m'} f(h)) = t^m \wedge f(h)$ for all $t \in A$, $h \in \mathcal{H}$. The definition on morphisms is clear, and it is readily seen to be $A_{[m']} \hookrightarrow A_{[m]}$ on automorphisms.
	
	\item Suppose $k', m'$ are coprime, $m' \in \Z_{\geq 1}$. There is a functor between gerbes $\sqrt[m']{\mathcal{P}} \to \sqrt[m']{k'_* \mathcal{P}}$ lying over $k':A_{[m']} \rightiso A_{[m']}$. Indeed, on objects we send $\mathcal{H} \xrightarrow{f} \mathcal{P}$ to $k'_* \mathcal{H} \xrightarrow{k'_* f} k'_* \mathcal{P}$, where $k'_*f(t \wedge h) = t^{m'} \wedge f(h)$. One readily verifies that $k'_* f$ is well-defined and $m'$-equivariant. The definition on morphisms is clear, and it sends every automorphism $t \in A_{[m']}$ to $t^{k'}$.
\end{itemize}

Reverting to our original problem, define
\begin{gather*}
	\hat{\mathcal{T}} := \Hom(Y_{Q,m'}, \Gm) \to \Hom(Y_{Q,m'}^\text{sc}, \Gm) =: \hat{\mathcal{T}}_\text{sc} \quad (F-\text{tori}), \\
	\tilde{\mathcal{T}}^\vee := \Hom(Y_{Q,m'}, \CC^\times) \to \Hom(Y^\text{sc}_{Q,m'}, \CC^\times) =: \tilde{\mathcal{T}}^\vee_\text{sc}, \quad (\CC-\text{tori}).
\end{gather*}
By taking $A = \hat{\mathcal{T}}$, $C = \Gm$ and $Z = Y_{Q,m'}$, we deduce $k_* \text{Spl}(\mathcal{D}_{Q,m'}) \rightiso \text{Spl}(\mathcal{D}_{kQ, m})$ and obtain functors of gerbes
\[\begin{tikzcd}[row sep=small]
	 \sqrt[m']{ \text{Spl}(\mathcal{D}_{Q,m'}) } \arrow{r} & \sqrt[m']{ k'_* \text{Spl}(\mathcal{D}_{Q,m'}) } \arrow{r} & \sqrt[m]{k_* \text{Spl}(\mathcal{D}_{Q,m'}) } \arrow{r} & \sqrt[m]{\text{Spl}(\mathcal{D}_{kQ, m})} \\
	 \hat{\mathcal{T}}_{[m']} \arrow{r}{k'}[swap]{\sim} & \hat{\mathcal{T}}_{[m']} \arrow[hookrightarrow]{r} & \hat{\mathcal{T}}_{[m]} \arrow{r}{\identity} & \hat{\mathcal{T}}_{[m]}.
\end{tikzcd}\]
In parallel, $k_* \text{Spl}(\mathcal{D}^\text{sc}_{Q,m'}) \rightiso \text{Spl}(\mathcal{D}^\text{sc}_{kQ, m})$ and there is a functor $\sqrt[m']{ \text{Spl}(\mathcal{D}^\text{sc}_{Q,m'}) } \to \sqrt[m]{\text{Spl}(\mathcal{D}^\text{sc}_{kQ, m})}$ lying over $\hat{\mathcal{T}}_{\text{sc}, [m']} \xrightarrow[\sim]{k'} \hat{\mathcal{T}}_{\text{sc}, [m']} \hookrightarrow \hat{\mathcal{T}}_{\text{sc}, [m]}$.

In \cite[\S 19.3.2]{Weis18}, the push-out of a gerbe is defined by pushing out the $\Hom$-torsors, thus we obtain $k'_* \sqrt[m']{ \text{Spl}(\mathcal{D}_{Q,m'}) } \to \sqrt[m]{\text{Spl}(\mathcal{D}_{kQ, m})}$ lying over $\hat{\mathcal{T}}_{[m']} \hookrightarrow \hat{\mathcal{T}}_{[m]}$; same for the sc-case. Pushing out by $\epsilon_*: \hat{\mathcal{T}}_{[m]} \rightiso \tilde{\mathcal{T}}^\vee_{[m]}$ and its sc-variant yield a diagram of gerbes and functors (cf. \cite[\S 3.1]{Weis18})
\begin{equation}\label{eqn:E-functors} \begin{tikzcd}[column sep=small]
	\mathsf{E}_{\epsilon'}(\tilde{T}[Q, m']) \arrow{r} \arrow{d}[swap]{\pi} & \mathsf{E}_\epsilon(\tilde{T}[kQ, m]) \arrow{d}{\pi} \\
	\mathsf{E}_{\epsilon'}^\text{sc}(\tilde{T}[Q, m']) \arrow{r} & \mathsf{E}^\text{sc}_\epsilon(\tilde{T}[kQ, m])
\end{tikzcd} \;\text{lying over}\; \begin{tikzcd}[column sep=small]
	\tilde{\mathcal{T}}^\vee_{[m']} \arrow{d} \arrow[hookrightarrow]{r} & \tilde{\mathcal{T}}^\vee_{[m]} \arrow{d} \\
	\tilde{\mathcal{T}}^\vee_{\text{sc}, [m']} \arrow[hookrightarrow]{r} & \tilde{\mathcal{T}}^\vee_{\text{sc}, [m]}
\end{tikzcd}\end{equation}
which is $2$-commutative. Here $\pi$ is induced by the natural arrows $\text{Spl}(\mathcal{D}_{Q,m'}) \to \text{Spl}(\mathcal{D}^\text{sc}_{Q, m'})$ and $\text{Spl}(\mathcal{D}_{kQ,m}) \to \text{Spl}(\mathcal{D}^\text{sc}_{kQ, m})$.

\begin{lemma}\label{prop:rescale-gerbe}
	Under Hypothesis \ref{hyp:D-comm}, the extensions \eqref{eqn:gerbe-pi1} attached to $(kQ,m,\epsilon)$ and $(Q,m',\epsilon')$ are canonically isomorphic.
\end{lemma}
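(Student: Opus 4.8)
The plan is to deduce the statement from the $2$-functoriality of Weissman's construction of the gerbe $\mathsf{E}_\epsilon(\tilde{G})$, applied to the $2$-commutative square \eqref{eqn:E-functors}. Recall that \eqref{eqn:gerbe-pi1} is nothing but the extension $1 \to Z_{\tilde{G}^\vee} \to \pi_1^{\text{ét}}(\mathsf{E}_\epsilon(\tilde{G})) \to \Gamma_F \to 1$ attached to the $Z_{\tilde{G}^\vee}$-gerbe $\mathsf{E}_\epsilon(\tilde{G})$ over $\Spec(F)_{\text{ét}}$, and that $\pi_1^{\text{ét}}$ is functorial on the $2$-groupoid of $Z_{\tilde{G}^\vee}$-gerbes over $\Spec(F)_{\text{ét}}$, sending an equivalence of gerbes lying over $\identity_{Z_{\tilde{G}^\vee}}$ to an isomorphism of extensions (covering $\identity$ on kernel and quotient, hence bijective by the five lemma). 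By Lemma \ref{prop:rescale-dual} the dual groups, the tori $\tilde{\mathcal{T}}^\vee$, $\tilde{\mathcal{T}}^\vee_{\text{sc}}$ and the centre $\tilde{Z}^\vee = Z_{\tilde{G}^\vee}$ attached to $(kQ,m)$ and to $(Q,m')$ coincide. So it suffices to produce a canonical equivalence of $\tilde{Z}^\vee$-gerbes $\mathsf{E}_{\epsilon'}(\tilde{G}[Q,m']) \simeq \mathsf{E}_\epsilon(\tilde{G}[kQ,m])$ lying over $\identity_{\tilde{Z}^\vee}$, and then apply $\pi_1^{\text{ét}}$.

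First I would note that Hypothesis \ref{hyp:D-comm} makes $\mathcal{D}_{Q,m'}$ commutative, hence so is its $k$-fold Baer sum $\mathcal{D}_{kQ,m}$, and likewise for the sc-variants; therefore the replacement \cite[\S\S 3.1--3.3]{Weis17} of \cite[Assumption 3.1]{Weis17} applies to both sets of data, and $\mathsf{E}_\epsilon(\tilde{G})$ is obtained in each case from the pair $\mathsf{E}_\epsilon(\tilde{T}) \xrightarrow{\pi} \mathsf{E}_\epsilon^{\text{sc}}(\tilde{T})$ by the $2$-functorial construction of \textit{loc. cit.} (the homotopy fibre of $\pi$ over the trivial $\tilde{\mathcal{T}}^\vee_{\text{sc}}$-gerbe), whose band is $\Ker[\tilde{\mathcal{T}}^\vee \to \tilde{\mathcal{T}}^\vee_{\text{sc}}] = \tilde{Z}^\vee$. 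Feeding the $2$-commutative square \eqref{eqn:E-functors} into this construction yields a functor $\Phi: \mathsf{E}_{\epsilon'}(\tilde{G}[Q,m']) \to \mathsf{E}_\epsilon(\tilde{G}[kQ,m])$, determined up to unique $2$-isomorphism, lying over the homomorphism of bands induced by the inclusions $\tilde{\mathcal{T}}^\vee_{[m']} \hookrightarrow \tilde{\mathcal{T}}^\vee_{[m]}$ and $\tilde{\mathcal{T}}^\vee_{\text{sc},[m']} \hookrightarrow \tilde{\mathcal{T}}^\vee_{\text{sc},[m]}$ of \eqref{eqn:E-functors}, i.e. over the restriction of those inclusions to the kernels; by Lemma \ref{prop:rescale-dual} this induced homomorphism is exactly $\identity_{\tilde{Z}^\vee}$.

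Finally I would invoke the general fact that a morphism of gerbes over $\Spec(F)_{\text{ét}}$ banded by a fixed sheaf of groups and compatible with the bands is automatically an equivalence: over a common étale cover both source and target have objects, over which $\Phi$ is the identity on automorphism sheaves (hence fully faithful) and locally essentially surjective since the target is a gerbe, so it is an equivalence of stacks. Thus $\Phi$ is an equivalence of $\tilde{Z}^\vee$-gerbes over $\identity_{\tilde{Z}^\vee}$, and applying $\pi_1^{\text{ét}}$ gives the asserted isomorphism; it is canonical because every arrow in \eqref{eqn:E-functors} is, being built from $\epsilon' = \epsilon^{k'}$, the transfer/norm maps in $K$-theory, and the canonical splittings of multiplicative $K_2$-torsors over unipotent subgroups used for the convenient base-points. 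The one non-formal point — and the main obstacle — is to verify that Weissman's passage from the pair $\mathsf{E}_\epsilon(\tilde{T}) \xrightarrow{\pi} \mathsf{E}_\epsilon^{\text{sc}}(\tilde{T})$ to $\mathsf{E}_\epsilon(\tilde{G})$ is genuinely $2$-functorial, so that the $2$-commutativity datum of \eqref{eqn:E-functors} descends to a well-defined $\Phi$; this requires unwinding \cite[\S\S 3.2--3.3]{Weis17} far enough to track the coherence $2$-morphisms, after which the argument is purely formal and rests on the bookkeeping already recorded in the constructions preceding \eqref{eqn:E-functors}.
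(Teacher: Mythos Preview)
Your high-level strategy is sound: produce a functor of $\tilde{Z}^\vee$-gerbes lying over $\identity_{\tilde{Z}^\vee}$, observe it must then be an equivalence, and apply $\pi_1^{\text{ét}}$. The paper does the same. But your description of Weissman's construction is inaccurate, and the inaccuracy hides exactly the computation that does the work.

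The gerbe $\mathsf{E}_\epsilon(\tilde{G})$ is \emph{not} ``the homotopy fibre of $\pi$ over the trivial $\tilde{\mathcal{T}}^\vee_{\text{sc}}$-gerbe''. It is the gerbe of liftings $\pi^{-1}(\bm{w})$ of a \emph{specific} object $\bm{w}$ of $\mathsf{E}_\epsilon^{\text{sc}}(\tilde{T})$, the \emph{Whittaker object}, built from the pinning via the $\hat{\mathcal{T}}_{\text{sc}}$-torsor $\mathrm{Whit}$ of generic characters and the splitting $\omega$ of $\mathcal{D}_{Q,m'}^{\text{sc}}$; see \cite[\S 3.3]{Weis17}. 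So the input to the construction is not just the arrow $\pi$ but the pair $(\pi,\bm{w})$, and the $2$-commutative square \eqref{eqn:E-functors} alone does not produce a functor $\Phi$: you additionally need to know that the bottom row of \eqref{eqn:E-functors} carries $\bm{w}[Q,m']$ to (an object isomorphic to) $\bm{w}[kQ,m]$. That is the non-formal content of the lemma, and it is what the paper's proof actually computes, tracking the Whittaker object through each of the three functors in the bottom row and using in particular the identity $m_\phi[kQ,m] = k'\, m_\phi[Q,m']$ so that $\mu[kQ,m] = \mu[Q,m']^{k'}$.

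Your closing remark that the ``one non-formal point'' is abstract $2$-functoriality of $(\pi)\mapsto \mathsf{E}_\epsilon(\tilde{G})$ therefore misidentifies the obstacle: the construction is already $2$-functorial in $(\pi,\bm{w})$, and the substance lies in matching the $\bm{w}$'s. Once you add that verification your argument coincides with the paper's.
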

\begin{proof}
	Every functor $\mathsf{E}_1 \to \mathsf{E}_2$ of gerbes induces a morphism $\pi_1^{\text{ét}}(\mathsf{E}_1) \to \pi_1^{\text{ét}}(\mathsf{E}_2)$ of group extensions by \cite[\S 19.4]{Weis18}. By definition, $\mathsf{E}_{\epsilon'}(\tilde{G}[Q, m'])$ is the ``gerbe of liftings'' $\pi^{-1}(\bm{w}[Q, m'])$ (see \cite[\S 19.3.3]{Weis18}) for the ``Whittaker object'' $\bm{w}[Q, m']$ in $\mathsf{E}_{\epsilon'}^\text{sc}(\tilde{T}[Q, m'])$. Similarly for $\mathsf{E}_\epsilon(\tilde{G}[kQ,m])$ and $\bm{w}[kQ, m]$. Both gerbes are banded by $\tilde{Z}^\vee$ and we want a functor in between. For reasons of functoriality, it suffices to show that $\bm{w}[Q, m'] \mapsto \bm{w}[kQ, m]$ under \eqref{eqn:E-functors}.
	
	By the identification made in Lemma \ref{prop:rescale-dual}, the $\hat{\mathcal{T}}_\text{sc}$-torsor $\text{Whit}$ of generic characters in \cite[\S 3.3]{Weis18} is the same for both $(kQ,m)$ and $(Q, m')$. Given each $\eta$ in $\text{Whit}$ (living possibly in some étale covering), we have the $\omega[Q, m'](\eta) \in \text{Spl}(\mathcal{D}_{Q, m'}^\text{sc})$ defined using the pinning, as discussed in \textit{loc.\ cit.}, \cite[\S 1.1]{Weis18b} and partly reviewed in \S\ref{sec:second-twist}. By construction, $\omega[kQ, m](\eta)$ is nothing but its image $1 \overset{k}{\wedge} \omega[Q, m'](\eta)$ in $\text{Spl}(\mathcal{D}_{kQ, m}^\text{sc}) \simeq k_* \text{Spl}(\mathcal{D}_{Q, m'}^\text{sc})$. Next, for every root $\phi$,
	\[ m_\phi[kQ, m] := \dfrac{kQ(\check{\phi})}{\text{gcd}(m, kQ(\check{\phi}))} = \dfrac{k'Q(\check{\phi})}{\text{gcd}(m', Q(\check{\phi}))} = k' m_\phi[Q, m']. \]
	Hence the endomorphism $\mu[kQ, m]$ of $\hat{\mathcal{T}}_\text{sc}$ in \cite[\S 3.3]{Weis18} equals $\mu[Q, m']^{k'}$. It is shown in \textit{loc. cit.} that $1 \wedge \eta \mapsto \omega[Q, m'](\eta)$ yields an $m'$-equivariant morphism $\mu[Q, m']_* \text{Whit} \to \text{Spl}(\mathcal{D}^\text{sc}_{Q,m'})$ of $\hat{\mathcal{T}}_\text{sc}$-torsors. This furnishes the Whittaker object $\bm{w}[Q, m']$ of $\sqrt[m']{\text{Spl}(\mathcal{D}^\text{sc}_{Q,m'})}$; the same holds for $\bm{w}[kQ, m]$. Let us study the image of $\bm{w}[Q, m']$ under the lower row of \eqref{eqn:E-functors} in stages.
	\[\begin{tikzcd}[row sep=small]
		\mu[Q,m']_* \text{Whit} \arrow{r}{m'} & \text{Spl}(\mathcal{D}^\text{sc}_{Q, m'}) & \omega[Q, m'] \arrow[mapsto]{d} \\
		\underbracket{k'_* \mu[Q,m']_*}_{= \mu[kQ, m]_*} \text{Whit} \arrow{r}{m'} & k'_* \text{Spl}(\mathcal{D}^\text{sc}_{Q, m'}) & 1 \wedge \eta \xmapsto{k'_* \omega[Q, m']} 1 \wedge^{k'} \omega[Q, m'](\eta) \arrow[mapsto]{d} \\
		\mu[kQ, m]_* \text{Whit} \arrow{r}{m'd} & \underbracket{d_* k'_*}_{\simeq k_*} \text{Spl}(\mathcal{D}^\text{sc}_{Q, m'}) \arrow{d}{\simeq} & 1 \wedge \eta \xmapsto{1 \overset{d}{\wedge} (k'_* \omega[Q, m'])} 1 \wedge^{d} 1 \wedge^{k'} \omega[Q, m'](\eta) \arrow[mapsto]{d}{\simeq} \\
		\mu[kQ, m]_* \text{Whit} \arrow{r}{m} & \text{Spl}(\mathcal{D}^\text{sc}_{kQ, m}) &  \left[ \eta \xmapsto{1 \wedge^k \omega[Q, m']} 1 \wedge^k \omega[Q, m'](\eta) \right] \simeq \omega[kQ, m]
	\end{tikzcd}\]
	Here $m'$, etc. over the arrows between torsors record the equivariance. This completes the proof.
\end{proof}

\begin{theorem}\label{prop:rescale-L}
	Under Hypothesis \ref{hyp:D-comm}, the $L$-groups $\Lgrp{\tilde{G}}[kQ, m]$ and $\Lgrp{\tilde{G}}[Q, m']$ are canonically isomorphic as extensions of $\Gamma_F$ by $\tilde{G}^\vee$.
\end{theorem}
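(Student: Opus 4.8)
The plan is to assemble the $L$-group from its two constituent extensions and to invoke the comparisons of the previous two lemmas componentwise. Recall that $\Lgrp{\tilde{G}}$ is obtained as a $\Gamma_F$-equivariant push-out via $Z_{\tilde{G}^\vee} \hookrightarrow \tilde{G}^\vee$ of the Baer sum $\Lgrp{Z}$ of the metaGalois extension $\tau_{Q,m,*}(\widetilde{\Gamma_F})$ from \eqref{eqn:metaGalois} and the fundamental-group extension $\pi_1^{\text{ét}}(\mathsf{E}_\epsilon(\tilde{G}))$ from \eqref{eqn:gerbe-pi1}. So it suffices to produce canonical isomorphisms of extensions of $\Gamma_F$ by $\tilde{Z}^\vee$, compatibly, for each of the two pieces, and then to check that these isomorphisms respect the Baer sum and the final push-out along $Z_{\tilde{G}^\vee} \hookrightarrow \tilde{G}^\vee$.

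First I would record that Lemma \ref{prop:rescale-dual} already gives everything on the purely ``dual-group'' side: the based root data of $\tilde{G}^\vee[kQ,m]$ and $\tilde{G}^\vee[Q,m']$ with $\Gamma_F$-action coincide, so $\tilde{G}^\vee$, $\tilde{Z}^\vee = Z_{\tilde{G}^\vee}$, and the homomorphisms $\tau_{kQ,m} = \tau_{Q,m'}: \bmu_2 \to \tilde{Z}^\vee$ agree. Since the metaGalois extension $\widetilde{\Gamma_F}$ of \eqref{eqn:metaGalois} depends only on $F$ (via $\text{rec}_F$ and $(\cdot,\cdot)_{F,2}$) and not on the Brylinski--Deligne data at all, its push-out $\tau_{kQ,m,*}(\widetilde{\Gamma_F}) = \tau_{Q,m',*}(\widetilde{\Gamma_F})$ is literally the same extension. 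This disposes of the first summand, canonically and $\Gamma_F$-equivariantly. For the second summand, Lemma \ref{prop:rescale-gerbe} (valid under Hypothesis \ref{hyp:D-comm}) provides a canonical isomorphism between the extensions $\pi_1^{\text{ét}}(\mathsf{E}_\epsilon(\tilde{G}[kQ,m]))$ and $\pi_1^{\text{ét}}(\mathsf{E}_{\epsilon'}(\tilde{G}[Q,m']))$ of $\Gamma_F$ by $\tilde{Z}^\vee$, obtained by the functor of gerbes of \eqref{eqn:E-functors} carrying the Whittaker object $\bm{w}[Q,m']$ to $\bm{w}[kQ,m]$ and then applying $\pi_1^{\text{ét}}(-)$.

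Having the two componentwise isomorphisms over $\identity_{\tilde{Z}^\vee}$ and $\identity_{\Gamma_F}$, the remaining step is formal: the Baer sum is a functor on the Picard groupoid $\cate{CExt}(\Gamma_F, \tilde{Z}^\vee)$, so a pair of isomorphisms of summands induces an isomorphism of Baer sums, giving $\Lgrp{Z}[kQ,m] \rightiso \Lgrp{Z}[Q,m']$; and push-out along the fixed inclusion $Z_{\tilde{G}^\vee} \hookrightarrow \tilde{G}^\vee$ is again functorial, so this descends to the desired $\Lgrp{\tilde{G}}[kQ,m] \rightiso \Lgrp{\tilde{G}}[Q,m']$ as extensions of $\Gamma_F$ by $\tilde{G}^\vee$. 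One should note in passing that the choice of $\epsilon' := \epsilon^{k'}$ is exactly what makes the $\epsilon_*$-push-outs in \eqref{eqn:E-functors} match up, and that Hypothesis \ref{hyp:D-comm} holds for $G = \Sp(W)$ by the Example after it, so the result applies unconditionally in our situation; this is the content anticipated in Remark \ref{rem:rescaling-Q}.

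The main obstacle is not any single computation but bookkeeping: one must be sure that the isomorphism of metaGalois summands and the gerbe isomorphism of Lemma \ref{prop:rescale-gerbe} are compatible in the sense of fitting into a single commutative ladder before taking Baer sums, i.e. that both are taken over the \emph{same} identification of $\tilde{Z}^\vee$ furnished by Lemma \ref{prop:rescale-dual}. Since the metaGalois piece is manifestly independent of the data and the gerbe piece is built by functorial push-outs over that very identification, this compatibility is automatic, but it is the point that needs to be spelled out. Once that is granted, the theorem follows by applying the two lemmas and the functoriality of Baer sum and push-out, with no further calculation.
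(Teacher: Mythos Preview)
Your proof is correct and follows exactly the paper's approach: the paper's proof is the single sentence ``Combine Lemma \ref{prop:rescale-dual} and \ref{prop:rescale-gerbe},'' and you have simply unpacked what that combination entails (identifying the metaGalois summands via $\tau_{kQ,m}=\tau_{Q,m'}$, invoking the gerbe comparison, and using functoriality of Baer sum and push-out).
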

\begin{proof}
	Combine Lemma \ref{prop:rescale-dual} and \ref{prop:rescale-gerbe}.
\end{proof}

\section{Construction of epipelagic supercuspidals for \texorpdfstring{$\widetilde{\Sp}(W)$}{tilde Sp(W)}} \label{sec:construction-rep}
Throughout this section, $F$ stands for a non-archimedean local field of residual characteristic $p$. From \S\ref{sec:compact-induction} onwards, we assume $p \neq 2$ and work with $G = \Sp(W)$; this excludes the bad and torsion primes \cite[I.4]{SS70} for $G$. The BD-covers $\tilde{G} \twoheadrightarrow G(F)$ will have degree $m$ with $p \nmid m$, which amounts to the \emph{tameness} of the cover, see \cite[\S 4]{GG}.

In \S\ref{sec:Adler-Spice}, stronger constraints on $F$ and $p$ will be imposed (Hypothesis \ref{hyp:p-large}).

\subsection{Generalities}\label{sec:MP-generalities}
For any reductive $F$-group $G$, denote by $\mathcal{B}(G, F)$ (resp. $\mathcal{B}^\text{red}(G, F)$) the enlarged Bruhat--Tits building (resp. the reduced one) of $G$. In what follows we assume $G$ splits over a tamely ramified extension. \index{Building@$\mathcal{B}(G, F)$}

Given $x \in \mathcal{B}^\text{red}(G,F)$, the Moy--Prasad filtration on $G(F)$ (resp. $\mathfrak{g}(F)$) is an increasing family of open compact subgroups (resp. of $\mathfrak{o}_F$-lattices), written as \index{G(F)_xr@$G(F)_x, G(F)_{x,r}$} \index{g(F)_xr@$\mathfrak{g}(F)_{x,r}$}
\begin{gather*}
	G(F)_{x, r}, \quad G(F)_{x, r+} := \bigcup_{s > r} G(F)_{x, s} \quad (r \geq 0), \\
	\mathfrak{g}(F)_{x,r}, \quad \mathfrak{g}(F)_{x, r+} := \bigcup_{s > r} \mathfrak{g}(F)_{x, s} \quad (r \in \R),
\end{gather*}
respectively. Conjugation by $g \in G(F)$ carries $G(F)_{x,r}$, $\mathfrak{g}(F)_{x,r}$ to $G(F)_{gx,r}$, $\mathfrak{g}(F)_{gx,r}$. Also set
\[ G(F)_{x, r:s} := G(F)_{x,r}/G(F)_{x, s}, \quad \mathfrak{g}(F)_{x, r:s} := \mathfrak{g}(F)_{x, r}/\mathfrak{g}(F)_{x,s} \]
whenever $s > r$. The meanings of $G(F)_{x, r:r+}$ and $\mathfrak{g}(F)_{x, r:r+}$ are clear. Also recall that $G(F)_{x,0}$ is the parahoric subgroup of $G(F)$ determined by $x$, and $G(F)_{x,0+}$ is its pro-unipotent radical; they are both contained in the stabilizer $G(F)_x := \Stab_{G(F)}(x)$. On the dual side, define the filtration
\begin{align*}
	\mathfrak{g}^*(F)_{x,-r} & := \left\{ X \in \mathfrak{g}^*(F) : \forall Y \in \mathfrak{g}(F)_{x,r+}, \;  v(\lrangle{X,Y}) > 0 \right\}, \\
	\mathfrak{g}^*(F)_{x, (-r)+} & := \bigcup_{s < r} \mathfrak{g}^*(F)_{x, -s}.
\end{align*} \index{g*F_xr@$\mathfrak{g}^*(F)_{x,r}$}
When $r > 0$, we have the $G(F)_{x,0:0+}$-invariant non-degenerate pairing $\mathfrak{g}^*(F)_{x, -r:(-r)+} \times \mathfrak{g}(F)_{x, r:r+} \to \kappa_F$, as well as the \emph{Moy--Prasad isomorphism} \cite{MP96} $\mathrm{MP}: G(F)_{x,r:r+} \simeq \mathfrak{g}(F)_{x, r:r+}$ between commutative groups; $\mathrm{MP}$ is $G(F)_x$-equivariant and identifies Pontryagin duals. One obtains the open subsets
\[ G(F)_r := \bigcup_x G(F)_{x,r}, \quad \mathfrak{g}(F)_r := \bigcup_x \mathfrak{g}(F)_{x,r}. \]

Let $S$ be a $F$-torus. Denote by $S(F)_b$ (resp. $S(F)_0$) the maximal bounded subgroup (resp. the parahoric subgroup) of $S(F)$. The Moy--Prasad filtration for $S$ reduces to
\begin{align*}
	S(F)_r & := \left\{ \gamma \in S(F)_0 : \forall \chi \in X^*(S_{\bar{F}}), \; v(\chi(\gamma)-1) \geq r \right\} \quad (r \geq 0), \\
	\mathfrak{s}(F)_r & := \left\{ X \in \mathfrak{s}(F) : \forall \chi \in X^*(S_{\bar{F}}), \; v(\dd\chi(X)) \geq r \right\} \quad (r \in \R).
\end{align*}
Here $v$ stands for the unique extension to $\bar{F}$ of the normalized valuation of $F$. Note that $v(\chi(\gamma)-1) \geq 0$ holds for all $\gamma \in S(F)_b$ by boundedness.

Denote the splitting field extension of $S$ by $L/F$ and let $e := e(L/F)$. We say $S$ is \emph{tame} if $L$ is, i.e. $p \nmid e$. We say $S$ is \emph{inertially anisotropic} if $X_*(S_{\bar{F}})^{I_F} = 0$, or equivalently if $S_{F^{\text{nr}}}$ is anisotropic. \index{inertially anisotropic}

Define the subgroups of $S(F)$
\begin{align*}
	S(F)_{p'} & := \left\{ x \in S(F)_b: \exists n, \; x^n = 1, \; p \nmid n \right\}, \\
	S(F)_\text{tu} & := \left\{ x \in S(F)_b: \lim_{k \to +\infty} x^{p^k} = 1 \right\}.
\end{align*}
Here $S(F)_\text{tu}$ consists of the \emph{topologically unipotent} elements in $S(F)$. By \cite[5.2]{Wa08}, we have the topological Jordan decomposition for bounded (or compact) elements \index{topological Jordan decomposition}
\begin{equation}\label{eqn:S-Jordan}
	S(F)_b = S(F)_{p'} \times S(F)_\text{tu}.
\end{equation}

Assume henceforth that $S$ is tame. It admits the ft-Néron model satisfying $S(\mathfrak{o}_F) = S(F)_b$. This Néron model coincides with all the other versions when $S$ is inertially anisotropic, see the explanations in \cite[p.50]{Kal13}; in that case
\[ S(F)_0 = S^\circ(\mathfrak{o}_F). \]
In what follows, $S$ will be a maximal $F$-torus of $G$ that is anisotropic modulo $Z_G$. It determines a point $x \in \mathcal{B}^\text{red}(G,F)$ by \cite[Remark 3]{Pra01}; this point being fixed by $N_G(S)(F)$, we deduce $S(F) \subset G(F)_x$. From \cite[Proposition 1.9.1]{Adl98} we have $\mathfrak{s}(F)_r = \mathfrak{g}(F)_{x,r} \cap \mathfrak{s}(F)$; same for the dual avatars $\mathfrak{s}^*(F)_r$ and $\mathfrak{g}^*(F)_{x,r}$.

Let $\Omega(G, S)(\bar{F})$ act on $Y := X_*(S_{\bar{F}})$ and $X := X^*(S_{\bar{F}})$. Following Springer \cite{Spr74}, we say an element $w \in \Omega(G,S)(\bar{F})$ is
\begin{itemize}
	\item \emph{regular}, if some eigenvector of $w$ acting on $Y \dotimes{\Z} \CC$ has trivial stabilizer under $\Omega(G,S)(\bar{F})$;
	\item \emph{elliptic}, if $\det( w-1 | Y \otimes \R) \neq 0$.
\end{itemize}
The torus $S$ gives rise to continuous $\Gamma_F$-actions on the $\Z$-modules $Y$ and $X$.

\begin{definition}[{\cite[Conditions 3.3]{Kal15}}]\label{def:type-ER} \index{type (ER)}
	Assume $Z_G^\circ$ is anisotropic. We say that a maximal $F$-torus $S \subset G$ is of \emph{type (ER)} if
	\begin{enumerate}[(a)]
		\item $S$ is tame, and
		\item the $I_F$-action on $Y$ is generated by an elliptic regular element of $\Omega(G,S)(\bar{F})$.
	\end{enumerate}
	It follows that $S$ is inertially anisotropic, and the foregoing discussions are applicable. When the $I_F$-action is generated by a Coxeter element, such $S$ is said to be of type (C) in \cite{Kal13}.
\end{definition}

\begin{lemma}\label{prop:S_0}
	Suppose $S$ is an inertially anisotropic $F$-torus. We have
	\[ S(F)_0 = S(F)_{0+} = S(F)_{1/e} = S(F)_\mathrm{tu}. \]
	Consequently, \eqref{eqn:S-Jordan} becomes $S(F) = S(F)_{p'} \times S(F)_{1/e}$ and $S(F)_{1/e}$ is the pro-$p$ part of $S(F)$.
\end{lemma}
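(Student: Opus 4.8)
The statement to prove is Lemma~\ref{prop:S_0}: for an inertially anisotropic (tame) $F$-torus $S$, one has the chain of equalities
\[ S(F)_0 = S(F)_{0+} = S(F)_{1/e} = S(F)_\mathrm{tu}, \]
and then the consequence about the topological Jordan decomposition follows formally. The plan is to prove the four-term equality by a circle of inclusions, then deduce the last sentence.

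\emph{First step: reductions and setup.} Let $L/F$ be the splitting field of $S$, $e = e(L/F)$, and recall $p \nmid e$ by tameness. I would fix a uniformizer $\varpi_F$ of $F$ and a uniformizer $\varpi_L$ of $L$ with $\varpi_L^e = \varpi_F \cdot (\text{unit})$; valuations $v$ on $\bar F$ are normalized as in the excerpt. Since $S$ is inertially anisotropic, $X_*(S_{\bar F})^{I_F} = 0$, equivalently $X^*(S_{\bar F})^{I_F}=0$, so $S$ has no $F^{\mathrm{nr}}$-split subtorus; in particular $S(F)$ is bounded, hence $S(F) = S(F)_b = S(F)_0$ (the parahoric equals the maximal bounded subgroup because $X_*(S)^{I_F}=0$ kills the Kottwitz homomorphism — cite \cite[p.50]{Kal13} as already invoked in the excerpt). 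This already gives $S(F)_0 = S(F)_b$; the content is to pin down the filtration jumps.

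\emph{Second step: $S(F)_{0+} = S(F)_{1/e}$ and both equal $S(F)_\mathrm{tu}$.} By definition $S(F)_r = \{\gamma \in S(F)_b : v(\chi(\gamma) - 1) \ge r \ \forall \chi \in X^*(S_{\bar F})\}$. The key observation is that for $\gamma \in S(F)_b$, each value $\chi(\gamma)$ is a unit in $\mathfrak{o}_{\bar F}$, and $\chi(\gamma) \in L^\times$ (as $\chi$ is defined over $L$), so $v(\chi(\gamma) - 1)$, if positive, is at least $v(\varpi_L) = 1/e$ — because $\chi(\gamma) - 1$ lies in the maximal ideal of $\mathfrak{o}_L$, whose elements have valuation $\ge 1/e$. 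Hence the filtration of $S(F)$ has no jump strictly between $0$ and $1/e$: $S(F)_{0+} = \bigcup_{s>0} S(F)_s = S(F)_{1/e}$. For $S(F)_{1/e} = S(F)_\mathrm{tu}$: if $\gamma \in S(F)_{1/e}$ then $\chi(\gamma) \equiv 1 \pmod{\mathfrak{p}_L}$ for all $\chi$, and raising to $p^k$ drives $\chi(\gamma)^{p^k} \to 1$ in $\mathfrak{o}_L$ (standard: $1 + \mathfrak{p}_L$ is a pro-$p$ group since $\kappa_L$ has characteristic $p$), so $\gamma^{p^k} \to 1$, i.e. $\gamma \in S(F)_\mathrm{tu}$; conversely if $\gamma \in S(F)_\mathrm{tu}$ then $\chi(\gamma) \in 1 + \mathfrak{p}_L$ (else $\chi(\gamma)$ has nontrivial image in the prime-to-$p$ group $\mu(\kappa_L) \times (\text{units of higher level})$... more precisely its image in $\kappa_L^\times$ must be a $p$-power-torsion element hence $1$), so $v(\chi(\gamma)-1) \ge 1/e$ and $\gamma \in S(F)_{1/e}$.

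\emph{Third step: $S(F)_0 = S(F)_{0+}$.} This is where inertial anisotropy is essential and is the main obstacle. Equivalently I must show $S(F)_0 / S(F)_{0+}$ is trivial. The quotient $S(F)_0/S(F)_{0+}$ is the group of $\kappa_F$-points of the reductive quotient of the special fiber of the connected Néron (ft-Néron) model of $S$; for an inertially anisotropic torus this special fiber reductive quotient is an anisotropic torus over $\kappa_F$, and more to the point, the relevant fact is that $S(F)_b/S(F)_{0+}$ embeds into $\prod_\chi (\mathfrak{o}_L/\mathfrak{p}_L)^\times$-type data through the characters, and inertial anisotropy forces the image to be trivial. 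Concretely: take $\gamma \in S(F)_b$; I want $v(\chi(\gamma) - 1) > 0$ for all $\chi$. Suppose not, so some $\chi(\gamma)$ is a unit with nonzero image $\bar\chi(\gamma) \in \kappa_L^\times$. Using that $S$ is defined over $F$ and the $\Gamma_F$-action on $X^*(S_{\bar F})$ has no nonzero fixed points and the orbit structure under $I_F$ together with $\gamma \in S(F)$ (so $\sigma(\chi)(\gamma) = \chi(\gamma)$ for $\sigma \in \Gamma_F$ after identifying via the $F$-structure) — one derives that the residual character values $\bar\chi(\gamma)$ assemble into an $I_F$-fixed vector in $X^*(S_{\bar F}) \otimes (\text{something})$, contradicting $X^*(S_{\bar F})^{I_F}=0$ unless all $\bar\chi(\gamma) = 1$. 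I expect the cleanest route is actually to invoke a known structural result: for a tame inertially anisotropic torus, $S(F)_0 = S^\circ(\mathfrak{o}_F)$ (quoted in the excerpt) is pro-$p$ times nothing — equivalently cite \cite[5.2]{Wa08} or the Néron model discussion to the effect that the maximal bounded subgroup of a tame inertially anisotropic torus is topologically unipotent. Given $S(F)_0 = S(F)_b$ topologically unipotent, Step 2 closes the loop: $S(F)_0 = S(F)_\mathrm{tu} = S(F)_{0+} = S(F)_{1/e}$.

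\emph{Fourth step: the consequence.} Plugging $S(F)_b = S(F)_0$ and $S(F)_\mathrm{tu} = S(F)_{1/e}$ into the topological Jordan decomposition \eqref{eqn:S-Jordan} gives $S(F) = S(F)_{p'} \times S(F)_{1/e}$ immediately, and since $S(F)_{1/e}$ is pro-$p$ (shown in Step 2) while $S(F)_{p'}$ has no $p$-torsion by definition, $S(F)_{1/e}$ is exactly the pro-$p$ part (the maximal pro-$p$ subgroup) of $S(F)$.

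The step I expect to be genuinely delicate is the third one — showing the special-fiber contribution vanishes, i.e. that inertial anisotropy kills $S(F)_0/S(F)_{0+}$. Everything else (Steps 2 and 4) is essentially bookkeeping with valuations on $L$ and the pro-$p$ nature of $1 + \mathfrak{p}_L$. For Step 3 I would lean on the Néron model identification $S(F)_0 = S^\circ(\mathfrak{o}_F)$ valid under inertial anisotropy (already cited from \cite{Kal13}) together with the fact that the reductive quotient of the special fiber of the Néron model of an inertially anisotropic torus is $\kappa_F$-anisotropic and hence, after base change to $\bar\kappa_F$, contributes nothing unipotent-complementary — or, if a hands-on argument is preferred, the Galois-descent contradiction sketched above using $X^*(S_{\bar F})^{I_F} = 0$.
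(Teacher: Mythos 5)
Your overall structure (reduce to showing no filtration jumps between $0$ and $1/e$ and to $S(F)_0 = S(F)_{0+}$, then feed into topological Jordan decomposition) matches the paper's, which also reduces the whole statement to the equality $S(F)_0 = S(F)_{0+}$ and then closes the circle through the pro-$p$ property of $S(F)_{0+}$. The paper cites \cite[Lemma 3.1.3]{Kal16} for that equality. But there are two genuine problems in your write-up.

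\textbf{Step 1 contains a false claim.} You assert $S(F)_b = S(F)_0$ ``because $X_*(S)^{I_F}=0$ kills the Kottwitz homomorphism.'' The Kottwitz map lands in $(X_*(S)_{I_F})^{\mathrm{Frob}}$ — \emph{coinvariants} of inertia, not invariants. Inertial anisotropy says $X_*(S)^{I_F}=0$; it does not make $X_*(S)_{I_F}$ vanish (that group is torsion but typically nonzero). Concretely, for the norm-one torus $S$ of a tamely ramified quadratic extension $L/F$, one has $X_*(S)^{I_F}=0$ yet $X_*(S)_{I_F} \simeq \Z/2$, the Kottwitz map is surjective, and $S(F)_b/S(F)_0 \simeq \Z/2$ (with $-1$ as the nontrivial coset). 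Happily, you never actually use $S(F)_b = S(F)_0$ elsewhere — the consequence only needs $S(F)=S(F)_b$, which follows from anisotropy over $F$ — but the claim as stated is wrong and should be removed or corrected.

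\textbf{Step 3, the crux, does not close as written.} First, your Galois-descent sketch does not reach a contradiction: the map $\chi \mapsto \bar\chi(\gamma)$ is $I_F$-equivariant with $I_F$ acting trivially on the target $\bar\kappa_F^\times$, so it factors through the coinvariants $X_*^\vee(S)_{I_F}$. That group is only torsion, not zero, so the map can be nontrivial; the argument yields no contradiction. Second, the phrase ``the reductive quotient of the special fiber ... is $\kappa_F$-anisotropic and hence ... contributes nothing'' is not correct: an anisotropic torus over a finite field has a large group of $\kappa_F$-points (e.g.\ $|T(\kappa_F)| = q+1$ for the nonsplit one-dimensional torus), so anisotropy alone would not give $S(F)_0 = S(F)_{0+}$. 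The fact you need is stronger: inertial anisotropy forces the torus quotient of the special fiber of $S^\circ$ to be \emph{trivial} (rank zero), equivalently the special fiber is unipotent. Third, your suggested citation \cite[5.2]{Wa08} is about the topological Jordan decomposition, not about this Néron-model structure; the statement you want is exactly \cite[Lemma 3.1.3]{Kal16}, which the paper cites and which does the work for Step 3.

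With those two points repaired — drop the $S(F)_b = S(F)_0$ claim and replace the Step 3 sketch with a citation of \cite[Lemma 3.1.3]{Kal16} (or a correct proof that the special fiber of $S^\circ$ is unipotent when $X_*(S)^{I_F}=0$) — the rest of the argument (Steps 2 and 4) is essentially correct and aligned with the paper's proof.
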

\begin{proof}
	Clearly we have $S(F)_0 \supset S(F)_{0+} \supset S(F)_{1/e}$. If $\gamma \in S(F)_\text{tu}$ then $v(\chi(\gamma)-1) \geq \frac{1}{e}$ for all $\chi$, as one sees over the splitting field, thus by \cite[(3.1.1)]{Kal19} we have $S(F)_{1/e} \supset S(F)_\text{tu}$ as well. It remains to show that $S(F)_0 \subset S(F)_{\text{tu}}$.
	
	By \cite[Lemma 3.1.3]{Kal19} we have $S(F)_0 = S(F)_{0+}$, since $Y^{I_F} = \{0\}$. Hence every element in $S(F)_0$ is pro-$p$ and must belong to $S(F)_\text{tu}$ as desired.
\end{proof}

\subsection{Tori in covers}\label{sec:covers-tori}
Assume $\text{char}(F) \neq 2$, $n \in \Z_{\geq 1}$ and specialize the formalism in \S\ref{sec:MP-generalities} to $G := \Sp(W)$ where $\dim_F W = 2n$. Let $S \subset G$ be a maximal $F$-torus and label the short coroots of $S_{\bar{F}} \subset G_{\bar{F}}$ by $\pm\check{\epsilon}_1, \ldots \pm\check{\epsilon}_n \in Y$ as in \S\ref{sec:Sp}. Given $w \in \Omega(G, S)(\bar{F})$, we decompose $\{ \pm\check{\epsilon}_i : 1 \leq i \leq n\}$ into $\lrangle{w}$-orbits. In parallel with Definition \ref{def:symmetric-orbits}, we say a $\lrangle{w}$-orbit $\mathcal{O}$ of short coroots is \emph{symmetric} if $\mathcal{O} = -\mathcal{O}$, otherwise \emph{asymmetric}.

\begin{lemma}\label{prop:2-torsion}
	Assume that $S$ is a tame and inertially anisotropic, for example when $S$ is of type (ER) in $G$. Then $x^2=1$ for all $x \in S(F)_{p'}$.
\end{lemma}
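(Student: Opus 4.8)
The plan is to reduce to a one-dimensional computation via the parameterization of maximal tori by étale algebras with involution, and then to exploit that inertial anisotropy forces the relevant quadratic extensions to be ramified.

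First I would choose a parameter $(K, K^\sharp, c)$ for $S \hookrightarrow G$ as in Proposition~\ref{prop:parameter-tori}, write $K = \prod_{i \in I} K_i$, $K^\sharp = \prod_{i \in I} K_i^\sharp$, so that $S(F) \cong K^1 = \prod_{i \in I} K_i^1$ with $K_i^1 = \{ y \in K_i^\times : y\tau(y) = 1 \}$. An element $x = (x_i)_{i} \in S(F)_{p'}$ is precisely a tuple of elements $x_i \in K_i^1$ of order prime to $p$, i.e. $x_i \in K_i^1 \cap \mu_{p'}(K_i)$, where $\mu_{p'}(K_i)$ denotes the group of roots of unity in $K_i^\times$ of order prime to $p$. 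Hence it is enough to prove $x_i^2 = 1$ for each fixed $i$.

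Next I would unwind the hypotheses factor by factor. The $I_F$-action on $X_*(S_{\bar F}) = \bigoplus_i \mathrm{Ind}_{\Gamma_{K_i^\sharp}}^{\Gamma_F}\!\big(\Z_{\mathrm{sgn}_{K_i/K_i^\sharp}}\big)$ (where $\Z_{\mathrm{sgn}}$ is $\Z$ with $\Gamma_{K_i^\sharp}$ acting through the quadratic character cut out by $K_i/K_i^\sharp$ when $K_i$ is a field, and $\Z \oplus \Z$ with the swap when $K_i \simeq K_i^\sharp \times K_i^\sharp$) has no nonzero invariants if and only if, for every $i$, the algebra $K_i$ is a field \emph{and} $K_i/K_i^\sharp$ is \emph{ramified}: otherwise $K_i \otimes_{K_i^\sharp} K_i^{\sharp,\mathrm{nr}}$ splits and contributes an $F^\text{nr}$-split subtorus, contradicting inertial anisotropy. (In particular tameness of $S$ here merely records $p \neq 2$, which is already in force.) So each $K_i/K_i^\sharp$ is a totally tamely ramified quadratic extension of local fields, and consequently the residue fields agree: $\kappa_{K_i} = \kappa_{K_i^\sharp}$.

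The computation is then immediate: since $K_i/K_i^\sharp$ is totally ramified, reduction to the common residue field identifies $\mu_{p'}(K_i)$ with $\mu_{p'}(K_i^\sharp)$---both are the Teichmüller lift of $\kappa_{K_i}^\times$ inside $\mathfrak{o}_{K_i^\sharp}^\times$. Thus $x_i$ already lies in $K_i^{\sharp,\times}$, hence is fixed by the nontrivial element $\tau$ of $\Gal{K_i/K_i^\sharp}$, giving $N_{K_i/K_i^\sharp}(x_i) = x_i\tau(x_i) = x_i^2$; but $x_i \in K_i^1$ means $N_{K_i/K_i^\sharp}(x_i) = 1$, so $x_i^2 = 1$, as desired. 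I expect no serious obstacle here: the only mildly delicate point is the translation of inertial anisotropy into the ramifiedness of the $K_i/K_i^\sharp$, which is a routine application of the étale-algebra dictionary after base change to $F^\text{nr}$ (cf. the $\dim_F W = 2$ reduction behind Lemma~\ref{prop:parameter-ani}).
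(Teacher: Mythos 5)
Your proof is correct and takes a genuinely different route from the paper's. The paper argues by Pontryagin duality and local Langlands for tori: it reduces (via Lemma~\ref{prop:S_0} and Kaletha's inflation lemma) to showing that $\check{S}^{I_F}$ is $2$-torsion, observes that $\check{S} = \check{S}^{P_F}$ with $I_F/P_F$ pro-cyclic so that the $I_F$-action factors through a single elliptic $w \in \Omega(G,S)(\bar F)$, and then computes $\check{S}^{w=1} = \bmu_2^{|\text{orbits}|}$ by decomposing the short coroots into $\lrangle{w}$-orbits, which are all symmetric by ellipticity. You instead work directly in the \'etale-algebra parameterization of Proposition~\ref{prop:parameter-tori}: you observe that inertial anisotropy forces each $K_i$ to be a field with $K_i/K_i^\sharp$ ramified (hence, given tameness, tamely \emph{totally} ramified of degree $2$), so that $\mu_{p'}(K_i) = \mu_{p'}(K_i^\sharp)$ by uniqueness of Teichm\"uller lifts over the common residue field; every $x_i \in K_i^1 \cap \mu_{p'}(K_i)$ is then $\tau$-fixed and $x_i^2 = x_i \tau(x_i) = N_{K_i/K_i^\sharp}(x_i) = 1$. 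Both routes are sound. Your version is more elementary and self-contained, bypassing the local Langlands input, and it proves along the way the ramification statement that the paper only later extracts in Theorem~\ref{prop:S-p'} --- whose proof there uses the present lemma as input, so the logical dependency between the two facts is reversed. The paper's duality argument, on the other hand, only uses that every $\lrangle{w}$-orbit of coroots is symmetric, so it is phrased directly in the dual-group language that the rest of the article needs, and in principle transfers to other groups with the same elliptic Weyl-orbit structure. One small imprecision to flag: in describing $X_*(S_{\bar F}) \simeq \bigoplus_i \mathrm{Ind}_{\Gamma_{K_i^\sharp}}^{\Gamma_F}(\Z_{\sgn_{K_i/K_i^\sharp}})$ you wrote ``$\Z\oplus\Z$ with the swap'' for the split case, but $X_*(K_i^1)$ has rank one in every case (it is $X_*(K_i^\times)$ that has rank two); this does not affect the argument, since your ramification claim rules out split $K_i$ anyway.
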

\begin{proof}
	It follows from Lemma \ref{prop:S_0} that $S(F)_{p'} \rightiso S(F)/S(F)_0$. It suffices to show that every continuous homomorphism $\theta: S(F) \to \CC^\times$ that is trivial on $S(F)_0$ satisfies $\theta^2=1$. Let $\check{S}$ be the $\CC$-torus dual to $S$. By \cite[Lemma 3.1.5]{Kal19}, the $L$-parameter $\phi \in H^1(\Weil_F, \check{S})$ of $\theta$ is inflated from $H^1(\Weil_F/I_F, \check{S}^{I_F})$. Hence it suffices to show that $\check{S}^{I_F}$ is a $2$-torsion group.
	
	We contend that the action of $I_F$ on $\check{S}$ contains an elliptic element $w$ from $\Omega(G, S)(\bar{F})$. Indeed, by writing $S_{\bar{F}} = g T_{\bar{F}} g^{-1}$ with $g \in G(\bar{F})$, where $T$ is a split maximal $F$-torus, we see that $\Gamma_F$ acts through $\Omega(G, S) = g\Omega(G,T) g^{-1}$. Since $\check{S} = \check{S}^{P_F}$ and $I_F/P_F$ is pro-cyclic, the action of $I_F$ is generated by some $w \in \Omega(G, S)(\bar{F})$, which must be elliptic since $S$ is inertially anisotropic.
	
	It remains to show that $\check{S}^{w=1}$ is a $2$-torsion group. By decomposing the short coroots into $\lrangle{w}$-orbits, ellipticity is equivalent to that that every orbit is symmetric; cf. the proof of Lemma \ref{prop:anisotropic-criterion}. It is now clear that $\check{S}^{w=1} = \bmu_2^{|\text{orbits}|}$.
\end{proof}

\begin{theorem}\label{prop:S-p'}
	Assume $p \neq 2$. Choose a parameter $(K, K^\sharp, \ldots)$ for a maximal $F$-torus $S \subset G=\Sp(2n)$ of type (ER) (see Definition \ref{def:type-ER}), $K = \prod_{i\in I} K_i$. Identify $S$ with $\prod_{i \in I} R_{K_i^\sharp /F}(K_i^1)$. Then
	\[ S(F)_{p'} = \prod_{i \in I} \{\pm 1\} \subset \prod_{i \in I} K_i^1. \]
	Furthermore, each $K_i/K_i^\sharp$ is a tamely ramified quadratic extension of fields.
\end{theorem}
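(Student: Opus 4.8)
The plan is to reduce everything to two inputs already available in the excerpt: the parameterization of $S$ by a datum $(K, K^\sharp, \ldots)$ as in \S\ref{sec:Sp} together with Lemma \ref{prop:parameter-ani}, and the $2$-torsion statement Lemma \ref{prop:2-torsion}.

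First I would settle the assertions about the $K_i$. Since $S$ is of type (ER), it is inertially anisotropic by Definition \ref{def:type-ER}, hence anisotropic over $F$ (because $X_*(S_{\bar F})^{\Gamma_F}\subseteq X_*(S_{\bar F})^{I_F}=0$). By Lemma \ref{prop:parameter-ani} this forces $I_0=I$, i.e. every $K_i$ is a field, so each $K_i/K_i^\sharp$ is a genuine quadratic field extension. For tameness one can argue in two equivalent ways: either $S$ is tame, so its splitting field $L/F$ is tamely ramified and then $K_i/F$, hence $K_i^\sharp/F$, hence $K_i/K_i^\sharp$, is tamely ramified as a subextension; or, more directly, since $p\neq 2$ the ramification index $e(K_i/K_i^\sharp)$ divides $2$ and is therefore coprime to $p$.

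Next I would compute $S(F)_{p'}$. Under the identification $S\simeq\prod_{i\in I}R_{K_i^\sharp/F}(K_i^1)$ one has $S(F)=\prod_{i\in I}K_i^1$ with $K_i^1=\{y\in K_i^\times: N_{K_i/K_i^\sharp}(y)=1\}$; since $K_i$ is a field, the only square roots of unity in $K_i^\times$ are $\pm 1$, both of norm $1$, so the $2$-torsion subgroup of $S(F)$ is exactly $\prod_{i\in I}\{\pm 1\}$. The inclusion $S(F)_{p'}\subseteq\prod_{i\in I}\{\pm 1\}$ then follows at once from Lemma \ref{prop:2-torsion}, whose hypotheses (tame, inertially anisotropic) are met by a torus of type (ER). For the reverse inclusion, each element of $\prod_{i\in I}\{\pm 1\}$ is a bounded element of $S(F)$ of order dividing $2$, and $p\nmid 2$, so it lies in $S(F)_{p'}$ by the very definition of the latter. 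Combining the two inclusions gives the claimed equality.

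This argument is in essence an assembly of the cited lemmas rather than a computation, so I do not anticipate a serious obstacle. The points that require the most care are the correct identification of $S(F)$ and of its $2$-torsion subgroup with $\prod_{i\in I}\{\pm 1\}$ (which uses crucially that all $K_i$ are fields, i.e.\ that anisotropy forces $I_0=I$), and the verification that a torus of type (ER) indeed satisfies the hypotheses of Lemma \ref{prop:2-torsion} and Lemma \ref{prop:parameter-ani}.
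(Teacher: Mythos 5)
Your argument for the equality $S(F)_{p'}=\prod_{i\in I}\{\pm1\}$ and for each $K_i$ being a field is essentially the same as the paper's and is correct. However, the assertion that each $K_i/K_i^\sharp$ is \emph{tamely ramified} is meant here in the strong sense that the extension is genuinely ramified (the paper's proof explicitly rules out the unramified case), and your argument does not establish this. Both of your alternatives for ``tameness'' only show that \emph{whatever} ramification there is must be tame, i.e.\ that $e(K_i/K_i^\sharp)$ divides $2$ and is coprime to $p$; neither excludes $e(K_i/K_i^\sharp)=1$.

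The missing step is supplied in the paper by a contradiction argument against the first part of the theorem: suppose $K_i/K_i^\sharp$ were unramified and let $q_i=|\kappa_{K_i^\sharp}|$. Then the prime-to-$p$ torsion of $K_i^\times$ is $\mu_{q_i^2-1}$, and the norm map to $K_i^\sharp$ on these roots of unity is $x\mapsto x^{1+q_i}$, so the norm-one group $K_i^1$ contains a cyclic subgroup of order $q_i+1$ (the paper writes $q_i-1$, a harmless slip). This is a subgroup of $S(F)$ consisting of elements of finite order prime to $p$ (since $p\mid q_i$), hence lies in $S(F)_{p'}$; but it has order $q_i+1>2$, contradicting $S(F)_{p'}=\prod_i\{\pm1\}$. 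You should add this argument (or an equivalent one) to complete the proof; the ramifiedness of $K_i/K_i^\sharp$ is used elsewhere in the paper and is not a cosmetic refinement.
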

\begin{proof}
	Since $p \neq 2$, we have $\{\pm 1\}^I \subset S(F)_{p'}$. The reverse inclusion follows from Lemma \ref{prop:2-torsion} since the $2$-torsion part of each $K_i^1$ is $\{\pm 1\}$.
	
	Each $K_i$ is a field since $S$ is anisotropic (Lemma \ref{prop:parameter-ani}). Let $q_i$ be the cardinality of the residue field of $K_i^\sharp$. If $K_i$ is unramified over $K_i^\sharp$, there will be an embedding $\Z/(q_i - 1)\Z \hookrightarrow K_i^1$ with image in $S(F)_{p'}$ since $p \mid q_i$. This contradicts the first part.
\end{proof}

In what follows, we fix $m \mid N_F$ with $p \nmid m$. Consider the BD-cover constructed in \S\ref{sec:BD-Sp}
\[ 1 \to \mu_m \to \tilde{G} \xrightarrow{\bm{p}} G(F) \to 1. \]
We fix an embedding $\epsilon: \mu_m \rightiso \bmu_m \subset \CC^\times$ and identify $\tilde{G}$ with a topological central extension of $G(F)$ by $\bmu_m$.

Let $S \subset G$ be a maximal $F$-torus of type (ER). Fix a parameter $(K, K^\sharp, \ldots)$ for $S$, with the usual decomposition $K_i = \prod_{i \in I} K_i$, etc. Write $\bm{p}: \tilde{S} \to S(F)$ for the pull-back of $\bm{p}$ to $S(F)$. Recall from Lemma \ref{prop:S_0} that $S(F) = S(F)_{p'} \times S(F)_{\text{tu}}$, and $S(F)_{\text{tu}} = S(F)_0 = S(F)_{0+}$.

\begin{lemma}\label{prop:splitting-pro-p}
	The covering $\tilde{S} \to S(F)$ splits uniquely over $S(F)_0$, and all elements from $S(F)_0$ are good in $\tilde{S}$.
\end{lemma}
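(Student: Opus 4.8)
The plan is to observe that this is essentially a direct application of the pro-$p$ splitting lemma. First I would invoke Lemma \ref{prop:S_0}: since $S$ is of type (ER) it is tame and inertially anisotropic, so $S(F)_0 = S(F)_{0+} = S(F)_{1/e} = S(F)_{\mathrm{tu}}$, and this is precisely the pro-$p$ part of $S(F)$ in the topological Jordan decomposition $S(F) = S(F)_{p'} \times S(F)_{1/e}$. In particular $S(F)_0$ is a pro-$p$ subgroup of $G(F)$: it is compact, and every element is topologically unipotent, so each finite continuous quotient is a finite $p$-group.

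Next I would recall that $\tilde{S} \to S(F)$ is by definition the pull-back of $\bm{p}: \tilde{G} \to G(F)$ along $S(F) \hookrightarrow G(F)$; hence for any subset $C \subset S(F)$ the preimage of $C$ in $\tilde{S}$ coincides with its preimage in $\tilde{G}$, and a splitting of $\tilde{S}$ over $C$ is the same datum as a splitting of $\tilde{G}$ over $C$. Taking $A := S(F)_0$ and using $p \nmid m$, Lemma \ref{prop:pro-p-splitting} then yields at once both conclusions: $\tilde{G} \twoheadrightarrow G(F)$ (equivalently $\tilde{S} \twoheadrightarrow S(F)$) admits a unique splitting over $S(F)_0$, and every element of $S(F)_0$ is good. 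For the goodness part one can alternatively reproduce the short computation from the proof of Lemma \ref{prop:pro-p-splitting}: for $a \in S(F)_0$ and $g \in Z_G(a)(F)$ one has $[a^{p^k}, g] = [a,g]^{p^k}$ in $\bmu_m$, and letting $k \to \infty$ with $a^{p^k} \to 1$ while $\{p^k \bmod m\}$ is periodic forces $[a,g] = 1$.

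There is no real obstacle here; the statement is a corollary of Lemma \ref{prop:pro-p-splitting} once Lemma \ref{prop:S_0} is in place. The only point that deserves a line of justification is that $S(F)_0$ is genuinely pro-$p$ (so that $H^1$ and $H^2$ with $\bmu_m$-coefficients vanish), which follows either from $S(F)_0 = S(F)_{0+}$ being the pro-unipotent radical of the parahoric, or directly from $S(F)_0 = S(F)_{\mathrm{tu}}$ consisting of topologically unipotent elements as noted above.
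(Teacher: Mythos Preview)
Your proposal is correct and follows essentially the same approach as the paper: the paper's proof is the one-line observation that $S(F)_0 = S(F)_{0+}$ is a pro-$p$ group, followed by an appeal to Lemma~\ref{prop:pro-p-splitting} together with $p \nmid m$. Your version simply spells out the justification for the pro-$p$ property via Lemma~\ref{prop:S_0} and adds the optional commutator computation, but the substance is identical.
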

\begin{proof}
	This follows from Lemma \ref{prop:pro-p-splitting} and $p \nmid m$, since $S(F)_0 = S(F)_{0+}$ is a pro-$p$-group.
\end{proof}
It remains to study the covering over $S(F)_{p'}$, whose structure is described by Theorem \ref{prop:S-p'}.

\begin{proposition}\label{prop:S-splitting-odd}
	When $m \notin 2\Z$, the covering $\tilde{S} \to S(F)$ splits uniquely.  
\end{proposition}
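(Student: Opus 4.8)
The plan is to deduce this from the structural description of $S(F)$ obtained above, together with a cohomology-vanishing argument. First I would record that $S$, being of type (ER), is tame and inertially anisotropic; moreover each $K_i$ is a field by Theorem \ref{prop:S-p'}, so $I_0 = I$ and $S$ is anisotropic by Lemma \ref{prop:parameter-ani}. Consequently $S(F)$ is compact and totally disconnected, i.e.\ profinite. By Lemma \ref{prop:S_0} one has a topological direct product decomposition $S(F) = S(F)_{p'} \times S(F)_0$ in which $S(F)_0 = S(F)_{0+}$ is pro-$p$, while by Theorem \ref{prop:S-p'} the complementary factor is $S(F)_{p'} \simeq \{\pm 1\}^I$, finite of order $2^{|I|}$. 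Since $m$ is odd and $p \nmid m$, the (supernatural) order of $S(F)$ is prime to $m$.

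Next I would invoke the standard principle that, for a topological central extension $1 \to \mu_m \to \tilde{S} \to S(F) \to 1$ of the profinite group $S(F)$ by the finite discrete group $\mu_m$, the existence of a continuous splitting is obstructed by $H^2(S(F), \mu_m)$ and its uniqueness by $H^1(S(F), \mu_m)$, in continuous cohomology. Both vanish: running the Lyndon--Hochschild--Serre spectral sequence for $1 \to S(F)_0 \to S(F) \to S(F)_{p'} \to 1$, the rows with $q \geq 1$ vanish because $H^q(S(F)_0, \mu_m) = 0$ for a pro-$p$ group when $p \nmid m$, and the $q = 0$ column reduces to $H^p(S(F)_{p'}, \mu_m)$, which vanishes for $p \geq 1$ since $S(F)_{p'}$ is a finite group whose order $2^{|I|}$ is prime to $m$ (both vanishings rest on the elementary fact that $H^{\geq 1}(G, A) = 0$ for finite $G$ and $A$ with $|G|$, $|A|$ coprime, passed to the limit). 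Hence $H^i(S(F), \mu_m) = 0$ for all $i \geq 1$, giving the unique splitting. If one prefers a hands-on version, Lemma \ref{prop:splitting-pro-p} already furnishes the unique splitting over the pro-$p$ factor $S(F)_0$, and over $S(F)_{p'}$ one notes that, $m$ being odd, the element $-1$ in the $i$-th factor of $\{\pm 1\}^I$ admits a unique lift of order dividing $2$ --- adjust any lift $\widetilde{-1}$ by the unique $\zeta \in \mu_m$ with $\zeta^2 = \widetilde{-1}^{-2}$; these lifts commute with one another by Theorem \ref{prop:G-T-reduction} and with the splitting of $S(F)_0$ because $S(F)_0$ is good in $\tilde{S}$ (Lemma \ref{prop:splitting-pro-p}), so they patch to a splitting over all of $S(F)$, which is clearly the only one.

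The argument is essentially formal once the decomposition $S(F) = S(F)_{p'} \times S(F)_0$ is in place; the one genuinely load-bearing point is the $p'$-part $S(F)_{p'} \simeq \{\pm 1\}^I$, and this is exactly where the hypothesis $m \notin 2\Z$ enters. For even $m$ the restriction of the cover to this $2$-group need not split --- it is the locus of the metaplectic-type sign, and Proposition \ref{prop:S-splitting-odd} will have a nontrivial counterpart there --- so oddness of $m$ is not cosmetic: it is precisely what makes $|S(F)_{p'}|$ prime to $m$ and kills the relevant $H^1$ and $H^2$. No serious obstacle is expected; the only care needed is to state the vanishing in continuous cohomology and to keep track that $S(F)$ really is compact.
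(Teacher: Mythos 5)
Your argument is correct and runs along the same lines as the paper's: both rest on the decomposition $S(F) = S(F)_{p'} \times S(F)_0$, Lemma \ref{prop:splitting-pro-p} for the pro-$p$ factor, and the fact that $|S(F)_{p'}| = 2^{|I|}$ is coprime to the odd $m$. Your cohomological LHS packaging and your "hands-on" canonical lift of order dividing $2$ are both just formalized versions of the paper's one-line observation (the paper writes the same section explicitly as $\gamma \mapsto \tilde{\eta}^{m}$ with $\eta^m = \gamma$), so the content is identical.
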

\begin{proof}
	In view of Lemma \ref{prop:splitting-pro-p}, the assertion follows from that $S(F)_{p'}$ is of $2$-torsion whilst $2 \nmid m$. An explicit section of $\bm{p}$ over $S(F)_{p'}$ is given by $\gamma \mapsto \tilde{\eta}^m$, where we take the $\eta \in S(F)_{p'}$ with $\eta^m = \gamma$, and $\tilde{\eta} \in \bm{p}^{-1}(\eta)$ is arbitrary.
\end{proof}

The case $m \in 2\Z$ requires more sophisticated constructions. As a first step, we prove the commutativity of $\tilde{S}$. Refined results will be given in \S\ref{sec:splittings-S}.
\begin{proposition}\label{prop:splitting-even-coarse}
	The group $\tilde{S}$ is commutative. In particular, all elements from $S_\mathrm{reg}(F)$ are good in $\tilde{G}$.
\end{proposition}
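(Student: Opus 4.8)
The plan is to reduce the commutativity of $\tilde{S}$ to the triviality of its commutator pairing on $S(F)\times S(F)$. Since $S$ is a torus, $S(F)$ is abelian, so for lifts $\tilde{x},\tilde{y}\in\tilde{S}$ of $x,y\in S(F)$ the element $[\tilde{x},\tilde{y}]=\tilde{x}\tilde{y}\tilde{x}^{-1}\tilde{y}^{-1}$ lies in $\mu_m=\Ker(\bm{p})$ and is independent of the lifts; the resulting map $S(F)\times S(F)\to\mu_m$ is the standard commutator form of the central extension $\tilde{S}$ of the abelian group $S(F)$, hence bi-multiplicative. By Lemma \ref{prop:S_0} we have $S(F)=S(F)_{p'}\times S(F)_0$ with $S(F)_0=S(F)_{\mathrm{tu}}$, so every $\gamma\in S(F)$ factors as $\gamma=\gamma_{p'}\gamma_0$; expanding $[\gamma_{p'}\gamma_0,\eta_{p'}\eta_0]$ by bi-multiplicativity, it suffices to show that every element of $S(F)_{p'}$ and every element of $S(F)_0$ commutes in $\tilde{G}$ with all of $S(F)$. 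Because $S\subseteq Z_G(\gamma)$ for any $\gamma\in S(F)$, this will follow once both $S(F)_{p'}$ and $S(F)_0$ are shown to consist of \emph{good} elements of $\tilde{G}$ in the sense of Definition \ref{def:good-element}.

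First I would treat $S(F)_0$: by Lemma \ref{prop:S_0} it is a pro-$p$ subgroup of $G(F)$, and $p\nmid m$, so Lemma \ref{prop:pro-p-splitting} (equivalently Lemma \ref{prop:splitting-pro-p}) shows all of its elements are good. Next, $S(F)_{p'}$: Theorem \ref{prop:S-p'} identifies it with $\prod_{i\in I}\{\pm 1\}\subset\prod_{i\in I}K_i^1=S(F)$, i.e. with the image of the canonical embedding $\{\pm 1\}^I\hookrightarrow S(F)$ of \S\ref{sec:st-conj-BD}; Remark \ref{rem:good-pm1} then gives that every such element is good in $\tilde{G}$ --- the proof there being the reduction, via Lemma \ref{prop:restriction-W_i}, to a single $\pm 1\in Z_{\Sp(W)}$, which is good by Proposition \ref{prop:BD-adjoint-action} (triviality of the $Z_G$-action on $E_G$ for $G=G_{\mathrm{sc}}$). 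Combining, for $\gamma=\gamma_{p'}\gamma_0$ and any $\eta\in S(F)$ we get $[\gamma,\eta]=[\gamma_{p'},\eta]\,[\gamma_0,\eta]=1$ since $\eta\in S(F)\subseteq Z_G(\gamma_{p'})(F)\cap Z_G(\gamma_0)(F)$ and both $\gamma_{p'},\gamma_0$ are good. Hence $\tilde{S}$ is commutative.

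For the ``in particular'' clause: if $\delta\in S_{\mathrm{reg}}(F)$ then $Z_G(\delta)=S$, so $\bm{p}^{-1}\big(Z_{G(F)}(\delta)\big)=\tilde{S}$; commutativity of $\tilde{S}$ says that any preimage $\tilde{\delta}$ is central in $\tilde{S}$, so $Z_{\tilde{G}}(\tilde{\delta})=\bm{p}^{-1}\big(Z_{G(F)}(\delta)\big)$ (the inclusion $\subseteq$ being automatic), which is precisely the goodness of $\delta$. I do not expect a genuine obstacle here: everything rests on results already in place --- the $\SL(2)$-reduction through Weil restriction, the description of $S(F)$ for tori of type (ER), and the tame pro-$p$ splitting --- and the only point needing a line of care is the bi-multiplicativity and $\mu_m$-valuedness of the commutator pairing on the abelian group $S(F)$, which is the classical commutator form of a central extension of an abelian group.
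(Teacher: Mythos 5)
Your proof is correct, and it takes a genuinely different --- and somewhat more elementary --- route than the paper's at the decisive step. Both arguments exploit the decomposition $S(F) = S(F)_{p'} \times S(F)_0$ of Lemma~\ref{prop:S_0} and handle the pro-$p$ factor $S(F)_0$ via Lemma~\ref{prop:pro-p-splitting}. The difference lies in how commutativity over $S(F)_{p'} = \{\pm 1\}^I$ (Theorem~\ref{prop:S-p'}) is obtained. The paper first invokes Theorem~\ref{prop:G-T-reduction}, identifying $\widetilde{G^S}$ with a contracted product of copies of $\widetilde{\SL}(2, K_i^\sharp)$ --- a statement resting on the Weil-restriction machinery of \S\ref{sec:Weil-restriction} --- to reduce to $\SL(2)$ over $K_i^\sharp$, and then pushes out to $\CC^\times$, where any topological central extension of $\{\pm 1\}$ by $\CC^\times$ splits. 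You instead observe that every element of $\{\pm 1\}^I$ is already a good element of $\tilde{G}$ by Remark~\ref{rem:good-pm1}, whose proof requires only the $F$-rational orthogonal decomposition $W = \bigoplus_{i \in I} W_i$ of Lemma~\ref{prop:restriction-W_i} and the centrality of $-1$ in $E_{\Sp(W_i)}$ from Proposition~\ref{prop:BD-adjoint-action}. Combined with bi-multiplicativity of the commutator pairing on the abelian group $S(F)$, goodness of both factors then yields commutativity of $\tilde{S}$ directly, so your route avoids both the Weil-restriction identification and the cohomological step. The \emph{in particular} clause, which the paper leaves implicit, is handled correctly: for $\delta \in S_{\mathrm{reg}}(F)$ one has $Z_G(\delta) = S$, so goodness of $\delta$ is precisely the assertion that $\tilde{\delta}$ is central in $\tilde{S}$.
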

\begin{proof}
	Decompose $S = \prod_{i \in I} R_{K_i^\sharp / F} (K_i^1)$ using parameters. Recall from Theorem \ref{prop:G-T-reduction} that $S \subset G^S$ and $\bm{p}: \widetilde{G^S} \to G^S(F)$ is isomorphic to the contracted product of topological central extensions $\bmu_m \hookrightarrow \widetilde{\SL}(2, K_i^\sharp) \twoheadrightarrow \SL(2, K_i^\sharp)$. We are thus reduced the case $G = \SL(2)$, $F = K^\sharp$. By Lemma \ref{prop:splitting-pro-p}, it suffices to show that the preimage of $S(F)_{p'} = \{\pm 1\}$ is commutative.
	
	To show this, one may push-out the covering by $\bmu_m \hookrightarrow \CC^\times$. The result follows since any topological central extension $1 \to \CC^\times \to C \to \{\pm 1\} \to 1$ splits.
\end{proof}

We conclude that the irreducible $\epsilon$-genuine representations of $\tilde{S}$ are continuous characters. As $\epsilon$ is fixed throughout, we shall abbreviate $\epsilon$-genuine as genuine.

\subsection{Compact induction}\label{sec:compact-induction}
Assume $p \neq 2$. Fix an additive character $\Lambda: \kappa_F \to \CC^\times$. Let $G = \Sp(W)$ and $\bm{p}: \tilde{G} \twoheadrightarrow G(F)$ be as in \S\ref{sec:covers-tori}; thus $p \nmid m$. We begin by reviewing \cite[\S 3.2]{Kal15}.

Let $S \subset G$ be a tame anisotropic maximal $F$-torus, and let $x \in \mathcal{B}^\text{red}(G,F)$ be the point its determines. Let $L$ be the splitting field of $S_{F^\text{nr}}$, with residue field $\kappa_L$.

Take $r \in \R_{> 0}$. To a continuous character $\theta: S(F) \to \CC^\times$ such that $\theta|_{S(F)_{r+}} = 1$, we attach a $\kappa_L$-line $\ell_\theta \subset \mathfrak{g}^*(L)_{x,0:0+}$ as follows. The $\mathrm{MP}: S(F)_{r:r+} \simeq \mathfrak{s}_{r:r+}(F)$ and $\theta|_{S(F)_r}$ gives rise to a character $\theta_r$ of the $\kappa_F$-vector space $\mathfrak{s}(F)_{r:r+}$. Identify $\theta_r$ with the $Y^* \in \mathfrak{s}^*(F)_{-r:(-r)+}$ such that $\Lambda(\lrangle{Y^*, -}) = \theta_r$. As $S_L$ splits, there exists $z \in L^\times$ such that $zY^* \in \mathfrak{s}^*(L)_{0:0+}$, and the $\kappa_L$-line obtained from by $zY^*$ depends only on $\theta_r$. The line $\ell_\theta$ is obtained via the inclusion $\mathfrak{s}^* = (\mathfrak{g}^*)^S \hookrightarrow \mathfrak{g}^*$, which induces $\mathfrak{s}^*(L)_{0:0+} \hookrightarrow \mathfrak{g}^*(L)_{x, 0:0+}$.

Following \cite{Kal15}, call a continuous character $\theta: S(F) \to \CC^\times$ \emph{generic of depth $r$} if
\begin{enumerate}
	\item the restriction of $\theta$ to $S(F)_{r+}$ (resp. to $S(F)_r$) is trivial (resp. non-trivial);
	\item the $\kappa_L$-line $\ell_\theta \subset \mathfrak{g}^*(L)_{x,0:0+}$ is strongly regular semisimple, i.e. its stabilizer in $G(L)_{x, 0:0+}$ is a maximal $\kappa_L$-torus.
\end{enumerate}
This notion depends solely on $\theta|_{S(F)_r}$, and $r$ must be a jump for the filtration on $S(F)$. We proceed to adapt it to the cover $\bmu_m \hookrightarrow \tilde{G} \xrightarrow{\bm{p}} G(F)$ and $\tilde{S} \xrightarrow{\bm{p}} S(F)$, where $S \subset G$ is of type (ER). By Lemma \ref{prop:splitting-pro-p}, the covering splits uniquely over $S(F)_0 = S(F)_{1/e}$, where $e$ stands for the ramification degree of the splitting extension of $S$.

\begin{definition}\label{def:epipelagic-character}\index{epipelagic genuine character}
	Let $S \subset G$ be a maximal $F$-torus of type (ER). Call a continuous genuine character $\theta: \tilde{S} \to \CC^\times$ \emph{generic} of depth $r > 0$ if
	\begin{enumerate}
		\item the restriction of $\theta$ to $S(F)_{r+}$ (resp. to $S(F)_r$) via the unique splitting over $S(F)_0$ is trivial (resp. non-trivial);
		\item the $\kappa_L$-line $\ell_\theta$ attached to $\theta|_{S(F)_r}$ is strongly regular semisimple.
	\end{enumerate}
	Call a continuous genuine character $\theta: \tilde{S} \to \CC^\times$ \emph{epipelagic} if it is generic of depth $1/e$.
\end{definition}
By Lemma \ref{prop:S_0}, the first jump of the Moy--Prasad filtration on $S(F)$ occurs at $r=1/e$. Hence the minimal possible depth is attained by epipelagic characters. Assume $\theta: \tilde{S} \to \CC^\times$ to be genuine epipelagic. We proceed to construct supercuspidals as follows.

\begin{asparaenum}[1.]
	\item Decompose $\mathfrak{g} = \mathfrak{s} \oplus \mathfrak{n}$, where $\mathfrak{n}$ is the direct sum of nontrivial isotypic components under $S$. Then for all $r \geq 0$ we have
		\[ \mathfrak{g}(F)_{x,r} = \mathfrak{s}(F)_r \oplus \mathfrak{n}(F)_{x,r} \]
		where $\mathfrak{n}(F)_{x,r} := \mathfrak{g}(F)_{x,r} \cap \mathfrak{n}(F)$. Cf. \cite[Proposition 1.9.3]{Adl98}. From now onwards $r := 1/e$. The isomorphisms $\mathrm{MP}: \mathfrak{s}(F)_{r:r+} \simeq S(F)_{r:r+}$ and $\mathfrak{g}(F)_{x, r:r+} \simeq G(F)_{x, r:r+}$ allow us to extend $\theta|_{S(F)_r}$ to a character $\widehat{\theta_0}$ of the group $G(F)_{x, r:r+}$.
	\item Let $\mathsf{V}_{x,r} := \mathfrak{g}(F^\text{nr})_{x, r:r+}$, which is a $\overline{\kappa_F}$-vector space with a descent datum to $\kappa_F$ such that $\mathsf{V}_{x,r}(\kappa_F) = \mathfrak{g}(F)_{x, r:r+}$. Let $\lambda \in \check{\mathsf{V}}_{x,r}$ be the linear functional determined by
		\[ \Lambda(\lrangle{\lambda, -}) = \text{the composite}\; \mathsf{V}_{x,r}(\kappa_F) \xrightarrow{\text{MP}} G(F)_{x, r:r+} \xrightarrow{\widehat{\theta_0}} \CC^\times. \]
		Recall that $S(F) \subset G(F)_x$. By \cite[Proposition 3.4]{Kal15},
		\begin{equation}\label{eqn:lambda-stab}\begin{gathered}
			\Stab_{G(F)_x}(\lambda) = S(F) G(F)_{x, r}, \quad \text{and} \\
			\lambda \in \check{\mathsf{V}}_{x,r} \; \text{is a stable vector for the}\; G(F^\text{nr})_{x,0:0+} \text{-action}.
		\end{gathered}\end{equation}
		Stability here is understood in the sense of geometric invariant theory.
	\item By Lemma \ref{prop:pro-p-splitting}, $\bm{p}$ splits uniquely over $G(F)_{x,r}$. As $S(F)$ normalizes $G(F)_{x,r}$, one can define the genuine continuous character
		\begin{align*}
			\widehat{\theta}: \tilde{S} \cdot G(F)_{x,r} = \bm{p}^{-1}\left( \Stab_{G(F)_x}(\lambda) \right) & \longrightarrow \CC^\times \\
			\tilde{\gamma} \cdot g & \longmapsto \theta(\tilde{\gamma}) \widehat{\theta_0}(g)
		\end{align*}
		where $\tilde{\gamma} \in \tilde{S}$, $g \in G(F)_{x,r}$. By Lemma \ref{prop:S_0} we may also write $\widehat{\theta}$ as
		\[ \widehat{\theta}: \widetilde{S(F)_{p'}} \ltimes G(F)_{x,r} \longrightarrow \CC^\times \]
		where $\widetilde{S(F)_{p'}} := \bm{p}^{-1}(S(F)_{p'})$. Notice that $\tilde{S} \cdot G(F)_{x,r}$ is a compact open subgroup of $\tilde{G}$.
	\item Now we can build a smooth representation from $(\tilde{S}, \theta)$ via compact induction
		\begin{equation}\label{eqn:c-Ind}
			\pi_{\tilde{S}, \theta} := \cInd^{\tilde{G}}_{\tilde{S} \cdot G(F)_{x, 1/e}} (\widehat{\theta}).
		\end{equation}
		It is clear that $\pi_{\tilde{S}, \theta}^{G(F)_{x, 1/e+}}$ contains the unrefined minimal $\mathsf{K}$-type $(G(F)_{x,r}, \lambda)$; see \cite[3.4]{MP96} for this notion.
\end{asparaenum}

The construction of $\pi_{\tilde{S}, \theta}$ is a covering version of \cite{RY14}, therefore can be baptized as a \emph{genuine epipelagic supercuspidal representation} for the following reason. \index{representation!epipelagic supercuspidal}
\begin{theorem}\label{prop:epipelagic-supercuspidal}
	Let $\theta: \tilde{S} \to \CC^\times$ be an epipelagic genuine character. The representation $\pi := \pi_{\tilde{S}, \theta}$ in \eqref{eqn:c-Ind} is irreducible and supercuspidal. When $S$ and $\theta|_{S(F)_{1/e}}$ are fixed, different choices of $\theta|_{\widetilde{S(F)_{p'}}}$ lead to non-isomorphic $\pi$.
\end{theorem}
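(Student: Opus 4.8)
The plan is to follow the now-classical Moy--Prasad / Reeder--Yu strategy, adapting it to the covering setting via the splitting results of \S\ref{sec:covers-tori}. First I would prove supercuspidality: since $\bm{p}$ splits uniquely over the pro-$p$ group $G(F)_{x,1/e}$ and over $S(F)_0 = S(F)_{1/e}$ by Lemmas \ref{prop:pro-p-splitting} and \ref{prop:splitting-pro-p}, the genuine character $\widehat{\theta}$ of $\tilde{S}\cdot G(F)_{x,1/e}$ is well-defined and the compact-induction \eqref{eqn:c-Ind} is from an open subgroup that is compact modulo the (finite) center $\mu_m$, hence $\pi_{\tilde S,\theta}$ is admissible. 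Cuspidality then follows from the stability statement \eqref{eqn:lambda-stab}: the datum $(G(F)_{x,1/e},\lambda)$ is an unrefined minimal $\mathsf{K}$-type whose associated functional $\lambda$ is a \emph{stable} vector for the $G(F^{\mathrm{nr}})_{x,0:0+}$-action, and by the standard argument of Reeder--Yu (using that a stable vector has closed orbit and finite stabilizer scheme) every element of the Hecke algebra supported away from the center acts locally nilpotently, so $\pi_{\tilde S,\theta}$ has no nonzero Jacquet module; equivalently, one invokes the Moy--Prasad classification of minimal $\mathsf K$-types, which applies verbatim to genuine representations of tame covers since the pro-$p$ part of everything is canonically split.

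For irreducibility I would use the Mackey / intertwining criterion: $\pi_{\tilde S,\theta}$ is irreducible iff the intertwining space $\Hom_{\tilde S\cdot G(F)_{x,1/e}\,\cap\,g(\tilde S\cdot G(F)_{x,1/e})g^{-1}}(\widehat\theta,\,{}^g\widehat\theta)$ vanishes for all $g\in\tilde G$ outside $\tilde S\cdot G(F)_{x,1/e}$, i.e.\ for all $g\in G(F)$ outside $S(F)G(F)_{x,1/e}$. Here the key input is again \eqref{eqn:lambda-stab}: $\Stab_{G(F)_x}(\lambda)=S(F)G(F)_{x,1/e}$, and more precisely the stability of $\lambda$ forces any such intertwining $g$ to normalize the $\mathsf K$-type, hence to lie in $S(F)G(F)_{x,1/e}$ after conjugating into $G(F)_x$ — the reductive-building combinatorics showing that the relevant double cosets reduce to those meeting $G(F)_x$ is exactly \cite[Proposition 3.4]{Kal15}, which I would cite. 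Since all the group-theory here takes place after the canonical pro-$p$ splittings, the covering introduces no new intertwining beyond $\mu_m$ acting by scalars, so the reductive-group argument transfers directly.

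For the last assertion — that fixing $S$ and $\theta|_{S(F)_{1/e}}$ but varying $\theta|_{\widetilde{S(F)_{p'}}}$ gives non-isomorphic $\pi$ — I would argue by recovering $(S,\theta)$ from $\pi$. The unrefined minimal $\mathsf K$-type $(G(F)_{x,1/e},\lambda)$ determines $x$ up to $G(F)$-conjugacy and determines the $G(F)$-conjugacy class of the pair $(S,\theta|_{S(F)_{1/e}})$ through the line $\ell_\theta$ and the theory of \cite[\S 2]{MP94}; then $\tilde S$ acts on the $\widehat{\theta_0}$-isotypic subspace $\pi^{G(F)_{x,1/e+}}[\widehat{\theta_0}]$ (which is one-dimensional by construction since $(G(F)_{x,1/e},\lambda)$ occurs with multiplicity one in $\cInd$ from $\Stab$), and on it $\tilde S$ acts precisely by $\theta$. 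Thus $\theta$ — in particular its restriction to $\widetilde{S(F)_{p'}}$ — is an invariant of $\pi$, proving the non-isomorphism. Concretely: if $\theta,\theta'$ agree on $S(F)_{1/e}$ but differ on $\widetilde{S(F)_{p'}}$, then $\theta'=\theta\cdot\chi$ for a nontrivial character $\chi$ of $S(F)_{p'}=\prod_{i\in I}\{\pm1\}$ (by Theorem \ref{prop:S-p'}), and since $\chi$ does not extend to a $G(F)$-conjugation-invariant twist, $\pi_{\tilde S,\theta'}\not\simeq\pi_{\tilde S,\theta}$.

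The main obstacle I expect is verifying that the Reeder--Yu stability argument for cuspidality and the Mackey intertwining computation survive the passage to covers with full rigor: one must check that the unique pro-$p$ splittings over $G(F)_{x,1/e}$, $G(F)_{x,1/e+}$ and $S(F)_0$ are mutually compatible (they are, by uniqueness, as all live inside $\bm{p}^{-1}$ of a single pro-$p$ subgroup and $p\nmid m$), and that the Moy--Prasad isomorphism $\mathrm{MP}$ is equivariant for the relevant actions on the cover. Since the nontrivial part of the cover is concentrated on $\widetilde{S(F)_{p'}}$, which is a \emph{finite} $2$-group extension acting by a central character, none of the analytic subtleties of Bruhat--Tits theory are actually affected; the argument is essentially bookkeeping, and I would present it by reducing cleanly to \cite{RY14} and \cite[\S 3.2]{Kal15} with the covering data handled through the canonical splittings, rather than re-deriving the building-theoretic estimates.
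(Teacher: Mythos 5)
Your approach is essentially the one in the paper: both proofs reduce to \cite[\S 2.1]{RY14} with $H=\widetilde{G(F)_x}$, $J=G(F)_{x,1/e}$, $A_\theta\simeq\widetilde{S(F)_{p'}}$, the key inputs being \eqref{eqn:lambda-stab} (equivalently \cite[(2.6)]{RY14}) and the observation that the cover only enters through the canonical pro-$p$ splittings, which contribute nothing new to the intertwining computation. The last assertion is likewise handled by the same Mackey-theoretic bookkeeping as in RY's Remark~1.

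One place where you over-engineer and slightly misstate the mechanism: for cuspidality you invoke stability of $\lambda$ and a ``Hecke algebra elements away from the center act locally nilpotently'' argument, but that framing is off (the operative fact is that matrix coefficients of a representation compactly induced from an open compact-mod-center subgroup have support in that subgroup). More to the point, stability is not needed for cuspidality at all: once irreducibility is known, supercuspidality follows for free from the soft result that an irreducible representation compactly induced from an open compact-mod-center subgroup is supercuspidal, which is exactly what the paper cites (\cite[11.4]{BH06}). Stability is what drives \emph{irreducibility}, via \eqref{eqn:lambda-stab} and the vanishing of intertwining outside $G(F)_x$, and there you use it correctly.
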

\begin{proof}
	Since $\widehat{\theta}$ is genuine, so is $\pi$. Irreducibility and the determination of $\theta|_{\widetilde{S(F)_{p'}}}$ follow by plugging these data into \cite[\S 2.1]{RY14} with $H := \widetilde{G(F)_x}$, $J := G(F)_{x, 1/e}$ and $A_\theta \simeq \widetilde{S(F)_{p'}}$; see also the Remark 1 in \textit{loc. cit.} More precisely, let $\theta_\lambda$ be the character of $S(F)_{1/e}$ attached to $\lambda \in \check{\mathsf{V}}_{x,r}$; the key ingredient \cite[(2.6)]{RY14} is the property that for all $g \in G(F)$,
	\[ \theta_\lambda|_{J \cap g^{-1} Jg} = \theta_\lambda \circ \Ad(g) |_{J \cap g^{-1}Jg} \implies g \in G(F)_x, \]
	which has nothing to do with coverings, since the splittings of $\bm{p}$ over subgroups of $J$ are unique.

	The cuspidality of $\pi$ is well-known, see for instance \cite[11.4]{BH06}.
\end{proof}

\begin{remark}\label{rem:Yu-data}
	The construction of $\pi = \pi_{\tilde{S}, \theta}$ is also a special case of Yu's construction \cite{Yu01} of tame supercuspidals, adapted to the case of coverings with $p \nmid m$, attached to the following quintuple (with length $d=1$)
	\begin{align*}
		\vec{G} & := (G^0 = S, \; G^1 = G), \; \text{or rather their preimages in}\; \tilde{G}, \quad \text{(i.e. a toral datum)} \\
		x & \in \mathcal{B}(S,F) \hookrightarrow \mathcal{B}^\text{red}(G,F), \;\text{cf. \cite{Pra01}}, \\
		\vec{r} & := \left( r_0 = \frac{1}{e}, \; r_1 = \frac{1}{e} \right), \\
		\rho & := \theta|_{\widetilde{S(F)_{p'}}}, \; \text{inflated to a genuine character of}\; \tilde{S} = \widetilde{S(F)_{p'}} \times S(F)_{1/e}, \\
		\vec{\phi} & := (\phi_0, \; \phi_1 = 1), \quad \phi_0 := \theta|_{S(F)_{1/e}}, \; \text{inflated to a character of}\; S(F) = S(F)_{p'} \times S(F)_{1/e}.
	\end{align*}
	See \cite[\S 3]{Yu01} as well as the explanations in \cite[\S 3.1]{HM08}; in the notation of the latter reference we have $\pi_{-1} = \rho$ and $\pi_0 = \rho\phi_0 = \theta$. The choice of $\vec{\phi}$ and $\rho$ is not unique, however.
\end{remark}

\begin{remark}\label{rem:refactorization}
	We say $(\tilde{S}_1, \theta_1)$ and $(\tilde{S}_2, \theta_2)$ are conjugate if there exists $g \in G(F)$ such that $\tilde{S}_2 = g \tilde{S}_1 g^{-1}$ and $\theta_2 \circ \Ad(g) = \theta_1$, where $(\tilde{S}_i, \theta_i)$ stands for maximal tori and epipelagic genuine characters ($i=1,2$). In view of the case of reductive groups \cite[Fact 3.8]{Kal15}, one expects that $\pi_{\tilde{S}_1, \theta_1} \simeq \pi_{\tilde{S}_2, \theta_2}$ if and only if $(\tilde{S}_1, \theta_1)$ and $(\tilde{S}_2, \theta_2)$ are conjugate. The ``if'' direction is evident. To establish the other direction, one has to address the issue of uniqueness of Yu's data (Remark \ref{rem:Yu-data}) for $\pi$.
	I
	For reductive groups, this uniqueness follows from Hakim--Murnaghan theory \cite[Corollary 6.10]{HM08}. Their results should carry over to $\tilde{G}$ and its maximal tori of type (ER), and the ``only if'' part would follow. Cf. the discussions in \cite[\S 3.5]{LM18}, especially for the case $m=2$. This is expected to follow from an ongoing project of Ju-Lee Kim and Wee Teck Gan.
\end{remark}

\subsection{The Adler--Spice character formula}\label{sec:Adler-Spice}
Keep the formalism of \S\ref{sec:compact-induction}. We also fix an additive character $\xi: F \to \CC^\times$ which restricts to an additive character $\Lambda: \kappa_F \to \CC^\times$. The assumptions below are to be imposed.
\begin{hypothesis}\label{hyp:p-large}
	Assume that
	\begin{enumerate}[\bfseries P.1]
		\item $\text{char}(F)=0$.
		\item $p \geq (2+e(F/\Q_p)) n(G)$, where $n(G)$ is the dimension of some faithful $F$-rational representation of $G$; in particular $p > 2$.
		\item $p \nmid m$ as usual, where $\mu_m = \Ker(\bm{p}: \tilde{G} \to G(F))$.
	\end{enumerate}
	We remark that \textbf{P.2} is required for the exponential and logarithm maps in the character formula, see \cite[Appendix B]{DR09}. The assumptions above make sense for any group $G$.
\end{hypothesis}

Let $\pi = \pi_{\tilde{S}, \theta}$ be the genuine irreducible supercuspidal representation constructed in Theorem \ref{prop:epipelagic-supercuspidal}. Denote its character by $\Theta_\pi$. This is a $G(F)$-invariant distribution $f \mapsto \Theta_\pi(f) = \Tr(\pi(f))$ on $\tilde{G}$. As explained in \cite{Li12b}, $\Theta_\pi$ is represented by a locally integrable genuine function on $\tilde{G}$, which is locally constant over $\tilde{G}_{\text{reg}}$ and is independent of Haar measure.

\begin{notation}\index{topological Jordan decomposition}
	Suppose $\tilde{\gamma} \in \tilde{G}$ is a compact element, i.e. contained in a compact subgroup. The topological Jordan decomposition (see the discussions in \S\ref{sec:MP-generalities}) also applies to $\tilde{G}$: it yields a unique expression
	\[ \tilde{\gamma} = \tilde{\gamma}_0 \tilde{\gamma}_{>0} = \tilde{\gamma}_{>0} \tilde{\gamma}_0 \]
	with $\tilde{\gamma}_0$ of finite order prime to $p$, and $\tilde{\gamma}_{>0}$ is topologically unipotent. Taking images by $\bm{p}$ yields the topological Jordan decomposition $\gamma = \gamma_0 \gamma_{>0} = \gamma_{>0} \gamma_0$ in $G(F)$. The following easy result says that we can lift $\gamma_{>0}$ uniquely to $\tilde{G}$, which is exactly $\tilde{\gamma}_{>0}$. Hence we can simply write $\tilde{\gamma} = \tilde{\gamma} \gamma_{>0} = \gamma_{>0} \tilde{\gamma}$ for the decomposition of $\tilde{\gamma}$.
\end{notation}

\begin{lemma}
	Let $\tilde{G} \xrightarrow{\bm{p}} G(F)$ be any topological covering with $\Ker(p) = \bmu_m$ and $p \nmid m$. Given a topologically unipotent element $\gamma_{>0}$ of $G(F)$, there exists a unique $\tilde{\gamma}_{>0} \in \bm{p}^{-1}(\gamma_{>0})$ that is topologically unipotent.
\end{lemma}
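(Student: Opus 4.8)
The plan is to realise the topologically unipotent lift as the image of $\gamma_{>0}$ under a canonical splitting over a suitable pro-$p$ subgroup, via Lemma \ref{prop:pro-p-splitting}.

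First I would set $A := \overline{\langle \gamma_{>0} \rangle}$, the closure of the cyclic group generated by $\gamma_{>0}$. Since $\gamma_{>0}$ is compact, $A$ is a compact, totally disconnected, procyclic subgroup of $G(F)$; in particular every continuous finite quotient of $A$ is finite cyclic. The key observation is that $A$ is \emph{pro-$p$}: if some prime $\ell \neq p$ divided the order of a finite quotient $A/U$, then $A$ would surject onto $\Z/\ell$ with $\gamma_{>0}$ mapping to a generator, but $\gamma_{>0}^{p^k} \to 1$ forces the image of $\gamma_{>0}^{p^k}$ in the discrete group $\Z/\ell$ to be trivial for $k \gg 0$, whence $\ell \mid p^k$, a contradiction.

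Next, since $p \nmid m$, Lemma \ref{prop:pro-p-splitting} applies to the pro-$p$ subgroup $A$: the covering $\bm{p}$ admits a unique splitting $s \colon A \to \tilde{G}$, a continuous homomorphism. I would then put $\tilde{\gamma}_{>0} := s(\gamma_{>0})$. For existence: $s(\gamma_{>0})$ lies in the compact subgroup $s(A)$, and $s(\gamma_{>0})^{p^k} = s(\gamma_{>0}^{p^k}) \to s(1) = 1$ by continuity of $s$, so $\tilde{\gamma}_{>0}$ is a topologically unipotent preimage of $\gamma_{>0}$.

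Finally, uniqueness: any topologically unipotent preimage $\tilde{\gamma}'$ of $\gamma_{>0}$ satisfies $\tilde{\gamma}' = \zeta \tilde{\gamma}_{>0}$ for some $\zeta \in \bmu_m$, the kernel being central. From $\tilde{\gamma}'^{p^k} = \zeta^{p^k}\tilde{\gamma}_{>0}^{p^k}$ together with $\tilde{\gamma}_{>0}^{p^k} \to 1$ and $\tilde{\gamma}'^{p^k} \to 1$, one gets $\zeta^{p^k} \to 1$ in the finite discrete group $\bmu_m$, hence $\zeta^{p^k} = 1$ for $k \gg 0$; as $\gcd(p, m) = 1$ this forces $\zeta = 1$. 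The only step carrying any content is the pro-$p$ claim for $A$; the rest is formal.
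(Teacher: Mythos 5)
Your proof is correct, but the existence argument takes a genuinely different route from the paper's. The paper forms the topological Jordan decomposition of an arbitrary preimage of $\gamma_{>0}$ inside $\tilde{G}$ (whose existence is presupposed in the preceding Notation) and projects by $\bm{p}$ to conclude that the prime-to-$p$ part is trivial. You instead observe that $A := \overline{\langle\gamma_{>0}\rangle}$ is a pro-$p$ procyclic subgroup --- the key technical point, which you justify correctly by passing to finite cyclic quotients and using that the image of $\gamma_{>0}^{p^k}$ must eventually vanish in any discrete quotient --- and then apply Lemma \ref{prop:pro-p-splitting} to split $\bm{p}$ uniquely over $A$, defining $\tilde{\gamma}_{>0} := s(\gamma_{>0})$, whose topological unipotence follows from continuity of $s$. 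Your construction is more self-contained: it derives the lift directly from the vanishing of pro-$p$ cohomology with prime-to-$p$ coefficients (the content of Lemma \ref{prop:pro-p-splitting}), rather than presupposing topological Jordan decomposition inside the covering group; the paper's route is shorter given the surrounding formalism, but yours makes the dependence on $p \nmid m$ more transparent. The uniqueness arguments in both are essentially identical: a central $\zeta \in \bmu_m$ with $\zeta^{p^k} \to 1$ must equal $1$ since $\gcd(p,m)=1$.
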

\begin{proof}
	For the uniqueness, note that if $\tilde{\gamma}_{>0}$ and $\noyau\tilde{\gamma}_{>0}$ are both topologically unipotent, where $\noyau \in \bmu_m$, then $\noyau^{p^k} \to 1$, which implies $\noyau=1$ since $p \nmid m$.
	
	To show existence, take the topological Jordan decomposition of any preimage of $\gamma_{>0}$, say $\tilde{\eta} \tilde{\gamma}_{>0} = \tilde{\gamma}_{>0} \tilde{\eta}$. Since its image is topologically unipotent, we must have $\bm{p}(\tilde{\eta})=1$ and $\tilde{\gamma}_{>0}$ is the required lifting of $\gamma_{>0}$.
\end{proof}
Also note that $\gamma_0$ is semi-simple and $\gamma_{>0} \in J_{\text{reg}}(F)$, where $J := G_{\gamma_0}$.

It is shown in \cite{D76} that the character of any supercuspidal representation of $\tilde{G}$ is supported on compact elements. Moreover, $\Theta_\pi|_{\tilde{G}_{\text{reg}}}$ vanishes off the good locus since it is an invariant genuine function. Let us state the Adler--Spice character formula \cite[Theorem 7.1]{AS09} for $\pi$, rephrased as in \cite[(6.1)]{Kal15}; the only difference is that we work with covering groups. Several definitions are in order.

\begin{itemize}
	\item Condition \textbf{P.2} implies that we have the exponential map $\mathfrak{g}(F^\text{nr})_{0+} \to G(F^\text{nr})_{0+}$ that is a $G(F^\text{nr})$ and $\Frob$-equivariant homeomorphism, whose inverse we denote by $\log$; see \cite[p.57]{Kal13} or \cite[Appendix B]{DR09}.
	\item Now take $Y \in \mathfrak{s}^*(F)_{-1/e}$ satisfying $\theta \circ \exp = \xi(\lrangle{Y, \cdot}): \mathfrak{s}(F)_{1/e} \to \CC^\times$. Only the coset $Y + \mathfrak{s}^*(F)_0$ is well-defined.
	\item It is sometimes useful to identify $\mathfrak{g}$, $\mathfrak{g}^*$ and $\mathfrak{s}$, $\mathfrak{s}^*$. To achieve this, we use a non-degenerate invariant bilinear form $B_{\mathfrak{g}}$ on $\mathfrak{g}$ such that for some (thus for all) maximal $F$-torus $T$ and every $H_\alpha := \dd\check{\alpha}(1)$ over $\bar{F}$, we have $v\left( B_{\mathfrak{g}}(H_\alpha, H_\alpha)\right) = 0$. For classical groups $G$ inside $\GL(W)$, such as $G = \Sp(W)$, we follow \cite[\S 2.1.1]{LM18} to take
	\begin{equation}\label{eqn:B_g-LMS}
		\mathbb{B}(X_1, X_2) := \frac{1}{2} \Tr\left( X_1 \cdot X_2 | W \right), \quad X_1, X_2 \in \mathfrak{g}.
	\end{equation} \index{Btr@$\mathbb{B}$}
	This corrects the earlier \cite[Definition 6.1.1]{LMS16} by a sign, as kindly explained to the author by H.\ Y.\ Loke (private communication).

	Using $\mathbb{B}$, one can identify the dual of $\mathfrak{g}(F)_{x, t:t+}$ with $\mathfrak{g}(F)_{x, (-t):(-t)+}$ for all $x$ and $t \in \R$, same for $\mathfrak{s}$; see \cite[Lemma A.1.1]{DR09}. In particular, we may view $Y$ as an element of $\mathfrak{s}(F)_{-1/e}$. The following result guarantees the regularity of $Y$ in $\mathfrak{g}$.
\end{itemize}

\begin{proposition}\label{prop:stable-good}
	Let $v_{\bar{F}}$ be the valuation of $\bar{F}$ extending $v_F$. For $Y$ as above, we have $v_{\bar{F}}(\dd\alpha(Y)) = -\frac{1}{e}$ for any root $\alpha$ of $S_{\bar{F}} \subset G_{\bar{F}}$. In particular $Y \in \mathfrak{s}(F)_{-1/e} \cap \mathfrak{s}_{\mathrm{reg}}(F)$, and each eigenvalue $\lambda \in \bar{F}$ of $Y$ (as an element of $\syp(W) \subset \gl(W)$) satisfies $v_{\bar{F}}(\lambda) = -\frac{1}{e}$.
\end{proposition}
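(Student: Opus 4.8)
First note that the covering is essentially irrelevant: by Lemma~\ref{prop:splitting-pro-p} the map $\bm{p}$ splits uniquely over $S(F)_0 = S(F)_{1/e}$, and the element $Y$ is manufactured solely from $\theta|_{S(F)_{1/e}}$, which—via that unique splitting—is an honest character of $S(F)_{1/e}$. So the statement only concerns the pair $\bigl(S,\theta|_{S(F)_{1/e}}\bigr)$ and is in substance \cite[Proposition~3.4 and \S 3.5]{Kal15}; I would present it as such, tracking only the points where type (ER) is used. Using $\mathbb{B}$ (normalised by $v(\mathbb{B}(H_\alpha,H_\alpha))=0$) to identify $\mathfrak{s}^*$ with $\mathfrak{s}$, we have $Y \in \mathfrak{s}(F)_{-1/e}$, so by the very definition of the filtration on a torus, $v_{\bar F}(\dd\chi(Y)) \geq -\tfrac1e$ for \emph{every} $\chi \in X^*(S_{\bar F})$. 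Applying this to the roots $\alpha$, and to the weights $\pm\epsilon_i$ of $S$ on $W$, gives the inequalities $v_{\bar F}(\dd\alpha(Y)) \geq -\tfrac1e$ and $v_{\bar F}(\lambda) \geq -\tfrac1e$ for eigenvalues $\lambda$. (These claimed equalities are also independent of the representative of $Y$ modulo $\mathfrak{s}^*(F)_0$, since that ambiguity contributes only terms of valuation $\geq 0$.) The content is therefore the reverse inequality.

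\textbf{The $w$-eigenspace picture.} Pass to $F^{\mathrm{nr}}$ and the splitting extension $L^{\mathrm{nr}}/F^{\mathrm{nr}}$, which is tamely totally ramified of degree $e$; fix a uniformiser $\varpi$ of $L^{\mathrm{nr}}$ with $\sigma_0(\varpi)=\zeta_e\varpi$ for a chosen topological generator $\sigma_0$ of $I_F/P_F$ and a primitive $e$-th root of unity $\zeta_e$. Since $S$ is of type (ER), $\sigma_0$ acts on $Y$ through an elliptic regular element $w$ of order $e$; decomposing $Y\otimes\bar F = \bigoplus_j V(\zeta_e^{\,j})$ into $w$-eigenspaces and noting that $\varpi^{-1}v$ with $v\in V(\zeta_e)$ is $\sigma_0$-fixed up to Galois action on scalars, one identifies the graded piece $\mathfrak{s}(F^{\mathrm{nr}})_{-1/e:(-1/e)+}$ with (a twist of) the $\zeta_e$-eigenspace $V(\zeta_e)$. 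Hence $Y = \varpi^{-1}Y_0 + (\text{depth}>-\tfrac1e)$ with $Y_0 \in V(\zeta_e)$, and $Y_0\neq 0$ because $\theta|_{S(F)_{1/e}}$ is non-trivial. For each root $\alpha$ this gives $\dd\alpha(Y) = \varpi^{-1}\dd\alpha(Y_0) + (\text{higher valuation})$, so $v_{\bar F}(\dd\alpha(Y)) = -\tfrac1e$ exactly when $\dd\alpha(Y_0)\neq 0$, and is $>-\tfrac1e$ otherwise. The proposition thus reduces to: $Y_0$ is a \emph{regular} vector in $V(\zeta_e)$, i.e. $\langle\alpha,Y_0\rangle\neq 0$ for all roots $\alpha$.

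\textbf{Genericity $\Rightarrow$ $Y_0$ regular, and conclusion.} By Springer's theory, $\zeta_e$ is a regular eigenvalue of $w$ (this is precisely what "elliptic regular" provides), so $V(\zeta_e)$ meets the regular locus; in particular, when $\dim V(\zeta_e)=1$—e.g.\ the type (C) case—every non-zero $Y_0$ is regular and one concludes unconditionally. In general, regularity of $Y_0$ is exactly the input of genericity in Definition~\ref{def:epipelagic-character}: the reduction map $\varpi^{-1}(\cdot)\bmod\mathfrak{p}$ identifies $\ell_\theta$ with the $\overline{\kappa_F}$-line through $\overline{Y_0}\in\mathrm{Lie}(\bar S)$ inside $\mathfrak{g}^*(L)_{x,0:0+}$, and the condition that $\ell_\theta$ be strongly regular semisimple—its stabiliser in the reductive quotient at $x$ being a maximal $\kappa_L$-torus—translates, through the dictionary between the roots of that quotient and the roots $\alpha$ of $S_{\bar F}$, into $\langle\alpha,Y_0\rangle\neq 0$ for every $\alpha$. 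This yields $v_{\bar F}(\dd\alpha(Y))=-\tfrac1e$ for all roots $\alpha$; in particular $\dd\alpha(Y)\neq 0$ for all $\alpha$, whence $Y\in\mathfrak{s}_{\mathrm{reg}}(F)$. For the eigenvalues: the weights of $S$ on $W=F^{2n}$ are $\pm\epsilon_1,\dots,\pm\epsilon_n$, and $2\epsilon_i$ is a root, so $v_{\bar F}(\dd\epsilon_i(Y))=v_{\bar F}(\dd(2\epsilon_i)(Y))=-\tfrac1e$ using $p\neq 2$; hence each eigenvalue $\lambda$ of $Y$ on $W$ satisfies $v_{\bar F}(\lambda)=-\tfrac1e$.

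\textbf{Main obstacle.} The delicate step is the last one: matching the geometric condition over $\kappa_L$ in Definition~\ref{def:epipelagic-character} (stabiliser of $\ell_\theta$ a maximal torus) with the Galois-theoretic statement that $Y_0$ is regular over $\bar F$. This requires the precise structure of type (ER) tori and of the point $x\in\mathcal{B}(G,F)$ they determine (after \cite{Pra01}), namely the relation between the roots of the reductive quotient at $x$ and the absolute roots of $S$; everything else is bookkeeping with the tame filtration.
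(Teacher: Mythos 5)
The paper's proof of this proposition is a one-line citation: ``In view of \eqref{eqn:lambda-stab}, this is just \cite[Lemma 7.3.1]{LMS16} or \cite[Lemma 3.2]{Kal15}. To deduce the assertion on $v_{\bar{F}}(\lambda)$, consider the long roots $\alpha$.'' You are instead trying to reconstruct the cited lemma from scratch. The scaffolding you set up is correct and matches the structure of Kaletha's argument: the inequality $v_{\bar F}(\dd\alpha(Y))\geq -1/e$ is free; the content is the reverse inequality; passing to $F^{\mathrm{nr}}$ and the $\lrangle{\sigma_0}$-eigenspace decomposition reduces matters to showing that the leading term $Y_0\in V(\zeta_e)$ satisfies $\lrangle{\alpha,Y_0}\neq 0$ for \emph{all} absolute roots $\alpha$; and the eigenvalue claim does follow from the long roots $2\epsilon_i$ and $p\neq 2$, exactly as in the paper.

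The gap is the one you yourself flag in your final paragraph, and it is a real gap, not bookkeeping. The genericity condition of Definition \ref{def:epipelagic-character} says that the stabiliser of $\ell_\theta$ in $G(L)_{x,0:0+}$ is a maximal $\kappa_L$-torus. Unwinding, this gives $\lrangle{\alpha,\overline{Y_0}}\neq 0$ only for those $\alpha$ that are roots of the reductive quotient $G(L)_{x,0:0+}$. When $e>1$ the point $x$ is not hyperspecial after base change to $L$, so this is a \emph{proper} sub-root-system of $R(G_L,S_L)$ — and moreover it is $w$-stable (since $\sigma_0$ fixes $x$), so the naive equivariance observation ``$\lrangle{\alpha,Y_0}\neq 0\iff\lrangle{w\alpha,Y_0}\neq 0$'' does not propagate the non-vanishing to the remaining $w$-orbits. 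The passage from regularity in the reductive quotient to regularity over $\bar F$ is exactly the non-trivial content of \cite[Lemma 3.2]{Kal15} (resp.\ \cite[Lemma 7.3.1]{LMS16}); it requires relating the affine-root structure at the $\mathrm{(ER)}$-point $x$ to the full root system and/or invoking Reeder--Yu/Vinberg stability. You assert the ``dictionary'' but do not supply it, so the proof is incomplete at precisely the point where the proposition has mathematical content. (A minor point: the reduction map identifying $\ell_\theta$ with the line through $\overline{Y_0}$ should be multiplication by $\varpi$, not $\varpi^{-1}$, and the notation $Y\otimes\bar F$ clashes with the element $Y\in\mathfrak{s}^*(F)$ whose eigenspace decomposition you actually intend; these are slips, not obstructions.)
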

\begin{proof}
	In view of \eqref{eqn:lambda-stab}, this is just \cite[Lemma 7.3.1]{LMS16} or \cite[Lemma 3.2]{Kal15}. To deduce the assertion on $v_{\bar{F}}(\lambda)$, consider the long roots $\alpha$.
\end{proof}

\begin{notation}
	For any reductive $F$-group $J$, a maximal torus $S \subset J$ together with $Z \in \mathfrak{s}^*_\text{reg}(F)$ (i.e. $J_Z = S$), we define the unnormalized orbital integral
	\[ \mu^J_Z: f^* \longmapsto \int_{S(F) \backslash J(F)} f^*(g^{-1} Z g) \dd g, \quad f^* \in C^\infty_c(\mathfrak{j}^*(F)) \]
	and its Fourier transform
	\[ \widehat{\mu^J_Z}(f) = \mu^J_Z(\hat{f}), \quad \hat{f}(X) = \int_{\mathfrak{j}(F)} f(X') \xi\lrangle{X', X} \dd X' \]
	for all $f \in C^\infty_c(\mathfrak{j}(F))$; here we adopt the Haar measures in \cite[\S 4.2]{Kal19}. Set
	\begin{align*}
		D^J(Z) & := \prod_{\substack{\alpha: \text{root} \\ \dd\check{\alpha}(Z) \neq 0 }} \dd\check{\alpha}(Z), \\
		\hat{\iota}^J(Z,X) & := |D^J(Z)|^{\frac{1}{2}} \cdot |D^J(X)|^{\frac{1}{2}} \cdot \widehat{\mu^J_Z}(X), \quad (Z, X) \in \mathfrak{j}^*_\text{reg}(F) \times \mathfrak{j}_\text{reg}(F).
	\end{align*}
	This is well-defined since $\widehat{\mu^J_Z}$ is representable by a locally integrable function over $\mathfrak{j}(F)$, smooth over $\mathfrak{j}_\text{reg}(F)$ by \cite{HC99}. It is $J(F)$-invariant in both variables.
\end{notation}

\begin{theorem}[Cf. {\cite[(6.1)]{Kal15}} or {\cite[\S 4.3]{Kal19}} ]\label{prop:Adler-Spice}
	Let $\tilde{\gamma} \in \tilde{G}_{\mathrm{reg}}$ be a compact element with topological Jordan decomposition $\tilde{\gamma} = \tilde{\gamma}_0 \gamma_{>0}$. Write $\gamma, \gamma_0$ for their images in $G(F)$ and put $J := G_{\gamma_0}$. Then
	\[ |D^G(\gamma)|^{\frac{1}{2}} \Theta_\pi(\tilde{\gamma}) = \sum_{\substack{ g \in J(F) \backslash G(F) / S(F) \\ g^{-1}\gamma_0 g \in S(F) }} \theta(g^{-1} \tilde{\gamma}_0 g) \hat{\iota}^J \left(gYg^{-1}, \log\gamma_{>0} \right). \]
\end{theorem}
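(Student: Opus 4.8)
The plan is to follow the proof of the Adler--Spice character formula in its original form \cite[Theorem 7.1]{AS09}, in the streamlined shape of Kaletha \cite[\S 4.3]{Kal16} (cf. also \cite[\S 6]{Kal15}), and to check that the covering $\bm{p}: \tilde{G} \twoheadrightarrow G(F)$ introduces no new analytic difficulty. First I would record the structural inputs. By \cite{D76} the character $\Theta_\pi$ of the genuine supercuspidal $\pi = \pi_{\tilde{S}, \theta}$ is supported on the compact locus, and by $G(F)$-invariance it vanishes off the good locus; hence it suffices to evaluate $\Theta_\pi(\tilde{\gamma})$ for compact regular $\tilde{\gamma}$ with topological Jordan decomposition $\tilde{\gamma} = \tilde{\gamma}_0 \gamma_{>0}$, where $\gamma_{>0} \in J_{\mathrm{reg}}(F)$ and $J := G_{\gamma_0}$. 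Since $\pi$ is compactly induced from the open, compact-mod-centre subgroup $\tilde{H} := \tilde{S} \cdot G(F)_{x,1/e} = \bm{p}^{-1}\big(\Stab_{G(F)_x}(\lambda)\big)$, the Frobenius formula for characters of compact inductions expresses $\Theta_\pi(\tilde{\gamma})$ as a sum over $\tilde{G}/\tilde{H}$ --- equivalently over $G(F)/H$, because $\mu_m$ is central and acts trivially on cosets --- of the zero-extension $\dot{\widehat{\theta}}(g\tilde{\gamma}g^{-1})$.

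Then I would carry out the Adler--Spice bookkeeping on this sum exactly as in loc. cit. First, only those $g$ survive for which $g^{-1}\gamma g \in \Stab_{G(F)_x}(\lambda) = S(F)\,G(F)_{x,1/e}$; matching topological Jordan decompositions forces $g^{-1}\gamma_0 g \in S(F)$ and $g^{-1}\gamma_{>0}g \in G(F)_{x,1/e}$, collapsing the index set to $\{ g \in J(F)\backslash G(F)/S(F) : g^{-1}\gamma_0 g \in S(F)\}$. Second, on each such double coset the semisimple contribution factors out as $\widehat{\theta}(g^{-1}\tilde{\gamma}_0 g) = \theta(g^{-1}\tilde{\gamma}_0 g)$, using that $\tilde{\gamma}_0$ is the canonical prime-to-$p$ lift of $\gamma_0$ and that $\tilde{S}$ is commutative (Proposition \ref{prop:splitting-even-coarse}). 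Third, the residual sum over the topologically unipotent part, after applying the exponential map (legitimate by Hypothesis \ref{hyp:p-large}, \textbf{P.2}), the Moy--Prasad isomorphism, and the identification of $\mathfrak{g}$ with $\mathfrak{g}^*$ via $\mathbb{B}$, is identified with the Fourier transform of a semisimple orbital integral on $\mathfrak{j}(F)$, that is, with $\hat{\iota}^J(gYg^{-1}, \log\gamma_{>0})$, where $Y \in \mathfrak{s}^*(F)_{-1/e}$ represents $\theta$ and $J_Y = S$ by Proposition \ref{prop:stable-good}. Assembling these three points and tracking the Weyl-discriminant normalisations --- the global factor $|D^G(\gamma)|^{1/2}$ against the $|D^J(\cdot)|^{1/2}$ built into $\hat{\iota}^J$, via the usual Weyl-integration bookkeeping of \cite[\S 4.3]{Kal16} --- yields the asserted identity.

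The covering-specific points, which I would isolate and verify explicitly, are the following: the cover splits uniquely over every pro-$p$ subgroup in sight, in particular over $G(F)_{x,1/e}$ (Lemma \ref{prop:pro-p-splitting}) and over $S(F)_0 = S(F)_{1/e}$ (Lemma \ref{prop:splitting-pro-p}), so $\widehat{\theta}$ is unambiguously defined and genuine; inner automorphisms by $g \in G(F)$ restrict to the unique lifted automorphisms on these compact subgroups, so every conjugation manipulation descends from $\tilde{G}$ to $G(F)$ and to $\mathfrak{g}(F)$ without change; and genuineness, $\widehat{\theta}(\noyau\tilde{x}) = \noyau\widehat{\theta}(\tilde{x})$, propagates to $\Theta_\pi$ and is consistent on both sides of the formula, the right-hand side being genuine in $\tilde{\gamma}_0$ through $\theta$. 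With these observations, the analytic steps of \cite{AS09} --- the local character expansion, the comparison of unrefined minimal $\mathsf{K}$-types (cf. \eqref{eqn:lambda-stab}), the passage through the Lie algebra --- apply verbatim to $\tilde{G}$.

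The main obstacle is the third step above: the identification of the topologically-unipotent contribution with a Lie-algebra Fourier-transformed orbital integral is the genuinely deep part of the Adler--Spice theorem, resting on Moy--Prasad theory, the stability of $\lambda$, and DeBacker--Spice-type asymptotics. Here I would not reprove it but quote \cite[Theorem 7.1]{AS09} and \cite[\S 4.3]{Kal16}, observing --- as already flagged in the introduction --- that its validity for the cover $\tilde{G}$ is taken as a hypothesis, made plausible by the fact that $\pi_{\tilde{S},\theta}$ is a special case of Yu's construction adapted to covers with $p \nmid m$ (Remark \ref{rem:Yu-data}), precisely the setting to which the Adler--Spice analysis is designed to apply. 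The remaining combinatorial collapse of the index set, the factoring of the semisimple part, and the discriminant normalisation are routine and identical to the reductive case.
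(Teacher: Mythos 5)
Your proposal is correct and follows essentially the same route as the paper: both sketches defer the analytic core to \cite{AS09} (explicitly flagged as taken for granted for $\tilde{G}$) and then isolate the covering-specific verifications --- unique splittings over pro-$p$ subgroups, genuineness propagating through $\widehat{\theta}$, the index-set collapse via matching topological Jordan decompositions, and the simplification from the epipelagic/toral shape of Yu's datum. The paper's own outline additionally points to \cite{Li12b} for the harmonic-analysis infrastructure on coverings and to the disappearance of the Weil-representation fourth roots of unity, but these are refinements of the same strategy you give, not a different argument.
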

\begin{proof}
	It is unrealistic to reproduce \cite{AS09} here, so we only indicate the following ingredients.
	\begin{enumerate}
		\item Since $S$ is a maximal torus,
			\[ g^{-1} \gamma_0 g \in S(F) \iff \Ad(g)(S) \subset G_{\gamma_0} = J. \]
			Thus $gYg^{-1} \in \mathfrak{j}^*_\text{reg}(F)$ and $\hat{\iota}^J(gYg^{-1}, \cdot)$ makes sense.
		\item There are no Weil representations in Yu's construction \cite{Yu01} for the epipelagic case (cf. Remark \ref{rem:Yu-data}), which simplifies enormously the character formula. In fact, the complex fourth roots of unity (summarized in \cite[\S 4.3]{Kal19}) in the formula of Adler--Spice disappear here.
		\item Since we are in the case of positive depth $r$, by Lemma \ref{prop:pro-p-splitting} $G(F)_{x,r}$ splits uniquely in $\tilde{G}$.
		\item In view of the description of Yu's data in Remark \ref{rem:Yu-data}, the normal approximation in \cite{AS09, Kal19} reduces to the topological Jordan decomposition, which works for $\tilde{G}$ as explained above.
		\item Basic results in harmonic analysis on coverings such as
		\begin{inparaenum}[(a)]
			\item semisimple descent,
			\item local integrability, smoothness and local expansions of $\Theta_\pi$, and
			\item properties of orbital integrals,
		\end{inparaenum}
		have been established in \cite{Li12b}.
		\item All elements from $S(F)_{p'}$ are good by Theorem \ref{prop:S-p'} + Remark \ref{rem:good-pm1}.
	\end{enumerate}
	The arguments should actually be simpler since the mock-exponential maps in \cite{AS09} are avoided by \textbf{P.2}.
\end{proof}

\section{The \texorpdfstring{$L$}{L}-packets}\label{sec:packets}
We shall work with a local field $F$ with $\text{char}(F) \neq 2$, and a symplectic $F$-vector space $(W, \lrangle{\cdot|\cdot})$ with $\dim_F W = 2n$. Set $G = \Sp(W)$ and let $\bm{p}: \tilde{G} \to G(F)$ be the BD-cover defined in \S\ref{sec:BD-Sp} with kernel $\mu_m$, where $m \mid N_F$. We also fix $\epsilon: \mu_m \rightiso \bmu_m \subset \CC^\times$ in order to talk about genuine representations of $\tilde{G}$, etc. When $m \equiv 2 \pmod 4$, we also fix an additive character $\psi: F \to \CC^\times$. The class of multiplicative $\shK_2$-torsors over $G$ under consideration is the same as in \S\ref{sec:BD-covers-Sp}. For the dual side, this limitation is justified by Theorem \ref{prop:rescale-L}.

From \S\ref{sec:inducing-data} onwards, $F$ will be non-archimedean of residual characteristic $p \nmid 2m$. In \S\ref{sec:stability}, the conditions from \S\ref{sec:Adler-Spice} will be in force.

\subsection{Convenient splittings}\label{sec:splittings-S}
The first task is to prescribe a preimage of $-1$ in the $K_2(F)$-torsor $E_G(F) \twoheadrightarrow G(F)$ using the symplectic form $\lrangle{\cdot|\cdot}$. Pick a symplectic basis $e_1, \ldots, e_n, e_{-n}, \ldots, e_{-1}$ of $W$ and define $B = TU$ as in \S\ref{sec:Sp}, with opposite $B^- = TU^-$. For every root $\alpha$ of $T$, let $U_\alpha$ be the root subgroup. The symplectic basis gives rise to a standard pinning for $G$: for each simple $\alpha$ we are given an isomorphism $x_\alpha: \Ga \rightiso U_\alpha$; concurrently we have $x_{-\alpha}: \Ga \rightiso U_{-\alpha}$, see \cite[Exp XXIII, 1.2]{SGA3-3}. They correspond to $X_{\pm\alpha} := \dd x_{\pm\alpha}(1)$ and $x_{-\alpha}$ is characterized by
\[ [X_\alpha, X_{-\alpha}] = H_\alpha := \dd\check{\alpha}(1); \]
see \cite[Exp XX, Corollaire 2.11]{SGA3-3}. Now set
\[ w_\alpha(t) := x_\alpha(t) x_{-\alpha}(-t^{-1}) x_\alpha(t), \quad t \in \Gm; \]
this gives a representative of the root reflection with respect to $\alpha$. Recall that $E_G \to G$ splits canonically over $U$ and $U^-$ by \cite[Proposition 3.2]{BD01}. Hence one may view $w_\alpha(t)$ as elements of $E_G(F)$ for all $t \in F$. So it makes sense on the level of $E_G$ to define
\begin{equation}\label{eqn:h_alpha}
	h_\alpha(t) := w_\alpha(t) w_\alpha(-1).
\end{equation}

We will only need to deal with positive long roots $\alpha = 2\epsilon_i$. Since it involves only $e_{\pm i}$ in the symplectic basis, one can calculate inside $\SL(2)$ to see that $H_\alpha = \twomatrix{1}{}{}{-1}$ and
\begin{align*}
	x_\alpha(t) = \twobigmatrix{1}{t}{}{1}, & \quad x_{-\alpha}(t) = \twobigmatrix{1}{}{t}{1}, \\
	w_\alpha(t) = \twobigmatrix{}{t}{-t^{-1}}{}, & \quad h_\alpha(t) = \twobigmatrix{t}{}{}{t^{-1}}.
\end{align*}
Note that if we pass to the basis $e_{-1}, \ldots, e_{-n}, e_n, \ldots, e_1$ by conjugation in $G(F)$, namely by $\twomatrix{}{-1}{1}{}$ in copies of $\SL(2)$, then $x_\alpha(t) = \twomatrix{1}{t}{}{1}$ will become $x_{-\alpha}(-t) = \twomatrix{1}{}{-t}{1}$.

\begin{lemma}\label{prop:minus-1-refined}
	Denote by $\alpha$ a positive long root of $T$.
	\begin{enumerate}[(i)]
		\item We have $w_\alpha(1) = w_{-\alpha}(-1)$, $w_\alpha(1) w_\alpha(-1) = 1$ and $h_\alpha(-1) = w_\alpha(-1)^2 = w_\alpha(1)^{-2}$ in $E_G(F)$.
		\item When $\alpha$ ranges over the positive long roots of $T$, the elements $h_\alpha(-1)$ form a commuting family in $E_G(F)$. Their product
			\[ \overline{-1} := \prod_{\substack{\alpha: \text{long root} \\ \alpha > 0 }} h_\alpha(-1) \]
			is a preimage of $-1 \in G(F)$.
		\item The element $\overline{-1}$ depends only on $(W, \lrangle{\cdot|\cdot})$, not on the choice of symplectic bases.
	\end{enumerate}
\end{lemma}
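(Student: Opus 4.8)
The plan is to reduce both (i) and (ii) to computations inside Matsumoto's central extension of $\SL(2)$, and then to deduce (iii) from $G(F)$-equivariance of the construction together with Proposition~\ref{prop:BD-adjoint-action}.

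\textbf{Reduction to $\SL(2)$.} I would decompose $W = \bigoplus_{i=1}^{n} W_i$, where $W_i$ is the hyperbolic plane spanned by $e_i, e_{-i}$, so that $\prod_{i=1}^{n} \Sp(W_i) \hookrightarrow G$. The positive long roots of $T$ are $\alpha_i := 2\epsilon_i$, and by the matrix formulas recalled just before the lemma all of $x_{\pm\alpha_i}(\cdot)$, $w_{\alpha_i}(\cdot)$, $h_{\alpha_i}(\cdot)$ lie in $\Sp(W_i) \simeq \SL(2)$. By Lemma~\ref{prop:restriction-W_i} the pull-back of $E_G$ to $\prod_i \Sp(W_i)$ is the contracted product of the $E_{\Sp(W_i)}$, each being Matsumoto's extension $E_{\SL(2)}$, and elements lying over distinct factors commute in $E_G(F)$. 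Consequently every identity involving only a single index $i$ may be checked in $E_{\SL(2)}(F)$, and $h_{\alpha_i}(-1)$ commutes with $h_{\alpha_j}(-1)$ for $i \neq j$, which already gives the commuting-family part of (ii).

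\textbf{Part (i).} Since the canonical splittings of $E_G \to G$ over $U$ and over $U^-$ (\cite[Proposition 3.2]{BD01}) are homomorphisms, $x_{\pm\alpha}(a) x_{\pm\alpha}(b) = x_{\pm\alpha}(a+b)$ holds in $E_G(F)$; expanding the word $w_\alpha(1) w_\alpha(-1)$ and cancelling yields $w_\alpha(1) w_\alpha(-1) = 1$, hence $w_\alpha(-1) = w_\alpha(1)^{-1}$, and then the definition~\eqref{eqn:h_alpha} gives $h_\alpha(-1) = w_\alpha(-1)^2 = w_\alpha(1)^{-2}$. The one identity that genuinely uses the noncommutativity of the cover is $w_\alpha(1) = w_{-\alpha}(-1)$; by the reduction above this lives in $E_{\SL(2)}(F)$, where it follows from the standard symmetric form $w_\alpha(t) = x_{-\alpha}(-t^{-1}) x_\alpha(t) x_{-\alpha}(-t^{-1})$ of Steinberg's Weyl elements (Matsumoto~\cite{Mat69}; cf.~\cite[\S 11.1]{BD01}) upon setting $t = 1$, since $x_{-\alpha}(-1) x_\alpha(1) x_{-\alpha}(-1)$ is by definition $w_{-\alpha}(-1)$.

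\textbf{Parts (ii) and (iii).} For the rest of (ii): $-\identity_W$ acts as $-\identity_{W_i}$ on $W_i$, and $-\identity_{W_i}$ is $h_{\alpha_i}(-1) = \twomatrix{-1}{}{}{-1}$ in $\Sp(W_i) \simeq \SL(2)$; since the $h_{\alpha_i}(-1)$ commute, $\overline{-1} := \prod_i h_{\alpha_i}(-1)$ is a well-defined preimage of $-\identity_W \in G(F)$. For (iii) I would show that the assignment $\mathcal{B} \mapsto \overline{-1}_{\mathcal{B}}$ is $G(F)$-equivariant: if $\mathcal{B}' = g\mathcal{B}$ with $g \in G(F) = \Sp(W)(F)$, then $\Ad(g)$ carries the Borel, torus and standard pinning attached to $\mathcal{B}$ to those attached to $\mathcal{B}'$, hence carries each $w_\alpha(t)$, $h_\alpha(t)$ — and, by functoriality of the canonical splittings over maximal unipotent subgroups, their lifts to $E_G(F)$ — to the corresponding objects for $\mathcal{B}'$; since $\Ad(g)$ also matches the positive long roots of $\mathcal{B}$ with those of $\mathcal{B}'$, we get $\overline{-1}_{\mathcal{B}'} = \Ad(g)(\overline{-1}_{\mathcal{B}})$. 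As $\Sp(W)(F)$ is transitive on symplectic bases, it remains to see $\Ad(g)(\overline{-1}_{\mathcal{B}}) = \overline{-1}_{\mathcal{B}}$; this holds because $\overline{-1}_{\mathcal{B}}$ lies over the central element $-1 \in Z_G$ and, as $\Sp(W) = \Sp(W)_{\mathrm{sc}}$, Proposition~\ref{prop:BD-adjoint-action} forces every preimage of $-1$ to be central in $E_G(F)$.

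\textbf{Main obstacle.} The only step beyond bookkeeping is the relation $w_\alpha(1) = w_{-\alpha}(-1)$ in $E_{\SL(2)}(F)$: it must be known for the \emph{canonical} lifts rather than merely after projecting to $\SL(2,F)$, and this is precisely what Matsumoto's presentation of the $K_2$-extension supplies. A secondary point to handle with care is the $G(F)$-equivariance underlying (iii), which rests on the functoriality of the canonical splitting over maximal unipotent subgroups; both are standard but should be invoked explicitly.
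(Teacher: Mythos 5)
Your proof is correct and follows essentially the same route as the paper's. For (i) the key relation $w_\alpha(1)=w_{-\alpha}(-1)$ is indeed the Tits trijection identity $e^+e^-e^+ = e^-e^+e^-$ (which the paper cites at the level of $E_G(F)$ from \cite[p.73]{BD01}); your explicit cancellation argument for $w_\alpha(1)w_\alpha(-1)=1$ using the additive law of the canonical $U$- and $U^-$-splittings, the use of Lemma~\ref{prop:restriction-W_i} for commutativity in (ii), and the appeal to centrality via Proposition~\ref{prop:BD-adjoint-action} in (iii) all match the paper's argument.
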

\begin{proof}
	For (i), notice that the elements $e^+ := x_\alpha(1)$, $e^- := x_{-\alpha}(-1)$ and $\nu := e^+ e^- e^+$ of $G(F)$ are in Tits trijection in the sense of \cite[11.1]{BD01}. It is shown in \cite[p.73]{BD01} that $e^+ e^- e^+ = e^- e^+ e^-$ holds on the level of $E_G(F)$, which amounts to $w_\alpha(1) = w_{-\alpha}(-1)$. The equation $w_\alpha(1)w_\alpha(-1)=1$ results from the definitions. The third equality in (i) follows immediately.
	
	For (ii), apply Lemma \ref{prop:restriction-W_i} to $W = \bigoplus_{i=1}^n \lrangle{e_i, e_{-i}}$ to deduce commutativity. It follows that $\overline{-1}$ is well-defined and maps to $-1 \in G(F)$.

	The symplectic bases of $W$ form a single $G(F)$-orbit; changing them amounts to replacing $\overline{-1}$ by a $G(F)$-conjugate. Proposition \ref{prop:BD-adjoint-action} asserts that $\overline{-1}$ is central in $E_G(F)$. This proves (iii).
\end{proof}

\begin{lemma}
	The element $\overline{-1}$ in $E_G(F)$ constructed in Lemma \ref{prop:minus-1-refined} satisfies
	\[ (\overline{-1})^2 = \{ -1, -1 \}_F^{n}. \]
\end{lemma}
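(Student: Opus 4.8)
The strategy is to reduce the computation of $(\overline{-1})^2$ to $n$ disjoint copies of $\SL(2)$ and then invoke the formula $h_\alpha(-1) = w_\alpha(-1)^2$ from Lemma \ref{prop:minus-1-refined}(i), together with the Steinberg-type relations in $E_G(F)$ for the rank-one case. More precisely, write $W = \bigoplus_{i=1}^n \lrangle{e_i, e_{-i}}$ as in the proof of Lemma \ref{prop:minus-1-refined}, so that by Lemma \ref{prop:restriction-W_i} the preimages $h_{\alpha_i}(-1)$ (where $\alpha_i = 2\epsilon_i$) lie in commuting copies of $E_{\SL(2)}(F) \hookrightarrow E_G(F)$ and pairwise commute. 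Hence
\[
(\overline{-1})^2 = \prod_{i=1}^n h_{\alpha_i}(-1)^2,
\]
and everything reduces to computing $h_\alpha(-1)^2$ for a single long root $\alpha$, i.e. inside $E_{\SL(2)}(F)$.

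For the rank-one computation I would use the canonical splittings of $E_{\SL(2)} \to \SL(2)$ over $U$ and $U^-$ (\cite[Proposition 3.2]{BD01}) to make sense of $w_\alpha(t) = x_\alpha(t) x_{-\alpha}(-t^{-1}) x_\alpha(t)$ in $E_{\SL(2)}(F)$, and then appeal to the explicit relations in \cite[\S 11, esp. (11.1.4)--(11.1.5)]{BD01} — which were already quoted in the proof of Lemma \ref{prop:gerbe-splitting} — for how $h_\alpha$ multiplies. Concretely, the relevant identity is $h_\alpha(s) h_\alpha(t) = \{s,t\}_F \cdot h_\alpha(st)$ (the commutator/cocycle formula for $\mathcal{D}$ with $Q(\check\alpha)=1$, since $\alpha$ is a long root on which Matsumoto's $Q$ takes the value $1$). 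Applying this with $s = t = -1$ gives
\[
h_\alpha(-1)^2 = \{-1,-1\}_F \cdot h_\alpha(1) = \{-1,-1\}_F,
\]
using $h_\alpha(1) = 1$ in $E_{\SL(2)}(F)$ (which follows from $w_\alpha(1) w_\alpha(-1) = 1$ in Lemma \ref{prop:minus-1-refined}(i), since $h_\alpha(1) = w_\alpha(1) w_\alpha(-1)$). Multiplying over $i = 1, \dots, n$ then yields $(\overline{-1})^2 = \{-1,-1\}_F^n$.

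The main obstacle I anticipate is pinning down the exact normalization: one must be careful that the quadratic form attached to Matsumoto's $E_G$ really takes value $1$ (not $2$ or $-1$) on the long coroot $\check\alpha = \check\epsilon_i$ in the convention used here — this is asserted after \eqref{eqn:Y-Sp} and in Remark \ref{rem:Matsumoto}, so it is available, but the sign conventions in \cite{BD01} around $w_\alpha$ and $h_\alpha$ (and the minus signs in the definition $x_{-\alpha}(-t^{-1})$, $w_\alpha(t) w_\alpha(-1)$) need to be tracked so that the cocycle identity comes out as $h_\alpha(s)h_\alpha(t) = \{s,t\}_F h_\alpha(st)$ with no extraneous sign. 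An alternative, perhaps cleaner route avoiding $B_Q$-bookkeeping: work directly with the preferred section $\bm{s}$ of Kubota's cover pushed into $E_{\SL(2)}(F)$, compute $\bm{s}(\mathrm{diag}(-1,-1))^2$ via the explicit cocycle $\bm{c}$ of \eqref{eqn:Kubota-cocycle} (noting $h_\alpha(-1) = \mathrm{diag}(-1,-1)$ and that $\bm{s}$ agrees with the chosen preimage up to the unique splitting over unipotents), and read off $\bm{c}(\mathrm{diag}(-1,-1), \mathrm{diag}(-1,-1)) = -\{-1,-1\}_F$ — but then one inherits the sign discrepancy flagged in Remark \ref{rem:Kubota-minus}, which cancels correctly only after keeping track of that extra minus. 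Either way the computation is short once the normalization is fixed; I would present the $\mathcal{D}$-based argument as the cleanest since $(Q,\mathcal{D})$ with $Q(\check\alpha)=1$ is exactly the data already in hand.
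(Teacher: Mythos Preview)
Your proposal is correct and follows essentially the same route as the paper: reduce to $n=1$ via the commutativity established in Lemma \ref{prop:minus-1-refined}(ii) (ultimately Lemma \ref{prop:restriction-W_i}), then invoke the Steinberg cocycle identity $h_\alpha(s)h_\alpha(t) = \{s,t\}_F\, h_\alpha(st)$ in $E_{\SL(2)}(F)$ with $s=t=-1$. The only cosmetic difference is that the paper cites Matsumoto \cite[\S 5]{Mat69} directly for this identity rather than \cite[(11.1.4)--(11.1.5)]{BD01}; your extended discussion of normalization worries and the alternative Kubota-cocycle route is unnecessary here, as the Matsumoto relation is already available in the conventions of the paper.
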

\begin{proof}
	In view of Lemma \ref{prop:minus-1-refined}, it suffices to deal with the case $n=1$ and calculate inside $\SL(2)$. By the construction of $E_G(F)$ in \cite[\S 5]{Mat69}, we know that $h_\alpha(t)h_\alpha(t') = \{t, t'\}_F \cdot h_\alpha(t t')$ for all $t,t' \in F^\times$.
\end{proof}

Henceforth we consider the BD-cover $\bmu_m \hookrightarrow \tilde{G} \twoheadrightarrow G(F)$ deduced from $E_G(F)$ and $\epsilon$. Denote again by $\overline{-1}$ the image of the element in Lemma \ref{prop:minus-1-refined} in the push-out $\tilde{G}$. It satisfies $(\overline{-1})^2 = (-1, -1)_{F,m}^n$ (omitting $\epsilon$), which is $\pm 1$ by what follows. \index{$\overline{-1}, \widetilde{-1}$}

\begin{lemma}\label{prop:1-c}
	For any $c \in F^\times$, we have
	\[ (-1, c)_{F,m} = \begin{cases}
			1, & m \notin 2\Z \\
			(-1,c)_{F,2}, & m \equiv 2 \pmod 4 \\
			(\zeta, c)_{F,2}, & 4 \mid m
	\end{cases}\]
	where $\zeta$ is any generator of $\mu_m$. It belongs to $\mu_2$ and depends solely on $cF^{\times 2}$.
\end{lemma}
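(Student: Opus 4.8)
The plan is to reduce the whole statement to the bimultiplicativity of the Hilbert symbol together with the power‑reduction formula \eqref{eqn:norm-residue-d}, and I expect no serious obstacle — the only point demanding care is why, when $4 \mid m$, the symbol $(\zeta,c)_{F,2}$ (and not $(-1,c)_{F,2}$) is the correct right‑hand side.

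First I would observe that $(-1,c)_{F,m}$ always lies in $\mu_2$: bimultiplicativity in the first slot gives $(-1,c)_{F,m}^2 = ((-1)^2,c)_{F,m} = (1,c)_{F,m} = 1$. This proves the parenthetical claim that the symbol belongs to $\mu_2$, and it settles the case $m \notin 2\Z$ at once, since then $\mu_2 \cap \mu_m = \{1\}$.

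Next I would assume $m$ is even and fix a generator $\zeta$ of $\mu_m$; here $\mu_m = \mu_m(F) \subset F^\times$ because $m \mid N_F$, so $(\zeta,c)_{F,2}$ makes sense, and $-1 = \zeta^{m/2}$. Then bimultiplicativity followed by \eqref{eqn:norm-residue-d} with $d = m/2$ (note $m/2 \mid m$, quotient $2$) yields
\[ (-1,c)_{F,m} = (\zeta,c)_{F,m}^{m/2} = (\zeta,c)_{F,m/(m/2)} = (\zeta,c)_{F,2}. \]
When $4 \mid m$ this is exactly the asserted identity. When $m \equiv 2 \pmod 4$ the exponent $m/2$ is odd, so $\zeta = \zeta^{m/2}\cdot(\zeta^{(m/2-1)/2})^2 \equiv -1 \pmod{F^{\times 2}}$; since $a \mapsto (a,c)_{F,2}$ is a homomorphism killing $F^{\times 2}$ (because $(d^2,c)_{F,2} = (d,c)_{F,2}^2 = 1$), we get $(\zeta,c)_{F,2} = (-1,c)_{F,2}$, as required.

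It then remains to dispatch the two coda claims. The value depends only on $cF^{\times 2}$: for $m$ even this is the same observation that $a \mapsto (a,c)_{F,2}$ is trivial on squares, now applied in the second argument ($(a,d^2)_{F,2}=1$), while for $m$ odd the value is constantly $1$. Independence of the choice of generator $\zeta$ when $4 \mid m$: any two generators of $\mu_m$ differ by an odd power, as the units of $\Z/m\Z$ are odd for $m$ even, and $(\zeta,c)_{F,2} \in \mu_2$ is unaffected by odd powers. I would also flag in passing that in the case $4 \mid m$ one genuinely cannot replace $(\zeta,c)_{F,2}$ by $(-1,c)_{F,2}$, since there $-1 = (\zeta^{m/4})^2 \in F^{\times 2}$ forces the latter to equal $1$, which is false in general — this is the one place where the statement would be easy to misremember.
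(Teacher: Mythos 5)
Your proof is correct and takes essentially the same approach as the paper's: bimultiplicativity in the first slot shows the symbol lies in $\mu_2$, and the power-reduction formula \eqref{eqn:norm-residue-d} lowers the degree from $m$ to $2$; the paper uses $(-1)^{m/2}=-1$ directly when $m\equiv 2\pmod 4$ and $\zeta^{m/2}=-1$ when $4\mid m$, which you unify slightly by always passing through $\zeta$. One small algebraic slip in your $m\equiv 2\pmod 4$ step: the displayed identity $\zeta = \zeta^{m/2}\cdot(\zeta^{(m/2-1)/2})^2$ is false (its right-hand side equals $\zeta^{m-1}=\zeta^{-1}$, not $\zeta$); the intended conclusion $\zeta\equiv -1\pmod{F^{\times 2}}$ is nevertheless correct, e.g.\ via $-1 = \zeta^{m/2} = \zeta\cdot(\zeta^{(m/2-1)/2})^2$, which exhibits $-\zeta^{-1}$ as a square.
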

\begin{proof}
	Bi-multiplicativity implies $(-1, c)_{F,m} \in \mu_2 \cap \mu_m$, hence $(-1, c)_{F,m} = 1$ for $m \notin 2\Z$. When $m \equiv 2 \pmod 4$ (resp. $4 \mid m$), use $(-1)^{m/2} = -1$ (resp. $\zeta^{m/2} = -1$) and \eqref{eqn:Hilbert-projection-formula} to reduce to $(\cdot, c)_{F,2}$.
\end{proof}

\begin{definition}\label{def:lifting-minus-1}
	Set $\tilde{G}^\natural \twoheadrightarrow G(F)$ to be the push-out of $\tilde{G}$ via $\bmu_m \hookrightarrow \bmu_{\text{lcm}(4,m)}$ if $m \in 2\Z$, otherwise set $\tilde{G}^\natural = \tilde{G}$. The next step is to define a preimage $\widetilde{-1} \in \tilde{G}^\natural$ of $-1$ such that $(\widetilde{-1})^2 = 1$, using Lemma \ref{prop:1-c}.
	\begin{itemize}
		\item \textbf{The case $m \not\equiv 2 \pmod 4$}. Take $\widetilde{-1} := \overline{-1}$. This works when $m \notin 2\Z$ by Lemma \ref{prop:1-c}. When $4 \mid m$, the same Lemma yields $(-1, -1)_{F,m} = (\zeta, -1)_{F,2} = (\zeta, \zeta)_{F,2}^{m/2}$, which still equals $1$.

		\item \textbf{The case $m \equiv 2 \pmod 4$}. We have a quadratic character $(-1, \cdot)_{F,m}: F^\times \to \mu_2$. The local root number $\epsilon\left( \frac{1}{2}, (-1, \cdot)_{F,m}; \psi \right)$ lies in $\bmu_4$ and satisfies
		\[ \epsilon\left( \frac{1}{2}, (-1, \cdot)_{F,m} ; \psi \right)^2 = (-1, -1)_{F,m} = (-1, -1)_{F,2}. \]
		Thus we can take
		\[ \widetilde{-1} := \epsilon\left( \frac{1}{2}, (-1, \cdot)_{F,m}; \psi\right)^{-n} \cdot \overline{-1} \; \in \tilde{G}^\natural. \]
	\end{itemize}
\end{definition}

\begin{lemma}\label{prop:splitting-variance}
	The formation of $\widetilde{-1}$ depends on $(W, \lrangle{\cdot|\cdot})$ and $\psi$ in the following manner. Suppose
	\begin{compactenum}[(i)]
		\item all data are acted upon by $\Ad(g)$ with $g \in G_\mathrm{ad}(F)$ (Proposition \ref{prop:BD-adjoint-action}), and $g$ comes from $g_1 \in \GSp(W)$ with similitude factor $N(g_1) = c$;
		\item $\lrangle{\cdot|\cdot}$ is replaced by $c\lrangle{\cdot|\cdot}$;
		\item $m \equiv 2 \pmod 4$ and $\psi$ is replaced by $\psi_c$
	\end{compactenum}
	where $c \in F^\times$. Then $\widetilde{-1}$ will be replaced by $(-1, c)_{F,m}^n \cdot \widetilde{-1}$ in each case.
\end{lemma}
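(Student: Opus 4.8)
The plan is to treat the three modifications separately, since by bi-multiplicativity of the Hilbert symbol and of $\CaliAd$-type constructions, their effects compose; I will also reduce everything to $n=1$ via Lemma~\ref{prop:restriction-W_i} and Lemma~\ref{prop:minus-1-refined}(ii), which express $\overline{-1}$ and hence $\widetilde{-1}$ as a product of $n$ commuting contributions, one for each $\lrangle{e_i,e_{-i}}$, each identical in form. Thus it suffices to prove that each single $h_\alpha(-1)$-factor gets multiplied by $(-1,c)_{F,m}$, and then the product of $n$ of them gets multiplied by $(-1,c)_{F,m}^n$.

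For case (i), the key point is that $\widetilde{-1}$ in $\tilde G^\natural$ was built from $\overline{-1}\in E_G(F)$, whose construction used the standard pinning attached to a symplectic basis, together with the canonical splittings of $E_G$ over $U$, $U^-$ (\cite[Proposition 3.2]{BD01}). Applying $\Ad(g)$ with $g$ coming from $g_1\in\GSp(W)$ of similitude $c$ transports the pinning and hence $\overline{-1}$ to a $G(F)$-conjugate; but $\overline{-1}$ is central in $E_G(F)$ by Proposition~\ref{prop:BD-adjoint-action}, so $\Ad(g)(\overline{-1})$ differs from $\overline{-1}$ only by a $K_2(F)$-cocycle contribution, which I compute exactly as in Lemma~\ref{prop:-1-adjoint}: reducing to $\SL(2)$ generated by $\alpha$, where $g$ acts by $\twomatrix{c}{}{}{1}$ (up to $\SL(2,F)$-conjugacy, which does not matter for a central element), Kubota's cocycle \eqref{eqn:Kubota-cocycle} gives $\Ad\bigl(\twomatrix{c}{}{}{1}\bigr)(h_\alpha(-1))=-\{-1,c\}_F\cdot h_\alpha(-1)$. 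After pushing out via $(\cdot,\cdot)_{F,m}$ this becomes multiplication by $(-1,c)_{F,m}$ (the sign $-1=(-1)^{m}$ absorbed since $m\ge 1$; more precisely $-\{-1,c\}_F$ pushes to $(-1,c)_{F,m}$ because $(-1)\in\mu_2\cap\mu_m$ acts trivially — use Lemma~\ref{prop:1-c}). Taking the product over the $n$ long roots yields the factor $(-1,c)_{F,m}^n$.

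For case (ii), replacing $\lrangle{\cdot|\cdot}$ by $c\lrangle{\cdot|\cdot}$ changes the preferred symplectic basis: one must rescale, say $e_{-i}\mapsto c^{-1}e_{-i}$, to obtain a symplectic basis for $c\lrangle{\cdot|\cdot}$. In the copy of $\SL(2)$ attached to $\alpha=2\epsilon_i$ this rescaling is conjugation by $h_\alpha(c)=\twomatrix{c}{}{}{c^{-1}}$, an element of $T(F)\subset G(F)$. Again $\overline{-1}$ is central in $E_G(F)$, so the new $\overline{-1}$ equals $\Ad(h_\alpha(c))$ applied to the old one for each factor; by the same $\SL(2)$ computation via Kubota's cocycle (now with $\twomatrix{c}{}{}{c^{-1}}$, whose determinant is $1$ but which is not in the canonical splitting image) one gets a commutator factor $\{c^{-1},-1\}_F$ or equivalently, after pushing out, $(-1,c)_{F,m}$ per factor; alternatively invoke the bi-multiplicativity $h_\alpha(c)h_\alpha(-1)=\{c,-1\}_F\cdot h_\alpha(-c)$ together with $h_\alpha(-c)=h_\alpha(c)\cdot(\text{unit contribution})$. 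Either way the net effect is multiplication by $(-1,c)_{F,m}^n$. For case (iii), only the prefactor $\epsilon\bigl(\tfrac12,(-1,\cdot)_{F,m};\psi\bigr)^{-n}$ is affected; using the standard behaviour of local root numbers of a quadratic character $\eta$ under $\psi\mapsto\psi_c$, namely $\epsilon(\tfrac12,\eta;\psi_c)=\eta(c)\,\epsilon(\tfrac12,\eta;\psi)$ with $\eta=(-1,\cdot)_{F,m}$, so $\eta(c)=(-1,c)_{F,m}$, the prefactor changes by $(-1,c)_{F,m}^{-n}=(-1,c)_{F,m}^{n}$ (it is $\pm1$). This gives the claimed twist.

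\textbf{Main obstacle.} The one step needing genuine care is case (ii): unlike cases (i) and (iii), the change is not implemented by a single adjoint action or a clean root-number identity, because passing to a symplectic basis for $c\lrangle{\cdot|\cdot}$ is only canonical up to $\Sp(W)$-conjugacy, and I must check that the identification of the new $E_G(F)$ (for $c\lrangle{\cdot|\cdot}$) with the old one is the natural isometry-induced one, then track $\overline{-1}$ through it. Concretely the subtlety is whether the rescaling can be chosen inside $\Sp(W)$ (it can, e.g. $\mathrm{diag}(1,\dots,1,c^{-1},\dots,c^{-1})$, which has similitude $c^{-1}$ as an element of $\GSp$ acting on the \emph{new} form but lies in $\Sp$ for a suitable comparison) — reconciling the two viewpoints so that the Kubota-cocycle computation applies verbatim is the delicate bookkeeping. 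Once that identification is pinned down, the residual computation is the same $\SL(2)$ commutator calculation as in Lemma~\ref{prop:-1-adjoint}, and everything else is routine bi-multiplicativity plus Lemma~\ref{prop:1-c}.
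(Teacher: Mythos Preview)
Your treatment of cases (i) and (iii) is essentially correct and matches the paper. The problem is case (ii), which you correctly flag as the main obstacle but then mishandle.

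You write that the rescaling $e_{-i}\mapsto c^{-1}e_{-i}$ is ``conjugation by $h_\alpha(c)=\twomatrix{c}{}{}{c^{-1}}$, an element of $T(F)\subset G(F)$.'' This is wrong on both counts. The linear map sending $e_i\mapsto e_i$, $e_{-i}\mapsto c^{-1}e_{-i}$ is $\twomatrix{1}{}{}{c^{-1}}$, not $\twomatrix{c}{}{}{c^{-1}}$; it has determinant $c^{-1}$, so it lies in $\GL(2)\smallsetminus\SL(2)$, i.e.\ in $\GSp(W)$ with similitude $c^{-1}$, not in $G(F)$. Had it genuinely been in $G(F)$, your argument would collapse: $\overline{-1}$ is central in $E_G(F)$ (Proposition~\ref{prop:BD-adjoint-action}), so conjugation by anything in $G(F)$ fixes it, and you would get no twist at all.

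Once you write the correct matrix, you see immediately that case (ii) \emph{is} case (i): the new $\overline{-1}$ built from the new symplectic basis is $\Ad(g_1)(\overline{-1})$ for $g_1\in\GSp(W)$ of similitude $c^{-1}$, hence by case (i) differs by $(-1,c^{-1})_{F,m}^n=(-1,c)_{F,m}^n$ (the symbol is $\pm 1$). This is exactly what the paper does: it simply states ``(i) and (ii) are equivalent, so it suffices to address (i)'' and then reduces to $n=1$ and invokes Lemma~\ref{prop:-1-adjoint} together with Lemma~\ref{prop:1-c}. Your separate bookkeeping for (ii), and the worry in your ``main obstacle'' paragraph about whether the rescaling can be chosen inside $\Sp(W)$, dissolve once you recognize this.
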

\begin{proof}
	The case (iii) follows from the dependence of local root numbers on $\psi$. (i) and (ii) are equivalent, so it suffices to address (i).

	Decompose $W$ as $\bigoplus_{i=1}^n W_i$ with $W_i := Fe_i \oplus Fe_{-i}$. Note that if $h_i \in \GSp(W_i)$ satisfies $N(h_i)=c$, then $g_1 := \text{diag}(h_1, \ldots, h_n) \in \GSp(W)$ satisfies $N(g_1) = c$ as well. It remains to show $\Ad(g)(-1) = (-1,c)_{F,m}$ in the case $n=1$, which is just Lemma \ref{prop:-1-adjoint} joint with Lemma \ref{prop:1-c}.
\end{proof}

Finally, consider an orthogonal decomposition of symplectic $F$-vector spaces
\[ W = \bigoplus_{i=1}^r W_i \]
and let $A$ be the commutative group $\prod_{i=1}^r \{\pm 1\} \subset \prod_{i=1}^r \Sp(W_i)$. Let $\tilde{G}^\natural_i$ be the analogues for the group $G_i := \Sp(W_i)$. In each $\tilde{G}^\natural_i$ we have the element $\widetilde{-1}_i$ from Definition \ref{def:lifting-minus-1}.

\begin{proposition}\label{prop:lifting-A}
	The topological central extension $\tilde{G}^\natural \to G(F)$ admits a section over $A$ given by
	\[ (1, \ldots, \underbracket{-1}_{i-\text{th slot}}, \ldots, 1) \longmapsto \iota_i\left( \widetilde{-1}_i \right) \]
	for every $1 \leq i \leq r$, where $\iota_i: \tilde{G}^\natural_i \to \tilde{G}$ is the natural homomorphism furnished by Lemma \ref{prop:restriction-W_i}.
\end{proposition}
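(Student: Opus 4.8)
The plan is to verify that the assignment $(1,\ldots,-1,\ldots,1)\mapsto\iota_i(\widetilde{-1}_i)$ extends to a group homomorphism $A\to\tilde{G}^\natural$ splitting $\bm{p}$. First I would record that, by Lemma \ref{prop:restriction-W_i}, the pull-back of $\tilde{G}^\natural$ (which is the push-out of $\tilde{G}$ along $\bmu_m\hookrightarrow\bmu_{\mathrm{lcm}(4,m)}$, or $\tilde{G}$ itself) to $\prod_{i=1}^r G_i(F)$ is the contracted product $\tilde{G}_1^\natural\utimes{\bmu_{\mathrm{lcm}(4,m)}}\cdots\utimes{\bmu_{\mathrm{lcm}(4,m)}}\tilde{G}_r^\natural$; in particular elements $\iota_i(\tilde{x})$ and $\iota_j(\tilde{y})$ lying over distinct components $G_i$, $G_j$ commute in $\tilde{G}^\natural$. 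Hence to define a homomorphism out of $A=\prod_i\{\pm1\}$ it suffices to check that each $\iota_i(\widetilde{-1}_i)$ has order dividing $2$, which is exactly the content of Definition \ref{def:lifting-minus-1}: there we arranged $(\widetilde{-1}_i)^2=1$ in $\tilde{G}_i^\natural$ in each of the three cases $m\notin 2\Z$, $4\mid m$, $m\equiv2\pmod4$, using Lemma \ref{prop:1-c} and, in the last case, the square of the local root number.

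The argument then runs as follows. For a subset $J\subset\{1,\ldots,r\}$ let $\sigma_J\in A$ be the element with $-1$ in the slots indexed by $J$. Define $s(\sigma_J):=\prod_{i\in J}\iota_i(\widetilde{-1}_i)$; by the commutativity just noted the product is independent of the order, and $\bm{p}(s(\sigma_J))=\prod_{i\in J}(1,\ldots,-1,\ldots,1)=\sigma_J$, so $s$ is a set-theoretic section of $\bm{p}$ over $A$. To see $s$ is multiplicative, take $J,J'\subset\{1,\ldots,r\}$; then $\sigma_J\sigma_{J'}=\sigma_{J\triangle J'}$ where $\triangle$ is symmetric difference, and
\[
s(\sigma_J)s(\sigma_{J'})=\prod_{i\in J}\iota_i(\widetilde{-1}_i)\cdot\prod_{i\in J'}\iota_i(\widetilde{-1}_i)
=\prod_{i\in J\cap J'}\iota_i(\widetilde{-1}_i)^2\cdot\prod_{i\in J\triangle J'}\iota_i(\widetilde{-1}_i),
\]
using that the factors over distinct indices commute and that factors over a common index may be collected. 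Since $\iota_i(\widetilde{-1}_i)^2=1$ for each $i$, the first product is trivial and the right-hand side equals $s(\sigma_{J\triangle J'})=s(\sigma_J\sigma_{J'})$. Thus $s:A\to\tilde{G}^\natural$ is a homomorphism, i.e. a splitting over $A$. Continuity is automatic as $A$ is finite (hence discrete).

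I do not expect a serious obstacle here: the statement is essentially a bookkeeping consequence of the already-established facts that (a) elements over distinct symplectic summands commute (Lemma \ref{prop:restriction-W_i}), and (b) each $\widetilde{-1}_i$ was constructed to be an involution (Definition \ref{def:lifting-minus-1}). The only point requiring a word of care is compatibility of the maps $\iota_i$ with the passage from $\tilde{G}$ to the push-out $\tilde{G}^\natural$ and with the contracted-product description: one must make sure that ``$\iota_i(\widetilde{-1}_i)^2=1$ in $\tilde{G}^\natural$'' is literally the identity proved in Definition \ref{def:lifting-minus-1}, which it is because $\iota_i$ is a homomorphism of topological central extensions over $\bmu_{\mathrm{lcm}(4,m)}$ and $\widetilde{-1}_i$ already lives in $\tilde{G}_i^\natural$. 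If one wishes, one may also note that this section is the unique one extending the chosen lifts $\iota_i(\widetilde{-1}_i)$, since $A$ is generated by the coordinate involutions; but uniqueness is not asserted in the statement and need not be belabored.
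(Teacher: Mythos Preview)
Your argument is correct and follows essentially the same approach as the paper: the paper's proof simply invokes Lemma \ref{prop:restriction-W_i} to reduce to the case $r=1$, where the claim amounts to $(\widetilde{-1})^2=1$, which is exactly what Definition \ref{def:lifting-minus-1} provides. You have spelled out the same reduction explicitly via the commutativity of the $\iota_i(\widetilde{-1}_i)$ and the involution property, which is a faithful unpacking of the paper's two-line proof.
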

\begin{proof}
	Using Lemma \ref{prop:restriction-W_i}, the problem is reduced to the case $r=1$. But to say $-1 \mapsto \widetilde{-1}$ gives a section is the same as requiring $(\widetilde{-1})^2 = 1$.
\end{proof}

\begin{remark}\label{rem:splitting-invariance}
	In the situation above, if we transport everything by applying $g \in G(F)$, the resulting splitting over $gAg^{-1}$ only differs by $\Ad(g): \tilde{G} \to \tilde{G}$.
\end{remark}

\begin{example}\label{eg:section-S}
	Consider the setting of \S\ref{sec:covers-tori}, assume that $S \subset G$ is of type (ER) and $p \neq 2$. We have $S(F) = S(F)_{p'} \times S(F)_{\text{tu}}$ by \eqref{eqn:S-Jordan}, and Theorem \ref{prop:S-p'} asserts $S(F)_{p'} = \{\pm 1\}^I$. The parameters $K = \prod_{i \in I} K_i$ for $S \subset G$ decompose $W$ into $\bigoplus_{i \in I} W_i$, and accordingly $A = S(F)_{p'} = \{\pm 1\}^I$ in Proposition \ref{prop:lifting-A}. In fact $W = \bigoplus_i W_i$ is exactly the decomposition into joint eigenspaces under $S(F)_{p'}$. In this way we obtain a section over $S(F)_{p'}$. In view of Lemma \ref{prop:splitting-pro-p}, it extends uniquely to a section
	\[\begin{tikzcd}
		\tilde{S}^\natural \arrow[twoheadrightarrow]{r}[swap]{\bm{p}} & S(F) \arrow[bend right]{l}[swap]{\sigma}
	\end{tikzcd}\]
	where $\tilde{S}^\natural$ is the preimage of $S(F)$ in $\tilde{G}^\natural$. Writing $\Pi_-(\cdot)$ (resp. $\Pi(\cdot)$) to denote the set of equivalence classes of genuine representations (resp. usual representations), we obtain bijections
	\begin{equation}\label{eqn:splitting-genuine} \begin{tikzcd}
		\Pi_-(\widetilde{S}) \arrow[leftrightarrow]{r}{1:1}[swap, inner sep=1em]{\text{well-known}} & \Pi_-(\widetilde{S}^\natural) \arrow[leftrightarrow]{r}{1:1}[swap, inner sep=1em]{\text{via}\;\sigma} & \Pi(S(F)).
	\end{tikzcd}\end{equation}
	Their dependence on $(W, \lrangle{\cdot|\cdot})$ and $\psi$ (when $m \equiv 2 \pmod 4$) is described by Lemma \ref{prop:splitting-variance}.
\end{example}

\subsection{Toral invariants}\label{sec:toral-invariants}
Consider a maximal $F$-torus $S$ in a reductive $F$-group $G$, and let $\alpha$ be an absolute root with $[F_{\pm\alpha} : F_\alpha] = 2$, i.e. $\alpha$ is a \emph{symmetric root}, cf. \eqref{eqn:F_alpha}. We shall denote by $R(G,S)(\bar{F})$ the set of absolute roots of $S$, and by $R(G,S)(\bar{F})_\text{sym}$ the subset of symmetric roots; both are $\Gamma_F$-stable. Below is a review of the \emph{toral invariant} of $\alpha \in R(G,S)(\bar{F})_\text{sym}$ in \cite[\S 4.1]{Kal15}, which will enter into our construction of $L$-packets. Define
\[ f_{(G,S)}(\alpha) := \sgn_{F_\alpha/F_{\pm\alpha}}\left( \dfrac{[X_\alpha, \tau X_\alpha]}{H_\alpha} \right), \]
where $X_\alpha \in \mathfrak{g}_\alpha(F_\alpha) \smallsetminus \{0\}$ is arbitrary, and $H_\alpha := \dd\check{\alpha}(1)$ is the infinitesimal coroot. \index{f_GS@$f_{(G,S)}(\alpha)$}
\begin{compactitem}
	\item By \cite[Fact 4.1]{Kal15}, $\alpha \mapsto f_{(G,S)}(\alpha)$ is well-defined and is $\Gamma_F$-invariant. It is even $N_G(S)(F)$-invariant, cf. the explanation of \cite[Fact 4.7.5]{Kal19}.
	\item Due to the infinitesimal nature of this definition, one can also compute $f_{(G,S)}(\alpha)$ in $G_\text{der}$ or $G_\text{ad}$.
\end{compactitem}
We compute two cases below.

\textbf{The case $G=\Sp(W)$}. Here $W$ is a $2n$-dimensional symplectic $F$-vector space. Take a finite separable extension $L/F$ to split $S$, such that $S_L$ is associated to a symplectic basis $e_{\pm 1}, \ldots, e_{\pm n}$ of $W \otimes_F L$.
\begin{asparaenum}
	\item Suppose $\alpha$ is a long root. Reasoning as in \S\ref{sec:stable-reduction}, one may calculate inside $\SL(2)$ over $F_{\pm\alpha}$. We deduce $f_{(G,S)}(\alpha) = 1$ by \cite[Lemma 7.3]{Kal15}.
	
	\item Suppose $\alpha$ is a short root. Without loss of generality, assume $\alpha = \epsilon_1 - \epsilon_2$ in this basis, so that $\check{\alpha} = \check{\epsilon}_1 - \check{\epsilon}_2$. Take the data
	\begin{gather*}
		X_\alpha: \begin{array}{l} e_2 \mapsto e_1 \\ e_{-1} \mapsto -e_{-2} \end{array}, \quad
		H_\alpha: \begin{array}{l}
		e_1 \mapsto e_1 \\
		e_2 \mapsto -e_2 \\
		e_{-2} \mapsto e_{-2} \\
		e_{-1} \mapsto -e_{-1}
	\end{array} \quad \text{(the other basis vectors $\mapsto 0$)}.	\end{gather*}
	Recall that $\Gal{L/F}$ acts on $\{\pm\epsilon_1, \ldots, \pm\epsilon_n\} \subset X^*(S_L)$ by permutation since it does so on long roots. Let $\sigma$ be the nontrivial element of $\Gal{F_\alpha/F_{\pm\alpha}}$.
	\begin{enumerate}[(i)]
		\item Suppose that $\epsilon_1, \epsilon_2$ are in the same $\Gal{L/F}$-orbit, so that $\sigma: \epsilon_1 \leftrightarrow \epsilon_2$. Calculate the Lie bracket as
		\[ [X_\alpha, \sigma X_\alpha] = \begin{array}{l}
		e_1 \mapsto e_1 \\
		e_2 \mapsto -e_2 \\
		e_{-2} \mapsto e_{-2} \\
		e_{-1} \mapsto -e_{-1}
		\end{array} = H_\alpha. \]
		Hence $f_{(G,S)}(\alpha) = 1$ in this case. Alternatively, we may also calculate $f_{(G,S)}(\alpha)$ in a twisted Levi subgroup $R_{F_{\pm\alpha}/F}(\GL(2)) \times \cdots$, and argue as in the case of long roots that $f_{(G,S)}(\alpha) = 1$.
		\item Suppose that $\epsilon_1, \epsilon_2$ are not in the same $\Gal{L/F}$-orbit, then $\sigma: \epsilon_i \leftrightarrow -\epsilon_i$ ($i=1,2$). This time
		\[ [X_\alpha, \sigma X_\alpha] = \begin{array}{l}
			e_1 \mapsto -e_1 \\
			e_2 \mapsto e_2 \\
			e_{-2} \mapsto -e_{-2} \\
			e_{-1} \mapsto e_{-1}
		\end{array} = -H_\alpha. \]
		Hence $f_{(G,S)}(\alpha) = \sgn_{F_\alpha/F_{\pm\alpha}}(-1)$ in this case.
	\end{enumerate}
\end{asparaenum}

\begin{definition}\label{def:hyperbolic-basis}
	By stipulation, if a quadratic $F$-vector space $(L,h)$ of dimension $2n$ can be written as the orthogonal direct sum of non-degenerate subspaces $Fe_i \oplus Fe_{-i}$, where $1 \leq i \leq n$, such that
	\[ h(e_i|e_{-i}) \neq 0, \quad h(e_i|e_i) = 0 = h(e_{-i}|e_{-i}), \quad 1 \leq i \leq n \]
	then we say $\{e_{\pm i}\}_{i=1}^n$ is a \emph{hyperbolic basis} for $(L,h)$.
\end{definition}
Suppose that $(L,h)$ admits a hyperbolic basis, there is then a split maximal torus $T \subset \SO(L,h)$ consisting of matrices in the basis $e_1, \ldots, e_n, e_{-n}, \ldots, e_{-1}$:
\[ \gamma = \text{diag}(a_1, \ldots, a_n, a_n^{-1}, \ldots, a_1^{-1}), \quad a_1, \ldots, a_n \in \Gm. \]
Therefore $X^*(T) = \bigoplus_{i=1}^n \Z\epsilon_i$ by setting $\epsilon_i(\gamma) = a_i$. Note that we do not require $h(e_i|e_{-i}) = 1$.

\textbf{The case $G=\SO(V,q)$}. Here $(V,q)$ is a $(2n+1)$-dimensional quadratic $F$-vector space. Take a finite separable extension $L/F$ to split $S$, such that $S_L$ is the maximal torus associated to a hyperbolic basis $\{e_{\pm i}\}_{i=1}^n$ of a $2n$-dimensional non-degenerate $L$-subspace of $V \otimes_F L$; define $\{\epsilon_i\}_{i=1}^n$ as above. Its orthogonal complement is an anisotropic line $\ell$ which is the weight-zero subspace of $V \otimes_F L$ under $S_L$, hence $\ell$ is generated by an anisotropic $F$-vector $e_0$.
\begin{asparaenum}
	\item Suppose $\alpha$ is a short root. Without loss of generality, assume $\alpha = \epsilon_1$. Consider the subspace of $S$-weights $\{\alpha,0,-\alpha\}$ of $V \otimes_F F_\alpha$, namely
	\[ U := F_\alpha e_1 \oplus F_\alpha e_0 \oplus F_\alpha e_{-1}. \]
	The restriction $q_U := (q \otimes_F F_\alpha)|_U$ is non-degenerate, and $f_{(G,S)}(\alpha)$ can be calculated inside $\SO(U, q_U)$. Since $\Gamma_{\pm\alpha}$ permutes $\{\alpha, 0, -\alpha\}$, we see that $(U, q_L)$ descends to a quadratic $F_{\pm\alpha}$-vector space $(U_0, q_0)$ of dimension $3$.
	\begin{enumerate}[(i)]
		\item If $\SO(U_0, q_0)$ is split, it will be isomorphic to $\PGL(2)$ and we have $f_{(G,S)}(\alpha) = 1$ by \cite[Lemma 7.13]{Kal15} as before.
		\item If $\SO(U_0, q_0)$ is not split, it will be an inner form of $\PGL(2)$ of Kottwitz sign $-1$. By \cite[Proposition 4.3]{Kal15} we have $f_{(G,S)}(\alpha) = -1$ in this case.
	\end{enumerate}
	Note that $\SO(U_0, q_0)$ is split if and only if $(U_0, q_0)$ is isotropic, hence by \cite[12.7 + 14.3]{Sch85}
	\begin{equation}\label{eqn:SO-split}
		f_{(G,S)}(\alpha) = \epsilon(U_0, q_0) (-1, d^\pm(U_0, q_0))_{F_{\pm\alpha}, 2}.
	\end{equation}
	\item Suppose $\alpha$ is a long root. We may assume $\alpha = \epsilon_1 - \epsilon_2$. The formulas for $X_\alpha$ and $H_\alpha$ are exactly the same as the case for $\Sp(W)$, the same calculation thus leads to
	\[ f_{(G,S)}(\alpha) = \begin{cases}
		1, & \epsilon_1, \epsilon_2 \in \text{the same $\Gal{L/F}$-orbit} \\
		\sgn_{F_\alpha/F_{\pm\alpha}}(-1), & \epsilon_1, \epsilon_2 \notin \text{the same $\Gal{L/F}$-orbit}.
	\end{cases} \]
\end{asparaenum}

Henceforth assume $F$ non-archimedean with residual characteristic $\neq 2$. For $S \subset G$ as above, Kaletha constructed in \cite[\S 4.6]{Kal15} a character $\epsilon_S: S(F) \to \{\pm 1\}$ using various $f_{(G,S)}(\alpha)$. By construction, $\epsilon_T$ is $N_G(T)(F)$-invariant since $f_{G,T}(\alpha)$ is, cf. \cite[Fact 4.7.5]{Kal19}. \index{epsilonS@$\epsilon_S$}
\begin{lemma}\label{prop:epsilon-invariance}
	Let $G=\Sp(W)$ where $(W, \lrangle{\cdot|\cdot})$ is a $2n$-dimensional symplectic $F$-vector space. Let $j,j': S \hookrightarrow G$ be two embeddings of maximal $F$-tori, related by stable conjugacy $\Ad(g): jS \rightiso j'S$. For any $\gamma \in S(F)$ with topological Jordan decomposition $\gamma = \gamma_0 \gamma_{>0}$, we have
	\[ \epsilon_j(j\gamma) = \epsilon_j(j\gamma_0) = \epsilon_{j'}(j'\gamma_0) = \epsilon_{j'}(j'\gamma). \]
\end{lemma}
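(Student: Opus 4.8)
The plan is to separate the two outer equalities from the middle one. For the outer equalities I would argue that $\epsilon_j$, being a continuous character valued in $\{\pm 1\}$, is trivial on every pro-$p$ subgroup of $(jS)(F)$ because $p \neq 2$; in particular it kills the subgroup of topologically unipotent elements. Since an $F$-isomorphism of tori preserves topological Jordan decompositions, $j\gamma_{>0}$ and $j'\gamma_{>0}$ are topologically unipotent, so $\epsilon_j(j\gamma) = \epsilon_j(j\gamma_0)\epsilon_j(j\gamma_{>0}) = \epsilon_j(j\gamma_0)$ and likewise $\epsilon_{j'}(j'\gamma) = \epsilon_{j'}(j'\gamma_0)$. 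This reduces the whole statement to the middle equality $\epsilon_j(j\gamma_0) = \epsilon_{j'}(j'\gamma_0)$; and by the reduction just made it is enough to treat $\gamma_0 \in S(F)_{p'}$, i.e. to show that the characters $\epsilon_j\circ j$ and $\epsilon_{j'}\circ j'$ of $S(F)$ agree on the prime-to-$p$ part.

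For the middle equality I would use the stable conjugacy $\Ad(g): jS \rightiso j'S$, $g \in G(\bar F)$, $\Ad(g)\circ j = j'$. By Proposition \ref{prop:canonical-isom} this is an isomorphism of $F$-tori, so $\alpha \mapsto \alpha\circ\Ad(g^{-1})$ is a $\Gamma_F$-equivariant bijection $R(G,jS)(\bar F) \rightiso R(G,j'S)(\bar F)$ of absolute root sets; it preserves the pairing with coroots (hence carries long roots to long, short to short), sends symmetric roots to symmetric ones, and fixes the associated fields $F_\alpha \supset F_{\pm\alpha}$ inside $\bar F$. Since $g$ conjugates $j\gamma_0$ to $j'\gamma_0$, one also has $\alpha(j\gamma_0) = (\alpha\circ\Ad(g^{-1}))(j'\gamma_0)$ for every $\alpha$. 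Now Kaletha's character $\epsilon_S$ of \cite[\S 4.6]{Kal15} is assembled, $\Gamma_F$-orbit by $\Gamma_F$-orbit of symmetric roots, out of precisely this data: the toral invariants $f_{(G,S)}(\alpha)$, the sign characters $\gamma \mapsto \sgn_{F_\alpha/F_{\pm\alpha}}(\cdots)$ attached to the orbits, and the relevant orbit-size parities. Comparing the contributions for $jS$ and $j'S$ term by term under the above bijection then yields $\epsilon_j(j\gamma_0) = \epsilon_{j'}(j'\gamma_0)$.

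The input that needs real care — I would isolate it as a preliminary claim — is that each ingredient of $\epsilon_S$ depends only on the absolute root datum of $S$ with its $\Gamma_F$-action and on the root values $\{\alpha(\gamma)\}$, so that it is transported correctly by an admissible isomorphism. For the toral invariants this is exactly what the explicit computations earlier in this subsection give: for $\Sp(W)$ every long root has $f_{(G,S)}(\alpha)=1$, while a short root $\alpha=\epsilon_i-\epsilon_j$ has $f_{(G,S)}(\alpha)=1$ or $\sgn_{F_\alpha/F_{\pm\alpha}}(-1)$ according to whether $\epsilon_i,\epsilon_j$ lie in a common $\Gamma_F$-orbit — all manifestly invariant under our bijection. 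The sign-character parts transfer because of $\alpha(j\gamma_0)=(\alpha\circ\Ad(g^{-1}))(j'\gamma_0)$ together with $F_\alpha=F_{\alpha\circ\Ad(g^{-1})}$, and the ambiguity in choosing an orbit representative is harmless by the $N_G(S)(F)$-invariance of $f_{(G,S)}(\alpha)$ noted via \cite[Fact 4.7.4]{Kal16}. Thus I expect the main obstacle to be essentially bookkeeping: recording Kaletha's precise formula for $\epsilon_S$ — in particular how the unramified symmetric orbits enter — and verifying that every factor is accounted for under the orbit correspondence, rather than any conceptual difficulty.
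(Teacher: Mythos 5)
Your proposal is correct and follows the same route as the paper's proof: the outer equalities are a consequence of $\epsilon_{S}$ depending only on $\gamma_0$, and the middle equality is handled by transporting the absolute root system and the toral invariants along the $\Gamma_F$-equivariant isomorphism $\Ad(g)$, using the explicit computations of $f_{(G,S)}(\alpha)$ for $\Sp(W)$ to see that everything matches term by term. The one thing you leave as a caveat at the end — ``how the unramified symmetric orbits enter'' — is exactly what the paper discharges by invoking \cite[Lemma 4.12]{Kal15}, which gives the clean formula $\epsilon_S(\gamma_0) = \prod_{\alpha \in R(G,S)_{\mathrm{sym}}/\Gamma_F,\ \alpha(\gamma_0)\neq 1} f_{(G,S)}(\alpha)$; quoting that formula at the outset would simultaneously give you the two outer equalities for free (it visibly depends on $\gamma$ only through $\gamma_0$, so no separate pro-$p$ argument is needed) and eliminate the bookkeeping concern, leaving only the matching of $f_{(G,jS)}$ and $f_{(G,j'S)}$ which you handle correctly. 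So your argument is sound; it is just slightly less streamlined than the paper's.
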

\begin{proof}
	The first and the last equalities are immediate consequences of \cite[Lemma 4.12]{Kal15}. Let us turn to the middle one. By \cite[Lemma 4.12]{Kal15}
	\[ \epsilon_j(\gamma_0) = \prod_{\substack{\alpha \in R(G, jS)_\text{sym}(\bar{F}) /\Gamma_F \\ \alpha(\gamma_0) \neq 1 }} f_{(G, jS)}(\alpha), \]
	and similarly for $\epsilon_{j'}(\gamma'_0)$. Now choose a Galois extension $L/F$ to split $S$ and a symplectic basis $\{ e_{\pm i}\}_{i=1}^n$ for $W \otimes_F L$ to calculate $f_{(G, jS)}(\cdot)$ as before. Transport it to $e'_{\pm i} := g e_{\pm i}$. Define $\epsilon_{\pm i} \in X^*(jS_L)$ (resp. $\epsilon'_{\pm i} \in X^*(j'S_L)$) using the basis $\{e_{\pm i}\}_{i=1}^n$ (resp. $\{e'_{\pm i}\}_{i=1}^n$); these data will be used to calculate  $f_{(G, jS)}(\cdot)$ and $f_{(G, j'S)}(\cdot)$.


	From $\Ad(g)$ we deduce a $\Gamma_F$-equivariant isomorphism $X^*(jS_L) \rightiso X^*(j'S_L)$. It restricts to a bijection $R(G, jS)(\bar{F}) \rightiso R(G, j'S)(\bar{F})$, written as $\alpha \mapsto \alpha' := \alpha \circ \Ad(g)^{-1}$. Concurrently, $\epsilon_{\pm i}$ are mapped to $\epsilon'_{\pm i} = \epsilon_{\pm i} \circ \Ad(g)^{-1}$. We also have $\Ad(g)\gamma_0 = \gamma'_0$, hence $\alpha'(\gamma'_0) \neq 1 \iff \alpha(\gamma_0) \neq 1$.
	
	It follows from the previous calculations for $\Sp(W)$ that $f_{(G, jS)}(\alpha) = f_{(G, j'S)}(\alpha')$, because these invariants depend solely on the $\Gamma_F$-action on $\{ \epsilon_{\pm i}, \epsilon'_{\pm i} \}_{i=1}^n$.
\end{proof}
Note that the argument fails for $\SO(V,q)$: already in the case $n=1$ and $S$ anisotropic, we saw that $f_{(G,S)}(\alpha)$ depends on finer invariants of quadratic forms.

\subsection{Inducing data}\label{sec:inducing-data}
In this subsection, $F$ is non-archimedean of residual characteristic $p \neq 2$, and for the BD-covers we assume $p \nmid m$. The characters in question are all continuous, and $\Hom(\cdots)$ denotes the continuous $\Hom$.

Fix a nonempty stable conjugacy class $\mathcal{E}$ of embeddings of tame maximal $F$-tori $j: S \rightiso jS \subset G$ of type (ER) (Definition \ref{def:type-ER}). Here we leave $S$ fixed and vary $j$. As usual, $e$ will stand for the ramification degree of the splitting field extension of $S$. Stable conjugacy preserves Moy--Prasad filtrations. These embeddings together with stable conjugacy form a groupoid with trivial automorphism groups. Furthermore,
\begin{compactitem}
	\item for any $j, j' \in \mathcal{E}$, stable conjugacy furnishes a canonical $F$-isomorphism $\Omega(G, jS) \rightiso \Omega(G, jS')$;
	\item in view of the absence of nontrivial automorphisms, one may form the universal absolute root system $R(G, S) := \varprojlim_j R(G, jS)$ living on $X^*(S_{\bar{F}})$, and similarly for $\Omega(G, S)$;
	\item regard $R(G,S)$, $R(G, jS)$ and $\Omega(G,S)$, $\Omega(G,jS)$ as sheaves over $\Spec(F)_{\text{ét}}$, so that $\Omega(G,S)(F)$ acts on $S$ and on $R(G,S)$;
	\item since the formation of $\iota_{Q,m}: (jS)_{Q,m} \to jS$ from \S\ref{sec:isogeny} respects conjugacy, we have the universal isogeny $ \iota_{Q,m}: S_{Q,m} \to S$.
\end{compactitem} \index{$\theta^\flat$}
Next, consider a character $\theta^\flat: S_{Q,m}(F) \to \CC^\times$, or equivalently a family of characters $\theta^\flat_j: jS_{Q,m}(F) \to \CC^\times$ compatibly with the groupoid. Our requirement is that any character $\theta: S(F) \to \CC^\times$ with $\theta \circ \iota_{Q,m} = \theta^\flat$ is epipelagic (Definition \ref{def:epipelagic-character}). This is a condition on $\theta^\flat|_{S_{Q,m}(F)_{0+}}$ since $\iota_{Q,m}$ is an isomorphism on the pro-$p$ parts, as one infers from \eqref{eqn:isogeny-Sp}.

\begin{definition}\label{def:inducing-data} \index{$(\mathcal{E}, S, \theta^\flat)$}
	Abbreviate the data above as $(\mathcal{E}, S, \theta^\flat)$. An isomorphism $(\mathcal{E}, S, \theta^\flat) \rightiso (\mathcal{E}_1, S_1, \theta^\flat_1)$ consists of a commutative diagram of $F$-tori
	\[\begin{tikzcd}[row sep=small, column sep=small]
		S_{Q,m} \arrow{r}{\varphi_{Q,m}} \arrow{d} & S_{1, Q,m} \arrow{d} \\
		S \arrow{r}[swap]{\varphi} & S_1
	\end{tikzcd}\]
	such that $\varphi$, $\varphi_{Q,m}$ are isomorphisms and
	\begin{compactitem}
		\item $\theta^\flat_1 = \theta^\flat \circ \varphi_{Q,m}$,
		\item $j \mapsto j \circ \varphi^{-1}$ is a bijection from $\mathcal{E}$ onto $\mathcal{E}_1$.
	\end{compactitem}
\end{definition}
Note that $\varphi_{Q,m}$ determines $\varphi$; in view of \eqref{eqn:isogeny-Sp} we can even write $\varphi = \varphi_{Q,m}$. Using automorphisms we may modify $\theta^\flat$ by $\Omega(G,S)$.

\begin{lemma}\label{prop:Fiber}
	We have $\Hom(S(F)_{p'}, \CC) = \Hom(S(F)/S(F)_{0+}, \CC^\times) \simeq (\Z/2\Z)^I$. Given $\theta^\flat$, the set
	\[ \mathrm{Fiber}(S, \theta^\flat) := \left\{\theta \in \Hom(S(F), \CC^\times) : \theta \circ \iota_{Q,m} = \theta^\flat \right\} \]
	is a singleton when $4 \nmid m$. When $4 \mid m$, we have
	\[ \mathrm{Fiber}(S, \theta^\flat) = \begin{cases}
		\text{a torsor under} \; \Hom(S(F)_{p'}, \CC), & \theta^\flat|_{S_{Q,m}(F)_{p'}} = 1 \\
		\emptyset, & \text{otherwise}.
	\end{cases} \]
	Furthermore, all the $\theta \in \mathrm{Fiber}(S, \theta^\flat)$ share the same pro-$p$ part $\theta|_{S(F)_{0+}}$.
\end{lemma}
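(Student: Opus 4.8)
The plan is to reduce everything to the $\SL(2)$-case via the canonical subgroup $S \subset G^S$ and the decomposition $S = \prod_{i \in I} R_{K_i^\sharp/F}(K_i^1)$ with each $K_i/K_i^\sharp$ a tamely ramified quadratic field extension (Theorem \ref{prop:S-p'}), since the isogeny $\iota_{Q,m}$ decomposes accordingly into the maps $\iota_{Q_i,m} \colon K_i^1 \to K_i^1$, $x \mapsto x^{m/\mathrm{gcd}(2,m)}$. First I would establish the auxiliary identification $\Hom(S(F)_{p'}, \CC) = \Hom(S(F)/S(F)_{0+}, \CC^\times) \simeq (\Z/2\Z)^I$: by Lemma \ref{prop:S_0} we have $S(F)_{0+} = S(F)_0$ and $S(F)_{p'} \rightiso S(F)/S(F)_0$, and by Theorem \ref{prop:S-p'} the group $S(F)_{p'}$ is $\prod_{i \in I}\{\pm 1\}$, whose Pontryagin dual is $(\Z/2\Z)^I$ — a continuous character is automatically trivial on the pro-$p$ group $S(F)_{0+}$.

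Next I would analyze $\mathrm{Fiber}(S,\theta^\flat)$ one factor at a time. On the pro-$p$ part, $\iota_{Q_i,m}$ restricts to an isomorphism $K_i^1(F)_{0+} \rightiso K_i^1(F)_{0+}$ (visible from $\eqref{eqn:isogeny-Sp}$, since $m/\mathrm{gcd}(2,m)$ is prime to $p$ and $1+\mathfrak p$-type groups are uniquely $\ell$-divisible for $\ell$ prime to $p$); hence $\theta|_{S(F)_{0+}}$ is forced to equal $\theta^\flat|_{S_{Q,m}(F)_{0+}} \circ (\iota_{Q,m}|_{0+})^{-1}$, giving at once the last sentence of the statement (all $\theta$ in the fiber share the same pro-$p$ part) and reducing the remaining freedom to the quotient $S(F)/S(F)_{0+} \simeq S(F)_{p'}$. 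On this finite quotient, writing $m_0 := m/\mathrm{gcd}(2,m)$, the map induced by $\iota_{Q_i,m}$ is $x \mapsto x^{m_0}$ on $K_i^1(F)_{p'} = \{\pm 1\}$. When $4 \nmid m$ one has $m_0$ odd, so $(-1)^{m_0} = -1$ and this map is an isomorphism $\{\pm1\}\rightiso\{\pm1\}$; thus $\theta$ is uniquely determined on $S(F)_{p'}$ too, and $\mathrm{Fiber}(S,\theta^\flat)$ is a singleton. When $4 \mid m$ one has $m_0$ even, so $x \mapsto x^{m_0}$ kills $\{\pm1\}$ and the image of $\iota_{Q_i,m}$ meets $K_i^1(F)_{p'}$ only in $\{1\}$; compatibility then demands $\theta^\flat|_{S_{Q,m}(F)_{p'}} = 1$ (otherwise the fiber is empty), and when this holds $\theta$ may be modified freely on $S(F)_{p'}$, so $\mathrm{Fiber}(S,\theta^\flat)$ is a torsor under $\Hom(S(F)_{p'},\CC)$.

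I expect the reduction itself — checking that the various objects ($\iota_{Q,m}$, the groups $S(F)_{p'}$, $S(F)_{0+}$, and the surjectivity $S_{Q,m}(F)\twoheadrightarrow$ its image modulo $S(F)_{0+}$) really split compatibly along $S = \prod_i R_{K_i^\sharp/F}(K_i^1)$ and along $K_i^1(F) = K_i^1(F)_{p'} \times K_i^1(F)_{0+}$ — to be the only genuine obstacle; once that bookkeeping is in place (using Theorems \ref{prop:S-p'}, \ref{prop:G-T-reduction} and Lemma \ref{prop:S_0}), the rest is the elementary parity computation on $m_0$ carried out above. One should also verify that the resulting $\theta$ is indeed epipelagic, but this is guaranteed by hypothesis on $\theta^\flat$ and the fact that epipelagicity depends only on $\theta|_{S(F)_{1/e}} = \theta|_{S(F)_{0+}}$, which is common to the whole fiber.
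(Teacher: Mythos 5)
Your proof is correct and follows essentially the same route as the paper: identify $\iota_{Q,m}$ with the $m_0$-th power endomorphism of $S$ (with $m_0 = m/\gcd(2,m)$), observe it is an isomorphism on the pro-$p$ part $S(F)_{0+}$ since $p\nmid m_0$, and reduce the $p'$-part to the parity of $m_0$ on $\{\pm1\}^I$. One small inefficiency: you invoke Theorem \ref{prop:G-T-reduction}, which concerns the BD-cover over $G^T(F)$ and is not needed here — the statement is entirely about the torus $S$ and the isogeny $\iota_{Q,m}$, and the decomposition of $\iota_{Q,m}$ along $S = \prod_i R_{K_i^\sharp/F}(K_i^1)$ already follows from the explicit formula \eqref{eqn:isogeny-Sp}.
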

\begin{proof}
	Theorem \ref{prop:S-p'} gives the structure of $\Hom(S(F)_{p'}, \CC)$. Observe from this and \eqref{eqn:isogeny-Sp} that
	\begin{compactitem}
		\item $\iota_{Q,m}: S_{Q,m}(F)_{0+} \to S(F)_{0+}$ is an isomorphism,
		\item $\iota_{Q,m}$ induces $S_{Q,m}(F)_{p'} \rightiso S(F)_{p'}$ when $4 \nmid m$, and $\iota_{Q,m}|_{S_{Q,m}(F)_{p'}} = 1$ when $4 \mid m$.
	\end{compactitem}
	For $4 \mid m$ we have complete freedom to choose $\theta|_{S(F)_{p'}}$, whence the torsor structure.
\end{proof}

Given $(\mathcal{E}, S, \theta^\flat)$, the next step is to prescribe genuine characters $\theta_j: \widetilde{jS} \to \CC^\times$ to each $j \in \mathcal{E}$. The isogeny $\iota_{Q,m}: jS_{Q,m} \to jS$ pulls back to
\[\begin{tikzcd}
	\widetilde{jS}_{Q,m} \arrow{d} \arrow{r}{\tilde{\iota}_{Q,m}} & \widetilde{jS} \arrow{d}{\bm{p}} \arrow[hookrightarrow]{r} & \tilde{G} \arrow{d}{\bm{p}} \\
	jS_{Q,m}(F) \arrow{r}{\iota_{Q,m}} & jS(F) \arrow[hookrightarrow]{r} & G(F)
\end{tikzcd}\]
Cf. \eqref{eqn:iota-cover}. Using \eqref{eqn:splitting-genuine}, it suffices to prescribe a character $\theta_j$ of $jS(F)$, and it can be further transported to $S(F)$ via $j$. The precise recipe will depend on $m \bmod 4$.

\begin{description}\index{$\theta^\circ_j, \theta^\dagger_j$}
	\item[The case $4 \mid m$.] We take all the possible $\theta_j \in \text{Fiber}(S, \theta^\flat)$ described by Lemma \ref{prop:Fiber}, viewed as a genuine character of $\widetilde{jS}$. By Lemma \ref{prop:Fiber} they have the same pro-$p$ part. This construction is non-vacuous only when $\theta^\flat|_{S_{Q,m}(F)_{p'}} = 1$. Cf. Remark \ref{rem:comparison-GG-tori}.
	\item[The case $4 \nmid m$.] Lemma \ref{prop:Fiber} asserts that $\text{Fiber}(S, \theta^\flat)$ is a singleton; the unique element therein transports to a genuine character $\theta^\circ_j: \widetilde{jS} \to \CC^\times$ by \eqref{eqn:splitting-genuine} for all $j$. Further modifications on the prime-to-$p$ part of $\theta^\circ_j$ are needed to achieve canonicity and stability, at least when $m \equiv 2 \pmod 4$. The required modification is encapsulated into the following axioms.
\end{description}

\begin{definition}\label{def:stable-system}\index{stable system}
	Suppose $4 \nmid m$. A \emph{stable system} is a rule assigning a family of genuine characters $\theta_j: \widetilde{jS} \to \CC^\times$ to a triple $(\mathcal{E}, S, \theta^\flat)$, relative to any given $(W, \lrangle{\cdot|\cdot})$ (and $\psi$ when $m \equiv 2 \pmod 4$). It must satisfy the following properties.
	\begin{enumerate}[\bfseries SS.1]
		\item We require that $\theta_j = \theta_j^\circ \theta_j^\dagger$, where $\theta_j^\dagger$ is a character of $j(S(F)/S(F)_{0+})$. When $m \equiv 2 \pmod 4$, we require further that if $c \in F^\times$ and
		\begin{compactitem}
			\item $\psi$ is replaced by $\psi_c$, or
			\item $\lrangle{\cdot|\cdot}$ is replaced by $c\lrangle{\cdot|\cdot}$
		\end{compactitem}
		then $\theta_j^\dagger$ will be twisted by the quadratic character of $jS(F)_{p'}$ that maps
		\[ \left( -1 \in K_i^1 \hookrightarrow \prod_{i' \in I} K_{i'}^1 \simeq S(F) \right) \mapsto \sgn_{K_i/K_i^\sharp}(c) \]
		(notation of Theorem \ref{prop:S-p'}) for all $i \in I$.
		\item For any stable conjugacy $j' = \Ad(g) \circ j$ in $\mathcal{E}$, we require that
		\begin{gather*}
			\theta_{j'}\left( \CaliAd(g)(\tilde{\gamma}) \right) = \theta_j(\tilde{\gamma}), \quad \tilde{\gamma} \in \widetilde{jS} 
		\end{gather*}
		where $\CaliAd(g)$ is as in Definition \ref{def:st-conj}.
		\item The character $\theta_j^\dagger$ depends only on the maximal torus $jS \subset G$ and the $\theta^\flat|_{S_{Q,m}(F)_{0+}}$ transported to $jS_{Q,m}(F)_{0+}$. It follows that if $(\varphi, \varphi_{Q,m}): (\mathcal{E}, S, \theta^\flat) \rightiso (\mathcal{E}_1, S_1, \theta^\flat_1)$ is an isomorphism, then $\theta_j^\dagger = \theta_{j \varphi^{-1}}^\dagger$; indeed, $\Image(j) = \Image(j\varphi^{-1}) =: R$ and the transportations to $R_{Q,m}(F)$ of $\theta^\flat$ and $\theta_1^\flat$ coincide since $\theta^\flat = \theta_1^\flat \varphi_{Q,m}$.
	\end{enumerate}
\end{definition}

Several quick observations are in order.
\begin{compactitem}
	\item \textbf{SS.1} and Lemma \ref{prop:Fiber} determine the pro-$p$ component of $\theta_j$; the pro-$p$ version of all the other conditions follow.
	\item The dependence of $\theta_j^\dagger$ on $(\psi, \lrangle{\cdot|\cdot})$ in \textbf{SS.1} is imposed by the canonicity of $L$-packets for BD-covers, see Theorem \ref{prop:packet-independence}.
	\item In view of \textbf{AD.3} of Proposition \ref{prop:CAd-prop}, taking $g \in G(F)$ in \textbf{SS.2} implies that $\theta_j$ is compatible with $G(F)$-conjugacy of $j$.
	\item For each $j \in \mathcal{E}$, the genuine character $\theta_j$ determines $\theta^\flat$ as follows: $\theta_j|_{jS(F)_{0+}}$ determines $\theta^\flat|_{S(F)_{0+}}$. By \textbf{SS.3}, this in turn determines $\theta_j^\dagger$ as well as $\theta_j^\circ = \theta_j \cdot  (\theta_j^\dagger)^{-1}$; we conclude that $\theta^\flat$ is also determined.
\end{compactitem}

\begin{lemma}
	In each case, $\theta_j$ is an epipelagic genuine character of $\widetilde{jS}$ in the sense of Definition \ref{def:epipelagic-character}.
\end{lemma}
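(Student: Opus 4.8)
The plan is to show that, in every case, $\theta_j$ differs from a character of $\widetilde{jS}$ already known to be epipelagic only by a twist whose restriction to the pro-$p$ part $jS(F)_{0+}$ is trivial, and that genericity of depth $1/e$ depends only on this pro-$p$ restriction. First I would record the reduction: by Definition \ref{def:epipelagic-character} a genuine character of $\widetilde{jS}$ is epipelagic iff (i) its restriction to $jS(F)_{1/e+}$ is trivial while its restriction to $jS(F)_{1/e}$ is non-trivial, and (ii) the $\kappa_L$-line $\ell_{\theta_j}$ attached to $\theta_j|_{jS(F)_{1/e}}$ is strongly regular semisimple. By Lemma \ref{prop:S_0} we have $jS(F)_0 = jS(F)_{0+} = jS(F)_{1/e}$, and the first jump of the Moy--Prasad filtration is at $1/e$; hence both (i) and (ii) are conditions purely on $\theta_j|_{jS(F)_{0+}}$, the pro-$p$ component of $\theta_j$.

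Next I would identify that pro-$p$ component in each case. Transporting along $j$ and using the bijection \eqref{eqn:splitting-genuine}, the character $\theta_j$ corresponds to an element $\theta$ of $\mathrm{Fiber}(S,\theta^\flat)$ (when $4\mid m$) or to $\theta_j^\circ\theta_j^\dagger$ with $\theta_j^\circ$ the unique element of $\mathrm{Fiber}(S,\theta^\flat)$ (when $4\nmid m$). In both cases Lemma \ref{prop:Fiber} says all members of $\mathrm{Fiber}(S,\theta^\flat)$ have the same pro-$p$ part, namely the character obtained from $\theta^\flat|_{S_{Q,m}(F)_{0+}}$ via the isomorphism $\iota_{Q,m}: S_{Q,m}(F)_{0+}\rightiso S(F)_{0+}$ of \eqref{eqn:isogeny-Sp}. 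When $4\nmid m$, the extra twist $\theta_j^\dagger$ is by a character of $j(S(F)/S(F)_{0+})$ by \textbf{SS.1}, so it is trivial on $jS(F)_{0+}$ and does not alter the pro-$p$ part. Thus in every case $\theta_j|_{jS(F)_{0+}}$ equals the transport to $jS(F)_{0+}$ of the pro-$p$ part of some $\theta\in\mathrm{Fiber}(S,\theta^\flat)$, hence of a $\theta$ with $\theta\circ\iota_{Q,m}=\theta^\flat$.

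Finally I would invoke the defining hypothesis on $\theta^\flat$ from \S\ref{sec:inducing-data}: any character $\theta$ of $S(F)$ with $\theta\circ\iota_{Q,m}=\theta^\flat$ is epipelagic, i.e. generic of depth $1/e$. Such a $\theta$, transported along each $j$ and regarded as a genuine character via \eqref{eqn:splitting-genuine}, is an epipelagic genuine character of $\widetilde{jS}$, so conditions (i) and (ii) above hold for its pro-$p$ component; since $\theta_j$ has the same pro-$p$ component, the same two conditions hold for $\theta_j$, and $\theta_j$ is epipelagic. The argument is essentially bookkeeping; the only point needing a little care is to make sure the genericity condition (ii), whose $\kappa_L$-line is built from $\theta_j|_{jS(F)_{1/e}}$ using the Moy--Prasad isomorphism, truly sees only the pro-$p$ restriction — which is immediate once one notes $jS(F)_{1/e}=jS(F)_{0+}$ is itself pro-$p$. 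I do not anticipate a genuine obstacle here, only the need to phrase the transport-of-structure along $j$ and \eqref{eqn:splitting-genuine} cleanly so that ``same pro-$p$ part'' is literally an equality of characters of $jS(F)_{0+}$.
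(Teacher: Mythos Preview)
Your proposal is correct and follows essentially the same approach as the paper's proof, just spelled out in considerably more detail: the paper simply observes that the epipelagic condition concerns only the pro-$p$ part of $\theta_j$, which is unaffected by $\theta_j^\dagger$, and that this property is built into the definition of the triples $(\mathcal{E}, S, \theta^\flat)$.
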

\begin{proof}
	This concerns only the pro-$p$ part of $\theta_j$, thus unaffected by $\theta_j^\dagger$. The required property is thus built into the definition of triples $(\mathcal{E}, S, \theta^\flat)$.
\end{proof}

\begin{proposition}[Standard stable system for odd $m$]\label{prop:std-stable-system}
	Suppose that $m \notin 2\Z$. There is a standard stable system given by $\theta_j := \theta_j^\circ$ for all $j$.
\end{proposition}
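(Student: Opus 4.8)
Since $m \notin 2\Z$ forces $4 \nmid m$, the notion of stable system from Definition \ref{def:stable-system} applies, and the claim is that the choice $\theta_j := \theta_j^\circ$ (equivalently, $\theta_j^\dagger \equiv 1$) satisfies \textbf{SS.1}--\textbf{SS.3}. The plan is to dispose of \textbf{SS.1} and \textbf{SS.3} by inspection and to reduce \textbf{SS.2} to the rigidity of odd-degree covers. For \textbf{SS.1}: the trivial character $\theta_j^\dagger = 1$ is a character of $j\bigl(S(F)/S(F)_{0+}\bigr)$, and the supplementary requirement about rescaling $\psi$ or $\lrangle{\cdot|\cdot}$ is imposed only when $m \equiv 2 \pmod 4$, hence is vacuous here. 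For \textbf{SS.3}: $\theta_j^\dagger = 1$ trivially depends only on $jS$ and $\theta^\flat|_{S_{Q,m}(F)_{0+}}$ (on nothing, in fact). So the entire content lies in \textbf{SS.2}.

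For \textbf{SS.2}, I would first record that, for $S$ of type (ER) and $m$ odd, the isogeny $\iota_{Q,m}: S_{Q,m} = S \to S$ of \eqref{eqn:isogeny-Sp} is the $m$-th power map, which is a homeomorphism of $S(F)$: its prime-to-$p$ part $S(F)_{p'}$ is $2$-torsion by Proposition \ref{prop:S-p'} (with $p \neq 2$) and the $m$-th power is an automorphism of the pro-$p$ part since $p \nmid m$. Consequently $\text{pr}_1: \tilde{S}_{Q,m} \rightiso \tilde{S}$ is an isomorphism of coverings, and, via Corollary \ref{prop:st-conj-elements-canonical}, for any stable conjugacy $\Ad(g): jS \rightiso j'S$ in $\mathcal{E}$ the map $\CaliAd(g)$ becomes a bona fide isomorphism $\CaliAd(g): \widetilde{jS} \rightiso \widetilde{j'S}$ of topological central extensions covering $\Ad(g)$ (this is \textbf{AD.2} of Proposition \ref{prop:CAd-prop}). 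Next I would invoke Proposition \ref{prop:S-splitting-odd}: each $\widetilde{jS} \twoheadrightarrow jS(F)$ carries a \emph{unique} splitting $\sigma_j$. Because $\iota_{Q,m}$ is bijective on $F$-points, there is a unique $\vartheta_j: jS(F) \to \CC^\times$ with $\vartheta_j \circ \iota_{Q,m} = \theta^\flat_j$, and $\theta_j^\circ$ is, by construction (cf. \eqref{eqn:splitting-genuine}), the genuine character of $\widetilde{jS}$ characterized by $\theta_j^\circ \circ \sigma_j = \vartheta_j$.

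The two inputs then combine cleanly. Since $\CaliAd(g)$ is an isomorphism of coverings, $\gamma \mapsto \CaliAd(g)\bigl(\sigma_j(\gamma)\bigr)$ is, after composing with $\Ad(g)^{-1}$, a splitting of $\widetilde{j'S}$ over $j'S(F)$, so uniqueness of $\sigma_{j'}$ gives $\CaliAd(g) \circ \sigma_j = \sigma_{j'} \circ \Ad(g)$. On the other hand, the groupoid-compatibility of the family $\{\theta^\flat_j\}_j$ built into the datum $(\mathcal{E},S,\theta^\flat)$ (\S\ref{sec:inducing-data}) together with the commutative square \eqref{eqn:isogeny-Ad} yields $\theta^\flat_{j'} \circ \Ad(g) \circ \iota_{Q,m} = \theta^\flat_{j'} \circ \iota_{Q,m} \circ \Ad(g) = \theta^\flat_j = \vartheta_j \circ \iota_{Q,m}$, whence $\vartheta_{j'} \circ \Ad(g) = \vartheta_j$ by bijectivity of $\iota_{Q,m}$ on $F$-points. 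Writing a general element of $\widetilde{jS}$ as $\noyau\,\sigma_j(\gamma)$ and using \textbf{AD.1}, one obtains
\[ \theta_{j'}\bigl(\CaliAd(g)(\noyau\,\sigma_j(\gamma))\bigr) = \noyau\,\theta_{j'}^\circ\bigl(\sigma_{j'}(\Ad(g)\gamma)\bigr) = \noyau\,\vartheta_{j'}(\Ad(g)\gamma) = \noyau\,\vartheta_j(\gamma) = \theta_j\bigl(\noyau\,\sigma_j(\gamma)\bigr), \]
which is exactly \textbf{SS.2}. I do not expect a serious obstacle; the only point requiring care is the bookkeeping identifying $\theta_j^\circ$ with the genuine character attached to the compatible family $\theta^\flat$ through the \emph{canonical} splittings $\sigma_j$, so that their uniqueness can be transported along $\CaliAd(g)$, and the observation that $\iota_{Q,m}$ is onto — indeed bijective — on $F$-points for these tori, which is what makes $\vartheta_j$ and the relation $\vartheta_{j'} \circ \Ad(g) = \vartheta_j$ meaningful.
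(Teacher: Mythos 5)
Your proof is correct and takes essentially the same route as the paper: both hinge on Proposition \ref{prop:S-splitting-odd} (uniqueness of the splitting over $S(F)_{p'}$ for odd $m$). The one noteworthy difference is that the paper's proof additionally invokes Proposition \ref{prop:CaliAd-simple} to observe that $\CaliAd(g)$ is the ``uncalibrated'' action, whereas you bypass that: you argue from \textbf{AD.1}--\textbf{AD.2} (plus the bijectivity of $\iota_{Q,m}$ on $F$-points for type-(ER) tori when $p \nmid 2m$, which makes $\mathrm{pr}_1$ an isomorphism) that $\CaliAd(g)$ is an isomorphism of coverings over $\Ad(g)$ regardless of whether the calibration factor is trivial, and then let uniqueness of splittings force $\CaliAd(g) \circ \sigma_j = \sigma_{j'} \circ \Ad(g)$. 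This is a mild gain in economy but not a fundamentally different decomposition; the mechanism is the same, and indeed your version makes explicit why the argument fails for $m \equiv 2 \pmod 4$ (uniqueness fails, not bijectivity of $\iota_{Q,m}$).
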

\begin{proof}
	The properties \textbf{SS.1} and \textbf{SS.3} are automatic. Since $m \notin 2\Z$, by Proposition \ref{prop:CaliAd-simple} $\CaliAd(g)$ is defined by the natural actions of $G^T_\text{ad}(F)$ and $G(F)$. On the other hand, \eqref{eqn:splitting-genuine} is realized by the unique splitting over $S(F)_{p'}$ (Proposition \ref{prop:S-splitting-odd}). This entails \textbf{SS.2}.
\end{proof}

\begin{definition}\label{def:packet-0}\index{$\Pi(S, \theta^\flat)$}
	Suppose we are given
	\begin{itemize}
		\item a triple $(\mathcal{E}, S, \theta^\flat)$ as above;
		\item a stable system (Definition \ref{def:stable-system}) when $4 \nmid m$.
	\end{itemize}
	Write $\epsilon_j := \epsilon_{jS}$ for the character of $jS(F)$ constructed from toral invariants, as reviewed in \S\ref{sec:toral-invariants}. Define the set
	\[ \Pi(S, \theta^\flat) := \left\{ \pi_{(\widetilde{jS}, \epsilon_j \theta_j)} \right\}_{j, \theta_j} \]
	where
	\begin{itemize}
		\item $j$ ranges over all $G(F)$-conjugacy classes of embeddings $S \hookrightarrow G$ in $\mathcal{E}$;
		\item for each $j$, let $\theta_j$ be the genuine epipelagic character(s) of $\widetilde{jS}$ specified as follows:
		\begin{description}
			\item[($4 \mid m$):] $\theta_j$ ranges over all the elements of $\text{Fiber}(S, \theta^\flat)$ transported to $\widetilde{jS}$, which could be empty,
			\item[($4 \nmid m$):] $\theta_j$ is uniquely specified by the stable system and $(\mathcal{E}, S, \theta^\flat)$;
		\end{description}
	\item $\pi_{(\widetilde{jS}), \epsilon_j \theta_j}$ is the representation of $\tilde{G}$ constructed in Theorem \ref{prop:epipelagic-supercuspidal}.
	\end{itemize}
\end{definition}

\begin{lemma}
	The set $\Pi(S, \theta^\flat)$ depends only on the isomorphism class of $(\mathcal{E}, S, \theta^\flat)$.
\end{lemma}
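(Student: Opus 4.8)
The plan is to unwind the construction of $\Pi(S,\theta^\flat)$ in Definition \ref{def:packet-0} and check that each ingredient is transported by an isomorphism $(\varphi,\varphi_{Q,m})\colon (\mathcal{E},S,\theta^\flat)\rightiso (\mathcal{E}_1,S_1,\theta^\flat_1)$. Recall that such an isomorphism is a commutative square of $F$-tori with $\varphi$, $\varphi_{Q,m}$ isomorphisms, $\theta^\flat_1=\theta^\flat\circ\varphi_{Q,m}$, and $j\mapsto j\circ\varphi^{-1}$ a bijection $\mathcal{E}\rightiso\mathcal{E}_1$; moreover $\varphi=\varphi_{Q,m}$ by \eqref{eqn:isogeny-Sp}. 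The point is simply that $\varphi$ identifies, for each $j\in\mathcal{E}$ with image $R:=\Image(j)=\Image(j\varphi^{-1})\subset G$, the genuine epipelagic character $\epsilon_j\theta_j$ of $\widetilde{R}$ used for $j$ with the one used for $j\circ\varphi^{-1}$, so that the two sets of induced representations $\pi_{(\widetilde{R},\epsilon_j\theta_j)}$ literally coincide. Since all these representations live on the fixed group $\tilde{G}$, equality of the indexing data gives equality of the subsets of $\Pi_-(\tilde{G})$.

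First I would observe that the embedded torus $R=\Image(j)=j(S)=(j\varphi^{-1})(S_1)\subset G$, hence the pull-back covers $\widetilde{R}$, $\widetilde{R}_{Q,m}$ and the isogeny $\iota_{Q,m}\colon R_{Q,m}\to R$ are intrinsic to $R\subset G$, independent of whether we view $R$ as $jS$ or as $(j\varphi^{-1})S_1$. Next, the transported character: for $j_1:=j\varphi^{-1}\in\mathcal{E}_1$, the pulled-back character $\theta^\flat_{1,j_1}$ of $R_{Q,m}(F)$ obtained from $\theta^\flat_1$ via $j_1$ equals the one obtained from $\theta^\flat$ via $j$, because $\theta^\flat_1=\theta^\flat\circ\varphi_{Q,m}$ and $j_1\circ\varphi_{Q,m}=j$ on $S_{Q,m}$; thus $\mathrm{Fiber}$ (via Lemma \ref{prop:Fiber}) transported to $\widetilde{R}$ is the same set in both descriptions. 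When $4\mid m$ this already settles the claim, since $\theta_j$ ranges over exactly that transported $\mathrm{Fiber}$. When $4\nmid m$, I would invoke \textbf{SS.3} of Definition \ref{def:stable-system}, which asserts precisely that $\theta_j^\dagger=\theta_{j\varphi^{-1}}^\dagger$ for an isomorphism of triples; combined with the equality of the pro-$p$ parts (from \textbf{SS.1} plus Lemma \ref{prop:Fiber}) and the equality of $\theta_j^\circ=\theta_{j\varphi^{-1}}^\circ$ (again because the transported $\theta^\flat$ agree and $\mathrm{Fiber}$ is a singleton), one gets $\theta_j=\theta_{j\varphi^{-1}}$ as genuine characters of $\widetilde{R}$.

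Finally I would check the twisting character $\epsilon_j=\epsilon_{jS}$: since $\epsilon_{jS}$ depends only on the embedded maximal torus $R=jS\subset G$ through the toral invariants $f_{(G,R)}(\alpha)$ and these are invariants of $R\subset G$ (see \S\ref{sec:toral-invariants}), we have $\epsilon_{jS}=\epsilon_{(j\varphi^{-1})S_1}$ as characters of $R(F)$. Putting the three pieces together, for every $j\in\mathcal{E}$ the pair $(\widetilde{R},\epsilon_j\theta_j)$ defining a member of $\Pi(S,\theta^\flat)$ coincides with the pair $(\widetilde{R},\epsilon_{j_1}\theta_{j_1})$ defining a member of $\Pi(S_1,\theta^\flat_1)$, and the bijection $j\mapsto j\varphi^{-1}$ matches $G(F)$-conjugacy classes (it is a bijection of stable classes commuting with $G(F)$-conjugacy since $\varphi$ is an abstract isomorphism of tori not involving $G$). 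Hence $\Pi(S,\theta^\flat)=\Pi(S_1,\theta^\flat_1)$ as subsets of $\Pi_-(\tilde{G})$. The only mild subtlety—hardly an obstacle—is bookkeeping the identification $\varphi=\varphi_{Q,m}$ on pro-$p$ parts so that ``same $\theta^\flat$ transported'' genuinely means the same character of $\widetilde{R}_{Q,m}$ after applying the canonical splitting of Example \ref{eg:section-S}; this is routine given that the splitting over $S(F)_0$ is unique and $\iota_{Q,m}$ is an isomorphism on pro-$p$ parts.
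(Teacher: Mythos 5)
Your proof is correct and follows essentially the same route as the paper's: both show that pull-back by $\varphi^{-1}$ identifies the $\mathrm{Fiber}$ sets and the stable classes $\mathcal{E}\to\mathcal{E}_1$, observe that the splitting identification \eqref{eqn:splitting-genuine} depends only on the embedded torus $jS=j_1S_1\subset G$, and invoke \textbf{SS.3} for the case $4\nmid m$. The only material difference is that you explicitly verify that the twisting character $\epsilon_{jS}$ depends only on the subgroup $jS\subset G$ (via the toral invariants of \S\ref{sec:toral-invariants}), a step the paper leaves implicit; this is a small but welcome addition.
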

\begin{proof}
	Indeed, given $(\varphi, \varphi_{Q,m}): (\mathcal{E}, S, \theta^\flat) \rightiso (\mathcal{E}_1, S_1, \theta^\flat_1)$, pull-back by $\varphi^{-1}$ induces bijections
	\[ \text{Fiber}(S, \theta^\flat) \to \text{Fiber}(S_1, \theta^\flat_1) \quad \text{and} \quad \mathcal{E} \to \mathcal{E}_1. \]
	Suppose $j \mapsto j_1$, the characters of $jS(F) = j_1 S_1(F)$ specified from $\theta^\flat$ and $\theta^\flat_1$ are therefore equal. Since the identification \eqref{eqn:splitting-genuine} depends only on $jS = j_1 S_1$, the inducing genuine characters in the case $4 \mid m$ also coincide. For the case $4 \nmid m$, we invoke \textbf{SS.3}.
\end{proof}

\begin{proposition}\label{prop:packet-prop-0}
	The set $\Pi(S, \theta^\flat)$ consists of genuine epipelagic supercuspidal representations of $\tilde{G}$. Conversely every genuine epipelagic supercuspidal representation belongs to some $\Pi(S, \theta^\flat)$.
	
	Granting the basic properties of Yu's construction for $\tilde{G}$ in Remark \ref{rem:refactorization}, we have
	\begin{itemize}
		\item when $4 \nmid m$, $\Pi(S, \theta^\flat)$ is a torsor under $H^1(F, S)$, of cardinality $2^{|I|}$, where $I$ is as in Theorem \ref{prop:S-p'};
		\item suppose $4 \mid m$, $\Pi(S, \theta^\flat)$ is
		\begin{itemize}
			\item a torsor under $\Hom(S(F)_{p'}, \CC) \times H^1(F, S)$ of cardinality $2^{2|I|}$, if $\theta^\flat|_{S_{Q,m}(F)_{p'}}=1$,
			\item empty if $\theta^\flat|_{S_{Q,m}(F)_{p'}} \neq 1$.
		\end{itemize}
	\end{itemize}
	Under the same premises, any two $\Pi(S_1, \theta^\flat_1)$ and $\Pi(S_2, \theta^\flat_2)$ are either disjoint or equal, and the latter case occurs if and only if both are empty, or $(\mathcal{E}_1, S_1, \theta^\flat_1)$ is isomorphic to $(\mathcal{E}_2, S_2, \theta^\flat_2)$.
\end{proposition}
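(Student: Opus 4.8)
\emph{Plan of proof.} The whole statement will be reduced to Theorem \ref{prop:epipelagic-supercuspidal}, the structural results of \S\ref{sec:packets}, and the granted uniqueness of Yu's data (Remark \ref{rem:refactorization}). That every member of $\Pi(S,\theta^\flat)$ is a genuine epipelagic supercuspidal representation is immediate: by the Lemma preceding Proposition \ref{prop:std-stable-system} each $\theta_j$ is a genuine epipelagic character of $\widetilde{jS}$, and multiplying it by the quadratic character $\epsilon_j$ of $jS(F)$ — which is trivial on the pro-$p$ group $jS(F)_{0+}$ because $p\neq 2$ — alters neither the depth nor the line $\ell_{\theta_j}$, so $\epsilon_j\theta_j$ is still epipelagic and Theorem \ref{prop:epipelagic-supercuspidal} applies. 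Conversely, a genuine epipelagic supercuspidal $\pi$ is by definition $\pi_{\tilde S,\theta}$ for a maximal $F$-torus $S$ of type (ER) and a genuine epipelagic $\theta$; I take $\mathcal{E}$ to be the stable class of this embedding $j$ and reverse-engineer $\theta^\flat$. The pro-$p$ part of $\theta^\flat$ is forced by $\theta|_{S(F)_{0+}}$ through the isomorphism $\iota_{Q,m}\colon S_{Q,m}(F)_{0+}\rightiso S(F)_{0+}$; this pins down $\theta_j^\dagger$ via \textbf{SS.3} when $4\nmid m$ (there is nothing to pin down when $4\mid m$); the prime-to-$p$ part of $\theta^\flat$ is then chosen, using the isomorphism $\iota_{Q,m}\colon S_{Q,m}(F)_{p'}\rightiso S(F)_{p'}$ when $4\nmid m$ (and automatically with $\theta^\flat|_{S_{Q,m}(F)_{p'}}=1$ when $4\mid m$, cf. Lemma \ref{prop:Fiber}), so that the character produced by the fixed stable system, resp. an element of $\mathrm{Fiber}(S,\theta^\flat)$, equals $\epsilon_j\theta$. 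Since the pro-$p$ part of this character is $\theta|_{S(F)_{0+}}$, which is generic of depth $1/e$, the datum $\theta^\flat$ is admissible and $\pi\in\Pi(S,\theta^\flat)$.

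For the torsor and cardinality assertions I invoke Remark \ref{rem:refactorization}: two members $\pi_{(\widetilde{jS},\epsilon_j\theta_j)}$ and $\pi_{(\widetilde{j'S},\epsilon_{j'}\theta_{j'})}$ are isomorphic iff the inducing data are $G(F)$-conjugate. The $G(F)$-conjugacy classes of embeddings inside the single stable class $\mathcal{E}$ form a torsor under $\mathfrak{D}(S,G;F)=H^1(F,S)$ via $\mathrm{inv}$ (as $H^1(F,G)=1$), and by Proposition \ref{prop:D-description} with Lemma \ref{prop:parameter-ani} this group is $\prod_{i\in I}K_i^{\sharp,\times}/N_{K_i/K_i^\sharp}(K_i^\times)\simeq\{\pm1\}^I$, of order $2^{|I|}$. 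When $4\nmid m$ the character $\theta_j$ is determined by $(\mathcal{E},S,\theta^\flat)$, so $j\mapsto\pi_{(\widetilde{jS},\epsilon_j\theta_j)}$ descends to a surjection from these $2^{|I|}$ conjugacy classes onto $\Pi(S,\theta^\flat)$; its injectivity follows from Remark \ref{rem:refactorization} once one checks that inequivalent embeddings give non-$G(F)$-conjugate inducing data — here one uses that $\lambda$ is a stable vector, \eqref{eqn:lambda-stab}, so the inducing datum pins down $jS$ up to $G(F)$-conjugacy. Thus $\Pi(S,\theta^\flat)$ is a torsor under $H^1(F,S)$ of cardinality $2^{|I|}$. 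When $4\mid m$, for each $j$ the admissible $\theta_j$ range over $\mathrm{Fiber}(S,\theta^\flat)$, which by Lemma \ref{prop:Fiber} and Theorem \ref{prop:S-p'} is empty if $\theta^\flat|_{S_{Q,m}(F)_{p'}}\neq1$ and otherwise a torsor under $\Hom(S(F)_{p'},\CC)\simeq(\Z/2\Z)^I$; distinct prime-to-$p$ components of $\theta_j$ yield non-isomorphic representations by the last sentence of Theorem \ref{prop:epipelagic-supercuspidal}, the $\Hom(S(F)_{p'},\CC)$-action and the $H^1(F,S)$-action commute, and the total is $2^{2|I|}$ (or $0$), making $\Pi(S,\theta^\flat)$ a torsor under $\Hom(S(F)_{p'},\CC)\times H^1(F,S)$.

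For disjointness, suppose $\Pi(S_1,\theta^\flat_1)$ and $\Pi(S_2,\theta^\flat_2)$ share a member; by Remark \ref{rem:refactorization} there is $g\in G(F)$ with $g\widetilde{j_1S_1}g^{-1}=\widetilde{j_2S_2}$ and $(\epsilon_{j_2}\theta_{j_2})\circ\Ad(g)=\epsilon_{j_1}\theta_{j_1}$ for suitable indices. Conjugation by $g$ induces $j_1S_1\rightiso j_2S_2$, and by \eqref{eqn:isogeny-Ad} a compatible $(j_1S_1)_{Q,m}\rightiso(j_2S_2)_{Q,m}$; transporting through $j_1,j_2$ gives isomorphisms $\varphi\colon S_1\rightiso S_2$ and $\varphi_{Q,m}\colon S_{1,Q,m}\rightiso S_{2,Q,m}$ with $j\mapsto j\circ\varphi^{-1}$ a bijection $\mathcal{E}_1\rightiso\mathcal{E}_2$ (since $\Ad(g)$ is in particular a stable conjugacy, stable classes are transported along it). Lemma \ref{prop:epsilon-invariance} gives $\epsilon_{j_2}\circ\Ad(g)=\epsilon_{j_1}$, hence $\theta_{j_2}\circ\Ad(g)=\theta_{j_1}$, and since $\theta_j$ determines $\theta^\flat$ (the observation following Definition \ref{def:stable-system}) we get $\theta^\flat_1=\theta^\flat_2\circ\varphi_{Q,m}$, i.e. $(\mathcal{E}_1,S_1,\theta^\flat_1)\simeq(\mathcal{E}_2,S_2,\theta^\flat_2)$, so the packets coincide by the Lemma preceding this Proposition. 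If both packets are empty they are equal; if exactly one is empty they are disjoint. Hence any two packets are disjoint or equal, with equality precisely when both are empty or the triples are isomorphic.

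\emph{Main obstacle.} No single deep input is needed beyond Remark \ref{rem:refactorization}; the delicate point is the faithful transport of structure — verifying that $G(F)$-conjugacy of inducing data is exactly reflected by an isomorphism of triples $(\mathcal{E},S,\theta^\flat)$. This requires simultaneously tracking $\iota_{Q,m}$ under conjugation through \eqref{eqn:isogeny-Ad}, separating the toral factor $\epsilon_j$ from $\theta_j$ by Lemma \ref{prop:epsilon-invariance}, and keeping control of the residual $\Omega(G,S)(F)$-ambiguity in $\theta^\flat$. The second ticklish item is proving that inequivalent embeddings genuinely produce non-isomorphic members, so that the cardinalities $2^{|I|}$ and $2^{2|I|}$ are attained and not collapsed to a proper quotient; there one relies on the stability of the minimal $\mathsf{K}$-type vector $\lambda$ in \eqref{eqn:lambda-stab}.
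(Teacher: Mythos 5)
Your proposal is correct and follows the same route as the paper: the first part comes from Theorem \ref{prop:epipelagic-supercuspidal}; the torsor and cardinality claims reduce, via Remark \ref{rem:refactorization} together with the stable-vector property \eqref{eqn:lambda-stab}, to $\mathfrak{D}(S,G;F)=H^1(F,S)\simeq\{\pm1\}^{I}$ (Proposition \ref{prop:D-description}) and, when $4\mid m$, to Lemma \ref{prop:Fiber}; disjointness is proved by transporting the triples along the $G(F)$-conjugation furnished by Remark \ref{rem:refactorization} and then using \textbf{SS.3} to recover $\theta^\flat$ from $\theta_j$. The only cosmetic difference is that the paper first normalizes to $S_1=S_2$, $\mathcal{E}_1=\mathcal{E}_2$ and kills the residual $\Omega(G,S)(F)$-ambiguity explicitly, whereas you fold these into the transport by $g$; both organizations are sound.
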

\begin{proof}
	The first part follows from the construction and Theorem \ref{prop:epipelagic-supercuspidal}. For the second part, in view of \eqref{eqn:lambda-stab} and Remark \ref{rem:refactorization}, different $G(F)$-conjugacy classes of $j$ give non-isomorphic supercuspidals of $\tilde{G}$. The group $H^1(F, S)$ alters the embeddings $j$ via $H^1(F, S) \simeq H^1(F, jS) = \mathfrak{D}(j S, G ;F)$ and makes a torsor. Proposition \ref{prop:D-description} implies $|H^1(F, S)| = 2^{|I|}$. This settles the case $4 \nmid m$, and for $4 \mid m$ we appeal to Lemma \ref{prop:Fiber}.

	The last part is clear by Lemma \ref{prop:Fiber} when $4 \mid m$ (only the pro-$p$ part matters). Suppose $4 \nmid m$ and that there exist $j_i \in \mathcal{E}_i'$ with $(\widetilde{j_i S_i}, \theta_{i, j_i})$ conjugate for $i=1,2$; upon modifying $(\mathcal{E}_1, S_1, \theta^\flat_1)$ by an isomorphism $(\varphi, \varphi_{Q,m})$ we may assume $S_1 = S_2$, $\mathcal{E}_1 = \mathcal{E}_2$ and $j_1 S_1 = j_2 S_2$. Then $\theta_{1, j_1} = \theta_{2, j_2} \Ad(w)$ for some $w \in \Omega(G, S_2)(F)$; by a further modification we may assume $\theta_{1, j_1} = \theta_{2, j_2}$. As remarked below Definition \ref{def:stable-system}, this implies $\theta_1^\flat = \theta_2^\flat$ by using \textbf{SS.3}.
\end{proof}

\subsection{Epipelagic \texorpdfstring{$L$}{L}-parameters}\label{sec:epipelagic-parameters}
We continue the thread of \ref{sec:inducing-data}. The Weil form of the $L$-group $\Lgrp{\tilde{G}}$ for $\tilde{G}$ has been introduced in \S\ref{sec:L-group}. Given the symplectic $F$-vector space $(W, \lrangle{\cdot|\cdot})$, together with $\psi: F \to \CC^\times$ when $m \equiv 2 \pmod 4$, we deduce an identification $\Lgrp{\tilde{G}} = \tilde{G}^\vee \times \Weil_F$ that respects the projections to $\Weil_F$.

\begin{definition}[{\cite[Conditions 5.1]{Kal15}}]\label{def:epipelagic-parameter} \index{L-parameter!epipelagic}
	An $L$-parameter $\phi: \Weil_F \to \Lgrp{\tilde{G}}$ is called \emph{epipelagic} if
	\begin{compactitem}
		\item $Z_{\tilde{G}^\vee}(\phi(P_F))$ is a maximal torus $\tilde{T}^\vee$ that fits into a $\Gamma_F$-pinning of $\tilde{G}^\vee$;
		\item the image of $\phi(I_F)$ in $\Omega(\tilde{G}^\vee, T^\vee)$ is generated by a regular elliptic element $t$;
		\item let $o(t)$ be the order of $t$, then $w \in \Gamma_F^{\frac{1}{o(t)}+} \implies \phi(w) = (1, w)$.
	\end{compactitem}
	Upon conjugating by $\tilde{G}^\vee$, we may assume that $\tilde{T}^\vee$ is the maximal torus in the given pinning for $\tilde{G}^\vee$. As in \textit{loc. cit.}, $\tilde{T}^\vee$ can be endowed with the continuous $\Gamma_F$-action induced from
	\[ \Weil_F \xrightarrow{\phi} N_{\tilde{G}^\vee}(T^\vee) \rtimes \Weil_F \twoheadrightarrow \Omega(\tilde{G}^\vee, T^\vee) \rtimes \Weil_F, \]
	which factors through a finite quotient. Name the resulting torus with Galois action as $S_{Q,m}^\vee$; recall that $X^*(S_{Q,m}^\vee) = Y_{Q,m}$.
\end{definition}
This is a special case of \emph{toral supercuspidal $L$-parameters} defined in \cite[Definition 6.1.1]{Kal19}\index{L-parameter!toral supercuspidal}. These conditions are independent of the splitting for $\Lgrp{\tilde{G}}$, since different splittings differ by an element from $H^1(\Weil_F, Z_{\tilde{G}^\vee})$.

The stable conjugacy classes of maximal $F$-tori in $G = \Sp(W)$ are parameterized by $H^1(F, \Omega)$, where $\Omega$ is the absolute Weyl group of $\Sp(2n)$. Now comes the \emph{type map} for maximal tori, let us choose a Borel subgroup $B$ with a reductive quotient $T$, thereby obtaining the Weyl group $\Omega$. For any maximal $F$-torus $S \subset G$ there exists $g \in G(\bar{F})$ such that $S_{\bar{F}} = g T_{\bar{F}} g^{-1}$. Then $\sigma \mapsto g^{-1}\sigma(g)$ is the required $1$-cocycle, whose class in $H^1(F, \Omega)$ is independent of the choice of $g$. Facts:
\begin{inparaenum}[(a)]
	\item Two maximal tori are stably conjugate if and only if they have the same class in $H^1(F, \Omega)$.
	\item Every class in $H^1(F, \Omega)$ comes from some $S \subset G$; see \cite[\S 3.2]{Kal19} or \cite[Theorem 1.1]{Rag04}.
\end{inparaenum}

Recall from \S\ref{sec:L-group} that $\Omega(\tilde{G}^\vee, \tilde{T}^\vee)$ and $\Omega(G,T)$ can be identified: $Y_{Q,m}$ and $Y$ span the same $\Q$-vector space on which $\Omega$ acts by reflections. Hence the $\Gamma_F$-action on $S_{Q,m}^\vee$ gives rise to
\begin{inparaenum}[(i)]
	\item a class $c \in H^1(F, \Omega)$;
	\item whence a stable class $\mathcal{E}$ of embeddings $j: S \hookrightarrow G$;
	\item by construction, $\Gamma_F$ acts on $X_*(S_{\bar{F}})$ via a $1$-cocycle in the class $c$.
\end{inparaenum}
We may assume that $\Gamma_F$ acts on $X_*(S_{\bar{F}})$ and $X^*(S_{Q,m}^\vee)$ by the same cocycle. Let $S_{Q,m}$ be the $F$-torus dual to $S_{Q,m}^\vee$, so we deduce $\Gamma_F$-equivariant isomorphisms
\[ X_*(S_{\bar{F}}) \simeq Y \hookleftarrow Y_{Q,m} \simeq X^*(S_{Q,m}^\vee) = X_*((S_{Q,m})_{\bar{F}}). \]
The naming is thus justified: the isomorphisms above induces $S_{Q,m} \to S$ that is exactly the $\iota_{Q,m}$ in \S\ref{sec:isogeny} for the BD-cover $\tilde{G}$.

To proceed, we follow \cite[\S 5.2]{Kal15} to construct an $L$-embedding $\Lgrp{j}: \Lgrp{S_{Q,m}} \to \Lgrp{\tilde{G}}$ up to $\tilde{G}^\vee$-conjugacy, an $L$-parameter $\phi_{S_{Q,m}, \Lgrp{j}}$ together with factorization
\[\begin{tikzcd}[row sep=small, column sep=small]
	\Weil_F \arrow{rr}{\phi} \arrow{rd}[swap]{\phi_{S_{Q,m}, \Lgrp{j}} } & & \Lgrp{\tilde{G}} \\
	& \Lgrp{S_{Q,m}} \arrow{ru}[swap]{\Lgrp{j}} &
\end{tikzcd} \qquad
	\left( \Lgrp{j}|_{S_{Q,m}^\vee}: S_{Q,m}^\vee \rightiso \tilde{T}^\vee \right) = \text{the given one.}
\]
The resulting $L$-parameter $\phi_{S_{Q,m}, \Lgrp{j}}$ will be canonical up to $\Omega(G,S)(F)$-action by \cite[Lemma 5.3]{Kal15}. In \textit{loc. cit.} such an $\Lgrp{j}$ comes from a carefully chosen $\chi$-datum of $(G,S)$ in the sense of Langlands--Shelstad \cite[(2.5), (2.6)]{LS1}. A $\chi$-datum consists of characters $\chi_\alpha: F_\alpha^\times \to \CC^\times$ (see \eqref{eqn:F_alpha}) where $\alpha \in R(G,S)(\bar{F})$, such that \index{$\chi$-data}
\begin{gather*}
	\chi_{-\alpha} = \chi_\alpha^{-1}, \\
	\sigma \in \Gamma_F \implies \chi_{\sigma\alpha} = \chi_\alpha \circ \sigma^{-1}, \\
	[F_\alpha : F_{\pm\alpha}] = 2 \implies \chi_\alpha|_{F_{\pm\alpha}} = \sgn_{F_\alpha/F_{\pm\alpha}}.
\end{gather*}
Rigorously speaking, here we must work with the $S_{Q,m}$ embedded into the split $F$-group $G_{Q,m}$ dual to $\tilde{G}^\vee$, since the roots/coroots are rescaled in constructing $\tilde{G}^\vee$. However, the $\Gamma_F$-action on the roots are unaffected by such rescaling. All in all, the construction of $\Lgrp{j}$ carries over to our setting, and \cite[Lemma 5.4]{Kal15} implies that
\begin{itemize}
	\item $jS$ is a maximal torus of type (ER) in $G$, for every $j \in \mathcal{E}$;
	\item local Langlands correspondence for $S_{Q,m}$ yields a character $\theta^\flat: S_{Q,m}(F) \to \CC^\times$, such that $(\mathcal{E}, S, \theta^\flat)$ satisfies the requirements of Definition \ref{def:inducing-data}.
\end{itemize}
Indeed, both conditions can be phrased in terms of Weyl group actions, thus the arguments in \textit{loc. cit.} carry over verbatim. The triple $(\mathcal{E}, S, \theta^\flat)$ is well-defined only up to isomorphism: recall the ambiguity by $\Omega(G,S)(F)$.

\begin{remark}
	Different choices of $\chi$-data lead to different assignments $\phi \leadsto (\mathcal{E}, S, \theta^\flat)$. This choice will not be used in the proof of stability, but it will intervene in our later comparison with $\Theta$-correspondence in \S\ref{sec:theta}. Also note that the $\chi$-data is not fixed in the approach of \cite{Kal19}.
\end{remark}

Recall that the identification $\Lgrp{\tilde{G}} = \tilde{G}^\vee \times \Weil_F$ can be twisted by $H^1(\Weil_F, Z_{\tilde{G}^\vee})$. All elements therein take the form $\chi_c$ as in \eqref{eqn:metaGalois-section-twist}. Its effect on $(\mathcal{E}, S, \theta^\flat)$ is to twist $\theta^\flat$ by the quadratic character associated to the image of $\chi_c$ in $H^1(\Weil_F, S_{Q,m}^\vee)$.
\begin{lemma}\label{prop:pre-twist}
	Suppose $4 \mid m$. The $H^1(\Weil_F, Z_{\tilde{G}^\vee})$-orbit of $(\mathcal{E}, S, \theta^\flat)$ contains at most one element $\theta^\flat_1$ satisfying $\theta^\flat_1|_{S_{Q,m}(F)_{p'}} = 1$.
\end{lemma}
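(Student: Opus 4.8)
\textbf{Proof plan for Lemma \ref{prop:pre-twist}.}

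The plan is to reduce the uniqueness assertion to a statement purely about the torus $S_{Q,m}$ and the Galois cohomology group $H^1(\Weil_F, Z_{\tilde{G}^\vee})$, using the recipe $\phi \leadsto (\mathcal{E},S,\theta^\flat)$ recalled above. First I would recall that twisting the $L$-embedding $\Lgrp{j}$ (equivalently, the identification $\Lgrp{\tilde{G}}=\tilde{G}^\vee\times\Weil_F$) by an element of $H^1(\Weil_F,Z_{\tilde{G}^\vee})$ twists $\phi_{S_{Q,m},\Lgrp{j}}$ by the image of that class under $H^1(\Weil_F,Z_{\tilde{G}^\vee})\to H^1(\Weil_F,S_{Q,m}^\vee)$ (induced by $Z_{\tilde{G}^\vee}\hookrightarrow\tilde{T}^\vee = S_{Q,m}^\vee$), and hence twists $\theta^\flat$ by the corresponding quadratic character of $S_{Q,m}(F)$ obtained via the local Langlands correspondence for the torus $S_{Q,m}$. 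So the orbit of $\theta^\flat$ under $H^1(\Weil_F,Z_{\tilde{G}^\vee})$ is a translate of the group $\Xi$ of quadratic characters of $S_{Q,m}(F)$ arising this way. The claim becomes: at most one element of the coset $\theta^\flat\cdot\Xi$ is trivial on $S_{Q,m}(F)_{p'}$; equivalently, every nontrivial $\chi\in\Xi$ is nontrivial on $S_{Q,m}(F)_{p'}$, i.e. the restriction map $\Xi\to\Hom(S_{Q,m}(F)_{p'},\CC^\times)$ is injective.

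Next I would make $\Xi$ explicit. Since $4\mid m$, we have $\tau_{Q,m}(-1)=1$ by \cite[\S 2.7.4]{Weis17}, so $Z_{\tilde{G}^\vee}$ is the Cartier dual of $Y_{Q,m}/Y_{Q,m}^{\text{sc}}$, which by the table in \S\ref{sec:L-group} is $Y_{Q,m}/\frac{m}{2}Y_0 \simeq \Z/2\Z$ (generated by the image of $\frac{m}{2}\check\epsilon_1$); thus $Z_{\tilde{G}^\vee}\simeq\bmu_2$ and $H^1(\Weil_F,Z_{\tilde{G}^\vee})=H^1(F,\mu_2)\simeq F^\times/F^{\times 2}$, with $c\mapsto\chi_c$. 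The image of $\chi_c$ in $H^1(\Weil_F,S_{Q,m}^\vee)$ corresponds, via $X^*(S_{Q,m})=Y_{Q,m}$ and the embedding $\Z/2\Z\hookrightarrow S_{Q,m}^\vee$ dual to $Y_{Q,m}\twoheadrightarrow Y_{Q,m}/\frac{m}{2}Y_0$, to the pushforward of $c\in H^1(F,\mu_2)$ along the cocharacter $\mu_2\to S_{Q,m}$ that is Cartier-dual to that surjection. On the level of characters of $S_{Q,m}(F)$, the quadratic character so produced is the composite of the norm-type homomorphism $S_{Q,m}(F)\to F^\times/F^{\times 2}$ dual to this $\mu_2$, followed by $(\cdot,c)_{F,2}$. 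Concretely, decomposing $S_{Q,m}=\prod_{i\in I}R_{K_i^\sharp/F}(K_i^1)$ as in Theorem \ref{prop:S-p'} and tracing through \eqref{eqn:isogeny-Sp} and \eqref{eqn:O-embedding-1}, the cocharacter $\mu_2\to S_{Q,m}$ lands in the factor(s) coming from the long root $2\epsilon_1$, so the resulting homomorphism $S_{Q,m}(F)\to F^\times/F^{\times 2}$ is, up to indexing, $(\gamma_i)_i\mapsto\prod_i N_{K_i/F}(\gamma_i)$ (or a sub-product thereof); this is precisely the content needed, and it is essentially the computation already carried out in the proof of Lemma \ref{prop:variance-gerbe}.

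Finally, I would check injectivity of $\Xi\to\Hom(S_{Q,m}(F)_{p'},\CC^\times)$. By Theorem \ref{prop:S-p'}, $S_{Q,m}(F)_{p'}=\prod_{i\in I}\{\pm1\}\subset\prod_i K_i^1$, and the norm map on such an element is $N_{K_i/F}(\pm 1)=1$ modulo squares only for $+1$, while $N_{K_i/F}(-1)=1$ as well — so one must be slightly more careful: the relevant homomorphism $S_{Q,m}(F)_{p'}\to F^\times/F^{\times 2}$ factors through $S_{Q,m}(F)_{p'}/(S_{Q,m}(F)_{p'}\cap S_{Q,m}(F)^{\text{(squares)}})$, and here the Hilbert-symbol pairing $F^\times/F^{\times 2}\times F^\times/F^{\times 2}\to\bmu_2$ is nondegenerate, so $\chi_c|_{S_{Q,m}(F)_{p'}}=1$ forces $(\cdot,c)_{F,2}$ to vanish on the image of $S_{Q,m}(F)_{p'}$ in $F^\times/F^{\times 2}$, and I would argue — using the local Langlands / Shapiro description of $H^1(F,S_{Q,m})$ from Proposition \ref{prop:D-description} together with the surjectivity of $S_{Q,m}(F)_{p'}\twoheadrightarrow S_{Q,m}(F)/S_{Q,m}(F)_{0+}$ from Lemma \ref{prop:S_0} and Lemma \ref{prop:Fiber} — that this image spans the relevant part of $F^\times/F^{\times 2}$ detected by $\Xi$, whence $\chi_c$ acts trivially on all of $S_{Q,m}(F)$, i.e. $\chi_c$ is the trivial element of $\Xi$. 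The main obstacle I anticipate is exactly this last bookkeeping: pinning down, via the $\chi$-data and the Langlands correspondence for tori, which quadratic characters of $S_{Q,m}(F)$ actually occur in $\Xi$ and verifying that each nontrivial one is already visible on the prime-to-$p$ (equivalently, on the $2$-torsion) part $\prod_{i\in I}\{\pm1\}$; this is where a careful unwinding of the identification $X^*(S_{Q,m})\simeq Y_{Q,m}$ and the effect of the rescaling $\alpha\mapsto\tilde\alpha$ on the long-root component is needed, but it is a finite computation with no conceptual difficulty once the norm description above is in hand.
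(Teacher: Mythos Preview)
Your reduction to showing that the restriction $\Xi \to \Hom(S_{Q,m}(F)_{p'},\CC^\times)$ is injective is correct, but you then embark on an explicit computation of $\Xi$ via norm maps and Hilbert symbols that never closes --- and you yourself flag the unresolved ``bookkeeping'' at the end. The paper's proof bypasses all of this with a single observation you never make: since $Z_{\tilde G^\vee}$ is $2$-torsion, every $\chi\in\Xi$ is a quadratic character of $S_{Q,m}(F)$, and because $p\neq 2$ any quadratic character is automatically trivial on the pro-$p$ group $S_{Q,m}(F)_{0+}$. Combined with the product decomposition $S_{Q,m}(F)=S_{Q,m}(F)_{p'}\times S_{Q,m}(F)_{0+}$ (inherited from $S$ via $S_{Q,m}\simeq S$), this says each $\chi\in\Xi$ is already determined by its restriction to $S_{Q,m}(F)_{p'}$, which is exactly your injectivity claim. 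Equivalently, any two orbit members agree on $S_{Q,m}(F)_{0+}$, so if both are trivial on $S_{Q,m}(F)_{p'}$ they coincide.

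Your norm computation actually confirms why the explicit route is a dead end: you correctly note $N_{K_i/F}(-1)=1$, so the norm map sees nothing on $\{\pm 1\}^I$, and the attempted rescue via ``spanning the relevant part of $F^\times/F^{\times 2}$'' is not an argument. The missing idea is simply that order-$2$ characters vanish on pro-$p$ groups with $p$ odd; once you say that, the lemma is immediate and none of the Langlands-for-tori or $\chi$-data machinery is needed.
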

\begin{proof}
	Recall that $S_{Q,m}(F) = S_{Q,m}(F)_{p'} \times S_{Q,m}(F)_{0+}$, as $S$ satisfies the same property and $S \simeq S_{Q,m}$. Since $p \neq 2$, the quadratic twists do not affect the pro-$p$ part of $\theta^\flat$.
\end{proof}

\begin{definition}[$L$-packets]\label{def:epipelagic-packet} \index{$\Pi_\phi, \Pi_{[\phi]}$}
	Choose a stable system (Definition \ref{def:stable-system}) when $4 \nmid m$. Let $\phi: \Weil_F \to \Lgrp{\tilde{G}}$ be an epipelagic $L$-parameter and let $(\mathcal{E}, S, \theta^\flat)$ be the resulting isomorphism class of inducing data. Using the notation from Lemma \ref{prop:pre-twist} for $4 \mid m$, define
	\[ \Pi_\phi := \begin{cases}
		\Pi(S, \theta^\flat), & 4 \nmid m \\
		\Pi(S, \theta^\flat_1), & 4 \mid m, \; \exists \theta^\flat_1 \\
		\emptyset, & 4 \mid m, \; \nexists \theta^\flat_1
	\end{cases}\]
	where $\Pi(S, \cdot)$ is that of Definition \ref{def:packet-0}. Call it the \emph{$L$-packet} (resp. \emph{pre-$L$-packet}) attached to $\phi$ when $4 \nmid m$ (resp. to the $H^1(\Weil_F, Z_{\tilde{G}^\vee})$-orbit $[\phi]$ of $\phi$ when $4 \mid m$). We also define the centralizer group
	\[ C_\phi := Z_{\tilde{G}^\vee}\left( \Image(\phi) \right). \]
\end{definition}

\begin{remark}\label{rem:pre-packets}
	Assume $4 \mid m$. Then $\Pi_\phi$ and $C_\phi$ depend only on the $H^1(\Weil_F, Z_{\tilde{G}^\vee})$-orbit $[\phi]$, hence we may write $\Pi_{[\phi]}$, $C_{[\phi]}$ instead. Note that among the twists of $\theta^\flat$, Lemma \ref{prop:pre-twist} specified the unique one (if exists) such that $\Pi(S, \theta^\flat_1) \neq \emptyset$, by Proposition \ref{prop:packet-prop-0}.
\end{remark}	

\begin{lemma}\label{prop:isocrystal}
	The $\CC$-group $C_\phi$ is finite and diagonalizable. There is a isomorphism
	\[ (S_{Q,m}^\vee)^{\Gamma_F} \simeq C_\phi. \]
\end{lemma}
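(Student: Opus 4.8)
The statement has two parts: that $C_\phi$ is finite and diagonalizable, and that there is an isomorphism $(S_{Q,m}^\vee)^{\Gamma_F} \simeq C_\phi$. The natural strategy is to produce the isomorphism first and deduce the structural statement from it. Recall from Definition \ref{def:epipelagic-parameter} that, after $\tilde{G}^\vee$-conjugation, $\phi$ takes values in $N_{\tilde{G}^\vee}(\tilde{T}^\vee) \rtimes \Weil_F$, and $Z_{\tilde{G}^\vee}(\phi(P_F)) = \tilde{T}^\vee$; moreover $\phi(I_F)$ maps onto a cyclic subgroup of $\Omega(\tilde{G}^\vee,\tilde{T}^\vee)$ generated by a regular elliptic element, and $\phi$ is trivial (in the $\tilde{G}^\vee$-component) on a sufficiently deep ramification subgroup. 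The first step is to argue that $C_\phi := Z_{\tilde{G}^\vee}(\Image(\phi)) \subseteq Z_{\tilde{G}^\vee}(\phi(P_F)) = \tilde{T}^\vee$, so every element of $C_\phi$ already lies in $\tilde{T}^\vee$.

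\textbf{Identifying $C_\phi$ inside $\tilde{T}^\vee$.} Once $C_\phi \subseteq \tilde{T}^\vee$, an element $t \in \tilde{T}^\vee$ centralizes $\Image(\phi)$ if and only if it is fixed by the action of $\Weil_F$ on $\tilde{T}^\vee$ induced by $w \mapsto \Ad(\phi(w))$. By the discussion preceding Definition \ref{def:epipelagic-packet}, this is precisely the Galois action defining $S_{Q,m}^\vee$: one has $X^*(\tilde T^\vee) = Y_{Q,m} = X^*(S_{Q,m}^\vee)$, with $\Gamma_F$ acting through the image of $\phi$ in $\Omega(\tilde{G}^\vee,\tilde T^\vee)\rtimes\Weil_F$, which factors through a finite quotient. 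Hence $C_\phi = (\tilde{T}^\vee)^{\Weil_F} = (S_{Q,m}^\vee)^{\Gamma_F}$, functorially in the data, which gives the desired isomorphism. One should check that the $\Weil_F$-action on $\tilde T^\vee$ coming from $\Ad\circ\phi$ genuinely coincides with the one defining $S_{Q,m}^\vee$ and not merely up to an inner twist; this is exactly the content of the normalization ``$\Gamma_F$ acts on $X_*(S_{\bar F})$ and $X^*(S_{Q,m}^\vee)$ by the same cocycle'' recorded in the excerpt, so it is a matter of unwinding definitions.

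\textbf{Finiteness and diagonalizability.} For the structural claim, note that $S_{Q,m}^\vee$ is a complex torus and $(S_{Q,m}^\vee)^{\Gamma_F}$ is the group of fixed points of a finite group acting by automorphisms; such a fixed-point subgroup is always diagonalizable (it is a closed subgroup of a torus). For finiteness, the key point is that the $\Gamma_F$-action on $X^*(S_{Q,m}^\vee) = Y_{Q,m}$ is generated by a regular \emph{elliptic} element $t \in \Omega$, so $(Y_{Q,m})^{\Gamma_F} = (Y_{Q,m})^{t} = 0$ because $\det(t - 1 \mid Y_{Q,m}\otimes\R) \neq 0$ by ellipticity. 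Dually, $(S_{Q,m}^\vee)^{\Gamma_F}$ has trivial connected component, hence is finite. (Equivalently one may observe $X_*((S_{Q,m}^\vee)^{\Gamma_F,\circ}) = (X_*(S_{Q,m}^\vee))^{\Gamma_F}$, which vanishes.) Combining, $C_\phi$ is finite and diagonalizable.

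\textbf{Expected main obstacle.} The routine parts — diagonalizability, finiteness via ellipticity — are immediate. The one place demanding care is the inclusion $C_\phi \subseteq \tilde T^\vee$ together with the precise matching of Galois actions: one must use that $Z_{\tilde G^\vee}(\phi(P_F)) = \tilde T^\vee$ is exactly a maximal torus (part of a pinning), that $\phi(I_F)$ normalizes it with image generated by a regular element (so no larger connected centralizer appears), and that the resulting outer action is the cocycle used to build $S_{Q,m}$. I expect this bookkeeping — reconciling Kaletha's conventions in \cite{Kal15,Kal16} with the rescaled dual group $\tilde G^\vee$ and the isogeny $\iota_{Q,m}$ — to be the only subtle step; the rest follows formally.
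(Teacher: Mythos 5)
Your proof is correct and supplies the argument that the paper compresses to a single citation of \cite[(5.2)]{Kal15}: indeed that reference establishes precisely the chain $C_\phi \subseteq Z_{\tilde G^\vee}(\phi(P_F)) = \tilde T^\vee$, the identification $C_\phi = (\tilde T^\vee)^{\Weil_F} = (S_{Q,m}^\vee)^{\Gamma_F}$, and finiteness via ellipticity. So you have unwound the same argument the paper appeals to, with no discrepancy in method.
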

\begin{proof}
	This is just \cite[(5.2)]{Kal15}. 
\end{proof}

Let $\check{\iota}_{Q,m}: S^\vee \to S_{Q,m}^\vee$ be the dual of $\iota_{Q,m}$. As $\iota_{Q,m}$ is identifiable with the endomorphism $t_0 \mapsto t_0^{m/\text{gcd}(2,m)}$ of $S$, from Theorem \ref{prop:S-p'} and Proposition \ref{prop:D-description} we infer that
\begin{gather*}
	H^1(\Weil_F, S^\vee) \xrightarrow{(\check{\iota}_{Q,m})_*} H^1(\Weil_F, S_{Q,m}^\vee): \quad \Ker \simeq \Coker \simeq
	\begin{cases}
		0, & 4 \nmid m \\
		\mu_2^I, & 4 \mid m;
	\end{cases} \\
	H^1(F, S_{Q,m}) \xrightarrow{\iota_{Q,m,*}} H^1(F, S): \quad
	\begin{cases}
		\text{bijective}, & 4 \nmid m \\
		\text{trivial}, & 4 \mid m.
	\end{cases}
\end{gather*}

\begin{theorem}\label{prop:L-packet-prop}
	Every genuine epipelagic irreducible representation of $\tilde{G}$ belongs to some $\Pi_\phi$. Granting the premises in Proposition \ref{prop:packet-prop-0}, we have the following properties.
	\begin{itemize}
		\item When $4 \nmid m$, all $\Pi_\phi$ are nonempty. When $4 \mid m$, denote by $\left[ \phi_{S_{Q,m}, \Lgrp{j}} \right]$ the $H^1(\Weil_F, Z_{\tilde{G}^\vee})$-orbit of $\phi_{S_{Q,m}, \Lgrp{j}}$; we have
			\[ \Pi_{[\phi]} = \emptyset \iff \left[ \phi_{S_{Q,m}, \Lgrp{j}} \right] \cap \Image\left( (\check{\iota}_{Q,m})_* \right) = \emptyset. \]
		\item When $\Pi_\phi \neq \emptyset$, it is canonically a torsor under $\Ker\left[ (\check{\iota}_{Q,m})_* \right] \times H^1(F, S)$; for $4 \nmid m$, it is canonically a torsor under $\pi_0(C_\phi, 1)^D$ where $(\cdots)^D$ means the Pontryagin dual.
		\item Any two $\Pi_{\phi}$, $\Pi_{\phi'}$ are either disjoint or equal, and the latter case occurs exactly when
		\begin{compactitem}
			\item both are empty,
			\item $4 \nmid m$ and $\phi$ is equivalent to $\phi'$, or
			\item $4 \mid m$ and $[\phi] = [\phi']$.
		\end{compactitem}
	\end{itemize}
\end{theorem}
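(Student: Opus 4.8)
The statement is essentially a repackaging of Proposition \ref{prop:packet-prop-0} through the dictionary $\phi \leadsto (\mathcal{E}, S, \theta^\flat)$ set up in \S\ref{sec:epipelagic-parameters}, so the plan is to verify that each clause translates correctly. First, for the exhaustion assertion: Proposition \ref{prop:packet-prop-0} already says every genuine epipelagic irreducible representation lies in some $\Pi(S, \theta^\flat)$, and by Proposition \ref{prop:std-stable-system} (odd $m$) or by construction (even $m$) every admissible triple $(\mathcal{E}, S, \theta^\flat)$ with $\theta^\flat$ trivial on the $p'$-part arises from an epipelagic $L$-parameter — one reads off the type map class $c \in H^1(F, \Omega)$ from the $\Gamma_F$-action on $S$, uses the surjectivity of type maps cited in \S\ref{sec:epipelagic-parameters} (via \cite{Rag04}), and realizes $\theta^\flat$ through the local Langlands correspondence for the torus $S_{Q,m}$ composed with a choice of $\chi$-data. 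So every nonempty $\Pi(S, \theta^\flat)$ is some $\Pi_\phi$, giving exhaustion; when $4 \mid m$ one must additionally note that the $\chi_c$-twists realize the full $H^1(\Weil_F, Z_{\tilde G^\vee})$-orbit on $\theta^\flat$, which is exactly the content needed to pass from $\Pi(S,\theta^\flat)$ to $\Pi_{[\phi]}$.

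Second, for the emptiness criterion when $4\mid m$: by Lemma \ref{prop:pre-twist} and Definition \ref{def:epipelagic-packet}, $\Pi_{[\phi]}$ is nonempty iff some twist $\theta^\flat_1$ in the orbit satisfies $\theta^\flat_1|_{S_{Q,m}(F)_{p'}}=1$; I would then match this against the dual-side condition using the isomorphism $H^1(F, S_{Q,m}) \xrightarrow{\iota_{Q,m,*}} H^1(F,S)$ and the cokernel $\mu_2^I$ of $(\check\iota_{Q,m})_*$ displayed just above the theorem. The point is that $\theta^\flat$ being trivial on the $2$-torsion $S_{Q,m}(F)_{p'} \simeq \mu_2^I$ is dual to $[\phi_{S_{Q,m},\Lgrp j}]$ meeting $\Image((\check\iota_{Q,m})_*)$: the obstruction to lifting $\phi$ through $\check\iota_{Q,m}$ lives precisely in the cokernel $\mu_2^I$, which under local Langlands for $S_{Q,m}$ is the Pontryagin dual of $S_{Q,m}(F)_{p'}$. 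This is a straightforward duality computation once the identifications of \S\ref{sec:epipelagic-parameters} are in place.

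Third, for the torsor structure and disjointness: when $\Pi_\phi \neq \emptyset$, Proposition \ref{prop:packet-prop-0} gives that $\Pi(S,\theta^\flat)$ is a torsor under $\Hom(S(F)_{p'},\CC) \times H^1(F,S)$ for $4\mid m$ and under $H^1(F,S)$ for $4\nmid m$; I would then substitute the group-cohomology identifications $\Hom(S(F)_{p'},\CC) \simeq \Ker[(\check\iota_{Q,m})_*]$ (from the displayed exact sequences) and, for $4\nmid m$, $H^1(F,S) \simeq H^1(F,S_{Q,m}) \simeq \pi_0(C_\phi,1)^D$ using Lemma \ref{prop:isocrystal} (which gives $(S_{Q,m}^\vee)^{\Gamma_F} \simeq C_\phi$) together with local Tate duality for the torus $S_{Q,m}$. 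Disjointness then follows by transporting the last clause of Proposition \ref{prop:packet-prop-0}: two packets coincide iff the inducing triples are isomorphic (or both empty), and isomorphism of triples translates into equivalence of parameters for $4\nmid m$, resp. equality of $H^1(\Weil_F,Z_{\tilde G^\vee})$-orbits for $4\mid m$, by \cite[Lemma 5.3]{Kal15} (canonicity of $\phi_{S_{Q,m},\Lgrp j}$ up to $\Omega(G,S)(F)$) and the fact that the residual $\Omega(G,S)(F)$-ambiguity in the triple is exactly matched on both sides.

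\textbf{Main obstacle.} The bookkeeping in the third paragraph is where I expect the real work: identifying $H^1(F,S)$ with $\pi_0(C_\phi,1)^D$ canonically (not just up to isomorphism) requires carefully threading local Tate–Nakayama duality for $S_{Q,m}$ through the dual-group identification $X^*(S_{Q,m}^\vee)=Y_{Q,m}$, and checking that the torsor structure on $\Pi_\phi$ produced this way agrees with the one from Proposition \ref{prop:packet-prop-0} rather than differing by an automorphism. The $4\mid m$ case compounds this because one is simultaneously tracking the $\mu_2^I$ worth of pre-$L$-packet ambiguity and the $\mu_2^I$ kernel governing the extra characters $\Hom(S(F)_{p'},\CC)$, and one must be sure these two copies of $\mu_2^I$ are being assigned to the correct structural roles (one to emptiness/twisting, one to the torsor). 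Everything else is essentially a dictionary lookup against Proposition \ref{prop:packet-prop-0} and the cited lemmas of Kaletha.
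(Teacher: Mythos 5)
Your proposal follows essentially the same route as the paper: reinterpret Proposition \ref{prop:packet-prop-0} through the dictionary $\phi \leadsto (\mathcal{E}, S, \theta^\flat)$, use Lemma \ref{prop:isocrystal} and Kottwitz's isomorphism for the canonical torsor structure when $4 \nmid m$, and reduce disjointness to the assertion that non-equivalent parameters give non-isomorphic triples (given that the $\chi$-data are fixed). What you call "local Tate duality for $S_{Q,m}$" is precisely what the paper invokes under the name of Kottwitz's isomorphism $H^1(F, S_{Q,m}) \rightiso \pi_0\bigl((S_{Q,m}^\vee)^{\Gamma_F}, 1\bigr)^D$; there is no genuine difference. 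For disjointness you cite \cite[Lemma 5.3]{Kal15} (canonicity of $\phi_{S_{Q,m},\Lgrp{j}}$ up to $\Omega(G,S)(F)$) plus an informal matching argument, whereas the paper invokes the more self-contained \cite[Proposition 5.2.4]{Kal16}; both are Kaletha's machinery and lead to the same conclusion, though the paper's citation is the more direct one. You also supply explicit detail on exhaustion (surjectivity of the type map, local Langlands for $S_{Q,m}$) which the paper compresses into the word "reinterpret"; this is fine and actually makes the argument easier to follow.

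One small slip: in the emptiness paragraph you write that you would "match this against the dual-side condition using the isomorphism $H^1(F, S_{Q,m}) \xrightarrow{\iota_{Q,m,*}} H^1(F,S)$", but the display just before the theorem records that this map is \emph{trivial} when $4 \mid m$ (it is bijective only when $4 \nmid m$), and the emptiness criterion is only in play when $4 \mid m$. Fortunately your actual argument — that the obstruction lies in the cokernel $\mu_2^I$ of $(\check{\iota}_{Q,m})_*$, which local Langlands for $S_{Q,m}$ identifies with the Pontryagin dual of $S_{Q,m}(F)_{p'}$ — is correct and does not rely on that misstatement, so this is a cosmetic error rather than a gap.
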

\begin{proof}
	The descriptions involving $\check{\iota}_{Q,m}$ and orbits just reinterpret Proposition \ref{prop:packet-prop-0} in terms of local Langlands correspondence for tori.
	
	In view of Lemma \ref{prop:isocrystal} and the discussion on $\iota_{Q,m,*}$, the refinement for $4 \nmid m$ is reduced to the existence of a natural $H^1(F,S_{Q,m}) \rightiso \pi_0(C_\phi, 1)^D$. This follows from Kottwitz's isomorphism $H^1(F, S_{Q,m}) \rightiso \pi_0((S_{Q,m}^\vee)^{\Gamma_F}, 1)^D$.
	

	It remains to show that non-equivalent $L$-parameters $\phi_1, \phi_2$ correspond to non-isomorphic $(\mathcal{E}_1, S_1, \phi_1)$, $(\mathcal{E}_2, S_2, \phi_2)$, provided that both $L$-packets are non-empty. This follows from the broader framework in \cite[Proposition 5.2.4]{Kal19},  since the $\chi$-data here are prescribed.
\end{proof}

\begin{remark}\label{rem:comparison-GG-tori}
	The recipe here can probably be compared with the local Langlands for metaplectic tori discussed in \cite[\S\S 8.1--8.3]{GG}.
\end{remark}

\begin{remark}
	The description of $\Pi_\phi$ here is modeled upon \cite{Kal15}. The \emph{extended pure inner forms} in \textit{loc. cit.} reduce to $G$ or $\tilde{G}$ itself in our situation, since the set of basic $G$-isocrystals $\mathbf{B}(G)_\text{bas}$ is trivial. This also amounts to taking $Z = \{1\}$ in the general framework of \cite{Kal19}.
\end{remark} 

We compare the central characters $\omega_\pi$ for $\pi \in \Pi_\phi$ next. Proposition \ref{prop:BD-adjoint-action} ensures $Z_{\tilde{G}} = \bm{p}^{-1}(\{\pm 1\})$ and $\omega_\pi$ is genuine. Recall that when $4 \nmid m$, the members of $\Pi_\phi$ are parameterized by conjugacy classes in a given stable class $\mathcal{E}$.

\begin{theorem}\label{prop:central-character}\index{central character}
	Assume $4 \nmid m$. Let $j,j': S \hookrightarrow G$ correspond to $\pi, \pi' \in \Pi_\phi$.
	\begin{compactitem}
		\item If $m \notin 2\Z$, then $\omega_\pi = \omega_{\pi'}$.
		\item If $m \equiv 2 \pmod 4$, then $\omega_\pi = \omega_{\pi'} \otimes \eta$ where $\eta: \{\pm 1\} \to \CC^\times$ maps $-1$ to $\lrangle{ \kappa^S_-, \mathrm{inv}(j,j')}$.
	\end{compactitem}
	Here $\kappa^S_-$ is as in Definition \ref{def:kappa-minus}.
\end{theorem}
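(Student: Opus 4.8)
The plan is to reduce the computation of the ratio $\omega_\pi / \omega_{\pi'}$ on $Z_{\tilde G} = \bm{p}^{-1}(\{\pm 1\})$ to an explicit comparison of the genuine characters $\epsilon_j\theta_j$ and $\epsilon_{j'}\theta_{j'}$ on the preimage of $\{\pm 1\}$, using the formula $\pi = \cInd^{\tilde G}_{\widetilde{jS}\cdot G(F)_{x,1/e}}(\widehat{\epsilon_j\theta_j})$ from \eqref{eqn:c-Ind}. Since $-1 \in \{\pm 1\}^I \subset S(F)_{p'} \subset jS(F)$ for every embedding $j$ in the stable class, and since $\widetilde{-1}$ acts as a scalar on $\pi$ (it is central in $\tilde G$ by Proposition \ref{prop:BD-adjoint-action}), we have $\omega_\pi(\widetilde{-1}) = \epsilon_j\theta_j(\widetilde{-1})$, where I fix the canonical preimage $\widetilde{-1} \in \tilde G^\natural$ of Definition \ref{def:lifting-minus-1} (here $m \not\equiv 0 \pmod 4$ so $\tilde G^\natural = \tilde G$ when $m$ is odd, and we work inside $\tilde G^\natural$ when $m\equiv 2\pmod 4$, noting the root number factor contributes equally to $\pi$ and $\pi'$ and cancels). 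So everything comes down to computing $\epsilon_j(-1)$ and $\theta_j(\widetilde{-1})$ and comparing across $j, j'$.

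First I would handle $\epsilon_j$: Lemma \ref{prop:epsilon-invariance} already gives $\epsilon_j(j\gamma_0) = \epsilon_{j'}(j'\gamma_0)$ for topologically semisimple $\gamma_0$, in particular for $\gamma_0 = -1$, so the toral-invariant characters contribute nothing to the ratio. Thus $\omega_\pi/\omega_{\pi'}$ on $\widetilde{-1}$ equals $\theta_j(\widetilde{-1})/\theta_{j'}(\widetilde{-1})$. Next I would invoke \textbf{SS.2} of Definition \ref{def:stable-system} (or, when $m\notin 2\Z$, Proposition \ref{prop:std-stable-system} together with \textbf{SS.2} for the standard system): for a stable conjugation $j' = \Ad(g)\circ j$ in $\mathcal{E}$ one has $\theta_{j'}(\CaliAd(g)(\tilde\gamma)) = \theta_j(\tilde\gamma)$ for all $\tilde\gamma \in \widetilde{jS}$. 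Applying this with $\tilde\gamma = \widetilde{-1}$, the whole question becomes: how does $\CaliAd(g)$ move $\widetilde{-1}$? That is, one must identify the preimage $\CaliAd(g)(\widetilde{-1}, \delta_0)$ where $\delta_0 \in \iota_{Q,m}^{-1}(-1)$; by Corollary \ref{prop:st-conj-elements-canonical} (valid since $4\nmid m$) the answer is independent of $\delta_0$.

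The heart of the computation reduces, via the decomposition $G^S = \prod_{i\in I} R_{K_i^\sharp/F}(\SL(2))$ and Lemma \ref{prop:restriction-W_i}, to the $\SL(2)$ case for each $i$, where $-1$ sits in a single copy $K_i^1$. For $m$ odd, Proposition \ref{prop:CAd-minus-1}(1) (or rather its specialization: $\CaliAd(g)$ fixes $\widetilde{-1}$ since $\widetilde{-1}$ is central in $\widetilde{\GL}(2,F)$ by Lemma \ref{prop:-1-adjoint} and $(-1,\cdot)_{F,m}=1$ by Lemma \ref{prop:1-c}) shows $\theta_{j'}(\widetilde{-1}) = \theta_j(\widetilde{-1})$, giving $\omega_\pi = \omega_{\pi'}$. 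For $m \equiv 2\pmod 4$, Proposition \ref{prop:CAd-minus-1}(2) is exactly the needed input: writing the flip of the sign in the $S_{Q,m}$-coordinate introduces the factor $\sgn_{K_i/K_i^\sharp}(\nu(g_i)) = \lrangle{\kappa_-, \mathrm{inv}(\delta, \Ad(g)\delta)}$ per component, and by Proposition \ref{prop:stable-reduction-SL2} together with Definition \ref{def:kappa-minus} the product over $i$ of these signs is precisely $\lrangle{\kappa^S_-, \mathrm{inv}(j,j')}$. Assembling, $\theta_{j'}(\widetilde{-1}) = \lrangle{\kappa^S_-,\mathrm{inv}(j,j')}\cdot\theta_j(\widetilde{-1})$, hence $\omega_\pi = \omega_{\pi'}\otimes\eta$ with $\eta(-1) = \lrangle{\kappa^S_-,\mathrm{inv}(j,j')}$.

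The main obstacle I anticipate is bookkeeping the precise relationship between $\CaliAd(g)$ acting on $(\widetilde{-1}, \delta_0) \in \widetilde{jS}_{Q,m}$ and the naive adjoint action, i.e. making sure that the calibration factor $\Cali_m(\nu,\delta_0)$ and the choice of $\delta_0 \in \iota_{Q,m}^{-1}(-1)$ are correctly tracked; in particular one must use that $-1 = \iota_{Q,m}(\delta_0)$ forces $\delta_0$ to be a root of unity and apply Proposition \ref{prop:dependence-on-delta_0} (or its corollary) so that the answer is genuinely $\delta_0$-independent, and then Proposition \ref{prop:CAd-minus-1} in the form relating $\CaliAd(g)(\widetilde{-1}\cdot\tilde\delta, -\delta_0)$ to $\CaliAd(g)(\tilde\delta,\delta_0)$. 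A secondary technical point is confirming that the root-number normalization in Definition \ref{def:lifting-minus-1} (the factor $\epsilon(\tfrac12,(-1,\cdot)_{F,m};\psi)^{-n}$) is the \emph{same} for $\pi$ and $\pi'$ — which it is, since it depends only on $(W,\lrangle{\cdot|\cdot})$ and $\psi$, not on $j$ — so it cancels in the ratio and does not pollute $\eta$. Once these are pinned down the statement follows by direct substitution; no serious analysis is involved beyond the already-established character formula and the properties of $\CaliAd$.
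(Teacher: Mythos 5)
Your proof is correct and uses the same three ingredients as the paper's own (very terse) proof: the identity $\omega_\pi = \epsilon_j\theta_j|_{Z_{\tilde G}}$, Lemma \ref{prop:epsilon-invariance} to cancel the toral-invariant characters, and \textbf{SS.2} combined with Proposition \ref{prop:CAd-minus-1} to track the action of $\CaliAd(g)$ on a preimage of $-1$. The extra care you take over the choice of preimage, the normalization from Definition \ref{def:lifting-minus-1}, and the reduction to $\SL(2)$ via the parameters $(K_i, K_i^\sharp)$ is exactly the bookkeeping that the paper's one-line proof suppresses, so the approaches coincide.
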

\begin{proof}
	We have $\omega_\pi = \epsilon_j \theta_j|_{Z_{\tilde{G}}}$ and $\omega_{\pi'} = \epsilon_{j'} \theta_{j'}|_{Z_{\tilde{G}}}$. Apply \textbf{SS.2} of Definition \ref{def:stable-system} with Proposition \ref{prop:CAd-minus-1} (resp. Lemma \ref{prop:epsilon-invariance}) to compare the restrictions to $Z_{\tilde{G}}$ of $\theta_j, \theta_{j'}$ (resp. of $\epsilon_j, \epsilon_{j'}$).
\end{proof}
This is in clear contrast with the case of reductive groups and conforms to the prediction in \cite[\S 12.1]{GG}.

\subsection{Independencies}\label{sec:independence}
In what follows, $\phi$ always denote an epipelagic $L$-parameter into $\Lgrp{\tilde{G}}$. Recall from \S\ref{sec:L-group} that when $m \notin 2\Z$, the identification $\Lgrp{\tilde{G}} = \tilde{G}^\vee \times \Weil_F$ is canonical, and so is our construction of $\Pi_\phi$.

When $m \in 2\Z$, the identification $\Lgrp{\tilde{G}} = \tilde{G}^\vee \times \Weil_F$ depends on the choices of $(W, \lrangle{\cdot|\cdot})$, as well as $\psi: F \to \CC^\times$ when $m \equiv 2 \pmod 4$; the same choices enter in \eqref{eqn:splitting-genuine}. The next result should be compared with \cite[Proposition 11.1]{GG}.

\begin{theorem}\label{prop:packet-independence}
	Assume $m \equiv 2 \pmod 4$. For $\phi: \Weil_F \to \Lgrp{\tilde{G}}$, the $L$-packet $\Pi_\phi$ is independent of the choice of $\psi$; it is also invariant under dilation of $\lrangle{\cdot|\cdot}$.
\end{theorem}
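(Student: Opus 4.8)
The plan is to track exactly how each of the two auxiliary choices enters the construction of $\Pi_\phi$, and to show that both effects cancel. There are two places where $\psi$ and $\lrangle{\cdot|\cdot}$ intervene: first, in the identification $\Lgrp{\tilde{G}} \simeq \tilde{G}^\vee \times \Weil_F$, which enters via the splittings of the metaGalois group (Lemma \ref{prop:variance-metaGalois}) and of the gerbe $\pi_1^{\text{\'et}}(\mathsf{E}_\epsilon(\tilde{G}))$ (Lemma \ref{prop:variance-gerbe}); second, in the identification \eqref{eqn:splitting-genuine}, i.e.\! in the choice of the distinguished splitting $\sigma$ of $\bm{p}$ over $S(F)_{p'}$, whose variance is governed by Lemma \ref{prop:splitting-variance}. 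I would first recall from \S\ref{sec:epipelagic-parameters} that dilating $\lrangle{\cdot|\cdot}$ by $c$ or replacing $\psi$ by $\psi_c$ twists the $L$-group identification by the cocycle $\chi_c$ of \eqref{eqn:metaGalois-section-twist}; consequently the inducing datum $(\mathcal{E}, S, \theta^\flat)$ attached to $\phi$ gets replaced by one in which $\theta^\flat$ is twisted by the quadratic character of $S_{Q,m}(F)$ associated to the image of $\chi_c$ in $H^1(\Weil_F, S_{Q,m}^\vee)$, while $\mathcal{E}$ and $S$ are unchanged.

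The core of the argument is then to identify that quadratic twist of $\theta^\flat$ explicitly on the arithmetic side. Since $\iota_{Q,m}$ is an isomorphism on pro-$p$ parts (by \eqref{eqn:isogeny-Sp}), only the prime-to-$p$ part $\theta^\flat|_{S_{Q,m}(F)_{p'}}$ can move, and by Theorem \ref{prop:S-p'} this amounts to a character of $\prod_{i\in I}\{\pm 1\}$. The class of $\chi_c$ in $H^1(\Weil_F, S_{Q,m}^\vee)$, translated through local Langlands for $S_{Q,m}$ and the $\chi$-data, should pull back to the character sending the generator $-1$ in the $i$-th slot $K_i^1 \hookrightarrow S(F)$ to $\sgn_{K_i/K_i^\sharp}(c)$; this is exactly the computation of the toral invariants for short roots of $\Sp(W)$ carried out in \S\ref{sec:toral-invariants}, which is what links $\sgn_{F_\alpha/F_{\pm\alpha}}(-1)$ to the symmetric-versus-asymmetric dichotomy of coroot orbits, i.e.\! to the fields $K_i/K_i^\sharp$. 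I would verify this by unwinding the recipe $\phi \leadsto \theta^\flat$ of \S\ref{sec:epipelagic-parameters} one symmetric root at a time, reducing (as always) to the $\SL(2)$-case via $G^S$ and Weil restriction, where it becomes the statement that $\chi_c$ corresponds under class field theory to $\sgn_{K/F}(c)$ on the relevant $K^1$.

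Having pinned down the twist on $\theta^\flat$, I then match it against the two compensating changes. The change in the splitting \eqref{eqn:splitting-genuine} is given by Lemma \ref{prop:splitting-variance}: $\widetilde{-1}_i$ gets multiplied by $(-1,c)_{F,m}^{n_i}$ in the $i$-th block, which via Lemma \ref{prop:1-c} and the projection formula \eqref{eqn:Hilbert-projection-formula} translates into precisely the character $-1 \mapsto \sgn_{K_i/K_i^\sharp}(c)$ on $S(F)_{p'}$ transported to $\widetilde{jS}$; and \textbf{SS.1} of Definition \ref{def:stable-system} stipulates that $\theta_j^\dagger$ is twisted by exactly the same quadratic character of $jS(F)_{p'}$ when $\psi$ or $\lrangle{\cdot|\cdot}$ is rescaled by $c$. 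So the genuine character $\theta_j = \theta_j^\circ\theta_j^\dagger$ picked out by the stable system from the twisted datum, transported to $\widetilde{jS}$ through the twisted splitting, equals the original $\theta_j$ transported through the original splitting — the twist on $\theta^\flat$ is absorbed by the twist on $\theta_j^\circ$ and $\sigma$, while $\theta_j^\dagger$ is twisted to restore the product. Since $\epsilon_{jS}$, the set $\mathcal{E}$, and the point $x\in\mathcal{B}(G,F)$ are unaffected, each inducing pair $(\widetilde{jS}, \epsilon_j\theta_j)$ — hence each compact induction $\pi_{\tilde{S},\theta}$ — is literally unchanged, so $\Pi_\phi$ is unchanged.

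The main obstacle I anticipate is the middle step: making precise the identification of the image of $\chi_c$ in $H^1(\Weil_F, S_{Q,m}^\vee)$ with the arithmetic character $-1\mapsto \sgn_{K_i/K_i^\sharp}(c)$, because this forces one to chase $\chi_c$ through the $\chi$-data normalization of $\Lgrp{j}$ of \cite{Kal15} and through the explicit dictionary between $S_{Q,m}^\vee$-cohomology and $H^1(F,\Omega)$ set up in \S\ref{sec:epipelagic-parameters}. The bookkeeping of which coroot orbits are symmetric (so that $K_i$ is a field and $\sgn_{K_i/K_i^\sharp}$ is nontrivial) versus asymmetric must be kept consistent with the sign $(-1,c)_{F,m}^{n_i}$ appearing in Lemma \ref{prop:splitting-variance}; everything else is the routine matching dictated by \textbf{SS.1} and Lemma \ref{prop:1-c}. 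It may be cleanest to phrase the whole comparison as an equality of two twisting characters of $S(F)_{p'}\simeq\{\pm1\}^I$ and check it coordinatewise, each coordinate being an $\SL(2)$-computation already done in \S\ref{sec:toral-invariants} and \S\ref{sec:Kubota}.
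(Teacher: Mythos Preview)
Your overall architecture matches the paper's: there are three effects to track --- the twist of $\theta^\flat$ by $\chi_c$, the change in the splitting $\sigma$ over $S(F)_{p'}$, and the twist of $\theta_j^\dagger$ dictated by \textbf{SS.1} --- and the claim is that they combine to leave $\theta_j$ invariant. That structure is correct.

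The gap is in your concrete identifications. You assert that the splitting change $(-1,c)_{F,m}^{n_i}$, via Lemma~\ref{prop:1-c} and the projection formula, equals $\sgn_{K_i/K_i^\sharp}(c)$. This is false: Lemma~\ref{prop:1-c} gives $(-1,c)_{F,m} = (-1,c)_{F,2}$ for $m \equiv 2 \pmod 4$, and the projection formula then yields $(-1,c)_{F,2}^{n_i} = (-1,c)_{K_i^\sharp,2}$, which is the quadratic symbol of $-1$ against $c$ over $K_i^\sharp$ --- not $\sgn_{K_i/K_i^\sharp}(c) = (D_i, c)_{K_i^\sharp,2}$ with $K_i = K_i^\sharp(\sqrt{D_i})$. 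Likewise, your claim that $\chi_S$ sends $-1 \in K_i^1$ to $\sgn_{K_i/K_i^\sharp}(c)$ is off by the same factor: the correct value is $\sgn_{K_i/K_i^\sharp}(c)\cdot(-1,c)_{K_i^\sharp,2}$. The three characters at play on each block are genuinely distinct, and the identity one needs is
\[
\underbrace{\sgn_{K_i/K_i^\sharp}(c)}_{\text{from \textbf{SS.1}}}\;\cdot\;\underbrace{(-1,c)_{K_i^\sharp,2}}_{\text{from the splitting}}\;=\;\underbrace{\chi_S(-1)}_{\text{from }\theta^\flat},
\]
not an equality of any two of them.

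Your proposed route for computing $\chi_S(-1)$ via toral invariants from \S\ref{sec:toral-invariants} is also a wrong turn: those invariants concern the characters $\epsilon_{jS}$, not the local Langlands map for $S_{Q,m}$. The paper instead passes through the commutative diagram relating $Y_{Q,m}/Y_{Q,m}^{\mathrm{sc}}$ to $Y/Y_0$ on the dual side, reduces to a single factor $R_{K^\sharp/F}(K^1)$, writes $S$ as a quotient of $S_1 = R_{K/F}(\Gmm{K})$, and computes $\chi_{S_1}(\omega) = (c, N_{K/K^\sharp}(\omega))_{K^\sharp,2}$ by Shapiro's lemma and class field theory; evaluating at $\omega = \sqrt{D}$ (so $\omega/\tau(\omega) = -1$) gives $\chi_S(-1) = (c, -D)_{K^\sharp,2} = \sgn_{K/K^\sharp}(c)\cdot(-1,c)_{K^\sharp,2}$, which is exactly what is needed. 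Once you replace your two misidentifications by these, your outline goes through.
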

\begin{proof}
	First consider the effect of changing $\psi$. Let $c \in F^\times$.
	\begin{compactenum}
		\item Changing $\psi$ to $\psi_c$ amounts to twist $\phi: \Weil_F \to \tilde{G}^\vee \times \Weil_F$ by the $\chi_c$ in \eqref{eqn:metaGalois-section-twist}. Accordingly, $\theta^\flat$ in the inducing datum will be twisted by the $\chi_{S_{Q,m}}: S_{Q,m}(F) \to \bmu_2$ corresponding to the image of $\chi_c$ under $H^1(\Weil_F, Z_{\tilde{G}^\vee}) \to H^1(\Weil_F, S_{Q,m}^\vee)$.
		\item The genuine character $\theta_j^\circ$ in Definition \ref{def:stable-system} is also controlled by the splitting over $S(F)_{p'}$; Lemma \ref{prop:splitting-variance} describes its dependence on $\psi$.
		\item The \textbf{SS.1} of Definition \ref{def:stable-system} describes the dependence of $\theta_j^\dagger$ on $\psi$.
	\end{compactenum}
	
	By construction (cf. \S\ref{sec:L-group}), there are $\Gamma_F$-equivariant commutative diagrams
	\[\begin{tikzcd}
		Y \arrow{r}{\frac{m}{2}}[swap]{\simeq} \arrow{rd}[swap]{\frac{m}{2}} & Y_{Q,m} \arrow[twoheadrightarrow]{r}  \arrow[hookrightarrow]{d} & Y_{Q,m}/Y_{Q,m}^\text{sc} \arrow{d}{\simeq} \\
		& Y \arrow[twoheadrightarrow]{r}[swap]{\check{\epsilon}_i \mapsto 1+2\Z } & \Z/2\Z
	\end{tikzcd} \quad \begin{tikzcd}
		S^\vee & S_{Q,m}^\vee \arrow{l}[swap]{\simeq} & Z_{\tilde{G}^\vee} \arrow[hookrightarrow]{l} \\
		& S^\vee \arrow{u}[swap]{\iota_{Q,m}^\vee} \arrow{lu}{\frac{m}{2}} & \bmu_2  \arrow{u}[swap]{\simeq} \arrow[hookrightarrow]{l}{i}
	\end{tikzcd} \]
	related by $\Hom(-, \CC^\times)$. We se see that $\chi_{S_{Q,m}} = \chi_S \circ \iota_{Q,m}$ where $\chi_S: S(F) \to \CC^\times$ corresponds to the composite of $i \circ \chi_c$ under the Langlands correspondence. If we identify $S^\vee$ with $(\CC^\times)^n$ through the usual basis $\{\check{\epsilon}_i\}_{i=1}^n$ of $Y$, then $i(-1) = (-1, \ldots, -1)$.

	To simplify notations, let us assume $S \simeq R_{K^\sharp/F} (K^1)$ in terms of parameters in \S\ref{sec:Sp-parameters}, where $K, K^\sharp$ are fields and $[K^\sharp:F]=n$. We must show
	\[ \overbracket{ \sgn_{K/K^\sharp}(c) }^{\text{for}\; \theta_j^\dagger} \overbracket{ (-1, c)_{F,m}^n }^{\text{for splitting}} = \overbracket{ \chi_S(-1) }^{\text{for}\; \theta^\flat}, \qquad -1 \in K^1. \]
	By Lemma \ref{prop:1-c} and \eqref{eqn:Hilbert-projection-formula}, this is equivalent to $\sgn_{K/K^\sharp}(c) (-1, c)_{K^\sharp, 2} = \chi_S(-1)$.

	Express $S$ as the quotient of $S_1 := R_{K/F}(\Gmm{K})$ via $\omega \mapsto \omega/\tau(\omega)$, where $\Gal{K/K^\sharp} = \{\identity, \tau \}$; dually $i_1: S^\vee \hookrightarrow S_1^\vee$ is the anti-diagonal embedding into $(\CC^\times \times \CC^\times)^n = (\CC^\times)^{2n}$ (cf. \S\ref{sec:Weil-restriction}). The composite $i_1 \circ i \circ \chi_c: \Gamma_F \to S_1^\vee$ corresponds, by Shapiro's isomorphism (``restriction composed with evaluation at 1''), to $\chi_c|_{\Gamma_K}: \Gamma_K \to \bmu_2$; it corresponds via local class field theory to the character
	\begin{align*}
		\chi_{S_1}: K^\times & \longrightarrow \mu_2 \\
		\omega & \longmapsto (c, N_{K/F}(\omega))_{F,2} = (c, N_{K/K^\sharp}(\omega))_{K^\sharp, 2} \quad \because\text{\text{\eqref{eqn:Hilbert-projection-formula}}}.
	\end{align*}
	Take $\omega = D \in K^{\sharp, \times}$ such that $K = K^\sharp(\sqrt{D})$, we conclude that
	\begin{align*}
		\chi_S(-1) & = \chi_{S_1}(\sqrt{D}) = (c, -D)_{K^\sharp, 2} = (c, D)_{K^\sharp, 2} (c, -1)_{K^\sharp, 2} \\
		& = \sgn_{K/K^\sharp}(c) (c, -1)_{K^\sharp, 2}.
	\end{align*}
	
	Now keep $\psi$ fixed and replace $\lrangle{\cdot|\cdot}$ by $c\lrangle{\cdot|\cdot}$. Upon replacing Lemma \ref{prop:variance-metaGalois} by Lemma \ref{prop:variance-gerbe}, the argument here is verbatim.
\end{proof}

\begin{remark}
	As a consequence, $\Pi_\phi$ depends only on $\psi \circ \lrangle{\cdot|\cdot}: W \to \CC^\times$ when $m \equiv 2 \pmod 4$. This principle is familiar in the case $m=2$.
\end{remark}

Next, assume that $4 \mid m$. The first observation is that the splitting over $S(F)_{p'}$ in \S\ref{sec:splittings-S} is irrelevant. Indeed, when $\Pi_{[\phi]} \neq \emptyset$, the inducing datum $\theta_j: \widetilde{jS} \to \CC^\times$ ranges over all genuine characters with prescribed pro-$p$ component. On the other hand, there is still an ambiguity by $H^1(\Weil_F, Z_{\tilde{G}^\vee})$ for identifying $\Lgrp{\tilde{G}}$ and $\tilde{G}^\vee \times \Weil_F$.

\begin{theorem}
	The $L$-packet $\Pi_{[\phi]}$ is unaltered by $H^1(\Weil_F, Z_{\tilde{G}^\vee})$-twists, therefore invariant under dilation of $\lrangle{\cdot|\cdot}$.
\end{theorem}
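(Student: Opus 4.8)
The statement to prove is that for $4 \mid m$, the pre-$L$-packet $\Pi_{[\phi]}$ does not depend on the choice of $H^1(\Weil_F, Z_{\tilde{G}^\vee})$-twist used to identify $\Lgrp{\tilde{G}}$ with $\tilde{G}^\vee \times \Weil_F$, hence is invariant under dilation of $\lrangle{\cdot|\cdot}$. The overall strategy is to trace through the construction $\phi \leadsto (\mathcal{E}, S, \theta^\flat) \leadsto \Pi(S, \theta^\flat_1)$ and observe that every step is insensitive to the twist once we pass to the orbit $[\phi]$. First I would recall (from \S\ref{sec:epipelagic-parameters}) that the stable class $\mathcal{E}$ and the torus $S$ depend only on the $\Gamma_F$-action on $X_*(S_{\bar F})$, equivalently on the image of $\phi$ in $\Omega(\tilde{G}^\vee, \tilde{T}^\vee) \rtimes \Weil_F$; since a twist by $\chi_c \in H^1(\Weil_F, Z_{\tilde{G}^\vee})$ lands in the central torus $Z_{\tilde{G}^\vee} \subset \tilde{T}^\vee$, it does not alter that image. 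Thus $(\mathcal{E}, S)$ is literally unchanged, and the only possibly affected datum is $\theta^\flat$, which (as noted just before Lemma \ref{prop:pre-twist}) gets twisted by the quadratic character of $S_{Q,m}(F)$ associated to the image of $\chi_c$ under $H^1(\Weil_F, Z_{\tilde{G}^\vee}) \to H^1(\Weil_F, S_{Q,m}^\vee)$.

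The crux is then Lemma \ref{prop:pre-twist} together with the structure of $\Pi(S, \theta^\flat)$ in the case $4 \mid m$. I would argue as follows. The family $\{\theta^\flat, \theta^\flat \cdot \chi_{S_{Q,m}}, \ldots\}$ obtained by running over all twists $\chi_c$ forms precisely the $H^1(\Weil_F, Z_{\tilde{G}^\vee})$-orbit of $\theta^\flat$, and this orbit is exactly what $[\phi]$ records. By Lemma \ref{prop:pre-twist}, this orbit contains \emph{at most one} member $\theta^\flat_1$ with $\theta^\flat_1|_{S_{Q,m}(F)_{p'}} = 1$. By Definition \ref{def:epipelagic-packet}, $\Pi_{[\phi]}$ is defined to be $\Pi(S, \theta^\flat_1)$ if such a $\theta^\flat_1$ exists and $\emptyset$ otherwise. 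Hence $\Pi_{[\phi]}$ is manifestly a function of the orbit $[\phi]$ alone, not of the representative $\phi$ or, equivalently, not of the chosen splitting of $\Lgrp{\tilde{G}}$. For the last sentence of the theorem, I would invoke Lemma \ref{prop:variance-gerbe}: changing $\lrangle{\cdot|\cdot}$ to $c\lrangle{\cdot|\cdot}$ twists the splitting of $\Lgrp{\tilde{G}}$ precisely by the class $\chi_c \in H^1(\Weil_F, Z_{\tilde{G}^\vee})$ (here one also uses that, as recorded in \S\ref{sec:splittings-S} just before Definition \ref{def:epipelagic-packet}, the auxiliary splitting over $S(F)_{p'}$ is irrelevant when $4 \mid m$ since $\theta_j$ ranges over all genuine characters with the prescribed pro-$p$ component), so dilation is a special case of the twist-invariance just established.

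One point that needs a short separate remark: I must check that the identification $\Pi(S, \theta^\flat_1)$ really uses only $\theta^\flat_1|_{S_{Q,m}(F)_{0+}}$ — that is, that the $\theta_j$ in Definition \ref{def:packet-0} for $4 \mid m$ are the \emph{full} set $\mathrm{Fiber}(S, \theta^\flat_1)$ and these all share the same pro-$p$ part by Lemma \ref{prop:Fiber}, so that the packet depends on $\theta^\flat_1$ only through its pro-$p$ part. This makes the invariance even more transparent, and I would also note in passing that it re-confirms the earlier statement (Remark \ref{rem:pre-packets}) that among all twists of $\theta^\flat$, at most one yields a nonempty $\Pi(S, \theta^\flat)$, which is consistent with the disjointness assertions of Theorem \ref{prop:L-packet-prop}. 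The argument involves no genuinely hard step — it is essentially a bookkeeping of which data are twist-sensitive — and the only thing requiring care is to make sure I have correctly invoked Lemma \ref{prop:variance-gerbe} (rather than Lemma \ref{prop:variance-metaGalois}, which governs the $m \equiv 2 \pmod 4$ case). The main (mild) obstacle is therefore purely expository: assembling the chain "twist of $\phi$ $\to$ twist of $\theta^\flat$ $\to$ at most one normalized representative $\to$ packet depends only on $[\phi]$" cleanly and citing the right lemma at each link.
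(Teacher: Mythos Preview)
Your proposal is correct and is essentially an explicit unpacking of the paper's one-line proof, which simply says that twist-invariance ``is built into the construction, since we worked with the $H^1(\Weil_F, Z_{\tilde{G}^\vee})$-orbit $[\phi]$.'' Your chain of reasoning (twists fix $(\mathcal{E},S)$, only $\theta^\flat$ moves within its orbit, Definition~\ref{def:epipelagic-packet} already selects the unique normalized representative $\theta^\flat_1$ via Lemma~\ref{prop:pre-twist}, and dilation of $\lrangle{\cdot|\cdot}$ is a special case by Lemma~\ref{prop:variance-gerbe}) is exactly the bookkeeping implicit in that sentence; one small cross-reference slip is that the remark about the irrelevance of the splitting over $S(F)_{p'}$ for $4\mid m$ appears at the start of \S\ref{sec:independence}, not in \S\ref{sec:splittings-S}.
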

\begin{proof}
	This is built into the construction, since we worked with the $H^1(\Weil_F, Z_{\tilde{G}^\vee})$-orbit $[\phi]$.
\end{proof}

\subsection{Proof of stability}\label{sec:stability}
Let $F$ and the additive characters $\xi$, $\Lambda$ be as in \S\ref{sec:Adler-Spice}. Choose a stable system (Definition \ref{def:stable-system}) when $4 \nmid m$. Given any epipelagic $L$-parameter $\phi: \Weil_F \to \Lgrp{\tilde{G}}$, in \S\ref{sec:epipelagic-parameters} we have obtained
\begin{compactitem}
	\item the triple $(\mathcal{E}, S, \theta^\flat)$ up to isomorphism;
	\item the $L$-packet $\Pi_\phi$ (or the pre-$L$-packet $\Pi_{[\phi]}$ when $4 \mid m$).
\end{compactitem}

\begin{definition}\index{STheta@$S\Theta_\phi, S\Theta_{[\phi]}$}
	The \emph{stable character} associated to $\phi$ is
	\[ S\Theta_\phi := \sum_{\pi \in \Pi_\phi} \Theta_\pi \]
	where $\Theta_\pi$ is as in \S\ref{sec:Adler-Spice}. It is an invariant distribution represented by a locally integrable genuine function on $\tilde{G}$, smooth over $\tilde{G}_\text{reg}$. As in Remark \ref{rem:pre-packets}, when $4 \mid m$ it is reasonable to write $S\Theta_{[\phi]}$ instead.
\end{definition}

In view of the prescription $\theta^\flat \leadsto \theta_j$ on the pro-$p$ part and the Adler--Spice character formula, let us take $Y = Y_\xi \in \mathfrak{s}^*(F)_{-1/e}$ satisfying
\begin{equation}\label{eqn:Y-theta}
	\theta \circ \iota_{Q,m} = \theta^\flat \implies \theta \circ \exp = \xi( \lrangle{Y, \cdot}): \; \mathfrak{s}(F)_{1/e} \to \CC^\times,
\end{equation}
where $j: S \hookrightarrow G$ is any embedding in $\mathcal{E}$. We may also view $Y$ as in $\mathfrak{s}(F)_{-1/e}$ using \eqref{eqn:B_g-LMS}, i.e. $\theta \circ \exp = \xi(B_{\mathfrak{g}}(jY, j(\cdot)))$ for any $j \in \mathcal{E}$. Proposition \ref{prop:stable-good} asserts that $Y$ is regular.

In what follows, $\tilde{\gamma} \in \tilde{G}_{\mathrm{reg}}$ will stand for a compact element with topological Jordan decomposition $\tilde{\gamma} = \tilde{\gamma}_0 \gamma_{>0}$, and we write $\gamma, \gamma_0$ for their images in $G(F)$. Also put $J := G_{\gamma_0} = Z_G(\gamma_0)$. \index{topological Jordan decomposition}

\begin{lemma}\label{prop:STheta-formula}
	For $\tilde{\gamma}$ as above, we have
	\[ |D^G(\gamma)|^{\frac{1}{2}} S\Theta_\phi(\tilde{\gamma}) = \sum_{[j]: S \hookrightarrow J} \sum_{\substack{k: S \hookrightarrow J \\ k \in [j]}} \sum_{\theta_k} (\theta_k \epsilon_k)(\tilde{\gamma}_0) \hat{\iota}^J(kY, \log \gamma_{>0}), \]
	where
	\begin{compactitem}
		\item $[j]$ ranges over the stable $J$-conjugacy classes of embeddings $j: S \hookrightarrow J$, whose composite with $J \hookrightarrow G$ lies in $\mathcal{E}$;
		\item $k$ ranges over the conjugacy classes of embeddings $S \hookrightarrow J$ within $[j]$;
		\item $\theta_k$ ranges over the genuine characters $\widetilde{kS} \to \CC^\times$ prescribed in \S\ref{sec:inducing-data}, which is a singleton unless $4 \mid m$.
	\end{compactitem}
	On the other hand, $S\Theta_\phi$ vanishes at non-compact elements in $\tilde{G}_{\mathrm{reg}}$.
\end{lemma}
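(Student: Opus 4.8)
The statement is a direct consequence of the Adler--Spice character formula (Theorem \ref{prop:Adler-Spice}) applied to each $\pi = \pi_{(\widetilde{jS}, \epsilon_j\theta_j)} \in \Pi_\phi$, followed by a reorganization of the resulting double sum. First I would recall that by Theorem \ref{prop:Adler-Spice}, for a compact $\tilde{\gamma} = \tilde{\gamma}_0\gamma_{>0}$ with $\gamma_0$ mapping to $G(F)$ and $J := G_{\gamma_0}$,
\[
|D^G(\gamma)|^{\frac{1}{2}}\Theta_\pi(\tilde{\gamma}) = \sum_{\substack{ h \in J(F)\backslash G(F)/jS(F) \\ h^{-1}\gamma_0 h \in jS(F) }} (\epsilon_j\theta_j)(h^{-1}\tilde{\gamma}_0 h)\,\hat{\iota}^J\!\left(h\,(jY)\,h^{-1},\ \log\gamma_{>0}\right),
\]
where I use that $\epsilon_j\theta_j$ is the genuine character inducing $\pi$ and that $jY \in \mathfrak{j}s^*_{\mathrm{reg}}(F)$ whenever $h^{-1}\gamma_0 h \in jS(F)$, so $\Ad(h)(jS)\subset J$ and the Fourier-transformed orbital integral $\hat{\iota}^J(\cdot)$ over $\mathfrak{j}(F)$ makes sense. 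The condition $Y_\xi$ with $\theta\circ\iota_{Q,m}=\theta^\flat \Rightarrow \theta\circ\exp = \xi(\lrangle{Y,\cdot})$ from \eqref{eqn:Y-theta} is exactly what lets me take a single $Y = Y_\xi \in \mathfrak{s}^*(F)_{-1/e}$ that works simultaneously for all $j\in\mathcal{E}$ and all the $\theta_j$ prescribed in \S\ref{sec:inducing-data}, since only the pro-$p$ component of $\theta_j$ enters the exponent and that component is pinned down by $\theta^\flat$.

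Next I would perform the bookkeeping. Summing over $\pi\in\Pi_\phi$ means summing over $G(F)$-conjugacy classes of embeddings $j\in\mathcal{E}$ and (when $4\mid m$) over the admissible $\theta_j\in\mathrm{Fiber}(S,\theta^\flat)$. For a fixed such $j$, the orbits $h\in J(F)\backslash G(F)/jS(F)$ with $h^{-1}\gamma_0 h\in jS(F)$ correspond bijectively to $J(F)$-conjugacy classes of embeddings $k := \Ad(h^{-1})\circ j : S\hookrightarrow J$ whose composite with $J\hookrightarrow G$ is $G(F)$-conjugate to $j$; under this correspondence $h(jY)h^{-1}$ becomes $kY$, the character value $(\epsilon_j\theta_j)(h^{-1}\tilde{\gamma}_0 h)$ becomes $(\epsilon_k\theta_k)(\tilde{\gamma}_0)$ with $\theta_k,\epsilon_k$ the transports of $\theta_j,\epsilon_j$ along $\Ad(h^{-1})$, and $\widetilde{kS}\cdot G(F)_{x_k,1/e}$ is the inducing subgroup for the corresponding torus $kS$. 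Collating all $(j,h)$ that give rise to a fixed $J(F)$-conjugacy class $k$, and grouping the $k$ into stable $J$-conjugacy classes $[j]$ (embeddings $S\hookrightarrow J$ landing in $\mathcal{E}$ after composing with $J\hookrightarrow G$), yields exactly the triple sum in the statement: outer sum over $[j]$, middle sum over conjugacy classes $k\in[j]$, inner sum over the $\theta_k$'s (a singleton unless $4\mid m$). The vanishing at non-compact $\tilde{\gamma}\in\tilde{G}_{\mathrm{reg}}$ is inherited termwise from each $\Theta_\pi$: by the result of \cite{D76} recalled in \S\ref{sec:Adler-Spice}, the character of every supercuspidal representation of $\tilde{G}$ is supported on compact elements, so each $\Theta_\pi$ vanishes there and hence so does $S\Theta_\phi$.

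The one point requiring care --- and the main obstacle --- is checking that the reindexing $(j,h)\leftrightarrow(k\ \mathrm{mod}\ J(F))$ is genuinely a bijection once we quotient appropriately, i.e.\ that no embedding is counted twice and none is omitted when we pass from ``$j$ up to $G(F)$-conjugacy, then $h$ up to $J(F)\backslash G(F)/jS(F)$'' to ``$[j]$ a stable $J$-class, then $k\in[j]$ up to $J(F)$-conjugacy''. This is the standard descent combinatorics for characters (cf.\ the proof of \cite[(6.3)]{Kal15}), but here I must also confirm that the transported data $(\widetilde{kS},\epsilon_k\theta_k)$ are precisely the inducing data of \S\ref{sec:inducing-data} attached to $k$: $\theta_k$ is indeed in $\mathrm{Fiber}(S,\theta^\flat)$ transported to $kS$ (when $4\mid m$) or the stable-system value (when $4\nmid m$), and $\epsilon_k = \epsilon_{kS}$ by the $N_G(\cdot)$-invariance of toral invariants recorded in \S\ref{sec:toral-invariants}. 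Because the identity $\Image(j) = \Image(k)$ up to conjugacy forces equality of the transported $\theta^\flat$-data, everything matches on the nose. Once this bijection and these identifications are in place, substituting into the sum $\sum_{\pi\in\Pi_\phi}\Theta_\pi$ gives the formula, and the non-compact vanishing is immediate.
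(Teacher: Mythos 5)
Your proposal follows the same route as the paper: apply the Adler--Spice formula (Theorem \ref{prop:Adler-Spice}) to each $\Theta_\pi$ for $\pi\in\Pi_\phi$, observe that $g^{-1}\gamma_0 g\in jS(F)$ forces $\Ad(g)(jS)\subset J$, set up the bijection between pairs $(j,g)$ (with $j\in\mathcal{E}$ modulo $G(F)$-conjugacy and $g\in J(F)\backslash G(F)/jS(F)$) and $J(F)$-conjugacy classes of $k:S\hookrightarrow J$ landing in $\mathcal{E}$ after composing with $J\hookrightarrow G$, transport $\theta_j,\epsilon_j$ along the conjugation, and regroup by stable $J$-classes; the vanishing off compact elements is termwise. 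One small slip: under your condition $h^{-1}\gamma_0 h\in jS(F)$ the embedding landing in $J$ is $k:=\Ad(h)\circ j$, not $\Ad(h^{-1})\circ j$ --- your subsequent formulas $h(jY)h^{-1}$ and $(\epsilon_j\theta_j)(h^{-1}\tilde{\gamma}_0 h)$ are already the ones consistent with the corrected orientation, so this is only a notational slip. Also, the relevant invariance making $\epsilon_k=\epsilon_{kS}$ is naturality under $F$-rational conjugation (i.e. $\epsilon_{\Ad(g)jS}=\epsilon_{jS}\circ\Ad(g)^{-1}$ for $g\in G(F)$), rather than $N_G(S)(F)$-invariance per se; and the identification of the transported $\theta_j$ with the stable-system value $\theta_k$ is \textbf{SS.2} applied with $g\in G(F)$, which via \textbf{AD.3} is ordinary conjugation.
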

Note that $\gamma_0 \in Z_J(F)$ implies that $\gamma _0 \in kS(F)$ for all $k \in [j]$.
\begin{proof}
	The vanishing at non-compact $\tilde{\gamma}$ follows from the same property for each $\Theta_\pi$. Apply Theorem \ref{prop:Adler-Spice} to express $|D^G(\gamma)|^{1/2} S\Theta_\phi(\tilde{\gamma})$ as the sum of
	\[ \sum_{\substack{ g \in J(F) \backslash G(F) / jS(F) \\ g^{-1}\gamma_0 g \in jS(F) }} (\epsilon_j \theta_j) \left(g^{-1} \tilde{\gamma}_0 g\right) \hat{\iota}^J\left( g j(Y) g^{-1}, \log \gamma_{>0} \right) \]
	over conjugacy classes of $j: S \hookrightarrow G$ and those $\theta_j$ prescribed in \S\ref{sec:inducing-data}. As remarked in the proof of Theorem \ref{prop:Adler-Spice},
	\[ g^{-1} \gamma_0 g \in jS(F) \iff \Ad(g)(jS) \subset G_{\gamma_0} = J. \]
	One verifies readily the bijection
	\[\begin{tikzcd}[column sep=small]
		\left\{ (j,g) : \begin{array}{l}
			j: S \hookrightarrow G, \; j \in \mathcal{E} \\
			g \in J(F) \backslash G(F) / jS(F) \\
			\text{s.t.}\; \gamma_0 \in \Ad(g)jS(F)
		\end{array} \right\} \bigg/
		\begin{array}{l}
			(j,g) \sim (\Ad(h)j, gh^{-1} ) \\
			\forall h \in G(F)
		\end{array}
		\arrow[leftrightarrow]{d}{1:1} & (j,g) \arrow[mapsto]{d} & (k,1) \\
		\left\{ \begin{array}{l}
			k: S \hookrightarrow J \\
			\text{s.t.}\; (S \to J \hookrightarrow G) \in \mathcal{E} 
		\end{array}\right\} \bigg/ J(F)-\text{conj} & k := \Ad(g) \circ j & k \arrow[mapsto]{u}
	\end{tikzcd}\]
	By \textbf{SS.2} in Definition \ref{def:stable-system} and the naturality of toral invariants, for $k = \Ad(g) \circ j$ as above we have
	\[ \theta_k = \theta_j \circ \Ad(g^{-1}) , \quad \epsilon_k = \epsilon_j \circ \Ad(g^{-1}). \]
	All in all, the formula for $|D^G(\gamma)|^{1/2} S\Theta_\phi(\tilde{\gamma})$ can be written as a sum over $k$ modulo $J(F)$-conjugacy, followed by a sum over $\theta_k$. Furthermore, the $k$-sum can be partitioned according to the stable $J$-conjugacy classes $[j]$ of embeddings $S \hookrightarrow J$, whose composite with $J \hookrightarrow G$ lies in $\mathcal{E}$. This leads to the required formula $\sum_{[j]} \sum_k \sum_{\theta_k} (\theta_k \epsilon_k)(\tilde{\gamma}_0) \hat{\iota}^J(kY, \log \gamma_{>0})$.
\end{proof}

We will show the stability of $S\Theta_\phi$ (Definition \ref{def:stability}) by analyzing the formula of Lemma \ref{prop:STheta-formula}. Consider compact elements $\tilde{\gamma} = \tilde{\gamma}_0 \gamma_{>0} \in \tilde{G}_\text{reg}$ as before, with $\gamma_0 = \bm{p}(\tilde{\gamma}_0)$, etc. For the sum $\sum_k$ to be non-vacuous, we can further assume that $\gamma_0$ lies in some $kS$. It follows from Theorem \ref{prop:S-p'} that $\gamma_0^2 = 1$.

Some notational preparations are in order. Let $X \in \mathfrak{g}_\text{reg}(F)$, whose conjugacy class is parameterized by the datum $(L, L^\sharp, y, d)$ with $L = \prod_{h \in H} L_h$, $L^\sharp = \prod_{h \in H} L^\sharp_h$ as in \S\ref{sec:Sp-parameters} on the Lie algebra level, cf. the end of \cite[\S 3.1]{Li11}. Write $H_0 := \{h \in H: L_h \;\text{is a field} \}$ and $T := G_X$. As in \S\ref{sec:Sp-parameters} stable conjugacy $\Ad(g): X \mapsto X'$ modulo conjugacy is parameterized by $\text{inv}(\Ad(g)) = \text{inv}(X, X') \in H^1(F, T)$. Similarly, the stable conjugacy $\Ad(h): kS \rightiso k'S$ between embeddings $k,k': S \hookrightarrow G$ will be parameterized by $\text{inv}(\Ad(h)) = \text{inv}(k,k') \in H^1(F, S)$

\begin{theorem}\label{prop:stability-1}\index{stable distribution}
	When $4 \nmid m$, the distribution $S\Theta_\phi$ on $\tilde{G}$ is stable.
\end{theorem}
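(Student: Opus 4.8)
The plan is to mimic Kaletha's stability proof \cite[\S 6]{Kal15} for reductive groups, with the metaplectic corrections concentrated entirely in the cover-theoretic bookkeeping. By Lemma \ref{prop:STheta-formula}, for a compact $\tilde{\gamma} = \tilde{\gamma}_0 \gamma_{>0} \in \tilde{G}_{\mathrm{reg}}$ with $J := G_{\gamma_0}$ and $\gamma_0^2 = 1$ (by Theorem \ref{prop:S-p'}), we have
\[ |D^G(\gamma)|^{\frac{1}{2}} S\Theta_\phi(\tilde{\gamma}) = \sum_{[j]: S \hookrightarrow J} \left( \sum_{\substack{k: S \hookrightarrow J \\ k \in [j]}} (\theta_k \epsilon_k)(\tilde{\gamma}_0) \hat{\iota}^J(kY, \log \gamma_{>0}) \right), \]
the innermost $\theta_k$-sum being a singleton since $4 \nmid m$. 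The first step is to reduce, via semisimple descent and the vanishing of $S\Theta_\phi$ at non-compact elements, to checking that if $\tilde{\gamma}$ and $\tilde{\eta}$ are stably conjugate good elements then the right-hand sides agree; here $\gamma_{>0}, \eta_{>0}$ are stably conjugate in the same torus $J_{\gamma_0}$ (after transporting), and $\tilde{\gamma}_0, \tilde{\eta}_0$ correspond under $\CaliAd(g)$. By Corollary \ref{prop:st-conj-elements-canonical} the choice of lift $\delta_0 \in \iota_{Q,m}^{-1}(\delta)$ is immaterial when $4 \nmid m$, so the statement of stability in Definition \ref{def:stability} is the simpler ``$\Xi(\tilde\delta) = \Xi(\tilde\eta)$'' form.

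Next I would fix the stable class $[j]$ of embeddings $S \hookrightarrow J$ and analyze the inner sum over conjugacy classes $k \in [j]$, which is a torsor under (a quotient of) $H^1(F,S) = \mathfrak D(S,J;F) \times \cdots$. The key identity to establish is that, for $k = \Ad(h) \circ k_0$ within $[j]$, one has
\[ (\theta_k \epsilon_k)(\tilde{\gamma}_0) \hat{\iota}^J(kY, \log\gamma_{>0}) = \lrangle{\kappa, \mathrm{inv}(\Ad(h))} \cdot (\theta_{k_0}\epsilon_{k_0})(\tilde\gamma_0) \hat\iota^J(k_0 Y, \log\gamma_{>0}) \cdot \lrangle{\kappa', \mathrm{inv}(\Ad(h))} \]
for suitable characters $\kappa, \kappa'$ of $H^1(F,S)$, where the $\kappa$-factor records the variance of $\theta_k \epsilon_k$ and the $\kappa'$-factor records the variance of $\hat\iota^J$ under stable conjugacy. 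The former comes from \textbf{SS.2} of Definition \ref{def:stable-system} together with Proposition \ref{prop:CAd-minus-1} (controlling how $\theta_k$ transforms, since $\gamma_0 = \pm 1$ componentwise lies in $\widetilde{kS}$) and Lemma \ref{prop:epsilon-invariance} (which shows $\epsilon_k(\gamma_0) = \epsilon_{k_0}(\gamma_0)$ for $\Sp(W)$ — crucially, the toral-invariant character is stably invariant here, unlike for $\SO$). The latter factor is exactly Waldspurger's transfer/Fourier-transform comparison \cite{Wa97}, now unconditional via Ngô's Fundamental Lemma: $\hat\iota^J(kY, \cdot)$ is, up to the endoscopic sign $\lrangle{\kappa', \mathrm{inv}(k_0,k)}$ for the relevant $\kappa'$ built from the $\chi$-data, independent of the conjugacy class $k$ within its stable class, because $Y$ is regular (Proposition \ref{prop:stable-good}) and epipelagic. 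Summing over $k \in [j]$, the two sign characters must cancel so that each $[j]$-block is invariant; this cancellation is precisely the content of how the $\chi$-datum was chosen in \S\ref{sec:epipelagic-parameters} (as in \cite[Lemma 5.4, \S 6.2]{Kal15}), combined with the fact that $\theta^\flat$ came from $\phi$ via local Langlands for tori so that its restriction to $S(F)_{p'}$ is pinned down correctly.

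The main obstacle — and the step requiring the most care — is verifying that the sign character governing the variance of $(\theta_k \epsilon_k)(\tilde\gamma_0)$ under $\CaliAd$ matches, term by term in $k$, the endoscopic sign from Waldspurger's theorem, so that the $H^1(F,S)$-sum is either all-equal or genuinely telescopes. For reductive groups this matching is the heart of Kaletha's argument and uses that the $\chi$-data and the toral invariants are tied together by \cite[Fact 4.1, \S 4.6]{Kal15}; in the metaplectic setting the extra input is that $\CaliAd(g)$ introduces the calibration factor $\Cali_m(\nu(g'), \cdot)$, which by Proposition \ref{prop:CAd-minus-1} and Definition--Proposition \ref{def:Cali-factor} contributes precisely $\lrangle{\kappa_-, \mathrm{inv}(\delta,\Ad(g)\delta)}$ when $m \equiv 2 \pmod 4$ and is trivial when $m \notin 2\Z$. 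I expect that for $m \notin 2\Z$ stability follows from the reductive-group argument essentially verbatim (Proposition \ref{prop:std-stable-system}, $\CaliAd = $ ordinary action), while for $m \equiv 2 \pmod 4$ one must check that the $\kappa_-$-contribution is absorbed into \textbf{SS.2}; but \textbf{SS.2} was axiomatized exactly so that $\theta_{k'}(\CaliAd(g)(\tilde\gamma)) = \theta_k(\tilde\gamma)$, i.e. the $\theta$-part is already calibrated to be $\CaliAd$-invariant, which is what makes the whole sum collapse correctly. The remaining routine points — that $\hat\iota^J$ is $J(F)$-invariant, that semisimple descent is valid on $\tilde G$ \cite{Li12b}, and that the exponential/logarithm behave as in \cite{DR09} under Hypothesis \ref{hyp:p-large} — I would cite rather than reprove.
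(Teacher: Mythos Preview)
Your ``key identity'' contains a genuine error: you assert that $\hat{\iota}^J(kY, X)$ differs from $\hat{\iota}^J(k_0 Y, X)$ by a sign $\lrangle{\kappa', \mathrm{inv}(k_0,k)}$, but this is false. Fourier transforms of regular semisimple orbital integrals at distinct conjugacy classes within a stable class are not scalar multiples of one another; only the stable sum $\sum_k \hat\iota^J(kY, \cdot)$, or more generally $\kappa$-weighted endoscopic combinations, enjoy good invariance/transfer properties --- and those are statements about the \emph{second} variable, not factorizations of individual terms in $k$. So the term-by-term cancellation you envision never materializes, and the argument does not control the passage $\gamma \leadsto \gamma'$.

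The paper proceeds differently. First it uses $\gamma_0^2 = 1$ and the vanishing of $H^1(F, \Sp(W_+)\times\Sp(W_-))$ to arrange $\gamma_0 = \gamma'_0$, hence $g \in J(\bar F)$; then it decomposes along the eigenspaces $W = W_+ \oplus W_-$ of $\gamma_0$ to reduce to $\gamma_0 = \pm 1$ and $J = G$, at the price of replacing $\theta_k$ by some $\vartheta_k$ that still satisfies \textbf{SS.2}. When $\gamma_0 = 1$ (and also when $\gamma_0 = -1$ with $m \notin 2\Z$, by Proposition \ref{prop:CAd-minus-1}) the factor $\vartheta_k(\tilde\gamma'_0)$ is constant in $(k,g)$, so stability reduces to Waldspurger's result \cite[1.6 Corollaire]{Wa97} that $\sum_k \hat\iota^G(kY,\cdot)$ is a stable distribution. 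The substantive case is $\gamma_0 = -1$ with $m \equiv 2 \pmod 4$: there $\vartheta_k(\tilde\gamma'_0)$ is \emph{not} constant, but Proposition \ref{prop:CAd-minus-1} and \textbf{SS.2} show it varies in $(k,g)$ exactly as the ratio $\Delta^1(Y_{\SO}, kY)/\Delta^2(X')$ for an elliptic endoscopic datum $\SO(V_1,q_1)$ of $G$ with $\dim V_1 = 2n$. One rewrites the inner sum as a constant times $\Delta^2(X')^{-1}\sum_k \Delta^1(Y_{\SO}, kY)\,\hat\iota^G(kY, X')$ and applies Waldspurger's endoscopic identity \cite[p.155]{Wa97} to pass to the $\SO$-side, where the resulting expression visibly depends only on the stable class of $X'$. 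It is this endoscopic transfer step --- together with the eigenspace reduction that isolates it --- that your proposal is missing.
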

\begin{proof}
	Recall that stable conjugacy operates on the level of $\tilde{G}_\text{reg}$ since $4 \nmid m$. Consider a stable conjugacy $\Ad(g): \gamma \mapsto \gamma'$ of compact elements of $G_\text{reg}(F)$, which lifts to $\CaliAd(g): \tilde{\gamma} \mapsto \tilde{\gamma}'$. Uniqueness of topological Jordan decompositions implies $\CaliAd(g)(\tilde{\gamma}_0) = \tilde{\gamma}'_0$, since $\CaliAd(g): \widetilde{G_\gamma} \to \widetilde{G_{\gamma'}}$ is a group isomorphism (Proposition \ref{prop:CAd-prop}, \textbf{AD.2}).
	
	Observe that upon adjusting $g$ by $G(F)$, we may assume $\gamma_0 = \gamma'_0$. Indeed, for $2$-torsion elements of $G(F)$, conjugacy in $G(\bar{F})$ is the same as ordinary conjugacy since $H^1(F, \Sp(W_+) \times \Sp(W_-)) = 0$ where $W_\pm$ are the $\pm 1$-eigenspaces of $\gamma_0$; it remains to apply \textbf{AD.3--4} of Proposition \ref{prop:CAd-prop}. After this adjustment, we have $J := Z_G(\gamma_0) = Z_G(\gamma'_0)$ and $g \in J(\bar{F})$.

	Fix $[j]$. It remains to establish the stability of the piece
	\[ \sum_{\substack{k: S \hookrightarrow J \\ k \in [j] \\ / J(F)-\text{conj}. }} (\epsilon_k \theta_k)(\tilde{\gamma}_0) \hat{\iota}^J(kY, \log \gamma_{>0}) \]
	under $\CaliAd(g)$ where $g \in J(\bar{F})$. By Lemma \ref{prop:epsilon-invariance}, the character $\epsilon_k$ on $kS(F)$ is not affected by stable conjugacy. Therefore it suffices to look at
	\[ \sum_k \theta_k (\tilde{\gamma}_0) \hat{\iota}^J(kY, \log \gamma_{>0}). \]

	By writing $W = W_+ \oplus W_-$ according to the eigenvalues of $\gamma_0$, we may write $S = S_+ \times S_-$ and decompose $\theta_k = \vartheta_{k_+} \times \vartheta_{k_-}$ in parallel. The sum breaks into
	\begin{multline*}
		\sum_{k_+: S_+ \hookrightarrow \Sp(W_+)} \; \sum_{k_-: S_- \hookrightarrow \Sp(W_-)} \vartheta_{k_+}(\tilde{\gamma}_{0,+}) \hat{\iota}^{\Sp(W_+)}\left( k_+ Y_+, \log\gamma_{>0,+} \right) \\
		\vartheta_{k_-}(\tilde{\gamma}_{0,-}) \hat{\iota}^{\Sp(W_-)}\left( k_- Y_-, \log\gamma_{>0,-} \right).
	\end{multline*}
	Caution: the characters $\vartheta_{k_\pm}$ are not necessarily the ones associated to some stable system for $\bm{p}^{-1}(\Sp(W_\pm))$, whence the different notation. Nonetheless, $g = (g_+, g_-) \in J(F)$ and $\CaliAd(g) = \CaliAd((1, g_-)) \CaliAd((g_+, 1))$ operates separately in $\bm{p}^{-1}(\Sp(W_\pm))$ as $\CaliAd(g_\pm)$, a property that can be traced back to Theorem \ref{prop:G-T-reduction}. Hence both of $\vartheta_{k_\pm}$ inherit the property \textbf{SS.2} of Definition \ref{def:stable-system} under $\CaliAd(g_\pm)$.
	
	Our problem is thus reduced to the case $\gamma_0 = \pm 1$ and $J = G$, at the cost of replacing $\theta_k$ by some genuine character $\vartheta_k$ satisfying only \textbf{SS.2}. Given the \textbf{AD.1} of Proposition \ref{prop:CAd-prop}, we may and do assume that $\tilde{\gamma}_0 = \pm 1 = \gamma_0$ via the splitting in Definition \ref{def:lifting-minus-1}. Put
	\[ X := \log \gamma_{>0}, \quad X' := \log \gamma'_{>0} = \Ad(g)X, \quad X,X' \in \mathfrak{g}_\text{reg}(F). \]
	We have to show the constancy of $\sum_k \vartheta_k(\tilde{\gamma}'_0) \hat{\iota}^G(kY, X')$ when $g$ (thus $\tilde{\gamma}'_0$, $X'$) varies in $G(F) \backslash (G/G_\gamma)(F)$. Observe that the sum over $k$ also varies in a stable conjugacy class.
	
	In what follows, we shall regard $Y$ as an element of $\mathfrak{s}_\text{reg}(F)$, and $\hat{\iota}^G$ as a function on $\mathfrak{g}_\text{reg}(F) \times \mathfrak{g}_\text{reg}(F)$, by using the $\mathbb{B}$ from \eqref{eqn:B_g-LMS}.
	
	\textbf{Case A: $\gamma_0 = 1$}. We also have $\tilde{\gamma}'_0 = \CaliAd(g)(1) = 1$, therefore $\vartheta_k(\tilde{\gamma}_0) = 1 = \vartheta_k(\tilde{\gamma}'_0)$. The required stability amounts to
	\begin{equation}\label{eqn:stability-easy}
		\sum_k \hat{\iota}^G(kY, X) = \sum_k \hat{\iota}^G(kY, X').
	\end{equation}
	This is assured by Waldspurger's result \cite[1.6 Corollaire]{Wa97} with $G = \Sp(W) = H$, or its non-standard version \cite[\S 1.8]{Wa08}.
	
	\textbf{Case B: $\gamma_0 = -1$}. Treat the easier case $m \notin 2\Z$ first. Using the Proposition \ref{prop:CAd-minus-1} with $\delta_0=1$, we see that $\tilde{\gamma}'_0 = \CaliAd(g)(-1) = -1$ for all $g$. Moreover, when $k$ gets replaced by a stable conjugate $k' = \Ad(h)k$, \textbf{SS.2} of Definition \ref{def:stable-system} and the previous step imply
	\[ \vartheta_{k'}(-1) = \vartheta_{k'}(\CaliAd(h)(-1)) = \vartheta_k(-1). \]
	Hence $\vartheta_k(\tilde{\gamma}'_0)$ is independent of $(k,g)$. The stability can thus be established as in Case A.
	
	Henceforth assume $m \equiv 2 \pmod 4$ in Case B. Recall the description of endoscopic data in \cite[Chapitre X]{Wa01}. Choose an endoscopic datum $(\SO(V_1, q_1), \ldots)$ of $G$ (regarding only the endoscopic group), where $\dim_F V_1 = 2n$, such that there is a matching $Y \leftrightarrow Y_{\SO} \in \so(V_1, q_1)_\text{reg}$ between stable conjugacy classes. Such an endoscopic datum is necessarily elliptic since $Y \in \mathfrak{s}(F)_\text{reg}$ and $S/Z_G$ is anisotropic. Pick a transfer factor $\Delta^1$ on Lie algebras for this datum, which is canonical up to $\CC^\times$.

	Claim: there exists a function $\Delta^2: \left\{Z \in \mathfrak{g}_\text{reg}(F) : Z \stackrel{\text{st}}{\sim} X \right\} \to \CC^\times$ such that
	\begin{gather*}
		 Z_1, Z_2 \stackrel{\text{st}}{\sim} X \implies \Delta^2(Z_2) = \lrangle{\kappa_-, \text{inv}(Z_1, Z_2)} \Delta^2(Z_1), \\
		\dfrac{\vartheta_k(\tilde{\gamma}'_0) \Delta^2(X') }{ \Delta^1(Y_{\SO}, kY) } = \text{constant}, \quad \text{when $k,g$ vary;}
	\end{gather*}
	recall Definition \ref{def:kappa-minus} for $\kappa_-$. By picking basepoints for $X'$ and $k$, this reduces to the observations below.
	\begin{compactitem}
		\item The variation of $g$ can be realized in various $\SL(2, K_h^\sharp)$ with $h \in H_0$, by the Lie algebra version of Proposition \ref{prop:stable-reduction-SL2}. Since $\CaliAd(g)$ is also realized in this manner, Proposition \ref{prop:CAd-minus-1} implies that $\tilde{\gamma}'_0 = \CaliAd(g)(-1)$ equals $\lrangle{\kappa_-, \Ad(g)} \cdot (-1)$. The same holds for $\vartheta_k(\tilde{\gamma}'_0)$ because $\vartheta_k$ is genuine.
		\item when $k$ is replaced by a stable conjugate $\Ad(h) k$, \textbf{SS.2} and Proposition \ref{prop:CAd-minus-1} imply
		\[ \lrangle{ \kappa_-, \text{inv}(\Ad(h)) } \cdot \vartheta_{\Ad(h)k}(\tilde{\gamma}'_0) = \vartheta_{\Ad(h)k}(\CaliAd(h)(\tilde{\gamma}'_0)) = \vartheta_k(\tilde{\gamma}'_0). \]
	\end{compactitem}
	These match the behavior of $\Delta^1(Y_{\SO}, \cdot)$ (resp. $\Delta^2$) when $k$ (resp. $g$) varies; for $\Delta^1$ we invoke the description in \cite[X.8]{Wa01}. This proves the claim and we are reduced to show that
	\begin{equation}\label{eqn:Delta-stability}
		\Delta^2(X')^{-1} \sum_k \Delta^1(Y_{\SO}, kY) \hat{\iota}^G(kY, X')
	\end{equation}
	is independent of $g$ or of the conjugacy class of $X'$. Set $\hat{i}^J(Z,X) := |D^J(X)|^{-1} \hat{\iota}(Z,X)$. Then
	\begin{multline*}
		\Delta^2(X')^{-1} \sum_k \Delta^1(Y_{\SO}, kY) \hat{i}^G(kY, X') = \gamma_\xi(\mathfrak{g})^{-1} \gamma_\xi(\so(V_1, q_1)) \\
		\times \sum_{Y_1 \stackrel{\text{st}}{\sim} Y_{\SO}} \sum_{Z_1 \leftrightarrow X'}  w(Z_1)^{-1} \Delta^2(X')^{-1} \Delta^1(Z_1, X')  \hat{i}^{\SO(V_1, q_1)}(Y_1, Z_1)
	\end{multline*}
	by virtue of \cite[p.155]{Wa97}, where
	\begin{compactitem}
		\item $\gamma_\xi(\mathfrak{g})$, $\gamma_\xi(\so(V_1, q_1))$ are as in \cite[p.154]{Wa97},
		\item $Y_1, Z_1$ range over the conjugacy classes in $\so(V_1, q_1)_\text{reg}$,
		\item $w(Z_1)$ is the number of conjugacy classes in the stable class of $Z_1$.
	\end{compactitem}
	Next, note that $Z_1 \leftrightarrow X' \iff Z_1 \leftrightarrow X$ since $X \stackrel{\text{st}}{\sim} X'$ in $\mathfrak{g}$. Assume that such a $Z_1$ exists, otherwise \eqref{eqn:Delta-stability} reduces to $0$ for all $X' \stackrel{\text{st}}{\sim} X$, and there is nothing to prove.
	
	All in all, we are reduced to show the constancy of $\Delta^2(X')^{-1} \Delta^1(Z_1, X')$ where $Z_1$ is kept fixed. Again, this is because both factors undergo a sign change $\lrangle{\kappa_-, \text{inv}(X'', X') }$ when $X'$ is replaced by a stable conjugate $X''$; for $\Delta^1(Z_1, \cdot)$ this is again a consequence of \cite[X.8]{Wa01}.
\end{proof}

The case $4 \mid m$ requires different arguments.
\begin{lemma}\label{prop:four-vanishing}
	Suppose $4 \mid m$. For every $\tilde{\gamma} \in \tilde{G}_{\mathrm{reg}}$, we have $S\Theta_\phi(\tilde{\gamma}) = 0$ unless the image $\gamma \in G_\mathrm{reg}(F)$ of $\tilde{\gamma}$ is topologically unipotent, in which case the formula of Lemma \ref{prop:STheta-formula} reduces to
	\[ |D^G(\gamma)|^{\frac{1}{2}} S\Theta_{[\phi]}(\tilde{\gamma}) = |S(F)_{p'}| \underbracket{\tilde{\gamma}_0}_{\in \bmu_m} \sum_{\substack{k: S \hookrightarrow G \\ k \in \mathcal{E}}} \hat{\iota}^G(kY, \log \gamma_{>0}). \]
\end{lemma}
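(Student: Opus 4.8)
The plan is to start from the general character formula in Lemma \ref{prop:STheta-formula} and exploit the rigidity imposed by $4 \mid m$ on the inducing characters $\theta_k$. First I would recall that, for $4 \mid m$, the members of $\Pi_{[\phi]}$ are parameterized not only by conjugacy classes of $k\colon S \hookrightarrow G$ but also by the full fibre $\mathrm{Fiber}(S, \theta^\flat)$ of genuine characters $\theta_k$ of $\widetilde{kS}$ with prescribed pro-$p$ part (Lemma \ref{prop:Fiber}, Definition \ref{def:packet-0}), which is a torsor under $\Hom(S(F)_{p'}, \CC)$ of cardinality $|S(F)_{p'}| = 2^{|I|}$ by Theorem \ref{prop:S-p'}. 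The key point is then the summation $\sum_{\theta_k}(\theta_k\epsilon_k)(\tilde\gamma_0)$ appearing in Lemma \ref{prop:STheta-formula}: since $\epsilon_k$ is a fixed quadratic character of $S(F)$ and the $\theta_k$ range over a coset of $\Hom(S(F)_{p'},\CC) \cong \Hom(S(F)/S(F)_{0+},\CC^\times)$, this inner sum is, up to the fixed scalar $\epsilon_k(\tilde\gamma_0)\theta_k^{\mathrm{fix}}(\tilde\gamma_0)$, a sum of all characters of the finite $2$-group $S(F)_{p'}$ evaluated at the image of $\tilde\gamma_0$ in that group. By orthogonality this vanishes unless that image is trivial, i.e. unless $\gamma_0 \in S(F)_{0+}\cdot\mathrm{(split part)}$; combined with Theorem \ref{prop:S-p'} (so $S(F)_{p'} = \{\pm1\}^I$) and the fact that $\gamma_0$, being a $2$-torsion element lying in some $kS(F)$, is forced to be $\pm1$ componentwise, the nonvanishing condition becomes $\gamma_0 = 1$, i.e. $\gamma = \gamma_{>0}$ is topologically unipotent.

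Next I would carry out the reduction of the surviving term. When $\gamma$ is topologically unipotent we have $\tilde\gamma_0 \in \bmu_m = \Ker(\bm p)$ (the lift of $1$, which need not be $1$ itself because $\tilde\gamma$ is an arbitrary preimage), and $J := G_{\gamma_0} = G$, so the outer double sum over stable-then-ordinary $J$-classes $[j]$ and $k \in [j]$ in Lemma \ref{prop:STheta-formula} collapses to a single sum over conjugacy classes of embeddings $k\colon S \hookrightarrow G$ with $k \in \mathcal{E}$. For each such $k$ the inner sum $\sum_{\theta_k}(\theta_k\epsilon_k)(\tilde\gamma_0)$ now has every summand equal to $\tilde\gamma_0$ (viewed in $\bmu_m$ via $\epsilon$), because $\theta_k$ and $\epsilon_k$ are genuine resp. trivial on $\bmu_m$ and there are $|S(F)_{p'}|$ choices of $\theta_k$; hence this inner sum is exactly $|S(F)_{p'}|\cdot\tilde\gamma_0$, independent of $k$. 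Pulling this constant out of the $k$-sum yields
\[
 |D^G(\gamma)|^{\frac12} S\Theta_{[\phi]}(\tilde\gamma) = |S(F)_{p'}|\,\tilde\gamma_0 \sum_{\substack{k\colon S \hookrightarrow G \\ k \in \mathcal{E}}} \hat\iota^G(kY, \log\gamma_{>0}),
\]
which is the asserted formula. The vanishing at non-topologically-unipotent $\tilde\gamma$ then follows from the orthogonality argument of the first paragraph together with the already-recorded vanishing of each $\Theta_\pi$ off the compact locus.

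The main obstacle I anticipate is bookkeeping rather than anything deep: one must be careful that the "fibre" indexing of $\Pi_{[\phi]}$ really does make $\theta_k$ range over a full coset of $\Hom(S(F)_{p'},\CC)$ for each fixed conjugacy class of $k$ — this is where Remark \ref{rem:pre-packets} and Lemma \ref{prop:pre-twist} are needed to ensure $\theta^\flat_1|_{S_{Q,m}(F)_{p'}} = 1$, so that $\mathrm{Fiber}(S,\theta^\flat_1)$ is nonempty and a genuine torsor rather than empty. One must also double-check that the image of $\tilde\gamma_0$ under $\widetilde{kS} \twoheadrightarrow kS(F) \twoheadrightarrow S(F)_{p'}$ is independent of which $k$ (equivalently, that the topological Jordan decomposition is compatible with the splittings of Example \ref{eg:section-S}), and that the passage $\sum_{\theta_k}(\theta_k\epsilon_k)(\tilde\gamma_0) = |S(F)_{p'}|\tilde\gamma_0$ in the surviving case correctly absorbs the $\epsilon_k$-factor (it does, since $\epsilon_k$ is trivial on $\bmu_m$ and on $\gamma_0 = 1$). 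Everything else — the collapse of the $[j]$-sum, the identification $J = G$, the regularity of $Y$ via Proposition \ref{prop:stable-good} — is immediate from the cited results.
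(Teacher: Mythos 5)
Your argument is correct and uses the same mechanism as the paper's proof: the inner sum $\sum_{\theta_k}\theta_k(\tilde\gamma_0)$ is recognized as a character sum over $\Hom(S(F)_{p'},\CC^\times)$ via $\widetilde{kS}=\widetilde{kS(F)_{p'}}\times S(F)_{0+}$, and Fourier inversion (orthogonality) kills it unless $\gamma_0=1$, in which case each term contributes $\tilde\gamma_0\in\bmu_m$ and the $[j]$- and $k$-sums collapse because $J=G$. The paper's proof is a two-line version of exactly this; your elaborations on the emptiness caveat via Remark \ref{rem:pre-packets}, the forcing of $\gamma_0\in\{\pm1\}^I$ by Theorem \ref{prop:S-p'}, and the compact-support step are all consistent with what the paper leaves implicit.
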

\begin{proof}
	We may assume $\tilde{\gamma}$ to be a compact element. The formula in Lemma \ref{prop:STheta-formula} contains a sum
	\[ \left( \sum_{\theta_k} \theta_k \right) (\tilde{\gamma}_0) \]
	where $\theta_k$ ranges over all genuine characters of $\widetilde{jS}$ that has a prescribed pro-$p$ component. Since $\widetilde{jS} = \widetilde{jS(F)_{p'}} \times S(F)_{0+}$, it remains to apply Fourier inversion.
\end{proof}

\begin{theorem}\label{prop:stability-2}\index{stable distribution}
	When $4 \mid m$, the distribution $S\Theta_{[\phi]}$ on $\tilde{G}$ is stable.
\end{theorem}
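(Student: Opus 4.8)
The plan is to follow the pattern of the $4 \nmid m$ case (Theorem \ref{prop:stability-1}), but now exploiting the extra vanishing afforded by Lemma \ref{prop:four-vanishing}, which collapses the summation over genuine characters $\theta_k$ and leaves only the ``geometric'' sum $\sum_k \hat\iota^G(kY, \log\gamma_{>0})$ together with a single central scalar $\tilde\gamma_0 \in \bmu_m$. First I would unwind Definition \ref{def:stability} in the present case: fix a maximal torus $T \subset G$, a sign $\sigma \in \mathrm{Sgn}_m(T)$ (now possibly nontrivial since $4 \mid m$), a good $\tilde\delta \in \tilde T_{\mathrm{reg}}$ and a stable conjugacy $\Ad(g)\colon \delta \mapsto \eta$; the task is to produce $\delta_0 \in T_{Q,m}(F)$ with $(\tilde\delta, \delta_0) \in \tilde T^\sigma_{Q,m}$ such that $\CaliAd^\sigma(g)(\tilde\delta, \delta_0) = (\tilde\eta, \Ad(g)\delta_0)$ forces $S\Theta_{[\phi]}(\tilde\delta) = S\Theta_{[\phi]}(\tilde\eta)$. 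By Lemma \ref{prop:four-vanishing} both sides vanish unless $\delta$ (equivalently $\eta$) is topologically unipotent, so I may assume $\tilde\delta = \gamma_{>0}$ is its own topologically unipotent part, i.e. $\tilde\delta_0 = 1$ and likewise on the $\eta$-side after applying $\CaliAd^\sigma(g)$.

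Next I would track the central scalar. The content of Lemma \ref{prop:four-vanishing} is that $|D^G(\gamma)|^{1/2}S\Theta_{[\phi]}(\tilde\gamma) = |S(F)_{p'}|\,\tilde\gamma_0 \sum_{k} \hat\iota^G(kY, \log\gamma_{>0})$, so stability reduces to two assertions: (i) the scalars $\tilde\delta_0$ and $\tilde\eta_0$ coincide (both being $1$ once we fix $\delta_0$ correctly, since $\CaliAd^\sigma(g)$ is $\bmu_m$-equivariant by \textbf{AD.1} and sends topologically unipotent elements to topologically unipotent elements by the uniqueness of the lift), and (ii) the geometric sum $\sum_k \hat\iota^G(kY, \log\gamma_{>0})$ is constant as $\gamma_{>0}$ ranges over the stable conjugacy class of $\log\delta$-exponentials within the relevant elliptic torus. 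Assertion (i) is where the choice of $\delta_0$ matters: I would invoke Proposition \ref{prop:CAd-minus-1} and Proposition \ref{prop:dependence-on-delta_0} to see that a suitable $\delta_0$ can always be selected so that $\CaliAd^\sigma(g)$ carries $(\gamma_{>0}, \delta_0)$ to $(\eta_{>0}, \Ad(g)\delta_0)$ with trivial correction sign—this is precisely the ``existence of $\delta_0$'' clause built into Definition \ref{def:stability}. After semisimple descent (here $\gamma_0 = 1$ so $J = G$) and identifying $Y \in \mathfrak s_{\mathrm{reg}}(F)$ via $\mathbb B$, assertion (ii) becomes the identity
\begin{equation*}
	\sum_{k\colon S \hookrightarrow G,\; k \in \mathcal E} \hat\iota^G(kY, X) = \sum_{k\colon S \hookrightarrow G,\; k \in \mathcal E} \hat\iota^G(kY, X'), \qquad X \overset{\mathrm{st}}{\sim} X' \text{ in } \mathfrak g_{\mathrm{reg}}(F),
\end{equation*}
which is Case A of the proof of Theorem \ref{prop:stability-1}: it follows directly from Waldspurger's result \cite[1.6 Corollaire]{Wa97} (with $G = \Sp(W) = H$), or its non-standard variant \cite[\S 1.8]{Wa08}.

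The main obstacle, I expect, is not the Fourier-analytic input—Lemma \ref{prop:four-vanishing} has already done the hard work of eliminating the $\theta_k$-dependence, so we never encounter the endoscopic transfer factors $\Delta^1, \Delta^2$ and the $\kappa_-$-twists that made Case B of Theorem \ref{prop:stability-1} delicate. Rather, the subtlety is bookkeeping: one must verify carefully that the correction factor $\Cali_m(\nu, \delta_0)$ appearing in $\CaliAd^\sigma(g)$, combined with the sign $\sgn_{K/F}(\nu)$ from Proposition \ref{prop:CAd-minus-1} when $T$ is anisotropic, is absorbed by the freedom in choosing $\delta_0$, so that the hypothesis of the implication in Definition \ref{def:stability} can indeed be met and, when it is met, genuinely forces $\tilde\delta_0 = \tilde\eta_0$. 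Concretely I would: (1) reduce via Theorem \ref{prop:G-T-reduction} to the $\SL(2)$ building blocks and the decomposition $W = \bigoplus_i W_i$; (2) on each block, use Proposition \ref{prop:dependence-on-delta_0}(2) to see that replacing $\delta_0$ by $\eta_0\delta_0$ with $\eta_0 \in \Ker(\iota_{Q,m}) = \{\pm 1\}$ changes the output of $\CaliAd^\sigma$ exactly by $\sgn_{K_i/K_i^\sharp}(\nu)$, hence the constraint $\CaliAd^\sigma(g)(\tilde\delta,\delta_0) = (\tilde\eta, \Ad(g)\delta_0)$ pins down $\tilde\eta$ up to that controllable sign; (3) conclude that for the correctly chosen $\delta_0$ the scalar ambiguity disappears and $S\Theta_{[\phi]}(\tilde\delta) = |S(F)_{p'}|^{-1}|D^G(\delta)|^{-1/2}\sum_k\hat\iota^G(kY,\log\delta_{>0})$ depends only on the stable class, completing the proof by the Waldspurger identity above. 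This mirrors the structure of the $4 \nmid m$ argument but is shorter, since the genuine-character sum is now trivial and no endoscopic comparison is needed.
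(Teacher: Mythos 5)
Your high-level strategy matches the paper's exactly: reduce via Lemma \ref{prop:four-vanishing} to topologically unipotent $\delta$ (so $\gamma_0=1$ and $J=G$), isolate the central scalar $\tilde\eta_0$, decompose into $\SL(2)$-blocks, and cite Waldspurger for the constancy of the geometric sum $\sum_k \hat\iota^G(kY,\cdot)$. All of that is correct and well aimed. However, there is a gap in the middle step, precisely where you write ``for the correctly chosen $\delta_0$ the scalar ambiguity disappears.''

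Two issues. First, Proposition \ref{prop:CAd-minus-1} is not applicable here: its two parts cover $m\notin 2\Z$ and $m\equiv 2 \pmod 4$, not $4 \mid m$. What the paper actually uses on each anisotropic $\SL(2)$-block is Proposition \ref{prop:iota-kernel}: since $\delta$ is topologically unipotent and $p\nmid m$, one has $\delta\in\Image(\iota_{Q,m})$, and for $4\mid m$ with $T$ anisotropic $\Image(\iota_{Q,m})$ and $-\Image(\iota_{Q,m})$ are \emph{disjoint}, so $\sigma$ is forced to be $+1$ on every anisotropic block. Without this, your appeal to ``$\CaliAd^\sigma(g)$ sends topologically unipotent elements to topologically unipotent elements by uniqueness of the lift'' is unjustified: for nontrivial $\sigma$, $\CaliAd^\sigma(g)$ is only a torsor map under $\CaliAd(g)$, not a group homomorphism, so the pro-$p$ uniqueness argument does not directly apply. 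Once $\sigma=1$ is established on the anisotropic blocks (and on split blocks $\CaliAd^\sigma$ is ordinary conjugacy since $H^1(F,T)=0$), either the group-theoretic argument goes through, or, equivalently, one computes directly that $\Cali_m(\nu,t_0)=1$ for $t_0$ topologically unipotent, using that $\Cali_m(\nu,\cdot)$ is a multiplicative $\mu_2$-valued function and $p\neq 2$ kills any nontrivial character of a pro-$p$ group.

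Second, your step (2) invoking Proposition \ref{prop:dependence-on-delta_0}(2) only shows that switching $\delta_0$ to $-\delta_0$ changes the output by $\sgn_{K_i/K_i^\sharp}(\nu)$; it does not by itself select which of the (at most two) choices gives $\tilde\eta_0=1$. The actual selection must be the topologically unipotent lift $\mu\in T_{Q,m}(F)$ of $\delta$, which exists because $\iota_{Q,m}$ is bijective on pro-$p$ parts, and which is what pins down $\Cali_m(\nu,\mu)=1$. So while your architecture is right and shorter than Case B of Theorem \ref{prop:stability-1} for the reason you state (no endoscopic transfer factors), the central arithmetic step — showing $\sigma=1$ on anisotropic blocks and choosing $\delta_0$ topologically unipotent — is asserted rather than proved.
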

\begin{proof}
	Following the paradigm of Definition \ref{def:stability}, we consider a maximal $F$-torus $T$, take $\sigma \in \text{Sgn}_m(T)$ as in \eqref{eqn:Sgn} and form the homomorphism $\tilde{T}^\sigma_{Q,m} \to \tilde{T} \subset \tilde{G}$ of \eqref{eqn:iota-cover}. By Lemma \ref{prop:four-vanishing}, it suffices to consider $\tilde{\delta} \in \tilde{T}_\text{reg}$ of topologically unipotent image, or equivalently $\tilde{\delta}_0 \in \bmu_m$.

	Consider an element $(\tilde{\delta}, \delta'_{Q,m}) \in \tilde{T}^\sigma_{Q,m}$; we have to adjust $\delta'_{Q,m}$ to some $\delta_{Q,m}$ to verify the requirement of Definition \ref{def:stability}. For this purpose, we may translate $\tilde{\delta}$ by $\bmu_m$ so that $\tilde{\delta} = \delta_{>0}$.
	\begin{enumerate}
		\item Parameterize $T$ by a datum $(L, L^\sharp, \ldots)$ as usual, with $L = \prod_{h \in H} L_h$, etc. Recall that $\sigma \in \text{Sgn}_m(T) = \{\pm 1\}^H$. Decompose $\delta$ into $(\delta_h)_{h \in H}$; each $\delta_h \in L^1_h$ is still topologically unipotent. The decomposition also applies to $\sigma$, $\delta_{Q,m}$ and it respects $\iota_{Q,m}$. Hereafter we fix $h \in H$ and work inside $\widetilde{\SL}(2, L_h^\sharp)$. In other words, we reduce to the case $n=1$ modulo Weil restriction.
		\item Assume $n=1$. By the topological unipotence of $\delta$ and $p \nmid m$, there exists a topologically unipotent $\mu \in T_{Q,m}(F)$ such that $\delta = \iota_{Q,m}(\mu)$. We contend that $\sigma=1$ when $T$ is anisotropic: otherwise we would have $\delta \in \Image(\iota_{Q,m}) \cap ((-1) \cdot \Image(\iota_{Q,m}))$ that contradicts Proposition \ref{prop:iota-kernel}.
		\begin{compactitem}
			\item If $T$ is split, we take $\delta_{Q,m} = \delta'_{Q,m}$.
			\item If $T$ is anisotropic, then $\sigma=1$ and we take $\delta_{Q,m} = \mu$.
		\end{compactitem}
		\item Reassembling these rank-one pieces, we obtain $(\tilde{\delta}, \delta_{Q,m}) \in \tilde{T}^\sigma_{Q,m}$.
	\end{enumerate}

	Using Lemma \ref{prop:four-vanishing}, the stability of $S\Theta_{[\phi]}$ amounts to
	\begin{multline*}
		\CaliAd^\sigma(g)(\delta_{>0}, \delta_{Q,m}) = \left(\tilde{\eta}, \Ad(g)(\delta_{Q,m})\right) \implies \\
		\sum_{\substack{k: S \hookrightarrow G \\ k \in \mathcal{E}}} \hat{\iota}^G(kY, \log \gamma_{>0}) = \underbracket{ \tilde{\eta}_0 }_{\in \bmu_m} \sum_{\substack{k: S \hookrightarrow G \\ k \in \mathcal{E}}} \hat{\iota}^G (kY, \log \eta_{>0}))
	\end{multline*}
	for any stable conjugation $\Ad(g)(\delta) = \eta$, which also implies $\Ad(\delta_{>0}) = \eta_{>0}$. As seen in \eqref{eqn:stability-easy}, the two sums $\sum_k$ are equal, thus it suffices to show $\tilde{\eta}_0 = 1$. Now recall that $\CaliAd^\sigma(g)$ is built upon
	\begin{compactitem}
		\item stable conjugacy in the case of $\widetilde{\SL}(2, L_h^\sharp)$ (Definition \ref{def:st-conj-SL2}), which uses $g \in G_\text{ad}(F)$ and incorporates a factor $\Cali_m(\nu, t_0)$;
		\item the harmless $G(F)$-conjugacy.
	\end{compactitem}
	Hence the calculation of $\tilde{\eta}_0$ reduces to the case $n=1$, which is dealt with as follows.
	\begin{itemize}
		\item When $T$ is split, we have $H^1(F,T)=0$ so $\CaliAd^\sigma(g)$ reduces to ordinary conjugacy by \textbf{AD.3} of Proposition \ref{prop:CAd-prop}. Thus $\tilde{\eta} = \Ad(g)(\delta_{>0}) = \eta_{>0}$.
		\item When $T$ is anisotropic, $\sigma=1$, we may assume $g \in G_\text{ad}(F)$ and $\tilde{\eta} = \Cali_m(\nu, t_0) \Ad(g)(\delta_{>0})$, where $t_0 \in L^1$ corresponds to $\delta_{Q,m}$. The factor $\Cali_m(\nu, \cdot)$ is multiplicative and $\mu_2$-valued. On the other hand $p \neq 2$ as $p \nmid m$, and $t_0$ is topologically unipotent since $\delta_0$ is. Hence $\Cali_m(\nu, t_0) = 1$ and $\tilde{\eta} = \eta_{>0}$.
	\end{itemize}
	Reassembling matters, we conclude that $\tilde{\eta}_0 = 1$ as desired.
\end{proof}

\section{A stable system for \texorpdfstring{$m \equiv 2 \bmod 4$}{m congruent to 2 mod 4}}\label{sec:stable-system}
For the definition of stable systems, see Definition \ref{def:stable-system}. The case of $m \notin 2\Z$ has been discussed in Proposition \ref{prop:std-stable-system}. Now we address the case of $m \equiv 2 \pmod 4$. Note that the only external evidence comes from the $m=2$ case, see Theorem \ref{prop:Theta-compatibility}.

\subsection{Moment maps}\label{sec:MM}
Let $F$ be a field with $\text{char}(F) \neq 2$. Let $(V,q)$ be a quadratic $F$-vector space with $\dim_F V = 2n+1$, $d^\pm(V,q) = 1$. The corresponding special orthogonal group is $\SO(V,q)$.
	
For any given $F$-linear map $T: W \to V$, define its adjoint ${}^\star T$ by
\[ \lrangle{ {}^\star T v | w } = q(v|Tw), \quad v \in V, \; w \in W. \]
Note that this differs from \cite[\S 6.1]{LMS16} by a sign. We say $Y \in \syp(W)$ corresponds to $Y' \in \so(V,q)$ if they are related by a $T \in \Hom_F(W,V)$ by the diagram below.\index{moment map}
\[\begin{tikzcd}
	& \Hom_F(W, V) \arrow{ld}{M_W}[swap]{{}^\star T \cdot T \mapsfrom T } \arrow{rd}{T \mapsto T \cdot {}^\star T}[swap]{M_V} & \\
	\syp(W) & & \so(V,q)
	\end{tikzcd}\]
The arrows $M_W, M_V$ are the \emph{moment maps}. Note that $\Sp(W) \times \Or(V,q)$ acts on the left of $\Hom_F(W,V)$ as
\begin{equation}\label{eqn:moment-action}
	T \longmapsto (g,h)T = hTg^{-1}, \quad (g,h) \in \Sp(W) \times \Or(V,q).
\end{equation}
It is routine to verify that ${}^\star((g,h)T) = g \cdot {}^\star T \cdot h^{-1}$. Hence
\[ M_W((g,h)T) = g M_W(T) g^{-1}, \quad M_V((g,h)T) = h M_V(T) h^{-1}, \quad (g,h) \in \Sp(W) \times \Or(V,q). \]

\begin{theorem}\label{prop:mm}
	Suppose $F$ is a local field. The correspondence above yields a bijection
	\[ \dfrac{\syp(W)_{\mathrm{reg}}}{\text{conj}} \xleftrightarrow{1:1} \bigsqcup_{(V,q)} \dfrac{\so(V,q)_{\mathrm{reg}} }{\text{conj}}, \]
	where $(V,q)$ ranges over isomorphism classes of quadratic $F$-vector spaces with $\dim_F V = 2n+1$ and $d^\pm(V,q) = 1$. Two elements match if and only if they have the same nonzero eigenvalues (counting multiplicities).
\end{theorem}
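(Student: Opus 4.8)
\medskip

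The plan is to extract the bijection from a classification of the orbits of $\Sp(W) \times \Or(V,q)$ on the space $\Hom_F(W,V)$ via the two moment maps, following the general philosophy of Howe-duality/$\Theta$-correspondence on the Lie algebra level. First I would normalize the problem: given $Y \in \syp(W)_{\mathrm{reg}}$, the $F$-algebra $K := F[Y] \subset \End_F(W)$ is étale of dimension $2n$ carrying the involution $\tau$ induced by the symplectic form (as in \S\ref{sec:Sp}, now on the Lie-algebra level, where $\tau(Y) = -Y$ replaces $\tau(x)=x^{-1}$), so the conjugacy class of $Y$ is encoded by a datum $(K, K^\sharp, Y, c)$ with $c \in K^\times$, $\tau(c) = -c$. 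On the orthogonal side, for $Y' \in \so(V,q)_{\mathrm{reg}}$ one similarly gets $K' := F[Y']$, now $(2n+1)$-dimensional; but $Y'$ is not invertible (an odd skew operator has a kernel), so $K' \simeq F \times F[Y'|_{(\ker Y')^\perp}]$, and the invertible factor matches the $2n$-dimensional étale algebra $K$ with its involution exactly when $Y, Y'$ have the same nonzero eigenvalues. This already identifies the two sides as sets of étale-algebra-with-involution data together with an extra "norm-class" parameter; the content of the theorem is that this extra parameter on the symplectic side (the class of $c$ modulo $N_{K/K^\sharp}(K^\times)$, i.e. an element of $\mathfrak{D}(T, \Sp(W); F)$ as in Proposition \ref{prop:D-description}) is in natural bijection with the choice of the ambient quadratic space $(V,q)$ of trivial discriminant together with the orthogonal norm-class.

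\medskip

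The key computation is the fiber analysis of the moment maps. Fix $Y \in \syp(W)_{\mathrm{reg}}$ and consider $T \in \Hom_F(W,V)$ with $M_W(T) = {}^\star T\, T = Y$. Then ${}^\star T\, T = Y$ is invertible, so $T$ is injective and ${}^\star T$ surjective; set $Y' := M_V(T) = T\, {}^\star T \in \so(V,q)$. One checks $T Y = T\,{}^\star T\, T = Y' T$, so $T$ intertwines $Y$ and $Y'$; since $Y$ is regular and $T$ injective, the restriction of $Y'$ to $\mathrm{im}(T)$ is regular with the same spectrum as $Y$, and $(\mathrm{im}\,T)^\perp = \ker Y'$ is the anisotropic line. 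Conversely, given $(V,q)$ and a regular $Y'$ with $(V,q), Y'$ as above, I would reconstruct $T$ up to the $\Sp(W)\times\Or(V,q)$-action: the image of $T$ must be $(\ker Y')^\perp$, and on that $2n$-dimensional space the quadratic form $q$ together with $Y'$ determines, via $T \mapsto {}^\star T\, T$, exactly a symplectic form on $W$ in the same "$c$-class" — this is the standard fact that pulling back an invariant symmetric form along the skew operator produces an invariant skew form, and the resulting map on norm-classes is an isomorphism. The bijectivity on the level of classes then follows by combining: (i) Proposition \ref{prop:D-description} for $\Sp(W)$, identifying $\mathfrak{D}(T,\Sp(W);F) \simeq \prod_{i\in I_0} K_i^{\sharp,\times}/N_{K_i/K_i^\sharp}(K_i^\times)$; (ii) the analogous Shapiro-lemma description of the stable-to-ordinary conjugacy classes in $\so(V,q)_{\mathrm{reg}}$ over all $(V,q)$ with $d^\pm = 1$, where the additional bookkeeping of Hasse invariants and the discriminant-one constraint precisely accounts for the sum $\bigsqcup_{(V,q)}$; and (iii) a direct check that the moment-map correspondence carries (i) to (ii).

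\medskip

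In more detail, for step (ii) I would run the recipe of \S\ref{sec:toral-invariants} and \cite[X]{Wa01}: a regular semisimple class in $\so(V,q)$ over varying $(V,q)$ is given by $(K', \tau', Y', c')$ where $K' = F \times K$ with $K$ as above, and the condition $d^\pm(V,q) = 1$ pins down one coordinate of the would-be parameter (the "anisotropic line" contributes a factor forced by triviality of the discriminant), so the free parameter is again valued in $\prod_{i \in I_0} K_i^{\sharp,\times}/N_{K_i/K_i^\sharp}(K_i^\times)$, matching (i) bijectively. The only subtlety is that "$\bigsqcup_{(V,q)}$ of conjugacy classes" must be read as: pairs consisting of a quadratic space and a conjugacy class therein, up to isometry; since $n \geq 1$ and we allow all Hasse invariants (discriminant fixed), every norm-class is realized, and the realization is unique because an orthogonal transformation moving one realization to another would be an isometry $(V,q)\rightiso(V',q')$. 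The statement "same nonzero eigenvalues (counting multiplicities)" is then immediate from the intertwining $TY = Y'T$ together with the dimension count $\dim V = \dim W + 1$, which forces the single extra eigenvalue of $Y'$ to be $0$.

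\medskip

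\textbf{Main obstacle.} The genuinely delicate point is not the spectral bookkeeping but verifying that the moment-map correspondence is a \emph{bijection on the level of rational ($F$-rational) conjugacy classes}, i.e. that it is compatible with the Galois-cohomological invariants $\mathrm{inv}(\cdot,\cdot)$ on both sides, rather than merely a bijection over $\bar F$. Concretely, one must show that two $T$'s with the same $M_W(T)$ lie in a single $\Or(V,q)(F)$-orbit (no hidden $H^1$ obstruction), and symmetrically; this is where the hypotheses $\mathrm{char}(F)\neq 2$, $\dim V$ odd, and $d^\pm(V,q)=1$ all get used — oddness makes $\Or(V,q) = \SO(V,q) \times \{\pm 1\}$ so that the center acts by a scalar one can absorb, and triviality of the discriminant is exactly the condition making the orthogonal norm-class group match the symplectic one without an extra $\Z/2$. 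I expect the cleanest route is to reduce, as in \S\ref{sec:stable-reduction}, to the rank-one case $\dim_F W = 2$, $\dim_F V = 3$, where $\Sp(W) = \SL(2)$, $\SO(V,q) = \PGL(2)$ or an inner form, and the correspondence becomes the classical identification of regular classes in $\mathfrak{sl}_2$ and $\mathfrak{pgl}_2$ over $F$ — then assemble via Weil restriction exactly as in Theorem \ref{prop:G-T-reduction}.
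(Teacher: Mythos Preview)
Your plan is essentially correct and, in fact, considerably more detailed than the paper's own proof, which consists of a single sentence citing Adams \cite[Proposition 2.5, Lemma 2.8]{Ad98} and asserting that ``the arguments therein work in general.''  What you have sketched is a reasonable reconstruction of Adams' argument: the explicit form $q\lrangle{Y}(w) = \lrangle{Yw|w}$ and the extra line $\lrangle{(-1)^n \det Y}$ that you implicitly invoke are exactly what the paper spells out in Remark \ref{rem:explicit-moment-map}, and your fiber analysis (Witt's theorem to show that two $T$'s over the same $Y$ lie in one $\Or(V,q)(F)$-orbit, and the intertwining $TY = Y'T$ for the eigenvalue matching) is the substance of the argument.

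One comment on your ``main obstacle'' and the proposed reduction to rank one via Weil restriction as in Theorem \ref{prop:G-T-reduction}: this step is not needed, and in fact does not work cleanly on the orthogonal side.  The decomposition $W \simeq \bigoplus_i K_i$ does give $G^T \simeq \prod_i R_{K_i^\sharp/F}\SL(2)$ inside $\Sp(W)$, but on $(V,q)$ the anisotropic line $\lrangle{(-1)^n \det Y}$ is \emph{global} --- it sees the whole determinant $\det Y = N_{K/F}(\vec{y})$, not a product of per-$i$ pieces --- so the torus $kS \subset \SO(V,q)$ does not sit inside a product of $R_{K_i^\sharp/F}\SO(3)$'s in any way compatible with the moment map.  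The direct route is shorter: once you have the explicit construction $(V,q) = (W, q\lrangle{Y}) \oplus \lrangle{(-1)^n \det Y}$ and its inverse (restrict $Y'$ to $(\ker Y')^\perp$ and equip that space with the skew form $q(Y'\cdot|\cdot)$), both maps are visibly well-defined on conjugacy classes and mutually inverse, with no Galois-cohomology bookkeeping required beyond the observation that any two symplectic forms on a given $2n$-dimensional $F$-space are isomorphic.  Your steps (i)--(iii) are then a \emph{consequence} of the bijection rather than its proof.
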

\begin{proof}
	The archimedean case is \cite[Proposition 2.5, Lemma 2.8]{Ad98}, and the arguments therein work in general.
\end{proof}

\begin{remark}\label{rem:explicit-moment-map}\index{qY@$q\lrangle{Y}$}
	Suppose that $Y \in \syp(W)_\text{reg}$ lands in $\so(V,q)$. In the proof cited above, the quadratic $F$-space $(V,q)$ is obtained as follows. Define the quadratic form
	\[ q\lrangle{Y}: w \mapsto \lrangle{Yw|w}, \quad w \in W. \]
	There exists a class $a \in F^\times/F^{\times 2}$ such that $(V,q) := q\lrangle{Y} \oplus \lrangle{a}$ satisfies $d^\pm(V,q)=1$. To determine $a$, take $d^\pm$ on both sides to conclude that $(-1)^n a \det Y = 1$ modulo $F^{\times 2}$, i.e. $a = (-1)^n \det Y \bmod F^{\times 2}$.

	Write
	$\begin{tikzcd}
	V = W \oplus F \arrow[yshift=2pt]{r}{\text{pr}} & W \arrow[yshift=-2pt]{l}{\iota}
	\end{tikzcd}$ for the evident projection and inclusion. We can construct $T \in M_W^{-1}(Y)$ by taking $T = \iota$. Indeed, by definition ${}^\star T = Y \circ \text{pr}$, hence ${}^\star T \cdot T = Y$ and $Y' := T \cdot {}^\star T = \iota \circ Y \circ \text{pr} \in \so(V,q)$ corresponds to $Y$.
	
	Consider the maximal $F$-torus $S := Z_{\Sp(W)}(Y)$. It naturally sits in $\SO(W, q\lrangle{Y}) \subset \SO(V,q)$ as a maximal torus, since for all $g \in S(F)$ we have
	\[ q\lrangle{Y}(gw) = \lrangle{Ygw|gw} = \lrangle{gYw|gw} = q\lrangle{Y}(w), \quad w \in W. \]
\end{remark}

\begin{remark}\label{rem:moment-map-basis}
	In the explicit construction of Remark \ref{rem:explicit-moment-map}, assume that $S$ is the split maximal torus associated to a symplectic basis $\{ e_{\pm i} \}_{i=1}^n$ of $W$, thus $Y$ is diagonalizable. Then $\{e_{\pm i}\}_{i=1}^n$ will become a hyperbolic basis (Definition \ref{def:hyperbolic-basis}) for the quadratic $F$-vector space $(W, q\lrangle{Y}) \subset (V, q)$, as a quick computation in $\SL(2)$ shows. Consequently, $Y'$ belongs to the split maximal torus of $\SO(V,q)$ associated to the latter basis. This also implies that the Weyl groups of $S$ inside $\Sp(W)$ and $\SO(V,q)$ are naturally identified; the same holds for the roots. Caution: long roots of $\Sp(W)$ go to short roots of $\SO(V,q)$.
	
	The correspondence via moment maps is stable under change of base fields, thus the conclusions above extend to non-split $S$: the absolute Weyl groups and roots are in bijection in a $\Gamma_F$-equivariant manner.
\end{remark}

Let $F$ be local with residual characteristic $\neq 2$. To prove the next result, we shall adopt the language of lattice functions to describe the Bruhat--Tits buildings for classical groups, as summarized in \cite[\S 4]{LMS16}.

\begin{definition}
	For an $F$-vector space of finite dimension, a \emph{lattice function} is an assignment $s \mapsto \mathcal{L}_s$, where $s \in \R$ and $\mathcal{L}_s$ are $\mathfrak{o}_F$-lattices in $V$, satisfying
	\begin{compactitem}
		\item $s' \geq s \implies \mathcal{L}_{s'} \subset \mathcal{L}_s$;
		\item $\mathcal{L}_{s + v_F(\varpi_F)} = \mathfrak{p}_F \mathcal{L}_s$;
		\item $\mathcal{L}_s = \bigcap_{s'' < s} \mathcal{L}_{s''}$.
	\end{compactitem}
	Denote by $\text{Latt}_V$ the set of lattice functions. For any $\mathcal{L} \in \text{Latt}_V$ set $\mathcal{L}[r]: s \mapsto \mathcal{L}_{s+r}$. We also set $\mathcal{L}_{s+} := \bigcup_{s' > s} \mathcal{L}_{s'}$.	Next, let $h$ be a quadratic or symplectic form on $V$. For every $\mathcal{L} \in \text{Latt}_V$, set
	\[ \mathcal{L}^\sharp: s \mapsto \left\{ v \in V: h\left( v | \mathcal{L}_{(-s)+} \right) \subset \mathfrak{p}_F \right\}, \quad \mathcal{L}^\sharp \in \text{Latt}_V. \]
	We say $\mathcal{L}$ is \emph{self-dual} if $\mathcal{L} = \mathcal{L}^\sharp$; the self-dual lattice functions form a subset $\text{Latt}_V^h \subset \text{Latt}_V$.
\end{definition}

It is known that there exists an equivariant affine bijection $\text{Latt}_V^h \rightiso \mathcal{B}(U(V,h), F)$, where $U(V,h)$ stands for the isometry group of $(V,h)$, possibly disconnected. Also, $\text{Latt}_V$ is in equivariant affine bijection with $\mathcal{B}(\GL(V), F)$; furthermore $\gl(V)_{\mathcal{L}, r} = \left\{ A \in \gl(V) : \forall s, \; A\mathcal{L}_s \subset \mathcal{L}_{s+r} \right\}$ under this identification.

Now consider the circumstance of Remark \ref{rem:explicit-moment-map}, write $G := \Sp(W)$, $H := \SO(V,q)$ and assume that
\begin{itemize}
	\item $S := Z_G(Y)$ is a tame anisotropic maximal $F$-torus;
	\item $v_F(\dd\alpha(Y)) = r$ for all absolute roots $\alpha$ of $S \subset G$, for some $r \in \Q$.
\end{itemize}
Denote by $G \xleftarrow{j} S \xrightarrow{k} H$ the natural embeddings, and let $x \in \mathcal{B}(G, F)$, $x' \in \mathcal{B}(H, F)$ be determined by $j,k$ as in \cite{Pra01}. Denote by $\mathcal{L} \in \mathrm{Latt}^{\lrangle{\cdot|\cdot}}_W$ and $\mathcal{L}' \in \mathrm{Latt}^q_V$ the self-dual lattice functions corresponding to $x$ and $x'$, respectively.

\begin{lemma}\label{prop:Y-translate}
	For $\mathcal{L}$ as above, we have $Y\mathcal{L}_s = \mathcal{L}_{s + r}$ for all $s \in \R$.
\end{lemma}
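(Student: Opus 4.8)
The statement asserts that the element $Y$, which is regular semisimple with $v_F(\mathrm{d}\alpha(Y)) = r$ for every absolute root $\alpha$ of $S \subset G$, shifts the self-dual lattice function $\mathcal{L}$ associated to $x \in \mathcal{B}(G,F)$ by exactly $r$, i.e. $Y\mathcal{L}_s = \mathcal{L}_{s+r}$ for all $s$. The natural approach is to work over a splitting field of $S$, diagonalize $Y$, and then descend. Since $S$ is tame and anisotropic, its splitting field $L/F$ is tamely ramified, and upon base change to $L$ the torus $S_L$ is split; by Proposition \ref{prop:stable-good} and the hypothesis, every eigenvalue $\lambda$ of $Y$ (viewed in $\syp(W) \subset \gl(W)$) has $v_L(\lambda) = r$ after suitable normalization of the valuation, because the long roots of $S \subset G$ read off the eigenvalues. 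The point is that $\mathcal{L}$ and its base change $\mathcal{L} \otimes_{\mathfrak{o}_F} \mathfrak{o}_L$ are compatible with the identification $\mathrm{Latt}^{\lrangle{\cdot|\cdot}}_W \hookrightarrow \mathrm{Latt}_W \rightiso \mathcal{B}(\GL(W), F)$, and under base change the point $x$ maps to the point of $\mathcal{B}(\GL(W_L), L)$ determined by $S_L$, which is the ``standard'' apartment point adapted to an eigenbasis of $Y$.

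First I would reduce to the split case: it suffices to check $Y\mathcal{L}_s = \mathcal{L}_{s+r}$ after base change to $L$, since a lattice function is determined by its base change to an unramified-then-tamely-ramified extension (the self-dual and $\mathfrak{o}_F$-lattice conditions are Galois-stable, and $\mathcal{L}$ is the unique Galois-fixed self-dual lattice function lying over $\mathcal{L}_L$). Then, with $S_L$ split and adapted to a symplectic basis $\{e_{\pm i}\}$ of $W_L$, the point $x$ corresponds via \cite[Remark 3]{Pra01} to the barycentric/origin point of the apartment attached to this basis, so $\mathcal{L}_s$ is (up to the self-dual normalization) the lattice function $\bigoplus_i \mathfrak{p}_L^{\lceil s/e_L \rceil}(\mathfrak{o}_L e_i \oplus \mathfrak{o}_L e_{-i})$ type expression, constant on each eigenspace up to the common shift. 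Because $Y$ acts on the $\pm i$ hyperbolic plane $L e_i \oplus L e_{-i}$ as a scalar-type operator of valuation $r$ (this is the $\SL(2)$-computation already invoked in \S\ref{sec:toral-invariants} and Remark \ref{rem:moment-map-basis}: $Y$ restricted there looks like $\mathrm{diag}(\lambda, -\lambda)$ or $\twomatrix{0}{\lambda}{-\lambda}{0}$ with $v_L(\lambda) = r$), multiplication by $Y$ sends $\mathcal{L}_s$ to $\mathcal{L}_{s+r}$ on each block, hence globally.

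The bookkeeping I expect to be mildly annoying rather than deep: pinning down the exact normalization of the shift (the factor $e_L = e(L/F)$ relating $v_F$ and $v_L$, and the fact that $r$ is measured with $v_F$ while the computation happens with $v_L$), and making sure the self-duality normalization of $\mathcal{L}$ in the definition of $\mathrm{Latt}^{\lrangle{\cdot|\cdot}}_W$ does not introduce a half-integer offset that interacts badly with multiplication by $Y$. One clean way to sidestep normalization headaches is to argue by inclusions in both directions: $Y\mathcal{L}_s \subseteq \mathcal{L}_{s+r}$ because $Y \in \gl(W)_{\mathcal{L}, r}$ (equivalently $Y$ has all eigenvalues of valuation $\geq r$ and $\mathcal{L}$ is the lattice function defining $x$, so $\mathrm{ad}$-type estimates apply after passing to $\mathrm{Latt}_W$), and symmetrically $Y^{-1}\mathcal{L}_{s+r} \subseteq \mathcal{L}_s$ because $Y^{-1}$ has all eigenvalues of valuation $-r$ and $Z_G(Y) = Z_G(Y^{-1}) = S$ determines the same point $x$, so $Y^{-1} \in \gl(W)_{\mathcal{L}, -r}$. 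Combining the two inclusions gives equality.

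\textbf{Main obstacle.} The one genuinely load-bearing input is the identification of $x \in \mathcal{B}(G,F)$, via \cite{Pra01}, with the point of $\mathcal{B}(\GL(W),F)$ whose associated lattice function $\mathcal{L}$ satisfies $Y \in \gl(W)_{\mathcal{L},r}$ and $Y^{-1} \in \gl(W)_{\mathcal{L},-r}$ — i.e. that the Bruhat–Tits point attached to the anisotropic torus $S$ is exactly the one ``centered'' on the eigenlattice filtration of any regular element of $\mathfrak{s}$ with equi-valued roots. This is essentially the content of \cite[\S 1.9]{Adl98} / the equi-valued torus theory (cf. the discussion around Proposition \ref{prop:stable-good} and \cite[Proposition 1.9.1]{Adl98}, which already gives $\mathfrak{s}(F)_r = \mathfrak{g}(F)_{x,r} \cap \mathfrak{s}(F)$), and once invoked the rest is the routine eigenspace computation sketched above.
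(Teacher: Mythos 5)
Your primary argument — base-change to a tamely ramified splitting field $L$, then compute in the apartment adapted to an eigenbasis of $Y$, with \cite[Remark 3]{Pra01} pinning down the point $x$ — is precisely the route taken in the paper. The alternative you sketch is a genuinely different and arguably cleaner route: from $Y \in \mathfrak{s}(F)_r$ and $Y^{-1} \in \mathfrak{s}(F)_{-r}$ (both of which follow from the equi-valuedness of the eigenvalues and $Z_G(Y) = Z_G(Y^{-1}) = S$), combined with \cite[Proposition 1.9.1]{Adl98} and the lattice-model identity $\mathfrak{g}(F)_{x,t} = \mathfrak{g}(F) \cap \gl(W)_{\mathcal{L},t}$ from \cite[\S 4]{LMS16}, one gets the two inclusions $Y\mathcal{L}_s \subseteq \mathcal{L}_{s+r}$ and $Y^{-1}\mathcal{L}_{s+r} \subseteq \mathcal{L}_s$, which together give the equality with no base change or tamely ramified descent at all. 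This buys you a proof that never leaves $F$, at the cost of invoking the Moy--Prasad compatibility rather than a bare-hands apartment computation. One caution: your parenthetical ``equivalently, $Y$ has all eigenvalues of valuation $\geq r$'' reads as if the eigenvalue estimate alone places an element of $\gl(W)$ into $\gl(W)_{\mathcal{L},r}$, which is false for general matrices (e.g.\ nilpotent ones); the step depends essentially on $Y$ lying in $\mathfrak{s}(F)$, with $\mathcal{L}$ the self-dual lattice function attached to the anisotropic torus $S$, so that Adler's comparison applies.
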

\begin{proof}
	The goal is to identify $s \mapsto Y\mathcal{L}_s$ and $\mathcal{L}[r]$ in $\text{Latt}_W$. Take a tamely ramified finite Galois extension $L/F$ to split $S$, and consider the base change map $\mathrm{BC}: \mathcal{B}(\GL(W), F) \to \mathcal{B}(\GL(W), L)^{\Gal{L/F}}$. Note that $\text{BC}(\mathcal{L}[r]) = \text{BC}(\mathcal{L})[r]$, provided that the simplicial structures are defined by the valuation $v$ of $L$ extending $v_F$. Since $\text{BC}$ is $\GL(W)$-equivariant, $\text{BC}(Y\mathcal{L}) = Y \cdot \text{BC}(\mathcal{L})$. Put $\mathcal{L}^\natural := \text{BC}(\mathcal{L})$. By tamely ramified descent on Bruhat--Tits buildings, $\text{BC}$ is bijective and it suffices to check $\mathcal{L}^\natural [r] = Y \mathcal{L}^\natural$.
	
	What remains is easy: by the recipe of \cite[Remark 3]{Pra01} there exists a symplectic basis of $W \otimes_F L$ that diagonalizes $S$, such that $\mathcal{L}^\natural$ lies in the corresponding apartment (equivalently, $\mathcal{L}^\natural$ corresponds to a splittable norm under this basis, see \cite[Definition 4.1.2]{LMS16}). Since all eigenvalues of $Y$ satisfy $v(\cdot)=r$, an easy computation in the split case yields $\mathcal{L}^\natural [r] = Y \mathcal{L}^\natural$ as asserted.
\end{proof}

\begin{proposition}\label{prop:T-depth}
	In the circumstance above, the $T = \iota \in \Hom_F(W, V)$ in Remark \ref{rem:explicit-moment-map} satisfies $T\mathcal{L}_s \subset \mathcal{L}'_{s + \frac{r}{2}}$ for all $s \in \R$.
\end{proposition}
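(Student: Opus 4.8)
The plan is to reduce the claim to a statement about lattice functions on $V = W \oplus F$ and then exploit the explicit description $T = \iota$ together with Lemma~\ref{prop:Y-translate}. Recall that on $V$ we have the quadratic form $q = q\lrangle{Y} \oplus \lrangle{a}$ with $a = (-1)^n \det Y \bmod F^{\times 2}$, and the special orthogonal group $H = \SO(V,q)$ contains $S := Z_G(Y)$ as the maximal torus $k(S)$, sitting inside $\SO(W, q\lrangle{Y})$. First I would unwind what $x' \in \mathcal{B}(H,F)$ is: since $S$ is anisotropic tame, $x'$ is the unique point fixed by $N_H(S)(F)$, and $\mathcal{L}' \in \mathrm{Latt}_V^q$ is the corresponding self-dual lattice function. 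The key observation is that the orthogonal decomposition $V = W \oplus Fe_0$ (with $e_0$ spanning the anisotropic line, chosen so $q(e_0) = a$) is $S$-stable, so $\mathcal{L}'$ should split as $\mathcal{L}'_s = \mathcal{L}''_s \oplus (\mathcal{L}^{(0)})_s$, where $\mathcal{L}'' \in \mathrm{Latt}_W^{q\lrangle{Y}}$ is the self-dual lattice function attached to the point of $\mathcal{B}(\SO(W,q\lrangle{Y}),F)$ determined by $S$, and $\mathcal{L}^{(0)}$ is a self-dual lattice function on the line $Fe_0$, i.e. essentially $\mathfrak{o}_F e_0$ up to shift. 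I would justify this splitting by tamely ramified descent, as in the proof of Lemma~\ref{prop:Y-translate}: base-change to a splitting field $L/F$, pick a hyperbolic basis of $W \otimes L$ diagonalizing $S$ (Remark~\ref{rem:moment-map-basis}), and observe that in the apartment both factors are manifestly split; then descend.

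Next I would relate $\mathcal{L}''$ on $(W, q\lrangle{Y})$ to $\mathcal{L}$ on $(W, \lrangle{\cdot|\cdot})$. Here the point is that $q\lrangle{Y}(w) = \lrangle{Yw|w}$, so the symmetric bilinear form attached to $q\lrangle{Y}$ is $w_1,w_2 \mapsto \lrangle{Yw_1|w_2}$; this is the composite of $Y$ with the symplectic form. Consequently the duality operator $\mathcal{M} \mapsto \mathcal{M}^{\sharp, q\lrangle{Y}}$ differs from $\mathcal{M} \mapsto \mathcal{M}^{\sharp, \lrangle{\cdot|\cdot}}$ by the twist $Y$, and one checks that $\mathcal{L}[r/2]$ is self-dual for $q\lrangle{Y}$ precisely because $\mathcal{L}$ is self-dual for $\lrangle{\cdot|\cdot}$ and $Y\mathcal{L}_s = \mathcal{L}_{s+r}$ by Lemma~\ref{prop:Y-translate}. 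Since the point of the building of $\SO(W,q\lrangle{Y})$ fixed by $N(S)(F)$ is unique and $S$-equivariance pins it down, I would conclude $\mathcal{L}'' = \mathcal{L}[r/2]$ (possibly after matching normalizations; this bookkeeping with the $\tfrac12$-shifts is where I expect to spend the most care). Then $\mathcal{L}'_s = \mathcal{L}_{s+r/2} \oplus (\mathcal{L}^{(0)})_s$.

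With that in hand the inclusion $T\mathcal{L}_s \subset \mathcal{L}'_{s+r/2}$ is immediate: $T = \iota$ is the inclusion $W \hookrightarrow V = W \oplus Fe_0$, so $T\mathcal{L}_s = \mathcal{L}_s \oplus 0 \subset \mathcal{L}_{(s+r/2) + ? } \oplus (\mathcal{L}^{(0)})_{s+r/2}$; one just needs $\mathcal{L}_s \subset \mathcal{L}_{(s+r/2)+r/2}$... wait, that reads $\mathcal{L}_s \subset \mathcal{L}_{s+r}$, which requires $r \geq 0$. So I would not expect to land inside $\mathcal{L}[r/2]$ componentwise unless $r\ge 0$; rather, the correct reading is that $\mathcal{L}'_{s+r/2}$ has $W$-component $\mathcal{L}_{(s+r/2)+r/2} = \mathcal{L}_{s+r}$, and indeed $Y$ maps $\mathcal{L}_s$ to $\mathcal{L}_{s+r}$ — but $T$ is $\iota$, not $Y$. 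Let me re-examine: the self-dual lattice function on $(W,q\lrangle Y)$ should in fact be normalized so that its $s$-th term is $\mathcal{L}_s$ itself when $r=0$, and in general a careful tracking of the duality twist shows the $W$-component of $\mathcal{L}'$ is $s\mapsto \mathcal{L}_{s}$ possibly up to a global shift determined by $v_F(\det)$ on the line $Fe_0$; the factor $\tfrac r2$ in the statement then comes from the mismatch between the $\GL(V)$-normalization (where $\gl(V)_{\mathcal L',t}$ is defined via $\mathcal L'_{s+t}$) and the self-dual normalization. I would therefore prove the inclusion by working in $\mathrm{Latt}_V$ directly: show $T\mathcal{L}_s \subset \mathcal{L}'_{s+r/2}$ by base-changing to $L$, using the explicit hyperbolic/symplectic bases and the fact that all eigenvalues of $Y$ have valuation $r$, exactly mirroring Lemma~\ref{prop:Y-translate}; the anisotropic line contributes only an $\mathfrak{o}_L$-multiple of $e_0$ whose placement is controlled by $a = (-1)^n\det Y$, hence by $r$ again. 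The main obstacle is thus purely the normalization bookkeeping: pinning down the half-integer shifts in $\mathcal{L}'$ coming from the self-duality convention on $V$ versus the building of $\GL(V)$, and checking that the line $Fe_0$ sits at the right level. Once the split form of $\mathcal{L}'$ is established with correct shifts, the containment $T\mathcal{L}_s \subset \mathcal{L}'_{s+r/2}$ follows by a one-line computation in the apartment over $L$ and tame descent.
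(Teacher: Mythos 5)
Your approach is structurally the same as the paper's: decompose $\mathcal{L}'$ along $V=W\oplus Fe_0$, relate the $W$-component to $\mathcal{L}$ using the twist by $Y$ (Lemma~\ref{prop:Y-translate}), and invoke tame descent to identify the candidate with $\mathcal{L}'$. But you have a genuine gap precisely at the place you flag as ``bookkeeping,'' and the sign there is the whole content of the proposition. You first conjecture that the $W$-component of $\mathcal{L}'$ is $\mathcal{L}[r/2]$, notice this makes the containment $T\mathcal{L}_s\subset\mathcal{L}'_{s+r/2}$ read $\mathcal{L}_s\subset\mathcal{L}_{s+r}$ (false for $r>0$), and then retreat to a vague ``up to a global shift'' without pinning it down. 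The correct shift is $-r/2$, not $+r/2$, and it is forced by an explicit self-duality computation that your proposal never carries out: writing out
\[
\bigl(\mathcal{L}[t]^{\sharp,q\lrangle{Y}}\bigr)_s=\left\{w:\lrangle{Yw\mid\mathcal{L}_{(-s+t)+}}\subset\mathfrak{p}_F\right\}=Y^{-1}\mathcal{L}_{s-t}=\mathcal{L}_{s-t-r}
\]
and demanding this equal $\mathcal{L}_{s+t}$ gives $t=-r/2$. With $\mathcal{L}''=\mathcal{L}[-r/2]\oplus\mathcal{M}$, the $W$-component of $\mathcal{L}''_{s+r/2}$ is $\mathcal{L}_{(s+r/2)-r/2}=\mathcal{L}_s$, so $\iota\mathcal{L}_s=\mathcal{L}_s\oplus 0\subset\mathcal{L}''_{s+r/2}$ is immediate --- no further apartment computation or case distinction on the sign of $r$ is needed. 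Your fallback plan (``work in $\mathrm{Latt}_V$ directly over $L$ in the apartment'') would succeed, but as written it is a deferral rather than a proof, and the sign error shows the deferred step is not actually trivial. The fix is simply to replace the guessed $\mathcal{L}[r/2]$ with $\mathcal{L}[-r/2]$ and to verify self-duality as above; once that is done the rest of your argument (tame descent to show $\mathcal{L}''=\mathcal{L}'$, then the one-line containment) is exactly the paper's.
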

\begin{proof}
	Recall that $(V,q) = (W, q\lrangle{Y}) \oplus \lrangle{(-1)^n \det Y}$. Take any self-dual lattice function $\mathcal{M}$ for $\lrangle{(-1)^n \det Y}$. Take
	\[ \mathcal{L}'' := \mathcal{L}\left[ \frac{-r}{2} \right] \oplus \mathcal{M} \; \subset \text{Latt}_V. \]
	We contend that $\mathcal{L}'' \in \text{Latt}_V^q$. Since the $\sharp$-operator is readily seen to commute with $\oplus$, it suffices to show $\mathcal{L}[-\frac{r}{2}]$ is self-dual relative to $q\lrangle{Y}$. Lemma \ref{prop:Y-translate} implies that
	\begin{align*}
		\left( \mathcal{L}\left[ \frac{-r}{2} \right]^\sharp\right)_s & = \left\{ w \in W : \lrangle{Yw | \mathcal{L}_{(-s - \frac{r}{2})+}} \subset \mathfrak{p}_F \right\} \\
		& = \left\{ w \in W : \lrangle{w | \mathcal{L}_{( \frac{r}{2} - s )+} } \subset \mathfrak{p}_F \right\} \\
		& = (\mathcal{L}^\sharp)_{\frac{-r}{2} + s} = \mathcal{L}_{ \frac{-r}{2} + s},
	\end{align*}
	thus $\mathcal{L}[\frac{-r}{2}]$ is self-dual. Lemma \ref{prop:Y-translate} also implies $Y\mathcal{L}_s = \mathcal{L}''_{s + \frac{r}{2}}$. Hence it remains to show that $\mathcal{L}'' = \mathcal{L}'$.
	
	Make a tame base change to a Galois extension $L/F$ that splits $S$. As seen in the proof of Lemma \ref{prop:Y-translate}, the fact that $\mathcal{L}$ is determined by $j$ means that $\mathcal{L}$ lies in the apartment $\mathcal{A}_j \subset \mathcal{B}(G, L)$ associated to a symplectic basis $\{e_{\pm i}\}_i$ diagonalizing $jS_L$. By Remark \ref{rem:moment-map-basis}, $\{e_{\pm i}\}_i$ is also a hyperbolic basis for $(W, q\lrangle{W}) \otimes_F L$ diagonalizing $kS_L$. We conclude that $\mathcal{L}'' = \mathcal{L}[\frac{-r}{2}] \oplus \mathcal{M}$ corresponds to a point of $\mathcal{B}(H, L)$ lying in the apartment determined by $kS_L$, after base change.

	The aforementioned point is $\Gal{L/F}$-fixed as it is defined over $F$. By the characterization \cite{Pra01} of $x'$ via tamely ramified descent, we infer that $x'$ is the point corresponding to $\mathcal{L}''$. See also \cite[\S 5.1]{LMS16}.
\end{proof}

\subsection{The construction}\label{sec:ss-construction}
The assumptions on $F$, $\tilde{G}$, etc. from \S\ref{sec:inducing-data} are in force. Assume $m \equiv 2 \pmod 4$ and choose an additive character $\psi$ of $F$. According to Definition \ref{def:stable-system}, given
\begin{compactitem}
	\item an inducing datum $(\mathcal{E}, S, \theta^\flat)$ (Definition \ref{def:inducing-data}),
	\item a $G(F)$-conjugacy class $j: S \hookrightarrow G$ in $\mathcal{E}$,
\end{compactitem}
we have to define a character $\theta_j^\dagger: jS(F)_{p'} \to \{\pm 1\}$. To begin with, define
\begin{equation}\label{eqn:a}
	a := a(\psi) = \min\left\{ b \in \Z: \psi|_{\mathfrak{p}_F^b} \; \text{non-trivial} \right\}.
\end{equation}
Pick $Y = Y_\psi \in \mathfrak{s}(F)$ such that for any $j \in \mathcal{E}$,
\begin{equation}\label{eqn:Y-theta-2}
	\theta \circ \iota_{Q,m} = \theta^\flat \implies \theta \circ \exp = \psi(\mathbb{B}(jY_\psi , j(\cdot))): \mathfrak{s}(F)_{1/e} \to \CC^\times.
\end{equation}
This is reminiscent of \eqref{eqn:Y-theta}, but here we pass to $\mathfrak{s}$ using $\mathbb{B}$ and make no assumption on $a(\psi)$.

\begin{lemma}\label{prop:stable-good-2}
	We have $Y_\psi \in \mathfrak{s}(F)_{a(\psi) - \frac{1}{e}}$; only the coset $Y_\psi + \mathfrak{s}(F)_{a(\psi)}$ is canonical. Furthermore, $Y_\psi \in \mathfrak{s}_{\mathrm{reg}}(F)$ and all eigenvalues $\lambda \in \bar{F}$ of $Y_\psi$ satisfies $v_{\bar{F}}(\lambda) = a(\psi) - \frac{1}{e}$
\end{lemma}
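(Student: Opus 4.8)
The statement is the exact analogue of Proposition \ref{prop:stable-good} with $\xi$ (which there is assumed to satisfy $a(\xi)=0$) replaced by a general additive character $\psi$, so the plan is to reduce to that case by a change of character. First I would recall the defining relation \eqref{eqn:Y-theta-2}: $\theta\circ\exp = \psi(\mathbb{B}(jY_\psi, j(\cdot)))$ on $\mathfrak{s}(F)_{1/e}$, where $\theta$ is any epipelagic character of $S(F)$ with $\theta\circ\iota_{Q,m}=\theta^\flat$. Write $\psi = \xi_c$ for the standard $\xi$ fixed in \S\ref{sec:Adler-Spice} and some $c\in F^\times$ with $v_F(c) = a(\psi) - a(\xi) = a(\psi)$ (since $a(\xi)=0$); more precisely $\psi(x) = \xi(cx)$ for a suitable $c$. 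Then $\psi(\mathbb{B}(jY_\psi, j(\cdot))) = \xi(\mathbb{B}(j(cY_\psi), j(\cdot)))$, so $cY_\psi$ plays the role of the element ``$Y$'' attached to the same inducing datum but with respect to $\xi$; that is, $cY_\psi \in Y_\xi + \mathfrak{s}^*(F)_0$ in the notation of \S\ref{sec:Adler-Spice}, hence $cY_\psi \in \mathfrak{s}(F)_{-1/e}$ by Proposition \ref{prop:stable-good} and $v_{\bar F}(\dd\alpha(cY_\psi)) = -\frac1e$ for every root $\alpha$ of $S_{\bar F}\subset G_{\bar F}$.

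The second step is pure bookkeeping with Moy--Prasad filtrations on the torus. Since $S$ is inertially anisotropic (type (ER) implies this, Definition \ref{def:type-ER}), multiplication by $c$ scales the filtration by $v_F(c) = a(\psi)$: explicitly $\mathfrak{s}(F)_{r} = c\cdot\mathfrak{s}(F)_{r - a(\psi)}$ for all $r\in\R$, because for $X\in\mathfrak{s}(F)$ one has $v(\dd\chi(cX)) = v_F(c) + v(\dd\chi(X))$ for every $\chi\in X^*(S_{\bar F})$. From $cY_\psi \in \mathfrak{s}(F)_{-1/e}$ we then get $Y_\psi = c^{-1}(cY_\psi) \in c^{-1}\mathfrak{s}(F)_{-1/e} = \mathfrak{s}(F)_{-1/e - a(\psi)}$; wait — I need to recheck the sign, since the claimed conclusion is $Y_\psi\in\mathfrak{s}(F)_{a(\psi)-1/e}$. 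The correct normalization is that $\psi$ being trivial on $\mathfrak{p}_F^{a(\psi)}$ and nontrivial on $\mathfrak{p}_F^{a(\psi)-1}$ corresponds to $\xi_c$ with $v_F(c) = -a(\psi)$ (a character of conductor $a(\psi)$ is $x\mapsto\xi(cx)$ with $v_F(c)=-a(\psi)$ when $\xi$ has conductor $0$). With this, $Y_\psi = c^{-1}Y_\xi\cdot(\text{unit}) $ has $v$-scaling by $-v_F(c)=a(\psi)$, giving $Y_\psi\in\mathfrak{s}(F)_{-1/e+a(\psi)}$ and $v_{\bar F}(\dd\alpha(Y_\psi)) = a(\psi) - \frac1e$ for every root $\alpha$ — this matches the statement. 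The canonicity up to $\mathfrak{s}(F)_{a(\psi)}$ follows from canonicity of $Y_\xi$ up to $\mathfrak{s}^*(F)_0 = \mathfrak{s}(F)_0$ (identification via $\mathbb{B}$, as in \S\ref{sec:Adler-Spice}) after scaling.

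The third and last step extracts regularity. Since $v_{\bar F}(\dd\alpha(Y_\psi))$ is finite (equal to $a(\psi)-\frac1e$) for every absolute root $\alpha$, in particular $\dd\alpha(Y_\psi)\neq 0$ for all $\alpha$, which is exactly the condition $G_{Y_\psi} = S$, i.e. $Y_\psi\in\mathfrak{s}_{\mathrm{reg}}(F)$. For the eigenvalue statement one specializes to the long roots $\alpha = 2\epsilon_i$ of $\syp(W)$: as in the proof of Proposition \ref{prop:stable-good}, $\dd(2\epsilon_i)(Y_\psi) = 2\lambda_i$ where $\pm\lambda_i$ are the eigenvalues of $Y_\psi$ acting on $W$ (viewing $\syp(W)\subset\gl(W)$), so $v_{\bar F}(\lambda_i) = v_{\bar F}(\dd(2\epsilon_i)(Y_\psi)) = a(\psi)-\frac1e$ since $2$ is a unit in $\mathfrak{o}_{\bar F}$. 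I do not anticipate a genuine obstacle here; the only point requiring care is pinning down the sign/normalization relating the conductor $a(\psi)$ of $\psi$ to $v_F(c)$ in the identification $\psi = \xi_c$, and correspondingly the direction in which scaling by $c$ moves the Moy--Prasad filtration — once that is fixed, everything is an immediate transcription of Proposition \ref{prop:stable-good} (equivalently \cite[Lemma 7.3.1]{LMS16}, \cite[Lemma 3.2]{Kal15}).
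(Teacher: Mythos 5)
Your reduction to the case $a(\psi)=0$ via the rescaling $\psi = \xi_c$ is exactly what the paper does (it writes $\xi := \psi_{\varpi_F^{a(\psi)}}$, equivalently $c = \varpi_F^{-a(\psi)}$, so that $Y_\psi = \varpi_F^{a(\psi)}Y_\xi$ and the claim follows from Proposition \ref{prop:stable-good}). Your mid-proof correction to $v_F(c)=-a(\psi)$ is the right one and gives the paper's formula, even though the accompanying verbal characterization of $a(\psi)$ (``trivial on $\mathfrak{p}_F^{a(\psi)}$, nontrivial on $\mathfrak{p}_F^{a(\psi)-1}$'') is off by one relative to \eqref{eqn:a}, under which the $\xi$ of \S\ref{sec:Adler-Spice} (nontrivial on $\mathfrak{o}_F$, trivial on $\mathfrak{p}_F$) has $a(\xi)=0$ — that mismatch would contradict your own assertion that ``$\xi$ has conductor $0$'' had you used it consistently, so it is worth flagging but does not affect the argument.
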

\begin{proof}
	Set $\xi := \psi_{\varpi_F^{a(\psi)}}$. Then $Y_\psi = \varpi_F^{a(\psi)} Y_\xi$ and $a(\xi)=0$. Our problem is thus reduced to the case $a(\psi)=0$, which has been addressed in Proposition \ref{prop:stable-good}.
\end{proof}

Using the recipe of \S\ref{sec:Sp}, we parameterize the conjugacy class $jY$ by the datum $(K, K^\sharp, \vec{y}, \vec{c})$, where $K = \prod_{i=1}^n K_i$, $K^\sharp = \prod_{i=1}^n K_i^\sharp$, $\vec{y} = (y_i)_{i \in I}$ and $\vec{c} = (c_i)_{i \in I} \in K^\times$, with $\tau(\vec{y}) = -\vec{y}$, $\tau(\vec{c}) = -\vec{c}$ as usual. All $K_i$ are fields since $S$ is anisotropic. We caution the reader that the parametrization depends on $\lrangle{\cdot|\cdot}$, which will be rescaled later on. Forgetting $Y$ gives rise to the datum $(K, K^\sharp, \vec{y})$ that parameterizes the conjugacy class of $j$.

\begin{definition}\label{def:dagger}
	For each $i \in I$, define a quadratic $K_i^\sharp$-vector space $(V_{i,Y}, q_{i,Y})$ as follows
	\begin{align*}
		\left( V_{i,Y}, q_{i,Y} \right) & := (K_i, q_{i,Y}^0) \oplus \lrangle{ (-1)^n \det Y }, \quad \text{where} \\
		K_i: & \quad \text{viewed as a $K_i^\sharp$-vector space}, \\
		q^0_{i,Y}(u|v) & := -\Tr_{K_i/K_i^\sharp} \left( \tau(u)v y_i c_i \right) \\
		& = \Tr_{K_i/K_i^\sharp} \left( \tau(y_i u)v c_i \right), \quad u,v \in K_i.
	\end{align*}
	Let $\theta_j^\dagger: jS(F)_{p'} \to \{ \pm 1\}$ be the homomorphism that maps $-1 \in K_i^1 \hookrightarrow jS(F)_{p'}$ to
	\[ \epsilon(V_{i,Y}, q_{i,Y}) \left(-1, d^\pm(V_{i,Y}, q_{i,Y})\right)_{K_i^\sharp, 2} , \qquad \text{for each}\; i \in I. \]
\end{definition}
Notice that $\det Y = N_{K/F}(\vec{y})$, thus the definition is phrased entirely in terms of the algebraic data $(K, K^\sharp, \vec{y}, \vec{c})$. If we modify everything by an isomorphism of étale $F$-algebras with involution, then $\theta_j^\dagger$ remains unaltered.

\begin{lemma}\label{prop:dagger-Weil}
	Let $\psi_i$ be any additive character of $K_i^\sharp$, for a given $i \in I$. The value of $\theta_j^\dagger$ at $-1 \in K_i^1$ equals
	\[ \gamma_{\psi_i} (q_{i,Y}^0) \gamma_{\psi_i}((-1)^n \det Y) \gamma_{\psi_i}(1)^{-2} \gamma_{\psi_i}\left( \mathfrak{d}_i (-1)^n \det Y \right)^{-1} \cdot \left(-1, \mathfrak{d}_i (-1)^n \det Y \right)_{K_i^\sharp, 2} \]
	where $\mathfrak{d}_i$ is the class in $F^\times/F^{\times 2}$ represented by $D_i$, if $K_i = K_i^\sharp(\sqrt{D_i})$.
\end{lemma}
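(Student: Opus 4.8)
The plan is to rewrite the value $\epsilon(V_{i,Y},q_{i,Y})\,(-1,d^\pm(V_{i,Y},q_{i,Y}))_{K_i^\sharp,2}$ from Definition~\ref{def:dagger} in terms of Weil constants. The key identity is the classical formula expressing the Hasse invariant of a quadratic $K_i^\sharp$-vector space via $\gamma_{\psi_i}$: for a quadratic space $(V,q)$ over a local field $E$ with additive character $\psi_E$, one has a relation of the shape
\[
\gamma_{\psi_E}(q) = \gamma_{\psi_E}(1)^{\dim V}\,\epsilon(V,q)\,\bigl(\text{Hilbert-symbol factors in } d^\pm(V,q)\bigr),
\]
so that $\epsilon(V,q)$ and the discriminant-Hilbert-symbol correction are jointly captured by $\gamma_{\psi_E}(q)/\gamma_{\psi_E}(1)^{\dim V}$ up to an explicit $\gamma_{\psi_E}$ of a one-dimensional form built from $d^\pm$. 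First I would apply this to $(V_{i,Y},q_{i,Y}) = (K_i,q^0_{i,Y})\oplus\lrangle{(-1)^n\det Y}$, which has dimension $[K_i:K_i^\sharp]+1 = 3$ over $K_i^\sharp$ when $n=\dots$; in general $\dim_{K_i^\sharp} V_{i,Y}$ is determined by the block decomposition, but since $\gamma_{\psi_i}$ is additive on orthogonal sums and multiplicative in the right sense, I would split off the line $\lrangle{(-1)^n\det Y}$ and treat $\gamma_{\psi_i}(q^0_{i,Y})$ and $\gamma_{\psi_i}((-1)^n\det Y)$ separately, which explains the two separate $\gamma_{\psi_i}$ factors in the target expression.

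The next step is bookkeeping of the discriminant. We have $d^\pm(V_{i,Y},q_{i,Y}) = d^\pm(K_i,q^0_{i,Y})\cdot d^\pm(\lrangle{(-1)^n\det Y}) = d^\pm(K_i,q^0_{i,Y})\cdot (-1)^n\det Y$ modulo squares, using the additivity of $d^\pm$ recalled in the Conventions. Computing $d^\pm(K_i,q^0_{i,Y})$: the form $q^0_{i,Y}(u|v) = \Tr_{K_i/K_i^\sharp}(\tau(y_iu)v\,c_i)$ is, up to the twist by $y_i$, the standard trace form on the quadratic extension $K_i=K_i^\sharp(\sqrt{D_i})$, whose discriminant is $\mathfrak{d}_i$ (the class of $D_i$) times the appropriate norm factor; here the $\vec c_i$ contribution and the $\tau$-antisymmetry force $d^\pm(K_i,q^0_{i,Y})$ to equal $\mathfrak{d}_i$ up to squares. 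So $d^\pm(V_{i,Y},q_{i,Y}) \equiv \mathfrak{d}_i\,(-1)^n\det Y \pmod{K_i^{\sharp\times 2}}$, which matches the quantity $\mathfrak{d}_i(-1)^n\det Y$ appearing inside $(-1,-)_{K_i^\sharp,2}$ and inside the last $\gamma_{\psi_i}$ in the statement. Then the correction term $\gamma_{\psi_i}(1)^{-2}$ accounts for the two three-... for the rank count (dimension $2$ from $K_i$ giving $\gamma_{\psi_i}(1)^2$ when peeled down to discriminant form, cancelled), and $\gamma_{\psi_i}(\mathfrak{d}_i(-1)^n\det Y)^{-1}$ is exactly the term needed to convert the raw product $\gamma_{\psi_i}(q^0_{i,Y})\gamma_{\psi_i}((-1)^n\det Y)$ into $\epsilon\cdot$(Hilbert symbols).

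Concretely, I would invoke the two-dimensional Weil-constant formula $\gamma_{\psi_E}(\lrangle{a,b}) = \gamma_{\psi_E}(a)\gamma_{\psi_E}(b)$ together with $\gamma_{\psi_E}(a)\gamma_{\psi_E}(b) = \gamma_{\psi_E}(1)\gamma_{\psi_E}(ab)(a,b)_{E,2}$ (which is \eqref{eqn:Weil-Hilbert} read over $E=K_i^\sharp$), and the fact that $\gamma_{\psi_E}$ extends to the Witt group, so $\gamma_{\psi_E}(q)$ depends only on the rank mod $8$ and $(\dim q, d^\pm(q), \epsilon(q))$. Diagonalizing $q^0_{i,Y}$ as $\lrangle{a_1,a_2}$ with $a_1a_2 \equiv \mathfrak d_i$, repeated application of \eqref{eqn:Weil-Hilbert} collapses $\gamma_{\psi_i}(q^0_{i,Y})$ to $\gamma_{\psi_i}(1)\gamma_{\psi_i}(\mathfrak d_i)\,\epsilon(K_i,q^0_{i,Y})$ times a sign; appending the line $\lrangle{(-1)^n\det Y}$ and one more application gives the full $V_{i,Y}$, and rearranging yields $\epsilon(V_{i,Y},q_{i,Y})(-1,d^\pm(V_{i,Y},q_{i,Y}))_{K_i^\sharp,2}$ on one side and the stated $\gamma_{\psi_i}$-expression on the other. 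The main obstacle I anticipate is not conceptual but is getting the $\bmu_8$-valued signs exactly right — in particular tracking the discriminant of the $y_i$-twisted trace form $q^0_{i,Y}$ against $\mathfrak d_i$, and confirming that the constant is genuinely independent of the choice of $\psi_i$ (which it must be, since the left side is). I would verify independence of $\psi_i$ separately by replacing $\psi_i$ by $\psi_{i,c}$ and checking that the four $\gamma$-factors transform by compensating Hilbert symbols via \eqref{eqn:metaGalois-section-twist}-type computations, using that $V_{i,Y}$ has the same dimension and discriminant throughout.
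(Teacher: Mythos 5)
Your proposal takes essentially the same route as the paper: invoke the local Weil-constant/Hasse-invariant relation over $K_i^\sharp$ for the three-dimensional $(V_{i,Y},q_{i,Y})$ (the paper cites Perrin [1.3.4]), split $\gamma_{\psi_i}(q_{i,Y})$ additively across the summands $q^0_{i,Y}$ and $\lrangle{(-1)^n\det Y}$, and identify $d^\pm(K_i,q^0_{i,Y})$ with $\mathfrak{d}_i$. The paper carries out the last step a bit more explicitly by noting $q^0_{i,Y}(x|x)=-2y_ic_iN_{K_i/K_i^\sharp}(x)$ is a scalar multiple of the norm form of $K_i/K_i^\sharp$, so (the rank being two) the scalar contributes only a square and the discriminant class is exactly $\mathfrak{d}_i$; your appeal to the $\vec{c}_i$ contribution and $\tau$-antisymmetry is the informal version of this.
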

\begin{proof}
	The relation between Weil's constant, discriminant and Hasse invariant over $K_i^\sharp$ (see eg. \cite[1.3.4]{Per81}) says that
	\[ \epsilon(V_{i,Y}, q_{i,Y}) = \gamma_{\psi_i}( q_{i,Y} ) \gamma_{\psi_i}(1)^{-2} \gamma_{\psi_i}\left(d^\pm(V_{i,Y}, q_{i,Y})\right). \]
	We have $\gamma_{\psi_i}(q_{i,Y}) = \gamma_{\psi_i}(q_{i,Y}^0) \gamma_{\psi_i}((-1)^n \det Y)$ by the additivity of $\gamma_{\psi_i}$. It remains to calculate
	\[ d^\pm(V_{i,Y}, q_{i,Y}) = d^\pm(K_i, q_{i,Y}^0) (-1)^n \det Y. \]
	Note that $q_{i,Y}^0(x|x) = -2y_i c_i N_{K_i/K_i^\sharp}(x)$, therefore its discriminant is the same as that of the norm form $N_{K_i/K_i^\sharp}(\cdot)$, which equals $\mathfrak{d}_i \bmod (K_i^{\sharp, \times})^2$.
\end{proof}

\begin{lemma}\label{prop:dagger-variance}
	Suppose that the datum $\vec{y}$ or $\vec{c}$ is multiplied by $\vec{d} = (d_i)_i \in (K^\sharp)^\times$, then the value of $\theta_j^\dagger$ at $-1 \in K_i^1$ is multiplied by $\sgn_{K_i/K_i^\sharp}(d_i)$.
\end{lemma}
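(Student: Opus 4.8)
The statement is a variance assertion: replacing the datum $\vec{y}$ (or $\vec{c}$) by $\vec{d}\,\vec{y}$ (resp. $\vec{d}\,\vec{c}$) multiplies the value $\theta_j^\dagger(-1)$ at the $i$-th copy of $K_i^1$ by $\sgn_{K_i/K_i^\sharp}(d_i)$. The whole computation decouples over $i \in I$ by Definition \ref{def:dagger}, so I would fix $i$ and work inside $K_i^\sharp$ throughout, writing $K = K_i$, $K^\sharp = K_i^\sharp$, $y = y_i$, $c = c_i$, $D = D_i$, $\mathfrak{d} = \mathfrak{d}_i$ for brevity. The plan is to trace the dependence on $d := d_i$ through the two ingredients of $\theta_j^\dagger(-1)$, namely $\epsilon(V_{i,Y}, q_{i,Y})$ and $\left(-1, d^\pm(V_{i,Y}, q_{i,Y})\right)_{K^\sharp,2}$, or equivalently through the cleaner expression furnished by Lemma \ref{prop:dagger-Weil}.

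First I would observe that the two cases ($\vec y \mapsto \vec d\,\vec y$ and $\vec c \mapsto \vec d\,\vec c$) are essentially the same: in Definition \ref{def:dagger} the form $q^0_{i,Y}$ depends on $y$ and $c$ only through the product $y_i c_i$ (it equals $u,v \mapsto -\Tr_{K/K^\sharp}(\tau(u)v\,y_ic_i)$), so rescaling either $\vec y$ or $\vec c$ by $\vec d$ has the same effect $q^0 \rightsquigarrow q^0_{\vec d}$ on $q^0_{i,Y}$, where $q^0_{\vec d}(u|v) = -\Tr_{K/K^\sharp}(\tau(u)v\, d y c)$. \textbf{Caveat}: replacing $\vec y$ also changes $\det Y = N_{K/F}(\vec y)$, hence the auxiliary line $\lrangle{(-1)^n \det Y}$ and the terms $d^\pm(V_{i,Y},q_{i,Y})$, $\gamma_{\psi_i}((-1)^n\det Y)$, etc.; while replacing only $\vec c$ leaves $\det Y$ untouched. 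I expect both bookkeepings to land on the same answer, but they must be done separately. In the $\vec c$-case, only $q^0_{i,Y} \to q^0_{\vec d}$ and $d^\pm(K,q^0_{i,Y})$ change, and the computation is short. In the $\vec y$-case I would carry along the extra factor $N := N_{K/F}(\vec d) \in F^\times$ multiplying $\det Y$, reduce it modulo $F^{\times 2}$ (note $N_{K/F}(\vec d) = N_{K^\sharp/F}(N_{K/K^\sharp}(\vec d)) = N_{K^\sharp/F}(d\tau(d))$, and since $d \in K^{\sharp,\times}$ this is $N_{K^\sharp/F}(d)^2 \in F^{\times 2}$, so in fact $\det Y$ is \emph{unchanged} modulo squares), which means the line $\lrangle{(-1)^n\det Y}$ and all Weil constants attached to it contribute trivially to the ratio. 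That is the key simplification and I expect it to be the crux of the $\vec y$-case.

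With that reduction in hand, the two cases merge: the only thing that varies is $q^0_{i,Y} \to q^0_{\vec d}$, a form on $K$ whose value on the diagonal is $q^0_{\vec d}(u|u) = -2\,dyc\,N_{K/K^\sharp}(u)$, so it is the norm form $N_{K/K^\sharp}$ scaled by $-2dyc$. I would then invoke the standard formula for the scaling behaviour of Weil's constant and of $d^\pm$ of a scaled form: over $K^\sharp$ we have $d^\pm(K,q^0_{\vec d}) = d \cdot d^\pm(K,q^0_{i,Y})$ modulo $(K^{\sharp,\times})^2$ (each of the $\dim_{K^\sharp} K = 2$ diagonal entries gets an extra $d$, contributing $d^2 \equiv 1$ — wait, that gives no change; so the correct statement is that $d^\pm$ of the \emph{full} $3$-dimensional $V_{i,Y}$ picks up one factor of $d$ from the odd-dimensional auxiliary line being \emph{un}scaled while $K$ is scaled — I would check the parity carefully here), and correspondingly $\gamma_{\psi_i}(q^0_{\vec d})/\gamma_{\psi_i}(q^0_{i,Y})$ is computed via $\gamma_{\psi_i}(dQ)$ in terms of $\gamma_{\psi_i}(Q)$ and Hilbert symbols, using the identity \eqref{eqn:Weil-Hilbert} repeatedly. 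Assembling these through the Lemma \ref{prop:dagger-Weil} expression, all the $\gamma_{\psi_i}(\cdot)$ factors that do not involve $d$ cancel, and the surviving terms should collapse — using $\sgn_{K/K^\sharp}(d) = (D, d)_{K^\sharp,2}$ and bi-multiplicativity/projection formula \eqref{eqn:Hilbert-projection-formula} — to exactly $(D,d)_{K^\sharp,2} = \sgn_{K/K^\sharp}(d)$.

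\textbf{Main obstacle.} I expect the genuine difficulty to be purely bookkeeping: correctly tracking how $\gamma_{\psi_i}$ and $d^\pm$ of the $3$-dimensional form $V_{i,Y}$ respond when the $2$-dimensional summand $K$ is rescaled by $d$ but the $1$-dimensional summand $\lrangle{(-1)^n\det Y}$ is (after the square-class reduction above) not — getting the parities of the $(-1,\cdot)$ and $(d,\cdot)$ Hilbert symbols exactly right, so that the answer is $\sgn_{K/K^\sharp}(d)$ and not $\sgn_{K/K^\sharp}(d)$ times a spurious $(-1,d)_{K^\sharp,2}$ or $(d,d)_{K^\sharp,2}$. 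There is no conceptual content beyond the elementary theory of Weil indices and Hilbert symbols over $K^\sharp$ (as in \cite{Per81}), and the choice of the auxiliary character $\psi_i$ is immaterial by the very shape of Lemma \ref{prop:dagger-Weil}; I would pick $\psi_i$ arbitrary and verify $\psi_i$-independence falls out of the computation.
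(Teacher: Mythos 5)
Your approach is correct and, where the paper simply cites \cite[Lemma 4.13]{Li11} after making the same opening observation (that $N_{K/F}(\vec d) = N_{K^\sharp/F}(\vec d)^2 \in F^{\times 2}$ when $\vec d \in (K^\sharp)^\times$, so $\det Y$ is unchanged modulo squares in the $\vec y$-case), you are essentially reproving that cited lemma inline. The key reduction — that both rescalings collapse to $q^0_{i,Y} \rightsquigarrow d\,q^0_{i,Y}$ with everything else in Lemma \ref{prop:dagger-Weil} untouched — is right, and the rest is the bookkeeping you flag.

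The one place where you hesitate, though, is where you should not: neither $d^\pm(K, q^0_{i,Y})$ nor $d^\pm(V_{i,Y}, q_{i,Y})$ picks up a factor of $d$. Since $q^0_{i,Y}$ is a $2$-dimensional form over $K_i^\sharp$, rescaling by $d$ scales its discriminant by $d^2 \equiv 1$; and the auxiliary line $\lrangle{(-1)^n \det Y}$ is unchanged modulo squares in both cases, so $d^\pm(V_{i,Y}, q_{i,Y}) = \mathfrak{d}_i(-1)^n\det Y$ is unchanged. Hence in the Lemma \ref{prop:dagger-Weil} expression, the \emph{only} factor that moves is $\gamma_{\psi_i}(q^0_{i,Y})$, and there is no spurious term to cancel. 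The clean endgame: write $q^0_{i,Y} \simeq \lrangle{a_1, a_2}$ over $K_i^\sharp$; then by \eqref{eqn:Weil-Hilbert},
\[
\frac{\gamma_{\psi_i}(d\,q^0_{i,Y})}{\gamma_{\psi_i}(q^0_{i,Y})}
= (d, a_1 a_2)_{K_i^\sharp,2}\Bigl(\tfrac{\gamma_{\psi_i}(d)}{\gamma_{\psi_i}(1)}\Bigr)^2
= (d, a_1 a_2)_{K_i^\sharp,2}(d, -1)_{K_i^\sharp,2}
= \bigl(d, d^\pm(q^0_{i,Y})\bigr)_{K_i^\sharp,2}
= (d, \mathfrak{d}_i)_{K_i^\sharp,2}
= \sgn_{K_i/K_i^\sharp}(d_i),
\]
using $(\gamma_{\psi_i}(d)/\gamma_{\psi_i}(1))^2 = (d,d)_{K_i^\sharp,2} = (d,-1)_{K_i^\sharp,2}$ (apply \eqref{eqn:Weil-Hilbert} with $a=b=d$) and $d^\pm(q^0_{i,Y}) = -a_1a_2 = \mathfrak{d}_i$ as you computed. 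So the answer is $\sgn_{K_i/K_i^\sharp}(d_i)$ with no extra Hilbert symbol, and the $\psi_i$-independence falls out automatically since the ratio is manifestly $\mu_2$-valued. Your "main obstacle" resolves in your favour; just delete the speculation about a spurious factor of $d$ in $d^\pm(V_{i,Y})$.
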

\begin{proof}
	Note that multiplying $\vec{y}$ by $\vec{d}$ will multiply $\det Y$ by $N_{K/F}(\vec{d}) = N_{K^\sharp/F}(\vec{d})^2$. Given Lemma \ref{prop:dagger-Weil}, it suffices to apply \cite[Lemma 4.13]{Li11} to $K_i^\sharp$ and $\psi_i$.
\end{proof}
By taking $\vec{d} \in N_{K/K^\sharp}(K^\times)$, we infer that $\theta_j^\dagger$ depends only on the equivalence class of $(K, K^\sharp, \vec{y}, \vec{c})$. This is not enough: we need a homomorphism that depends only on the coset $Y + \mathfrak{s}(F)_0$.

\begin{lemma}\label{prop:dagger-indep}
	The character $\theta_j^\dagger$ depends only on the coset of $Y + \mathfrak{s}(F)_0$. Moreover, $\theta^\flat$ intervenes only through its pro-$p$ component.
\end{lemma}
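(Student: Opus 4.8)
The plan is to show two independences separately: (i) that $\theta_j^\dagger$ is unchanged if $Y$ is replaced by $Y_1 \in Y + \mathfrak{s}(F)_0$, and (ii) that $\theta_j^\dagger$ only sees the pro-$p$ part $\theta^\flat|_{S_{Q,m}(F)_{0+}}$. The second point will actually follow from the first once I recall the bijection $\mathrm{Fiber}(S,\theta^\flat) \leftrightarrow$ (genuine characters with prescribed pro-$p$ component): by \eqref{eqn:Y-theta-2} the element $Y = Y_\psi$ is only well-defined modulo $\mathfrak{s}(F)_{a(\psi)}$, and changing the prime-to-$p$ part of $\theta^\flat$ alters $Y$ by an element of $\mathfrak{s}(F)_0 \supset \mathfrak{s}(F)_{a(\psi)}$ (here I use that $p \neq 2$ and that the prime-to-$p$ quotient $S(F)_{p'} \cong S(F)/S(F)_{0+}$ is $2$-torsion by Theorem \ref{prop:S-p'} combined with Proposition \ref{prop:iota-kernel}/\eqref{eqn:isogeny-Sp}). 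So the whole lemma reduces to (i).

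For (i), I would argue componentwise over $i \in I$, working inside the twisted Levi $R_{K_i^\sharp/F}(\GL(2))\times\cdots$, i.e. effectively over $K_i^\sharp$ with the quadratic extension $K_i/K_i^\sharp$. Write $Y_1 = Y + Z$ with $Z \in \mathfrak{s}(F)_0$; in parameter terms, $Y$ is parameterized by $(K,K^\sharp,\vec y,\vec c)$ and $Y_1$ by $(K,K^\sharp,\vec y\,',\vec c)$ where $y_i' = y_i u_i$ for some $u_i \in (K_i^{\sharp})^\times$ (the same $\vec c$, since $\vec c$ only depends on $\lrangle{\cdot|\cdot}$ and the torus, not on $Y$). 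By Lemma \ref{prop:dagger-variance} the value of $\theta_j^\dagger$ at $-1 \in K_i^1$ gets multiplied by $\sgn_{K_i/K_i^\sharp}(u_i)$, so I must show $\sgn_{K_i/K_i^\sharp}(u_i) = 1$, i.e. $u_i \in N_{K_i/K_i^\sharp}(K_i^\times)$. The key input is a depth/valuation estimate: since $Z \in \mathfrak{s}(F)_0$, the ratio $y_i'/y_i = u_i$ lies in $1 + \mathfrak{p}_{K_i^\sharp}^{?}$ — more precisely I would invoke Lemma \ref{prop:stable-good-2}, which pins $v_{\bar F}(\dd\alpha(Y)) = a(\psi) - 1/e$ on \emph{all} roots, so the eigenvalues $y_i$ all have the same valuation, while the perturbation $Z$ shifts each eigenvalue only within $\mathfrak{s}(F)_0$, i.e. the valuation of $u_i - 1$ is strictly positive (indeed $\geq$ the relevant filtration jump). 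Then $u_i$ is a principal unit in $K_i^\sharp$, and since $K_i/K_i^\sharp$ is a tamely ramified quadratic extension (Theorem \ref{prop:S-p'}) with $p \neq 2$, every principal unit of $K_i^\sharp$ is a norm from $K_i$ — this is the standard fact that for a tame extension $N(1+\mathfrak{p}_{K_i}) \supseteq 1+\mathfrak{p}_{K_i^\sharp}$, or one argues via $1+\mathfrak{p}_{K_i^\sharp}$ being uniquely $2$-divisible so $\sgn_{K_i/K_i^\sharp}$ kills it. Hence $\sgn_{K_i/K_i^\sharp}(u_i)=1$ and $\theta_j^\dagger$ is unchanged.

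One subtlety to handle carefully: the coset $Y + \mathfrak{s}(F)_0$ versus the precise canonical coset $Y_\psi + \mathfrak{s}(F)_{a(\psi)}$ from Lemma \ref{prop:stable-good-2}, and that $\det Y$ (which enters $q_{i,Y}$ through $(-1)^n\det Y$) must be shown to change only by $F^{\times 2}$ — but that is immediate from Lemma \ref{prop:dagger-variance}'s proof since multiplying $\vec y$ by $\vec d$ multiplies $\det Y$ by $N_{K^\sharp/F}(\vec d)^2$. I expect the main obstacle to be the clean bookkeeping of valuations: matching ``$Z \in \mathfrak{s}(F)_0$'' with ``$u_i$ is a principal unit of $K_i^\sharp$'' requires knowing the Moy--Prasad filtration $\mathfrak{s}(F)_r$ in terms of the eigenvalue parametrization, which is where Lemma \ref{prop:stable-good-2} (equal valuations on all roots) does the real work, guaranteeing that no eigenvalue degenerates and that the perturbation lands uniformly in the unit filtration. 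Everything else is a routine unwinding of Definition \ref{def:dagger} and the norm-form computation already recorded in Lemma \ref{prop:dagger-Weil}.
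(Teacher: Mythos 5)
Your proof is correct and is somewhat cleaner than the paper's. Both proofs ultimately reduce to the same observation: the perturbation $Y\leadsto Y+Z$ rescales each $y_i$ by a principal unit $u_i=1+z_i/y_i\in 1+\mathfrak{p}_{K_i^\sharp}$, and $\sgn_{K_i/K_i^\sharp}$ is trivial on principal units because $1+\mathfrak{p}_{K_i^\sharp}$ is pro-$p$ (hence uniquely $2$-divisible, $p\neq 2$). The paper, however, does not feed $\vec u$ into Lemma \ref{prop:dagger-variance} wholesale; instead it walks term-by-term through the expression of Lemma \ref{prop:dagger-Weil}. For the factors $\gamma_{\psi_i}((-1)^n\det Y)$ and $\gamma_{\psi_i}(\mathfrak{d}_i(-1)^n\det Y)$ it invokes a Gaussian-sum rigidity statement for Weil constants (if $v_M(t_1-t)>v_M(t)$ then $\gamma_\eta(t_1)=\gamma_\eta(t)$), and handles the remaining two factors via the principal-unit observation. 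Your route shortcuts the Gaussian-sum step entirely: since $u_i\in K_i^\sharp$, one has $N_{K_i/F}(u_i)=N_{K_i^\sharp/F}(u_i)^2$, so $\det(Y+Z)/\det Y\in F^{\times 2}\subset(K_i^\sharp)^{\times 2}$ and the $\det Y$-terms are manifestly unchanged; then Lemma \ref{prop:dagger-variance} converts the whole change into the single sign $\sgn_{K_i/K_i^\sharp}(u_i)=1$. This is a legitimate and more economical argument, and you also correctly flag the $\mathfrak{s}(F)_0$-versus-$\mathfrak{s}(F)_{a(\psi)}$ discrepancy (the statement says $\mathfrak{s}(F)_0$ while the relevant canonical coset from Lemma \ref{prop:stable-good-2} is $Y+\mathfrak{s}(F)_{a(\psi)}$; the paper's own proof actually takes $Z\in\mathfrak{s}(F)_{a(\psi)}$).

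One small wording point: in your reduction of the ``moreover'' clause you write that ``changing the prime-to-$p$ part of $\theta^\flat$ alters $Y$ by an element of $\mathfrak{s}(F)_0$.'' This is backwards --- by \eqref{eqn:Y-theta-2}, $Y$ is computed from $\theta$ on $\mathfrak{s}(F)_{1/e}$ only, so changing the prime-to-$p$ part of $\theta^\flat$ does not alter $Y$ at all; combined with part (i) (invariance of $\theta_j^\dagger$ in the coset of $Y$) one immediately gets that $\theta_j^\dagger$ sees only the pro-$p$ component. Your conclusion is right, but the intermediate sentence should read that the prime-to-$p$ part leaves $Y$ untouched, not that it shifts $Y$ within a lattice.
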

\begin{proof}
	Recall that prescribing $Y + \mathfrak{s}(F)_0$ amounts to prescribing the pro-$p$ component of $\theta^\flat$. In view of Lemma \ref{prop:dagger-Weil}, it suffices to fix $i \in I$ and argue that
	\[ \gamma_{\psi_i}((-1)^n \det Y), \quad \gamma_{\psi_i}\left( \mathfrak{d}_i (-1)^n \det Y \right), \quad (-1, \mathfrak{d}_i (-1)^n \det Y )_{K_i^\sharp, 2} , \quad \gamma_{\psi_i}(q_{i,Y}^0) \]
	are all determined by the coset. The following standard fact will be used: consider
	\begin{compactitem}
		\item $M$: a non-archimedean local field of residual characteristic $\neq 2$,
		\item $\eta: M \to \CC^\times$ is an additive character with $a(\eta)$ defined as in \eqref{eqn:a},
		\item $t \in M^\times$,
		\item $\mathcal{L} := \mathfrak{p}_M^{\lceil r \rceil} \subset \mathfrak{p}_M^{\lfloor r \rfloor} =: \mathcal{L}'$, where $r := \dfrac{a(\eta) + 1 - v_M(t)}{2}$.
	\end{compactitem}
	Then $\gamma_\eta(t) = g_\eta(t, \mathcal{L}') \big/ |g_\eta(t, \mathcal{L}')|$, with
	\[ g_\eta(t, \mathcal{L}') := \sum_{x \in \mathcal{L}'/\mathcal{L}} \eta(tx^2). \]
	Here $\eta(tx^2)$ depends only on $x + \mathcal{L} \subset \mathcal{L}'$. Indeed, using $\lfloor r \rfloor + \lceil r \rceil = a(\eta) + 1 -v_M(t)$, it is routine to check that $f(x) := \eta(tx^2)$ equals $1$ on $\mathcal{L}$, and $\mathcal{L}' = \{y \in M: \eta(ty\mathcal{L})=1 \}$. Thus the required formula follows from \cite[\S 16 and \S 27]{Weil64}.
	
	As a consequence, suppose $t_1 \in M^\times$ satisfies $v_M(t_1 - t) > v_M(t)$, then the $r$ associated to $t,t_1$ are the same, whilst for all $x \in \mathcal{L}'$,
	\[ v_M\left((t-t_1)x^2\right) \geq v_M(t) + 1 + 2\lfloor r \rfloor \geq a(\eta) + 1 \implies \eta(tx^2) = \eta(t_1 x^2). \]
	Hence $\gamma_\eta(t) = \gamma_\eta(t_1)$.
	
	Now apply this to $M = K_i^\sharp$, $\eta = \psi_i$ and $t = (-1)^n \det Y$, $t_1 = (-1)^n \det(Y+Z)$, where $Z \in \mathfrak{s}(F)_{a(\psi)}$. By Proposition \ref{prop:stable-good-2}, the eigenvalues $\lambda \in \bar{F}$ of $Y$ satisfy $v_{\bar{F}}(\lambda) = a(\psi) - 1/e$. On the other hand, the eigenvalues of $Z$ satisfy $v_{\bar{F}}(\lambda) \geq a(\psi)$; this is a consequence of the concrete description of Moy--Prasad filtrations inside $\gl(W)$, see \cite[\S 4.1]{LMS16}. Since $[Y,Z]=0$, we infer that $v_{\bar{F}}(\det(Y+Z) - \det Y) > v_{\bar{F}}(\det Y)$, and same for $v_M(\cdots)$. Therefore the previous result implies $\gamma_{\psi_i}((-1)^n \det Y) = \gamma_{\psi_i}((-1)^n \det(Y+Z))$. Multiplying $t, t_1$ by $d := d(K_i/K_i^\sharp)$, the same argument gives
	\[ \gamma_{\psi_i}( \mathfrak{d}_i (-1)^n \det Y) = \gamma_{\psi_i}(\mathfrak{d}_i (-1)^n \det(Y+Z)). \]
	
	Next, we contend that $\gamma_{\psi_i}(q_{i,Y}^0)$ and $(-1, \mathfrak{d}_i (-1)^n \det Y)_{K_i^\sharp, 2}$ are both unaltered under $Y \leadsto Y+Z$. Parameterize $Y$, $Z$ by $\vec{y}, \vec{z} \in K^\times$ as usual. The $y_i, z_i$ are actually eigenvalues of $Y, Z$, thus $\frac{y_i + z_i}{y_i} \in 1 + \mathfrak{p}_{K_i^\sharp}$ by the foregoing discussions. Since the group $1 + \mathfrak{p}_{K_i^\sharp}$ is pro-$p$, we have
	\[ \left( -1, \frac{\det(Y+Z)}{\det Y} \right)_{K_i^\sharp, 2} = 1, \quad \sgn_{K_i/K_i^\sharp}\left( \dfrac{(y_i + z_i)c_i}{y_i c_i} \right) = 1. \]
	The latter implies $\gamma_{\psi_i}(q_{i,Y}^0) = \gamma_{\psi_i}(q_{i,Y+Z}^0)$ by Lemma \ref{prop:dagger-variance}.
\end{proof}

One can also regard $\theta^\dagger_j$ as a character of $j(S(F)/S(F)_{0+})$. All the foregoing constructions hinge on $\psi$ and $\lrangle{\cdot|\cdot}$.
\begin{theorem}\label{prop:dagger-SS}\index{stable system}
	The characters $\theta^\dagger_j$ in Definition \ref{def:dagger} depend only on $(\mathcal{E}, S, \theta^\flat)$ and $j$, and they form a stable system in the sense of Definition \ref{def:stable-system} by putting $\theta_j = \theta_j^\circ \theta_j^\dagger$.
\end{theorem}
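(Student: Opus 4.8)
The statement has two halves. First, the characters $\theta^\dagger_j$ must depend only on the triple $(\mathcal{E}, S, \theta^\flat)$ together with the $G(F)$-conjugacy class of $j$ — not on the many auxiliary choices used to define them, namely $Y = Y_\psi$, the algebraic parameter $(K, K^\sharp, \vec{y}, \vec{c})$, the auxiliary characters $\psi_i$, and the class $\mathfrak{d}_i$. Second, the resulting family $\theta_j = \theta^\circ_j \theta^\dagger_j$ must verify the three axioms \textbf{SS.1}, \textbf{SS.2}, \textbf{SS.3} of Definition \ref{def:stable-system}. Almost all the analytic content has been isolated in the preceding lemmas, so the proof is mostly a matter of assembling them in the right order.

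For the well-definedness I would argue as follows. Independence of the choice of $\psi_i$ and of $\mathfrak{d}_i$ is immediate from Lemma \ref{prop:dagger-Weil}: the formula there expresses the value of $\theta^\dagger_j$ at $-1 \in K_i^1$ as a product of Weil constants times a Hilbert symbol, and the left-hand side — the quantity $\epsilon(V_{i,Y},q_{i,Y})(-1, d^\pm(V_{i,Y},q_{i,Y}))_{K_i^\sharp,2}$ of Definition \ref{def:dagger} — manifestly involves neither $\psi_i$ nor $\mathfrak{d}_i$; so the right-hand side is independent of them as well (this is really just consistency of the formula, and one checks that changing $\psi_i$ changes no factor, by the standard variance of local root numbers together with Lemma \ref{prop:dagger-variance}). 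Independence of the normalization of the parameter $(K, K^\sharp, \vec y, \vec c)$ within its equivalence class is Lemma \ref{prop:dagger-variance} applied with $\vec d \in N_{K/K^\sharp}(K^\times)$, since then $\prod_i \sgn_{K_i/K_i^\sharp}(d_i) = 1$. Independence of the choice of $Y_\psi$ within the canonical coset $Y_\psi + \mathfrak{s}(F)_0$ is exactly Lemma \ref{prop:dagger-indep}; and that lemma also records that $\theta^\flat$ enters only through its pro-$p$ component, i.e. only through the coset $Y_\psi + \mathfrak{s}(F)_0$ it determines via \eqref{eqn:Y-theta-2}. Since the $G(F)$-conjugacy class of $j$ determines the conjugacy class of $jY$, hence the equivalence class of $(K,K^\sharp,\vec y,\vec c)$, this completes the first half. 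I would note that \textbf{SS.3} — the requirement that $\theta^\dagger_j$ depend only on $jS \subset G$ and on $\theta^\flat|_{S_{Q,m}(F)_{0+}}$, with the induced compatibility under isomorphisms of inducing data — is subsumed by this same discussion, because $jS$ and the pro-$p$ part of $\theta^\flat$ determine the conjugacy class of $jY$ modulo $\mathfrak{s}(F)_0$.

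For \textbf{SS.1}: by construction $\theta^\dagger_j$ is a character of $j(S(F)/S(F)_{0+}) = jS(F)_{p'}$, valued in $\{\pm 1\}$, so the first sentence of \textbf{SS.1} is clear. The variance clause asserts that replacing $\psi$ by $\psi_c$, or $\lrangle{\cdot|\cdot}$ by $c\lrangle{\cdot|\cdot}$, twists $\theta^\dagger_j$ by the quadratic character of $jS(F)_{p'}$ sending $-1 \in K_i^1$ to $\sgn_{K_i/K_i^\sharp}(c)$. Rescaling $\lrangle{\cdot|\cdot}$ to $c\lrangle{\cdot|\cdot}$ replaces $\vec c$ by $c\vec c$ (Remark \ref{rem:parameter-GSp}), and then Lemma \ref{prop:dagger-variance} with $\vec d = (c,\dots,c)$ gives precisely the twist by $\prod \sgn_{K_i/K_i^\sharp}(c)$ on each factor. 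Rescaling $\psi$ to $\psi_c$ amounts to rescaling the chosen $Y_\psi$ (one passes from $\psi$ to $\psi_{\varpi_F^{a}}$ as in the proof of Lemma \ref{prop:stable-good-2}, then scales), and in the formula of Lemma \ref{prop:dagger-Weil} this multiplies $\det Y$ by $c^{2n}$ up to squares and multiplies each $y_i$ by $c$ up to squares; tracking the Weil constants through Lemma \ref{prop:dagger-variance} one again obtains the same twist. I expect this bookkeeping to be the most delicate point of the whole argument: one must verify that the two sources of variance (the splitting $\sigma$ over $S(F)_{p'}$ from Lemma \ref{prop:splitting-variance}, already absorbed into $\theta^\circ_j$, versus the new $\theta^\dagger_j$) produce exactly the combination demanded so that the final $L$-packet $\Pi_\phi$ is canonical — but the necessary identity has effectively been checked already inside the proof of Theorem \ref{prop:packet-independence}, so I would cross-reference that computation rather than redo it.

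For \textbf{SS.2}, the stable-conjugacy compatibility $\theta_{j'}(\CaliAd(g)\tilde\gamma) = \theta_j(\tilde\gamma)$ for $j' = \Ad(g)\circ j$: I would separate the pro-$p$ part from the prime-to-$p$ part. On $S(F)_{0+}$ the cover splits uniquely (Lemma \ref{prop:splitting-pro-p}/\ref{prop:splitting-pro-p}), $\CaliAd(g)$ restricts to the honest conjugation $\Ad(g)$ there (by \textbf{AD.3} of Proposition \ref{prop:CAd-prop}, since $S(F)_{0+}$ is pro-$p$ and $\Cali_m$ is $\mu_2$-valued hence trivial on it), and $\theta^\circ_j|_{S(F)_{0+}}$ is transported correctly by construction of $\theta^\flat$; the dagger part is trivial on $S(F)_{0+}$. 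It remains to treat $\tilde\gamma$ over $S(F)_{p'} = \{\pm 1\}^I$, i.e. over the generators $-1 \in K_i^1$. Here one reduces, as usual (Theorem \ref{prop:G-T-reduction}, Definition \ref{def:st-conj}), to the rank-one case and must compute $\CaliAd(g)(\widetilde{-1})$ where $\widetilde{-1}$ is the distinguished lift of Definition \ref{def:lifting-minus-1}: by Proposition \ref{prop:CAd-minus-1} this introduces the sign $\sgn_{K/F}(\nu(g)) = \lrangle{\kappa_-, \mathrm{inv}(j,j')}$ together with a factor $\Cali_m(\nu(g),\cdot)$. One then checks that the product $\theta^\circ_j\theta^\dagger_j$ changes by exactly these factors as $jS$ moves within its stable class: the change of $\theta^\circ_j$ is governed by the splitting variance, and the change of $\theta^\dagger_j$ is governed by the change of the Hasse invariant $\epsilon(V_{i,Y},q_{i,Y})$ and discriminant under the moment-map/endoscopy comparison — concretely, by the behaviour of the quadratic form $q^0_{i,Y}$ under replacing $\vec c$ by $\vec c\cdot(\text{norm})$ versus by a non-norm, as quantified by Lemma \ref{prop:dagger-variance} and the explicit formula \eqref{eqn:SO-split} for the $\SO$ toral invariants. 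The identity to be verified is that these two variances cancel against the $\Cali_m$ and $\sgn_{K/F}$ factors produced by $\CaliAd(g)$. This is the genuine ``reverse-engineering'' content of the construction and is where the case $m=2$ and the description of $\Theta$-lifting by Loke--Ma--Savin (to be used only in \S\ref{sec:theta}, not here) motivated the shape of $q^0_{i,Y}$; here it should fall out by a direct Hilbert-symbol computation in each $K_i^\sharp$. I would flag this cancellation as the main obstacle, everything else being assembly of the lemmas above.
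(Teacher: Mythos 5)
Your overall plan matches the paper's: the well-definedness (independence of auxiliary choices), \textbf{SS.1}, and \textbf{SS.3} are handled by exactly the right lemmas — Lemma \ref{prop:dagger-variance}, Lemma \ref{prop:dagger-indep}, Remark \ref{rem:parameter-GSp}, and the pro-$p$ remarks — and your bookkeeping for the rescalings in \textbf{SS.1} is accurate (multiplying $\lrangle{\cdot|\cdot}$ by $c$ sends $\vec{c}\mapsto c\vec c$, replacing $\psi$ by $\psi_c$ sends $\vec y\mapsto c^{-1}\vec y$, and Lemma \ref{prop:dagger-variance} then gives the twist by $\sgn_{K_i/K_i^\sharp}(c)$ in either case). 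For \textbf{SS.3} the paper additionally invokes Proposition \ref{prop:big-Weyl-action} to check that an element $w\in\Omega(G,R)(F)$ with $jS = j'S = R$ acts on the parameter via an automorphism of $(K,\tau)$, which merely permutes the index set $I$ and sends $q_{i,Y}$ to an isometric $q_{f(i),Y}$; your ``subsumed by the same discussion'' is a bit too quick there, but the gap is cosmetic.

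The real shortfall is \textbf{SS.2}, which you flag as the main obstacle and leave unresolved, and where your attribution of the variance is wrong. After reducing to the rank-one case via the factorization pair and Theorem \ref{prop:G-T-reduction}, the splitting $\widetilde{-1}$ of Definition \ref{def:lifting-minus-1} depends only on $(W,\lrangle{\cdot|\cdot})$ and $\psi$, \emph{not} on the torus $jS$; so when $j$ is replaced by $j' = \Ad(g)\circ j$ inside the same $W$, the genuine character $\theta^\circ_j|_{\widetilde{jS(F)_{p'}}}$ does \emph{not} change — contrary to your statement that ``the change of $\theta^\circ_j$ is governed by the splitting variance.'' \emph{All} of the discrepancy sits in $\theta^\dagger$: if the parameters for $j,j'$ are $(K,F,c)$ and $(K,F,c')$, then $\theta^\dagger_{j'}(-1)/\theta^\dagger_j(-1) = \sgn_{K/F}(c/c')$ by Lemma \ref{prop:dagger-variance}, and this equals $\sgn_{K/F}(\nu(g))$ by Proposition \ref{prop:st-conj-SL2}; Proposition \ref{prop:CAd-minus-1} gives $\CaliAd(g)(\tilde\gamma_0) = \sgn_{K/F}(\nu(g))\tilde\gamma_0$. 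Genuineness of $\theta^\circ_{j'}$ now yields
\[
\theta_{j'}\bigl(\CaliAd(g)\tilde\gamma_0\bigr)
= \sgn_{K/F}(\nu(g))\,\theta^\circ_{j'}(\tilde\gamma_0)\,\theta^\dagger_{j'}(\gamma_0)
= \sgn_{K/F}(\nu(g))^2\,\theta^\circ_j(\tilde\gamma_0)\,\theta^\dagger_j(\gamma_0) = \theta_j(\tilde\gamma_0),
\]
which is the cancellation you needed. No appeal to \eqref{eqn:SO-split} or the moment-map comparison is required for \textbf{SS.2}; those belong to Theorem \ref{prop:interplay} and the later $\Theta$-lifting comparison, not to the verification that the $\theta^\dagger_j$ form a stable system.
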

\begin{proof}
	The remarks after Lemma \ref{prop:dagger-variance} entail that $\theta^\dagger_j$ is $G(F)$-invariant in $j$; the same invariance holds for $\theta^\circ_j$ by Remark \ref{rem:splitting-invariance}. Lemma \ref{prop:dagger-indep} shows that only the pro-$p$ part of $\theta^\flat$ matters for $\theta^\dagger_j$.
	
	\begin{asparadesc}
		\item[SS.1]\quad The first part follows by construction. For the second part, Remark \ref{rem:parameter-GSp} asserts that multiplying $\lrangle{\cdot|\cdot}$ by $a \in F^\times$ amounts to replacing $(K, K^\sharp, \vec{c})$ by $(K, K^\sharp, a\vec{c})$. Similarly, replacing $\psi$ by $\psi_a$ is the same as replacing $Y_\psi$ by $Y_{\psi_a} = a^{-1} Y_\psi$ by \eqref{eqn:Y-theta-2}; this is in turn equivalent to replacing $\vec{y}$ by $a^{-1}\vec{y}$. The required behavior of $\theta^\dagger_j$ is then ensured by Lemma \ref{prop:dagger-variance}.
		\item[SS.2]\quad The invariance under $G(F)$-conjugacy has been observed above. In general, consider $j' = \Ad(g) \circ j$ and $\tilde{\gamma} \in \widetilde{jS}$ as in Definition \ref{def:stable-system}. Take the topological Jordan decomposition $\tilde{\gamma} = \tilde{\gamma}_0 \gamma_{>0}$. The covering splits uniquely over pro-$p$ subgroups, thus
		\[ \CaliAd(g)(\tilde{\gamma}) = \CaliAd(g)(\tilde{\gamma}_0) \CaliAd(g)(\gamma_{>0}) = \CaliAd(g)(\tilde{\gamma}_0) \Ad(g)(\gamma_{>0}). \]
		On the other hand, the pro-$p$ parts of $\theta_j$, $\theta_{j'}$ coincides with those of $\theta^\circ_j$, $\theta^\circ_{j'}$ by \textbf{SS.1}, which are respected by $\Ad(g)$. Therefore it remains to show that
		\[ \theta_{j'}(\CaliAd(g)(\tilde{\gamma}_0)) = \theta_j(\tilde{\gamma}_0). \]
		Recall that $\CaliAd(g) = \CaliAd(g'')\CaliAd(g')$ where $(g', g'')$ is a factorization pair for $\Ad(g)$ (Definition \ref{def:st-conj}). The verification thus reduces to the case $g \in G^{jS}_\text{ad}(F)$, and then to $\dim_F W = 2$ upon Weil restriction; recall \S\ref{sec:st-conj-BD} for this procedure. By Theorem \ref{prop:S-p'} we have $\gamma_0 := \bm{p}(\tilde{\gamma}_0) \in \{\pm 1\}$, thus it suffices to deal with the case $\gamma_0 = -1$. Choose a parameter $(K, K^\sharp = F, \ldots)$ for $j$. Proposition \ref{prop:CAd-minus-1} asserts that $\CaliAd(g)(\tilde{\gamma}_0) = \sgn_{K/F}(\nu(g)) \tilde{\gamma}_0$, with $\nu(g)$ coming from \eqref{eqn:nu-arises}.
		
		On the other hand, $\theta^\circ_j(\tilde{\gamma}_0) = \theta^\circ_{j'}(\tilde{\gamma}_0)$ since they rely on $\theta^\flat$ and the $\widetilde{-1}$ prescribed in \S\ref{sec:splittings-S}, which is insensitive to $j, j'$ when $\dim_F W = 2$. Compare $\theta^\dagger_{j'}(\gamma_0)$ and $\theta^\dagger_j(\gamma_0)$ next. Let $(K, F, c)$ and $(K, F, c')$ be the parameters of $j$ and $j'$. Lemma \ref{prop:dagger-variance} implies $\theta^\dagger_{j'}(\gamma_0) = \sgn_{K/F}(c/c') \theta^\dagger_j(\gamma_0)$. On the other hand, Proposition \ref{prop:st-conj-SL2} implies $\sgn_{K/F}(c/c') = \sgn_{K/F}(\nu(g))$: both are identifiable with $\text{inv}(j, j')$.
		
		\item[SS.3]\quad First fix $j \in \mathcal{E}$. Lemma \ref{prop:dagger-indep} asserts that $\theta_j^\dagger$ is determined by the pro-$p$ part of $\theta^\flat$. Next, fix $\theta^\flat$ and suppose that $jS = j'S$ where $j, j' \in \mathcal{E}$; denote this common image by $R$. Then $j' = \Ad(w) j$ for some $w \in \Omega(G,R)(F)$. By Proposition \ref{prop:big-Weyl-action}, $\Ad(w)$ corresponds to the action by some $\varphi \in \Aut(K,\tau)$ on parameters. An automorphism $\varphi$ induces a permutation $f$ on $I$ characterized by $\varphi|_{K_i}: K_i \rightiso K_{f(i)}$, thus the identification $\mu_2^I \simeq R(F)_{p'}$ changes by $f$. As remarked after Definition \ref{def:dagger}, $q_{i,Y} \simeq q_{f(i), Y}$ as quadratic vector spaces over $K_i^\sharp \simeq K_{f(i)}^{\sharp}$. It follows that $\theta_j^\dagger = \theta_{\Ad(w)j}^\dagger$ as functions on $R(F)_{p'}$.
	\end{asparadesc}
\end{proof}

\subsection{Interplay}\label{sec:interplay}
Keep the assumptions of \S\ref{sec:ss-construction}. Let $(\mathcal{E}, S, \theta^\flat)$ be as in Definition \ref{def:inducing-data} and consider a $j \in \mathcal{E}$. Define $Y = Y_\psi$ by \eqref{eqn:Y-theta-2}. By Theorem \ref{prop:mm}, there exists a unique $(V,q)$ with $\dim_F V = 2n+1$ and $d^\pm(V,q)=1$ such that $jY$ corresponds to some $Y' \in \mathfrak{h}_\text{reg}(F)$, where $H := \SO(V,q)$; such a $Y'$ is unique up to conjugacy. Furthermore, Remark \ref{rem:explicit-moment-map} says that we can take
\[ (V,q) := q\lrangle{Y} \oplus \lrangle{(-1)^n \det Y} \]
and there is a natural embedding $k: S \hookrightarrow H$ determined by $j$.

Also recall Kaletha's quadratic character $\epsilon_{jS}$ (resp. $\epsilon_{kS}$) of $jS(F) \subset G$ (resp. $kS(F) \subset H$) from \S\ref{sec:toral-invariants}. Denote their pull-backs to $S(F)$ as $\epsilon_j$ and $\epsilon_k$, respectively.\index{epsilonS@$\epsilon_S$}

\begin{theorem}\label{prop:interplay}
	For every $\gamma \in S(F)$, let $\gamma = \gamma_0 \gamma_{>0}$ be its topological Jordan decomposition. Then
	\[ \dfrac{\epsilon_k(\gamma)}{\epsilon_j(\gamma)} = \theta^\dagger_j(\gamma_0) \]
	where $\theta^\dagger_j$ is as in Definition \ref{def:dagger}, pulled back to $S(F)_{p'}$
\end{theorem}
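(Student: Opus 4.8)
The plan is to reduce everything to the rank-one case $\dim_F W = 2$ and to compute both sides of the identity explicitly there. First I would invoke the product decompositions attached to the parameter $(K, K^\sharp, \vec y, \vec c)$: the torus $S$ splits as $\prod_{i \in I} R_{K_i^\sharp/F}(K_i^1)$, the group $G^{jS}$ as $\prod_{i \in I} R_{K_i^\sharp/F}(\SL(2))$, and, crucially, the moment-map construction of \S\ref{sec:MM} respects this decomposition: $(V, q) = \bigoplus_{i \in I} (V_{i,Y}, q_{i,Y})$ over $F$, where each $V_{i,Y}$ is the Weil restriction (along $K_i^\sharp/F$) of the $3$-dimensional $K_i^\sharp$-space in Definition \ref{def:dagger}. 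Correspondingly $k: S \hookrightarrow H = \SO(V,q)$ factors through $\prod_i \SO(V_{i,Y}, q_{i,Y})$. Since $\epsilon_{jS}$ and $\epsilon_{kS}$ are by construction products over $\Gamma_F$-orbits of symmetric roots (Lemma \ref{prop:epsilon-invariance} and \cite[Lemma 4.12]{Kal15}), and $\theta_j^\dagger$ is a product over $i \in I$ by Definition \ref{def:dagger}, it suffices to check the identity orbit-by-orbit, i.e. for $\dim_F W = 2$ with $S \simeq R_{K^\sharp/F}(K^1)$, $K^\sharp$ a field and $K/K^\sharp$ a tamely ramified quadratic extension (Theorem \ref{prop:S-p'}); and after Weil restriction one is working over $K^\sharp$, so one may as well take $K^\sharp = F$, $\dim_F W = 2$, $G = \SL(2)$, $H = \SO(V,q)$ with $\dim_F V = 3$, $d^\pm(V,q) = 1$.

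In the rank-one case I would compute both sides at $\gamma_0 = -1$ (the nontrivial $2$-torsion element, the only relevant value since $S(F)_{p'} = \{\pm 1\}$ and $\theta_j^\dagger$ is trivial on the pro-$p$ part while $\epsilon_j, \epsilon_k$ factor through $\gamma_0$ by \cite[Lemma 4.12]{Kal15}). The toral-invariant computations of \S\ref{sec:toral-invariants} already give: for the long root $\alpha$ of $S \subset \SL(2) = G$, $f_{(G,S)}(\alpha) = 1$ when $\epsilon_1, \epsilon_2$ are $\Gamma$-conjugate and $= \sgn_{K/F}(-1)$ otherwise (with the two sheets of $X^*(S_{\bar F})$ determined by $K$); for the short root $\alpha$ of $S \subset \SO(V,q) = H$, formula \eqref{eqn:SO-split} gives $f_{(H,S)}(\alpha) = \epsilon(U_0, q_0)(-1, d^\pm(U_0, q_0))_{F,2}$, where $(U_0, q_0)$ is the descent to $F_{\pm\alpha} = F$ of the weight-$\{\alpha, 0, -\alpha\}$ subspace, i.e. exactly $(V_{1,Y}, q_{1,Y})$ of Definition \ref{def:dagger}. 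Thus $\epsilon_k(-1)/\epsilon_j(-1)$ unwinds precisely to $\epsilon(V_{1,Y}, q_{1,Y})(-1, d^\pm(V_{1,Y}, q_{1,Y}))_{F,2}$, which is the definition of $\theta_j^\dagger(-1)$. The only genuine bookkeeping is matching the "long root of $G$ versus short root of $H$" combinatorics from Remark \ref{rem:moment-map-basis} so that the two sheet-configurations correspond, and checking that Kaletha's $\epsilon$-character in rank one really is $f_{(\cdot,S)}(\alpha)$ evaluated at the unique symmetric root with $\alpha(\gamma_0) = -1 \neq 1$ — this is \cite[Lemma 4.12]{Kal15}, and the $\SO$-side short root is symmetric exactly when $K/F$ is a field, which is our hypothesis.

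The main obstacle I anticipate is the sign/normalization audit: the moment map in \S\ref{sec:MM} is defined with a sign differing from \cite{LMS16} (see Remark \ref{rem:explicit-moment-map} and the correction to \cite{LMS16}), the quadratic forms $q^0_{i,Y}$ in Definition \ref{def:dagger} carry a $-$ sign relative to \eqref{eqn:K-symp-form} and \eqref{eqn:h-h}, and the invariant bilinear form $\mathbb B$ of \eqref{eqn:B_g-LMS} used to pass from $\mathfrak s^*$ to $\mathfrak s$ in \eqref{eqn:Y-theta-2} also carries a factor $\tfrac12$. I would therefore be careful to fix once and for all the identification of $Y$ with an element of $\syp(W)$, run the explicit $\SL(2)$ computation of $q\lrangle{Y}$ on a symplectic basis $\{e_1, e_{-1}\}$ diagonalizing $S_{\bar F}$ (as in Remark \ref{rem:moment-map-basis}), and confirm that the discriminant of $q^0_{1,Y}$ agrees with the discriminant of the norm form $N_{K/F}$, so that $d^\pm(V_{1,Y}, q_{1,Y}) = \mathfrak d \cdot (-1)^n \det Y$ as asserted in the proof of Lemma \ref{prop:dagger-Weil}. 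Once these signs are pinned down, no deep input beyond \S\ref{sec:toral-invariants}, Definition \ref{def:dagger}, and the orbit-wise multiplicativity of all three quantities is needed.
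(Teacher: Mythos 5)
Your plan founders at the very first step: there is no orthogonal decomposition $(V,q) = \bigoplus_{i \in I} R_{K_i^\sharp/F}(V_{i,Y}, q_{i,Y})$ over $F$. Dimension count alone rules it out: each $R_{K_i^\sharp/F}(V_{i,Y})$ has $F$-dimension $3[K_i^\sharp:F]$, summing to $3n$, whereas $\dim_F V = 2n+1$. The obstruction is structural, not a sign or normalization issue: the weight-zero anisotropic line $\langle (-1)^n \det Y \rangle$ is \emph{shared} among all the would-be ``rank-one pieces.'' Definition \ref{def:dagger} deliberately uses the \emph{global} $\det Y$ in defining every single $q_{i,Y}$; if you truly base-change to $\dim W = 2$, $K^\sharp = F$, you replace $(-1)^n \det Y$ by $(-1)\det Y_i$, which is not what $\theta_j^\dagger$ is built from. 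Since $\SO(2n+1)$ has no internal product structure matching the index set $I$, $k$ does not factor through any product $\prod_i \SO(V_{i,Y})$.

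There is a second, independent gap. Even granting orbit-wise multiplicativity of the three characters on $S(F)_{p'} = \{\pm 1\}^I$, evaluating $\epsilon_j$ and $\epsilon_k$ at a generator $(-1)_i$ picks up contributions from \emph{mixed} short $G$-roots $\alpha = \epsilon_r \pm \epsilon_s$ with $r$ belonging to $K_i$ and $s$ to some $K_{i'}$, $i' \neq i$: these satisfy $\alpha(\gamma_0) = -1$, hence enter both $\epsilon_j$ and $\epsilon_k$. A rank-one reduction (which sees only one $K_i$ at a time) is blind to them. In the actual proof one must observe that for these short symmetric $\alpha$ the equality $f_{(G,jS)}(\alpha) = f_{(H,kS)}(\beta)$ holds, where $\beta$ is the matching long $H$-root; this is exactly what the calculations in \S\ref{sec:toral-invariants} supply, and it is what makes these terms cancel in the ratio $\epsilon_k/\epsilon_j$, leaving only the contributions of short $H$-roots $\beta = \epsilon_r$. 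The correct per-orbit localization then takes place not in a summand of $(V,q)$ but in the $3$-dimensional weight-$\{\beta,0,-\beta\}$ subspace $U_\beta \subset V \otimes_F F_{\pm\alpha}$, which descends to a quadratic $K_i^\sharp$-space by the $\Gamma_{\pm\alpha}$-action and which one must then identify with $(V_{i,Y}, q_{i,Y})$; this identification is the content of the moment-map bookkeeping (Remarks \ref{rem:explicit-moment-map}, \ref{rem:moment-map-basis}) together with the $\Gamma_F$-equivariant decomposition \eqref{eqn:K_i-decomp}. Your final rank-one computation of $f_{(H,S)}(\epsilon_1)$ via \eqref{eqn:SO-split} is the right endgame, but the reduction to it must go through this weight-subspace analysis rather than a decomposition of $V$.
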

\begin{proof}
	First observe that $kS \subset H$ satisfies Definition \ref{def:type-ER}, since $G$ and $H$ share the same Weyl group whose actions on $jS$ and $kS$ match. This follows immediately from Remark \ref{rem:moment-map-basis}.
	
	By \cite[Lemma 4.12]{Kal15} we have $\epsilon_j(\gamma) = \epsilon_j(\gamma_0)$ and $\epsilon_k(\gamma) = \epsilon_k(\gamma_0)$. It also implies
	\begin{gather*}
		\dfrac{\epsilon_j(\gamma_0)}{\epsilon_k(\gamma_0)} = \dfrac{ \displaystyle\prod_{\substack{ \alpha \in R(G, jS)_\text{sym} / \Gamma_F \\ \alpha(j\gamma_0) \neq 1 }} f_{(G,jS)}(\alpha) }{ \displaystyle\prod_{\substack{ \beta \in R(H, kS)_\text{sym} / \Gamma_F \\ \beta(k\gamma_0) \neq 1 }} f_{(H,kS)}(\beta) } \quad \in \{\pm 1\}.
	\end{gather*}
	We shall use the $\Gamma_F$-equivariant bijection $\alpha \leftrightarrow \beta$ of roots of Remark \ref{rem:moment-map-basis}. It preserves symmetric roots by equivariance. Recall that $\gamma_0^2 = 1$ (Theorem \ref{prop:S-p'}) and consider the following cases.
	\begin{itemize}
		\item $\alpha$ is a short symmetric root, $\alpha(j\gamma_0) = -1$, then $\beta(k\gamma_0) = -1$ as well: in fact both roots take the form $\epsilon_r \pm \epsilon_s$ or its negative, where $1 \leq r,s \leq n$. Since $F_\alpha = F_\beta$, $F_{\pm\alpha} = F_{\pm\beta}$, the calculations in \S\ref{sec:toral-invariants} lead to $f_{G,jS}(\alpha) = f_{(H,kS)}(\beta)$.
		\item $\alpha$ is a long root, say of the form $\pm 2\epsilon_r$ with $1 \leq r \leq n$, then $\beta = \pm\epsilon_r$ and $\alpha(j\gamma_0) = 1$ always holds. In this case $\alpha, \beta$ are both symmetric as $S$ is anisotropic (Lemma \ref{prop:anisotropic-criterion}). Let $(U, q_U)$ be the $3$-dimensional $F_\alpha$-vector subspace with $kS$-weights $\{\epsilon_r, 0, -\epsilon_r \}$. The calculations in \S\ref{sec:toral-invariants} show that $(U, q_U)$ descends to a quadratic $F_{\pm\alpha}$-vector space $(U_\beta, q_\beta)$, and by \eqref{eqn:SO-split}
		\[ f_{(H, kS)}(\beta) = \epsilon(\SO(U_\beta, q_\beta)) (-1, d^\pm(U_\beta, q_\beta))_{F_{\pm\beta}, 2}. \]
	\end{itemize}

	We shall determine $(U_\beta, q_\beta)$ in steps. First, we parameterize the conjugacy class of $jY$ by a datum $(K, K^\sharp, \vec{y}, \vec{c})$ as in Definition \ref{def:dagger}, and identify $W$ with $K$ to simplify notation. Let $i \in I$.
	\begin{asparaenum}[\bfseries Step 1.]
		\item By the general recipe \eqref{eqn:tensor-split},
			\begin{equation}\label{eqn:K_i-decomp}
				K_i \dotimes{F} \bar{F} = K_i \dotimes{K_i^\sharp} \left( K_i^\sharp \dotimes{F} \bar{F} \right) \rightiso K_i \dotimes{K_i^\sharp} \bar{F}^{\oplus \Hom_F(K_i^\sharp, \bar{F}) };
			\end{equation}
			the $\Gamma_F$-action on the right-hand side can be described by \eqref{eqn:MF-Gal-action}: it operates only on the second slot, and permutes the summands transitively.
			
			Take $\iota \in \Hom_F(K_i^\sharp, \bar{F})$; there are exactly two $\iota', \iota'\tau: K_i \to \bar{F}$ extending $\iota$. Identifying $\Hom_F(K_i, \bar{F})$ and $\Hom_{\bar{F}}\left( K_i \dotimes{F} \bar{F}, \bar{F}\right)$, we obtain $\pm\epsilon_\iota \in X^*(S_{\bar{F}})$ where $\epsilon_\iota := \iota'|_{(K_i \otimes_F \bar{F})^1}$. They are the $jS$-weights of the $\iota$-th component of $K_i \dotimes{F} \bar{F}$ in the decomposition above; denote this space as $K_i[\iota]$. Also note that $\Stab_{\Gamma_F}\left( \{\pm\epsilon_\iota\}\right) = \Stab_{\Gamma_F}(\iota)$ (resp. $\Stab_{\Gamma_F}(\epsilon_\iota) = \Stab_{\Gamma_F}(\iota')$) corresponds to the intermediate field $\iota(K_i^\sharp) \simeq K_i^\sharp$ (resp. $\iota'(K_i) \simeq K_i$).
		\item Define $h_i, h^i$ as in \eqref{eqn:h-h}. The involution on $K_i$ transports to the right-hand side of \eqref{eqn:K_i-decomp}, acting only on $K_i$. Hence $h_i \dotimes{F} \bar{F}$ equals $(h^i \dotimes{K_i^\sharp} \bar{F})^{\oplus \Hom_F(K_i^\sharp, \bar{F}) }$.
		
		Let $h_i[\iota]$ denote the $\iota$-th component of $h_i \dotimes{F} \bar{F}$; it lives on the subspace $K_i[\iota] \simeq K_i \otimes_{K_i^\sharp} \bar{F}$ with $jS$-weights $\{\pm\epsilon_\iota\}$. Hence $h_i[\iota]$ descends to the symplectic form $h^i$ on the $K_i^\sharp$-vector space $K_i$. The same descent works if we consider the symmetric forms $(u, v) \mapsto h_i(y_i u|v)$, $h^i(y_i u|v)$ instead, which yield the quadratic form $q_{i,Y}^0$ on $K_i$.
		\item Now we may choose a symplectic basis for $h^i \otimes_{K_i^\sharp} K_i$ with associated characters $\pm\epsilon_\iota \in X^*(S_{K_i})$ in the notation above; this is easily done by reducing to $n=1$. By varying $(i, \iota)$ and $i \in I$, we obtain a symplectic basis $\{e_{\pm r}\}_{r=1}^n$ for $K \otimes_F \bar{F}$, as well as the adapted characters $\pm\epsilon_1, \ldots, \pm\epsilon_n$. The procedure in Remark \ref{rem:explicit-moment-map}, \ref{rem:moment-map-basis} renders $\{e_{\pm r}\}_{r=1}^n$ into a hyperbolic basis for $q\lrangle{Y} \dotimes{F} \bar{F}$.
		
		Now consider $\beta \in R(H, kS)(\bar{F})$ and let $R(G, jS)(\bar{F}) \ni \alpha \leftrightarrow \beta$, so that $\alpha = 2\epsilon_r$ for some $r$ as in Step 3.  As remarked in Step 1 (cf. \eqref{eqn:K_i-alpha}),
		\[ F_\beta = F_\alpha \simeq K_i, \quad F_{\pm\beta} = F_{\pm\alpha} \simeq K_i^\sharp, \]
		so that $\beta \in R(H, kS)(\bar{F})_\text{sym}$. Comparing the step 2 with \S\ref{sec:toral-invariants} yields
		\[ (U_\beta, q_\beta) \simeq \underbracket{(K_i, q_{i,Y}^0)}_{ kS-\text{weight}=\pm\beta } \oplus \underbracket{\lrangle{(-1)^n \det Y}}_{kS-\text{weight}=0} = (V_{i,Y}, q_{i,Y}). \]
	\end{asparaenum}

	Thus for $\beta$ as above, $f_{(H, kS)}(\beta)$ equals the value of $\theta^\dagger_j$ at $-1 \in K_i^1$. Recall from the first part of our proof that
	\[ \dfrac{\epsilon_j(\gamma)}{\epsilon_k(\gamma)} = \prod_{\substack{\beta \in R(H, kS)/\Gamma_F \\ \text{short} \\ \beta(\gamma_0) = -1 }} f_{(H,kS)}(\beta); \]
	the product can be equivalently taken over $i \in I$ by the construction of $\{\epsilon_{\pm r}\}_{r=1}^n$. If we write $\gamma_0 = (\gamma_{0,i})_{i \in I}$ with $\gamma_{0,i} \in \{\pm 1\} \subset K_i^1$, then $\beta(\gamma_0) = -1 \iff \gamma_{0,i} = -1$ and the product is exactly $\theta^\dagger_j(\gamma_0)$.
\end{proof}

\section{Compatibilities}\label{sec:compatibilities}
Fix a local field $F$ with $\text{char}(F) \neq 2$, and consider a symplectic $F$-vector space $(W, \lrangle{\cdot|\cdot})$ of dimension $2n$. Set $G := \Sp(W)$. To rule out trivial cases, we assume $F \neq \CC$.

\subsection{Review of Adams' stable conjugacy}\label{sec:Adams}
Fix an additive character $\psi$ of $F$. Let $H(W)$ be the Heisenberg group of $(W, \lrangle{\cdot|\cdot})$, which has a smooth irreducible representation $(\rho_\psi, S_\psi)$ with central character $\psi$, unique up to isomorphisms. Weil's \emph{metaplectic group} is a topological central extension
\begin{gather*}
	1 \to \CC^\times \to \overline{G}_\psi \xrightarrow{\bm{p}} G(F) \to 1, \\
	\overline{G}_\psi := \left\{ (x, M_x) \in G(F) \times \Aut_{\CC}(S_\psi) : \rho_\psi \xrightarrow[\sim]{M_x} \rho_\psi^x \right\}
\end{gather*}
where $\rho_\psi^x(h) = \rho_\psi(xh)$ for all $h \in H(W)$, and $(x, M_x)(y, M_y) = (xy, M_x M_y)$. Specifically, we choose a Lagrangian $\ell \subset W$ and follow \cite[\S 2.4.1]{Li11} to construct $(\rho_\psi, S_\psi)$ using the \emph{Schrödinger model} attached to $\ell$; there is then a set-theoretic section $x \mapsto (x, M_\ell[x])$ of $\overline{G}_\psi \twoheadrightarrow G(F)$. The multiplication is described by the \emph{Maslov cocycle} \cite[Théorème 2.6]{Li11}:
\begin{equation}\label{eqn:Maslov-cocycle}
	M_\ell[x] M_\ell[y] = \gamma_\psi\left( \tau(\ell, y\ell, xy\ell) \right) M_\ell[xy].
\end{equation}
Here, for any Lagrangians $\ell_1, \ell_2, \ell_3$ in $W$, one has the quadratic $F$-vector space $\tau(\ell_1, \ell_2, \ell_3)$ canonically constructed by T.\ Thomas \cite{Th06}. This reduces $\overline{G}_\psi$ to $\bmu_8 \hookrightarrow \overline{G}_\psi^{(8)} \twoheadrightarrow G(F)$, by setting $\overline{G}_\psi^{(8)} = \{(x, zM_\ell[x]): z \in \bmu_8 \}$. To get a Lagrangian-independent definition, one may use the canonical intertwining operators between Schrödinger models; see \cite[\S 2.1]{Per81}. \index{Gpsi@$\overline{G}_\psi, \overline{G}_\psi^{(2)}, \overline{G}_\psi^{(8)}$}

Furthermore, taking the commutator subgroup yields a further reduction $\bmu_2 \hookrightarrow \overline{G}_\psi^{(2)} \twoheadrightarrow G(F)$. A direct description of $\overline{G}_\psi^{(2)}$ is given by Lion and Perrin \cite{Per81}
\begin{gather*}
	\overline{G}_\psi^{(2)} = \left\{ (x, \pm m(x\ell, \ell) M_\ell[x]) \in \overline{G}_\psi : x \in G(F) \right\}, \\
	m(\ell_1, \ell_2) := \gamma_\psi(1)^{n - \dim \ell_1 \cap \ell_2 - 1 } \gamma_\psi(A_{\ell_1, \ell_2}).
\end{gather*}
The notation is explained as follows.
\begin{itemize}
	\item We take $\ell_1, \ell_2$ to be Lagrangians in $W$ endowed with orientations $o_1, o_2$. An orientation on a finite-dimensional $F$-vector space $V$ means a nonzero element from $\topwedge V$ taken up to $F^{\times 2}$, with the convention $\topwedge \{0\} = F$.
	\item Put $\ell'_i := \ell_i/\ell_1 \cap \ell_2$ for $i=1,2$. The restriction of $\lrangle{\cdot|\cdot}$ to $\ell'_1 \times \ell'_2$ is non-degenerate and induces a pairing
	\[ \alpha: \topwedge \ell'_1 \dotimes{F} \topwedge \ell'_2 \to F. \]
	Specifically, if $v^{(1)}_1, v^{(1)}_2, \ldots$ and $v^{(2)}_1, v^{(2)}_2, \ldots$ are dual bases, then $\alpha\left( v^{(1)}_1 \wedge \cdots, v^{(2)}_1 \wedge \cdots \right) = 1$. Now set $A_{\ell_1, \ell_2} := \alpha(o'_1, o'_2)$ by writing $o_i = o'_i \otimes c$ where $c$ is any orientation on $\ell_1 \cap \ell_2$.
	\item In $m(x\ell, \ell)$ we choose any orientation $o$ on $\ell$ and transport it to $x\ell$ by $\topwedge(x)$.
\end{itemize}
It turns out that $\overline{G}_\psi^{(2)} \twoheadrightarrow G(F)$ is isomorphic to the BD-cover $\tilde{G} \twoheadrightarrow G(F)$ in \S\ref{sec:BD-Sp} with $m=2$. Denote by $\sigma_\text{LP}$ the set-theoretic section $x \mapsto (x, m(x\ell, \ell)M_\ell[x])$. Multiplication on $\overline{G}_\psi^{(2)}$ is given by Rao's cocycle in terms of $\sigma_\text{LP}$. When $n=1$, we get Kubota's cocycle $\bm{c}$ in \eqref{eqn:Kubota-cocycle} by identifying $\sigma_{\text{LP}}$ with $\bm{s}$; see \cite[Chapitre 3, I.3]{MVW87} or \cite[2.4.2]{Per81}.

Let $P_\ell := \Stab_G(\ell)$. It is a Siegel parabolic with Levi component $\GL(\ell)$ once transversal Lagrangians $W = \ell \oplus \ell'$ are chosen. The Schrödinger model furnishes a splitting $\sigma_\ell: x \mapsto (x, M_\ell[x])$ of $\overline{G}_\psi^{(8)}$ over $P_\ell(F)$; see \cite[Chapitre 2, II.6]{MVW87}. The following result will be needed in \S\ref{sec:theta}. \index{sigma_ell@$\sigma_\ell, \sigma_{\mathrm{LP}}$}
\begin{proposition}\label{prop:convenient-Schrodinger}
	We have $\sigma_\ell(-1) = \frac{\gamma_\psi(1)}{\gamma_\psi((-1)^n)} \sigma_{\mathrm{LP}}(-1)$. Furthermore, the $\widetilde{-1}$ in Definition \ref{def:lifting-minus-1} coincides with $\sigma_\ell(-1)$ via $\tilde{G}^\natural \hookrightarrow \overline{G}^{(8)}_\psi$.
\end{proposition}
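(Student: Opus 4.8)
The plan is to handle the two assertions separately, reducing the second one to the rank-one case $G=\SL(2)$.

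For the first identity, I would simply unwind definitions. Since $\sigma_{\mathrm{LP}}(x)=(x, m(x\ell,\ell)M_\ell[x])$ and $\sigma_\ell(x)=(x,M_\ell[x])$, one has $\sigma_\ell(x)=m(x\ell,\ell)^{-1}\sigma_{\mathrm{LP}}(x)$ in $\overline{G}^{(8)}_\psi$. Taking $x=-1$ and noting that $-1$ acts as $-\identity$ on $W$, so $(-1)\ell=\ell$, I am reduced to computing $m(\ell,\ell)$. Here $\ell_1=\ell_2=\ell$, hence $\dim(\ell_1\cap\ell_2)=n$ and the exponent of $\gamma_\psi(1)$ in the formula for $m(\ell_1,\ell_2)$ is $n-n-1=-1$; moreover $\ell'_i=\ell_i/(\ell_1\cap\ell_2)=0$, so the pairing $\topwedge\ell'_1\otimes\topwedge\ell'_2\to F$ is the multiplication $F\otimes F\to F$, while the orientation transported to the first copy of $\ell$ is that of the second twisted by $\topwedge(-1)=(-1)^n$. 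Thus $A_{\ell,\ell}\equiv(-1)^n$ modulo squares, $m(\ell,\ell)=\gamma_\psi(1)^{-1}\gamma_\psi((-1)^n)$, and $\sigma_\ell(-1)=\tfrac{\gamma_\psi(1)}{\gamma_\psi((-1)^n)}\,\sigma_{\mathrm{LP}}(-1)$.

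For the second identity I would first reduce to $n=1$. Decompose $W=\bigoplus_{i=1}^n\lrangle{e_i,e_{-i}}$ with Lagrangian $\ell=\bigoplus_i Fe_i$, and write $-1=\prod_i\tau_i$, where $\tau_i$ is $-1$ on $\lrangle{e_i,e_{-i}}$ and $\identity$ elsewhere. By Lemma \ref{prop:minus-1-refined}(ii) and Lemma \ref{prop:restriction-W_i}, $\overline{-1}=\prod_i\iota_i(\overline{-1}_i)$ with $\overline{-1}_i$ the rank-one element in the $i$-th block, whence $\widetilde{-1}=\epsilon(\tfrac12,(-1,\cdot)_{F,m};\psi)^{-n}\,\overline{-1}=\prod_i\iota_i(\widetilde{-1}_i)$ by Definition \ref{def:lifting-minus-1}. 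On the metaplectic side, since $\sigma_\ell$ is the Schrödinger splitting attached to $\ell$ and the Schrödinger model is a tensor product over the blocks, $M_\ell[\tau_i]$ acts as $M_{\ell_i}[-1]$ in the $i$-th factor and trivially on the others; as $\sigma_\ell$ restricts to a homomorphism on the Siegel Levi $\GL(\ell)$ (being a splitting over $P_\ell$), the factorization $-1=\prod_i\tau_i$ is respected and $\sigma_\ell(-1)=\prod_i\iota_i(\sigma_{\ell_i}(-1))$, all of this compatibly with $\tilde{G}^\natural\hookrightarrow\overline{G}^{(8)}_\psi$. Hence it suffices to prove $\widetilde{-1}=\sigma_\ell(-1)$ for $G=\SL(2)$. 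There I would run an explicit cocycle computation with Kubota's cocycle \eqref{eqn:Kubota-cocycle} (whose sign ambiguity of Remark \ref{rem:Kubota-minus} is invisible after push-out to $\bmu_2$): writing $w:=\twomatrix{0}{-1}{1}{0}$, the defining product $w_\alpha(-1)=x_\alpha(-1)x_{-\alpha}(1)x_\alpha(-1)$, computed via the canonical splittings over $U$ and $U^-$ (which agree with $\bm{s}$ on those subgroups), equals $\bm{s}(w)$, and then $\overline{-1}=h_\alpha(-1)=w_\alpha(-1)^2=\bm{s}(w)^2=\bm{c}(w,w)\,\bm{s}(-1)$, with $\bm{c}(w,w)=-\{-1,-1\}_F$ mapping to $(-1,-1)_{F,2}$ in $\bmu_2$. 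Under the identification $\overline{G}^{(2)}_\psi\cong\tilde{G}$ sending $\bm{s}$ to $\sigma_{\mathrm{LP}}$, this reads $\overline{-1}=(-1,-1)_{F,2}\,\sigma_{\mathrm{LP}}(-1)$; combining with the first identity for $n=1$ and Definition \ref{def:lifting-minus-1}, the desired equality becomes the scalar relation $\epsilon(\tfrac12,(-1,\cdot)_{F,2};\psi)^{-1}(-1,-1)_{F,2}=\tfrac{\gamma_\psi(1)}{\gamma_\psi(-1)}$, which is the classical identity relating the local constant of the quadratic character $(-1,\cdot)_{F,2}$ to Weil's constant $\gamma_\psi$ (as in \cite{Per81}), consistent with $\epsilon(\tfrac12,(-1,\cdot)_{F,m};\psi)^2=(-1,-1)_{F,2}$ from Definition \ref{def:lifting-minus-1}. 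Finally, assembling the blocks gives the statement for general $n$.

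The main obstacle is the coherent sign bookkeeping: checking that the canonical Brylinski--Deligne splittings over the unipotent radicals $U,U^-$ coincide with the restriction of Kubota's section $\bm{s}$ (and, on the metaplectic side, that $\sigma_\ell$ restricts to the canonical splitting of the unipotent radical of $P_\ell$), and reconciling the normalization of the local root number $\epsilon(\tfrac12,(-1,\cdot)_{F,m};\psi)$ in Definition \ref{def:lifting-minus-1} with the Weil-index description of $\gamma_\psi$; everything else is routine computation.
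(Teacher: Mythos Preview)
Your proposal is correct and follows essentially the same route as the paper: compute $m(\ell,\ell)$ for the first identity, then reduce to $n=1$ via the block decomposition and the splitting $\sigma_\ell$ over $P_\ell$, and finish with a scalar identity relating $\epsilon(\tfrac12,(-1,\cdot)_{F,2};\psi)$ to $\gamma_\psi(1)^2$.

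The one place you differ from the paper is in the rank-one step. You compute $\overline{-1}=h_\alpha(-1)=w_\alpha(-1)^2=\bm{s}(w)^2=\bm{c}(w,w)\,\bm{s}(-1)$ by hand via Kubota's cocycle, which requires the check you flag (that $\bm{s}$ agrees with the canonical Brylinski--Deligne splittings on $U,U^-$). The paper instead invokes directly the relation $\bm{s}\bigl(\begin{smallmatrix}t&\\&t^{-1}\end{smallmatrix}\bigr)=(t,t)_{F,2}^{-1}\,h_\alpha(t)$ from the discussion preceding \cite[Corollaire~5.12]{Mat69}, which encapsulates exactly that compatibility and yields $\sigma_{\mathrm{LP}}(-1)=\gamma_\psi(1)^4\,\overline{-1}$ immediately. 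Both routes collapse to the same scalar identity $\epsilon(\tfrac12,(-1,\cdot)_{F,2};\psi)=\gamma_\psi(1)^2$; the paper establishes it by recognizing $\gamma_\psi(\langle 1,1\rangle)$ as the Weil constant of the norm form of $F[X]/(X^2+1)$ and citing \cite[Lemma~1.2]{JL70}, whereas you only gesture at \cite{Per81}. That citation should be made precise, but otherwise your argument is complete.
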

\begin{proof}
	For all $x \in \GL(\ell)$ we have $\sigma_\ell(x) = (1, m(x\ell, \ell))^{-1} \sigma_\text{LP}(x)$. Put $x=-1$ in the definition of $m(x\ell, \ell)$ to deduce the first assertion.

	Take a basis of $\ell$ and extend it to a symplectic basis of $W = \ell \oplus \ell'$, so that $\sigma_\ell(-1) = \prod_\alpha \sigma_\ell(\check{\alpha}(-1))$ where $\alpha$ ranges over the positive long roots. One can calculate $\sigma_\ell(\check{\alpha}(-1))$ inside rank-one pieces, see \cite[Chapitre 2, II.6]{MVW87}. Since $\widetilde{-1}$ and $\overline{-1}$ also decompose in this manner, we can assume $n=1$.

	Let $t \in F^\times$. By the discussions preceding \cite[Corollaire 5.12]{Mat69} or a direct computation via Kubota's cocycle,
	\[ \sigma_\text{LP}(\check{\alpha}(t)) = \bm{s}\twobigmatrix{t}{}{}{t^{-1}} = (t,t)_{F,2}^{-1} h_\alpha(t) = (t,-1)_{F,2} h_\alpha(t)  \quad \in \widetilde{\SL}(2,F). \]
	Using \eqref{eqn:Weil-Hilbert}, we see $(-1,-1)_{F,2} = \gamma_\psi(1)^4$ so that $\sigma_\text{LP}(-1) = \gamma_\psi(1)^4 \cdot \overline{-1}$. It remains to prove
	\[ \gamma_\psi(1)^6 = \epsilon\left( \frac{1}{2}, (-1, \cdot)_{F,2}; \psi\right)^{-1}, \quad \text{i.e.}\; \gamma_\psi(\lrangle{1,1}) = \epsilon\left( \frac{1}{2}, (-1, \cdot)_{F,2}; \psi\right). \]
	As $\lrangle{1,1}$ is the norm form of the $F$-algebra $F[X]/(X^2+1)$, from \cite[Lemma 1.2]{JL70} we deduce $\gamma_\psi(\lrangle{1,1}) = \epsilon(\frac{1}{2}, (-1, \cdot)_{F,2}; \psi_{1/2})$, noting the different normalization of $\gamma_\psi$ therein. As $(-1, \frac{1}{2})_{F,2} = (-1, 1-(-1))_{F,2} = 1$, we may replace $\psi_{1/2}$ by $\psi$.
\end{proof}

The group $\overline{G}_\psi$ and its reductions carry the \emph{Weil representation} $\omega_\psi = \omega_\psi^+ \oplus \omega_\psi^-$ on $S_\psi$, which is genuine and canonically defined with respect to $\psi \circ \lrangle{\cdot|\cdot}$. Here $\omega_\psi^\pm$ are irreducible genuine admissible representations. There is also a canonical element $-1 \in \overline{G}_\psi$ lying over $-1 \in G(F)$, described as $(-1, M_\ell[-1])$ in the Schrödinger model, such that $\omega_\psi^\pm(-1) = \pm \identity$ and $(-1)^2 = 1$. Write $\Theta_\psi = \Theta_\psi^+ + \Theta_\psi^-$ for the corresponding characters. We are in a position to state the notion of stable conjugacy after Adams.

\begin{definition}[J. Adams]\label{def:Adams-conj}\index{stable conjugacy}
	Call two elements $\tilde{\gamma}, \tilde{\delta} \in \overline{G}_{\psi, \mathrm{reg}}$ stably conjugate, if
	\begin{itemize}
		\item the images $\gamma, \delta \in G_{\mathrm{reg}}(F)$ are stably conjugate, and
		\item $(\Theta^+_\psi - \Theta^-_\psi)(\tilde{\delta}) = (\Theta^+_\psi - \Theta^-_\psi)(\tilde{\gamma})$.
	\end{itemize}
\end{definition}
This definition does not rely on the choice of Lagrangians. For details of these constructions, we refer to \cite{Li11} and the bibliography therein. We record the key property below. For any $x \in G(F)$, set $\Gamma_x := \left\{ (w, xw) : w \in W \right\}$; it is a Lagrangian in $W^- \oplus W$ where $W^- := (W, -\lrangle{\cdot|\cdot})$. Define the genuine function $\overline{G}_\psi \to \CC$ \index{nabla@$\nabla$}
\begin{equation}\label{eqn:nabla}
	\nabla: (x, zM_\ell[x]) \mapsto z \gamma_\psi\left( \tau(\Gamma_{-x}, \Gamma_1, \ell \oplus \ell) \right).
\end{equation}

\begin{theorem}\label{prop:nabla}
	The function $\nabla|_{\overline{G}_{\psi, \mathrm{reg}}}$ is invariant under $G(F)$-conjugation. Two elements $\tilde{\gamma}, \tilde{\delta} \in \overline{G}_{\psi, \mathrm{reg}}$ are stably conjugate if and only if their images $\gamma, \delta$ are stably conjugate in $G_{\mathrm{reg}}(F)$ and $\nabla(\tilde{\gamma}) = \nabla(\tilde{\delta})$.
	
	If $\tilde{\delta} \in \overline{G}_{\psi, \mathrm{reg}}^{(2)}$ is stably conjugate to $\tilde{\delta} \in \overline{G}_{\psi, \mathrm{reg}}$, then $\delta \in \overline{G}_\psi^{(2)}$ as well.
\end{theorem}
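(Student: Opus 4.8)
The plan is to establish the three assertions in sequence, since the last one is essentially a corollary of the first two once the formalism of $\nabla$ is in place.

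First I would address the $G(F)$-invariance of $\nabla|_{\overline{G}_{\psi,\mathrm{reg}}}$. The function $\nabla$ in \eqref{eqn:nabla} is built from a Weil constant $\gamma_\psi$ applied to the Thomas space $\tau(\Gamma_{-x}, \Gamma_1, \ell\oplus\ell)$, where $\Gamma_x = \{(w,xw): w\in W\}\subset W^-\oplus W$. Conjugating $x$ by $g\in G(F)$ replaces $x$ by $gxg^{-1}$ and, correspondingly, replaces the section value $M_\ell[x]$ by $M_{g\ell}[gxg^{-1}]$ up to the canonical intertwiner between Schr\"odinger models; the Lagrangian $\Gamma_{-gxg^{-1}}$ is the image of $\Gamma_{-x}$ under the diagonal action of $(g,g)$ on $W^-\oplus W$, which preserves $\lrangle{\cdot|\cdot}$ on $W^-\oplus W$ and hence maps Thomas spaces isometrically to Thomas spaces. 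The subtlety is that $\ell\oplus\ell$ is not $(g,g)$-invariant, so one must compare $\tau(\Gamma_{-x},\Gamma_1,\ell\oplus\ell)$ with $\tau(\Gamma_{-x},\Gamma_1, g\ell\oplus g\ell)$; this is exactly the cocycle discrepancy that matches the discrepancy between $M_\ell[x]$ and $M_{g\ell}[x]$ in the metaplectic cocycle. I would invoke the cocycle identities for Thomas spaces (the coboundary relation $\tau(\ell_1,\ell_2,\ell_3) - \tau(\ell_1,\ell_2,\ell_4) + \tau(\ell_1,\ell_3,\ell_4) - \tau(\ell_2,\ell_3,\ell_4)$ being Witt-trivial, up to the standard correction terms) together with the behavior of $\gamma_\psi$ on the Witt group to cancel these contributions. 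This is where the reference \cite{Li11} (and \cite{Th06}) does the heavy lifting; the claim is essentially \cite[\S 4]{Li11} or the companion results there, so I would cite it rather than reproduce the Maslov-index bookkeeping.

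Second, I would prove the equivalence: $\tilde\gamma\stackrel{\text{st}}{\sim}\tilde\delta$ iff $\gamma\stackrel{\text{st}}{\sim}\delta$ and $\nabla(\tilde\gamma)=\nabla(\tilde\delta)$. By Definition \ref{def:Adams-conj}, it suffices to show that on $\overline{G}_{\psi,\mathrm{reg}}$ the genuine invariant function $\Theta_\psi^+ - \Theta_\psi^-$ agrees with $\nabla$ up to a genuine invariant function that is automatically constant on stable classes over the image — in fact the cleanest route is to show $\Theta_\psi^+ - \Theta_\psi^-$ and $\nabla$ differ by a function pulled back from $G_{\mathrm{reg}}(F)$, i.e. an honest (non-genuine) invariant function. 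The character $\Theta_\psi = \Theta_\psi^+ + \Theta_\psi^-$ of the full Weil representation has a known closed form on regular semisimple elements in terms of $\gamma_\psi$ of a Cayley-transform quadratic form (the formula of Maktouf, or the version in \cite{Li11}), and the difference $\Theta_\psi^+ - \Theta_\psi^-$ is the character of $\omega_\psi$ twisted by the central sign, which again has an explicit $\gamma_\psi$-expression; comparing that expression with $\gamma_\psi(\tau(\Gamma_{-x},\Gamma_1,\ell\oplus\ell))$ is a matching of two quadratic forms in the Witt group, where $\Gamma_{-x}$ and $\Gamma_1$ encode precisely the graph data that produce the Cayley form $\frac{1+x}{1-x}$ on $W$. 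Thus $\nabla$ and $\Theta_\psi^+-\Theta_\psi^-$ coincide up to $|D^G(\gamma)|$-type factors depending only on $\gamma$, and the equivalence of the two conditions follows. The main obstacle here is pinning down the precise normalization: tracking the powers of $\gamma_\psi(1)$ and the $\dim$-dependent correction in $m(\ell_1,\ell_2)$ so that the two sides literally match and not merely up to an ambiguous constant; I expect this to be the most delicate bookkeeping in the whole argument, and I would lean on \cite{Per81,Li11} for the exact constants.

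Finally, the last assertion — if $\tilde\delta\in\overline{G}_{\psi,\mathrm{reg}}^{(2)}$ and $\tilde\gamma\in\overline{G}_{\psi,\mathrm{reg}}$ is stably conjugate to it, then $\tilde\gamma\in\overline{G}_\psi^{(2)}$ — follows by characterizing $\overline{G}_\psi^{(2)}$ inside $\overline{G}_\psi$ via the value of $\nabla$. Concretely, $\overline{G}_\psi^{(2)}$ is the subgroup on which a certain genuine function takes values in a fixed coset, and since by the Lion--Perrin description $\overline{G}_\psi^{(2)} = \{(x,\pm m(x\ell,\ell)M_\ell[x])\}$, membership is detected by $\nabla$ taking values in $\bmu_2\cdot(\text{explicit }\gamma_\psi\text{-factor depending only on }x)$; equivalently, $\nabla(\tilde\gamma)/\nabla(\sigma_{\mathrm{LP}}(\gamma)) \in \bmu_2$. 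Now stable conjugacy forces $\nabla(\tilde\gamma)=\nabla(\tilde\delta)$ and $\gamma\stackrel{\text{st}}{\sim}\delta$; combining the $\gamma_\psi$-factor's dependence only on the stable class of $\gamma$ (it is, after all, a Witt-class invariant of the Cayley form, which is stably conjugation-invariant) with $\tilde\delta\in\overline{G}_\psi^{(2)}$ gives the membership criterion for $\tilde\gamma$. So I would: (i) record that the defining $\gamma_\psi$-ratio for $\overline{G}_\psi^{(2)}$-membership depends only on the stable class of the image, (ii) apply $\nabla(\tilde\gamma)=\nabla(\tilde\delta)$, (iii) conclude. The only care needed is that $\sigma_{\mathrm{LP}}$ is defined over a translate of a parabolic rather than globally, so I would work locally near $\gamma$ or use the $\bmu_8$-reduction $\overline{G}_\psi^{(8)}$ where everything is finite and the coset conditions are transparent.
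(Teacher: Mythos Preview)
Your proposal is correct in outline and would work, but it is more laborious than the paper's proof, and the inefficiency comes from the order in which you attack the three parts.

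The paper does not prove $G(F)$-invariance of $\nabla$ by direct Maslov-index bookkeeping at all. Instead it establishes a single identity
\[
(\Theta_\psi^+ - \Theta_\psi^-)(\tilde{\gamma}) \;=\; |\det(\gamma+1|W)|^{-1/2}\,\nabla(\tilde{\gamma})
\]
on $\overline{G}_{\psi,\mathrm{reg}}$, and then all three assertions fall out at once. The identity is obtained in two lines: since $\omega_\psi^\pm(-1)=\pm\identity$, one has $(\Theta_\psi^+-\Theta_\psi^-)(\tilde{x}) = \Theta_\psi\bigl((-1,M_\ell[-1])\cdot\tilde{x}\bigr)$; then the Maslov cocycle gives $M_\ell[-1]M_\ell[x]=M_\ell[-x]$ (because $\tau(\ell,x\ell,x\ell)$ has dimension zero), so one is evaluating the full character $\Theta_\psi$ at $(-x, zM_\ell[-x])$, and Maktouf's formula \cite[Corollaire~4.4]{Li11} already gives this as $|\det(x+1)|^{-1/2}\cdot z\,\gamma_\psi(\tau(\Gamma_{-x},\Gamma_1,\ell\oplus\ell))$. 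No separate Witt-group matching of a Cayley form against $\tau(\Gamma_{-x},\Gamma_1,\ell\oplus\ell)$ is needed, because Maktouf's formula is \emph{already} stated in exactly these Thomas-space terms.

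Once this identity is in hand, $G(F)$-invariance of $\nabla$ is immediate from the $G(F)$-invariance of the character $\Theta_\psi^+-\Theta_\psi^-$ and of $|\det(\gamma+1)|$; the equivalence in the second assertion is then literally Definition~\ref{def:Adams-conj}; and the third assertion is dispatched by citing \cite[Th\'eor\`eme~4.2~(iii)]{Li11}. Your direct attack on invariance via the coboundary relation for $\tau$ and the transition between $M_\ell$ and $M_{g\ell}$ would work, but it reproves by hand what the character-theoretic route gives for free.
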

\begin{proof}
	From \eqref{eqn:Maslov-cocycle} we infer that $M_\ell[-1] M_\ell[x] = M_\ell[-x]$ since $\dim \tau(\ell, x\ell, x\ell) = 0$ by \cite[Proposition 2.5]{Li11}. Therefore when $\det(x+1|W) \neq 0$, Maktouf's character formula \cite[Corollaire 4.4]{Li11} becomes
	\begin{align*}
		\left(\Theta^+_\psi - \Theta^-_\psi\right) \left( (x, zM_\ell[x]) \right) & = \left(\Theta^+_\psi + \Theta^-_\psi\right)\left( (-1, M_\ell[-1]) (x, z M_\ell[x]) \right) \\
		& = \Theta_\psi\left( (-x, z M_\ell[-x]) \right) \\
		& = |\det(x+1)|^{-\frac{1}{2}} z \gamma_\psi\left( \Gamma_{-x}, \Gamma_1, \ell \oplus \ell \right).
	\end{align*}
	Hence $\nabla|_{\overline{G}_{\psi, \mathrm{reg}}}$ is $G(F)$-invariant, and the first part follows. The second part is a consequence of \cite[Théorème 4.2 (iii)]{Li11}.
\end{proof}

Next, let $T \subset G$ be a maximal $F$-torus. Construct $T \subset G^T \subset G$ following Definition \ref{def:G-T}. By choosing a parameter $(K, K^\sharp, c)$ for $T \hookrightarrow G$ via Proposition \ref{prop:parameter-tori}, with $K = \prod_{i \in I} K_i$ etc., we may identify $G^T(F)$ with $\prod_{i \in I} \SL(2, K_i^\sharp)$. We may regard each $\SL(2, K_i^\sharp)$ as $G_i := \Sp(W_i)$ for a symplectic $K_i^\sharp$-vector space $W_i$ of dimension $2$ with suitably chosen Lagrangian $\ell_i$, which will be specified anon. To each $i \in I$ we construct Weil's metaplectic group
\[ 1 \to \CC^\times \to \overline{G}_{\psi_i, i} \xrightarrow{\bm{p}_i} G_i(F) \to 1, \quad \psi_i := \psi \circ \Tr_{K_i^\sharp/F}. \]
On each $\overline{G}_{\psi_i, i}$ we define the genuine function $\nabla_i$ using \eqref{eqn:nabla} for $W_i$, $\psi_i$.

\begin{lemma}\label{prop:metaplectic-reduction}\index{contracted product}
	The pull-back of $\overline{G}_\psi$ (resp. of $\overline{G}_\psi^{(2)}$) to $G^T(F)$ is isomorphic to the contracted product of $\CC^\times \hookrightarrow \overline{G}_{\psi_i, i} \twoheadrightarrow G_i(F)$ (resp. $\bmu_2 \hookrightarrow \overline{G}^{(2)}_{\psi_i, i} \twoheadrightarrow G_i(F)$), for $i \in I$. Under such an isomorphism, the restriction of $\nabla$ to $\bm{p}^{-1}(G^T(F))$ coincides with $\bigotimes_{i \in I} \nabla_i$.
\end{lemma}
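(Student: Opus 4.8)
The plan is to reduce the statement to the analogous compatibility for the Heisenberg--Weil setup under orthogonal direct sums of symplectic spaces, then to match the two resulting structures term by term. First I would recall from \S\ref{sec:Sp-parameters}, \S\ref{sec:stable-reduction} that the choice of parameter $(K, K^\sharp, c)$ gives an isomorphism of symplectic $F$-vector spaces $(W, \lrangle{\cdot|\cdot}) \simeq \bigoplus_{i \in I} (K_i, h_i)$, where $K_i$ is viewed as an $F$-vector space via $\Tr_{K_i^\sharp/F}$ and $h_i = \Tr_{K_i^\sharp/F} \circ h^i$ with $h^i$ the symplectic $K_i^\sharp$-form of \eqref{eqn:h-h}; moreover $G^T = \prod_{i \in I} R_{K_i^\sharp/F}(\Sp(K_i, h^i))$ sits inside $\prod_{i \in I} \Sp(K_i, h_i) \hookrightarrow \Sp(W)$. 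So it suffices to identify $W_i := K_i$ (as an $F$-space with form $h_i$, but also as a $K_i^\sharp$-space with form $h^i$), to choose $\ell_i \subset W_i$ a $K_i^\sharp$-Lagrangian that is automatically an $F$-Lagrangian, and then to invoke the behaviour of the Heisenberg group and its metaplectic cover under orthogonal sums. Concretely, $H(W) = H\!\left(\bigoplus_i W_i\right)$ is the quotient of $\prod_i H(W_i)$ identifying the centers, its Schrödinger model on $\bigotimes_i S_{\psi_i}$ (attached to $\ell = \bigoplus_i \ell_i$) is the tensor product of the Schrödinger models on the $S_{\psi_i}$, and the additive character $\psi$ of the center of $H(W_i)$, read through $K_i^\sharp \hookrightarrow F$-structure, is exactly $\psi_i = \psi \circ \Tr_{K_i^\sharp/F}$. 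This gives a canonical map $\prod_i \overline{(G_i)}_{\psi_i} \to \overline{G}_\psi$ compatible with projections to $\prod_i G_i(F)$, with $\prod_i \CC^\times \to \CC^\times$ the product, i.e. it factors through the contracted product; the same works after the reduction to $\bmu_2$ by passing to commutator subgroups. This part is essentially the standard multiplicativity of Weil's construction; I would cite \cite[Chapitre 2]{MVW87} and \cite{Per81} rather than reprove it, perhaps cross-referencing the purely algebraic analogue Lemma \ref{prop:restriction-W_i} and Theorem \ref{prop:G-T-reduction} to fix the isomorphism with the BD-cover picture.

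Next I would verify that the isomorphism just constructed is the same one as in Theorem \ref{prop:G-T-reduction} (applied with $m=2$), i.e. that the metaplectic contracted product agrees with the BD-covering contracted product of the $\widetilde{\SL}(2, K_i^\sharp)$. For this the key point is the compatibility between the Weil restriction of multiplicative $\shK_2$-torsors and the identification $\overline{G}^{(2)}_{\psi_i, i} \simeq \widetilde{\SL}(2, K_i^\sharp)$ over $K_i^\sharp$ (the $m=2$, rank-one case of \S\ref{sec:BD-Sp}), together with the fact that $E_G|_{G^T} \simeq \utimes{i} f_{i,*}(E_{\SL(2), K_i^\sharp})$ from Theorem \ref{prop:G-T-reduction-K_2}. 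Both sides are topological central extensions of $G^T(F) = \prod_i \SL(2, K_i^\sharp)$ by $\bmu_2$ restricting on each factor to the standard rank-one metaplectic cover, so any two such identifications differ by a character of $G^T(F)$ into $\bmu_2$; since $G^T(F)$ is perfect (product of $\SL(2)$'s over infinite fields), the identification is forced, and I would invoke Proposition \ref{prop:lifting-uniqueness} to make this rigorous. An alternative is to work directly with cocycles: Rao's (or Kubota's \eqref{eqn:Kubota-cocycle}) cocycle for $\overline{G}^{(2)}_\psi$ in the section $\sigma_{\mathrm{LP}}$ is, by construction, additive over an orthogonal decomposition with a compatible Lagrangian, which is precisely the cocycle-level statement of contracted product; this can be read off from the Maslov cocycle \eqref{eqn:Maslov-cocycle} since $\tau(\ell, y\ell, xy\ell)$ for $\ell = \bigoplus \ell_i$ decomposes as $\bigoplus_i \tau(\ell_i, y_i\ell_i, x_i y_i \ell_i)$, using that Lagrangian triples and $\gamma_\psi$ are additive.

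Finally, for the statement about $\nabla$, I would use the defining formula \eqref{eqn:nabla}: $\nabla(x, zM_\ell[x]) = z\,\gamma_\psi(\tau(\Gamma_{-x}, \Gamma_1, \ell \oplus \ell))$ inside $W^- \oplus W$. Under the orthogonal decomposition $W = \bigoplus_i W_i$ we get $W^- \oplus W = \bigoplus_i (W_i^- \oplus W_i)$, the graph Lagrangians decompose as $\Gamma_{-x} = \bigoplus_i \Gamma_{-x_i}$, $\Gamma_1 = \bigoplus_i \Gamma_1$, and $\ell \oplus \ell = \bigoplus_i (\ell_i \oplus \ell_i)$; hence the Thomas quadratic space $\tau(\cdot, \cdot, \cdot)$ is the orthogonal sum of the $\tau_i(\cdot, \cdot, \cdot)$, and $\gamma_\psi$ of an orthogonal sum is the product of the $\gamma_{\psi_i}$ (since Weil's constant is a homomorphism on the Witt group, and $\gamma_\psi$ restricted to $K_i^\sharp$-quadratic spaces read as $F$-quadratic spaces gives $\gamma_{\psi_i}$ by transitivity of $\Tr$). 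The scalar $z \in \bmu_2$ (or $\bmu_8$) multiplies as $\prod_i z_i$ under the contracted product map, matching $\bigotimes_i \nabla_i$. One subtlety to check here is the normalization: \eqref{eqn:nabla} is written for the section $\sigma_\ell$ (Schrödinger), so I need the image of a point of the contracted product $\prod_i \overline{(G_i)}_{\psi_i}$ under the standard section $(x_i, M_{\ell_i}[x_i])_i$ to be exactly $(x, M_\ell[x])$ with $x = (x_i)_i$ and $\ell = \bigoplus_i \ell_i$ — this is again just the tensor-product compatibility of Schrödinger models, so it is clean. I expect the main obstacle to be purely bookkeeping: pinning down that \emph{all four} identifications in play (the $F$-space vs. $K_i^\sharp$-space structures on $W_i$, the metaplectic contracted product, the BD contracted product of Theorem \ref{prop:G-T-reduction}, and the compatible Lagrangian $\ell$) are mutually the canonical ones, with no stray cocycle discrepancy; the perfectness argument via Proposition \ref{prop:lifting-uniqueness} is the tool that dispatches any residual ambiguity, and $\gamma_\psi$-additivity handles $\nabla$.
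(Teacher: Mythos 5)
Your proposal is correct and, through the "alternative" cocycle-level route you sketch, is essentially the same as the paper's argument: the paper constructs an explicit bijection onto the contracted product and matches Maslov cocycles by combining the additivity of Thomas's $\tau(\cdot,\cdot,\cdot)$ over orthogonal sums, the fact that $\tau$ computed on the $F$-structure of $W_i$ equals $(\Tr_{K_i^\sharp/F})_*$ of the $K_i^\sharp$-level $\tau$, and the resulting identity $\gamma_\psi \circ (\Tr_{K_i^\sharp/F})_* = \gamma_{\psi_i}$; the passage to $\overline{G}_\psi^{(2)}$ is by the commutator-subgroup argument you mention, and the $\nabla$ claim follows by applying the same decomposition to \eqref{eqn:nabla} — all of which you correctly identify. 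Your leading paragraph's Heisenberg-multiplicativity framing is a slightly more packaged way of saying the same thing, and your second paragraph (matching the metaplectic identification with the Brylinski--Deligne one) goes beyond what the lemma asserts, though it is not wrong and is dispatched by the perfectness remark, which the paper also notes immediately after the lemma statement.
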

Note that the asserted isomorphism must be unique, since $G_i(F)$ is a perfect group.
\begin{proof}
	Identify $(W, \lrangle{\cdot|\cdot})$ with $\bigoplus_{i \in I} (K_i, h_i)$ using the parameter, where $h_i = \Tr_{K_i^\sharp/F} \circ h^i$ are defined in \eqref{eqn:h-h}. Select a Lagrangian $\ell_i \subset K_i$ relative to $h^i$. Then $\ell_i$ is also a Lagrangian for $(K_i, h_i)$ since it is totally isotropic for $h_i$ and has the right $F$-dimension. We use the Lagrangian $\ell := \bigoplus_{i \in I} \ell_i$ (resp. $\ell_i$) to realize $\overline{G}_\psi$ (resp. $\overline{G}_{\psi_i, i}$).
	
	To prove the first assertion for $\overline{G}_\psi$, denote by $\overline{G}^T_\psi$ the contracted product in question. Represent the elements of $G^T(F)$ as $x = (x_i)_{i \in I} \in G^T(F)$ with $x_i \in \Sp(K_i, h^i) \simeq \SL(2, K_i^\sharp)$. Consider the bijection
	\begin{align*}
		\text{Bij}: \bm{p}^{-1}(G^T(F)) & \longrightarrow \overline{G}^T_\psi \\
		(x, zM_\ell[x]) & \longmapsto z \cdot \prod_{i \in I} (x_i, M_{\ell_i}[x_i])_{/K_i^\sharp};
	\end{align*}
	the final subscript indicates that we work over symplectic $K_i^\sharp$-vector spaces. Note that $x \mapsto (x, M_\ell[x])$ is a continuous section over the open cell $\{x: x\ell \cap \ell = 0 \}$; the same applies to each $\ell_i$ as well. Therefore it suffices to show that $\text{Bij}$ is a homomorphism, which amounts to matching the Maslov cocycles from both sides.
	
	The symplectic additivity of $\tau(\cdots)$ (see \cite[p.532]{Li11}) implies
	\[ \tau(\ell, y\ell, xy\ell) = \bigoplus_{i \in I} \tau(\ell_i, y_i\ell_i, x_i y_i\ell_i). \]
	Since $x_i, y_i, \ell_i$ all come from ``upstairs'' by forgetting $K_i^\sharp$-structures, which is an exact functor, the construction of $\tau(\cdots)$ in \cite[\S 2.2.3]{Th06} immediately leads to
	\[ \tau(\ell_i, y_i \ell_i, x_i y_i \ell_i) = (\Tr_{K_i^\sharp/F})_* \left( \tau(\ell_i, y_i \ell_i, x_i y_i \ell_i)_{/K_i^\sharp} \right). \]
	Now invoke the additivity of $\gamma_\psi$ to obtain
	\[ \gamma_\psi\left( \tau(\ell, y\ell, xy\ell) \right) = \prod_{i \in I} \gamma_{\psi_i}\left( \tau(\ell_i, y_i \ell_i, x_i y_i \ell_i )_{/K_i^\sharp} \right). \]
	The right-hand side matches the Maslov cocycle for $\overline{G}^T_\psi$. This proves the case of $\overline{G}_\psi$. Now notice that the pull-back of $\overline{G}_\psi^{(2)}$ to $G^T(F)$ is closed under commutators, thus contains the contracted product of $\overline{G}^{(2)}_{\psi_i, i}$, $i \in I$. Both are twofold coverings of $G^T(F)$, hence they coincide and the case of $\overline{G}_\psi^{(2)}$ follows.
	
	As for the second assertion, apply the same reasoning to \eqref{eqn:nabla} for the symplectic $K_i^\sharp$-vector spaces $W_i^- \oplus W_i$.
\end{proof}

Last but not least, the adjoint $G(F)$-action on $\overline{G}_\psi^{(2)}$ extends uniquely to $\GSp(W)$ by \cite[Chapitre 4, I.8]{MVW87}.

\subsection{Stable conjugacy for \texorpdfstring{$m=2$}{m=2}}
We begin with the case $n=1$. Fix a symplectic basis $e_1, e_{-1}$ for $W$ and take $\ell := Fe_1$. Use the standard orientation generated by $e_1$ of $\ell$ in the Lion--Perrin construction. Identifying $W$ with $F^2$ in this basis, we have
\[ \lrangle{r,s | r',s'} = rs' - r's, \quad \ell = \bigl(\begin{smallmatrix} * \\ 0 \end{smallmatrix}\bigr), \quad \Sp(W) = \SL(2), \quad \GSp(W) = \GL(2). \]
The similitude factor becomes determinant. As seen in \S\ref{sec:Adams}, $\widetilde{\mathrm{SL}}(2, F) \simeq \overline{G}_\psi^{(2)}$ by a unique isomorphism such that $\sigma_\text{LP}$ matches Kubota's $\bm{s}$ in \S\ref{sec:Kubota}.

\begin{lemma}\label{prop:nabla-1}
	Let $\gamma = \twomatrix{a}{b}{c}{d} \in \SL(2,F)_{\mathrm{reg}}$. Suppose $c \neq 0$, then
	\[ \nabla(\sigma_{\mathrm{LP}}(\gamma))  = \gamma_\psi\left( \lrangle{-c, c(2 + \Tr(\gamma))} \right). \]
\end{lemma}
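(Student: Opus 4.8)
The plan is to unwind the definition of $\nabla$ in \eqref{eqn:nabla} at the Lion--Perrin section. Since $\sigma_{\mathrm{LP}}(\gamma) = (\gamma, m(\gamma\ell,\ell)M_\ell[\gamma])$, one reads off immediately
\[ \nabla(\sigma_{\mathrm{LP}}(\gamma)) = m(\gamma\ell,\ell)\cdot\gamma_\psi\bigl(\tau(\Gamma_{-\gamma},\Gamma_1,\ell\oplus\ell)\bigr), \]
so the problem splits into computing the two factors separately and recombining them through the additivity of $\gamma_\psi$ on Witt classes together with $\gamma_\psi(\mathbb{H}) = 1$ for a hyperbolic plane. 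A useful identity along the way is $\det(\gamma+1 \mid W) = \det\gamma + \Tr\gamma + 1 = 2 + \Tr(\gamma)$, nonzero because $\gamma$ is strongly regular (trace $-2$ would force $\gamma$ non-semisimple or $\gamma=-1$).

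First I would treat the factor $m(\gamma\ell,\ell)$. Because $c\neq 0$, the Lagrangian $\gamma\ell = F(ae_1 + ce_{-1})$ is transversal to $\ell = Fe_1$, so the exponent $n - \dim(\gamma\ell\cap\ell) - 1 = 1-0-1$ vanishes and $m(\gamma\ell,\ell) = \gamma_\psi(A_{\gamma\ell,\ell})$. Transporting the chosen orientation $e_1$ of $\ell$ to $\gamma\ell$ by $\topwedge\gamma$ and pairing against $e_1$ gives $\lrangle{\gamma e_1\mid e_1} = -c$, hence $A_{\gamma\ell,\ell} = -c$ and $m(\gamma\ell,\ell) = \gamma_\psi(\lrangle{-c})$. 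Next, the Maslov quadratic space $\tau(\Gamma_{-\gamma},\Gamma_1,\ell\oplus\ell)$ inside $W^-\oplus W$: using Thomas's model \cite{Th06}, I would parametrize $E_0 := \{(v_1,v_2,v_3)\in\Gamma_{-\gamma}\times\Gamma_1\times(\ell\oplus\ell): v_1+v_2+v_3 = 0\}$ by $v_1 = (w_1,-\gamma w_1)$, $v_2 = (w_2,w_2)$, $v_3 = (se_1,te_1)$; the closure relation forces $(\gamma+1)w_1 = (t-s)e_1$, solvable since $\det(\gamma+1)\neq 0$, so $E_0\cong F^2$ in coordinates $(s,t)$ and the Maslov form reduces to $q(s,t) = (t-s)^2\lrangle{e_1\mid(\gamma+1)^{-1}e_1}$. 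A direct $2\times2$ computation gives $\lrangle{e_1\mid(\gamma+1)^{-1}e_1} = -c/(2+\Tr\gamma)$, so $q$ is a rank-one form with one-dimensional radical $\{s=t\}$ and anisotropic quotient $\tau(\Gamma_{-\gamma},\Gamma_1,\ell\oplus\ell) \Wittequiv \lrangle{c(2+\Tr\gamma)}$. Combining, $\nabla(\sigma_{\mathrm{LP}}(\gamma)) = \gamma_\psi(\lrangle{-c})\gamma_\psi(\lrangle{c(2+\Tr\gamma)}) = \gamma_\psi(\lrangle{-c,\,c(2+\Tr\gamma)})$, as claimed.

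I expect the main obstacle to be purely a matter of sign bookkeeping: three conventions must be pinned down compatibly — the orientation pairing entering $m(\cdot,\cdot)$, the normalization of $\gamma_\psi$ (so that $\gamma_\psi(\mathbb{H})=1$, equivalently $\gamma_\psi(a)\gamma_\psi(-a)=1$), and the ordering of the three Lagrangians in Thomas's $\tau$ (which changes $\tau$ by $-\tau$ under a transposition, and hence the discriminant by $-1$) — so that $-c$ and $c(2+\Tr\gamma)$ land exactly as in the statement. These I would match against \cite{Li11, Per81, Th06}. As a cross-check, and an alternative route robust against sign slips, one can instead start from the identity in the proof of Theorem~\ref{prop:nabla}, namely $\nabla(\sigma_{\mathrm{LP}}(\gamma)) = |\det(\gamma+1)|^{1/2}(\Theta^+_\psi - \Theta^-_\psi)(\sigma_{\mathrm{LP}}(\gamma))$, and compute $\Theta^+_\psi - \Theta^-_\psi$ from the explicit Schrödinger kernel of $M_\ell[\gamma]$ as $\int K(x,-x)\,dx$, a Gaussian integral in $x$ with quadratic coefficient $(2+\Tr\gamma)/c$; this reproduces $\gamma_\psi(c)^{-1}\gamma_\psi((2+\Tr\gamma)/c)\,|\det(\gamma+1)|^{-1/2}$, and the equality with $\gamma_\psi(\lrangle{-c,\,c(2+\Tr\gamma)})$ follows from $\gamma_\psi(c)^{-1} = \gamma_\psi(-c)$ and scaling-invariance of $\gamma_\psi$ up to squares.
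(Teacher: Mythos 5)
Your overall strategy is exactly the paper's: split $\nabla(\sigma_{\mathrm{LP}}(\gamma))$ into the Lion--Perrin factor $m(\gamma\ell,\ell)$ and the Weil constant of the Maslov space, compute each, and recombine by additivity of $\gamma_\psi$. The $m(\gamma\ell,\ell)$ computation (exponent $1-0-1$, $A_{\gamma\ell,\ell}=-c$) agrees verbatim with the paper. Where you genuinely diverge is in how you attack the Maslov form: you parametrize Thomas's kernel space $E_0=\{(v_1,v_2,v_3)\in\Gamma_{-\gamma}\times\Gamma_1\times(\ell\oplus\ell): v_1+v_2+v_3=0\}$ directly, eliminate $w_1$ via the closure relation, and read off a rank-one form; the paper instead invokes Perrin's Lemme~1.4.2 (the form $v\mapsto\lrangle{\pi_1 v\mid\pi_3 v}_{W^\Box}$ on $\Gamma_1=\ell_2$ for the triple ordered as $(\ell\oplus\ell,\Gamma_1,\Gamma_{-\gamma})$), solves a small linear system, and then flips by the dihedral antisymmetry $\tau(\ell_3,\ell_2,\ell_1)\sim -\tau(\ell_1,\ell_2,\ell_3)$ to restore the ordering $(\Gamma_{-\gamma},\Gamma_1,\ell\oplus\ell)$ occurring in \eqref{eqn:nabla}. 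The two routes are interchangeable; Perrin's formula together with the explicit dihedral flip makes the sign bookkeeping externally visible, while Thomas's model is more symmetric but requires you to pin down which of $\lrangle{v_1\mid v_2}$ or $\lrangle{v_2\mid v_1}$ (or a variant with $1/2$) is Thomas's convention before the discriminant is determined.

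That convention issue is where your write-up has an unresolved discrepancy rather than an outright gap. Your derived coefficient is $\lrangle{e_1\mid(\gamma+1)^{-1}e_1}=-c/(2+\Tr\gamma)$, so the form $q(s,t)=(t-s)^2\lrangle{e_1\mid(\gamma+1)^{-1}e_1}$ is Witt-equivalent to $\lrangle{-c(2+\Tr\gamma)}$, yet you record the answer as $\lrangle{c(2+\Tr\gamma)}$ without an intervening sign justification. This factor of $-1$ is exactly the one the paper accounts for by the dihedral antisymmetry, and it would be absorbed if Thomas's convention is $\lrangle{v_2\mid v_1}$; but as written your chain of equalities is inconsistent, so you must actually check the convention in \cite{Th06} (or cross-compare with the Perrin formula as the paper does) rather than appeal to it implicitly. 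You do flag this honestly as "sign bookkeeping," and the final answer you quote is correct. Your Schrödinger-kernel route via $\Theta^+_\psi-\Theta^-_\psi$ is a nice independent cross-check that the paper does not carry out; note however that it relies on the identity $\nabla=|\det(\gamma+1)|^{1/2}(\Theta^+_\psi-\Theta^-_\psi)$, which the paper derives only afterwards in the proof of Theorem~\ref{prop:nabla}, so used on its own it would reverse the logical order (though this is harmless since that identity is independent of the present lemma).
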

\begin{proof}
	We have $\sigma_\text{LP}(\gamma) = (\gamma, m(\gamma\ell, \ell) M_\ell[\gamma])$. First calculate
	\begin{equation}\label{eqn:cal-m} \begin{aligned}
		A_{\gamma\ell, \ell} & = \lrangle{ a,c | 1,0 } = -c \bmod F^{\times 2}, \\
		m(\gamma\ell, \ell) & = \gamma_\psi(1)^{1-0-1} \gamma_\psi(-c) = \gamma_\psi(-c).
	\end{aligned}\end{equation}
	Next, consider the following Lagrangians of $W^\Box := W^- \oplus W$:
	\[ \ell_1 := \ell \oplus \ell, \quad \ell_2 := \Gamma_1, \quad \ell_3 := \Gamma_{-\gamma}, \]
	noting that $\ell_3$ is transversal to both $\ell_1, \ell_2$ by the assumptions on $\gamma$. By  into \cite[Lemme 1.4.2]{Per81}, $\tau(\ell_1, \ell_2, \ell_3)$ is Witt-equivalent to the quadratic form on $\Gamma_1$ defined by $q_{123}: v \mapsto \lrangle{\pi_1 v | \pi_3 v}_{W^\Box}$ where $\pi_1, \pi_3$ are the projections attached to $W^\Box = \ell_1 \oplus \ell_3$. This form is degenerate: we will soon see that its radical is $\Gamma_1 \cap (\ell \oplus \ell) \simeq \ell$. We contend that $q_{123}$ is Witt-equivalent to $\lrangle{ -c(2 + \Tr(\gamma)) }$.
	
	To see this, represent the elements of $W^\Box$ as $(x,y;x',y')$. Let $v = (\alpha,\beta; \alpha, \beta) \in \Gamma_1$. There is a unique decomposition
	\[ v = \underbracket{(r, 0; s, 0)}_{\in \ell \oplus \ell} + \underbracket{(w, -\gamma w)}_{\in \Gamma_{-\gamma}} = (r, 0; s, 0) + \left( t, u; -at-bu, -ct-du \right) \]
	where $w = (t,u) \in W$, so that $q_{123}(v) = -ru - s(ct+du)$. The resulting linear system
	\begin{align*}
		r + t = \alpha & = s - at - bu \\
		u = \beta & = -ct - du
	\end{align*}
	entails $u=\beta$ and $-ru - s(ct+du) = -r\beta + s\beta = (t-\alpha+s)\beta$. Furthermore,
	\[ (s, t) = (a t + b\beta + \alpha, \; t) = \left( \frac{a\beta(d+1)}{-c} + b\beta + \alpha, \; \frac{(d+1)\beta}{-c} \right). \]
	This leads to
	\begin{align*}
		q_{123}(v) & = \left( \frac{(d+1)\beta}{-c} - \alpha + \frac{a\beta(d+1)}{-c} + b\beta + \alpha \right) \beta \\
		& = \left( \frac{(a+1)(d+1)\beta}{-c} + b\beta \right) \beta = \dfrac{2 + a + d}{-c} \cdot \beta^2;
	\end{align*}
	here we used $ad-bc=1$. Therefore $q_{123}$ factors through the $\beta$-coordinate, and is Witt-equivalent to $\lrangle{-(2+a+d)/c} \simeq \lrangle{-c(2+a+d)}$ as asserted.
	
	Now apply the dihedral symmetry \cite[p.532]{Li11} of $\tau(\cdots)$ to deduce the Witt equivalences
	\[ \tau(\Gamma_{-\gamma}, \Gamma_1, \ell \oplus \ell) \sim -\tau(\ell \oplus \ell, \Gamma_1, \Gamma_{-\gamma}) \sim \lrangle{c(2 + \Tr(\gamma))}. \]
	We conclude by comparing with \eqref{eqn:nabla} and \eqref{eqn:cal-m}.
\end{proof}

\begin{lemma}\label{prop:nabla-2}
	Let $\gamma \in \SL(2,F)_{\mathrm{reg}}$ and $\tilde{\gamma} \in \bm{p}^{-1}(\gamma)$. For every $g_1 \in \GL(2, F)$ with $\nu := \det g_1$ we have
	\[ \nabla(g\tilde{\gamma}g^{-1}) = \nabla(\tilde{\gamma}) \Cali_2(\nu, \gamma) \]
	where $\Cali_2$ is defined as in Definition--Proposition \ref{def:Cali-factor}, and $g \in \PGL(2, F)$ is the image of $g_1$.
\end{lemma}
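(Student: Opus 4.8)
The plan is to reduce the claim to a computation with Weil's constant via the explicit formula of Lemma \ref{prop:nabla-1}, exploiting that $\nabla|_{\overline{G}_{\psi,\mathrm{reg}}}$ is $G(F)$-invariant (Theorem \ref{prop:nabla}) so that $\nabla(g\tilde\gamma g^{-1})$ depends on $g_1 \in \GL(2,F)$ only through its class in $\PGL(2,F)$, hence through $\nu = \det g_1 \bmod F^{\times 2}$, i.e.\ through $\nu(g) \in F^\times/F^{\times 2}$ in the sense of \eqref{eqn:nu-arises}. First I would fix $\tilde\gamma = \sigma_{\mathrm{LP}}(\gamma)$; since $\nabla$ is genuine, the general case $\tilde\gamma = z\sigma_{\mathrm{LP}}(\gamma)$ follows because both sides scale by $z \in \bmu_2$. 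Then I would parameterize the conjugacy class of $\gamma$ by a datum $(K, F, x, c)$ as in \S\ref{sec:Sp}, so that $K = F[\gamma]$ and $\Cali_2(\nu,\gamma) = (N_{K/F}(\omega),\nu)_{F,2}$ where $x = \omega/\tau(\omega)$; recall from Definition--Proposition \ref{def:Cali-factor} that this depends only on the stable class of $\gamma$, so I am free to pick a convenient representative in the class.

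The core step is to compute $\nabla(\sigma_{\mathrm{LP}}(\gamma))$ explicitly. Writing $\gamma = \twomatrix{a}{b}{c}{d}$ with $c \neq 0$ (the open condition; the case $c=0$ is handled by the symmetric-basis swap noted after \eqref{eqn:Kubota-cocycle}, or by Zariski density since both sides are invariant and vary continuously on the stable class), Lemma \ref{prop:nabla-1} gives $\nabla(\sigma_{\mathrm{LP}}(\gamma)) = \gamma_\psi(\lrangle{-c,\,c(2+\Tr\gamma)})$. Now $2 + \Tr\gamma = \det(\gamma + 1) = N_{K/F}(\gamma+1)$ up to the standard identification, and the form $\lrangle{-c,\, c(2+\Tr\gamma)} = \lrangle{-c} \oplus \lrangle{c N_{K/F}(\gamma+1)}$ should be recognized, via the additivity of $\gamma_\psi$ on the Witt group, as computing a Hilbert symbol: by \eqref{eqn:Weil-Hilbert} and bimultiplicativity of $\gamma_\psi(1)^{-1}\gamma_\psi(\cdot)$ one gets $\gamma_\psi(\lrangle{-c, e}) = \gamma_\psi(1)^{-1}\gamma_\psi(-c)\gamma_\psi(e)(-c,e)_{F,2}^{\,?}$ — more precisely I would use the clean identity $\gamma_\psi(\lrangle{a,b}) = \gamma_\psi(\lrangle{1, ab})(a,b)_{F,2}^{-1}$ (a consequence of \eqref{eqn:Weil-Hilbert}) to separate the part that depends on $\nu$ from the part that does not. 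Applying this with $a = -c$, $b = c(2+\Tr\gamma)$ lets me write $\nabla(\sigma_{\mathrm{LP}}(\gamma))$ as a product of a term depending only on $\gamma$ (the stable class) and the Hilbert symbol $(-c, c(2+\Tr\gamma))_{F,2}^{\pm 1}$.

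Finally I would track what happens under conjugation by $g_1$. Conjugating $\gamma$ by $g_1 \in \GL(2,F)$ replaces the datum $(K,F,x,c)$ by $(K,F,x,\nu c)$ up to equivalence, by Remark \ref{rem:parameter-GSp} (the similitude factor $\nu$ multiplies $c$). So in the formula above, $c$ becomes $\nu c$ while $2 + \Tr\gamma$ is unchanged, and the ``stable-class'' factor is unchanged. Computing the ratio $\nabla(g\tilde\gamma g^{-1})/\nabla(\tilde\gamma)$, everything cancels except the change in the Hilbert-symbol part, which I expect to collapse to $(N_{K/F}(\omega),\nu)_{F,2} = \Cali_2(\nu,\gamma)$ after using the projection formula \eqref{eqn:Hilbert-projection-formula} and the relation $x = \omega/\tau(\omega)$, $N_{K/F}(\gamma+1) = N_{K/F}(\omega + \tau(\omega))/N_{K/F}(\tau(\omega))$ up to squares. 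The main obstacle is precisely this last identification: getting the Hilbert-symbol bookkeeping to match $\Cali_2(\nu,\gamma)$ on the nose requires care about which representative $\omega$ is chosen (recall $N_{K/F}(\omega)$ is only well-defined mod $F^{\times 2}$), about the sign ambiguity in the identity for $\gamma_\psi(\lrangle{a,b})$, and about reconciling the $c \neq 0$ matrix computation with the basis-independent parameter $(K,F,x,c)$. I anticipate that choosing $\gamma$ to be the companion-type matrix of its characteristic polynomial, so that $c = 1$ and $K = F[X]/(X^2 - \Tr\gamma\cdot X + 1)$ explicitly, will make the comparison with $N_{K/F}$ transparent and will be the cleanest route through this step.
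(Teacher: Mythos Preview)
Your approach is correct and follows the paper's proof closely: reduce to $\tilde\gamma = \sigma_{\mathrm{LP}}(\gamma)$ with matrix entry $c \neq 0$, apply Lemma~\ref{prop:nabla-1}, rewrite via \eqref{eqn:Weil-Hilbert}, and track the change under conjugation. The paper dissolves your ``main obstacle'' by a cleaner bookkeeping choice. After writing $2 + \Tr\gamma = (\omega+\tau(\omega))^2\, N_{K/F}(\omega)^{-1}$ (which you also have), it obtains the closed form
\[
\nabla(\sigma_{\mathrm{LP}}(\gamma)) = \gamma_\psi(1)\,\gamma_\psi\bigl(N_{K/F}(\omega)\bigr)^{-1}\,\bigl(N_{K/F}(\omega),\, -c\bigr)_{F,2},
\]
where $c$ is the \emph{matrix entry}. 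Using $G(F)$-invariance to take $g_1 = \twomatrix{1}{}{}{\nu}$, direct matrix multiplication sends the entry $c$ to $\nu c$, while $N_{K/F}(\omega) \bmod F^{\times 2}$ is unchanged; the ratio is then visibly $(N_{K/F}(\omega),\nu)_{F,2} = \Cali_2(\nu,\gamma)$.

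Your detour through Remark~\ref{rem:parameter-GSp} is what manufactures the obstacle you worry about: the $c$ in that Remark is the parameter datum, not the matrix entry appearing in Lemma~\ref{prop:nabla-1}, and relating the two would indeed require extra work. But there is no need---the paper never touches the parameter datum $c$ here, and your proposed companion-matrix normalization is unnecessary once you simply conjugate by the diagonal matrix and read off the effect on the lower-left entry.
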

\begin{proof}
	We may also assume $\tilde{\gamma} = \sigma_\text{LP}(\gamma)$ since $\nabla$ is genuine. Since $\nabla(\tilde{\gamma}) = |\det(\gamma + 1)|^{1/2} (\Theta^+_\psi - \Theta^-_\psi)(\tilde{\gamma})$ is locally constant, upon perturbation we may further assume $\gamma = \twomatrix{a}{b}{c}{d}$ with $c \neq 0$. Parameterize the stable class of $\gamma$ by $(K, F, \lambda)$ as in \S\ref{sec:Kubota}, where $K$ is an étale $F$-algebra of dimension $2$ and $\lambda \in K^1$; denote by $\tau$ the nontrivial $F$-involution on $K$. There exists $\omega \in K^\times$ such that $\lambda = \omega/\tau(\omega)$. Hence
	\[ 2 + \Tr(\gamma) = N_{K/F}(1 + \lambda) = (\omega + \tau(\omega))^2 N_{K/F}(\omega)^{-1}. \]
	By Lemma \ref{prop:nabla-1} and \cite[Proposition 1.3.4]{Per81}, we express $\nabla(\tilde{\gamma})$ as
	\begin{equation}\label{eqn:nabla-new} \begin{aligned}
		\gamma_\psi\left( \lrangle{-c, c(2 + \Tr(\gamma))} \right) & = \gamma_\psi(1) \gamma_\psi\left(- (2 + \Tr(\gamma)) \right) (-c, c(2 + \Tr(\gamma)))_{F,2} \\
		& = \gamma_\psi(1) \gamma_\psi(N_{K/F}(\omega))^{-1} (N_{K/F}(\omega), -c)_{F,2}
	\end{aligned}\end{equation}
	using $(-c,c)_{F,2} = 1$. As $\nabla$ is $G(F)$-invariant, we may assume $g_1 = \twomatrix{1}{}{}{\nu}$. It follows that $\nabla(g_1 \tilde{\gamma} g_1^{-1}) = \nabla(\tilde{\gamma}) (N_{K/F}(\omega), \nu)_{F,2}$ since $N_{K/F}(\omega) \bmod F^{\times 2}$ does not change.
\end{proof}

We switch back to the case of general $n$, identifying $\overline{G}_\psi^{(2)}$ with $\tilde{G}$ by the unique isomorphism.
\begin{theorem}\label{prop:st-conj-twofold}
	Adams' notion of stable conjugacy in Definition \ref{def:Adams-conj}, when restricted to $\overline{G}_\psi^{(2)} \simeq \tilde{G}$, coincides with Definition \ref{def:st-conj-elements} for $m=2$.
\end{theorem}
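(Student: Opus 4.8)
The plan is to route everything through the $G(F)$-invariant function $\nabla$ of Theorem \ref{prop:nabla} and the rank-one computation of Lemma \ref{prop:nabla-2}. Both relations under comparison require that the underlying elements $\delta,\eta \in G_\mathrm{reg}(F)$ be stably conjugate, and — as $4 \nmid m$ here — both are relations on $\tilde{G}_\mathrm{reg}$ (Corollary \ref{prop:st-conj-elements-canonical}); moreover $\iota_{Q,2} = \identity_T$, so $\tilde{T}_{Q,2} = \tilde{T}$ and no auxiliary $\delta_0$ intervenes. I first record that $\nabla$ is nowhere vanishing on $\overline{G}_\psi^{(2)}$: by \eqref{eqn:nabla} it is a unit times an eighth root of unity. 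The whole statement then follows once I establish the single identity
\[
	\nabla\bigl( \CaliAd(g)(\tilde{\delta}) \bigr) = \nabla(\tilde{\delta})
\]
for every stable conjugacy $\Ad(g)\colon T \rightiso S$ of maximal $F$-tori (so $g \in G(\bar F)$) and every lift $\tilde{\delta}$ of $\delta \in T_\mathrm{reg}(F)$, where $\CaliAd(g)$ is the operator of Definition \ref{def:st-conj}, transported to $\overline{G}_\psi^{(2)}$ via the identification recalled in \S\ref{sec:Adams}.

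Granting the identity, both implications are immediate. If $\tilde{\delta},\tilde{\eta}$ are stably conjugate in the sense of Definition \ref{def:st-conj-elements}, say $\tilde{\eta} = \CaliAd(g)(\tilde{\delta})$ with $\Ad(g)\delta = \eta$, then $\nabla(\tilde{\eta}) = \nabla(\tilde{\delta})$ and they are Adams-stably conjugate by Theorem \ref{prop:nabla}. Conversely, if $\delta \stackrel{\text{st}}{\sim} \eta$ and $\nabla(\tilde{\delta}) = \nabla(\tilde{\eta})$, pick any $g \in G(\bar F)$ with $\Ad(g)\delta = \eta$; by \textbf{AD.2} the element $\CaliAd(g)(\tilde{\delta})$ is a lift of $\eta$, hence equals $\noyau\tilde{\eta}$ for some $\noyau \in \mu_2$, and applying $\nabla$ together with the identity and $\nabla(\tilde{\eta}) \neq 0$ forces $\noyau = 1$, so $\tilde{\eta} = \CaliAd(g)(\tilde{\delta})$ and Definition \ref{def:st-conj-elements} is satisfied.

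To prove the identity I reduce first to $G^T$, then to rank one. By Proposition \ref{prop:stable-reduction-SL2} choose a factorization pair $(g',g'')$ for $\Ad(g)$ with $g' \in G^T_\mathrm{ad}(F)$, $g'' \in G(F)$; since $\CaliAd(g) = \Ad(g'') \circ \CaliAd(g')$ and $\nabla|_{\overline{G}_{\psi,\mathrm{reg}}}$ is $G(F)$-invariant (Theorem \ref{prop:nabla}), the $\Ad(g'')$ factor drops out and one may assume $g = g' \in G^T_\mathrm{ad}(F)$. Fixing a parameter $(K,K^\sharp,\ldots)$ for $T \hookrightarrow G$ identifies $G^T \simeq \prod_{i\in I} R_{K_i^\sharp/F}(\SL(2))$ and $G^T_\mathrm{ad}(F) \simeq \prod_i \PGL(2,K_i^\sharp)$; by Lemma \ref{prop:metaplectic-reduction} the pull-back of $\overline{G}_\psi^{(2)}$ to $G^T(F)$ is the contracted product of the rank-one metaplectic groups with $\psi_i := \psi \circ \Tr_{K_i^\sharp/F}$, and $\nabla$ restricts there to $\bigotimes_i \nabla_i$, while $\CaliAd(g')$ acts as $\prod_i \CaliAd^+(g'_i)$ by Definition--Proposition \ref{def:st-conj-G-T}. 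So it suffices to prove $\nabla_i(\CaliAd^+(g'_i)(\tilde{\delta}_i)) = \nabla_i(\tilde{\delta}_i)$ over each $K_i^\sharp$. In rank one (dropping the index), Definition--Proposition \ref{def:st-conj-SL2} gives $\CaliAd^+(g')(\tilde{\delta}) = \Cali_2(\nu(g'),\delta)\cdot\Ad(g')(\tilde{\delta})$ (with $t_0 = \delta$ since $\iota_{Q,2} = \identity$), and $\Ad(g')$ agrees with conjugation by any lift $g_1 \in \GL(2,K^\sharp)$ of $g'$ by Lemma \ref{prop:PGL-action} (equivalently by uniqueness of lifts, Proposition \ref{prop:lifting-uniqueness}, as $\SL(2,K^\sharp)$ is perfect), so $\nu(g') = \det(g_1) \bmod K^{\sharp\times 2}$. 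Lemma \ref{prop:nabla-2}, which together with the definition of $\Cali_2$ is insensitive to the additive character and hence valid over $K^\sharp$ with $\psi_i$, yields $\nabla_i(g_1\tilde{\delta}g_1^{-1}) = \nabla_i(\tilde{\delta})\,\Cali_2(\nu(g'),\delta)$; as $\nabla_i$ is genuine and $\Cali_2 \in \mu_2$, this gives $\nabla_i(\CaliAd^+(g')(\tilde{\delta})) = \Cali_2(\nu(g'),\delta)^2\,\nabla_i(\tilde{\delta}) = \nabla_i(\tilde{\delta})$.

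The main obstacle I anticipate is organizational rather than computational: one must check that the chosen isomorphism $\overline{G}_\psi^{(2)} \simeq \tilde{G}$ is genuinely compatible with the two product decompositions — that of Lemma \ref{prop:metaplectic-reduction} governing $\nabla$ and that of Theorem \ref{prop:G-T-reduction} governing $\CaliAd$ — and that it intertwines the two $\GSp(W)$-adjoint actions (the one on $\overline{G}_\psi^{(2)}$ from \cite[Chapitre 4, I.8]{MVW87} and the Brylinski--Deligne one of Proposition \ref{prop:BD-adjoint-action}). This should be handled uniformly by uniqueness of lifts (Proposition \ref{prop:lifting-uniqueness}, using that $\Sp(2n,F)$ and the groups $\SL(2,K_i^\sharp)$ are perfect), together with careful tracking of the additive characters $\psi_i$ under Weil restriction. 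Once that bookkeeping is in place, the displayed identity above is a direct substitution.
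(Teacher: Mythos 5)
Your proof is correct and follows essentially the same path as the paper's: establish $\nabla\circ\CaliAd(g) = \nabla$ by factoring through $G^T_\mathrm{ad}(F)$, reducing to rank one via Lemma~\ref{prop:metaplectic-reduction}, and invoking Lemma~\ref{prop:nabla-2}, while the compatibility of the $\GSp$-actions and product decompositions is handled by uniqueness of lifts (Proposition~\ref{prop:lifting-uniqueness}) exactly as the paper does. The one divergence is cosmetic: the paper appeals to the uniqueness statement \cite[Lemme~5.7]{Li11} to conclude, whereas you deduce both implications directly from the observation that $\nabla$ is nowhere vanishing on $\overline{G}_\psi^{(2)}$ — a minor self-contained shortcut that replaces the external citation.
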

\begin{proof}
	Let $\tilde{\gamma} \in \overline{G}_{\psi, \text{reg}}^{(2)}$ with image $\gamma$ and $T := Z_G(\gamma)$. Let $\Ad(g): \gamma \mapsto \delta$ be a stable conjugation in $G$. By \cite[Lemme 5.7]{Li11}, there exists a unique $\tilde{\delta} \mapsto \delta$ in $\overline{G}_{\psi, \mathrm{reg}}^{(2)}$ which is Adams-stably conjugate to $\tilde{\gamma}$. We have to show that $\CaliAd(g)(\tilde{\gamma}) = \tilde{\delta}$, or equivalently $\nabla\left(\CaliAd(g)(\tilde{\gamma})\right) = \nabla(\tilde{\gamma})$ by Theorem \ref{prop:nabla}.
	
	Take a factorization pair $(g', g'')$ for $\Ad(g)$ (Definition \ref{def:st-conj}) so that $\CaliAd(g) = \Ad(g'') \CaliAd(g')$. Since $\nabla$ is $G(F)$-invariant, we may assume $g = g' \in G^T_\text{ad}(F)$ and $g''=1$. Take a parameter $(K, K^\sharp, x, c)$ for the conjugacy class of $\gamma$, with $K = \prod_{i \in I} K_i$, etc.; see \S\ref{sec:Sp}.
	
	Write $g = (g_i)_{i \in I}$. Since $\CaliAd(g)$ is the composition of $\CaliAd(g_i)$ in any order by \textbf{AD.4} of Proposition \ref{prop:CAd-prop}, Lemma \ref{prop:metaplectic-reduction} and \textbf{AD.1} reduce the problem to the case $n=1$ upon passing to a finite separable extension of $F$. Hence we may write $\gamma = \twomatrix{a}{b}{c}{d}$ and represent $g$ by $g_1 \in \GL(2,F)$ by choosing a symplectic basis. Note that the $\GL(2,F) = \GSp(W)$ action on $\widetilde{\SL}(2,F)$ mentioned in \S\ref{sec:Adams} coincides with the one from Proposition \ref{prop:BD-adjoint-action}, say by Proposition \ref{prop:lifting-uniqueness}.
	
	Now Lemma \ref{prop:nabla-2} implies $\nabla\left(\CaliAd(g)(\tilde{\gamma})\right) = \nabla(\tilde{\gamma})$ by the very definition of $\CaliAd(g)$, which involves the same factor $\Cali_2(\det g_1, \gamma)$.
\end{proof}

\subsection{Relation with \texorpdfstring{$\Theta$}{Theta}-lifting}\label{sec:theta}
Suppose $\text{char}(F)=0$ and fix $\psi$ to form Weil's metaplectic group $\overline{G}_\psi^{(2)}$ as in \S\ref{sec:Adams}. Denote by $\Pi_-(\overline{G}_\psi^{(2)})$ the genuine admissible dual of $\overline{G}_\psi^{(2)}$, i.e. the set of isomorphism classes of genuine irreducible admissible representations. Also denote by $\tilde{G} \twoheadrightarrow G(F)$ the BD-cover with $m=2$. As mentioned in \S\ref{sec:Adams}, there is a unique topological isomorphism $\tilde{G} \simeq \overline{G}^{(2)}_\psi$. Note that $Y_{Q,2} = Y$, thus the isogenies $\iota_{Q,2}$ are identity maps.

Let $(V,q)$ be a quadratic $F$-vector space with $\dim_F V = 2n+1$ and $d^\pm(V,q) = 1$. Denote by $\Pi(\SO(V,q))$ (resp. $\Pi(\Or(V,q))$) the admissible dual of $\SO(V,q)$ (resp. of $\Or(V,q)$). A fundamental result of Adams--Barbasch \cite{AB98} and Gan--Savin \cite{GS1}, for archimedean and non-archimedean $F$ respectively, says that for every $\pi_{\SO} \in \Pi(\SO(V,q))$, there exists a unique extension $\pi_{\Or}$ to $\Or(V,q) = \SO(V,q) \times \{\pm 1_{\Or}\}$ such that the $\theta$-lift $\theta_\psi(\pi_{\Or})$ to $\overline{G}_\psi^{(2)}$ is nonzero. Furthermore, it asserts that $\pi_{\SO} \mapsto \theta_\psi(\pi_{\Or})$ yields a bijection
\begin{equation}\label{eqn:AB-GS} \begin{aligned}
	\Pi_-(\tilde{G}) \simeq \Pi_-(\overline{G}_\psi^{(2)}) \xleftrightarrow{1:1} & \bigsqcup_{\substack{\dim_F V = 2n+1 \\ d^\pm(V,q) = 1 \\ \bmod \simeq }} \Pi(\SO(V,q)) \\
	\theta_\psi(\pi_{\Or}) \longmapsfrom & \pi_{\SO}
\end{aligned}\end{equation}
preserving discrete series, supercuspidal representations, etc. All these $\SO(V,q)$ are pure inner forms of each other.

Hereafter we assume $F$ is non-archimedean of residual characteristic $p \neq 2$. Observe that $\Lgrp{\SO(V,q)} = \Sp(2n, \CC) \times \Weil_F$. As $\psi$, $(W, \lrangle{\cdot|\cdot})$ are chosen, this can be identified with $\Lgrp{\tilde{G}}$ by \S\ref{sec:L-group}. Also recall that $\SO(2n+1)$ and $\Sp(W)$ share the same Weyl group $\Omega$ relative to some split maximal torus.

Now consider an epipelagic $L$-parameter (Definition \ref{def:epipelagic-parameter})
\[ \phi: \Weil_F \to \Lgrp{\tilde{G}}, \]
which factorizes into
\[ \Weil_F \xrightarrow{\phi_{S_{Q,2}, \Lgrp{j}} } \Lgrp{S_{Q,2}} \xrightarrow{\Lgrp{j}} \Lgrp{\tilde{G}}. \]
As remarked in \S\ref{sec:epipelagic-parameters} (see also \cite[\S 5]{Kal15}), this is done by choosing a $\chi$-datum and works in the exactly same way for both $\tilde{G}$ and $\SO(2n+1) = G_{Q,2}$ on the dual side.  The parameter $\phi_{S_{Q,2}, \Lgrp{j}}$ yields
\begin{compactitem}
	\item the triple $(\mathcal{E}, S, \theta^\flat)$ as in Definition \ref{def:inducing-data}, now with $\theta^\flat: S(F) \to \CC^\times$ an epipelagic character;
	\item a stable class $\mathcal{E}$ of embeddings $j: S \hookrightarrow G$ of maximal tori of type (ER), corresponding to an element of $H^1(F, \Omega)$;
	\item the element in $H^1(F, \Omega)$ also determines a stable class $\mathcal{F}$ of embeddings $k: S \hookrightarrow \SO(V,q)$, where $(V,q)$ varies as in \eqref{eqn:AB-GS}; see \cite[\S 3.2]{Kal19} for generalities on this extension across inner forms.
\end{compactitem}
For each $k \in \mathcal{F}$, we can transport $\theta^\flat$ to a character $\theta_k$ of $kS(F)$. Kaletha's epipelagic supercuspidal $L$-packet is defined as
\begin{equation*}
	\Pi^{\SO}_\phi := \left\{ \pi_{kS, \theta_k \epsilon_{kS}} : k \in \mathcal{F} \right\} \; \subset \bigsqcup_{\substack{\dim_F V = 2n+1 \\ d^\pm(V,q) = 1 \\ \bmod \simeq }} \Pi(\SO(V,q)),
\end{equation*}
where
\begin{compactitem}
	\item $\epsilon_{kS}: kS(F) \to \{\pm 1\}$ is the character in \cite[\S 4.6]{Kal15}, see also \S\ref{sec:toral-invariants};
	\item $\pi_{kS, \theta_k \epsilon_{kS}}$ is the irreducible supercuspidal representation of $\SO(V,q)$ constructed in \cite[\S 3]{Kal15}, see also Theorem \ref{prop:epipelagic-supercuspidal}.
\end{compactitem}

\begin{remark}\label{rem:SO-packet-size}
	By \cite[Proposition 5.7]{Kal15}, $|\Pi^{\SO}_\phi|$ equals $|\pi_0(C_\phi, 1)^\wedge| = |\mathbf{B}(S)|$. More precisely, $H^1(F,S) \simeq \mathbf{B}(S)$ (Kottwitz's isomorphism) acts on the conjugacy classes of embeddings in $\mathcal{E}$ and $\mathcal{F}$, and makes both sets into torsors.
\end{remark}

On the other hand, using the stable system for $m=2$ of Theorem \ref{prop:dagger-SS}, we construct the packet $\Pi_\phi \subset \Pi_-(\tilde{G})$ of Definition \ref{def:epipelagic-packet}. Our aim is to show that $\Pi_\phi = \Pi_\phi^{\SO}$ under \eqref{eqn:AB-GS}. The main tool will be the theory of Loke--Ma--Savin \cite{LMS16}. To this end, we follow \textit{loc. cit.} to assume
\[ \psi|_{\mathfrak{o}_F} \not\equiv 1, \quad \psi|_{\mathfrak{p}_F} = 1. \]
and take $\xi = \psi$ in the constructions of \S\ref{sec:compact-induction}.

Before applying their results, notice that the inducing data of depth $\frac{1}{e}$ in \textit{loc. cit.} take the form $(x, \lambda, \chi)$, where
\[ x \in \mathcal{B}(G, F), \quad \lambda \in \check{\mathsf{V}}_{x,1/e}, \quad \chi \in \Hom(\mathsf{S}_\lambda, \CC^\times), \quad \mathsf{S}_\lambda := \Stab_{G(F)_x}(\lambda) \big/ G(F)_{x, 1/e}. \]
Given $(\mathcal{E}, S, \theta^\flat)$ as above and $j \in \mathcal{E}$, the point $x$ will be associated to $j$, and $\lambda$ arises from $\theta^\flat|_{jS(F)_{1/e}}$ (depending on $\psi$). Before explicating $\chi$, we split the cover over $\Stab_{G(F)_x}(\lambda)$ using the following facts:
\begin{compactitem}
	\item by \cite[\S 3.4]{LMS16} there exists an $\mathfrak{o}_F$-lattice $\Lambda \subset W$ such that $\Lambda = \left\{w \in W: \lrangle{w|\Lambda} \subset \mathfrak{p}_F \right\}$ (i.e. self-dual), and $G(F)_x \subset \Stab_{G(F)}(\Lambda)$;
	\item the lattice model \cite[Chapitre 2, II.8]{MVW87} associated to $\Lambda$ furnishes a splitting $\sigma_\Lambda: \Stab_{G(F)}(\Lambda) \hookrightarrow \overline{G}_\psi^{(8)} \simeq \tilde{G}$, and $\sigma_\Lambda|_{G(F)_x}$ is independent of $\Lambda$ by \cite[Lemma A.3.1]{LMS16}.
\end{compactitem}
The character produced from $(\lambda, \chi)$ of $\Stab_{G(F)_x}(\lambda)$ can thus be lifted to a genuine one of its preimage, and the remaining construction is as in \S\ref{sec:compact-induction}.

By Lemma \ref{prop:pro-p-splitting}, the splitting restricted to $G(F)_{x, 1/e}$ coincides with the one in \S\ref{sec:compact-induction}. From Lemma \ref{prop:S_0} and \eqref{eqn:lambda-stab} we have $\Stab_{G(F)_x}(\lambda) = jS(F)_{p'} \ltimes G(F)_{x, 1/e}$, thus the crux is to compare the splittings over $jS(F)_{p'}$.

\begin{lemma}\label{prop:chi-formula}
	The genuine epipelagic representation $\pi_{jS, \theta_j \epsilon_{jS}}$ corresponds to the datum $(x, \lambda, \chi)$ in \cite{LMS16}, where $x, \lambda$ are determined as above, and
	\[ \chi = \theta^\flat \otimes \theta_j^\dagger \otimes \epsilon_{jS}: \; jS(F)_{p'} \to \CC^\times; \]
	here $\theta^\flat$ are transported from $S(F)$ to $jS(F)$.
\end{lemma}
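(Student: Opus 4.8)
The plan is to unwind both sides as compact inductions of genuine characters from the \emph{same} compact-open subgroup $\widetilde{jS}\cdot G(F)_{x,1/e}$ of $\tilde G$, and then to match the inducing data piece by piece. On our side, by Theorem~\ref{prop:epipelagic-supercuspidal} and \eqref{eqn:c-Ind}, $\pi_{jS,\theta_j\epsilon_{jS}}=\cInd^{\tilde G}_{\widetilde{jS}\cdot G(F)_{x,1/e}}\bigl(\widehat{\theta_j\epsilon_{jS}}\bigr)$. On the side of \cite{LMS16}, once the covering is split over $\Stab_{G(F)}(\Lambda)\supset G(F)_x$ by the lattice model $\sigma_\Lambda$, the datum $(x,\lambda,\chi)$ gives a compact induction from $\bm{p}^{-1}\bigl(\Stab_{G(F)_x}(\lambda)\bigr)$ of the genuine character obtained by combining the $\lambda$-character on $G(F)_{x,1/e}$ with $\chi$ on the preimage of $jS(F)_{p'}$; here we use \eqref{eqn:lambda-stab} and Lemma~\ref{prop:S_0} to write $\Stab_{G(F)_x}(\lambda)=jS(F)_{p'}\ltimes G(F)_{x,1/e}$. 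So it suffices to identify (i) the building point $x$, (ii) the restriction of the inducing character to $G(F)_{x,1/e}$, and (iii) its restriction to $\widetilde{jS(F)_{p'}}$.

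Item (i) is immediate: the point $x\in\mathcal{B}(G,F)$ attached to $jS$ by \cite{Pra01} is the one used both in \S\ref{sec:compact-induction} and in \cite{LMS16}. For (ii), both characters restrict to $G(F)_{x,1/e}$ as the character determined by $\lambda\in\check{\mathsf{V}}_{x,1/e}$ via the Moy--Prasad isomorphism and the fixed additive character $\psi=\xi$ (normalized by $a(\psi)=0$); this uses Lemma~\ref{prop:pro-p-splitting}, which makes the splitting over the pro-$p$ group $G(F)_{x,1/e}$ unique, together with the fact that, by construction, $\lambda$ is the functional attached to $\theta^\flat|_{jS(F)_{1/e}}$. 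This reduces the lemma to comparing two characters of $jS(F)_{p'}$: the $\chi=(\theta_j\epsilon_{jS})\circ\sigma_\Lambda$ of \cite{LMS16}, against $\theta^\flat\otimes\theta_j^\dagger\otimes\epsilon_{jS}$.

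Since $\epsilon_{jS}$ is an honest character of $jS(F)$, it matches directly, and writing $\theta_j=\theta_j^\circ\theta_j^\dagger$ with $\theta_j^\dagger$ an honest character (Definition~\ref{def:dagger}, Theorem~\ref{prop:dagger-SS}), we are reduced to showing $\theta_j^\circ\circ\sigma_\Lambda=\theta^\flat$ on $jS(F)_{p'}$. By definition $\theta_j^\circ$ is the genuine character of $\widetilde{jS}$ corresponding to (the transport of) $\theta^\flat$ under the identification \eqref{eqn:splitting-genuine}, which is built from the section $\sigma$ of Example~\ref{eg:section-S}; hence what must be proven is the equality of splittings
\[ \sigma|_{jS(F)_{p'}}=\sigma_\Lambda|_{jS(F)_{p'}}\colon\ jS(F)_{p'}=\{\pm1\}^I\longrightarrow\overline{G}^{(8)}_\psi. \]
To establish this I would choose $\Lambda$ adapted to the orthogonal decomposition $W=\bigoplus_{i\in I}W_i$ carried by the parameter of $jS$ (it remains self-dual and $G(F)_x$-stable, and $\sigma_\Lambda|_{G(F)_x}$ does not depend on $\Lambda$ by \cite{LMS16}); both sides are then compatible with the contracted-product decompositions of Lemma~\ref{prop:restriction-W_i} and Lemma~\ref{prop:metaplectic-reduction}, reducing to $\dim_F W=2$ over each $K_i^\sharp$. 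There, Proposition~\ref{prop:convenient-Schrodinger} identifies the value of $\sigma$ at $-1$ with the Schr\"odinger splitting $\sigma_\ell(-1)=\bigl(-1,M_\ell[-1]\bigr)$, i.e.\ with the canonical element $-1\in\overline{G}_\psi$, so the task becomes to check $\sigma_\Lambda(-1)=\sigma_\ell(-1)$. Both square to $1$ and lie over $-1$, so they differ by a sign, which I would pin down by computing the action of $\sigma_\Lambda(-1)$ on the even half $\omega_\psi^+$ of the Weil representation and checking that it is $\identity$, exactly as for $\sigma_\ell(-1)$; the two then coincide by faithfulness of $\omega_\psi^+\oplus\omega_\psi^-$ on fibres over $-1$. (A direct lattice-model computation inside $\SL(2,K_i^\sharp)$ is an alternative.)

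The main obstacle is precisely this last comparison of the lattice-model and Schr\"odinger-model liftings of $-1$; equivalently, the assertion that the lattice-model splitting $\sigma_\Lambda$ contains the canonical element $-1$ of Weil's metaplectic group. Everything else is bookkeeping of normalizations --- the depth-$\tfrac1e$ truncation, the Moy--Prasad isomorphism and the choice of $\psi$ --- together with the already-established compatibility of all the relevant structures (covers, sections, $\theta_j^\dagger$) with orthogonal direct sums.
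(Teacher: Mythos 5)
Your proposal follows the paper's proof quite closely. Both reduce to comparing the splitting $\sigma$ from Example~\ref{eg:section-S} with the lattice-model splitting $\sigma_\Lambda$ over $jS(F)_{p'}$, both exploit the orthogonal decomposition $W=\bigoplus_i W_i$ into joint eigenspaces (and the induced decomposition $\Lambda=\bigoplus_i\Lambda_i$), and both invoke Proposition~\ref{prop:convenient-Schrodinger} to identify $\sigma(-1_{\Sp(W_i)})$ with the Schr\"odinger-model lift $\sigma_{\ell_i}(-1)$. The divergence is in the last step. The paper stays over $F$: it chooses the $F$-Lagrangian $\ell_i\subset W_i$ \emph{adapted} to $\Lambda_i$ (so that $\Lambda_i=(\Lambda_i\cap\ell_i)\oplus(\Lambda_i\cap\ell'_i)$) and then quotes~\cite[Proposition~2.13]{Li11}, which says precisely that $\sigma_{\Lambda_i}$ and $\sigma_{\ell_i}$ agree on $\Stab(\Lambda_i)\cap\GL(\ell_i)\ni -1$. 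You instead observe that both lifts are involutive and over the same point, hence differ by $\pm1$, and propose to pin down the sign by computing the action on $\omega_\psi^+$ and invoking faithfulness of $\omega_\psi$ on the fibre; this is a legitimate alternative and is in fact close in spirit to how~\cite[Proposition~2.13]{Li11} is proved, but you leave the computation as a sketch, whereas the paper offloads it to a citation. One small caveat: the further reduction via Lemma~\ref{prop:metaplectic-reduction} to ``$\dim W=2$ over $K_i^\sharp$'' is an unnecessary detour — the lattice model is an $F$-construction, and the paper wisely stays over $F$ throughout and just picks a suitable $F$-Lagrangian in each $W_i$; bringing Lemma~\ref{prop:metaplectic-reduction} into play would require an additional compatibility of lattice models with Weil restriction that is not established in the paper. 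If you drop that step and keep the $W_i$-decomposition over $F$, your argument matches the paper's modulo the final citation-versus-computation choice.
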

\begin{proof}
	By inspecting the construction of $\theta_j$ in \S\ref{sec:inducing-data}, it boils down to identify the splittings over $jS(F)_{p'}$ given by
	\begin{inparaenum}[(a)]
		\item the lattice models, and
		\item the recipe of Example \ref{eg:section-S}.
	\end{inparaenum}
	Recall that in Example \ref{eg:section-S}, we have $W = \bigoplus_{i \in I} W_i$ as joint eigenspaces under $jS(F)_{p'} = \{\pm 1\}^I$, and lift each $(-1)_{\Sp(W_i)}$ into $\tilde{G}^\natural \hookrightarrow \overline{G}^{(8)}_\psi$. Let $\Lambda$ be a self-dual lattice stabilized by $jS(F)$. Therefore we have a decomposition $\Lambda = \bigoplus_{i \in I} \Lambda_i$ where $\Lambda_i = \Lambda \cap W_i$ is the joint eigen-lattice since $p > 2$; each $\Lambda_i$ is still self-dual. Cf. \cite[p.551]{Li11}.

	For each $i$, there exist transversal Lagrangians $W_i = \ell_i \oplus \ell'_i$ such that $\Lambda_i = (\Lambda_i \cap \ell_i) \oplus (\Lambda_i \cap \ell'_i)$. Let us compare the splittings $\sigma_{\Lambda_i}$ and $\sigma_{\ell_i}$, the latter being reviewed before Proposition \ref{prop:convenient-Schrodinger}. By \cite[Proposition 2.13]{Li11} they agree over $\Stab(\Lambda_i) \cap \GL(\ell_i) \ni -1_{\Sp(W_i)}$. Proposition \ref{prop:convenient-Schrodinger} ensures that $\sigma_{\ell_i}(-1)$ equals the $\widetilde{-1}_{\Sp(W_i)}$ in Definition \ref{def:lifting-minus-1}, thus completes the proof.
\end{proof}

More notations: given $(V,q)$ with $\dim_F V = 2n+1$, we systematically write $H := \SO(V,q)$. For an inducing datum $(x', \lambda', \chi')$ for epipelagic supercuspidals for $\Or(V,q)$ as in \cite{LMS16}, where $\lambda' \in \check{\mathsf{V}}_{x',r}$, define $\mathsf{S}_{\lambda'}$ as the stabilizer of $\lambda'$ in $\Or(V,q)_{x'}$ modulo $H(F)_{x',r}$. The version defined using $H(F)_{x'}$ is denoted by $\mathsf{S}^H_{\lambda'}$. To distinguish $G$ and $H$, we shall also write $\check{\mathsf{V}}^G_{x,r}$, etc. For both $G, H$ we use the invariant form \eqref{eqn:B_g-LMS} to identify Lie algebras and their duals.

\begin{theorem}\label{prop:Theta-compatibility}
	For every additive character $\psi$ and every epipelagic $L$-parameter $\phi: \Weil_F \to \Lgrp{\tilde{G}}$, in the notation of \eqref{eqn:AB-GS} we have
	\[ \Pi_\phi = \left\{ \theta_\psi(\pi_{\Or}) : \pi_{\SO} \in \Pi^{\SO}_\phi \right\}, \]
	where $\Pi_\phi$ is constructed using the stable system of Theorem \ref{prop:dagger-SS}.
\end{theorem}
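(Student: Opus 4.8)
The plan is to match $\Pi_\phi$ and $\Pi^{\SO}_\phi$ representation by representation by comparing their Moy--Prasad inducing data, using Lemma~\ref{prop:chi-formula} on the metaplectic side and the explicit description of $\Theta$-lifting due to Loke--Ma--Savin \cite{LMS16} on the orthogonal side, with Theorem~\ref{prop:interplay} as the bridge between the two quadratic characters $\epsilon_{jS}$ and $\epsilon_{kS}$. A preliminary step is to reduce to the normalization $\psi|_{\mathfrak{o}_F}\not\equiv 1$, $\psi|_{\mathfrak{p}_F}=1$ employed in \cite{LMS16}: replacing $\psi$ by $\psi_c$ alters the left-hand side through Lemma~\ref{prop:splitting-variance} (the splitting over $S(F)_{p'}$), \textbf{SS.1} of Definition~\ref{def:stable-system} (the factor $\theta^\dagger_j$) and Lemma~\ref{prop:variance-metaGalois} (the identification $\Lgrp{\tilde{G}}\simeq\tilde{G}^\vee\times\Weil_F$, hence $\theta^\flat$), while it alters the right-hand side through the standard variance of $\theta$-lifting under rescaling of $\psi$; a Hilbert-symbol computation identical to that in the proof of Theorem~\ref{prop:packet-independence} shows these two effects cancel, so it is harmless to fix the normalization once and for all.

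Both packets are indexed by a single stable class of maximal tori of type (ER): on the symplectic side by embeddings $j\colon S\hookrightarrow G$ forming the class $\mathcal{E}$, on the orthogonal side by embeddings $k\colon S\hookrightarrow\SO(V,q)$, with $(V,q)$ ranging over quadratic $F$-spaces of dimension $2n+1$ and trivial signed discriminant, forming the class $\mathcal{F}$. Theorem~\ref{prop:mm} and Remark~\ref{rem:moment-map-basis} set up, via the moment maps, a $\Gamma_F$-equivariant identification of roots and Weyl groups that matches conjugacy classes of $j$ with conjugacy classes of $k$ compatibly with the $H^1(F,S)$-action turning each index set into a torsor (Theorem~\ref{prop:L-packet-prop}, Remark~\ref{rem:SO-packet-size}); in particular $|\Pi_\phi|=|H^1(F,S)|=|\mathbf{B}(S)|=|\Pi^{\SO}_\phi|$. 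It therefore suffices to fix a matched pair $(j,k)$ and to prove $\theta_\psi(\pi_{\Or})\simeq\pi_{jS,\theta_j\epsilon_{jS}}$, where $\pi_{\Or}$ is the unique extension to $\Or(V,q)$ of Kaletha's $\pi_{kS,\theta_k\epsilon_{kS}}$ whose $\psi$-lift is nonzero.

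By Lemma~\ref{prop:chi-formula}, $\pi_{jS,\theta_j\epsilon_{jS}}$ corresponds to the Loke--Ma--Savin datum $(x,\lambda,\chi)$, where $x$ is the point of $\mathcal{B}(G,F)$ attached to $jS$, $\lambda$ is the stable functional cut out by $\theta^\flat|_{jS(F)_{1/e}}$, and $\chi=\theta^\flat\otimes\theta^\dagger_j\otimes\epsilon_{jS}$ on $jS(F)_{p'}$. On the other side, Kaletha's $\pi_{kS,\theta_k\epsilon_{kS}}$, once promoted to the distinguished $\Or(V,q)$-extension, carries a datum $(x',\lambda',\chi')$ built from $\theta^\flat|_{kS(F)_{1/e}}$ and from $\theta^\flat\otimes\epsilon_{kS}$ on $kS(F)_{p'}$. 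Now I would feed $(x',\lambda',\chi')$ into the $\Theta$-lift formula of \cite{LMS16}: it outputs a metaplectic datum in which the building point $x'$ maps to the point attached to $jS$ --- this is exactly Proposition~\ref{prop:T-depth}, the moment map shifting Moy--Prasad depths by the predicted amount and Prasad's characterization \cite{Pra01} pinning the result down --- the functional $\lambda'$ maps to $\lambda$ by Theorem~\ref{prop:mm} (same nonzero eigenvalues), and the character on $x'$ is multiplied by a product of Weil constants and discriminants indexed by the short roots. The content of \S\ref{sec:MM}--\S\ref{sec:interplay}, culminating in Theorem~\ref{prop:interplay}, is precisely that this correction is the character $\theta^\dagger_j$ of Definition~\ref{def:dagger}, together with the identity $\epsilon_{kS}/\epsilon_{jS}=\theta^\dagger_j$ on $S(F)_{p'}$; hence $\theta_\psi(\pi_{\Or})$ acquires the datum $\theta^\flat\otimes\theta^\dagger_j\otimes\epsilon_{jS}=\chi$, exactly the datum of $\pi_{jS,\theta_j\epsilon_{jS}}$. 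Uniqueness of the compact-inducing data (Remark~\ref{rem:refactorization}, \cite[Corollary~6.10]{HM08}) then forces $\theta_\psi(\pi_{\Or})\simeq\pi_{jS,\theta_j\epsilon_{jS}}$, and since the $H^1(F,S)$-twists of $j$ and $k$ correspond under the moment map, the term-by-term matching upgrades to the claimed equality of packets.

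The main obstacle is the precise extraction from \cite{LMS16} of the character attached to the $\psi$-lift: one must track every Weil-index, discriminant and $\gamma$-factor contribution, check that they assemble exactly into the quadratic $K_i^\sharp$-space $(V_{i,Y},q_{i,Y})$ of Definition~\ref{def:dagger} attached to each short root, and verify that the passage from $\SO(V,q)$ to the distinguished $\Or(V,q)$-extension introduces no further sign. This is the bookkeeping that Theorem~\ref{prop:interplay}, together with the auxiliary results of \S\ref{sec:MM} and \S\ref{sec:interplay} (notably Proposition~\ref{prop:T-depth}), was designed to absorb, so once those are in hand the remaining verification, though delicate, is essentially mechanical.
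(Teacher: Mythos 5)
Your overall route coincides with the paper's: reduce to the standard normalization of $\psi$, match conjugacy classes of $j\in\mathcal{E}$ and $k\in\mathcal{F}$ through the moment map, and compare the Loke--Ma--Savin inducing data using Lemma~\ref{prop:chi-formula} and Theorem~\ref{prop:interplay}. Two points, however, are not merely ``delicate bookkeeping'' but genuine omissions that would derail the argument as written.

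First, the $\Theta$-lift formula you invoke, \cite[Theorem~1.2.3~(i)]{LMS16}, does not ``output a metaplectic datum with character multiplied by Weil constants'': it is a direct matching of types via the homomorphism $\alpha\colon\mathsf{S}_{\lambda'}\to\mathsf{S}_\lambda$, but it produces a \emph{contragredient}. Concretely, one has $\theta_\psi(\pi(x,\lambda,\chi))=\pi(x',-\lambda',\chi'^{-1})$ with $\chi'=\chi\circ\alpha$, so after Theorem~\ref{prop:interplay} simplifies $\chi\circ\alpha$ to $\epsilon_{kS}\theta^\flat\boxtimes\cdots$, the resulting $\Or(V,q)$-representation is Kaletha's one for the character $(\theta^\flat)^{-1}$, not $\theta^\flat$. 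Your proposal never confronts this minus sign. The paper closes the gap by observing that $(\theta^\flat)^{-1}=\theta^\flat\circ\Ad(w_0)$ with $w_0\in\Omega(H,kS)(F)$ the longest element, and that $\Omega(H,kS)(F)$-action leaves Kaletha's $L$-packet unchanged; without this, you would only have exhibited the wrong $\SO$-packet. Relatedly, the ``Weil constants and discriminants indexed by the short roots'' you attribute to the $\Theta$-lift formula actually belong to Theorem~\ref{prop:interplay} and Lemma~\ref{prop:chi-formula}, which compare $\epsilon_{jS}$, $\epsilon_{kS}$ and $\theta^\dagger_j$; the LMS16 formula itself carries no such correction.

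Second, your framing of the $\psi$-normalization step as a ``cancellation'' between variances of the two sides is off. Theorem~\ref{prop:packet-independence} already says $\Pi_\phi$ does not change under $\psi\leadsto\psi_c$, and the right-hand side is likewise $\psi$-independent by \cite[Proposition~11.1]{GG}, which rests on \cite[Theorem~12.1~(i)]{GS1}; the paper simply cites both facts rather than redoing a Hilbert-symbol computation. This is a minor misunderstanding of the logic, not a fatal flaw, but the contragredient issue above is the real gap you would need to address.
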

\begin{proof}
	To begin with, we reduce the problem to the case that $\psi$ induces a nontrivial character of $\mathfrak{o}_F/\mathfrak{p}_F$. Indeed, $\psi$ enters in the identification $\Lgrp{G} \simeq \tilde{G}^\vee \times \Weil_F$, which is not fixed at this moment. It remains to prove that the packets from both sides are independent of $\psi$: for $\Pi_\phi$ this is Theorem \ref{prop:packet-independence}, whilst for $\left\{ \theta_\psi(\pi_{\Or}) : \pi_{\SO} \in \Pi^{\SO}_\phi \right\}$ this is \cite[Proposition 11.1]{GG} that stems from \cite[Theorem 12.1 (i)]{GS1}.

	Let $Y \in \mathfrak{s}_{\text{reg}}(F) \cap \mathfrak{s}(F)_{-1/e}$ be defined by $\theta^\flat \circ \exp = \psi(\mathbb{B}(Y, \cdot))$ as in \S\ref{sec:Adler-Spice}, now with $\psi = \xi$. Apply Remarks \ref{rem:explicit-moment-map}, \ref{rem:moment-map-basis} and Proposition \ref{prop:T-depth} to see that for any $j: S \hookrightarrow G$ in the given stable class, we have
	\begin{compactitem}
		\item a quadratic $F$-vector space $(V,q) = (W, q\lrangle{Y}) \oplus \lrangle{(-1)^n \det Y}$,
		\item a natural embedding $k: S \hookrightarrow \SO(V,q) =: H$,
		\item $\iota: W \hookrightarrow V$ standing for the natural inclusion,
		\item $\mathcal{L}$, $\mathcal{L}'$: the self-dual lattice functions corresponding to the points $x \in \mathcal{B}(G, F)$, $x' \in \mathcal{B}(\SO(V,q)), F)$ arising from $j,k$ respectively.
	\end{compactitem}
	They satisfy \index{moment map}
	\begin{gather*}
		j\mathfrak{s}(F)_{-1/e} \ni jY \xleftarrow{M_W} \left( \iota \in \Hom_F(W,V) \right) \xrightarrow{M_V} kY \in k\mathfrak{s}(F)_{-1/e}, \\
		\iota\mathcal{L}_s \subset \mathcal{L}'_{s - \frac{1}{2e}}.
	\end{gather*}
	The construction in \S\ref{sec:compact-induction} or \cite[\S 3.3]{Kal15} associates to $jY, kY$ the stable linear functionals $\lambda \in \check{\mathsf{V}}^G_{x, 1/e}$ and $\lambda' \in \check{\mathsf{V}}^H_{x', 1/e}$. The formulas above ``witness'' the matching condition (M) of \cite[Proposition 1.2.1]{LMS16} for $(x, \lambda)$ and $(x', -\lambda')$. The $H$-version of \eqref{eqn:lambda-stab} entails isomorphisms
	\[ \mathsf{S}_\lambda \simeq jS(F)/jS(F)_{1/e} \leftiso S(F)_{p'} \rightiso kS(F)/kS(F)_{1/e} \simeq \mathsf{S}^H_{\lambda'}. \]
	We construct a map $\alpha: \mathsf{S}_{\lambda'} \to \mathsf{S}_\lambda$ as follows. For every $\gamma \in kS(F)$, Remark \ref{rem:explicit-moment-map} guarantees that the action \eqref{eqn:moment-action} satisfies
	\[ (1, k(\gamma)) \iota = k(\gamma) \circ \iota = \iota \circ j(\gamma) = (j(\gamma)^{-1}, 1) \iota; \]
	likewise $-1_{\Or} \in \Or(V,q)$ satisfies $(1, -1_{\Or}) \iota = -\iota = (-1_{\Sp} , 1) \iota$. It is routine to check that the surjective homomorphism
	\begin{align*}
		\alpha: \mathsf{S}_{\lambda'} = \mathsf{S}^H_{\lambda'} \times \{\pm 1_{\Or}\} & \longrightarrow \mathsf{S}_\lambda \\
		k(\gamma) & \longmapsto j(\gamma), \quad (\gamma \in S(F)_{p'}) \\
		-1_{\Or} & \longmapsto -1_{\Sp}.
	\end{align*}
	coincides with the $\alpha$ constructed in \cite[A.2 Proof of Lemma 8.1.1, (ii)]{LMS16}. Now transport $\theta^\dagger_j$, $\epsilon_{jS}$, $\epsilon_{kS}$ to $S(F)_{p'}$ via $j,k$, and invoke Lemma \ref{prop:chi-formula} to write
	\[ \chi' := \chi \circ \alpha = \underbracket{(\theta^\dagger_j \cdot \epsilon_j/\epsilon_k)} \cdot \epsilon_{kS} \cdot \theta^\flat \boxtimes \left[ -1_{\Or} \mapsto \chi(-1_{\Sp}) \right] \]
	modulo transportation among $S$, $jS$ an $kS$. Theorem \ref{prop:interplay} asserts that $\theta^\dagger_j \cdot \epsilon_j/\epsilon_k = 1$, so $\chi' = \epsilon_{kS} \theta^\flat \boxtimes \cdots$. Denote by $\pi(x, \lambda, \chi) \in \Pi_-(\tilde{G})$ and $\pi(x',\lambda',\chi') \in \Pi(\Or(V,q))$ the epipelagic supercuspidals so obtained. We are ready to apply \cite[Theorem 1.2.3 (i)]{LMS16}: keeping track of the sign and contragredient therein, we have
	\[ \theta_\psi\left( \pi(x, \lambda, \chi)\right) = \pi\left( x', -\lambda', \chi'^{-1} \right) \]
	under the $\theta$-lifting backwards to $\Or(V,q)$. 

	In terms of Kaletha's data, $\pi(x', -\lambda', \chi'^{-1})$ is associated to the epipelagic character $(\theta^\flat)^{-1}$ transported to $kS(F)$. Now vary the $G(F)$-conjugacy class of $j \in \mathcal{E}$, so that the $\pi(x, \lambda, \chi)$ exhaust $\Pi_\phi$ without repetition. On the other hand, Theorem \ref{prop:mm} implies that the corresponding $k$ exhausts the set $\mathcal{F}$ (modulo conjugation) of embeddings into pure inner forms of $\SO(2n+1)$, without repetition. In view of Remark \ref{rem:SO-packet-size}, we obtain Kaletha's $L$-packet for $\SO(2n+1)$ induced from $(\theta^\flat)^{-1}$ by discarding the $\{\pm 1_{\Or}\}$-components.

	Finally, $(\theta^\flat)^{-1} = \theta^\flat \circ \Ad(w_0)$ where $w_0 \in \Omega(H,kS)(\bar{F})$ is the longest element, thus defined over $F$. The $\Omega(H,kS)(F)$-action leaves Kaletha's $L$-packet intact. All in all, we obtain $\Pi^{\SO}_\phi$ on the $\SO(2n+1)$-side.
\end{proof}

\subsection{Theory of Hiraga--Ikeda}
Hereafter we assume $m \in 2\Z$. Fix $\epsilon: \mu_m \rightiso \bmu_m$, and fix an additive character $\psi$ when $m \equiv 2 \pmod 4$. Following Hiraga--Ikeda, define two morphisms $\bm{\tau}^\pm: \PGL(2) \to \SL(2)$ by
\[ \bm{\tau}^\pm(\gamma) = \pm 1 \cdot (\det \hat{\gamma})^{-m/2} \hat{\gamma}^m, \quad \hat{\gamma} \in \GL(2): \; \text{representative}. \]
Given $\delta \in \SL(2)_\text{reg}$, put $T := Z_{\SL(2)}(\delta)$. Up to stable conjugacy, the pair $(T, \delta)$ is parameterized by a $2$-dimensional étale $F$-algebra $K$ with nontrivial $F$-involution $\tau$, and $x \in K^1$ as done in \S\ref{sec:Kubota}. Set $T_1 := Z_{\GL(2)}(T)$, then $T(F) \simeq K^1$ and $T_1(F) \simeq K^\times$.

Note that if $\delta = \bm{\tau}^\pm(\gamma) \in \SL(2,F)_\text{reg}$, then $\gamma \in T_1/\Gm \subset \PGL(2)$. Since $\det|_{T_1}$ corresponds to $N_{K/F}$, by choosing a representative $\omega \in K^\times$ of $\gamma$, we have $\det\hat{\gamma} = N_{K/F}(\omega)$ and the definition of $\bm{\tau}^\pm$ translates into
\[ \pm x = \omega^m N_{K/F}(\omega)^{-m/2} = \left( \omega/\tau(\omega)\right)^{m/2}. \]

Now consider the BD-cover $\bmu_m \hookrightarrow \widetilde{\SL}(2,F) \stackrel{\bm{p}}{\twoheadrightarrow} \SL(2,F)$ constructed in \S\ref{sec:BD-Sp} (with $n=1$). We shall employ the preferred section $\bm{s}$ and the cocycle $\bm{c}$ of \eqref{eqn:Kubota-cocycle}. By convention $\delta := \bm{p}(\tilde{\delta})$. Fix a maximal $F$-torus $T \subset \SL(2)$. Recalling \eqref{eqn:isogeny-Sp}, the discussion above leads to a bijection
\[\begin{tikzcd}
	\left\{ (\gamma, \tilde{\delta}) \in \PGL(2,F) \times \tilde{T}_\text{reg} : \delta = \bm{\tau}^\pm(\gamma) \right\} \arrow[leftrightarrow]{r}{1:1} & \tilde{T}^\pm_{Q,m, \text{reg}}.
\end{tikzcd}\]
Here $\tilde{T}^\pm_{Q,m, \text{reg}} := \left\{ (\tilde{\delta}, \delta_0) \in \tilde{T}^\pm_{Q,m} : \delta \in T_\text{reg}(F) \right\}$, see \eqref{eqn:iota-cover}, and $\delta_0$ corresponds to $\omega/\tau(\omega) \in K^1$. We can rephrase the transfer factor of Hiraga--Ikeda as follows.

\begin{definition}[K.\ Hiraga, T.\ Ikeda]\index{transfer factor}
	Let $\sigma \in \{+,-\}$, $T \subset \SL(2)$ be a maximal torus, and $(\tilde{\delta}, \delta_0) \in \tilde{T}^\sigma_{Q,m, \text{reg}}$. Write $\tilde{\delta} = \noyau\bm{s}(\delta)$ for some $\noyau \in \bmu_m$ and take $\omega \in K^\times$ such that $\delta_0$ is parameterized by $\omega/\tau(\omega)$.
	\begin{enumerate}
		\item Suppose $m \equiv 2 \pmod 4$. Set
			\begin{align*}
				\Delta^+(\tilde{\delta}, \delta_0) & := \noyau \cdot \dfrac{\gamma_\psi(1)}{\gamma_\psi(N_{K/F}(\omega))} \cdot \left( N_{K/F}(\omega), -\bm{x}(\delta) \right)_{F,2} & (\sigma = +), \\
				\Delta^-(\tilde{\delta}, \delta_0) & := \gamma_\psi(1)^2 \Delta^+ \left( \bm{s}(-1)\tilde{\delta} , \delta_0 \right) & (\sigma = -).
			\end{align*}
		\item Suppose $4 \mid m$. Set
			\begin{align*}
				\Delta^+(\tilde{\delta}, \delta_0) & := \noyau \cdot \left( N_{K/F}(\omega), -\bm{x}(\delta) \right)_{F,2} & (\sigma = +), \\
				\Delta^-(\tilde{\delta}, \delta_0) & := \Delta^+ \left( \bm{s}(-1)\tilde{\delta} , \delta_0 \right) & (\sigma = -).
			\end{align*}
	\end{enumerate}
\end{definition}

\begin{remark}
	In view of the Remark \ref{rem:Kubota-minus}, the original factor $\Delta^\pm$ of Hiraga--Ikeda lives on the ``opposite'' central extension of $\widetilde{\SL}(2,F)$ by $\bmu_m$, which can be obtained from ours by changing $\epsilon$ to $\epsilon^{-1}$ by Remark \ref{rem:rescaling-Q}. Thus there is no essential difference.
\end{remark}

\begin{proposition}[Hiraga--Ikeda]\label{prop:HI-invariance}
	The factors $\Delta^\pm$ are invariant under adjoint $\SL(2,F)$-action.
\end{proposition}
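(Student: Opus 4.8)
The plan is to reduce at once to the case $\sigma = +$. Since $-1$ is central in $\SL(2,F)$ and, by Proposition \ref{prop:BD-adjoint-action}, the element $\overline{-1}$ is central in $\widetilde{\SL}(2,F)$ — hence so is $\bm{s}(-1)$, which differs from $\overline{-1}$ by an element of $\bmu_m$ — conjugation by any $g \in \SL(2,F)$ commutes with left translation by $\bm{s}(-1)$: indeed $g\bigl(\bm{s}(-1)\tilde{\delta}\bigr)g^{-1} = \bm{s}(-1)\,(g\tilde{\delta}g^{-1})$. Moreover, if $(\tilde{\delta},\delta_0) \in \tilde{T}^-_{Q,m}$ then $(\bm{s}(-1)\tilde{\delta},\delta_0) \in \tilde{T}^+_{Q,m}$. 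So the formulas defining $\Delta^-$ in terms of $\Delta^+$ (in either range of $m$) show that $\SL(2,F)$-invariance of $\Delta^-$ is a formal consequence of that of $\Delta^+$.

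For $\sigma = +$, write $\tilde{\delta} = \noyau\,\bm{s}(\delta)$ and parameterize the stable class of $(\delta,\delta_0)$ by a two-dimensional étale algebra $K$ with involution $\tau$ and by $x_0 = \omega/\tau(\omega) \in K^1$, where $\delta = \iota_{Q,m}(\delta_0)$ corresponds to $x_0^{m/\mathrm{gcd}(2,m)}$. Here $\omega$ is pinned down only up to $F^\times = K^{\sharp,\times}$, so $N_{K/F}(\omega) \bmod F^{\times 2}$ depends only on $x_0$; under $\SL(2,F)$-conjugation $\delta_0 \mapsto g\delta_0 g^{-1}$ lands in a stably conjugate torus carrying the canonically identified parameter $x_0$, so $N_{K/F}(\omega) \bmod F^{\times 2}$ is unchanged. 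As $\gamma_\psi$ is insensitive to squares, the auxiliary factor $\gamma_\psi(1)/\gamma_\psi(N_{K/F}(\omega))$ (trivial when $4 \mid m$) and the ``$N_{K/F}(\omega)$-slot'' of $(N_{K/F}(\omega),-\bm{x}(\delta))_{F,2}$ are therefore all $\SL(2,F)$-conjugation invariant. The whole content thus boils down to the cocycle identity
\[
	\bm{c}(g,\delta)\,\bm{c}(g\delta,g^{-1})\,\bm{c}(g,g^{-1})^{-1} = \bigl( N_{K/F}(\omega),\ \bm{x}(\delta)/\bm{x}(g\delta g^{-1}) \bigr)_{F,2}, \qquad g \in \SL(2,F),
\]
since the left-hand side $\rho_g(\delta)$ is exactly the factor relating $g\,\bm{s}(\delta)\,g^{-1}$ to $\bm{s}(g\delta g^{-1})$, and it must compensate the change of $\bm{x}(\delta)$ in $\Delta^+$.

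To prove this identity I would use $\det g = \det\delta = 1$ to simplify Kubota's cocycle \eqref{eqn:Kubota-cocycle}, and, after a density argument in the spirit of Lemma \ref{prop:nabla-1}, assume $\bm{x}(\delta) = c \neq 0$ in general position. Writing a Bruhat decomposition $g = u_1 h_\alpha(t) u_2$ or $g = u_1 h_\alpha(t) w_\alpha(1) u_2$ with $u_1, u_2$ unipotent, one checks that the unipotent and the diagonal factors contribute nothing: for $u$ unipotent one has $\bm{x}(u\delta u^{-1}) = \bm{x}(\delta)$ and $\rho_u(\delta) = 1$ by a one-line manipulation of Steinberg symbols, while for $h_\alpha(t)$ one gets $\bm{x}(\delta)/\bm{x}\bigl(h_\alpha(t)\delta h_\alpha(t)^{-1}\bigr) = t^2 \in F^{\times 2}$ and $\rho_{h_\alpha(t)}(\delta) = 1$ using bilinearity, anti-symmetry, $\{t,t\}_F = \{t,-1\}_F$ and $\{t,1\}_F = 1$. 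The substantial case is the Weyl element $w_\alpha(1)$: there $\rho_{w_\alpha(1)}(\delta)$ expands, via \eqref{eqn:Kubota-cocycle}, to a Hilbert symbol involving $\bm{x}(\delta)$ and $\Tr\delta$, and one invokes the goodness of $\delta = \iota_{Q,m}(\delta_0)$ (Theorem \ref{prop:good-SL2}) together with Flicker's commutator formula (Proposition \ref{prop:Flicker-comm}) and Lemma \ref{prop:commutator-GL2-T} to rewrite $2 + \Tr\delta$ as a value of the norm form $N_{K/F}$ and match the symbol with $\bigl(N_{K/F}(\omega), \bm{x}(\delta)/\bm{x}(w_\alpha(1)\delta w_\alpha(1)^{-1})\bigr)_{F,2}$ — this is precisely the computation \eqref{eqn:nabla-new} behind Lemma \ref{prop:nabla-2}. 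Assembling the three pieces via the cocycle relation for $\bm{c}$ gives the identity for generic $\delta$, and the density argument then disposes of the degenerate cases.

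The hard part is exactly this Weyl-element step and its interaction with the goodness hypothesis: one must make $N_{K/F}(\omega)$ emerge from $\Tr\delta$ and track the non-multiplicative quantity $\bm{x}$ through the Bruhat pieces. I would note that when $m \equiv 2 \pmod 4$ the entire third paragraph can be bypassed: substituting $\omega \leadsto \omega^{m/2}$ one recognizes, through Lemmas \ref{prop:nabla-1} and \ref{prop:nabla-2}, that $\Delta^+(\tilde{\delta},\delta_0) = \nabla(\tilde{\delta})$, and the $G(F)$-conjugation invariance of $\nabla$ is already recorded in Theorem \ref{prop:nabla}. This shortcut fails when $4 \mid m$, where $\Delta^+$ genuinely depends on $\delta_0$ and is not a function of $\delta$ alone, so the symbol calculation is unavoidable there; I would accordingly present the $m \equiv 2 \pmod 4$ case via $\nabla$ and carry out the cocycle computation only for $4 \mid m$.
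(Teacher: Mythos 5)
Your reduction to the case $\sigma = +$ via centrality of $\bm{s}(-1)$, your observation that $N_{K/F}(\omega) \bmod F^{\times 2}$ is insensitive to $\SL(2,F)$-conjugation, and your identification of the cocycle identity $\rho_g(\delta) = \bigl(N_{K/F}(\omega),\,\bm{x}(\delta)/\bm{x}(g\delta g^{-1})\bigr)_{F,2}$ as the heart of the matter all match the paper. What differs is how that identity is established.

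The paper avoids any Bruhat decomposition. Because $\delta = \iota_{Q,m}(\delta_0)$ equals $\bm{\tau}^+(\gamma) = (\det\hat\gamma)^{-m/2}\hat\gamma^m$ by construction, one may work with $\bm{s}(\gamma^m)$: the map $\gamma \mapsto \bm{s}(\gamma)^m$ kills the $\mu_m$-valued cocycle (being $m$-torsion) and hence respects conjugation exactly, as in Flicker. Combined with the identity $(N,\bm{x}(\delta))_{F,2}\bm{s}(\gamma^m) = \bm{s}(N^{m/2})\bm{s}(\delta)$ and the centrality of $\bm{s}(N^{m/2})$, this gives $\noyau(N,\bm{x}(\delta'))_{F,2} = (N,\bm{x}(\delta))_{F,2}$ in a few lines, with no case analysis. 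The $m$-th power structure is what forces $\rho_g(\delta)$ into $\mu_2 \subset \mu_m$ to begin with; a raw Kubota computation for the Weyl element $w$ and $\delta = \twomatrix{a}{b}{c}{d}$ yields $\rho_w(\delta) \equiv (a, -bc)_{F,m}$ modulo a $2$-torsion term, which lives in all of $\mu_m$, and bringing it down to the $\mu_2$-valued target requires precisely the $m$-th power input that you leave as a citation. Moreover Proposition \ref{prop:Flicker-comm} and Lemma \ref{prop:commutator-GL2-T} concern commutators $[g,\gamma]$ of \emph{torus} elements, not the cocycle $\rho_w(\cdot)$ for a Weyl representative; the bridge between them is the nontrivial step and is not supplied in your sketch.

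Your shortcut for $m \equiv 2 \pmod 4$ via $\nabla$ has a real gap. $\nabla$ is defined on $\overline{G}_\psi^{(2)}$ and is $\bmu_2$-genuine; $\Delta^+$ lives over the $\mu_m$-cover and is $\mu_m$-genuine. The pullback of $\nabla$ along $\widetilde{\SL}(2,F) \to \overline{G}^{(2)}_\psi$ (pushout $\noyau \mapsto \noyau^{m/2}$) transforms by $\noyau^{m/2}$, not $\noyau$, so it cannot equal $\Delta^+$ on the whole fiber. Comparing the two only at the distinguished section $\bm{s}$, which is all the $\omega \leadsto \omega^{m/2}$ substitution gives, and then invoking $\SL(2,F)$-invariance of $\nabla$, yields $\rho_g(\delta)^{m/2} = (N,\bm{x}(\delta)/\bm{x}(\delta'))_{F,2}$ in $\mu_2$, whereas you need $\rho_g(\delta) = (N,\cdots)_{F,2}$ in $\mu_m$; these coincide only once one already knows $\rho_g(\delta) \in \mu_2$, which is what is being proved. (Proposition \ref{prop:HI-Li} does assert $\Delta^+ = \nabla$ at $m=2$, but its proof cites Proposition \ref{prop:HI-invariance}, so it is unavailable here in any case.) Thus for $m \equiv 2 \pmod 4$ you cannot bypass the algebraic argument, and the $m$-th power route used by the paper is the efficient one.
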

\begin{proof}
	As $\bm{s}(-1)$ is central by Proposition \ref{prop:BD-adjoint-action}, it suffices to consider $\Delta^+$. Let $N := N_{K/F}(\omega)$. Conjugation does not change $N \bmod F^{\times 2}$, thus it remains to show that for $\delta' = \Ad(g)(\delta)$ and $\noyau \bm{s}(\delta') = \Ad(g)(\bm{s}(\delta))$ where $g \in \SL(2,F)$, we have $\noyau (N, \bm{x}(\delta'))_{F,2} = (N, \bm{x}(\delta))_{F,2}$. Note that $\noyau \in \bmu_m$ is unique since $\delta$ is a good element.
	
	Suppose that $\bm{\tau}^+(\gamma) = (\det\hat{\gamma})^{-m/2} \hat{\gamma}^m = \delta$, where $\gamma \in \PGL(2)$ has representative $\hat{\gamma}$ parameterized by the chosen $\omega \in K^\times$. Put $\gamma' := \Ad(g)(\gamma)$. Since $\left( N^{m/2}, \bm{x}(\delta) \right)^{-1}_{F, m} = (N, \bm{x}(\delta))_{F,2}$ and similarly for $\bm{x}(\delta')$, the following holds in $\widetilde{\GL}(2,F)$:
	\[ (N, \bm{x}(\delta))_{F,2} \bm{s}(\gamma^m) = \bm{s}(N^{m/2}) \bm{s}(\delta), \quad (N, \bm{x}(\delta'))_{F,2} \bm{s}(\gamma'^m) = \bm{s}(N^{m/2}) \bm{s}(\delta'). \]
	By the easy fact that $\gamma \mapsto \bm{s}(\gamma)^m$ respects conjugation and \cite[p.130 + Lemma 1.2.1]{Fl80}, we have $\Ad(g)\left( \bm{s}(\gamma^m) \right) = \bm{s}\left( \Ad(g)(\gamma^m) \right)$. On the other hand, it is easily seen that $\bm{s}(N^{m/2})$ centralizes $\widetilde{\SL}(2,F)$. All these combine into
	\begin{align*}
		(N, \bm{x}(\delta))_{F,2} \bm{s}(\gamma'^m) & = \Ad(g) \left( (N, \bm{x}(\delta))_{F,2} \bm{s}(\gamma^m) \right) = \bm{s}(N^{m/2}) \Ad(g) \left( \bm{s}(\delta) \right) \\
		& = \noyau \bm{s}(N^{m/2}) \bm{s}(\delta') = \noyau (N, \bm{x}(\delta'))_{F,2} \bm{s}(\gamma'^m).
	\end{align*}
	This proves the required equality.
\end{proof}

\begin{proposition}\label{prop:HI-Li}
	When $m=2$, we have $\Delta^+(\tilde{\delta}, \delta_0) = \nabla(\tilde{\delta})$.
\end{proposition}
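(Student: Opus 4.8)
The plan is to unwind both sides when $m=2$ and see that they are given by the same explicit expression in Weil's constants and Hilbert symbols. On the left, the relevant definition reads
\[
\Delta^+(\tilde{\delta}, \delta_0) = \noyau \cdot \frac{\gamma_\psi(1)}{\gamma_\psi(N_{K/F}(\omega))} \cdot \left( N_{K/F}(\omega), -\bm{x}(\delta) \right)_{F,2},
\]
where $\tilde{\delta} = \noyau \bm{s}(\delta)$ and $\delta_0$ is parameterized by $\omega/\tau(\omega) \in K^1$, recalling that for $m=2$ we have $\iota_{Q,2} = \identity$, so the datum $\delta_0$ is just $\delta$ itself parameterized by $x = \omega/\tau(\omega)$. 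First I would reduce, using the $\SL(2,F)$-invariance of both sides (Proposition \ref{prop:HI-invariance} for $\Delta^+$ and Theorem \ref{prop:nabla} for $\nabla$), to the case where $\tilde{\delta} = \bm{s}(\delta)$ (so $\noyau = 1$) and $\delta = \twomatrix{a}{b}{c}{d}$ with $c \neq 0$; this is legitimate since $\nabla|_{\overline{G}_{\psi,\mathrm{reg}}}$ is locally constant and the good-element condition makes $\noyau$ unique. In this regime $\bm{x}(\delta) = c$.

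The heart of the matter is then the formula for $\nabla(\bm{s}(\delta)) = \nabla(\sigma_{\mathrm{LP}}(\delta))$, which is exactly Lemma \ref{prop:nabla-1}: it gives $\nabla(\sigma_{\mathrm{LP}}(\gamma)) = \gamma_\psi(\lrangle{-c, c(2 + \Tr(\gamma))})$ when $c \neq 0$. I would then invoke the rewriting already carried out in the proof of Lemma \ref{prop:nabla-2}: setting $N := N_{K/F}(\omega)$ and using $2 + \Tr(\delta) = N_{K/F}(1+x) = (\omega + \tau(\omega))^2 N^{-1}$ together with the relation between $\gamma_\psi$ of a binary form, its discriminant and Hasse invariant (\cite[Proposition 1.3.4]{Per81}), one obtains precisely
\[
\nabla(\bm{s}(\delta)) = \gamma_\psi(1)\,\gamma_\psi(N)^{-1}\,(N, -c)_{F,2},
\]
using $(-c, c)_{F,2} = 1$. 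Comparing with the displayed formula for $\Delta^+$ with $\noyau = 1$ and $\bm{x}(\delta) = c$, the two agree on the nose. Since both sides are genuine and $\SL(2,F)$-invariant, this identity on the locus $c \neq 0$ with $\tilde{\delta} = \bm{s}(\delta)$ propagates to all $(\tilde{\delta}, \delta_0) \in \tilde{T}^+_{Q,2,\mathrm{reg}}$.

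The main obstacle is bookkeeping rather than conceptual: one must be careful that the identification $\widetilde{\SL}(2,F) \simeq \overline{G}_\psi^{(2)}$ is the canonical one matching $\sigma_{\mathrm{LP}}$ with Kubota's $\bm{s}$ (as recorded in \S\ref{sec:Adams}), so that $\tilde{\delta} = \noyau\bm{s}(\delta)$ really is $\noyau\sigma_{\mathrm{LP}}(\delta)$ and Lemma \ref{prop:nabla-1} applies verbatim; and one must track the precise normalization of $\gamma_\psi$ and the sign conventions in $\tau(\cdots)$ used in \cite{Li11, Per81} so that no stray factor of $\gamma_\psi(1)$ or $\gamma_\psi(-1)$ appears. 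A secondary point is to confirm that the $N \bmod F^{\times 2}$ appearing via $\omega$ is well-defined independently of the choice of $\omega$ with $\omega/\tau(\omega) = x$ (it changes $\omega$ by $K^1$, hence $N$ by $N_{K/F}(K^1) \subset F^{\times 2}$ by Hilbert 90 applied twice), which is exactly the remark made after Lemma \ref{prop:commutator-GL2-T}; both $\Delta^+$ and $\nabla$ depend on $\omega$ only through this class, so the comparison is consistent. Once these normalizations are pinned down, the proof is a one-line match of the two explicit formulas.
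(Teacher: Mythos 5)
Your proposal is correct and follows the paper's own proof essentially verbatim: reduce by $\SL(2,F)$-invariance and genuineness to the case $\tilde{\delta} = \bm{s}(\delta) = \sigma_{\mathrm{LP}}(\delta)$ with $c \neq 0$, then match the formula for $\Delta^+$ against equation \eqref{eqn:nabla-new} derived from Lemma \ref{prop:nabla-1}. The extra bookkeeping remarks you add (the $\sigma_{\mathrm{LP}} \leftrightarrow \bm{s}$ identification, the independence of $N_{K/F}(\omega) \bmod F^{\times 2}$ from the choice of $\omega$) are sound and consistent with what the paper implicitly relies on.
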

\begin{proof}
	Write $\delta = \twomatrix{a}{b}{c}{d}$. Both sides being $\SL(2,F)$-invariant (Proposition \ref{prop:HI-invariance}) and genuine, we may adjust $\tilde{\delta}$ to assume $c \neq 0$ and $\tilde{\delta} = \bm{s}(\delta)$. Now compare $\Delta^+(\tilde{\delta}, \delta_0)$ with \eqref{eqn:nabla-new}, identifying $\bm{s}(\delta)$ with $\sigma_\text{LP}(\delta)$.
\end{proof}

\begin{lemma}
	When $m \equiv 2 \pmod 4$, the factors $\Delta^\pm(\tilde{\delta}, \delta_0)$ depend only on $\tilde{\delta}$.
\end{lemma}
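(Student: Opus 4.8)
\emph{Plan.} The plan is to reduce the statement to $\Delta^+$ and then to isolate, and control, the only way $\delta_0$ can enter the formula. First I would record the reduction: for $(\tilde{\delta}, \delta_0) \in \tilde{T}^-_{Q,m,\mathrm{reg}}$ we have $\bm{p}(\bm{s}(-1)\tilde{\delta}) = -\delta = \iota_{Q,m}(\delta_0)$, so $(\bm{s}(-1)\tilde{\delta}, \delta_0) \in \tilde{T}^+_{Q,m,\mathrm{reg}}$, and by definition $\Delta^-(\tilde{\delta}, \delta_0) = \gamma_\psi(1)^2 \Delta^+(\bm{s}(-1)\tilde{\delta}, \delta_0)$. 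Since $\bm{s}(-1)\tilde{\delta}$ is a function of $\tilde{\delta}$ alone, it suffices to prove the claim for $\Delta^+$.

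Next I would fix $\tilde{\delta} = \noyau\bm{s}(\delta)$, so that $\delta$, $\noyau$ and $\bm{x}(\delta)$ are all determined. In the formula
\[
\Delta^+(\tilde{\delta}, \delta_0) = \noyau \cdot \frac{\gamma_\psi(1)}{\gamma_\psi(N_{K/F}(\omega))} \cdot \bigl( N_{K/F}(\omega), -\bm{x}(\delta) \bigr)_{F,2},
\]
the only quantity touched by $\delta_0$ is the \emph{square class} $N_{K/F}(\omega) \bmod F^{\times 2}$, where $\omega \in K^\times$ is any element with $\omega/\tau(\omega)$ equal to the image of $\delta_0$ in $K^1$: both $\gamma_\psi$ of an element of $F^\times$ and the quadratic Hilbert symbol $(\cdot, -\bm{x}(\delta))_{F,2}$ factor through $F^\times/F^{\times 2}$. (For a single $\delta_0$ this class is already well defined, since $\omega$ is pinned down up to $K^{\sharp,\times} = F^\times$.)

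The substance is then to see that this square class is unchanged when $\delta_0$ runs over all elements of $T_{Q,m}(F)$ with the given $\tilde{\delta}$. Two such $\delta_0, \delta_0'$ satisfy $\delta_0'/\delta_0 \in \Ker(\iota_{Q,m})$, and since $m \equiv 2 \pmod 4$ we have $4 \nmid m$, so Proposition \ref{prop:iota-kernel} gives $\Ker(\iota_{Q,m}) = 1$ when $T$ is anisotropic (nothing to prove) and $\Ker(\iota_{Q,m}) = \mu_{m/2}$, embedded via $z \mapsto (z, z^{-1})$, when $T$ is split, i.e. $K \simeq F \times F$. In the split case, writing $\delta_0 \leftrightarrow (a, a^{-1})$ one may take $\omega = (a,1)$, whence $N_{K/F}(\omega) = a$; then $\delta_0' = (za, (za)^{-1})$ for some $z \in \mu_{m/2}$, and $\omega' = (za,1)$ gives $N_{K/F}(\omega') = za$. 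As $m/2$ is odd, every element of $\mu_{m/2}$ lies in $F^{\times 2}$ (if $z^{m/2}=1$ with $m/2 = 2t+1$, then $z = (z^{t+1})^2$), so $N_{K/F}(\omega') \equiv N_{K/F}(\omega) \pmod{F^{\times 2}}$; this finishes the argument.

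\emph{Main obstacle.} There is essentially none: the whole content is the last step, and there the only point worth flagging is the interplay $4 \nmid m \Rightarrow \Ker(\iota_{Q,m})$ is nontrivial only for split $T$, where it is the odd group $\mu_{m/2} \subset F^{\times 2}$, together with the observation that $\gamma_\psi$ and $(\cdot,\cdot)_{F,2}$ only see $N_{K/F}(\omega)$ modulo squares.
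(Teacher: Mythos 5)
Your proof is correct and takes essentially the same approach as the paper's: reduce to $\Delta^+$, invoke Proposition \ref{prop:iota-kernel} to see that $\delta_0$ can only vary by $\Ker(\iota_{Q,m})$, and use the oddness of $m/2$ to conclude that the square class of $N_{K/F}(\omega)$ is unchanged. You are slightly more explicit than the paper in two places — you spell out that $\mu_{m/2} \subset F^{\times 2}$ via $z = (z^{t+1})^2$ when $m/2 = 2t+1$, and you note that the $\gamma_\psi(N_{K/F}(\omega))$ factor also needs this square-class invariance (the paper's proof mentions only the Hilbert symbol, though the same reason covers both) — but the underlying argument is the same.
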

\begin{proof}
	It suffices to show that $\Delta^+(\tilde{\delta}, \delta_0)$ is independent of $\delta_0$ or $\omega$. When $T$ is anisotropic (resp. split), Proposition \ref{prop:iota-kernel} implies that $\delta$ determines $N_{K/F}(\omega)$ uniquely (resp. up to $\mu_{m/2}$). Since $m/2$ is odd, $(N_{K/F}(\omega), \bm{x}(\delta))_{F,2}$ is unaffected.
\end{proof}

\begin{remark}\index{transfer factor}
	When $m=2$, in \cite[Définition 5.9]{Li11} are defined the two elliptic endoscopic data $(1,0)$, $(0,1)$ of $\widetilde{\SL}(2,F)$ as well as the transfer factors $\Delta_{1,0}$, $\Delta_{0,1}$, both viewed as genuine functions on $\widetilde{\SL}(2,F)_\text{reg}$. Proposition \ref{prop:HI-Li} amounts to $\Delta^+ = \Delta_{(1,0)}$. Consider $\Delta^-$ next. Embed $\widetilde{\SL}(2,F)$ into $\overline{G}^{(8)}_\psi$, then $\gamma_\psi(1)^2 \bm{s}(-1) \in \widetilde{\SL}(2,F)^\natural$ corresponds to $\frac{\gamma_\psi(1)}{\gamma_\psi(-1)} \sigma_\text{LP}(-1) = \sigma_\ell(-1)$ (Proposition \ref{prop:convenient-Schrodinger}). It follows from \cite[Proposition 5.16]{Li11} that $\Delta^- = \Delta_{(0,1)}$.
\end{remark}

For any maximal torus $T \subset \SL(2)$ we have $\kappa_-$ from Definition \ref{def:kappa-minus}; denote by $\kappa_+$ the trivial character of $H^1(F,T)$.
\begin{theorem}\label{prop:HI-cocycle}
	Suppose $\Ad(g): \delta \mapsto \eta$ is a stable conjugation in $\SL(2,F)_{\mathrm{reg}}$. Let $\tilde{\delta}, \tilde{\eta} \in \widetilde{\SL}(2,F)$ be their preimages.
	\begin{itemize}
		\item When $m \equiv 2 \pmod 4$ and $\tilde{\eta} = \CaliAd(g)(\tilde{\delta})$ (dropping references to $\delta_0, \eta_0$ by Corollary \ref{prop:st-conj-elements-canonical}), we have
			\begin{equation*}
				\Delta^\pm(\tilde{\eta}, \ldots) = \lrangle{\kappa_\pm, \mathrm{inv}(\delta, \eta)} \Delta^\pm(\tilde{\delta}, \ldots)
			\end{equation*}
		\item When $4 \mid m$ and $(\tilde{\eta}, \eta_0) = \CaliAd^\pm(g)(\tilde{\delta}, \delta_0)$, we have
			\begin{equation*}
				\Delta^\pm(\tilde{\eta}, \eta_0) = \Delta^\pm(\tilde{\delta}, \delta_0).
			\end{equation*}
	\end{itemize}
\end{theorem}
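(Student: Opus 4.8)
The plan is to reduce everything to a Kubota--cocycle computation on $\GL(2)$, in the spirit of the proof of Proposition~\ref{prop:HI-invariance}. Since $\Delta^\pm$ are genuine and invariant under $\SL(2,F)$-conjugation (Proposition~\ref{prop:HI-invariance}) and every $\CaliAd^\sigma(g)$ factors as $\Ad(g'')\circ\CaliAd^\sigma(g')$ with $g'\in G_{\mathrm{ad}}(F)=\PGL(2,F)$ and $g''\in G(F)$ through a factorization pair (Definition~\ref{def:st-conj}), I may assume from the outset that $g\in\PGL(2,F)$, with $\nu=\nu(g)\in F^\times/F^{\times 2}$ the class of $\det g_1$ for any lift $g_1\in\GL(2,F)$ (Proposition~\ref{prop:st-conj-SL2} and \eqref{eqn:nu-arises}); accordingly $\Cali_m(\nu,\delta_0)=(N_{K/F}(\omega),\nu)_{F,2}$ by Definition--Proposition~\ref{def:Cali-factor}. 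For $m\equiv 2\pmod 4$ one should first note that, by the $\delta_0$-independence lemma preceding the theorem, $\Delta^\pm$ descends to a genuine function on the good regular locus of $\widetilde{\SL}(2,F)$, so that the statement (with $\delta_0,\eta_0$ suppressed and $\tilde\eta=\CaliAd(g)(\tilde\delta)$ read via Corollary~\ref{prop:st-conj-elements-canonical}) makes sense.

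The core is the case of $\Delta^+$, where I claim $\CaliAd^+(g)$ acts trivially (matching $\lrangle{\kappa_+,\cdot}=1$). Write $\tilde\delta=\noyau\,\bm{s}(\delta)$, $\eta=g\delta g^{-1}$; the task is to compute the defect $\zeta:=\Ad(g)(\bm{s}(\delta))\,\bm{s}(\eta)^{-1}\in\bmu_m$. Choose $\gamma\in\PGL(2,F)$ with $\bm{\tau}^+(\gamma)=\delta$, represented by a matrix parameterised by the fixed $\omega\in K^\times$, so that $\det\hat\gamma=N:=N_{K/F}(\omega)$. As in the proof of Proposition~\ref{prop:HI-invariance} one has in $\widetilde{\GL}(2,F)$ the identity $(N,\bm{x}(\delta))_{F,2}\,\bm{s}(\hat\gamma^{m})=\bm{s}(N^{m/2}I)\,\bm{s}(\delta)$ (using \eqref{eqn:norm-residue-d}). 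Conjugating this by $\bm{s}(g_1)$, I use: (i) $\gamma\mapsto\bm{s}(\gamma)^{m}$ commutes with $\GL(2,F)$-conjugation (\cite[p.\,130 and Lemma~1.2.1]{Fl80}), whence $\Ad(g_1)\bigl(\bm{s}(\hat\gamma^m)\bigr)=\bm{s}(\widehat{\gamma'}^{\,m})$ for $\gamma'=\Ad(g)\gamma$, which satisfies $\bm{\tau}^+(\gamma')=\eta$ and $\det\widehat{\gamma'}=N$ modulo squares; (ii) the scalar-matrix analogue of Lemma~\ref{prop:-1-adjoint}, giving $\Ad(g_1)\bigl(\bm{s}(N^{m/2}I)\bigr)=(N,\nu)_{F,2}\,\bm{s}(N^{m/2}I)$ (and the fact that $\bm{s}(N^{m/2}I)$ centralises $\widetilde{\SL}(2,F)$); (iii) the same identity applied to $\gamma'$. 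Collecting terms yields $\zeta=(N,\,\nu\,\bm{x}(\delta)\,\bm{x}(\eta))_{F,2}$. Substituting $\zeta$ and $\Cali_m(\nu,\delta_0)=(N,\nu)_{F,2}$ into the definition of $\Delta^+$ and simplifying by bimultiplicativity of the Hilbert symbol together with $(a,-a)_{F,2}=1$, all factors cancel and $\Delta^+\bigl(\CaliAd^+(g)(\tilde\delta,\delta_0)\bigr)=\Delta^+(\tilde\delta,\delta_0)$; the Weil-constant prefactor $\gamma_\psi(1)/\gamma_\psi(N)$ is untouched since $N$ is unchanged. The $4\mid m$ case of $\Delta^+$ is formally identical, with the Weil-constant factor absent and Lemma~\ref{prop:1-c} invoked for the $4\mid m$ shape of the relevant symbols.

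Finally I deduce the $\Delta^-$ statements from the $\Delta^+$ ones using the defining relation $\Delta^-(\,\cdot\,)=(\text{const})\cdot\Delta^+\bigl(\bm{s}(-1)\cdot\,,\,\cdot\bigr)$. For $m\equiv 2\pmod 4$ one compares $\bm{s}(-1)\cdot\CaliAd(g)(\tilde\delta)$ with the first component of $\CaliAd^+(g)(\bm{s}(-1)\tilde\delta,\delta_0)$: the discrepancy is governed by (i) $\Ad(g)(\bm{s}(-1))=(-1,\nu)_{F,2}\,\bm{s}(-1)$ (Lemma~\ref{prop:-1-adjoint} together with Lemma~\ref{prop:1-c}) and (ii) the change of the calibration factor $\Cali_m(\nu,-\delta_0)=(-1,\nu)_{F,2}\,\sgn_{K/F}(\nu)\,\Cali_m(\nu,\delta_0)$; the two copies of $(-1,\nu)_{F,2}$ cancel and leave exactly $\sgn_{K/F}(\nu)$, which is $\lrangle{\kappa_-,\mathrm{inv}(\delta,\eta)}$ by Proposition~\ref{prop:st-conj-SL2}. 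This is precisely the computation packaged in Proposition~\ref{prop:CAd-minus-1}(2). When $4\mid m$, by contrast, $\CaliAd^-(g)$ is defined directly with the fixed $\widetilde{-1}$ and its own factor $\Cali_m(\nu,\delta_0)$ (Definition--Proposition~\ref{def:st-conj-SL2}(2)), so the analogous comparison produces no extra sign and one gets $\Delta^-(\tilde\eta,\eta_0)=\Delta^-(\tilde\delta,\delta_0)$; here one also records that $\bm{s}(-1)$ and the canonical $\widetilde{-1}$ of Definition~\ref{def:lifting-minus-1} differ by a sign in $\bmu_2$, which is harmless since $\Delta^+$ is genuine. I expect the main obstacle to be the bookkeeping of the $\Delta^+$ cocycle computation --- tracking Kubota's cocycle \eqref{eqn:Kubota-cocycle} through conjugation, verifying the conjugation-equivariance of $\bm{s}(\gamma)^m$, and scrupulously distinguishing the degree-$2$ from the degree-$m$ Hilbert symbols via \eqref{eqn:norm-residue-d} and Lemma~\ref{prop:1-c} --- rather than any conceptual difficulty.
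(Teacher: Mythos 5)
Your proposal is correct and shares the overall skeleton of the paper's argument (reduction to $g\in\PGL(2,F)$ via $\SL(2,F)$-invariance; handle $\Delta^+$; deduce the $\Delta^-$ cases from $\Delta^+$ using the defining relation and Proposition~\ref{prop:CAd-minus-1} for $m\equiv 2\pmod 4$), but the core $\Delta^+$ computation is genuinely different in technique. The paper first specialises to $g=\twomatrix{1}{}{}{\nu}$ diagonal (and to $T$ anisotropic, treating the split case by~\textbf{AD.3}), then verifies by a direct Kubota-cocycle calculation that $\Ad(g)\bm{s}(\delta)=\bm{s}(\Ad(g)\delta)$ with no correction factor; the cancellation with $\Cali_m(\nu,\delta_0)=(N_{K/F}(\omega),\nu)_{F,2}$ then comes from the single identity $\bm{x}(\Ad(g)\delta)=\nu\,\bm{x}(\delta)$. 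You instead keep $g\in\PGL(2,F)$ general, conjugate the $m$-th power identity $(N,\bm{x}(\delta))_{F,2}\,\bm{s}(\hat\gamma^m)=\bm{s}(N^{m/2}I)\,\bm{s}(\delta)$ from the proof of Proposition~\ref{prop:HI-invariance}, and extract the defect $\zeta=\Ad(g_1)(\bm{s}(\delta))\,\bm{s}(\eta)^{-1}=(N,\nu\,\bm{x}(\delta)\bm{x}(\eta))_{F,2}$; the cancellation then occurs in the final assembly without ever needing $\zeta=1$ or a diagonal $g$. Your route is somewhat more conceptual (it reuses the Flicker argument wholesale and never relies on the specific shape of the conjugator), whereas the paper's is more elementary at the cost of the reduction to a special $g$. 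Two small points to tighten: (i) you should note that, as in the paper, one may first $\SL(2,F)$-conjugate so that both $\delta$ and $\eta$ have nonzero lower-left entry, since $\bm{x}$ and the $m$-th power identity are written in that chart; (ii) the remark that the $\widetilde{-1}$ in $\CaliAd^-$ and the canonical one of Definition~\ref{def:lifting-minus-1} ``differ by a sign'' is superfluous, since Definition--Proposition~\ref{def:st-conj-SL2}(2) lets $\widetilde{-1}$ be an arbitrary preimage of $-1$, so one can simply take $\bm{s}(-1)$ there.
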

\begin{proof}
	By the $\SL(2,F)$-invariance of $\Delta^\pm$ (Proposition \ref{prop:HI-invariance}), it suffices to consider stable conjugacy realized by $\Ad(g): \delta \mapsto \eta$ such that $g = \twomatrix{1}{}{}{\nu} \in \PGL(2,F)$, where $\nu \in F^\times$. If $T$ splits then $\Ad(g)$ reduces to ordinary conjugacy and we conclude by \textbf{AD.3} of Proposition \ref{prop:CAd-prop}. Hereafter assume $T$ anisotropic, so that $\delta, \eta$ take the form $\twomatrix{a}{b}{c}{d}$ with $c \neq 0$. A routine computation in $\widetilde{\GL}(2,F)$ using \eqref{eqn:Kubota-cocycle} shows
	\[ \bm{s}\twobigmatrix{1}{}{}{\nu}^{-1} = \bm{s}\twobigmatrix{1}{}{}{\nu^{-1}}, \quad \Ad(g) \bm{s}\twobigmatrix{a}{b}{c}{d} = \bm{s}\twobigmatrix{a}{\nu^{-1}b}{\nu c}{d} \quad (c \neq 0). \]
	The cases of $\Delta^+$ follow because $\bm{x}(\Ad(g)\delta) = \nu\bm{x}(\delta)$ and $\Cali_m(\nu, \delta_0) = (N_{K/F}(\omega), \nu)_2$. The case of $\Delta^-$ for $4 \mid m$ follows by a comparison of the definitions of $\CaliAd^-(g)$ and $\Delta^-$. The case of $\Delta^-$ for $m \equiv 2 \pmod 4$ is accounted by Proposition \ref{prop:CAd-minus-1}, which says $\bm{s}(-1)\CaliAd(g)(\tilde{\delta}) = \sgn_{K/F}(\nu) \CaliAd(g)(\bm{s}(-1)\tilde{\delta})$.
\end{proof}

When $m=2$, Theorem \ref{prop:HI-cocycle} reduces to \cite[Proposition 5.13]{Li11}.

\section{Errata}\label{sec:Errata}




The computations of the toral invariants $f_{(G, S)}(\alpha)$ in Sect.~7.2 of the paper \emph{Stable conjugacy and epipelagic L-packets for Brylinski-Deligne covers of $\Sp(2n)$} in Selecta Mathematica (N.S.), Vol. 26, (2020), no.~1 (ditto for the arXiv version) are flawed when the root $\alpha$ takes the form $\alpha = \epsilon_i \pm \epsilon_j$, in both the symplectic and the odd orthogonal case. The resulting $f_{(G, S)}(\alpha)$ can vary within a stable conjugacy class $\mathcal{E}$ of embeddings $j: S \hookrightarrow G$ of maximal tori. The main impacts of this mistake are listed below.
\begin{itemize}
	\item The proof of Lemma 7.2.2 no longer works for general tori $S$. This lemma is used to prove the key Theorem 7.6.3 on the stability of epipelagic $L$-packets.
	\item Theorem 8.3.1 is directly built upon Sect.~7.2. Its aim is interpret the \emph{stable system} (a notion of obstruction to stability, see Definition 7.3.3) constructed in Sect.~8.2 for $m \equiv 2 \pmod{4}$, as a ratio $\epsilon_j(\gamma) / \epsilon_k(\gamma)$. This is used in Sect.~9.3 to show the compatibility with $\Theta$-lifting when $m=2$.
\end{itemize}

The goal here is to show that
\begin{itemize}
	\item Lemma 7.2.2 still holds for a restricted class of maximal tori, which suffices for our purposes;
	\item Theorem 8.3.1 can be turned into a definition for the stable system when $m \equiv 2 \pmod{4}$, in order to force the results in Sect.~9.3 to be true. The new definition is more transparent.
\end{itemize}

The impacted results can thus be restored with reasonable modifications. Remarkably, this is a reversion to the strategy in an early draft of this work.

The author is grateful to Chuijia Wang for kindly pointing our these mistakes.

\subsection{Correction to Lemma 7.2.2}
One should assume in Lemma 7.2.2 that $S$ is a maximal torus of type (ER) in $G = \mathrm{Sp}(W)$, as in Definition 6.1.1. This is all what one needs for the epipelagic supercuspidals studied in Sect.~7.3 --- 7.6. The corrected arguments are given below.

Retain the notation in the original proof. The goal is to show $\epsilon_j(\gamma_0)$ depends only on the stable conjugacy class of $j: S \hookrightarrow G$ and $\gamma_0 \in S(F)_{p'}$. Given $j$, define the sign
\[ E_j(S, G) = \prod_{\alpha \in R(G, jS)_{\mathrm{sym}}(\overline{F}) / \Gamma_F} f_{G, jS}(\alpha). \]
Set $G_{j \gamma_0} := Z_G(j(\gamma_0))^\circ$. By \cite[Lemma 4.12]{Kal15},
\begin{equation}\label{eqn:epsilon-character-errata}
	\epsilon_j(\gamma_0) = \prod_{\substack{\alpha \in R(G, jS)_{\mathrm{sym}}(\overline{F}) / \Gamma_F \\ \alpha(\gamma_0) \neq 1}} f_{G, jS}(\gamma_0) = \frac{E_j(S, G)}{E_j(S, G_{j \gamma_0})}.
\end{equation}

Fix any additive character $\psi$ of $F$. Kottwitz's formula \cite[Corollary 4.11]{Kal15} gives
\begin{equation}\label{eqn:Kottwitz-formula-errata}
	E_j(S, G) = \dfrac{e(G) \epsilon(X^*(S)_{\mathbb{C}} - X^*(T)_{\mathbb{C}}, \psi)}{\displaystyle\prod_{\alpha \in R(G, jS)_{\mathrm{sym}}(\overline{F}) / \Gamma_F} \lambda_{F_\alpha / F_{\pm\alpha}}(\psi \circ \mathrm{Tr}_{F_\alpha / F_{\pm\alpha}}) };
\end{equation}
\begin{compactitem}
	\item $e(G)$ is the Kottwitz sign of $G$;
	\item $T$ is a minimal Levi subgroup in the quasisplit inner form $G^*$ of $G$;
	\item $\epsilon(\cdots , \psi)$ is the $\epsilon$-factor with Langlands' normalization associated with the virtual $\Gamma_F$-representation $X^*(S)_{\mathbb{C}} - X^*(T)_{\mathbb{C}}$ of degree zero;
	\item $F_\alpha/F_{\pm\alpha}$ are the quadratic extensions associated with symmetric roots $\alpha$, and $\lambda_{F_\alpha / F_{\pm\alpha}}(\cdots)$ are Langlands' constants.
\end{compactitem}

Formulas \eqref{eqn:epsilon-character-errata} and \eqref{eqn:Kottwitz-formula-errata} apply to connected reductive $F$-groups in general. In particular, we have the formula for $E_j(S, G_{\gamma_0})$ by replacing $T$ with a minimal Levi subgroup $T_j \subset G^*_{j\gamma_0}$. The denominator of \eqref{eqn:Kottwitz-formula-errata} only depends on the stable conjugacy class of $j$.

Return to our setting that $S$ is of type (ER) and $G = \Sp(W)$. It remains to show that
\[ e(G_{j\gamma_0}) \epsilon(X^*(S)_{\mathbb{C}} - X^*(T_j)_{\mathbb{C}}, \psi) \]
depends only on the stable conjugacy class of $j$.

Using Theorem 6.2.2 we know that $j\gamma_0$ can only have eigenvalues $\pm 1$, with multiplicities $2n_\pm$ respectively. Hence
\[ G_{j\gamma_0} = \mathrm{Sp}(W_+) \times \mathrm{Sp}(W_-) \]
where $W_\pm \subset W$ are symplectic vector subspaces of dimension $2n_\pm$. The stable conjugacy class of $j$ and $\gamma_0$ determine the isomorphism class of $G_{j\gamma_0}$, hence determines $e(G_{j\gamma_0})$ and $X^*(T_j)_{\mathbb{C}}$. The proof is now complete.

\subsection{Correction to Section 8.2 and Theorem 8.3.1}
The obscure construction in Sect.~8.2 should be abandoned. Theorem 8.3.1 should instead be turned into a definition for the stable system when $m \equiv 2 \pmod{4}$. Specifically, we shall define
\[ \theta_j^\dagger(\gamma_0) := \frac{\epsilon_k(\gamma_0)}{\epsilon_j(\gamma_0)}, \]
where $\gamma_0 \in S(F)_{p'}$. Then we put $\theta_j := \theta_j^\circ \theta_j^\dagger$ to obtain a stable system as in Theorem 8.2.6.

Let us recall the notation from Sect. 8.2 and 8.3 briefly.
\begin{compactitem}
	\item Fix an additive character $\psi$ of the non-Archimedean local field $F$. Fix a symplectic $F$-vector space $(W, \lrangle{\cdot|\cdot}$ to define $G = \mathrm{Sp}(W)$ and its $m$-fold BD-cover $\tilde{G}$.
	\item Consider an inducing datum $(\mathcal{E}, S, \theta^\flat)$ (Definition 7.3.1) and pick any $Y = Y_\psi \in \mathfrak{s}(F)$ as in (8.3).
	\item For each conjugacy class of embeddings $j: S \hookrightarrow G$ in $\mathcal{E}$, we plug $Y$ into the moment map correspondence to produce $H = \mathrm{SO}(V,q)$ and the conjugacy class of embeddings $k: S \hookrightarrow H$.
\end{compactitem}

This will guarantee the compatibility with $\Theta$-lifting of epipelagic representations \cite{LMS16} when $m = 2$. All the statements in Sect.~9.3 remain valid.

To achieve this, we must check the conditions \textbf{SS.1}---\textbf{SS.3} in Definition 7.3.3, and show that $\theta_j^\dagger$ is independent of the choice of $Y$.

Suppose $Y$ is chosen. We begin with \textbf{SS.2}. It cannot be reduced to $\dim W = 2$ like in the original arguments for Theorem 8.2.6. However, we can still
\begin{itemize}
	\item parameterize $j$ by a datum $(K, K^\sharp, \vec{c})$ with $K = \prod_{i \in I} K_i$ an étale $F$-algebra, $K^\sharp = \prod_{i \in I} K_i^\sharp$ the subalgebra fixed by an involution $\tau$, and $\vec{c} = (c_i)_{i \in I} \in K^\times$ with $\tau(\vec{c}) = -\vec{c}$;
	\item parameterize $jY \in j\mathfrak{s}(F)$ by $(K, K^\sharp, \vec{c}, \vec{y})$ where $\vec{y} = (y_i)_{i \in I} \in K^\times$ with $\tau(\vec{y}) = -\vec{y}$; the part $\vec{y}$ depends only on the stable conjugacy class of $j$.
\end{itemize}
This gives $S(F) \simeq \prod_{i \in I} K_i^1$ where $K_i^1$ is the norm-one subtorus of $K_i^\times$. A similar parameterization for conjugacy classes of maximal tori and their elements exists for $H$ and its pure inner forms. See for example \cite[\S 3.1]{Li11}.

By Theorem 6.2.2, $\gamma_0 = (\gamma_{0, i})_{i \in I} \in \{\pm 1\}^I$. Accordingly,
\[ I = I_+ \sqcup I_-, \quad I_{\pm} := \{i \in I: \gamma_{0, i} = \pm 1\}. \]

The analysis of stable conjugacy in $\tilde{G}$ in the proof of Theorem 8.2.6 reduces \textbf{SS.2} to the equality
\begin{equation}\label{eqn:SS2-errata}
	\theta^\dagger_{j'}(\gamma_0) = \theta^\dagger_j(\gamma_0) \prod_{\substack{i \in I \\ \gamma_{0,i} = -1}} \sgn_{K_i/K_i^\sharp}\left( c_i / c'_i \right)
\end{equation}
for all $j, j' \in \mathcal{E}$, parameterized by $(K, K^\sharp, \vec{c})$ and $(K, K^\sharp, \vec{c}')$ respectively.

Denote by $k': S \hookrightarrow H' = \mathrm{SO}(V', q')$ the embedding arising from $j'$ by moment map correspondence. It is stably conjugate to $k$ (possibly across pure inner forms). By definition of the moment map correspondence in Sect.~8.1, $k$ (resp.\ $k'$) is parameterized by $(K, K^\sharp, \vec{c}\vec{y})$ (resp.\ $(K, K^\sharp, \vec{c}'\vec{y})$).

Let $V_\pm \subset V$ and $V'_\pm \subset V'$ are the quadratic subspaces on which $k\gamma_0$ and $k'\gamma_0$ act by $\pm 1$, respectively. Note that $\dim V_+ = \dim V'_+$ (resp.\ $\dim V_- = \dim V'_-$) are odd (resp.\ even). We have
\[ H_{k\gamma_0} \simeq \SO(V_+) \times \SO(V_-), \quad H'_{k' \gamma_0} \simeq \SO(V'_+) \times \SO(V'_-). \]

Note that $k$ and $k'$ are related by a pure inner twist from $H$ to $H'$ coming from $H^1(F, S)$. In turn, this restricts to a pure inner twists from $H_{j\gamma_0}$ to $H'_{j'\gamma_0}$, and similarly on their $\pm$-factors. The parameter of $kS_+$ in $kS = kS_+ \times kS_-$ is carved out from that of $kS$ by $I_+ \subset I$. Ditto for $k'S_+$.

We have shown that $\epsilon_j(\gamma_0) = \epsilon_{j'}(\gamma_0)$, so \eqref{eqn:SS2-errata} reduces into
\begin{equation}\label{eqn:SS2-errata-1}
	\epsilon_{k'}(\gamma_0) = \epsilon_k(\gamma_0) \prod_{\substack{i \in I \\ \gamma_{0,i} = -1}} \sgn_{K_i/K_i^\sharp}\left( c_i / c'_i \right).
\end{equation}
We shall prove it by Kottwitz's formula. The Langlands constants are stably invariant and $T_k \simeq T'_{k'}$ since $(H_{k\gamma_0})^* = (H'_{k'\gamma_0})^*$. Thus \eqref{eqn:epsilon-character-errata} and \eqref{eqn:Kottwitz-formula-errata} imply
\[ \frac{\epsilon_{k'}(\gamma_0)}{\epsilon_k(\gamma_0)} = \frac{E_{k'}(S, H') / E_{k'}(S, H'_{k'\gamma_0})}{E_k(S, H) / E_k(S, H_{k\gamma_0})} = \frac{e(H')e(H_{k'\gamma_0})}{e(H)e(H_{k\gamma_0})} \]

\begin{enumerate}
	\item First, we claim that
	\[ \frac{e(H')}{e(H)} = \prod_{i \in I} \sgn_{K_i/K_i^\sharp}(c_i/c'_i). \]
	Indeed, $e(H)$ is computed by the diagonal arrow below.
	\[\begin{tikzcd}
		H^1(F, H^*) \arrow[r] \arrow[rrd] & H^2(F, Z) \arrow[r, "\rho"] & H^2(F, \Gm)_{2\text{-tors}} \\
		\{\pm 1\} \arrow[rr] \arrow[u, "\simeq"] & & \{\pm 1\} \arrow[u, "\simeq"']
	\end{tikzcd}\]
	where $Z = \Ker[H^*_{\text{SC}} \to H^*]$, and $\rho$ is induced by the half-sum of positive roots. The diagonal arrow is non-trivial since there does exist a pure inner form $H^{**}$ of $H^*$ with $e(H^{**}) = -1$. Hence the bottom arrow is the identity map.
	
	The pure inner twist from $H$ to $H'$ in question comes from
	\[\begin{tikzcd}
		\left( \sgn_{K_i/K_i^\sharp}(c_i/c'_i) \right)_{i \in I} \arrow[phantom, r, "\in" description, sloped] & \{\pm 1\}^I \arrow[r, "\text{mult.}"] & \{\pm 1\} & \\
		& H^1(F, S) \arrow[r, "\text{natural}"'] \arrow[u, "\simeq"] & H^1(F, H) \arrow[u, "\simeq"] \arrow[r, "\sim", "\text{natural}"'] & H^1(F, H^*) .
	\end{tikzcd}\]
	The claim follows.
	
	\item Since $\dim V_+ = \dim V'_+$ is odd, the same arguments yield
	\[ \frac{e(\SO(V'_+))}{e(\SO(V_+))} = \prod_{i \in I_+} \sgn_{K_i/K_i^\sharp}(c_i/c'_i). \]
	
	\item To establish \eqref{eqn:SS2-errata-1}, it suffices to show $e(\SO(V'_-)) = e(\SO(V_-))$. Apart from the trivial case $\dim V_- = 2$, there are exactly two pure inner forms of $\SO(V_-)$: the other one arises from a quadratic vector space $V_{-, \diamond}$ with the same dimension and discriminant, but opposite Hasse invariant. Denote by $V^{\mathrm{an}}_-$ and $V^{\mathrm{an}}_{-,\diamond}$ their anisotropic kernels. It is known that
	\[ \{ \dim V^{\mathrm{an}}_-, \dim V^{\mathrm{an}}_{-, \diamond} \} = \begin{cases}
		\{0, 4\}, \\
		\{2\}.
	\end{cases}\]
	In either case, the split ranks of $\SO(V_-)$ and $\SO(V_{-,\diamond})$ have the same parity, hence $e(\SO(V_-)) = e(\SO(V_{-,\diamond}))$. This establishes \textbf{SS.2}.
\end{enumerate}

For \textbf{SS.1}, one has $\theta_j = \theta_j^\circ \theta_j^\dagger$ by construction. Rescaling $\psi$ or $\lrangle{\cdot|\cdot}$ by some $a \in F^\times$ amounts to a similar rescaling of $Y$. This amounts to rescale the $q\lrangle{Y}$ in Remark 8.1.2 by $a$. This also amounts to replacing the parameter $(K, K^\sharp, \vec{c})$ (see Sect.~3.1) of $j$ by $(K, K^\sharp, a\vec{c})$, corresponding to some $j' \in \mathcal{E}$. The required sign-change is already predicated by \textbf{SS.2}.

Now consider \textbf{SS.3}. Following the proof of Theorem 8.2.6, one should show that $\theta^\dagger_j$ depends only on the pro-$p$ part of the datum $\theta^\flat$. Granting this fact, one shows $\theta_j^\dagger = \theta_{\Ad(w)j}^\dagger$ for any $w \in \Omega(G,jS)(F)$ by repeating the original arguments.

To prove this, assume for convenience that the $a(\psi)$ in (8.2) is zero. The only ambiguity is that $Y \in \mathfrak{s}(F)_{-1/e}$ can be replaced by any $Y' \in Y + \mathfrak{s}(F)_0$. Consider the parameter $\vec{y}' = (y'_i)_{i \in I}$ of $jY'$. Then
\[ \frac{y'_i}{y_i} - 1 \; \text{is topologically nilpotent in}\; K_i^\sharp \]
for all $i \in I$. Since we assumed $p \neq 2$ from Sect.~7.3 onward, the group of $1$ + topological nilpotents is $2$-divisible in $(K_i^\sharp)^\times$, hence $\sgn_{K_i/K_i^\sharp} = 1$ on them. The resulting $k: S \hookrightarrow H$, etc.\ are thus unaltered up to conjugacy.

The above also shows that $\theta^\dagger_j(\gamma_0)$ depends only on the pro-$p$ part of $\theta^\flat$ determined by $Y$, not on $Y$ itself.




\printindex

\bibliographystyle{abbrv}
\bibliography{Epipelagic}


\vspace{1em}
\begin{flushleft}
	Wen-Wei Li \\
	E-mail address: \href{mailto:wwli@pku.edu.cn}{\texttt{wwli@pku.edu.cn}} \\
	School of Mathematical Sciences / Beijing International Center for Mathematical Research, Peking University \\
	No.\ 5, Yiheyuan Road, 100871 Beijing, People's Republic of China.
\end{flushleft}

\end{document}